\newtheorem{thm}{Theorem}
\newtheorem{defn}{Definition}
\newtheorem{fact}{Fact}
\newtheorem{lem}[thm]{Lemma}
\newtheorem{prop}[thm]{Proposition}
\newtheorem{cor}[thm]{Corollary}
\newtheorem{question}{Question}
\newcommand{\pp}{\mathcal{P}}
\newcommand{\up}{\upharpoonright}
\newcommand{\ra}{\rightarrow}
\newcommand{\mc}{\mathcal}
\newcommand{\ms}{\mathscr}
\newcommand{\msa}{\mathscr{A}}
\newcommand{\msb}{\mathscr{B}}
\newcommand{\bfe}{\mathbf{E}}
\newcommand{\mct}{\mathcal{T}}
\newcommand{\mcc}{\mathcal{C}}
\newcommand{\mcr}{\mathcal{R}}
\newcommand{\mcx}{\mathcal{X}}
\begin{document}
 \title{Fragments of Martin's axiom}
  \author{Yinhe Peng}
  \address{Academy of Mathematics and Systems Science, Chinese Academy of Sciences\\ East Zhong Guan Cun Road No. 55\\Beijing 100190\\China}
\email{pengyinhe@amss.ac.cn}


\subjclass[2010]{03E50, 03E65}
\keywords{{\rm MA}, $\mathscr{K}_n$, $\mathcal{K}_n$,  ccc, Knaster property K$_n$, $\sigma$-$n$-linked, precaliber $\omega_1$, $\sigma$-centered}

  \begin{abstract}
We show that Martin's axiom for $\omega_1$ dense sets is equivalent to its fragment asserting that every ccc poset has the Knaster property K$_3$. On the other hand, we show that the dimension 3 in K$_3$ is in some sense minimal.
   \end{abstract}
  
  \begingroup
  \def\uppercasenonmath#1{}
    \maketitle
    \endgroup

    \section{Introduction}
    
    One of the most important developments of the   forcing method, introduced by Cohen \cite{Cohen}, is the area of so-called internal forcing, more commonly known as Forcing Axioms. This area was introduced in the seminal paper by Solovay and Tennenbaum \cite{ST}, which developed the method of iterated forcing resulting in the first forcing axiom known today under the name Martin's axiom (MA) asserting that MA$_\kappa$ holds for all $\kappa$ less than the continuum.
    
     For a cardinal $\kappa$,  {\rm MA}$_{\kappa}$ is the   assertion that
  \begin{itemize}
  \item for every ccc poset $\mc{P}$, if $\mc{X}$ is a collection  of $\kappa$ many   dense  subsets of $\mc{P}$, then there is a  filter $G\subseteq \mc{P}$ such that $G\cap D\neq \emptyset$ for all $D\in \mc{X}$.\footnote{$D\subseteq\pp$ is dense if it is downward cofinal. $G\subseteq\pp$ is a filter if it is upward closed and any finite elements of $G$ have a common lower bound in $G$.}
  \end{itemize}  
  A poset (partially ordered set) $\mc{P}$ has the \emph{countable chain condition} (ccc)  if every uncountable subset of $\mc{P}$ contains two elements with a common lower bound.
  
  Numerous applications of MA in   mathematics were found in the decade following its discovery. The 1984 monograph by Fremlin \cite{Fremlin} collects these applications up to that date. They are organized according to the corresponding fragments of MA needed for their application, which reveals otherwise hidden information about the relationships between problems in different areas of mathematics. 
  
It is not surprising that fragments of MA correspond directly to the strengthenings of the countable chain condition that have been considered in mathematics long before the discovery of forcing.   For example, the assertion that
every ccc poset has the Knaster's property K$_2$,
   considered by Knaster and Szpilrajn in Problem 192 of the Scottish Book in the 1940s (see \cite{The Scottish Book}), arises naturally when attempting to solve the famous Suslin Problem \cite{Souslin},  which asks whether the unit interval can be characterized among ordered continua by the countable chain condition.

 There is   another reason for considering fragments of MA and their relative strengths. This reason is drawn from the phenomenon that some problems in mathematics are solved through the combination of   corresponding fragments derived from two contradictory principles. For example, a positive answer to Katetov's problem follows from the combination of a fragment of MA$_{\omega_1}$ and a weak form of the Continuum Hypothesis (CH) \cite{LT2002}.  
  Kunen and Tall analyzed   fragments of MA$_{\omega_1}$   related to problems of this kind in \cite{KT}. For example, it is asked in \cite{KT} whether the weak diamond $2^\omega<2^{\omega_1}$ is consistent with the fragment  of MA$_{\omega_1}$ (denoted there by H) asserting that every ccc poset has precaliber $\omega_1$, or equivalently that every uncountable ccc poset has an uncountable centered subset. This problem was answered     negatively by Todorcevic and Velickovic in  \cite{TV} where the higher-dimensional versions of Knaster's axioms are considered as important fragments of MA$_{\omega_1}$. Before introducing these fragments, we need unsurprisingly to introduce the corresponding high-dimensions of Knaster's original property K$_2$.

  For $n\geq 2$, a poset $\mathcal{P}$ has \emph{property {\rm K}$_n$} (K for Knaster) if every uncountable subset of $\mathcal{P}$ has an uncountable subset that is $n$-linked where a subset $X\subseteq \mathcal{P}$ is \emph{$n$-linked} if every $n$-element subset of $X$ has a common lower bound. We may omit $n$ if $n=2$.
  
  It is easy to see that 
  \[\cdots\Rightarrow\text{ K$_{n+1}\Rightarrow$ K$_n\Rightarrow\cdots\Rightarrow$ K$_2$}.\] 
  And it is independent of ZFC, the Zermelo-Fraenkel axiomatic set theory with the Axiom of Choice, that the implications are reversible. Moreover, there are three well-known patterns on reversibility of implications.
  \begin{itemize}
  \item No implication is reversible under the Continuum Hypothesis (see, e.g., \cite{KT}, \cite{Galvin}, \cite{Todorcevic86}).
  \item All implications are reversible under MA$_{\omega_1}$.
  \item For $n\geq 2$,   there is a switch at $n$ from non-reversible to reversible, i.e.,
  \[\text{K$_i\not\Rightarrow$ K$_{i+1}$ for $2\leq i<n$  and K$_j\Rightarrow$ K$_{j+1}$ for $j\geq n$},\]
  by, e.g., a  finite support iteration of  sufficient posets with property K$_n$  from a model of CH.
  \end{itemize}
  We may also view the first two patterns as special cases of the third pattern by taking $n$ to be $\infty$ and 2 respectively. No new pattern has ever been found.
  
Then our investigation on fragments of Martin's axiom confirms that no other pattern exists. More precisely, by Theorem \ref{thm K3toK4},  for   $n\geq 2$,
  \[\text{ if K$_n\Rightarrow$ K$_{n+1}$, then K$_{n}\Rightarrow$ K$_{m}$ for all $m>n$}.\]


Fragments with the full strength of MA$_{\omega_1}$ have their own special significance. In the 1980s, Todorcevic and Velickovic  \cite{TV} proved that the following fragment of MA$_{\omega_1}$,
  \[\text{every uncountable ccc poset has an uncountable centered subset},\]
is  equivalent to MA$_{\omega_1}$. This result, along with  the underlying ideas  (\cite{Bell} and \cite{Todorcevic87}), plays an important role in later investigations.
  
       A subset $X$ of a poset $\mc{P}$ is \emph{centered} if every finite subset of $X$ has a common lower bound.
       
       Ever since, no seemingly weaker fragment  of this type was proved or disproved to have the full strength of MA$_{\omega_1}$. In particular, whether the following fragment   is equivalent to MA$_{\omega_1}$ was left open.
      \[\text{Every uncountable ccc poset has an uncountable $n$-linked subset for every }n\geq 2,\]
       \[\text{or equivalently, every ccc poset has property K$_n$ for every }n\geq 2.\footnote{For a ccc poset $\mc{P}$ and an uncountable $X\subseteq \mc{P}$, the collection of finite centered subsets of $X$ ordered by reverse inclusion is ccc and whose uncountable $n$-linked subset induces an uncountable $n$-linked subset of $X$.}\] 
       For progresses  and partial results, see a recent survey \cite{Bagaria}.


    \subsection{Strengthening ccc to property {\rm K}$_n$}
    
    We will use the following notation   introduced in  \cite{TV} to simplify our expression. 
      \begin{itemize}
      \item  For $n\geq 2$, $\ms{K}_n$ is the assertion that every ccc poset has property K$_n$.
      \end{itemize}

      A fundamental  type of questions for $\ms{K}_n$'s is: 
      \begin{itemize}
      \item What are the exact strengths of $\ms{K}_n$'s?
      \end{itemize}
      The investigation of these questions leads to another fundamental type of questions for $\ms{K}_n$'s:
       \begin{itemize}
      \item  What consequences do  $\ms{K}_n$'s have?
      \end{itemize}
       More specific questions of these types  have been asked and investigated, see e.g., \cite{TV},  \cite{Todorcevic91}, \cite{Moore2000}, \cite{LT2001}, \cite{Yorioka10}, \cite{Yorioka11}, \cite{Bagaria}.
      
      Progresses have been made to the second type of questions. We list some well-known consequences. First, the following statements follow from $\ms{K}_2$.
      \begin{itemize}
      \item (\cite{Todorcevic86}) Every collection $\mc{F}$ of $\omega_1$ many functions from natural numbers to natural numbers has a $<^*$ upper bound.\footnote{I.e., there is a function $g$ such that for every $f\in \mc{F}$, $f(n)<g(n)$ for all but finitely many $n$.}
      \item (\cite{Todorcevic91}) Every Aronszajn tree is special.
      \end{itemize}
      Then, the following statements follow from $\ms{K}_3$.
      \begin{itemize}
\item (\cite{TV}; \cite{Todorcevic89}) $2^\omega=2^{\omega_1}$.
\item (\cite{Moore2000}) The union of $\omega_1$ many Lebesgue null sets is null.
      \end{itemize}
      Finally, there is a consequence of $\ms{K}_{n+1}$ for general $n\geq 2$.
        \begin{itemize}
     \item (\cite{TV}) Every ccc poset of size $\omega_1$ is $\sigma$-$n$-linked.
       \end{itemize}
       A poset is \emph{$\sigma$-$n$-linked} if it is a countable union of $n$-linked subsets.
      
      However,  no approach to the first type of questions seems to be satisfactory. In other words, there is a large gap between the known  upper bound and lower bound of $\ms{K}_n$'s. On one hand, the discovered lower bounds, i.e., consequences of $\ms{K}_n$'s not in form of fragments of MA$_{\omega_1}$, have strengths much weaker than MA$_{\omega_1}$. On the other hand, no upper bound other than MA$_{\omega_1}$ itself was known.
      
     A possible proof of MA$_{\omega_1}$ from $\ms{K}_n$ seems to need a coding of centered subsets of a ccc poset by $n$-linked subsets of another ccc poset. In other words, a coding that codes 0-homogeneous subsets of a finitary coloring by 0-homogeneous subsets of another $n$-ary coloring is   needed.     
     An intuitive  guess  is that such coding does not exist and that $\ms{K}_n$'s are strictly weaker than MA$_{\omega_1}$. 
     However, our investigation ends up being just the opposite of this expectation.
     \begin{thm}\label{thm K3toMA}
     $\ms{K}_3$ implies {\rm MA}$_{\omega_1}$.
     \end{thm}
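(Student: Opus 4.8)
The plan is to route the statement through the Todorcevic--Velickovic characterisation of $\ma_{\omega_1}$ quoted above and then to build the missing ``amplifying'' coding. By that theorem, $\ma_{\omega_1}$ is equivalent to the assertion that every uncountable ccc poset has an uncountable centered subset, so it is enough to deduce the latter from $\ms{K}_3$. First I would carry out the routine reductions: the reduction recorded in the footnote (pass from an uncountable ccc poset to the ccc poset of finite centered subsets of an uncountable subset of it, ordered by reverse inclusion), together with a Löwenheim--Skolem argument, brings the problem down to ccc posets of size $\omega_1$ of that concrete shape. Equivalently, we are handed a finitary colouring $c\colon [\omega_1]^{<\omega}\to 2$ whose poset $\mc{P}_c$ of finite $0$-homogeneous subsets of $\omega_1$ is ccc, and the task is to produce an uncountable $c$-$0$-homogeneous set. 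Assume toward a contradiction that none exists.

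The core of the argument is a construction. From $c$ together with a combinatorial ``scaffold'' $\bfe=\langle E_\alpha:\alpha<\omega_1\rangle$ --- a sufficiently coherent or entangled $\omega_1$-sequence of subsets of $\omega$ (equivalently, the bookkeeping of minimal walks on $\omega_1$), fixed once and for all --- I would build an auxiliary poset $\mc{Q}$, in effect a $3$-ary colouring whose poset of finite $0$-homogeneous sets is ccc, so that: (i) $\mc{Q}$ is ccc, whence $\ms{K}_3$ endows it with property {\rm K}$_3$ and produces an uncountable $3$-linked set; and (ii) every uncountable $3$-linked subset of $\mc{Q}$ ``decodes'' to an uncountable $c$-$0$-homogeneous set. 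A condition of $\mc{Q}$ should be a finite object recording, for finitely many indices, both membership data for the sought homogeneous set and ``amplification certificates'' read off from $\bfe$; the entire point of the design is to arrange that $3$-wise compatibility of conditions, once filtered through the combinatorics of $\bfe$, propagates to $k$-wise compatibility for every $k$, so that an uncountable $3$-linked family is \emph{forced} to be centered and hence decodes as intended. I expect this to be done first for \emph{powerfully} ccc posets (all finite powers ccc), where products are available and the amplification is cleanest, establishing the fragment \map; this is then leveraged, together with the remaining consequences of $\ms{K}_3$, to the general case.

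Checking (i) is where the strength of $\ms{K}_3$ beyond bare cccness gets spent. Given a putative uncountable antichain in $\mc{Q}$, a $\Delta$-system reduction reduces matters to reconciling the amplification certificates of two conditions, and the combinatorial consequences of $\ms{K}_3$ already on the table --- $\mathfrak{b}>\omega_1$ and ``every Aronszajn tree is special'' (already consequences of $\ms{K}_2$), together with $2^\omega=2^{\omega_1}$ (from $\ms{K}_3$) --- should eliminate exactly the obstructions (unbounded families, non-special trees, small-continuum counting) that might otherwise let such an antichain survive, so that $\mc{Q}$ is ccc. Then $\ms{K}_3$ hands us an uncountable $3$-linked subset of $\mc{Q}$, clause (ii) decodes it to an uncountable $c$-$0$-homogeneous set, and this contradicts our assumption. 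Therefore $\mc{P}_c$ has an uncountable centered subset, and, by the Todorcevic--Velickovic theorem, $\ms{K}_3$ implies $\ma_{\omega_1}$.

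The hard part will be the construction of $\mc{Q}$ in the second step, precisely because its two required properties pull against each other: clause (ii) wants the conditions rigid enough that mere $3$-wise compatibility already forces full centeredness, whereas clause (i) wants them pliable enough to be amalgamated in profusion. Making these two demands compatible is exactly what the choice of the scaffold $\bfe$ and the internal structure of $\mc{Q}$ must accomplish, and I expect this --- rather than the reductions of the first step or the cccness verification sketched above --- to contain essentially all of the real difficulty.
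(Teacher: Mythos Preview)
Your high-level target is right --- route through the Todorcevic--Velickovic characterisation and build a $3$-ary coding poset $\mc{Q}$ whose uncountable $3$-linked subsets decode to uncountable centered subsets --- but the execution plan misses the two genuinely new ingredients and substitutes ingredients that do not do the work.

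The paper's proof is a bootstrap in five steps, and the two you are missing are precisely the new ones. First, $\ms{K}_3$ implies $\mathfrak{t}>\omega_1$: this is a self-contained combinatorial argument (Section~3.1) that from a tower one builds a specific $3$-ary coloring whose $\mc{H}^\pi_0$ is ccc, and an uncountable $3$-linked set in it yields a pseudointersection. By Bell and Rothberger/Malliaris--Shelah this gives ${\rm MA}_{\omega_1}(\sigma\text{-centered})$. Second, and this is the crux, the coding poset $\mc{Q}$ is \emph{not} built combinatorially from $c$ and a walks-type scaffold; it is obtained by applying ${\rm MA}_{\omega_1}(\sigma\text{-centered})$ to a carefully designed $\sigma$-centered forcing (Lemma~\ref{lem 4.1} and Proposition~\ref{prop code fin by 4}) that generically adds an $(n{+}1)$-ary coloring $\pi$ together with a $\sigma$-$n$-linked partition of $\mc{H}^\pi_0$, arranged so that $\mc{H}^\pi_0$ sits between the $\le n$-element centered sets and all centered sets. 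The input poset must already be $\sigma$-linked for this to go through, and that is where the older TV result ($\ms{K}_3$ implies every ccc poset of size $\omega_1$ is $\sigma$-linked) enters.

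So your plan has a real gap: you propose to verify ccc of $\mc{Q}$ using $\mathfrak{b}>\omega_1$, specialness of Aronszajn trees, and $2^\omega=2^{\omega_1}$, and to pass through ${\rm MA}_{\omega_1}(\text{powerfully ccc})$ as an intermediate fragment. None of these appear in the actual argument, and there is no indication that a direct combinatorial construction of $\mc{Q}$ from a scaffold can achieve the ``$3$-linked $\Rightarrow$ centered'' amplification without first having ${\rm MA}_{\omega_1}(\sigma\text{-centered})$ in hand. The missing idea is exactly this bootstrap: use $\ms{K}_3$ once to fill towers, obtain ${\rm MA}_{\omega_1}(\sigma\text{-centered})$, and only then force the coding.
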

       The proof of above theorem consists of the following steps.
    \begin{enumerate}[($\bigstar$1)]
    \item (\cite{TV}) $\ms{K}_3$ implies that every ccc poset of size $\omega_1$ is $\sigma$-linked.
    \item Assume ${\rm MA}_{\omega_1}(\sigma$-centered). For every $\sigma$-linked poset $\mc{P}$, there is another $\sigma$-linked poset $\mc{Q}$ such that every uncountable 3-linked subset of $\mc{Q}$ induces an uncountable centered subset of $\mc{P}$.
    \item (\cite{TV}) MA$_{\omega_1}$ is equivalent to the statement that every uncountable ccc poset has an uncountable centered subset.
    \item $\ms{K}_3$ implies $\mathfrak{t}>\omega_1$.
    \item (\cite{Bell}; \cite{Rothberger}, \cite{MS}) ${\rm MA}_{\omega_1}(\sigma$-centered) is equivalent to $\mathfrak{t}>\omega_1$.
    \end{enumerate}

  ($\bigstar$1) and ($\bigstar$3).  The proof is based on Todorcevic's minimal walk technique \cite{Todorcevic87}  that codes all $\omega_1$ colors by pairs of an arbitrary uncountable subset. Then in \cite{TV}, a coloring is designed for the coding structure of ccc posets.
    
    ($\bigstar$5). This is a well-known consequence of the following famous theorems:   Bell's Theorem \cite{Bell} that ${\rm MA}_{\kappa}(\sigma$-centered) is equivalent to $\mathfrak{p}>\kappa$; Rothberger's Theorem \cite{Rothberger} that $\mathfrak{p}>\omega_1$ iff $\mathfrak{t}>\omega_1$ (or   Malliaris-Shelah's Theorem \cite{MS} that $\mathfrak{p}=\mathfrak{t}$).
    
   ($\bigstar$2).  This property follows from a more general fact (see Proposition \ref{prop code fin by 4}) indicating a general phenomenon.  
   A general damage control structure explored in Section 7 plays an essential role in the discovery of this general property.

($\bigstar$4). This result follows from the combinatorial analysis of $\omega_1$-towers given  in Section 3.
This investigation   leads us to a new structure theory necessary for filling the tower. 

The proofs of ($\bigstar$2) and ($\bigstar$4) are likely to be useful in further investigations of fragments of Martin's axiom. For example, the proof of ($\bigstar$4) can be generalized to cardinal $\kappa$ greater than $\omega_1$. Together with Bell's Theorem and Malliaris-Shelah's Theorem, an appropriate fragment of MA$_\kappa$ would imply MA$_\kappa$($\sigma$-centered).

\subsection{Fragments above property {\rm K}}

By Theorem \ref{thm K3toMA}, strengthening ccc to K$_3$ is as strong as strengthening ccc to precaliber $\omega_1$. This seems to indicate   that K$_3$ is so powerful that can fulfill the same goals as precaliber $\omega_1$. 

To explore this speculation,  we look into the general case. Then further investigation actually indicates
 that, the strengths between K$_n$  and K$_{n+1}$ is so significant that establishing equivalence between them is as powerful as having the corresponding forcing axiom.  For example, if we view    the fragment of $\ms{K}_3$ that 
\[\text{every poset with property K has property K$_3$}\]
 as a fragment of  MA$_{\omega_1}$(K), then this fragment is so powerful that it implies the full MA$_{\omega_1}$(K). Then $\ms{K}_3$ implies $\ms{K}_2$+MA$_{\omega_1}$(K) which is clearly equivalent to  MA$_{\omega_1}$.
In fact, this phenomenon is   generally true.

\begin{thm}\label{thm K3toK4}
Assume $n\geq 2$. If every poset with property {\rm K}$_n$ has property {\rm K}$_{n+1}$, then {\rm MA$_{\omega_1}$(K$_n$)} holds.
\end{thm}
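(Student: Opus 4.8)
The plan is to rerun the five-step proof of Theorem \ref{thm K3toMA} with the class of ccc posets replaced everywhere by the narrower class of posets with property K$_n$, applying the hypothesis — call it $(\dagger)$: ``every poset with property K$_n$ has property K$_{n+1}$'' — wherever that proof applied $\ms{K}_3$ to an auxiliary ccc poset. Two elementary observations keep this on track. First, a $\sigma$-$n$-linked poset has property K$_n$ (an uncountable subset meets one of its countably many $n$-linked pieces in an uncountable set), so any auxiliary poset that the argument produces as $\sigma$-$n$-linked is at once eligible for $(\dagger)$, which then upgrades it to property K$_{n+1}$ and hence, when it is uncountable, supplies an uncountable $(n+1)$-linked subset. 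Second, if $\mc{P}$ is an uncountable poset with property K$_n$ and $M$ is an elementary submodel of size $\omega_1$ of a large $H(\theta)$ with $\mc{P}\in M$ and $\omega_1\subseteq M$, then $\mc{P}\cap M$ is again an uncountable poset with property K$_n$ — given $n$ members of an uncountable subset that is $n$-linked in $\mc{P}$, a common lower bound exists, so by elementarity one exists inside $M$ — and an uncountable centered subset of $\mc{P}\cap M$ is centered in $\mc{P}$; hence it suffices to produce uncountable centered subsets for posets with property K$_n$ of size $\omega_1$.

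Step 1 (the analogue of ($\bigstar$4)+($\bigstar$5)): $(\dagger)$ implies $\mathfrak{t}>\omega_1$, hence ${\rm MA}_{\omega_1}(\sigma$-centered) by ($\bigstar$5). This is the dimension-$n$ form of ($\bigstar$4): from a $\subseteq^{*}$-decreasing $\omega_1$-tower with no pseudointersection one builds a poset that has property K$_n$ but fails property K$_{n+1}$, contradicting $(\dagger)$. For $n=2$ this is in substance the property-K-but-not-K$_3$ poset behind ($\bigstar$4); what is new is to carry the structure theory for filling the tower from Section 3 at the level of $n$ versus $n+1$, so that in the resulting poset every uncountable set of conditions has an uncountable $n$-linked subset while some uncountable set of conditions has no uncountable $(n+1)$-linked subset.

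Step 2 (the analogue of ($\bigstar$1)) and the assembly with ($\bigstar$2)+($\bigstar$3): First, $(\dagger)$ implies that every poset of size $\omega_1$ with property K$_n$ is $\sigma$-$n$-linked — rerun the minimal-walk coding of \cite{TV} behind ($\bigstar$1), which from such a $\mc{P}$ forms an auxiliary poset $\mc{P}^{*}$ (finite subsets of $\omega_1$ carrying walk data, with compatibility combining compatibility in $\mc{P}$ and a walk condition) whose uncountable $(n+1)$-linked subsets force $n$-wise compatibility in $\mc{P}$ along a club, hence the $\sigma$-$n$-linkedness of $\mc{P}$; the point to add to \cite{TV} is that $\mc{P}^{*}$ again has property K$_n$ (there only ``ccc'' was needed, since $\ms{K}_{n+1}$ concerns all ccc posets), after which $(\dagger)$ supplies the required $(n+1)$-linked subset of $\mc{P}^{*}$. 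Now let $\mc{P}$ be uncountable with property K$_n$; reduce to $|\mc{P}|=\omega_1$ as in the first paragraph, so $\mc{P}$ is $\sigma$-$n$-linked. Under ${\rm MA}_{\omega_1}(\sigma$-centered), available by Step 1, the coding of Proposition \ref{prop code fin by 4} in its dimension-$n$ form — provided by the damage-control structure of Section 7, and sending $\sigma$-$n$-linked posets to $\sigma$-$n$-linked posets along which ``$(n+1)$-linked'' decodes ``centered'' — yields an uncountable $\sigma$-$n$-linked poset $\mc{Q}$ whose uncountable $(n+1)$-linked subsets induce uncountable centered subsets of $\mc{P}$. Being $\sigma$-$n$-linked, $\mc{Q}$ has property K$_n$, so $(\dagger)$ gives it property K$_{n+1}$ and an uncountable $(n+1)$-linked subset, and $\mc{P}$ thus has an uncountable centered subset. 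Since $\mc{P}$ was an arbitrary uncountable poset with property K$_n$, the relativization of ($\bigstar$3) — the reduction of Todorcevic and Velickovic, which survives for the class K$_n$ once one checks that the posets it builds from a K$_n$ poset stay in K$_n$ — now gives ${\rm MA}_{\omega_1}({\rm K}_n)$.

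The crux is the narrowness of the band between property K$_n$ and property K$_{n+1}$: in Theorem \ref{thm K3toMA} every auxiliary poset merely had to be ccc, so there was room to spare, whereas here the tower counterexample of Step 1 and the minimal-walk poset $\mc{P}^{*}$ of Step 2 must each be placed squarely on the K$_n$ side of the K$_n$/K$_{n+1}$ divide while still encoding enough to witness the other side. I expect Step 1 (the dimension-$n$ tower counterexample, resting on the structure theory of Section 3) to demand the most genuinely new ideas, and the verification that $\mc{P}^{*}$ retains property K$_n$ in Step 2 to be the most delicate routine point; the coding of Proposition \ref{prop code fin by 4}, being the paper's main engine, is by design already available in the generality needed here.
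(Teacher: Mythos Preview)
Your plan follows the paper's essential route, and the ingredients you list---the tower argument for $\mathfrak t>\omega_1$, the reduction $\mathrm{K}_n\Rightarrow\sigma\text{-}n\text{-linked}$ (size $\omega_1$), Proposition~\ref{prop code fin by 4}, and a TV-style coding---are exactly the ones the paper uses. Two corrections and one simplification are in order.

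First, Proposition~\ref{prop code fin by 4} is \emph{not} provided by the damage-control structure of Section~7; it is proved in Section~4 by forcing with a $\sigma$-centered poset $\mc P$ (Lemma~\ref{lem 4.1}) under $\mathrm{MA}_{\omega_1}(\sigma\text{-centered})$. The Section~7 machinery is used only for the \emph{separation} results (Theorems~\ref{thm K3tosK3 n+1} and~\ref{thm k2 not k3}), not for Theorem~\ref{thm K3toK4}.

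Second, your Step~2 first part (``$(\dagger)\Rightarrow$ every $\mathrm{K}_n$ poset of size $\omega_1$ is $\sigma\text{-}n$-linked'') is not a one-shot rerun of the TV coding. The paper (Section~5.2, Lemma~\ref{lem K3toK4} and Corollary~\ref{cor K3toK4 K3tosK3}) proves this by a genuine \emph{finite induction}: for each $1\le i^*<n$, one builds from a $\sigma\text{-}i^*$-linked $\mathrm{K}_n$ poset $\mathbb P$ a coloring $\pi$ with $\mc H^\pi_0$ having property $\mathrm{K}_n$, and an uncountable $(n{+}1)$-linked subset of $\mc H^\pi_0$ upgrades $\mathbb P$ to $\sigma\text{-}(i^*{+}1)$-linked. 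The point is that in the key ``new'' case of the $\mathrm K_n$-verification, exactly $n-i^*$ of the $n+1$ coordinates are consumed to pin down a single element of $\msa$, leaving at most $i^*$ candidates in a single $i^*$-linked piece; this is why the bootstrap only gains one link per application of $(\dagger)$, and $n-1$ applications are needed. Your description (``uncountable $(n{+}1)$-linked subsets force $n$-wise compatibility \ldots\ hence $\sigma\text{-}n$-linkedness'') collapses this to one step, which the construction does not give.

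Finally, once you have $\mathrm{P}_{\omega_1}(\mathrm K_n\to\sigma\text{-}n\text{-linked})$, the paper's assembly is shorter than your ``relativized $(\bigstar 3)$'': since $\sigma\text{-}n$-linked posets have $\mathrm K_n$, $(\dagger)$ gives $\mathrm P(\sigma\text{-}n\text{-linked}\to\mathrm K_{n+1})$, and Theorem~\ref{thm sK3toK4} then yields $\mathrm{MA}_{\omega_1}(\sigma\text{-}n\text{-linked})$ directly; combining with $\mathrm{P}_{\omega_1}(\mathrm K_n\to\sigma\text{-}n\text{-linked})$ gives $\mathrm{MA}_{\omega_1}(\mathrm K_n)$ without invoking a separate relativization of \cite{TV}. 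Your route through ``precaliber $\omega_1$ for $\mathrm K_n$'' and a relativized $(\bigstar 3)$ works too, but it unpacks what Theorem~\ref{thm sK3toK4} already packages.
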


To clarify the general phenomenon, we list   well-known properties stronger than property K  in the following figure (see Definition \ref{poset} for precise definitions).
 The arrows denote implications and no implication is reversible (see, e.g., \cite[Section 3]{Bagaria}).

    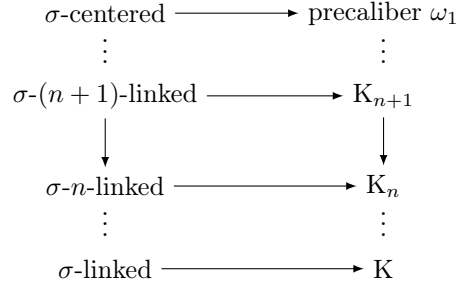
\begin{figure}[h]
\begin{tikzpicture}[auto, node distance=1.2 cm, >=latex]
\node  (centered) {$\sigma$-centered};
\node  (precaliber) [right of=centered, xshift=2.5cm] {precaliber $\omega_1$};
\node (dot11) [below of=centered, yshift=0.8cm] {$\vdots$};
\node (dot12) [below of=precaliber, yshift=0.8cm] {$\vdots$};
\node (n+1linked) [below of=dot11, yshift=0.5cm] {$\sigma$-$(n+1)$-linked};
\node (Kn+1) [below of=dot12, yshift=0.5cm] {K$_{n+1}$};
\node (nlinked) [below of=n+1linked]  {$\sigma$-$n$-linked};
\node (Kn) [below of=Kn+1] {K$_n$};
\node (dot21) [below of=nlinked, yshift=0.8cm] {$\vdots$};
\node (dot22) [below of=Kn, yshift=0.8cm] {$\vdots$};
\node (linked) [below of=dot21, yshift=0.5cm] {$\sigma$-linked};
\node (K2) [below of=dot22, yshift=0.5cm] {K};

\draw [->] (centered) -- (precaliber);
\draw [->] (n+1linked) -- (Kn+1);
\draw [->] (n+1linked) -- (nlinked);
\draw [->] (Kn+1) -- (Kn);
 \draw [->] (nlinked) -- (Kn);
 \draw [->] (linked) -- (K2);

\end{tikzpicture}
\caption{Forcing properties}\label{figure1}
\end{figure}

The  $\sigma$-$n$-linked property plays an important role in the proof of Theorem \ref{thm K3toK4}. Moreover,  Theorem \ref{thm K3toK4} remains true if we replace property K$_n$ by $\sigma$-$n$-linked.
\begin{thm}\label{thm sK3toK4}
Assume $n\geq 2$. If every $\sigma$-$n$-linked poset  has property {\rm K}$_{n+1}$, then {\rm MA}$_{\omega_1}$($\sigma$-$n$-linked) holds.
\end{thm}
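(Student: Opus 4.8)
The plan is to rerun the argument behind Theorem~\ref{thm K3toMA} (equivalently, of Theorem~\ref{thm K3toK4}) inside the class of $\sigma$-$n$-linked posets, with all of the relevant Knaster dimensions raised by one. Write (H) for the hypothesis that every $\sigma$-$n$-linked poset has property K$_{n+1}$. The first step is a localized form of the Todorcevic--Velickovic reduction ($\bigstar$3): to prove MA$_{\omega_1}$($\sigma$-$n$-linked) it suffices to show that every uncountable $\sigma$-$n$-linked poset has an uncountable centered subset. Indeed, feeding a $\sigma$-$n$-linked poset together with $\omega_1$ dense sets that witness a failure of MA$_{\omega_1}$($\sigma$-$n$-linked) into Todorcevic's minimal-walk construction from \cite{TV} yields a poset $\mc{R}$ with two features: $\mc{R}$ is again $\sigma$-$n$-linked and uncountable --- the only point to check is that adjoining finitely many walk-coordinates to the original conditions preserves $\sigma$-$n$-linkedness, which is automatic since the property is inherited by subposets and finite products --- and every uncountable centered subset of $\mc{R}$ decodes into a filter meeting all the given dense sets; since by assumption there is no such filter, $\mc{R}$ is an uncountable $\sigma$-$n$-linked poset with no uncountable centered subset. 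So it remains to show: \emph{every uncountable $\sigma$-$n$-linked poset $\mc{P}$ has an uncountable centered subset}, and by a standard reflection argument we may assume $|\mc{P}|=\omega_1$. (Unlike in the proof of Theorem~\ref{thm K3toMA}, no analogue of ($\bigstar$1) is needed here, since we already start inside the class.)

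The first substantive ingredient is the analogue of ($\bigstar$4): (H) implies $\mathfrak{t}>\omega_1$. I would redo the combinatorial analysis of $\omega_1$-towers from Section~3 with ``$3$-linked'' replaced by ``$(n+1)$-linked'' throughout. From a hypothetical $\subseteq^*$-decreasing $\omega_1$-sequence in $[\omega]^\omega$ with no pseudo-intersection, the filling construction there should produce a $\sigma$-centered poset $\mc{S}$ --- hence $\sigma$-$n$-linked --- in which every uncountable $(n+1)$-linked subset yields a pseudo-intersection of the sequence. Applying (H) to $\mc{S}$ then forces such a subset to exist, contradicting the choice of the sequence; so $\mathfrak{t}>\omega_1$. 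By ($\bigstar$5) --- that is, Bell's Theorem together with Rothberger's Theorem, or Malliaris--Shelah --- MA$_{\omega_1}$($\sigma$-centered) follows.

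The second ingredient is the analogue of ($\bigstar$2). With MA$_{\omega_1}$($\sigma$-centered) now available, I would invoke Proposition~\ref{prop code fin by 4} in the form appropriate to $\sigma$-$n$-linked posets, applied to $\mc{P}$: there is a $\sigma$-$n$-linked poset $\mc{Q}$, which we may take to have size $\omega_1$, such that every uncountable $(n+1)$-linked subset of $\mc{Q}$ induces an uncountable centered subset of $\mc{P}$. Finally, (H) applies to $\mc{Q}$: taking $X=\mc{Q}$ in the definition of property K$_{n+1}$ produces an uncountable $(n+1)$-linked subset of $\mc{Q}$, which in turn produces an uncountable centered subset of $\mc{P}$. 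This establishes the displayed claim, and hence MA$_{\omega_1}$($\sigma$-$n$-linked).

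I expect the main obstacle to be concentrated in the two substantive ingredients above, and within them in one recurring point: every auxiliary poset built along the way --- the minimal-walk poset $\mc{R}$, the tower-filling poset $\mc{S}$, and the coding poset $\mc{Q}$ of Proposition~\ref{prop code fin by 4} --- must be kept strictly inside the class of $\sigma$-$n$-linked posets. Since (H) speaks only about $\sigma$-$n$-linked posets, any slippage down to merely ccc, or even to K$_n$, is irreparable, and here --- in contrast to the proof of Theorem~\ref{thm K3toMA} --- there is no ($\bigstar$1)-type step to fall back on; this is also precisely what pins the relevant dimension at $n+1$ rather than $n$. Concretely, the damage-control structure of Section~7 (behind $\mc{Q}$) and the new tower-filling structure of Section~3 (behind $\mc{S}$) must both be carried out in ``$(n+1)$-ary'' form, with uncountable $(n+1)$-linked subsets taking over the role played by uncountable $3$-linked subsets in the proofs of Theorems~\ref{thm K3toMA} and \ref{thm K3toK4}; verifying that they can is the crux of the proof.
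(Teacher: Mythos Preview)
Your plan is essentially the paper's: (H) gives $\mathfrak{t}>\omega_1$ and hence MA$_{\omega_1}(\sigma$-centered) (Proposition~\ref{prop sK3toK4 t}); then Proposition~\ref{prop code fin by 4} together with (H) upgrades every $\sigma$-$n$-linked poset to precaliber~$\omega_1$ (Theorem~\ref{thm sK3toK4 sK3top}). Where you diverge is in the final glue. You package it as a direct localized ($\bigstar$3)-style reduction --- build a $\sigma$-$n$-linked $\mc{R}$ from $\mc{P}$ plus $\omega_1$ dense sets so that an uncountable centered subset of $\mc{R}$ yields the filter --- whereas the paper instead proves Lemma~\ref{lem sK3top sK3tos}: from P$(\sigma$-$n$-linked$\ra$precaliber~$\omega_1)$ one gets that every $\sigma$-$n$-linked poset of size $\omega_1$ is $\sigma$-centered, after which MA$_{\omega_1}(\sigma$-centered) finishes trivially. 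Your one-line justification that $\mc{R}$ stays $\sigma$-$n$-linked (``inherited by subposets and finite products'') is too glib: the TV-style coding poset is not a product, and in Lemma~\ref{lem sK3top sK3tos} the verification that $\mc{H}^\pi_0$ is $\sigma$-$n$-linked requires a nontrivial partition using $m$-closures under the coherent map $e$. Two smaller corrections: the tower-filling poset $\mathbb{P}_{T,I}$ of Proposition~\ref{prop sK3toK4 t} is only $\sigma$-$n$-linked, not $\sigma$-centered (harmless here, since (H) still applies); and Proposition~\ref{prop code fin by 4} is proved by a direct $\sigma$-centered forcing in Section~4, not via the damage-control structure of Section~7, which serves only the independence results of Sections~8--9.
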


We will use the following notations for fragments of MA$_{\omega_1}$.
\begin{defn}
 {\rm P}($\Phi\ra \Psi$) is the assertion that every poset with property $\Phi$ has property $\Psi$ where   $\Phi$ and $\Psi$ are properties for posets.
 
  If $\Psi$ is $\sigma$-centered or $\sigma$-$n$-linked, then {\rm P}$_{\omega_1}$($\Phi\ra \Psi$) is the assertion that every poset, of size $\omega_1$, with property $\Phi$ has property $\Psi$.
\end{defn}
Note that P(precaliber $\omega_1\ra \sigma$-linked) is false: Precaliber $\omega_1$ is closed under arbitrary product with finite support and more than continuum many product of non-trivial posets is not $\sigma$-linked. So the subscript $\omega_1$ in above notation is necessary when $\Psi$ is $\sigma$-centered or $\sigma$-$n$-linked.

On the other hand, strengthening K$_n$ to $\sigma$-$n$-linked is not so strong.
\begin{thm}\label{thm K3tosK3}
For $n\geq 2$,  {\rm P$_{\omega_1}$(K$_n\ra \sigma$-$n$-linked)} does not imply {\rm MA$_{\omega_1}$(K$_n$)}.
\end{thm}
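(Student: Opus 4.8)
The statement is a relative consistency result, so the plan is to start from a model of GCH and build, by forcing, a model in which {\rm P}$_{\omega_1}$({\rm K}$_n\ra\sigma$-$n$-linked) holds while {\rm MA}$_{\omega_1}$({\rm K}$_n$) fails. I would first reduce to a cleaner target. Every $\sigma$-centered poset has property {\rm K}$_n$, so {\rm MA}$_{\omega_1}$({\rm K}$_n$) implies {\rm MA}$_{\omega_1}(\sigma$-centered), which by Bell's Theorem is equivalent to $\mathfrak p>\omega_1$. Hence it is enough to produce a model of ``every {\rm K}$_n$ poset of size $\omega_1$ is $\sigma$-$n$-linked'' together with $\mathfrak p=\omega_1$; the latter alone already refutes {\rm MA}$_{\omega_1}$({\rm K}$_n$).

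For the construction I would run a finite-support ccc iteration $\langle\mathbb P_\alpha,\dot{\mathbb Q}_\alpha:\alpha<\omega_2\rangle$ with bookkeeping that lists, each cofinally often, all nice $\mathbb P_{\omega_2}$-names for posets whose underlying set is a subset of $\omega_1$. This is legitimate because such a name has support of size $\le\omega_1<\mathrm{cf}(\omega_2)$, hence bounded below $\omega_2$, so it is realized in some $V^{\mathbb P_\beta}$. At a stage $\alpha$ of cofinality $\omega$ we are handed such a $\dot{\mathcal P}$, already realized in $V^{\mathbb P_\alpha}$; if $\dot{\mathcal P}$ is forced there to have property {\rm K}$_n$, we let $\dot{\mathbb Q}_\alpha$ be the poset of all finite partial functions $p:\dot{\mathcal P}\to\omega$ each colour class of which is \emph{strongly $n$-linked} (every subset of size $\le n$ has a common lower bound), ordered by end-extension; otherwise $\dot{\mathbb Q}_\alpha$ is trivial. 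By genericity the generic function is total and its colour classes are strongly $n$-linked, so it witnesses that $\dot{\mathcal P}$ is $\sigma$-$n$-linked, and this persists to the final model. (Passing from ``$n$-linked'' to ``strongly $n$-linked'' classes costs nothing --- a $\sigma$-$n$-linked poset has a countable cover by strongly $n$-linked sets --- and is what makes the compatibility analysis clean.)

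The conceptual core is the lemma that $\mathbb S(\mathcal P):=\dot{\mathbb Q}_\alpha$ is ccc (indeed has property {\rm K}$_n$) whenever $\mathcal P$ has property {\rm K}$_n$; this is precisely where the dimensions in ``{\rm K}$_n$'' and ``$\sigma$-$n$-linked'' must match. One runs a $\Delta$-system argument: given uncountably many conditions, thin so that their domains form a $\Delta$-system with root $R$, they agree on $R$, and the colour pattern on the non-root parts is constant. For $n$ indices $\alpha_1,\dots,\alpha_n$ from the thinned family, $p_{\alpha_1}\cup\cdots\cup p_{\alpha_n}$ is automatically a function, and it can fail to be a condition only through some colour $k$ and a ``mixed'' set $T=S\cup\{x^{\alpha_1}_i:i\in I_1\}\cup\cdots\cup\{x^{\alpha_n}_i:i\in I_n\}$ of size $\le n$ with no common lower bound, where $S$ is a colour-$k$ subset of $R$, the $I_l$ lie among the colour-$k$ non-root indices, and at least two of them are nonempty (a non-mixed violation is impossible, as each $p_{\alpha_l}$ is a condition). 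But for each such pattern and each $l$, the set $S\cup\{x^{\alpha_l}_i:i\in I_1\cup\cdots\cup I_n\}$ has size $\le n$ and lies inside the colour-$k$ class of the single condition $p_{\alpha_l}$, which is strongly $n$-linked; so it has a common lower bound $y^{\mathrm{pat}}_{\alpha_l}$ that is below $S$ and below all relevant $x^{\alpha_l}_i$. Since there are only finitely many patterns, applying property {\rm K}$_n$ of $\mathcal P$ to each uncountable family $\{y^{\mathrm{pat}}_\alpha:\alpha\}$ and intersecting yields an uncountable index set along which every such witness family is $n$-linked. Then for $n$ indices from it and any pattern, a common lower bound of $y^{\mathrm{pat}}_{\alpha_1},\dots,y^{\mathrm{pat}}_{\alpha_n}$ lies below $S$ and below every coordinate of the corresponding $T$, hence below $T$; so those $n$ conditions are compatible. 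Thus $\mathbb S(\mathcal P)$ has {\rm K}$_n$, and $\mathbb P_{\omega_2}$ is ccc.

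Two global verifications remain. For {\rm P}$_{\omega_1}$({\rm K}$_n\ra\sigma$-$n$-linked): a routine reflection argument gives a club of $\gamma<\omega_2$ of cofinality $\omega$ such that any uncountable subset of any handled $\mathcal P$ that acquires an uncountable $n$-linked subset somewhere in the iteration already has one by stage $\gamma$; consequently, if a poset $\mathcal P$ of size $\omega_1$ has property {\rm K}$_n$ in $V^{\mathbb P_{\omega_2}}$, then at some such good stage at which it is handled it still has {\rm K}$_n$, and is therefore specialized. The step I expect to be the real obstacle is keeping $\mathfrak p=\omega_1$. Fix in the ground model an $\omega_1$-tower $\mathcal T=\langle A_\xi:\xi<\omega_1\rangle$ with no pseudo-intersection; since finite-support limits contribute only Cohen reals, which do not fill towers, one must show that each $\mathbb S(\mathcal P)$ adds no pseudo-intersection to $\mathcal T$. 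As $\mathbb S(\mathcal P)$ is a priori merely ccc of size $\omega_1$ --- and such posets can kill towers --- this has to be extracted from the finite-approximation structure of $\mathbb S(\mathcal P)$ together with {\rm K}$_n$ of $\mathcal P$ (for instance, by showing that any $\mathbb S(\mathcal P)$-name for an infinite subset of $\omega$ forced to be almost contained in every $A_\xi$ can be refuted using finitely many conditions and one appeal to {\rm K}$_n$), and then fed into the standard finite-support preservation framework. Granting this, $\mathcal T$ still has no pseudo-intersection in $V^{\mathbb P_{\omega_2}}$, so $\mathfrak p=\omega_1$ there, which together with the preceding paragraph completes the proof.
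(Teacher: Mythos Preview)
Your reduction to ``{\rm P}$_{\omega_1}$({\rm K}$_n\to\sigma$-$n$-linked) together with $\mathfrak{p}=\omega_1$'' is not just hard to verify---it is impossible for $n\geq 3$. The paper shows (Proposition~\ref{prop K3sK3}) that {\rm P}$_{\omega_1}$({\rm K}$_n\to\sigma$-$n$-linked) is equivalent to {\rm P}$_{\omega_1}$(precaliber $\omega_1\to\sigma$-$n$-linked); hence for $n\geq 3$ it implies {\rm P}$_{\omega_1}$(precaliber $\omega_1\to\sigma$-$3$-linked), and Theorem~\ref{thm ptosK3 t} then yields $\mathfrak{t}>\omega_1$, i.e.\ $\mathfrak{p}>\omega_1$. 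So the step you correctly flag as ``the real obstacle'' is in fact a contradiction: no amount of care with $\mathbb{S}(\mathcal{P})$ will preserve an $\omega_1$-tower while simultaneously achieving {\rm P}$_{\omega_1}$({\rm K}$_n\to\sigma$-$n$-linked) for $n\geq 3$. Your specialization iteration necessarily fills every $\omega_1$-tower.

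The paper's route avoids this trap by choosing a different ground-model witness to the failure of {\rm MA}$_{\omega_1}$({\rm K}$_n$): a coloring $\pi_n:[\omega_1]^{n+1}\to 2$ with $\mathcal{H}^{\pi_n}_0$ $\sigma$-$n$-linked but no uncountable $0$-homogeneous set (available under CH since $\mathfrak{b}=\omega_1$). The crucial observation---which your argument almost proves but does not exploit---is that the specialization poset for a {\rm K}$_n$ poset has not merely {\rm K}$_n$ but \emph{precaliber $\omega_1$} (Lemma~\ref{lem 6.2}); your $\Delta$-system computation in fact shows that any finitely many of the refined conditions are compatible, not just $n$ of them. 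Forcing with precaliber-$\omega_1$ posets cannot add an uncountable $0$-homogeneous set for any fixed $(n{+}1)$-ary coloring (Fact~\ref{fact p K3}), so $\pi_n$ survives the entire iteration and continues to witness failure of {\rm MA}$_{\omega_1}$({\rm K}$_n$). The moral: the right invariant to preserve is a coloring, not a tower.
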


Finally, we show that strengthening precaliber $\omega_1$ to $\sigma$-centered induces the corresponding forcing axiom.
\begin{thm}\label{thm ptosc}
{\rm P$_{\omega_1}$(precaliber $\omega_1\ra \sigma$-centered)} implies {\rm MA$_{\omega_1}$(precaliber $\omega_1$)}.
\end{thm}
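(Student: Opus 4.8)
The plan is to follow the template of the proof of Theorem~\ref{thm K3toMA}; the present situation is in fact simpler, since the conclusion property in the hypothesis is already $\sigma$-centered, so no analogue of the coding step ($\bigstar$2) is needed. The outline: first reduce {\rm MA}$_{\omega_1}$(precaliber $\omega_1$) to posets of size at most $\omega_1$; then use the hypothesis to make such a poset $\sigma$-centered; finally invoke the equivalence of {\rm MA}$_{\omega_1}$($\sigma$-centered) with $\mathfrak{t}>\omega_1$ supplied by ($\bigstar$5). Thus everything will rest on deriving $\mathfrak{t}>\omega_1$ from {\rm P$_{\omega_1}$(precaliber $\omega_1\ra \sigma$-centered)}, which is the analogue here of ($\bigstar$4). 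For the first reduction: given a poset $\pp$ with precaliber $\omega_1$ and dense sets $\langle D_\alpha:\alpha<\omega_1\rangle$, take $M\prec H(\theta)$ with $|M|=\omega_1$, $\omega_1\subseteq M$, and $\pp,\langle D_\alpha:\alpha<\omega_1\rangle\in M$, and set $\pp'=\pp\cap M$. Then $|\pp'|\le\omega_1$; each $D_\alpha\cap\pp'$ is dense in $\pp'$ by elementarity (using $\alpha\in M$); $\pp'$ still has precaliber $\omega_1$, because any uncountable $X\subseteq\pp'$ contains an uncountable $Z$ centered in $\pp$, and for finite $F\subseteq Z$ (note $F\in M$) a lower bound of $F$ exists in $\pp$, hence in $\pp\cap M=\pp'$ by elementarity, so $Z$ is centered in $\pp'$; and a filter of $\pp'$ meeting every $D_\alpha\cap\pp'$ generates, by upward closure in $\pp$, a filter of $\pp$ meeting every $D_\alpha$.

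Granting the reduction, let $\pp'$ have precaliber $\omega_1$ with $|\pp'|\le\omega_1$. If $|\pp'|<\omega_1$ then $\pp'$ is trivially $\sigma$-centered; if $|\pp'|=\omega_1$ then $\pp'$ is $\sigma$-centered by {\rm P$_{\omega_1}$(precaliber $\omega_1\ra \sigma$-centered)}. Hence, once we know $\mathfrak{t}>\omega_1$, the statement {\rm MA}$_{\omega_1}$($\sigma$-centered) (equivalent to $\mathfrak{t}>\omega_1$ by ($\bigstar$5)) applied to $\pp'$ and $\langle D_\alpha\cap\pp':\alpha<\omega_1\rangle$ produces the desired filter. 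So the whole theorem comes down to proving $\mathfrak{t}>\omega_1$.

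Suppose $\mathfrak{t}=\omega_1$ and fix a $\subseteq^*$-decreasing tower $\mathcal{T}=\langle T_\alpha:\alpha<\omega_1\rangle$ of infinite subsets of $\omega$ with no pseudo-intersection. The goal is to attach to $\mathcal{T}$ a poset $\mathcal{Q}=\mathcal{Q}(\mathcal{T})$ of size $\omega_1$ with two properties: (a) $\mathcal{Q}$ has precaliber $\omega_1$ in ZFC; and (b) if $\mathcal{Q}$ is $\sigma$-centered, then $\mathcal{T}$ has a pseudo-intersection. Given such a $\mathcal{Q}$, the hypothesis {\rm P$_{\omega_1}$(precaliber $\omega_1\ra \sigma$-centered)} forces $\mathcal{Q}$ to be $\sigma$-centered, so by (b) the tower $\mathcal{T}$ has a pseudo-intersection, contradicting its choice; this yields $\mathfrak{t}>\omega_1$. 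The conditions of $\mathcal{Q}$ should be finite approximations to a ``filling'' of $\mathcal{T}$ — finite partial functions recording, for finitely many indices $\alpha<\omega_1$, a finite subset of $T_\alpha$ together with a constraint on admissible future values — each equipped with a \emph{side condition}, namely a finite $\in$-chain of countable elementary submodels, which is the device by which the structure theory for filling $\omega_1$-towers developed in Section~3 (behind ($\bigstar$4)) is brought to bear. The ordering should be designed so that: for (a), any uncountable set of conditions, after passing to a $\Delta$-system and aligning the side conditions, has an uncountable subfamily that can be amalgamated into a centered set; and for (b), whenever $\mathcal{Q}=\bigcup_n\mathcal{Q}_n$ with each $\mathcal{Q}_n$ centered, an uncountable family $\{p_\alpha:\alpha<\omega_1\}$ of conditions with $p_\alpha$ ``deciding level $\alpha$'' all lying in one $\mathcal{Q}_n$ has the feature that the common lower bounds witnessing centeredness of $\mathcal{Q}_n$ force the union of the recorded finite subsets of the $T_\alpha$ to be infinite and almost contained in every $T_\alpha$, i.e.\ a pseudo-intersection of $\mathcal{T}$.

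The heart of the matter, and the step I expect to be the main obstacle, is the construction of $\mathcal{Q}$: one must encode enough of the ``anti-centered'' content of the pseudo-intersection-free tower $\mathcal{T}$ that no decomposition of $\mathcal{Q}$ into countably many centered pieces can avoid producing a pseudo-intersection, while at the same time keeping the chain condition of $\mathcal{Q}$ as strong as precaliber $\omega_1$ — the strongest property below $\sigma$-centered. These two demands pull against each other, and reconciling them should require the finer structure theory for filling $\omega_1$-towers from Section~3 (the tool behind ($\bigstar$4)), carried by the elementary-submodel side conditions and used both to amalgamate $\Delta$-systems of conditions into uncountable centered families and to certify that every centered piece meeting uncountably many ``level-deciding'' conditions genuinely converges to a pseudo-intersection. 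By comparison, the reductions in the first two paragraphs are routine.
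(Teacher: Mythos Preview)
Your outline is correct up to the crux: deriving $\mathfrak{t}>\omega_1$ is indeed the whole content, and the L\"owenheim--Skolem reduction plus Bell's theorem handle the rest. But the proposal stops where the work begins. You specify a poset $\mathcal{Q}$ only by its desiderata --- precaliber $\omega_1$, and ``$\sigma$-centered $\Rightarrow$ pseudo-intersection'' --- and gesture at elementary-submodel side conditions, without ever defining the conditions, the order, or verifying either property. These two demands genuinely pull against each other (the naive Mathias-style fill poset loses precaliber $\omega_1$ exactly when the tower is unfilled), and it is not clear that a side-condition scheme resolves the tension. You invoke ``the structure theory for filling $\omega_1$-towers developed in Section~3'', but Section~3 \emph{is} the proof you are meant to supply, and it does not use models-as-side-conditions at all.

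The paper's route is different in kind. A downward-closed coloring $\pi:[Lim]^{<\omega}\to 2$ is built from the coherent function $e$ together with a C-sequence, and $\mathcal{H}^\pi_0$ is the precaliber-$\omega_1$ poset; the ladder system, not elementary submodels, does the amalgamation work for $\Delta$-systems. A preliminary step you omit is $\mathfrak{b}>\omega_1$ (Theorem~\ref{thm ptosK2 b}), proved via a separate coloring; this is needed to pass to a sub-tower and an interval partition $\langle I_n\rangle$ with $t_\alpha\cap I_n\ne\emptyset$ throughout, without which the decoding of the $\sigma$-linked partition into a pseudo-intersection fails. Finally, the paper actually proves the stronger Theorem~\ref{thm ptosK3 t}: already $\sigma$-$3$-linked (rather than $\sigma$-centered) suffices to extract the fill.
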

This is not surprising since by Theorem \ref{thm BRMS} (see ($\bigstar$5)),     above theorem   is equivalent to the following property on cardinal invariant $\mathfrak{t}$ (see Theorem \ref{thm ptosK3 t}).
\begin{itemize}
\item {\rm P}$_{\omega_1}$(precaliber $\omega_1\ra \sigma$-centered) implies $\mathfrak{t}>\omega_1$.
\end{itemize}

We summarize the results mentioned above. Assume $n\geq 2$.
\begin{enumerate}[(i)]
\item  {\rm P}({\rm K}$_n\ra$ K$_{n+1}$) implies {\rm MA}$_{\omega_1}$({\rm K}$_n$).
\item   {\rm P}$(\sigma$-$n$-linked$\ra{\rm K}_{n+1}$) implies {\rm MA}$_{\omega_1}$($\sigma$-$n$-linked). In particular, {\rm P}$_{\omega_1}(\sigma$-$n$-linked$\ra\sigma$-$(n+1)$-linked) implies {\rm MA}$_{\omega_1}$($\sigma$-$n$-linked).
\item {\rm P}$_{\omega_1}$(precaliber $\omega_1\ra \sigma$-centered) implies {\rm MA}$_{\omega_1}$(precaliber $\omega_1$).
\item   {\rm P}$_{\omega_1}$({\rm K}$_n\ra \sigma$-$n$-linked) does not imply {\rm MA}$_{\omega_1}$({\rm K}$_n$).
\end{enumerate}

We indicate above results in     Figure \ref{figure2}  where a dashed arrow from $\Phi$ to $\Psi$ denotes that the fragment P($\Phi\ra\Psi$) (or P$_{\omega_1}$($\Phi\ra\Psi$)) of MA$_{\omega_1}(\Phi)$ has the same strength as MA$_{\omega_1}(\Phi)$.

    \begin{figure}[h]
\begin{tikzpicture}[auto, node distance=1.2 cm, >=latex]
\node  (centered) {$\sigma$-centered};
\node  (precaliber) [right of=centered, xshift=2.5cm] {precaliber $\omega_1$};
\node (dot11) [below of=centered, yshift=0.8cm] {$\vdots$};
\node (dot12) [below of=precaliber, yshift=0.8cm] {$\vdots$};
\node (n+1linked) [below of=dot11, yshift=0.5cm] {$\sigma$-$(n+1)$-linked};
\node (Kn+1) [below of=dot12, yshift=0.5cm] {K$_{n+1}$};
\node (nlinked) [below of=n+1linked]  {$\sigma$-$n$-linked};
\node (Kn) [below of=Kn+1] {K$_n$};
\node (dot21) [below of=nlinked, yshift=0.8cm] {$\vdots$};
\node (dot22) [below of=Kn, yshift=0.8cm] {$\vdots$};
\node (linked) [below of=dot21, yshift=0.5cm] {$\sigma$-linked};
\node (K2) [below of=dot22, yshift=0.5cm] {K};

\draw [->] (centered) -- (precaliber);
\draw [->] (n+1linked) -- (Kn+1);
\draw [->] (n+1linked) -- (nlinked);
\draw [->] (Kn+1) -- (Kn);
 \draw [->] (nlinked) -- (Kn);
 \draw [->] (linked) -- (K2);
 
 \draw[->, dashed] (2.5, 0.15) -- (0.9, 0.15);
 \draw[->,dashed] (0.2, -2) -- (0.2, -1.4);
  \draw[->, dashed] (0.3, -2) -- (3.4, -1.4);
 \draw[->, dashed] (3.9, -2) -- (3.9, -1.4);

\end{tikzpicture}
\caption{Forcing properties' strength}\label{figure2}
\end{figure}
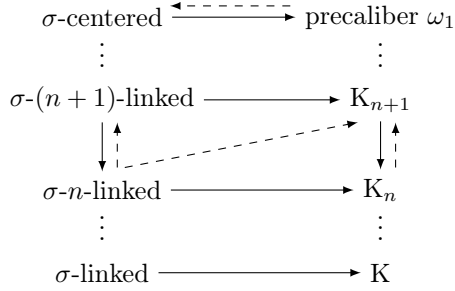

By   results (i)-(iv) summarized above, the diagram is complete.

\subsection{Ramsey-type  fragments}

Ramsey-type fragments of MA$_{\omega_1}$     are also well studied in the literature (see, e.g., \cite{TV}, \cite{Todorcevic91}, \cite{LT2001}).

A coloring $c: [\omega_1]^{<\omega}\ra 2$ is \emph{ccc} if $c^{-1}\{0\}$ is downward closed, uncountable and every uncountable collection of finite 0-homogeneous subsets contains two elements whose union is 0-homogeneous. 
 For $n\geq 2$, a coloring $c: [\omega_1]^n\ra 2$ is \emph{ccc} if $c^{-1}\{0\}$ is uncountable and every uncountable collection of finite 0-homogeneous subsets contains two elements whose union is 0-homogeneous.

We use the notation from \cite{LT2002} to denote the Ramsey type fragments.
For $n\geq 2$, $\mc{K}_n$ is the assertion that every ccc coloring $c: [\omega_1]^n\ra 2$ has an uncountable 0-homogeneous subset.

For   the exact strengths   of Ramsey-type fragments,  a natural proposal for $\mc{K}_n$'s is that the following question (see \cite{Yorioka10})  may have a positive answer.
\begin{itemize}
\item Is a Ramsey-type fragment of MA$_{\omega_1}$ equivalent to the corresponding version of poset-type fragment?
\end{itemize}
This proposal is supported by the following transformation between ccc posets  and the corresponding finitary ccc colorings.
\begin{itemize}
\item $\mc{P}$ has an uncountable centered subset iff $c: [\mc{P}]^{<\omega}\ra 2$ has an uncountable 0-homogeneous subset where $c^{-1}\{0\}$ is the collection of finite centered subsets.
\end{itemize}
This equivalence induces the equivalence between the following two statements.
\begin{itemize}
\item Every uncountable ccc poset has an uncountable centered subset.
\item Every ccc coloring $c: [\omega_1]^{<\omega}\ra 2$ has an uncountable 0-homogeneous subset.
\end{itemize}
By \cite{TV} (see ($\bigstar$3)), each of above two statements is equivalent to MA$_{\omega_1}$.

However, for a fixed $n\geq 2$, there is no obvious equivalence between the Ramsey-type fragments and poset-type fragments. In fact, no equivalence, for a fixed natural number $n\geq 2$, was proved or disproved.


Recall the productivity of ccc which has its own origin and interest. We use the notation from \cite{Galvin}.\footnote{We would like to reserve the notation $\mcc^2$ for other purposes.}
\[C(\omega_1)\text{ is the assertion that the product of two ccc posets is ccc}.\] 
 According to the computation in \cite[pp 837]{Todorcevic91} (or the proof of Lemma \ref{lem 6.2}), $\ms{K}_n$ is equivalent to $\mc{K}_n+C(\omega_1)$ for $n\geq 2$. So the following specific question might be a first step of the proposal.
\begin{itemize}
\item Does $\mc{K}_2$ imply $C(\omega_1)$?
\end{itemize}
This question is closely related to  questions \cite[(7)-(8)]{Todorcevic91}   about the exact strength of $\mc{K}_2$:
\begin{itemize}
\item Does $\mc{K}_2$ imply MA$_{\omega_1}$?
\item Does $\mc{K}_2$ imply $\mc{K}_3$?
\end{itemize}
We answer three of the above questions in negative by proving the following theorem.\footnote{Note that $\mc{K}_2$+$C(\omega_1)$+MA$_{\omega_1}$(K) is equivalent to MA$_{\omega_1}$.}
\begin{thm}\label{thm k2 not k3}
It is relative consistent with {\rm ZFC} that $\mc{K}_2$ and {\rm MA$_{\omega_1}$(K)} hold, and $\mc{K}_3$ fails.
\end{thm}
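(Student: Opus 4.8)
The plan is to build, by a finite support iteration over a model of CH, a ccc forcing that adds a ``hard-to-color'' ccc coloring $c:[\omega_1]^3\to 2$ while keeping all ccc colorings of the form $[\omega_1]^2\to 2$ tame, and while forcing the poset-level fragment $\mathrm{MA}_{\omega_1}(\mathrm K)$. The guiding principle, already visible in the excerpt, is that the reversibility patterns for $\mathrm K_i$ admit a ``switch at $n$'', produced by a finite support iteration of sufficient posets with property $\mathrm K_n$. Here we want the analogous phenomenon one level up in the Ramsey hierarchy at $n=2$: start from CH, fix a ground-model ccc coloring $c:[\omega_1]^3\to 2$ with no uncountable $0$-homogeneous set (a $3$-dimensional analogue of a $\Delta$-system / Luzin-type obstruction, e.g.\ built from a coherent sequence or an explicit partition of an Aronszajn-tree-type structure), and then iterate, with finite support, all posets of size $\le\omega_1$ that have property $\mathrm K$, together with the natural posets that, for each ccc coloring $d:[\omega_1]^2\to 2$ appearing along the iteration, shoot an uncountable $0$-homogeneous set through $d$ (i.e.\ the poset of finite $0$-homogeneous subsets of $d$, which is ccc precisely because $d$ is ccc). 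Book-keeping of length $\omega_2$ handles all names for such posets and colorings.

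The three things to verify in the extension $V[G]$ are: first, $\mathrm{MA}_{\omega_1}(\mathrm K)$ holds --- this is standard for a finite support ccc iteration of length $\omega_2$ over CH that generically meets every property-$\mathrm K$ poset of size $\le\omega_1$, using that property $\mathrm K$ is sufficiently absolute/preserved, so that one only needs the usual $\Delta$-system and reflection argument of the Solovay--Tennenbaum type; second, $\mc K_2$ holds, i.e.\ every ccc coloring $[\omega_1]^2\to 2$ in $V[G]$ has an uncountable $0$-homogeneous set --- this holds because by ccc-ness of the iteration every such coloring appears at some stage $<\omega_2$ by a reflection argument, and the book-keeping then generically adds an uncountable $0$-homogeneous subset later in the iteration (here one must check that the ``add a $0$-homogeneous set'' poset for a ccc $2$-coloring really is ccc and that it does not destroy ccc-ness of the remaining tail, which is the classical fact underlying $\mc K_2$-forcing); third, $\mc K_3$ fails --- this requires that the pre-chosen $3$-dimensional coloring $c$ is not destroyed, i.e.\ that no uncountable $0$-homogeneous subset of $c$ is added at any stage of the iteration. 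Finally $\neg\mc K_3$ together with the footnote ($\mc K_2+C(\omega_1)+\mathrm{MA}_{\omega_1}(\mathrm K)\Leftrightarrow\mathrm{MA}_{\omega_1}$, which would force $\mc K_3$) is not directly needed, but it confirms consistency is genuinely between the fragments.

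The main obstacle is the third point: preserving the bad $3$-coloring $c$ through the whole iteration. One must choose $c$ so robustly that adding uncountably many finite $0$-homogeneous subsets of arbitrary ccc \emph{$2$-dimensional} colorings, and meeting arbitrary property-$\mathrm K$ posets, cannot conspire to produce an uncountable $c$-homogeneous triangle-free set. The natural strategy is a preservation theorem: isolate a combinatorial property $\Theta$ of a forcing $\mathbb P$ --- roughly, ``$\mathbb P$ is ccc and forces that $c$ still has no uncountable $0$-homogeneous set'' --- and show (a) each single-step poset in the iteration (both the property-$\mathrm K$ posets and the $2$-coloring $0$-homogenizing posets) has $\Theta$, and (b) $\Theta$ is preserved under finite support ccc iteration. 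Part (b) is where the real work lies: one typically runs an elementary-submodel / side-condition argument showing that any name for an uncountable sequence of finite $c$-$0$-homogeneous sets can be refined, using ccc-ness and the $\Delta$-system lemma, to two conditions forcing two sets whose union is not $c$-$0$-homogeneous, because the original $c$ witnesses a ``rigid'' coloring that the forcing extension cannot overwrite. The delicate tension is that the $2$-coloring posets we are adding are themselves ccc and could in principle encode $3$-dimensional information; one guards against this by choosing $c$ to be ``$2$-dimensionally trivial'' in a precise sense --- e.g.\ every ccc $2$-coloring, even after forcing, decomposes $\omega_1$ in a way compatible with $c$'s obstruction --- so that the $\mathrm K$- and $2$-coloring-genericity we demand is simply orthogonal to the $3$-dimensional obstruction. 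Making the properties ``$\Theta$'' and the choice of $c$ fit together is the crux of the argument.
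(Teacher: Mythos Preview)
Your overall shape is right --- finite-support ccc iteration over a CH ground model, preserving a fixed ccc coloring $c:[\omega_1]^3\to 2$ with no uncountable $0$-homogeneous set --- but the core strategy has a genuine gap.

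You propose to force $\mc{K}_2$ by iterating the posets $\mc{H}^d_0$ (finite $0$-homogeneous sets) for every ccc $2$-coloring $d$, and to check that each such poset has your preservation property $\Theta$. This is where the plan fails: there is \emph{no} choice of $c$ for which every such $\mc{H}^d_0$ preserves ``$c$ has no uncountable $0$-homogeneous set.'' Indeed, from the bad $3$-coloring $c$ one can manufacture ccc $2$-colorings $d$ whose homogenizing poset \emph{does} force an uncountable $0$-homogeneous set for $c$; your hand-wave ``choose $c$ to be $2$-dimensionally trivial'' cannot rule this out, because the $2$-colorings arising along the iteration are not under your control. (Your property-K iterands are fine --- property K implies productively ccc, and productively ccc posets do preserve the relevant invariant --- but the homogenizing posets $\mc{H}^d_0$ are merely ccc.) Once a single iterand adds such a set, $\neg\mc{K}_3$ is gone.

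The paper's key idea, which you are missing, is a \emph{destroy-or-force} dichotomy rather than a pure preservation theorem. One does not iterate through all relevant posets; instead, for each ccc poset $\mathbb P$ with $2$-ccc coloring that appears: if $\mathbb P$ preserves the invariant, force it; if $\mathbb P$ would add a $0$-homogeneous set for $c$, use an auxiliary ccc forcing to \emph{destroy} the ccc (or $2$-ccc-coloring) of $\mathbb P$, so that $\mathbb P$ ceases to be a counterexample to $\mc{K}_2+\mathrm{MA}_{\omega_1}(\mathrm K)$. The delicate point is that the destroying forcings must themselves preserve the invariant and remain ccc. Making this work requires much more than an ad hoc $c$: the paper builds $c=\pi$ together with an evolving ``damage control'' structure $(\mcc,\mct,\bfe,\mcr)$ satisfying a strong reservation property $\psi_2$ that prescribes, for every candidate family, which $\pi$-patterns must remain realizable. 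This is what lets one simultaneously (i) prove the destroying posets are ccc, (ii) verify they preserve $\psi_2$, and (iii) show $\psi_2$ persists through limits. A generic ``Aronszajn-tree-type'' $c$ does not carry enough structure to run any of these steps.
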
 
In \cite{Peng25}, we introduced an iteration method of  minimizing damage to a strong binary coloring.  We take out the structure that controls damage and introduce its general form for higher arity colorings in Section 7. The general damage control structure follows the framework but has one major difference from the specific one in \cite{Peng25}. This major difference eventually leads to a proof of above theorem.


We also explore  the general damage control structure's properties    in Section 7 since this structure and its variations are likely to be useful in further investigations. 



\subsection{Contents and organization}

The proof of Theorem \ref{thm K3toMA} that $\ms{K}_3$ implies MA$_{\omega_1}$, consists of two parts and are presented in Subsection 3.1 and Section 4. 

In Section 3, we prove $\mathfrak{t}>\omega_1$ from $\ms{K}_3$ and several other assumptions. 
A new structure is  transformed from a tower to fill the tower.  The forcing   and coloring methods are explored to address different issues.

In Section 4, we show that under MA$_{\omega_1}$($\sigma$-centered), if every $\sigma$-$n$-linked poset has property K$_{n+1}$, then every $\sigma$-$n$-linked poset has precaliber $\omega_1$. Moreover, a more general phenomenon holds: MA$_{\omega_1}$($\sigma$-centered)
reduces arity for $\sigma$-$n$-linked colorings (see Proposition \ref{prop code fin by 4}).

In Section 5, we generalize the construction in \cite{TV}. The generalization and results in Sections 3-4 prove Theorem \ref{thm K3toK4} and Theorem \ref{thm sK3toK4} that describe general phenomenons on fragments of Martin's axiom. 

In Section 6, we prove Theorem \ref{thm K3tosK3} that, the fragment P$_{\omega_1}$(K$_n\ra\sigma$-$n$-linked) of MA$_{\omega_1}$(K$_n$) is strictly weaker than MA$_{\omega_1}$(K$_n$). 


In Section 7, we introduce the general form of a damage control structure   generalizing a specific form   in \cite{Peng25}. 
 We then analyze   properties of the damage control structure. In Section 8, we apply the damage control structure to distinguish P$_{\omega_1}$(K$_n\ra\sigma$-$n$-linked) and P$_{\omega_1}$(K$_{n+1}\ra\sigma$-$(n+1)$-linked). In Section 9, we apply the damage control structure to prove Theorem \ref{thm k2 not k3}. In particular, the Ramsey-type fragment $\mc{K}_2$ is strictly weaker than MA$_{\omega_1}$.

      \section{Preliminary}

      We first describe notations throughout this paper.
      
       $\subset$ and $\supset$ are strict and denote $\subsetneqq$ and $\supsetneqq$ respectively.     For $x, y\subseteq \omega$, $x\subseteq^* y$ if $y\setminus x$ is finite.        
       For a set $X$ and a cardinal $\kappa$, 
       \[[X]^\kappa=\{Y\subseteq X: |Y|=\kappa\}\text{ and }[X]^{<\kappa}=\{Y\subseteq X: |Y|<\kappa\}.\]
       
       A \emph{tower} is a family $T=\{t_\alpha\in [\omega]^\omega: \alpha<\kappa\}$ for some cardinal $\kappa$ such that $t_\beta\subseteq^* t_\alpha$ for all $\alpha<\beta<\kappa$.  A tower $T=\{t_\alpha\in [\omega]^\omega: \alpha<\kappa\}$ is \emph{filled} if for some $t\in [\omega]^\omega$, $t\subseteq^* t_\alpha$ for all $\alpha<\kappa$. The \emph{tower number} $\mathfrak{t}$ is the least size of an unfilled tower.

    For a partial order $(P, \leq_P)$, say $p\in P$ is the \emph{$\leq_P$-least} (\emph{$\leq_P$-largest}) element if $p\leq_P q$ ($p\geq_P q$) for all $q\in P$. Say $p\in P$ is a \emph{$\leq_P$-minimal} (\emph{$\leq_P$-maximal}) element if there is no $q\in P$ with $p>_P q$ ($p<_P q$) where $<_P$ is the strict part of $\leq_P$. 
    
        For a function $f$ and a set $X\subseteq dom(f)$, denote $f[X]=\{f(x): x\in X\}$.
        
   A set of ordinals $X$ is identified with the increasing sequence which enumerates $X$.  For $\alpha$ less than $otp(X)$, the order type of $X$, and a subset $I\subseteq otp(X)$,   
   \[X(\alpha)\text{ is the $\alpha$th element  in the increasing enumeration of }X\]
   \[\text{and }X[I]=\{X(\beta): \beta\in I\}.\]

The following notions of posets are standard (see, e.g.,  \cite{Barnett}, \cite{Bagaria}).
\begin{defn}\label{poset}
Suppose $\pp$ is a poset and $2\leq n<\omega$.
\begin{itemize}
\item $\pp$ has \emph{property {\rm K}$_n$} if every uncountable subset of $\pp$ has an uncountable subset that is $n$-linked.   A subset $X$ of $\pp$ is \emph{$n$-linked} if every $n$-element subset of $X$ has a common lower bound.  
\item $\pp$ is \emph{$\sigma$-$n$-linked} if $\pp$ is a countable union of $n$-linked subsets.  
\item $\pp$ has \emph{precaliber $\omega_1$} if every uncountable subset of $\pp$ has an uncountable centered subset. A subset $X$ of $\pp$ is \emph{centered} if every finite subset of $X$ has a common lower bound.
\item $\pp$ is \emph{$\sigma$-centered} if $\pp$ is a countable union of centered subsets.
\end{itemize}
In above concepts, we omit $n$ if $n=2$.
\end{defn}

\begin{defn}\label{defn iso}
Suppose  $\theta$ is an ordinal, $X, Y\subseteq \theta $, $k<\omega$ and $\langle f_i: i<k\rangle$, $\langle g_i: i<k\rangle$ are sequences of functions  such that all $f_i: [\theta]^{<\omega}\ra \omega$ and all $g_i: [\theta]^{<\omega}\ra \omega$ are partial maps. Say $(X,   f_i: i<k)$ \emph{is isomorphic to} $(Y,  g_i: i<k)$ if $X, Y$ have the same order type and for every $i<k$ and $I\in [otp(X)]^{<\omega}$,  $f_i(X[I])=g_i(Y[I])$. 
\end{defn}
In above definition, $f_i(X[I])=g_i(Y[I])$ asserts that $f_i(X[I])$ and $g_i(Y[I])$ are either both undefined or both defined and equal each other.

For a function $c: [\omega_1]^2\ra \omega$,  
\begin{itemize}
\item we write $c(\alpha, \beta)$ instead of $c(\{\alpha, \beta\})$ when $\alpha<\beta$ for notation simplicity;\item  for $\beta<\omega_1$,
$c_\beta$ is the function from $\beta$ to $\omega$ defined by $c_\beta(\alpha)=c(\alpha, \beta)$.
\end{itemize}

Coherent functions have played   important roles in analyzing combinatorial properties of structures  since Todorcevic introduced the technique of minimal walk in \cite{Todorcevic87}. 
\begin{defn}
A function $c: [\omega_1]^2\ra \omega$ is \emph{coherent} if for every $\alpha<\beta<\omega_1$, 
\[D_{\alpha\beta}=\{\xi<\alpha: c_\alpha(\xi)\neq c_\beta(\xi)\} \text{ is finite.}\]

\end{defn}

Throughout the paper, we will fix a coherent function $e: [\omega_1]^2\ra \omega$ with additional properties and use the function to construct different colorings.
The function defined in \cite[Definition 3.2.1]{Todorcevic07} is a witness of the following lemma (see \cite[Lemma 3.2.2, Lemma 3.2.3]{Todorcevic07}).
\begin{lem}[\cite{Todorcevic07}]\label{lem coh}
There is a coherent  function $e: [\omega_1]^2\ra \omega$ satisfying the following properties.
\begin{enumerate}[({\rm coh}1)]
\item For every $\beta<\omega_1$, $e_\beta$ is one-to-one.
\item For every $\alpha<\beta<\gamma<\omega_1$, 
\[e(\alpha, \beta)\leq \max\{e(\alpha, \gamma), e(\beta, \gamma)\}\text{ and }e(\alpha, \gamma)\leq \max\{e(\alpha, \beta), e(\beta, \gamma)\},\]
 or equivalently, $e(\beta, \gamma)\geq \max\{e(\alpha, \beta), e(\alpha, \gamma)\}$ whenever $e(\alpha, \beta)\neq e(\alpha, \gamma)$.
 \item For $\alpha<\beta<\gamma<\omega_1$, $e(\alpha, \beta)\neq e(\beta, \gamma)$.
\end{enumerate}
\end{lem}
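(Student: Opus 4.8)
The statement to prove is Lemma \ref{lem coh}, which asserts the existence of a coherent function $e\colon [\omega_1]^2\to\omega$ satisfying (coh1)–(coh3). The plan is to build $e$ recursively on $\beta<\omega_1$, specifying at stage $\beta$ the ``column'' $e_\beta\colon\beta\to\omega$ so that it coheres with all earlier columns, is one-to-one, and the triangle inequalities in (coh2) together with the diagonal separation (coh3) are preserved. The backbone is Todorcevic's construction from \cite[Definition 3.2.1]{Todorcevic07}: fix for each limit $\beta$ a cofinal $\omega$-sequence and define $e(\alpha,\beta)$ in terms of the minimal walk from $\beta$ down to $\alpha$ along these sequences, for instance as (a suitable modification of) the maximal weight encountered on the walk. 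One then verifies (coh2) — the ultrametric-type inequality — directly from the recursive structure of walks: a walk from $\gamma$ to $\alpha$ either passes through $\beta$, in which case the weights decompose, or the two walks $\gamma\to\alpha$ and $\gamma\to\beta$ share an initial segment, which forces the stated $\max$ inequalities. Coherence, i.e.\ finiteness of $D_{\alpha\beta}$, is the classical fact that two columns of a $\rho$-type function differ on only finitely many coordinates, proved by induction on $\beta$.

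To get (coh1), that each $e_\beta$ is injective, and (coh3), that $e(\alpha,\beta)\ne e(\beta,\gamma)$ for $\alpha<\beta<\gamma$, I would not expect the raw walk function to satisfy these outright, so the next step is a perturbation argument. The standard trick is to compose or add a ``correction'' term: replace $e$ by $e'(\alpha,\beta)= \langle e(\alpha,\beta), n_\beta(\alpha)\rangle$ coded back into $\omega$ via a pairing function, where at each stage $\beta$ one chooses $n_\beta\colon\beta\to\omega$ making $e'_\beta$ injective while only finitely altering each earlier agreement pattern — this can be done because coherence means we only need to break ties among finitely many ``clusters'' of coordinates on which the first coordinate is constant, and each such cluster, being a subset of the domain of an already-injective earlier column up to a finite set, is itself essentially injective. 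Care is needed to check that the pairing does not destroy (coh2); one arranges the coding to be monotone in the first coordinate on the relevant range, or simply re-derives (coh2) for $e'$ using the fact that the first coordinate already witnesses it and ties are broken consistently. For (coh3), since $\alpha<\beta<\gamma$ and the walk from $\gamma$ to $\alpha$ factors through a walk reaching $\beta$ or a point below it, one can ensure by the construction that the weight assigned to the step landing at $\beta$ from above differs from any weight assigned strictly below $\beta$; alternatively absorb (coh3) into the same injectivity bookkeeping by treating $e(\beta,\gamma)$ as a ``column $\gamma$'' value and $e(\alpha,\beta)$ as a ``column $\beta$'' value and choosing the correction terms from disjoint residue classes indexed by the level.

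The cleanest route, and the one I would actually follow in the writeup, is simply to \emph{cite} the construction: take $e$ to be the function $\rho_1$ (or the relevant $e$) of \cite[Definition 3.2.1]{Todorcevic07} and invoke \cite[Lemma 3.2.2, Lemma 3.2.3]{Todorcevic07} for coherence and the inequality (coh2); then observe (coh1) and (coh3) either hold for that function directly or follow from the elementary perturbation described above, which preserves all previously established properties since it changes $e$ only by a finitely-supported correction on each column. So the proof is essentially: (1) recall the walk-based definition; (2) quote coherence and (coh2) from Todorcevic's monograph; (3) fix up injectivity of columns and diagonal separation by a finite correction, checking it is harmless.

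The main obstacle is step (3): verifying that the injectivity/separation correction can be carried out \emph{simultaneously} with, and without spoiling, the ultrametric inequality (coh2). The inequality is fragile under arbitrary modification — adding even $1$ to one value can break a tight instance $e(\alpha,\beta)=\max\{e(\alpha,\gamma),e(\beta,\gamma)\}$ — so the correction must be designed to respect the order structure (e.g.\ strictly monotone pairing, or correction terms that are themselves ultrametric-compatible), and one must check the induction hypotheses line up. Everything else — the recursion, coherence, the Ramsey-free combinatorics of walks — is routine and available in the literature.
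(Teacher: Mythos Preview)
Your main route—cite \cite[Definition 3.2.1]{Todorcevic07} and invoke \cite[Lemmas 3.2.2, 3.2.3]{Todorcevic07}—is exactly what the paper does; the paper asserts that this function is already a witness to all of (coh1)--(coh3), so your perturbation layer is not needed for that route.

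Where you and the paper diverge is in the backup construction. The paper also sketches an alternative: build $e'$ by induction on $\gamma$ so that (coh1)--(coh2) hold together with the extra requirement that for every $\gamma$ there are infinitely many $m$ with $\operatorname{rang}(e'_\gamma)\cap[2^m,2^{m+1})=\emptyset$; then a slight modification yields (coh3). The point of these reserved dyadic blocks is precisely to dissolve the obstacle you flagged: with infinitely many empty intervals available in every column, one can relocate offending values without touching any tight instance of the $\max$-inequality in (coh2). Your pairing-function correction $\langle e(\alpha,\beta),n_\beta(\alpha)\rangle$, by contrast, does not obviously respect (coh2)—as you yourself note, a generic monotone coding can break a tight $e(\alpha,\beta)=\max\{e(\alpha,\gamma),e(\beta,\gamma)\}$—so the paper's gap device is the cleaner way to carry out step (3) if one insists on doing it by hand.
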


Note that the coherence of $e$ follows from properties (coh1)-(coh2). To see this, fix $\beta<\gamma$. Then $D_{\beta\gamma}=\{\alpha<\beta: e(\alpha, \beta)\neq e(\alpha, \gamma)\}$ is contained in $e_\beta^{-1}[e(\beta, \gamma)+1]$ by (coh2) and hence is finite by (coh1).

Another way to get a witness of Lemma \ref{lem coh} is to first construct a function $e'$ satisfying (coh1)-(coh2): Construct $e'_\gamma$, by induction on $\gamma$, such that (coh1)-(coh2) and the following additional property are satisfied.
\begin{itemize}
\item For every $\gamma<\omega_1$, $rang(e'_\gamma)\cap [2^m, 2^{m+1})=\emptyset$ for infinitely many $m$.
\end{itemize}
Then a slight modification of $e'$ will induce a coherent function $e$ satisfying (coh1)-(coh3).

The following property induced from the function $e$ is straightforward and will be frequently used.
\begin{lem}\label{lem initial}
Suppose $a, b\in [\omega_1]^{<\omega}$ and $(a, e)$ is isomorphic to $(b, e)$. Then $|a|=|b|$ and $a\cap b$ is an initial segment of both $a$ and $b$.
\end{lem}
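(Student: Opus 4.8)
The plan is to unwind the definition of isomorphism and then read the conclusion off from properties (coh1) and (coh3) of the fixed function $e$. Since $e\colon[\omega_1]^2\to\omega$ is the only (binary) function involved, the hypothesis that $(a,e)$ is isomorphic to $(b,e)$ says, by Definition~\ref{defn iso}, precisely that $a$ and $b$ have the same finite order type $n$ -- so $|a|=|b|=n$ is immediate -- and that $e(a(s),a(t))=e(b(s),b(t))$ for all $s<t<n$, where I write $a=\{a(0)<\cdots<a(n-1)\}$ and similarly for $b$. It remains to prove the ``initial segment'' assertion, and for this I would isolate two claims.

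\textbf{Claim A:} if $a(i)=b(j)$ then $i=j$. \textbf{Claim B:} if $a(i)=b(i)$ then $a(i')=b(i')$ for every $i'<i$. Granting both, $\{i<n:a(i)\in b\}=\{i<n:a(i)=b(i)\}$ by Claim~A, and this set is downward closed in $n$ by Claim~B, hence equals $\{0,\dots,k-1\}$ for some $k\le n$; therefore $a\cap b=\{a(0),\dots,a(k-1)\}$ is an initial segment of $a$, and, being also $\{b(0),\dots,b(k-1)\}$, an initial segment of $b$.

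Claim~B is a one-line use of (coh1): if $a(i)=b(i)=\gamma$ and $i'<i$, then $a(i'),b(i')<\gamma$, and applying the isomorphism to the index pair $i'<i$ gives $e(a(i'),\gamma)=e(a(i'),a(i))=e(b(i'),b(i))=e(b(i'),\gamma)$, i.e. $e_\gamma(a(i'))=e_\gamma(b(i'))$; since $e_\gamma$ is one-to-one on $\gamma$, we get $a(i')=b(i')$. For Claim~A, suppose $a(i)=b(j)=\gamma$ with $i\ne j$; by the symmetry of the hypothesis between $a$ and $b$ I may assume $i<j$. Then $b(i)<b(j)=\gamma=a(i)<a(j)$, so $b(i)<\gamma<a(j)$ is a strictly increasing triple in $\omega_1$, while applying the isomorphism to the index pair $i<j$ yields $e(\gamma,a(j))=e(a(i),a(j))=e(b(i),b(j))=e(b(i),\gamma)$, contradicting (coh3), which for exactly such a triple asserts $e(b(i),\gamma)\ne e(\gamma,a(j))$.

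There is no genuine obstacle here; the only thing to get right is the bookkeeping in Definition~\ref{defn iso} -- that an isomorphism of $(a,e)$ and $(b,e)$ is exactly preservation of the values of $e$ along matching index pairs -- and then locating the two positions of a common element $\gamma$ inside $a$ and inside $b$ so that (coh3) (for Claim~A) and (coh1) (for Claim~B) can be invoked.
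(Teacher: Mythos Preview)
Your proof is correct and follows essentially the same approach as the paper: both argue via (coh3) that a common element must sit at the same index in $a$ and in $b$, and via (coh1) that once $a(i)=b(i)$ the earlier entries agree. The paper phrases this by taking $\beta=\max(a\cap b)$ and proving your Claims~A and~B just for that $\beta$, which suffices; your formulation of the two claims for an arbitrary common element is a harmless generalization of the same two-line arguments.
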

\begin{proof}
$|a|=|b|$ follows immediately from the isomorphism. We now show that $a\cap b$ is an initial segment of both $a$ and $b$. We may assume that $a\cap b\neq \emptyset$. Let 
\[\beta=\max(a\cap b)\text{ and $i, j<|a|$ be such that }\beta=a(i)=b(j).\]

We first show that $i=j$. 
Suppose otherwise. Say $i<j$. Then $b(i)<b(j)=a(i)<a(j)$. By isomorphism, $e(b(i), b(j))=e(a(i), a(j))$. But this contradicts (coh3) and the fact that $b(j)=a(i)$.

Now it suffices to show that $a(k)=b(k)$ for $k<i$. Fix $k<i$. By isomorphism, $e(a(k), a(i))=e(b(k), b(i))$.
By (coh1) and the fact $a(i)=\beta=b(i)$, $a(k)=b(k)$.
\end{proof}

The following two simple facts will be frequently used.
\begin{lem}\label{lem initial2}
Suppose for some $n<\omega$ and $\{a_i\in [\omega_1]^{<\omega}: i<n\}$, $(a_i, e)$ is isomorphic to $(a_j, e)$ for all $i<j<n$.
\begin{enumerate}
\item For $I, J\subseteq |a_0|$, $a_0[I]\cap a_1[J]=a_0[I\cap J\cap |a_0\cap a_1|]$. In particular, $a_0[I]\cap a_1[J]=\emptyset$ if $I\cap J=\emptyset$.
\item For $I\subseteq |a_0|$ and  $\langle J_j\subseteq |a_0|: j<n\rangle$, if $a_0[I]\subseteq \bigcup_{j<n} a_j[J_j]$, then $I\subseteq \bigcup_{j<n} J_j$.
\end{enumerate}
\end{lem}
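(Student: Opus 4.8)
The plan is to reduce both items to elementary bookkeeping about where common elements sit in increasing enumerations, using only Lemma~\ref{lem initial}. First I would record the following consequence of that lemma, which will be the engine of both parts: if $(a,e)$ is isomorphic to $(b,e)$ and $m=|a\cap b|$, then $a(k)=b(k)$ for every $k<m$, while $a(k)\notin b$ for every $k\ge m$. This is immediate from Lemma~\ref{lem initial}: since $a\cap b$ is an initial segment of each enumeration, its elements are exactly $a(0)<\cdots<a(m-1)$ and also exactly $b(0)<\cdots<b(m-1)$, and two increasing enumerations of one and the same finite set agree pointwise; moreover $a(k)$ for $k\ge m$ lies strictly above $\max(a\cap b)$, hence outside $b$. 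Throughout I will also use that $|a_0|=\cdots=|a_{n-1}|$, again by Lemma~\ref{lem initial}.

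For item (1), I would fix $I,J\subseteq|a_0|$, set $m=|a_0\cap a_1|$, and prove $a_0[I]\cap a_1[J]=a_0[I\cap J\cap m]$ by double inclusion. If $k\in I$ and $a_0(k)\in a_1[J]$, then $a_0(k)\in a_0\cap a_1$, so $k<m$ and $a_0(k)=a_1(k)$ by the observation; since $a_0(k)=a_1(\ell)$ for some $\ell\in J$ and $a_1$ is injective on $|a_1|$, we get $\ell=k\in I\cap J\cap m$. Conversely, for $k\in I\cap J\cap m$ one has $a_0(k)=a_1(k)\in a_1[J]$. The ``in particular'' clause is then the instance $I\cap J=\emptyset$.

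For item (2), I would fix $I\subseteq|a_0|$ and $\langle J_j:j<n\rangle$ with $a_0[I]\subseteq\bigcup_{j<n}a_j[J_j]$, take $k\in I$, and chase membership. There is some $j<n$ with $a_0(k)=a_j(\ell)$ for an $\ell\in J_j$. If $j=0$, injectivity of $a_0$ gives $k=\ell\in J_0$. If $j\ne0$, then $a_0(k)\in a_0\cap a_j$, so $k<|a_0\cap a_j|$ and $a_0(k)=a_j(k)$ by the observation applied to the isomorphic pair $a_0,a_j$; comparing with $a_0(k)=a_j(\ell)$ and using injectivity of $a_j$ yields $k=\ell\in J_j$. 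In either case $k\in\bigcup_{j<n}J_j$, which is what is needed.

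I do not anticipate a genuine obstacle here, since the statement is purely combinatorial and Lemma~\ref{lem initial} supplies everything. The only points requiring a little care are the preliminary observation about matching positions along the common initial segment (slightly stronger-looking than Lemma~\ref{lem initial} as stated, but an immediate consequence) and remembering to treat the diagonal case $j=0$ of item (2) separately, where no overlap analysis is available and one relies purely on injectivity of the enumeration $a_0$.
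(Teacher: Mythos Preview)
Your proof is correct and follows essentially the same approach as the paper's, reducing both parts to the initial-segment consequence of Lemma~\ref{lem initial}. The paper's presentation is slightly more compressed: for (1) it writes a single chain of equalities $a_0[I]\cap a_1[J]=a_0[I]\cap(a_0\cap a_1)\cap a_1[J]=a_0[I\cap k]\cap a_0[J\cap k]=a_0[I\cap J\cap k]$, and for (2) it does not separate the case $j=0$, since the argument ``$a_0(i)\in a_0\cap a_j$ implies $a_0(i)=a_j(i)$'' goes through trivially when $j=0$.
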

\begin{proof}
(1) Let $k=|a_0\cap a_1|$. Then by Lemma \ref{lem initial}, $a_0\cap a_1=a_0[k]=a_1[k]$. So
\begin{align*}
a_0[I]\cap a_1[J]=& a_0[I]\cap (a_0\cap a_1)\cap a_1[J]\\
= &a_0[I\cap k]\cap a_1[J\cap k]\\
= &a_0[I\cap k]\cap a_0[J\cap k]=a_0[I\cap J\cap k].
\end{align*}

(2) Fix $i\in I$. Choose $j<n$ such that 
\begin{enumerate}[(i)]
\item $a_0(i)\in a_j[J_j]$.
\end{enumerate}
Now, $a_0(i)\in a_0\cap a_j$. By Lemma \ref{lem initial}, $a_0(i)=a_j(i)$. Together with (i), $i\in J_j$.
\end{proof}

We will also need the following characterizations of $\mathfrak{p}$ and $\mathfrak{t}$.
\begin{thm}\label{thm BRMS}
\begin{enumerate}
\item (\cite{Bell}) {\rm MA}$_{\kappa}(\sigma$-centered) is equivalent to $\mathfrak{p}>\kappa$.
\item (\cite{Rothberger}) $\mathfrak{p}>\omega_1$ iff $\mathfrak{t}>\omega_1$.
\item (\cite{MS}) $\mathfrak{p}=\mathfrak{t}$.
\end{enumerate}
\end{thm}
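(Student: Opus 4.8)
The plan for all three parts is to invoke the classical theorems of Bell, Rothberger, and Malliaris--Shelah respectively; below I indicate the shape of each argument, which is what a self-contained treatment would need.

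For (1) the plan is to prove both inequalities between $\mathfrak{p}$ and $\mathfrak{m}$, where $\mathfrak{m}$ denotes the least $\kappa$ for which {\rm MA}$_\kappa(\sigma$-centered) fails. That {\rm MA}$_{\mathfrak{p}}(\sigma$-centered) fails is the easy half: starting from a family $\mathcal{F}\subseteq[\omega]^\omega$ of size $\mathfrak{p}$ with the strong finite intersection property and no pseudo-intersection, one forms the poset $\mathcal{Q}$ of pairs $(s,F)$ with $s\in[\omega]^{<\omega}$ and $F\in[\mathcal{F}]^{<\omega}$, ordered by $(s',F')\le(s,F)$ iff $s\subseteq s'$, $F\subseteq F'$ and $s'\setminus s\subseteq\bigcap F$. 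Then $\mathcal{Q}$ is $\sigma$-centered (conditions sharing a first coordinate have a common lower bound), the sets $D_A=\{(s,F):A\in F\}$ for $A\in\mathcal{F}$ and $E_n=\{(s,F):|s|\ge n\}$ for $n<\omega$ are dense, and a filter meeting all $\mathfrak{p}$ of them produces a pseudo-intersection of $\mathcal{F}$, a contradiction. The converse inequality $\mathfrak{p}\le\mathfrak{m}$ is the substantive content, namely Bell's theorem proper: given a $\sigma$-centered poset $\mathcal{P}=\bigcup_n\mathcal{P}_n$ and fewer than $\mathfrak{p}$ dense sets, one uses the centered decomposition to encode a suitable decreasing sequence of finite approximations to a generic filter as a strong finite intersection family of size $<\mathfrak{p}$ on a countable set; a pseudo-intersection of that family decodes to the required filter.

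For (2), note first that $\mathfrak{p}\le\mathfrak{t}$ holds trivially: a tower is $\subseteq^*$-linearly ordered, hence has the strong finite intersection property, so an unfilled tower is already an SFIP family with no pseudo-intersection. Thus only the inequality $\mathfrak{t}\le\omega_1$ under the hypothesis $\mathfrak{p}=\omega_1$ requires work, and this is Rothberger's argument. The plan is to fix an SFIP family $\{A_\alpha:\alpha<\omega_1\}$ with no pseudo-intersection (available since $\mathfrak{p}=\omega_1$), which one may assume closed under finite intersections, and to construct by recursion a $\subseteq^*$-decreasing sequence $\{t_\alpha:\alpha<\omega_1\}$ in $[\omega]^\omega$ with $t_\alpha\subseteq^* A_\alpha$ for each $\alpha$; at each of the countably long stages the continuation uses only that a countable family with the strong finite intersection property has a pseudo-intersection, together with the bookkeeping (maintaining that each $t_\alpha$ still meets every member of the family infinitely) needed to keep the recursion alive through all $\omega_1$ stages. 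The resulting $\{t_\alpha:\alpha<\omega_1\}$ is a tower, and it is unfilled because a pseudo-intersection of it would be one of $\{A_\alpha:\alpha<\omega_1\}$; hence $\mathfrak{t}=\omega_1$.

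Part (3), the equation $\mathfrak{p}=\mathfrak{t}$, is the Malliaris--Shelah theorem, and here I would simply cite \cite{MS}: the inequality $\mathfrak{p}\le\mathfrak{t}$ is the trivial observation from the previous paragraph, while the deep content is $\mathfrak{t}\le\mathfrak{p}$, obtained through the analysis of cofinality spectrum problems and ``treetops'', which I would not attempt to reproduce. This part is the one genuine obstacle among the three, but it is hard for reasons orthogonal to everything else in the paper; moreover (3) formally subsumes (2), and the weaker Rothberger form is recorded separately because only its $\omega_1$-instance is actually used (in step ($\bigstar$5)), and that instance admits the elementary argument sketched above. In short, I would prove (1) and the $\omega_1$-version of (2) by hand and treat (3) as a black box.
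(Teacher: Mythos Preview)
Your proposal is correct, and indeed goes beyond what the paper does: the paper states Theorem~\ref{thm BRMS} purely as a citation of the classical results of Bell, Rothberger, and Malliaris--Shelah, with no proof or sketch given. Your outlines of the standard arguments for (1) and (2) are accurate (the Rothberger construction in (2) is a bit compressed but identifies the right recursion), and treating (3) as a black box is exactly what the paper does as well.
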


\section{Constructing colorings and posets   from a tower}

In this section, we prove that $\mathfrak{t}>\omega_1$ follows from $\ms{K}_3$, as well as several other assumptions. Throughout this section, we fix the following notation.

\textbf{Notation.} For $I\in [\omega]^\omega$ and $n<\omega$, denote 
\[I_n=[I(n), I(n+1)).\]




Several colorings will be introduced in this section. We first introduce the following notation for colorings.
\begin{defn}
\begin{enumerate}[(i)]
\item For $2<N\leq \omega$, a coloring $\pi: [\omega_1]^{<N}\ra 2$ is \emph{downward closed} if 
\[(a\subseteq b\wedge\pi(b)=0)\ra\pi(a)=0.\]
 For downward closed $\pi$, $\mc{H}_0^\pi$ is the collection of finite 0-homogeneous subsets ordered by reverse inclusion.
 \item For $n\geq 2$, $i<2$ and a coloring $\pi: [\omega_1]^n\ra 2$, 
$\mc{H}_i^\pi$ is the collection of finite $i$-homogeneous subsets ordered by reverse inclusion.
  A finite set $a\in [\omega_1]^{<\omega}$ is \emph{$i$-homogeneous} if either $(|a|\geq n\wedge [a]^n\subseteq \pi^{-1}\{i\})$ or $a\subseteq b$ for some  $b\in \pi^{-1}\{i\}$.
\end{enumerate}
\end{defn}

\textbf{Remark.} Colorings of forms $\pi: [\omega_1]^{<\omega}\ra 2$ and $\pi: [\omega_1]^n\ra 2$ are frequently used in the literature. For a coloring $\pi: [\omega_1]^n\ra 2$, it is also common to define 0-homogeneity so that elements in $[\omega_1]^{<n}$ are all 0-homogeneous. 
Here, for coloring on $n$-ary subsets of $\omega_1$, we require, 
\[\exists b\in \pi^{-1}\{i\} ~ a\subseteq b,\]
 for $a\in [\omega_1]^{<n}$ to be $i$-homogeneous.
For colorings defined in a constructive way, these two requirements usually do not make any difference (see, e.g., the coloring in Theorem \ref{thm K3 t} below).
 We want to point out that our additional requirement is  natural, especially for   colorings   closely related to other posets (see, e.g., Section 4).  Consider a poset $(\omega_1, \prec)$ with property K$_3$. We define $\pi: [\omega_1]^3\ra 2$ by $\pi(a)=0$ iff $a$ has a common lower bound in $(\omega_1, \prec)$. Then an uncountable 0-homogeneous subset of $\pi$ is a 3-linked subset of $(\omega_1, \prec)$. Without the additional requirement, $\mc{H}_0^\pi$ is in general not ccc even if $(\omega_1, \prec)$ has property K$_3$. For example, $\{a_\alpha: \alpha<\omega_1\}$ is an uncountable antichain if each $a_\alpha$ is a pair of incompatible conditions.
 
In above definition, colorings in form of (ii) can be viewed as   colorings in form of (i). For example, suppose $\pi: [\omega_1]^n\ra 2$ is a coloring for $n\geq 2$. Then define $\pi': [\omega_1]^{<n+1}\ra 2$ by
\[\pi'(a)=0\text{ iff } \pi(b)=0 \text{ for some } b\supseteq a.\]
Then $\pi'$ is downward closed and $\mc{H}^\pi_0=\mc{H}^{\pi'}_0$.

 \subsection{$\ms{K}_3$ implies $\mathfrak{t}>\omega_1$}
 
 In this subsection, we  prove that $\ms{K}_3$ implies $\mathfrak{t}>\omega_1$ by the coloring method. The proof illustrates the structure we shall use to fill towers.
 
 
 For a tower $T=\{t_\alpha: \alpha<\omega_1\}$, an equivalent way to fill $T$ is to find    $\Gamma\in [\omega_1]^{\omega_1}$ such that $\bigcap_{\alpha\in \Gamma} t_\alpha$ is infinite. The difficulty is to guarantee that the intersection is infinite.  
 
 First recall by \cite{Todorcevic86}, $\ms{K}_2$ implies $\mathfrak{b}>\omega_1$.
Replacing $T$ by  a sub-tower,  we may assume that there exists $I\in [\omega]^\omega$ such that $t_\alpha\cap I_n\neq\emptyset$ for all $\alpha$ and all $n$. 

We will investigate  the following type of structures:
\[\langle (\{t_\alpha\cap I_n: \alpha\in \Gamma\}, \subseteq): n\in A\rangle\]
where $A\subseteq \omega$ and $\Gamma\subseteq \omega_1$.
And our goal is:
 \begin{itemize}
\item Find $A\in [\omega]^\omega$ and $\Gamma\in [\omega_1]^{\omega_1}$ such that for every $n\in A$, 
\[(\{t_\alpha\cap I_n: \alpha\in \Gamma\}, \subseteq)\text{ is linearly ordered}.\]
 \end{itemize}
 If above goal is achieved, then for $n\in A$, $\bigcap_{\alpha\in \Gamma} t_\alpha\cap I_n$ is the $\subseteq$-least element of $\{t_\alpha\cap I_n: \alpha\in \Gamma\}$ which is non-empty. Consequently, $\bigcap_{\alpha\in \Gamma} t_\alpha$ is infinite.

\begin{thm}\label{thm K3 t}
$\ms{K}_3$ implies $\mathfrak{t}>\omega_1$.
\end{thm}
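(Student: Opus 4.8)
The plan is to build a ccc coloring $\pi\colon[\omega_1]^3\to 2$ from the tower $T=\{t_\alpha\colon\alpha<\omega_1\}$ (after the preliminary reduction ensuring $t_\alpha\cap I_n\neq\emptyset$ for all $\alpha,n$) in such a way that an uncountable $0$-homogeneous set for $\pi$ yields exactly the combinatorial outcome singled out before the theorem: a set $\Gamma\in[\omega_1]^{\omega_1}$ and an infinite $A\subseteq\omega$ so that for each $n\in A$ the family $\{t_\alpha\cap I_n\colon\alpha\in\Gamma\}$ is linearly ordered by $\subseteq$. Applying $\ms K_3$ to $\mc H_0^\pi$ (which we must check is ccc) then produces such a $\Gamma$, hence $\bigcap_{\alpha\in\Gamma}t_\alpha$ is infinite, filling the tower; since $T$ was an arbitrary $\omega_1$-tower this gives $\mathfrak t>\omega_1$.

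For the coloring itself, the natural attempt is to declare a triple $\{\alpha<\beta<\gamma\}$ to be $0$-homogeneous only when the three sets $t_\alpha\cap I_n,t_\beta\cap I_n,t_\gamma\cap I_n$ are pairwise $\subseteq$-comparable on a ``large enough'' set of $n$, where ``large enough'' has to be calibrated against the coherent function $e$ from Lemma~\ref{lem coh}: roughly, one asks for comparability on all $n$ with $e$-value above the finitely many exceptional indices determined by the pairs from $\{\alpha,\beta,\gamma\}$. The point of weaving in $e$ is twofold. First, it lets us localize the requirement so that $0$-homogeneity of a triple depends only on an initial-segment-type interaction (using Lemma~\ref{lem initial} and Lemma~\ref{lem initial2}), which is what makes $\mc H_0^\pi$ ccc: given uncountably many finite $0$-homogeneous sets, a $\Delta$-system plus an isomorphism-of-type $(a,e)$ argument lets us amalgamate two of them. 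Second, three-dimensionality is essential precisely because pairwise $\subseteq$-comparability is a condition on triples that does not follow from comparability of pairs, so a $2$-linked set would not suffice — this is why the theorem needs $\ms K_3$ and not merely $\ms K_2$.

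The steps, in order, would be: (1) carry out the reduction to a subtower with $t_\alpha\cap I_n\neq\emptyset$ for all $\alpha,n$, using $\mathfrak b>\omega_1$ from $\ms K_2$; (2) define $\pi\colon[\omega_1]^3\to 2$ explicitly in terms of the $t_\alpha\cap I_n$ and $e$, with the comparability-on-a-tail condition; (3) verify $\pi^{-1}\{0\}$ is uncountable and that $\mc H_0^\pi$ is ccc, via a $\Delta$-system / coherence amalgamation argument; (4) apply $\ms K_3$ to get an uncountable $0$-homogeneous $\Gamma$; (5) extract from $\Gamma$ an infinite $A$ on which $\{t_\alpha\cap I_n\colon\alpha\in\Gamma\}$ is linearly ordered — here one further thins $\Gamma$ if necessary so that the finitely many $e$-exceptions across all pairs stabilize, forcing a genuine (not just ``tail'') linear order on a cofinal set of coordinates; (6) conclude $\bigcap_{\alpha\in\Gamma}t_\alpha\supseteq\bigcup_{n\in A}(t_{\min\Gamma}\cap I_n\text{ appropriately})$ is infinite.

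The main obstacle I expect is step (3): getting $\mc H_0^\pi$ to be ccc. The tension is that the comparability condition must be strong enough that $0$-homogeneity propagates to give a genuine linear order in step (5), yet weak and ``local'' enough — keyed to tails determined by $e$ — that two isomorphic finite $0$-homogeneous sets can always be merged into one. Balancing these is exactly the ``structure we shall use to fill towers'' the author advertises, and I would expect the proof to invest most of its effort in choosing the $e$-dependent threshold so that the amalgamation of two finite $0$-homogeneous sets $a,b$ with $(a,e)\cong(b,e)$ and $a\cap b$ an initial segment of each never creates a non-comparable triple straddling $a\setminus b$ and $b\setminus a$.
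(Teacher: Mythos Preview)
Your overall architecture matches the paper's: reduce using $\mathfrak b>\omega_1$, define a $3$-ary coloring from $T$ and $e$, prove $\mc H_0^\pi$ is ccc, apply $\ms K_3$, extract the linear-order structure, fill the tower. But your description of the coloring is off in a way that matters, and your explanation of why dimension $3$ is needed is wrong.

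The paper's coloring is \emph{not} a symmetric comparability condition on all three of $t_\alpha,t_\beta,t_\gamma$ across a tail of $n$'s. The smallest element $\alpha$ plays a purely index-selecting role: one sets $\pi(\alpha,\beta,\gamma)=0$ iff
\[
e(\alpha,\beta)=e(\alpha,\gamma)\ \Longrightarrow\ \bigl(t_\beta\cap I_{e(\alpha,\beta)}\subseteq t_\gamma\ \text{ or }\ t_\gamma\cap I_{e(\alpha,\beta)}\subseteq t_\beta\bigr).
\]
So the condition is conditional, concerns a \emph{single} interval $I_n$ with $n=e(\alpha,\beta)$, and only $t_\beta,t_\gamma$ are tested --- $t_\alpha$ never appears. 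Dimension $3$ is needed not because ``linear order is a condition on triples'' (it isn't: $\subseteq$ is transitive, so pairwise comparability already gives a chain) but because one coordinate is spent selecting the interval and two are spent on the comparability check.

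This asymmetry is what makes both halves work. For ccc, the implication is vacuous except on the thin set where $e(\alpha,\beta)=e(\alpha,\gamma)$, and this locality is what drives the $\Delta$-system/coherence amalgamation; your ``comparability on all $n$ above an $e$-threshold'' would be far too strong to amalgamate. For extraction, the paper does \emph{not} thin $\Gamma$ to stabilize finitely many $e$-exceptions across all pairs (impossible for uncountable $\Gamma$). Instead it splits the $0$-homogeneous $\Gamma'$ at some $\delta$ with $\Gamma'\cap\delta$ infinite, uses coherence of $e$ to pass to an uncountable $\Gamma\subseteq\Gamma'\setminus\delta$ on which $e(\cdot,\gamma)$ agrees with $e(\cdot,\delta)$ on $\delta\setminus F$ for one fixed finite $F$, and sets $A=\{e(\alpha,\delta):\alpha\in\Gamma'\cap\delta\setminus F\}$. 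For $n\in A$ the hypothesis $e(\alpha,\beta)=e(\alpha,\gamma)$ is then \emph{forced} for all $\beta,\gamma\in\Gamma$ (via the common witness $\alpha\in\Gamma'\cap\delta$), so $0$-homogeneity delivers the comparability directly.
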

\begin{proof}
Fix a tower $T=\{t_\alpha: \alpha<\omega_1\}$.
By \cite{Todorcevic86}(see also Proposition \ref{prop sK3toK4 b}), $\mathfrak{b}>\omega_1$. Replacing $T$ by a sub-tower if necessary,  find $I\in [\omega]^\omega$ such that 
\begin{enumerate}
\item $t_\alpha\cap I_n\neq\emptyset$ for all $\alpha$ and all $n$.
\end{enumerate}

Define a coloring $\pi: [\omega_1]^3\ra 2$ by for $\alpha<\beta<\gamma$, $\pi(\alpha, \beta, \gamma)=0$ iff
\[e(\alpha, \beta)=e(\alpha, \gamma)\ra (t_\beta\cap I_{e(\alpha, \beta)}\subseteq t_\gamma \vee t_\gamma\cap I_{e(\alpha, \beta)}\subseteq t_\beta).\]
Before proceeding to the proof, we would like to point out that the positions of $\beta$ and $\gamma$ are symmetric. So the order between $\beta$ and $\gamma$ is not important and we can simply say that $\alpha<\min\{\beta, \gamma\}$.\medskip

\textbf{Claim 1.} $\mc{H}^\pi_0$ is ccc.
\begin{proof}[Proof of Claim 1.]
Fix $\{p_\alpha\in \mc{H}^\pi_0: \alpha<\omega_1\}$. Find $\Sigma\in [\omega_1]^{\omega_1}$ such that for some $n, N<\omega$,
\begin{enumerate}\setcounter{enumi}{1}
\item $\{p_\alpha: \alpha\in \Sigma\}$ forms a $\Delta$-system with root $\overline{p}$;
\item  for $\alpha\in \Sigma$, $|p_\alpha|=n$ and  for $\xi<\zeta$ in $p_\alpha$,
\begin{itemize}
\item $e(\xi, \zeta)<N$;
\item $t_\zeta\setminus N\subseteq t_\xi$;
\end{itemize}
\item  for $\alpha<\beta$ in $\Sigma$,
\begin{itemize}
\item $(p_\alpha, e)$ is isomorphic to $(p_\beta, e)$;\footnote{See Definition \ref{defn iso}.}
\item  for $i<n$, $t_{p_\alpha(i)}\cap I(N)=t_{p_\beta(i)}\cap I(N)$.
\end{itemize}
\end{enumerate}

We may assume $n\geq 3$. 
Now find $\xi<\zeta$ in $\Sigma$ such that 
\begin{enumerate}\setcounter{enumi}{4}
\item $\max(p_\xi)<\min(p_\zeta\setminus \overline{p})$ and $e(\alpha, \beta)>N$ for $\alpha\in p_\xi\setminus \overline{p}$ and $\beta\in p_\zeta\setminus \overline{p}$.
\end{enumerate}
We will show that $p_\xi\cup p_\zeta\in \mc{H}^\pi_0$. Fix $\alpha<\beta<\gamma$ in $p_\xi\cup p_\zeta$. It suffices to show that $\pi(\alpha, \beta, \gamma)=0$. The non-trivial case is
\begin{enumerate}\setcounter{enumi}{5}
\item $e(\alpha,\beta)=e(\alpha, \gamma)$.
\end{enumerate}
We discuss by cases.\medskip

\textbf{Case 1.} $\alpha\in \overline{p}$.\medskip

The non-trivial case is $\beta\in p_\xi\setminus \overline{p}$ and $\gamma\in p_\zeta\setminus \overline{p}$. Note by (3), $e(\alpha, \beta)<N$.

Let $i<n$ and $\gamma^*\in p_\xi\setminus \overline{p}$ be such that $\gamma=p_\zeta(i)$ and $\gamma^*=p_\xi(i)$. 

If $\beta=\gamma^*$, then by (4) and the fact $e(\alpha, \beta)<N$, 
\[t_\beta\cap I_{e(\alpha, \beta)}=t_{\gamma^*}\cap  I_{e(\alpha, \beta)}=t_\gamma\cap  I_{e(\alpha, \beta)}.\] Hence, $\pi(\alpha, \beta, \gamma)=0$.

Now suppose $\beta\neq \gamma^*$. By (6) and (4),
\[e(\alpha,\beta)=e(\alpha, \gamma)=e(\alpha, \gamma^*).\]
Recall that $\{\alpha, \beta, \gamma^*\}\subseteq p_\xi\in \mc{H}^\pi_0$. So $\pi(\{\alpha, \beta, \gamma^*\})=0$. Together with the fact $\alpha<\min \{\beta, \gamma^*\}$,
\[t_\beta\cap I_{e(\alpha, \beta)}\subseteq t_{\gamma^*}\vee t_{\gamma^*} \cap I_{e(\alpha, \beta)}\subseteq t_\beta.\]
By (4) and the fact $e(\alpha, \beta)<N$, $t_\gamma\cap I_{e(\alpha, \beta)}=t_{\gamma^*}\cap I_{e(\alpha, \beta)}$. This shows
\[t_\beta\cap I_{e(\alpha, \beta)}\subseteq t_{\gamma}\vee t_{\gamma} \cap I_{e(\alpha, \beta)}\subseteq t_\beta.\]
Hence, $\pi(\alpha, \beta, \gamma)=0$.\medskip

\textbf{Case 2.} $\alpha\in p_\xi\setminus \overline{p}$.\medskip

The non-trivial case is $\gamma\in p_\zeta$. By (6) and (5), 
\[e(\alpha, \beta)=e(\alpha, \gamma)>N.\]
 Together with (3), $\beta\in p_\zeta$. Now by (3) and the fact $e(\alpha, \beta)>N$, $t_\gamma\cap I_{e(\alpha, \beta)}\subseteq t_\beta$. So  $\pi(\alpha, \beta, \gamma)=0$.\medskip

\textbf{Case 3.} $\alpha\in p_\zeta\setminus \overline{p}$.

Trivial.\medskip

So in any case, $\pi(\alpha, \beta, \gamma)=0$. This shows $p_\xi\cup p_\zeta\in \mc{H}^\pi_0$ and finishes the proof of the claim.
\end{proof}
It is easy to see that $\mc{H}^\pi_0$ is   uncountable. Apply $\ms{K}_3$ to find an uncountable 0-homogeneous subset $\Gamma'\in [\omega_1]^{\omega_1}$.

First find $\delta<\omega_1$ such that $\Gamma'\cap \delta$ is infinite. Note by coherence of $e$,  for every $\gamma\in \Gamma'\setminus \delta$, there exists $F_\gamma\in [\delta]^{<\omega}$ such that $e_\gamma$ agrees with $e_\delta$ on $\delta\setminus F_\gamma$. Then
find $F\in [\delta]^{<\omega}$ and $\Gamma\in [\Gamma'\setminus \delta]^{\omega_1}$ such that
\begin{enumerate}\setcounter{enumi}{6}
\item for every $\gamma\in \Gamma$ and every $\alpha\in \delta\setminus F$, $e(\alpha, \delta)=e(\alpha, \gamma)$.
\end{enumerate}
Let
\[A=\{e(\alpha, \delta): \alpha\in \Gamma'\cap \delta\setminus F\}.\]
We check the following property.\medskip 

\textbf{Claim 2.} For every $n\in A$, $(\{t_\beta\cap I_n: \beta\in \Gamma\}, \subseteq)$ is linearly ordered.
\begin{proof}[Proof of Claim 2.]
Fix $n\in A$. By definition of $A$, find $\alpha\in \Gamma'\cap \delta\setminus F$ such that 
\[e(\alpha, \delta)=n.\]
 Arbitrarily choose $\beta<\gamma$ in $\Gamma$.
By (7), 
\[e(\alpha, \beta)=e(\alpha, \delta)=e(\alpha, \gamma).\]
Together with the fact $\pi(\alpha, \beta, \gamma)=0$,
\[t_\beta\cap I_{e(\alpha, \beta)}\subseteq t_\gamma \vee t_\gamma\cap I_{e(\alpha, \beta)}\subseteq t_\beta.\]
Now the claim follows from  $e(\alpha, \beta)=e(\alpha, \delta)=n$.
\end{proof}
By Claim 2, for every $n\in A$, $\bigcap_{\gamma\in \Gamma} (t_\gamma\cap I_n)$ is the $\subseteq$-least element of $\{t_ \gamma\cap I_n: \gamma\in \Gamma\}$. Together with (1), 
$\bigcap_{\gamma\in \Gamma} t_\gamma\cap I_n\neq \emptyset$.
So $\bigcap_{\gamma\in \Gamma} t_\gamma$ is infinite and fills the tower $T$. Since $T$ is an arbitrary tower, $\mathfrak{t}>\omega_1$.
\end{proof}

\subsection{{\rm P($\sigma$-$n$-linked$\ra$K$_{n+1}$)} implies $\mathfrak{t}>\omega_1$}

Throughout this subsection, we fix a natural number $n\geq 2$ and assume {\rm P($\sigma$-$n$-linked$\ra$K$_{n+1}$)}.
We will present the forcing method.

By \cite{Todorcevic86} and \cite{Todorcevic91},  {\rm P($\sigma$-$n$-linked$\ra$K$_{n+1}$)} implies $\mathfrak{b}>\omega_1$. For completeness, we sketch a proof below.
\begin{prop}[\cite{Todorcevic86}, \cite{Todorcevic91}]\label{prop sK3toK4 b}
{\rm P($\sigma$-$n$-linked$\ra$K$_{n+1}$)} implies $\mathfrak{b}>\omega_1$.
\end{prop}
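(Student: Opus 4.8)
The plan is to run the standard ``unbounded family gives a ccc-destroying poset'' argument, but packaged so that the relevant poset is $\sigma$-$n$-linked (in fact $\sigma$-centered), so that the hypothesis {\rm P($\sigma$-$n$-linked$\ra$K$_{n+1}$)} can be applied to it. Suppose toward a contradiction that $\mathfrak{b}=\omega_1$, and fix an unbounded $<^*$-increasing family $\{f_\alpha:\alpha<\omega_1\}\subseteq\omega^\omega$ (strictly increasing functions, without loss of generality). The classical poset $\mc{P}$ consists of pairs $(s,F)$ with $s\in\omega^{<\omega}$ a finite increasing sequence and $F\in[\omega_1]^{<\omega}$, ordered by $(s',F')\leq(s,F)$ iff $s\subseteq s'$, $F\subseteq F'$, and for every $\alpha\in F$ and every $k\in\mathrm{dom}(s')\setminus\mathrm{dom}(s)$, $s'(k)>f_\alpha(k)$; that is, we are adding a real eventually dominating all $f_\alpha$. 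This poset is $\sigma$-centered: conditions with the same first coordinate $s$ are centered. For $\alpha<\omega_1$ let $X_\alpha=\{(s,F)\in\mc{P}:\alpha\in F\}$; each $X_\alpha$ is uncountable (and in fact $\mc{P}=\bigcup_\alpha X_\alpha$ up to the empty-$F$ conditions, which we can absorb), and a standard density argument shows that no uncountable subset of $\mc{P}$ can meet uncountably many of the $X_\alpha$'s and be, say, even $2$-linked when the family is unbounded — wait, that is not quite how I want to phrase it, so let me restate the core combinatorial point below.

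More precisely, I would argue: if $Y\subseteq\mc{P}$ is uncountable, then by a $\Delta$-system and counting argument we may assume all members of $Y$ have finite sets $F$ of the same size forming a $\Delta$-system with root $r$, all with the same stem $s$ (there are only countably many stems, so this costs nothing... actually we do \emph{not} want all stems equal; rather we want, for an $(n{+}1)$-linked subset, to derive a contradiction). Here is the clean way: we show $\mc{P}$ is ccc (hence has property {\rm K}$_n$ is not what we need; we directly use that it is $\sigma$-$n$-linked, so by hypothesis it has property {\rm K}$_{n+1}$, a fortiori property {\rm K}$_2$, a fortiori ccc, but even more we get an uncountable $2$-linked subset of any uncountable subset). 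Take $Y=\{(s_\alpha,F_\alpha):\alpha<\omega_1\}$ with $\alpha\in F_\alpha$; apply property {\rm K}$_{n+1}$ (implied by the hypothesis since $\mc{P}$ is $\sigma$-centered, hence $\sigma$-$n$-linked) to get an uncountable $(n{+}1)$-linked $Y'\subseteq Y$. Refining $Y'$ by a $\Delta$-system so the $F_\alpha$ are disjoint off a common root $r$, with common stem length $m=|s_\alpha|$, and so that the indices $\alpha$ are strictly increasing with $\min(F_\alpha\setminus r)\to$ large: pick $\alpha_0<\cdots<\alpha_n$ in $Y'$ with $\beta_i\in F_{\alpha_i}\setminus r$ and $\beta_i$ pairwise distinct; their common lower bound $(t,G)$ has $\mathrm{dom}(t)>m$ and satisfies $t(m)>f_{\beta_i}(m)$ for all $i\le n$ — that alone is no contradiction, so the real point must come from unboundedness across \emph{all} of $\omega_1$, not just $n{+}1$ conditions.

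Therefore the argument I will actually write is the genuinely correct one: it suffices to get a \emph{single} uncountable $2$-linked (even: centered is not needed) subset meeting cofinally many indices, then run the usual ``the family is bounded'' conclusion. So: since $\mc{P}$ is $\sigma$-centered it is $\sigma$-$n$-linked, so by {\rm P($\sigma$-$n$-linked$\ra$K$_{n+1}$)} it has property {\rm K}$_{n+1}$, in particular property {\rm K}$_2$. Apply {\rm K}$_2$ to $Y=\{(s_\alpha,F_\alpha):\alpha<\omega_1,\ \alpha\in F_\alpha\}$ to get an uncountable $2$-linked $Y'$, and let $\Gamma=\{\alpha:(s_\alpha,F_\alpha)\in Y'\}$, an uncountable set. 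For any two $\alpha<\alpha'$ in $\Gamma$ the conditions are compatible, and a density argument over the dense sets $D_k=\{(s,F):|s|\ge k\}$ (which are dense since $\mathfrak{b}=\omega_1$ only bounds, not dominates — but note {\rm K}$_2$ does \emph{not} hand us a filter, so instead I use compatibility directly): one shows by induction that one can find, inside $Y'$, an infinite descending chain of conditions whose stems grow, producing a real dominating $f_\alpha$ for all $\alpha\in\Gamma$; since $\Gamma$ is cofinal in $\omega_1$ (uncountable) and the $f_\alpha$ are $<^*$-increasing, that real dominates the whole family, contradicting unboundedness. The step I expect to be the genuine obstacle — and the one I will have to execute carefully rather than wave at — is extracting this dominating real from mere pairwise compatibility of an uncountable subset: the honest route is to note that pairwise-compatible-plus-$\Delta$-system-plus-same-stem forces the $F_\alpha$'s to ``cohere'' into one condition after countably many steps, so I would first refine $Y'$ so that all stems are equal (countably many stems, uncountable $Y'$, so some stem repeats uncountably often) and then observe that same-stem compatible conditions in this $\mc{P}$ are in fact \emph{centered} as a family — since compatibility of $(s,F)$ and $(s,F')$ with equal stems imposes no constraint at all, any finite subfamily with common stem $s$ has lower bound $(s,\bigcup F_i)$ — whence $\Gamma$ is uncountable and $\{(s,F_\alpha):\alpha\in\Gamma\}$ is centered, and now a straightforward genericity argument against the countably many sets $D_k$ inside this centered family yields the dominating real, the final contradiction. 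This also shows why we only needed {\rm K}$_2$ and hence why the weaker hypothesis {\rm P($\sigma$-$n$-linked$\ra$K$_{n+1}$)} suffices, since it implies $\ms{K}_2$-for-$\sigma$-centered-posets, which is all that is used here.
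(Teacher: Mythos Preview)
Your approach has a fundamental gap: the Hechler-style poset $\mc{P}$ you chose is $\sigma$-centered, and every $\sigma$-centered poset already has precaliber $\omega_1$ (hence K$_{n+1}$) in ZFC---given an uncountable subset, uncountably many members lie in one centered piece. So applying the hypothesis {\rm P($\sigma$-$n$-linked$\ra$K$_{n+1}$)} to $\mc{P}$ is vacuous and yields no information whatsoever. You even notice this implicitly when you observe that same-stem conditions are automatically centered regardless of compatibility; at that point the K$_2$ step did nothing. The final ``genericity argument'' then cannot work either: your centered family $\{(s,F_\alpha):\alpha\in\Gamma\}$ all share the fixed stem $s$, and meeting $D_k$ for $k>|s|$ requires extending the stem, but any such extension $(t,G)$ will in general fail to be compatible with $(s,F_\beta)$ for $\beta$ with $f_\beta(j)\ge t(j)$ at some $j\in[|s|,|t|)$. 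There is no filter through your centered family meeting all $D_k$, and no dominating real emerges.

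The paper instead uses a poset that is genuinely only $\sigma$-$n$-linked: take $\mathbb{P}$ to be the finite $n$-splitting subsets $F$ of $\mc{B}=\{f_\alpha:\alpha<\omega_1\}$ (where $F$ is $n$-splitting if for every $s\in\omega^{<\omega}$, $|\{g(|s|):g\in F,\ s\subset g\}|\le n$), ordered by reverse inclusion. This is $\sigma$-$n$-linked since any $n$ conditions with suitably matching data have union still $n$-splitting, but it is not $\sigma$-centered. Applying K$_{n+1}$ to the singletons gives an uncountable $X\subseteq\mc{B}$ with every $(n{+}1)$-subset $n$-splitting, hence $X$ itself is $n$-splitting; the tree of initial segments of $X$ is then $\le n$-branching, so each level is finite, and $g(k)=1+\max\{f(k):f\in X\}$ dominates $X$ and therefore all of $\mc{B}$. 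The point is that the poset must be chosen so that K$_{n+1}$ is not automatic.
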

\begin{proof}[Sketch Proof.]
Fix a $<^*$-increasing family $\mc{B}=\{f_\alpha\in \omega^\omega: \alpha<\omega_1\}$ such that each $f_\alpha$ is strictly increasing. Say $F\in [\mc{B}]^{<\omega}$ is $n$-splitting if 
\[\text{for every $s\in \omega^{<\omega}$, $|\{g(|s|): g\in F$ and }s\subset g\}|\leq n.\]

Let $\mathbb{P}=\{F\in [\mc{B}]^{<\omega}: F$ is $n$-splitting$\}$ ordered by reverse inclusion. 

It is straightforward to check that $\mathbb{P}$ is $\sigma$-$n$-linked. By {\rm P($\sigma$-$n$-linked$\ra$K$_{n+1}$)}, there is $X\in [\mc{B}]^{\omega_1}$ such that $[X]^{n+1}\subseteq \mathbb{P}$. Then $X$ is $n$-splitting and hence has a $<^*$-upper bound. This upper bound dominates $\mc{B}$.
\end{proof}

Inspired from the coloring proof of Theorem \ref{thm K3 t} and its generalizations, we will use properties of the structure:
\[\langle (\{t_\alpha\cap I_n: \alpha\in \Gamma\}, \subseteq): n\in A\rangle\]
where $A\subseteq \omega$ and $\Gamma\subseteq \omega_1$. First, we recall a concept.

For a finite partial order $(P,\prec)$, the \emph{width} of $(P,\prec)$ is the maximal size of a subset consisting of pairwise incomparable elements, i.e.,
\[\max(\{|F|: F\subseteq P\text{ and } \forall a\neq b\in F ~ (a\not\prec b \wedge b\not\prec a)\}.\]

For a tower $T=\{t_\alpha: \alpha<\omega_1\}$ and $I\in [\omega]^\omega$, our goal is:

\begin{itemize}
\item Find    $\Gamma\in [\omega_1]^{\omega_1}$ such that 
\[(\{t_\alpha\cap I_m: \alpha\in \Gamma\}, \subseteq)\text{ has width $\leq n$ for all }m<\omega.\]
\end{itemize}

For this, we first introduce the $\sigma$-$n$-linked poset.
\begin{defn}\label{defn tower poset}
For a tower $T=\{t_\alpha: \alpha<\omega_1\}$ and $I\in [\omega]^\omega$,  $\mathbb{P}_{T, I}$ is the poset consisting of $F\in [\omega_1]^{<\omega}$ such that for every $m<\omega$, $(\{t_\alpha\cap I_m: \alpha\in F\}, \subseteq)$ has width $\leq n$. The order is reverse inclusion.
\end{defn}
We will need $\mathfrak{b}>\omega_1$ to guarantee   that the potential pseudointersection    is infinite.
\begin{prop}\label{prop sK3toK4 t}
{\rm P($\sigma$-$n$-linked$\ra$K$_{n+1}$)} implies $\mathfrak{t}>\omega_1$. In particular, {\rm P($\sigma$-$n$-linked$\ra$K$_{n+1}$)} implies {\rm MA$_{\omega_1}(\sigma$-centered)}.
\end{prop}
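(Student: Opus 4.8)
The plan is to fill an arbitrary tower $T=\{t_\alpha:\alpha<\omega_1\}$; this gives $\mathfrak{t}>\omega_1$, and the ``in particular'' clause then follows, since by Theorem \ref{thm BRMS} one has $\mathfrak{t}>\omega_1\Leftrightarrow\mathfrak{p}>\omega_1\Leftrightarrow{\rm MA}_{\omega_1}(\sigma\text{-centered})$. First I would invoke Proposition \ref{prop sK3toK4 b} to get $\mathfrak{b}>\omega_1$, and use it to pass to a sub-tower and to fix $I\in[\omega]^\omega$ with $t_\alpha\cap I_m\neq\emptyset$ for all $\alpha$ and all $m$. In fact $\mathfrak{b}>\omega_1$ is needed here for more than nonemptiness of the intervals: since there are only $\omega_1<\mathfrak{b}$ many finite intersections $t_{\gamma_1}\cap\cdots\cap t_{\gamma_n}$ of members of $T$, I would choose $I$ so finely (dominating the corresponding ``next-element'' functions) that every such intersection meets cofinitely many $I_m$; this is exactly what will be needed to guarantee that the pseudointersection we extract is infinite.

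Next I would analyze $\mathbb{P}_{T,I}$ from Definition \ref{defn tower poset} and check it is $\sigma$-$n$-linked. For a condition $F$, the finite family $\{t_\alpha:\alpha\in F\}$ is $\subseteq^*$-linearly ordered, so there is a least $j(F)<\omega$ such that $\{t_\alpha\setminus I(j(F)):\alpha\in F\}$ is linearly ordered by $\subseteq$; let the \emph{trace} of $F$ be $j(F)$ together with the finite multiset $\{t_\alpha\cap I(j(F)):\alpha\in F\}$. There are only countably many traces, and if $F_0,\dots,F_{n-1}$ share a trace then $F_0\cup\cdots\cup F_{n-1}\in\mathbb{P}_{T,I}$: at a level $m$ with $I(m+1)\le I(j)$ the sets $t_\alpha\cap I_m$ ($\alpha\in F_i$) form the same family as for $F_0$, hence width $\le n$; at a level $m$ with $I(m)\ge I(j)$ each $F_i$ contributes a $\subseteq$-chain $\{(t_\alpha\setminus I(j))\cap I_m:\alpha\in F_i\}$, so the union contributes at most $n$ chains, hence width $\le n$; and since $I(j)$ is an endpoint of the partition there are no intermediate levels. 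Thus $\mathbb{P}_{T,I}$ is a countable union of $n$-linked sets. Now the hypothesis {\rm P}($\sigma$-$n$-linked$\ra{\rm K}_{n+1}$) gives that $\mathbb{P}_{T,I}$ has property ${\rm K}_{n+1}$; applying this to the uncountable set of singletons $\{\{\alpha\}:\alpha<\omega_1\}$ yields an uncountable $\Gamma\subseteq\omega_1$ with $\{\{\gamma\}:\gamma\in\Gamma\}$ $(n+1)$-linked, i.e.\ $\{\gamma_0,\dots,\gamma_n\}\in\mathbb{P}_{T,I}$ for all $n+1$ elements of $\Gamma$. Since an antichain of size $n+1$ in some $(\{t_\gamma\cap I_m:\gamma\in\Gamma\},\subseteq)$ would refute this, we conclude that for every $m$ the family $\{t_\gamma\cap I_m:\gamma\in\Gamma\}$ has width $\le n$.

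The remaining step, which I expect to be the main obstacle, is to deduce that $\bigcap_{\gamma\in\Gamma}t_\gamma$ is infinite; once this is known it is contained in $t_\gamma$ for any $\gamma\in\Gamma$ above a prescribed $\alpha$, hence $\subseteq^*t_\alpha$ for all $\alpha$, so it fills $T$. For each $m$, $\bigcap_{\gamma\in\Gamma}(t_\gamma\cap I_m)$ is the intersection of the at most $n$ $\subseteq$-minimal members of $\{t_\gamma\cap I_m:\gamma\in\Gamma\}$, and the witnesses of these minimal members over all $m$ lie in a single countable $\Gamma_0\subseteq\Gamma$. The key is that the tower ordering forces, for all but finitely many $m$, that the sets $t_\gamma\cap I_m$ with $\gamma\in\Gamma_0$ already form a $\subseteq$-chain — this is where the fine choice of $I$ against $\mathfrak{b}>\omega_1$ enters, to control uniformly the finitely many ``errors'' $t_{\gamma'}\setminus t_\gamma$ that could destroy comparability — so that at cofinitely many levels there is a unique minimal member, which is nonempty by the choice of $I$. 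Hence $\bigcap_{\gamma\in\Gamma}(t_\gamma\cap I_m)\neq\emptyset$ for infinitely many $m$, and $\bigcap_{\gamma\in\Gamma}t_\gamma$ is infinite.

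The delicate point throughout is the interaction between the fixed interval partition $I$ and the a priori unknown countable set $\Gamma_0$ of minimal witnesses: the width bound alone does not prevent the $\le n$ minimal members at a level from having empty common intersection, so $\mathfrak{b}>\omega_1$ has to be invested already when $I$ is built (and the tower thinned), not merely at the end, so that whatever $\Gamma$ and $\Gamma_0$ arise, only finitely many levels are ``bad''. Getting this bookkeeping exactly right — in particular making ``the errors $t_{\gamma'}\setminus t_\gamma$ are caught uniformly'' a condition on $I$ that can be arranged before $\Gamma$ is chosen — is the technical heart of the argument.
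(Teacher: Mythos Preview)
Your setup and the $\sigma$-$n$-linked verification are essentially the paper's, but the final step --- extracting a pseudointersection from the width-$\le n$ family --- has a genuine gap that no advance choice of $I$ can close.

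You want the countable witness set $\Gamma_0$ to yield a $\subseteq$-chain at cofinitely many levels $m$. For a single pair $\gamma<\gamma'$ in $\Gamma_0$ the error $t_{\gamma'}\setminus t_\gamma$ is finite, so only finitely many $m$ are bad; but $\Gamma_0$ is infinite and the bad levels for different pairs can cover all of $\omega$, no matter how $I$ was chosen. The interval system $I$ is fixed before $\Gamma$ (and hence $\Gamma_0$) is produced by the application of K$_{n+1}$, and there is no uniform bound on $\bigcup_{\gamma<\gamma'\in\Gamma_0}(t_{\gamma'}\setminus t_\gamma)$ for an arbitrary countable $\Gamma_0$. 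Your ``fine'' requirement that each $n$-fold intersection $t_{\gamma_1}\cap\cdots\cap t_{\gamma_n}$ meet cofinitely many $I_m$ is actually automatic --- that intersection is $=^* t_{\max_i\gamma_i}$ --- and says nothing about infinite subfamilies. More bluntly: $\bigcap_{\gamma\in\Gamma}(t_\gamma\cap I_m)=(\bigcap_{\gamma\in\Gamma}t_\gamma)\cap I_m$, so ``nonempty at infinitely many $m$'' is precisely the conclusion you are trying to reach, and the detour through $\Gamma_0$ has not reduced the problem.

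The paper does \emph{not} try to show $\bigcap_{\gamma\in X}t_\gamma$ is infinite. It builds a different pseudointersection via a minimality trick. Writing $\mc{I}^X_k=\{t_\alpha\cap I_k:\alpha\in X\}$, for each $m\le n$ ask whether there exist $Y\in[X]^{\omega_1}$, $A\in[\omega]^\omega$ and $\langle\mc{I}'_k\in[\mc{I}^X_k]^m:k\in A\rangle$ such that every $\alpha\in Y$ has $\{k\in A:\forall a\in\mc{I}'_k\ a\not\subseteq t_\alpha\}$ finite; taking $\mc{I}'_k=\min_\subseteq(\mc{I}^X_k)$ shows some $m\le n$ works. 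Let $n^*$ be least such, with witnesses $Y,A,\langle\mc{I}'_k\rangle$. Then $\bigcup_{k\in A}\bigcup\mc{I}'_k\subseteq^* t_\alpha$ for every $\alpha\in Y$: otherwise there are $\alpha\in Y$, $B\in[A]^\omega$ and $a_k\in\mc{I}'_k$ with $a_k\not\subseteq t_\alpha$ for $k\in B$; taking any $\beta\in Y\setminus(\alpha+1)$, for large $k\in B$ some $a\in\mc{I}'_k$ has $a\subseteq t_\beta\cap I_k\subseteq t_\alpha$, hence $a\ne a_k$, and $\langle\mc{I}'_k\setminus\{a_k\}:k\in B\rangle$ witnesses the property for $n^*-1$, contradicting minimality. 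This \emph{union} is the infinite set that fills $T$; the descent on $n^*$ is the idea your outline is missing.
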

\begin{proof}
Fix a tower $T=\{t_\alpha: \alpha<\omega_1\}$. By Proposition \ref{prop sK3toK4 b}, $\mathfrak{b}>\omega_1$. Replacing $T$ by a sub-tower if necessary, we find  $I\in [\omega]^\omega$ such that
\begin{enumerate}
\item for every $\alpha<\omega_1$ and every $m<\omega$, $t_\alpha\cap I_m\neq\emptyset$.
\end{enumerate}
Denote for $m<\omega$ and $F\subseteq \omega_1$,
\[\mc{I}^F_m=\{t_\alpha\cap I_m: \alpha\in F\}.\]
\medskip

\textbf{Claim.} $\mathbb{P}_{T, I}$, defined in Definition \ref{defn tower poset}, is $\sigma$-$n$-linked. 

\begin{proof}[Proof of Claim.]
Fix a partition $\mathbb{P}_{T, I}=\bigcup_{m<\omega} P_m$ such that for every $m$, there are $N, M<\omega$ satisfying
\begin{enumerate}\setcounter{enumi}{1}
\item for $F\in P_m$,  $|F|=N$ and $t_{F(j)}\setminus M\subseteq t_{F(i)}$ whenever $i<j<N$;
\item for $E, F\in P_m$ and $i<N$, $t_{E(i)}\cap I(M+1)=t_{F(i)}\cap I(M+1)$.
\end{enumerate}

It suffices to show that each $P_m$ is $n$-linked.  Fix $m<\omega$ and $F_0,...,F_{n-1}\in P_m$. Let $F=\bigcup_{i<n} F_i$. To show $F\in \mathbb{P}_{T, I}$, we will check that for every $k<\omega$, $(\mc{I}^F_k, \subseteq)$ has width $\leq n$. Let $N, M$ witness (2)-(3).

For $k\leq M$, by (3), $(\mc{I}^F_k, \subseteq)=(\mc{I}^{F_0}_k, \subseteq)$ and hence has width $\leq n$.

For $k>M$, by (2), $(\mc{I}^{F_i}_k, \subseteq)$ is linear for each $i<n$. So $\mc{I}^F_k=\bigcup_{i<n} \mc{I}^{F_i}_k$  has width $\leq n$.

So $F\in \mathbb{P}_{T, I}$ and hence is a lower bound of $F_0,..., F_{n-1}$. This shows that $\mathbb{P}_{T, I}$ is $\sigma$-$n$-linked and finishes the proof of the claim.
\end{proof}

By {\rm P($\sigma$-$n$-linked$\ra$K$_{n+1}$)}, fix $X\in [\omega_1]^{\omega_1}$ such that $[X]^{n+1}\subseteq \mathbb{P}_{T, I}$.
Note that if  $(\mc{I}^X_k, \subseteq)$ has width $> n$ for some $k$, then this fact is witnessed by an $(n+1)$-element subset of $X$. So,
\begin{enumerate}\setcounter{enumi}{3}
\item for every $k<\omega$, $(\mc{I}^X_k, \subseteq)$ has width $\leq n$.
\end{enumerate}
Consequently, for all $k<\omega$, $|\min_{\subseteq}(\mc{I}^X_k)|\leq n$ where $\min_{\subseteq}(\mc{I}^X_k)$ is the collection of $\subseteq$-minimal elements of $\mc{I}^X_k$.

So there exists $m\leq n$ with the following property.
\begin{enumerate}\setcounter{enumi}{4}
\item For some $Y\in [X]^{\omega_1}$, $A\in [\omega]^\omega$ and $\langle \mc{I}'_k\in [\mc{I}^X_k]^m: k\in A\rangle$, 
\[\text{ for every } \alpha\in Y, ~ \{k\in A: \forall a\in \mc{I}'_k ~ a\not\subseteq t_\alpha\}\text{ is finite}.\]
\end{enumerate}
To see this, 
take $m\leq n$ such that $A=\{k<\omega: |\min_{\subseteq}(\mc{I}^X_k)|=m\}$ is infinite. Then $X, A$ and $\langle \min_{\subseteq}(\mc{I}^X_k): k\in A\rangle$ witness (5) for $m$. 

Let $n^*$   be the least $m$ satisfying (5) witnessed by
$Y$, $A$ and $\langle \mc{I}'_k: k\in A\rangle$. We claim that 
\begin{enumerate}\setcounter{enumi}{5}
\item for every $\alpha\in Y$, $\bigcup_{k\in A} \bigcup \mc{I}'_k\subseteq^* t_\alpha$.
\end{enumerate}
Suppose otherwise. There are  $\alpha\in Y$, $B\in [A]^\omega$ and $\{a_k\in\mc{I}'_k: k\in B\}$ such that 
\[a_k\not\subseteq t_\alpha\text{ for all }k\in B.\] 
Now fix $\beta\in Y\setminus \alpha$. By (5) and $t_\beta\subseteq^* t_\alpha$, for large enough  $k\in B$,
\[\text{there exists $a\in \mc{I}'_k$ with }a\subseteq t_\beta\cap I_k\subseteq t_\alpha.\] 
Note $a\neq a_k$ and hence $a\in \mc{I}'_k\setminus \{a_k\}$. This shows that $Y\setminus \alpha$, $B$ and $\langle \mc{I}'_k\setminus \{a_k\}: k\in B\rangle$ witness (5) for $n^{*}-1$. But this contradicts minimality of $n^*$.

This contradiction shows that (6) holds. Together with (1), $\bigcup_{k\in A} \bigcup \mc{I}'_k$ is infinite and hence fills the tower $T$.

Since $T$ is an arbitrary tower, $\mathfrak{t}>\omega_1$. The in particular part follows from Theorem \ref{thm BRMS}.
\end{proof}

We also present a coloring method, but leave the details to interested reader. 

For $\mc{B}$ as in Proposition \ref{prop sK3toK4 b}, define $c: [\omega_1]^{n+1}\ra 2$ by $c(\alpha_0,...,\alpha_n)=0$ iff
\[e(\alpha_0, \alpha_1)>e(\alpha_1, \alpha_2)>\cdots>e(\alpha_{n-1}, \alpha_n)\ra f_{\alpha_n}(e(\alpha_{n-1}, \alpha_n))\leq e(\alpha_0, \alpha_1).\]
Then $\mc{H}^c_0$ is $\sigma$-$n$-linked and an uncountable 0-homogeneous subset of $c$ will induce an upper bound of $\mc{B}$.

For a tower $T=\{t_\alpha: \alpha<\omega_1\}$ and $I\in [\omega]^\omega$ such that $t_\alpha\cap I_m\neq\emptyset$ for all $\alpha$ and $m$,
define a coloring $\pi: [\omega_1]^{n+1}\ra 2$ by $\pi(\alpha_0,...,\alpha_n)=0$ iff
\begin{align*}
e(\alpha_0, \alpha_1)=e(\alpha_0, \alpha_2)=\cdot\cdot\cdot&=e(\alpha_0, \alpha_n)  \\
&\ra t_{\alpha_i}\cap I_{e(\alpha_0, \alpha_1)}\subseteq t_{\alpha_j} \text{ for some } 0<i\neq j\leq n.
\end{align*}
Then $\mc{H}_0^\pi$ is $\sigma$-$n$-linked and an uncountable 0-homogeneous subset of $\pi$ induces $A\in [\omega]^\omega$ and $\Gamma\in [\omega_1]^{\omega_1}$ such that
\begin{itemize}
\item for every $m\in A$, $(\{t_\alpha\cap I_m: \alpha\in \Gamma\}, \subseteq)$ has width $\leq n-1$.
\end{itemize}
Then above property will induce an infinite set filling the tower $T$.

The above forcing proof induces an uncountable $X$ such that $(\{t_\alpha\cap I_m: \alpha\in X\}, \subseteq)$ has width $\leq n$ for all $m$. And the coloring method induces an uncountable $\Gamma$ such that $(\{t_\alpha\cap I_m: \alpha\in \Gamma\}, \subseteq)$ has width $\leq n-1$ for infinitely many $m$. The latter has a smaller width because in the coloring method, one of the $n+1$ inputs is only used to compute the interval $I_m$.

\subsection{{\rm P$_{\omega_1}$(precaliber $\omega_1\ra \sigma$-linked)} implies $\mathfrak{b}>\omega_1$}

As in the proof of Proposition \ref{prop sK3toK4 t}, we need first to prove $\mathfrak{b}>\omega_1$ before proving $\mathfrak{t}>\omega_1$. 

A \emph{C-sequence}  (or a \emph{ladder system}) is a sequence $\langle C_\alpha: \alpha<\omega_1\rangle$ such that $C_{\alpha+1}=\{\alpha\}$ and for limit ordinal $\alpha$, $C_\alpha$ is a cofinal subset of $\alpha$ of order type $\omega$.

The following well-known fact will be used to justify the coloring property.
\begin{fact}\label{fact1}
If $\Gamma\in [\omega_1]^{\omega_1}$, then for all but countably many $\alpha\in \Gamma$, for every $n<\omega$,   $\{\beta\in \Gamma\setminus (\alpha+1): e(\alpha, \beta)>n\}$ is uncountable.
\end{fact}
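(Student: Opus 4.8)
\textbf{Proof proposal for Fact \ref{fact1}.}

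The plan is to argue by contradiction, isolating the ``bad'' ordinals in $\Gamma$ and showing they must be countable, while exploiting the coherence of $e$ together with property (coh1) — namely that each $e_\beta$ is one-to-one. First I would call $\alpha\in\Gamma$ \emph{bad} if there is some $n<\omega$ such that $\{\beta\in\Gamma\setminus(\alpha+1): e(\alpha,\beta)>n\}$ is countable; equivalently, there is $n_\alpha<\omega$ and a countable ordinal $\gamma_\alpha$ such that $e(\alpha,\beta)\leq n_\alpha$ for all $\beta\in\Gamma\cap(\gamma_\alpha,\omega_1)$. Our goal is to show that the set $B$ of bad $\alpha$ is countable. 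Suppose not; by the pigeonhole principle, fix a single $n$ and an uncountable $B'\subseteq B$ such that $e(\alpha,\beta)\leq n$ for every $\alpha\in B'$ and every $\beta$ in some co-countable tail of $\Gamma$ (the tail depending on $\alpha$, but we can thin $B'$ further so that a single $\beta^*\in\Gamma$ works for all of them, by choosing $\beta^*$ above $\sup_{\alpha\in B'\cap\delta}\gamma_\alpha$ for a suitable $\delta$ with $B'\cap\delta$ uncountable).

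The contradiction then comes from (coh1): fix such a $\beta^*\in\Gamma$ above uncountably many $\alpha\in B'$, and observe that for all those uncountably many $\alpha$ we have $e(\alpha,\beta^*)=e_{\beta^*}(\alpha)\leq n$. But $e_{\beta^*}$ is a one-to-one function from $\beta^*$ into $\omega$, so $e_{\beta^*}^{-1}[\,n+1\,]$ is finite — it cannot contain uncountably many $\alpha$. This is the contradiction, so $B$ is countable, which is exactly the statement of the fact. I do not expect any genuine obstacle here; the only point requiring a little care is the double thinning to get a \emph{single} witness $\beta^*$ serving uncountably many bad $\alpha$ at once, which is a routine $\Delta$-system / pigeonhole maneuver once one notices that the countable sets $\{\beta\in\Gamma\setminus(\alpha+1): e(\alpha,\beta)>n_\alpha\}$ are each bounded below $\omega_1$ and so, for an uncountable subfamily, commonly bounded below some fixed $\delta<\omega_1$; any $\beta^*\in\Gamma\setminus\delta$ above that countably many bad ordinals then works.

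An alternative framing, which avoids contradiction, is to note directly that for \emph{every} $\alpha<\omega_1$ and every $n$, the set $\{\beta<\omega_1: e(\alpha,\beta)\leq n\}$ is contained in $\{\beta: \alpha\in e_\beta^{-1}[n+1]\}\cup(\alpha+1)$; one would like to say this is countable, but that is not immediate from (coh1) alone for a fixed $\alpha$ ranging over all $\beta$. So the contradiction route through a fixed $\beta^*$ — where (coh1) applies to the single function $e_{\beta^*}$ — is cleaner, and that is the one I would write up.
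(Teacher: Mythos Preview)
Your approach is the same as the paper's: assume uncountably many $\alpha\in\Gamma$ are bad, pigeonhole to a common bound $n$, and contradict (coh1) by finding a single $\beta$ with too many $\alpha<\beta$ satisfying $e(\alpha,\beta)\le n$. The paper phrases the thinning as an inductive construction of an uncountable $X\subseteq\Gamma$ with $e(\alpha,\beta)\le n$ for all $\alpha<\beta$ in $X$, after which any $\beta\in X$ with $X\cap\beta$ infinite gives the contradiction; your single-$\beta^*$ version is a minor variant.

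There is, however, a slip in your thinning step. You assert that for an uncountable subfamily of $B'$ the thresholds $\gamma_\alpha$ are commonly bounded below some fixed $\delta<\omega_1$ (and in the first paragraph you speak of ``$B'\cap\delta$ uncountable'' for $\delta<\omega_1$, which is impossible). In general no such common bound exists: the map $\alpha\mapsto\gamma_\alpha$ may well be cofinal in $\omega_1$. The fix is that you do not need uncountably many $\alpha$ below $\beta^*$; since $e_{\beta^*}$ is injective, $e_{\beta^*}^{-1}[n+1]$ has at most $n+1$ elements, so $n+2$ bad $\alpha$'s suffice. Pick any $\alpha_0<\cdots<\alpha_{n+1}$ in $B'$, set $\delta=\max\bigl(\alpha_{n+1},\sup_{k\le n+1}\gamma_{\alpha_k}\bigr)+1$, and take any $\beta^*\in\Gamma\setminus\delta$. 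With this correction your argument goes through and matches the paper's.
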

\begin{proof}
Suppose otherwise. For uncountably many $\alpha\in \Gamma$, there is $n_\alpha<\omega$ such that $\{\beta\in \Gamma\setminus (\alpha+1): e(\alpha, \beta)>n_\alpha\}$ is at most countable.
Then there are $n<\omega$ and $X\in [\Gamma]^{\omega_1}$ such that for $\alpha<\beta$ in $X$, $e(\alpha, \beta)\leq n$.  This clearly contradicts (coh1).
\end{proof}

Now we are ready to prove the result of this subsection.
\begin{thm}\label{thm ptosK2 b}
{\rm P$_{\omega_1}$(precaliber $\omega_1\ra \sigma$-linked)} implies $\mathfrak{b}>\omega_1$
\end{thm}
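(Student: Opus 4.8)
The plan is to mimic the forcing proof of Proposition~\ref{prop sK3toK4 t}, but with a precaliber $\omega_1$ poset built from a $<^*$-increasing family and a C-sequence, so that the uncountable centered subset yields a dominating function. First I would fix a $<^*$-increasing family $\mc{B}=\{f_\alpha\in \omega^\omega:\alpha<\omega_1\}$ with each $f_\alpha$ strictly increasing, and a C-sequence $\langle C_\alpha:\alpha<\omega_1\rangle$. The key is to design a poset $\mathbb{P}$ whose conditions are finite subsets $F\in[\omega_1]^{<\omega}$ carrying a promise that controls the values $f_\beta(e(\alpha,\beta))$ along ``walks'' or along the coherent function $e$: roughly, a condition should consist of a finite $F$ together with a finite bound $N$ such that for all $\alpha<\beta$ in $F$, $f_\beta(e(\alpha,\beta))\le N$ (or some variant tying the interval index $e(\alpha,\beta)$ to a uniform bound). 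The order is end-extension/refinement in the usual way.

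Next I would verify that $\mathbb{P}$ has precaliber $\omega_1$, not merely $\sigma$-linkedness: given uncountably many conditions, pass to an uncountable set forming a $\Delta$-system with root $\bar{F}$, with isomorphic $(F,e)$-types, the same size and the same bound $N$, and with the functions $f_{F(i)}$ agreeing below $N$ for indices $i$ enumerating positions; then Fact~\ref{fact1} and the coherence properties (coh1)--(coh2) of $e$ should let me argue that any finite subfamily of the remaining conditions has a common lower bound --- the crucial point being that for $\alpha,\beta$ coming from different conditions in the $\Delta$-system, $e(\alpha,\beta)$ is large (above $N$ and above the relevant thresholds), so the promise is automatically respected there, while within a single condition or within the root it was already respected. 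This is where the careful bookkeeping of (coh2) --- that $e$ takes its larger value on the ``outer'' pair --- does the work, exactly as in Case~1 and Case~2 of Claim~1 in Theorem~\ref{thm K3 t}.

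Then, applying {\rm P$_{\omega_1}$(precaliber $\omega_1\ra\sigma$-linked)} to (a size-$\omega_1$ version of) $\mathbb{P}$, I get $\mathbb{P}=\bigcup_{k<\omega}Q_k$ with each $Q_k$ linked, hence (after thinning) an uncountable $X\in[\omega_1]^{\omega_1}$ lying inside one $Q_k$ together with a uniform bound $N$ such that all pairwise unions of singleton conditions from $X$ stay in $\mathbb{P}$ with bound $N$; concretely $f_\beta(e(\alpha,\beta))\le N$ for all $\alpha<\beta$ in $X$. Using (coh1) to see that $\{e(\alpha,\beta):\beta\in X\setminus(\alpha+1)\}$ is infinite for suitable fixed $\alpha\in X$, I conclude that $f_\beta$ is bounded by $N$ on an infinite set of coordinates for cofinally many $\beta$ --- and by a diagonalization over a countable increasing sequence $\alpha_0<\alpha_1<\cdots$ in $X$ (each contributing a different infinite coordinate set $\{e(\alpha_i,\beta):\beta\}$) I can patch together a single function $g$ with $f_\beta\le^* g$ for all $\beta\in X$, hence $g$ dominates a cofinal, $<^*$-increasing piece of $\mc{B}$, so $\mc{B}$ is dominated and $\mathfrak{b}>\omega_1$.

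The main obstacle I expect is arranging the poset so that it genuinely has precaliber $\omega_1$ (the $\Delta$-system argument for arbitrary finite subfamilies, not just pairs) while still forcing enough coordinates of each $f_\beta$ to be bounded --- there is a tension, since a too-generous promise kills precaliber and a too-stingy one yields no domination. I would resolve this by letting a condition record, for each pair $\alpha<\beta$ in $F$, only the single inequality $f_\beta(e(\alpha,\beta))\le N$ for a \emph{shared} $N$, relying on the fact that the ``new'' pairs created when amalgamating $\Delta$-system conditions have $e$-value above every threshold mentioned in the conditions, so the promise about them is vacuously inherited --- this is the same mechanism that makes the tower posets in Section~3 work, and I expect the verification to be a routine, if slightly lengthy, case analysis along the lines of Claim~1 of Theorem~\ref{thm K3 t}.
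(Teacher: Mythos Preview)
Your plan has a genuine gap at the amalgamation step. You propose the promise $f_\beta(e(\alpha,\beta))\le N$ and claim that for ``new'' pairs (with $\alpha,\beta$ from different $\Delta$-system pieces) the promise is vacuously satisfied because $e(\alpha,\beta)$ is large. But the inequality goes the wrong way: since each $f_\beta$ is strictly increasing from $\omega$ to $\omega$, large $e(\alpha,\beta)$ forces $f_\beta(e(\alpha,\beta))\ge e(\alpha,\beta)>N$, so the promise is \emph{violated}, not vacuous, on every new pair. Hence if $N$ is frozen you do not get precaliber $\omega_1$; and if you allow the common lower bound to raise $N$, then any finite set of conditions is compatible, the poset is already centered, and a $\sigma$-linked partition carries no information. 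Indeed your target conclusion ``$f_\beta(e(\alpha,\beta))\le N$ for all $\alpha<\beta$ in an uncountable $X$'' is outright impossible: for cofinally many $\beta\in X$ the set $\{e(\alpha,\beta):\alpha\in X\cap\beta\}$ is infinite, and a strictly increasing $f_\beta$ cannot be bounded on an infinite set.

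What the paper does instead is the missing idea: it replaces the fixed bound $N$ by a \emph{variable} bound coming from a third coordinate. The condition is on triples $\alpha<\beta<\gamma$ and reads
\[
\bigl(e(\alpha,\beta)\le e(\beta,\gamma)\ \wedge\ C_\gamma\cap(\alpha,\beta]\neq\emptyset\bigr)\ \Rightarrow\ f_\gamma(e(\alpha,\beta))\le e(\beta,\gamma),
\]
so the bound $e(\beta,\gamma)$ grows with the pair. The $C$-sequence hypothesis $C_\gamma\cap(\alpha,\beta]\neq\emptyset$ is exactly what makes the precaliber $\omega_1$ verification go through: after the $\Delta$-system and Pressing Down refinements, new triples either have $e(\beta,\gamma)\ge M>f_\gamma(N)$ (so the implication holds) or have $C_\gamma\cap(\alpha,\beta]=\emptyset$ (so the hypothesis fails). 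In the extraction, linkedness is applied not to single ordinals but to \emph{pairs} $\{\alpha,\gamma\}\in\mc{H}^\pi_0$: linkedness of two such pairs gives $0$-homogeneity of a four-element set, hence of the needed triple $\{\alpha,\beta_k,\gamma\}$, and the bound $f_\gamma(k)\le e(\beta_k,\gamma)$ then yields a dominating function via the coherence of $e$. Your sketch never uses the $C$-sequence you fixed, and the binary promise cannot manufacture this three-input mechanism.
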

 \begin{proof}
 Fix a $<^*$-increasing family $\mc{B}=\{f_\alpha\in \omega^\omega: \alpha<\omega_1\}$ such that each $f_\alpha$ is strictly increasing. Fix a C-sequence $\langle C_\alpha: \alpha<\omega_1\rangle$.
 
 Define a coloring $\pi: [\omega_1]^{<\omega}\ra 2$ by $\pi(a)=0$ iff for all $\alpha<\beta<\gamma $  in $a$,
 \[ (e(\alpha, \beta)\leq e(\beta, \gamma) \wedge C_\gamma\cap (\alpha, \beta]\neq \emptyset)\ra f_\gamma(e(\alpha, \beta))\leq e(\beta, \gamma). \]
 Recall that $\mc{H}^\pi_0$ is $\pi^{-1}\{0\}$ ordered by reverse inclusion.\medskip
 
 \textbf{Claim.}   $\mc{H}^\pi_0$ has precaliber $\omega_1$. 
 \begin{proof}[Proof of Claim.]
 Fix an uncountable $\mc{A}\subseteq\mc{H}^\pi_0$. Find $\mc{A}'\in [\mc{A}]^{\omega_1}$ such that
  \begin{enumerate}
  \item $\mc{A}'$ forms a $\Delta$-system with root $\overline{p}$.
    \end{enumerate}
  Find  a stationary set $\Sigma\subseteq \omega_1$, $\{p_\alpha\in \mc{A}': \alpha\in \Sigma\}$ and $N<M<\omega$ such that
 \begin{enumerate}\setcounter{enumi}{1}
 \item for all $\alpha<\beta$ in $\Sigma $,  $\max(\overline{p})<\alpha$ and $\max(p_\alpha)<\beta<\min (p_\beta\setminus \overline{p})$;
 \item   for all $\alpha\in \Sigma$, $e[[p_\alpha]^2]\subseteq N$ and $\max(\{f_\xi(N): \xi\in p_\alpha\})<M$.
  \end{enumerate}
  
  Note for a given $\beta\in \Sigma$, $\bigcup_{\eta\in p_\beta\setminus \overline{p}} e^{-1}_\eta[M]$ is  finite. So by the Pressing Down Lemma,  find a stationary subset $\Sigma'\subseteq\Sigma$ such that
   \begin{enumerate}\setcounter{enumi}{3}
 \item for all $\alpha<\beta$ in $\Sigma'$, for all $\xi\in p_\alpha\setminus \overline{p}$ and all $\eta\in p_\beta\setminus \overline{p}$, $e(\xi, \eta)\geq M$.
 \end{enumerate}
 
Note by (2),  $\bigcup_{\eta\in p_\beta\setminus \overline{p}}  (C_\eta\cap \beta)$ is  finite for every $\beta\in \Sigma$. So by the Pressing Down Lemma again,  find a stationary subset $\Gamma\subseteq \Sigma'$ such that
 \begin{enumerate}\setcounter{enumi}{4}
 \item for some $\overline{\alpha}<\min(\Gamma)$, for all $\alpha\in\Gamma$ and  all $\xi\in p_\alpha\setminus \overline{p}$, $C_\xi\cap \alpha\subseteq \overline{\alpha}$.
 \end{enumerate}
 
We check that $\{p_\alpha: \alpha\in \Gamma\}$ is centered. 
Fix $\alpha<\beta<\gamma$ in $\bigcup_{\xi\in \Gamma} p_\xi$. It suffices to prove that $\pi(\alpha, \beta, \gamma)=0$. So assume 
\begin{enumerate}\setcounter{enumi}{5}
\item $e(\alpha, \beta)\leq e(\beta, \gamma) \text{ and } C_\gamma\cap (\alpha, \beta]\neq \emptyset$.
\end{enumerate}\medskip

\textbf{Case 1.} $\alpha\in \overline{p}$. \medskip

By (3), 
\[e(\alpha, \beta)<N.\]

If $e(\beta, \gamma)\geq N$, then by (3), $\beta$ and $\gamma$ are not contained in a single $p_\xi$. Then by (4),
$e(\beta, \gamma)\geq M$. Together with (3) and the fact that $f_\gamma$ is strictly increasing, 
\[f_\gamma(e(\alpha, \beta))<f_\gamma(N)<M\leq e(\beta, \gamma)\text{ and so } \pi(\alpha, \beta, \gamma)=0.\]

If $e(\beta, \gamma)<N$, then by (4), $\beta$ and $\gamma$ are from a single $p_\xi$. Then $\{\alpha, \beta, \gamma\}\subseteq p_\xi$. Hence $\pi(\alpha, \beta, \gamma)=0$.\medskip

\textbf{Case 2.} $\alpha\in p_{\alpha'}\setminus \overline{p}$ for some $\alpha'\in \Gamma$.\medskip

 Assume $\gamma\in p_{\gamma'}\setminus \overline{p}$ for some $\gamma'\in \Gamma$. 

If $\alpha'<\gamma'$, then by (5), $C_\gamma\cap \gamma'\subseteq \overline{\alpha}< \alpha'<\alpha$. Together with (6),  we conclude that $\beta\geq\gamma'$. So $\gamma'\leq \beta<\gamma$ and $\gamma\in p_{\gamma'}\setminus \overline{p}$.  Then by (2),  $\beta\in p_{\gamma'}$. But then by (3)-(4), 
\[e(\alpha, \beta)\geq M>N> e(\beta, \gamma).\]
 This contradicts (6). 

This contradiction shows that $\alpha'=\gamma'$. So $\{\alpha, \beta, \gamma\}\subseteq p_{\alpha'}$ and hence $\pi(\alpha, \beta, \gamma)=0$.\medskip

So in any case, $\pi(\alpha, \beta, \gamma)=0$. This shows that $\{p_\alpha: \alpha\in \Gamma\}$ is centered and hence $\mc{H}^\pi_0$ has precaliber $\omega_1$.
\end{proof}

By {\rm P$_{\omega_1}$(precaliber $\omega_1\ra \sigma$-linked)}, $\mc{H}^\pi_0$ is $\sigma$-linked. Let
\[\mc{H}^\pi_0=\bigcup_{n<\omega} P_n \text{ be a partition of $\mc{H}^\pi_0$ into countably many linked subsets}.\]

Note for every $\alpha<\omega_1$, there exists $n<\omega$ such that $\{\gamma>\alpha: \{\alpha, \gamma\}\in P_n\}$  is stationary. So
we find $n<\omega$ such that
\[X=\{\alpha<\omega_1: \{\gamma>\alpha: \{\alpha, \gamma\}\in P_n\}\text{ is stationary$\}$ is uncountable}.\]
By Fact \ref{fact1}, choose $\alpha\in X$ such that 
\begin{enumerate}\setcounter{enumi}{6}
\item for every $m$,   $\{\beta\in X: e(\alpha, \beta)>m\}$ is uncountable.
\end{enumerate}
 By definition of $X$, choose $i^*<\omega$ and $\eta>\alpha$ such that
\begin{enumerate}\setcounter{enumi}{7}
\item $Y=\{\gamma>\eta: \eta\in C_\gamma, e(\alpha, \gamma)=i^* \text{ and }  \{\alpha, \gamma\}\in P_n\}$ is stationary.
\end{enumerate}
Now by (7), find $A\in [\omega\setminus (i^*+1)]^\omega$ and an increasing sequence $\langle\beta_k: k\in A\rangle$ such that
\[\text{for every } k\in A, ~ \beta_k\in X\setminus (\eta+1)\text{ and } e(\alpha, \beta_k)=k.\]
Let $\delta=\sup \{\beta_k: k\in A\}$. 

We are now ready to find an upper bound of $\{f_\gamma: \gamma\in Y\setminus \delta\}$. Arbitrarily choose $k\in A$ and $\gamma\in Y\setminus \delta$. By (coh2) and (8), 
\[k=e(\alpha, \beta_k)\leq \max\{e(\alpha, \gamma), e(\beta_k, \gamma)\}=\max\{i^*, e(\beta_k, \gamma)\}.\]
Together with the fact $\min (A)>i^*$ and (8), we conclude that 
\begin{enumerate}\setcounter{enumi}{8}
\item $k=e(\alpha, \beta_k)\leq e(\beta_k, \gamma) \text{ and } \eta\in C_\gamma\cap (\alpha, \beta_k]$.
\end{enumerate}
Recall that $\{\alpha, \gamma\}\in P_n$ and $\beta_k\in X$. Together with the definition of $X$ and the fact that $P_n$ is linked, $\pi(\alpha, \beta_k, \gamma)=0$. Then by  (9), $f_\gamma(k)\leq e(\beta_k, \gamma)$.

Since $k\in A$ and $\gamma\in Y\setminus \delta$ are arbitrary, 
\begin{enumerate}\setcounter{enumi}{9}
\item $f_\gamma(k)\leq e(\beta_k, \gamma)$ for all $k\in A$ and $\gamma\in Y\setminus \delta$.
\end{enumerate}
By coherence of $e$, for every $\gamma\in Y\setminus \delta$, $e_\delta=^*e_\gamma$ and hence
\[f_\gamma(k)\leq e(\beta_k, \delta) \text{ for all but finitely many }k\in A.\]
Now define $f: \omega\ra \omega$ by $f(j)=e(\beta_{A(j)}, \delta)$. Arbitrarily choose $\gamma\in Y\setminus \delta$.  Then
\[f_\gamma(j)< f_\gamma(A(j))\leq e(\beta_{A(j)}, \delta)=f(j)\text{ for all but finitely many }j<\omega.\]
This shows that $f$ dominates $\{f_\gamma: \gamma\in Y\setminus \delta\}$ and hence $\mc{B}$.
 \end{proof}
 
 \subsection{{\rm P$_{\omega_1}$(precaliber $\omega_1\ra \sigma$-3-linked)} implies $\mathfrak{t}>\omega_1$}
 
 Throughout this subsection, we use $Lim$ to denote the set of countable limit ordinals.
 
\begin{thm}\label{thm ptosK3 t}
{\rm P$_{\omega_1}$(precaliber $\omega_1\ra \sigma$-3-linked)} implies $\mathfrak{t}>\omega_1$
\end{thm}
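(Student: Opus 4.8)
\textbf{Proof proposal for Theorem \ref{thm ptosK3 t}.}

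The plan is to follow the template established by Proposition \ref{prop sK3toK4 t} and Theorem \ref{thm ptosK2 b}: fix an arbitrary tower $T=\{t_\alpha:\alpha<\omega_1\}$, use the already-established $\mathfrak{b}>\omega_1$ (Theorem \ref{thm ptosK2 b}, since precaliber $\omega_1\ra\sigma$-$3$-linked certainly implies precaliber $\omega_1\ra\sigma$-linked) to pass to a sub-tower and find $I\in[\omega]^\omega$ with $t_\alpha\cap I_m\neq\emptyset$ for all $\alpha,m$, and then design a coloring $\pi:[\omega_1]^{<\omega}\ra 2$ whose poset $\mc{H}^\pi_0$ has precaliber $\omega_1$ but whose $\sigma$-$3$-linked decomposition (guaranteed by the hypothesis) yields an uncountable set $\Gamma$ along which the sections $\{t_\alpha\cap I_m:\alpha\in\Gamma\}$ are, for infinitely many $m$, of width small enough that $\bigcap_{\alpha\in\Gamma}t_\alpha\cap I_m\neq\emptyset$. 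Concretely I expect to combine the $3$-ary intersection-comparison idea from Theorem \ref{thm K3 t} (using $e(\alpha,\beta)$ to name the interval and asking $t_\beta\cap I_{e(\alpha,\beta)}$ and $t_\gamma\cap I_{e(\alpha,\beta)}$ to be $\subseteq$-comparable when $e(\alpha,\beta)=e(\alpha,\gamma)$) with the ladder-system/pressing-down machinery of Theorem \ref{thm ptosK2 b} that is needed to upgrade ``ccc'' to ``precaliber $\omega_1$''. The $C$-sequence $\langle C_\alpha:\alpha<\omega_1\rangle$ enters exactly as in Theorem \ref{thm ptosK2 b}: the clause in $\pi$ is only activated when $C_\gamma$ meets the relevant interval of ordinals, which is what allows a single pressing-down argument to align the finitely many ``active'' triples across a stationary set.

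The key steps, in order: (1) Reduce to a tower with the interval property via $\mathfrak{b}>\omega_1$, as above. (2) Write down the coloring $\pi:[\omega_1]^{<\omega}\ra 2$ declaring $\pi(a)=0$ iff for all $\alpha<\beta<\gamma$ in $a$, whenever $e(\alpha,\beta)=e(\alpha,\gamma)$ and $C_\gamma$ meets $(\alpha,\beta]$ we have $t_\beta\cap I_{e(\alpha,\beta)}\subseteq t_\gamma$ or $t_\gamma\cap I_{e(\alpha,\beta)}\subseteq t_\beta$; check $\pi^{-1}\{0\}$ is downward closed and uncountable so $\mc{H}^\pi_0$ makes sense. (3) Prove $\mc{H}^\pi_0$ has precaliber $\omega_1$: given an uncountable $\mc{A}\subseteq\mc{H}^\pi_0$, thin to a $\Delta$-system with root $\overline p$, fix the isomorphism type of $(p_\alpha,e)$, fix a bound $N$ on $e$-values inside each $p_\alpha$, then apply the Pressing Down Lemma twice — once to push the $e$-values between distinct pieces above a threshold $M$, once to freeze the finitely many ladders $C_\xi\cap\alpha$ below some $\overline\alpha$ — and verify every triple from $\bigcup_{\alpha\in\Gamma}p_\alpha$ is $\pi$-good by the same case analysis as in Theorem \ref{thm K3 t} (the new ``$C_\gamma\cap(\alpha,\beta]\neq\emptyset$'' hypothesis kills the cross-piece cases just as in Theorem \ref{thm ptosK2 b}). (4) Apply {\rm P$_{\omega_1}$(precaliber $\omega_1\ra\sigma$-$3$-linked)} — noting $\mc{H}^\pi_0$ has size $\omega_1$ — to decompose $\mc{H}^\pi_0=\bigcup_n P_n$ with each $P_n$ $3$-linked. (5) For each $\alpha$ pick $n$ with $\{\gamma>\alpha:\{\alpha,\gamma\}\in P_n\}$ stationary, stabilize $n$ on an uncountable $X$, use Fact \ref{fact1} to choose $\alpha\in X$ with $\{\beta\in X:e(\alpha,\beta)>m\}$ uncountable for every $m$, then choose $i^*$ and $\eta$ so that $Y=\{\gamma>\eta:\eta\in C_\gamma,\,e(\alpha,\gamma)=i^*,\,\{\alpha,\gamma\}\in P_n\}$ is stationary, and extract an increasing $\langle\beta_k:k\in A\rangle$ in $X$ with $e(\alpha,\beta_k)=k$ above $\eta$. (6) For $k\in A$ and $\gamma,\gamma'\in Y$ past $\sup\beta_k$, the triple $\{\alpha,\beta_k,\gamma\}$ (or $\{\alpha,\gamma,\gamma'\}$ after further bookkeeping) lies in $P_n$, hence in $\mc{H}^\pi_0$; using (coh2) to get $e(\alpha,\beta_k)\le e(\beta_k,\gamma)$ and $\eta\in C_\gamma\cap(\alpha,\beta_k]$, the coloring clause fires and forces the $\subseteq$-comparabilities that say $\{t_\gamma\cap I_k:\gamma\in Y'\}$ is, for $k\in A$, linear — or at least of width $\le 2$ — on a cofinal/uncountable $Y'\subseteq Y$. (7) Conclude $\bigcap_{\gamma\in Y'}t_\gamma\cap I_k\neq\emptyset$ for all $k\in A$ by step (1), so $\bigcap_{\gamma\in Y'}t_\gamma$ is infinite and fills $T$; since $T$ was arbitrary, $\mathfrak{t}>\omega_1$.

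The main obstacle I anticipate is step (6): a $3$-linked (rather than centered) subset only controls triples, so from $\{\alpha,\beta_k,\gamma\}\in P_n$ I get comparability of $t_\gamma\cap I_k$ with a \emph{single} $t_{\beta_k}\cap I_k$, not directly with other $t_{\gamma'}\cap I_k$; I will need to arrange the combinatorics so that the width-$\le 2$ (not width-$\le 1$) conclusion suffices, exactly as in the ``$n-1$ versus $n$'' discussion at the end of Subsection 3.2 — one of the three inputs of the triple is spent pinning down the interval $I_k$ via $e(\alpha,\cdot)$, leaving only a $2$-element comparison among the $t_\gamma$'s. Showing that width $\le 2$ still guarantees a nonempty intersection along an \emph{uncountable} (not just cofinal) set, and that the minimal-width bookkeeping of Proposition \ref{prop sK3toK4 t} can be reproduced here with the precaliber-forced $P_n$ in place of the $\sigma$-$n$-linked poset, is where the real work lies; the precaliber verification in step (3), while technical, should go through essentially verbatim by grafting the $C$-sequence clause onto the proof of Theorem \ref{thm K3 t}.
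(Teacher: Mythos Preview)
Your proposal has a genuine gap in the coloring design, and it is exactly the gap you flag in step (6) but underestimate. With the $3$-ary clause you write down --- fire when $e(\alpha,\beta)=e(\alpha,\gamma)$ and $C_\gamma$ meets $(\alpha,\beta]$ --- you simply cannot get comparability at infinitely many intervals. If you anchor at a single $\alpha$ and take $\gamma,\gamma'$ from your stationary set $Y$, then $e(\alpha,\gamma)=e(\alpha,\gamma')=i^*$ and you only learn about the one interval $I_{i^*}$. If instead you introduce $\beta_k$ with $e(\alpha,\beta_k)=k$, then in the triple $\{\alpha,\beta_k,\gamma\}$ the hypothesis $e(\alpha,\beta_k)=e(\alpha,\gamma)$ fails (it is $k$ versus $i^*$), so nothing fires; and in $\{\beta_k,\gamma,\gamma'\}$ the values $e(\beta_k,\gamma)$, $e(\beta_k,\gamma')$ are $\ge k$ by (coh2) but not controlled to be equal, let alone equal to $k$. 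Your ``width $\le 2$'' fallback does not rescue this: you never obtain \emph{any} comparison at $I_k$ for $k\neq i^*$, so there is no width bound to speak of.

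The paper's fix is to raise the arity of the defining clause to four: declare $\pi(a)=0$ on $[Lim]^{<\omega}$ iff for all $\xi<\alpha<\beta<\gamma$ in $a$, if $e(\alpha,\beta)=e(\alpha,\gamma)<e(\xi,\alpha)$ together with $C_\beta\cap(\xi,\alpha]\neq\emptyset$ and $C_\gamma\cap(\xi,\alpha]\neq\emptyset$, then $t_\beta\cap I_{e(\xi,\alpha)}$ and $t_\gamma\cap I_{e(\xi,\alpha)}$ are $\subseteq$-comparable. The extra coordinate $\xi$ supplies the interval index $m=e(\xi,\alpha)$, which now varies over an infinite set $A$ as $\xi$ ranges over a countable reservoir, while $\alpha$ serves as the anchor with $e(\alpha,\beta)=e(\alpha,\gamma)=N$ for a \emph{fixed} $N$ and all $\beta,\gamma$ in the target stationary set $X$. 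The $\sigma$-$3$-linked decomposition $\mc{H}^\pi_0=\bigcup_n H_n$ is applied to pairs: from $\{\xi,\beta'\}\in H_n$ (for some $\beta'$), $\{\alpha,\beta\}\in H_n$, $\{\alpha,\gamma\}\in H_n$, $3$-linkedness yields a common extension containing $\{\xi,\alpha,\beta,\gamma\}$, so this $4$-tuple is $0$-homogeneous and the clause forces $t_\beta\cap I_m$ and $t_\gamma\cap I_m$ to be comparable. This gives a genuine linear order (width $1$) at every $m\in A$, not merely width $\le 2$. The separation of $\xi$ (below) from $\alpha,\beta,\gamma$ (above) is organized via a countable elementary submodel rather than your Fact~\ref{fact1} extraction, and the $C$-sequence hypothesis is placed on \emph{both} $\beta$ and $\gamma$ (not just $\gamma$), which is what makes the precaliber case analysis go through.
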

 \begin{proof}
 Fix a C-sequence $\langle C_\alpha: \alpha<\omega_1\rangle$. Let $T=\{t_\alpha: \alpha<\omega_1\}$ be a tower.
 
By Theorem \ref{thm ptosK2 b} and replacing $T$ by a sub-tower, we may assume that there exists $I\in [\omega]^\omega$ such that
 \begin{enumerate}
 \item for every $\alpha<\omega_1$ and every $n<\omega$, $t_\alpha\cap I_n\neq \emptyset$.
 \end{enumerate}
 Define a coloring $\pi: [Lim]^{<\omega}\ra 2$ by $\pi(a)=0$ iff for all $\xi<\alpha<\beta<\gamma$ in $a$,
 \begin{align*}
 (e(\alpha, \beta)=e(\alpha, \gamma)<e(\xi, \alpha) \wedge C_\beta&\cap (\xi, \alpha]\neq \emptyset \wedge C_\gamma\cap (\xi, \alpha]\neq \emptyset)  \\
 &\ra (t_\beta\cap I_{e(\xi, \alpha)}\subseteq t_\gamma \vee t_\gamma\cap I_{e(\xi, \alpha)}\subseteq t_\beta).
 \end{align*}
 Note that the positions of $\beta$ and $\gamma$ are symmetric. So the order between $\beta$ and $\gamma$ is not important and we can simply say that $\xi<\alpha<\min \{\beta, \gamma\}$.\medskip

\textbf{Claim.} $\mc{H}_0^\pi$ has precaliber $\omega_1$.  
\begin{proof}[Proof of Claim.]
 Fix an uncountable $\mc{A}\subseteq\mc{H}^\pi_0$. 
  Find  a stationary set $\Sigma\subseteq \omega_1$, $\{p_\alpha\in \mc{A}: \alpha\in \Sigma\}$ and $n, N<\omega$ such that
 \begin{enumerate}\setcounter{enumi}{1}
 \item $\{p_\alpha: \alpha\in \Sigma\}$ forms a $\Delta$-system with root $\overline{p}$;
 \item for all $\alpha<\beta$ in $\Sigma $,  $\max(\overline{p})<\alpha$ and $\max(p_\alpha)<\beta<\min (p_\beta\setminus \overline{p})$;
 \item  for all $\alpha<\beta$  in $\Sigma $, 
 \begin{itemize}
 \item $|p_\alpha|=n$, $e[[p_\alpha]^2]\subseteq N$;
  \item for $i<j<n$, $t_{p_\alpha(j)}\setminus N\subseteq t_{p_\alpha(i)}$;
 \item  $t_{p_\alpha(i)}\cap I(N+1)=t_{p_\beta(i)}\cap I(N+1)$ whenever $i<n$;
 \end{itemize}
  \end{enumerate}
  Note that for every $\alpha\in \Sigma$, $\bigcup_{\xi\in p_\alpha\setminus \overline{p}} (e^{-1}_\xi[N]\cup (C_\xi\cap \alpha))$ is finite. By the Pressing Down Lemma, we find a stationary $\Gamma\subseteq \Sigma$ such that
   \begin{enumerate}\setcounter{enumi}{4}
 \item for all $\alpha<\beta$ in $\Gamma$, 
 \begin{itemize}
 \item $(p_\alpha, e)$ is isomorphic to $(p_\beta, e)$;
 \item for all $\xi\in p_\alpha\setminus \overline{p}$ and all $\eta\in p_\beta\setminus \overline{p}$, $e(\xi, \eta)\geq N$;
 \end{itemize}
 \item for some $\overline{\alpha}<\min(\Gamma)$, for all $\alpha<\beta$ in $\Gamma$, 
 \begin{itemize}
 \item for all $\xi\in p_\alpha\setminus \overline{p}$, $C_\xi\cap \alpha\subseteq \overline{\alpha}$;
  \item for $i<n$, $C_{p_\alpha(i)}\cap \overline{\alpha}=C_{p_\beta(i)}\cap \overline{\alpha}$.
  \end{itemize}
 \end{enumerate}
 We check that $\{p_\alpha: \alpha\in \Gamma\}$ is centered. To see this, fix $\xi<\alpha<\beta<\gamma$ in $\bigcup_{\zeta\in \Gamma} p_\zeta$. It suffices to prove that $\pi(\xi, \alpha, \beta, \gamma)=0$. So assume 
\begin{enumerate}\setcounter{enumi}{6}
\item $e(\alpha, \beta)=e(\alpha, \gamma)<e(\xi, \alpha) \wedge C_\beta\cap (\xi, \alpha]\neq \emptyset \wedge C_\gamma\cap (\xi, \alpha]\neq \emptyset$.
\end{enumerate}
We discuss by cases.\medskip

\textbf{Case 1.} $\alpha\in \overline{p}$.\medskip

Then $\xi\in \overline{p}$. The only non-trivial case is that $\beta\in p_{\beta'}\setminus \overline{p}$, $\gamma\in p_{\gamma'}\setminus \overline{p}$ and $\beta'<\gamma'$. Let $i<n$ be such that $\gamma=p_{\gamma'(i)}$. Denote $\gamma^*=p_{\beta'}(i)$. 

If $\gamma^*=\beta$, then by (4), $e(\xi, \alpha)<N$ and 
\[t_\beta\cap I_{e(\xi, \alpha)}=t_{p_{\beta'}(i)}\cap I_{e(\xi, \alpha)}=t_{p_{\gamma'}(i)}\cap I_{e(\xi, \alpha)}=t_\gamma\cap  I_{e(\xi, \alpha)}.\]
 Hence $\pi(\xi, \alpha, \beta, \gamma)=0$.

Now suppose $\gamma^*\neq \beta$. Note by (6), 
\[C_\gamma\cap \gamma'=C_\gamma\cap \overline{\alpha}=C_{\gamma^*}\cap \overline{\alpha}=C_{\gamma^*}\cap \beta'.\]
Since $\alpha<\beta'<\gamma'$, 
\[C_{\gamma^*}\cap (\xi, \alpha]=C_{\gamma}\cap (\xi, \alpha]\text{ and is non-empty by (7)}.\]
 By (5) and (7), 
 \[e(\alpha, \gamma^*)=e(\alpha, \gamma)=e(\alpha, \beta)<e(\xi, \alpha).\]
 In summary,
 \begin{enumerate}\setcounter{enumi}{7}
\item $e(\alpha, \beta)=e(\alpha, \gamma^*)<e(\xi, \alpha) \wedge C_\beta\cap (\xi, \alpha]\neq \emptyset \wedge C_{\gamma^*}\cap (\xi, \alpha]\neq \emptyset$.
\end{enumerate}
 Note that $\pi(\{\xi, \alpha, \beta, \gamma^*\})=0$ and $\xi<\alpha<\min\{\beta, \gamma^*\}$. By (8),
 \[t_\beta\cap I_{e(\xi, \alpha)}\subseteq t_{\gamma^*} \vee t_{\gamma^*}\cap I_{e(\xi, \alpha)}\subseteq t_\beta.\]
Together with (4) and the fact $e(\xi, \alpha)<N$, 
 \[t_\beta\cap I_{e(\xi, \alpha)}\subseteq t_{\gamma} \vee t_{\gamma}\cap I_{e(\xi, \alpha)}\subseteq t_\beta.\]
 Hence $\pi(\xi, \alpha, \beta, \gamma)=0$.\medskip
 
 \textbf{Case 2.} $\xi\in \overline{p}$ and $\alpha\in p_{\alpha'}\setminus \overline{p}$ for some $\alpha'\in \Gamma$.\medskip
 
 By (4), $e(\xi, \alpha)<N$ and then by (5) and (7), $\beta, \gamma$ are in $p_{\alpha'}$. Hence $\{\xi, \alpha, \beta, \gamma\}\subseteq p_{\alpha'}$ and $\pi(\xi, \alpha, \beta, \gamma)=0$.\medskip
 
 \textbf{Case 3.} $\xi\in p_{\xi'}\setminus \overline{p}$ for some $\xi'\in \Gamma$.\medskip
 
 Assume $\alpha\in p_{\alpha'}\setminus \overline{p}$. If $\xi'=\alpha'$, then the argument in Case 2 shows that $\beta, \gamma$ are also in $p_{\xi'}$. Hence $\pi(\xi, \alpha, \beta, \gamma)=0$.
 
 Now assume $\xi'<\alpha'$. Let $\gamma'\in \Gamma$ be such that $\gamma\in p_{\gamma'}\setminus \overline{p}$.   Then by (6), 
 \[C_\gamma\cap [\overline{\alpha}, \gamma')=\emptyset\]
  and by (7),
  \[C_\gamma\cap (\xi, \alpha]\neq \emptyset.\]
  Note by (3) and the choice of $\overline{\alpha}$, $\overline{\alpha}<\xi<\gamma'$. 
  We conclude that $\gamma'\leq \alpha$ and hence by (3), $\alpha'=\gamma'$. So $\{\alpha, \beta, \gamma\}\subseteq p_{\alpha'}$. 
  
  By (5), $e(\xi, \alpha)\geq N$ and then by (4), $t_\gamma\cap I_{e(\xi, \alpha)}\subseteq t_\beta$. This shows $\pi(\xi, \alpha, \beta, \gamma)=0$.\medskip
  
  So in any case, $\pi(\xi, \alpha, \beta, \gamma)=0$. This shows that $\{p_\eta: \eta\in \Gamma\}$ is centered and hence $\mc{H}_0^\pi$ has precaliber $\omega_1$. 
  \end{proof}
  
 By  {\rm P$_{\omega_1}$(precaliber $\omega_1\ra \sigma$-3-linked)}, $\mc{H}^\pi_0$ is $\sigma$-3-linked. Let 
 \[\mc{H}^\pi_0=\bigcup_{n<\omega} H_n \text{ be a partition of $\mc{H}^\pi_0$ into 3-linked subsets}.\]
 Fix a countable elementary submodel $\mc{M}\prec H({\omega_2})$    containing everything relevant.  First choose $\eta\in \omega_1\setminus \mc{M}$ such that
 \[\{\beta\in Lim: \eta\in C_\beta\} \text{ is stationary.}\]
Fix $\alpha\in Lim\setminus \eta$. Then choose natural numbers $n, N$ such that
 \begin{enumerate}\setcounter{enumi}{8}
 \item $X=\{\beta\in Lim\setminus (\alpha+1): \eta\in C_\beta, ~\{\alpha, \beta\} \in H_n \text{ and } e(\alpha, \beta)=N\}$ is stationary.
 \end{enumerate}
 We will find $A\in [\omega]^\omega$ such that for every $m\in A$, $(\{t_\beta\cap I_m: \beta\in X\}, \subseteq)$ is a linear order.
 
 Let 
 $Y=\{\xi\in Lim: \{\xi, \beta'\}\in H_n \text{ for some }\beta'>\xi\}$. 
 Then $Y\in \mc{M}$ and by (9), $\alpha\in Y\setminus \mc{M}$. By elementarity, $Y$ is uncountable. Let
 \[Y'=Y\cap \mc{M}\setminus e_\alpha^{-1}[N+1].\]
 Then $Y'$  is infinite and for every $ \xi\in Y'$,
 \[ e(\xi, \alpha)>N \text{ and } \xi<\mc{M}\cap \omega_1\leq \eta\leq \alpha.\]
Then by (coh1),
  \begin{enumerate}\setcounter{enumi}{9}
 \item $A=\{e(\xi, \alpha): \xi\in Y'\} \text{ is infinite and} \min(A)>N$.
 \end{enumerate}
To check that $A$ is as desired, fix $m\in A$ and $\beta<\gamma$ in $X$. By (10), there exists $\xi\in Y'$ such that $e(\xi, \alpha)=m$. Note that 
 \begin{itemize}
 \item $\xi\in Y'\subseteq Y$ and hence $\{\xi, \beta'\}\in H_n$ for some $\beta'$;
 \item $\beta,\gamma\in X$ and hence $\{\alpha, \beta\}\in H_n$ and $\{\alpha, \gamma\}\in H_n$.
 \end{itemize} 
 Since $H_n$ is 3-linked, $\pi(\xi, \alpha, \beta, \gamma)=0$. Recall that
  \[e(\alpha, \beta)=N=e(\alpha, \gamma)<m=e(\xi, \alpha)\text{ and }\eta\in C_\beta\cap C_\gamma\cap (\xi, \alpha].\]
   By definition of $\pi$, 
 \[t_\beta\cap I_{m}\subseteq t_\gamma \vee t_\gamma\cap I_{m}\subseteq t_\beta.\]
 This justifies that for every $m\in A$, $(\{t_\beta\cap I_m: \beta\in X\}, \subseteq)$ is a linear order.
 
 So for every $m\in A$, $\bigcap_{\beta\in X} (t_\beta\cap I_m)$ is the $\subseteq$-least element of $\{t_\beta\cap I_m: \beta\in X\}$ and  is non-empty by (1).
 Then $\bigcap_{\beta\in X} t_\beta$ is infinite.
 
So $\bigcap_{\beta\in X} t_\beta$ 
  fills the tower $T$. This finishes the proof of the theorem.
 \end{proof}
Now Theorem \ref{thm ptosc} follows from the above theorem.
\begin{proof}[Proof of Theorem \ref{thm ptosc}.]
By Theorem \ref{thm ptosK3 t}, $\mathfrak{t}>\omega_1$. Then by Theorem \ref{thm BRMS},  MA$_{\omega_1}(\sigma$-centered) holds. Together with {\rm P$_{\omega_1}$(precaliber $\omega_1\ra \sigma$-centered)}, MA$_{\omega_1}$(precaliber $\omega_1$) holds.
\end{proof}


\section{Coding finatary coloring by $(n+1)$-ary coloring}

In this section, we   prove that  if every $\sigma$-$n$-linked poset has property K$_{n+1}$, then every $\sigma$-$n$-linked poset has precaliber $\omega_1$. 
The proof actually indicates a method of finding a $\sigma$-$n$-linked coloring on $[\omega_1]^{\leq n+1}$ (or $[\omega_1]^{n+1}$) that codes sufficient 0-homogeneous information for a $\sigma$-$n$-linked coloring on $[\omega_1]^{<\omega}$.


Say $\ms{C}\subseteq [\omega_1]^{<\omega}$ is  \emph{$\sigma$-$n$-linked} if there is a partition $\ms{C}=\bigcup_{m<\omega} C_m$ such that $\bigcup F\in \ms{C}$ for all $m$ and all $F\in [C_m]^n$. We show the following stronger property which has its own interest. 
\begin{prop}\label{prop code fin by 4}
 Assume {\rm MA$_{\omega_1}$($\sigma$-centered)} and $2\leq n<\omega$. If $\ms{C}\subseteq [\omega_1]^{<\omega}$ is downward closed and $\sigma$-$n$-linked,  then there is a coloring $\pi: [\omega_1]^{<n+2}\ra 2$ with the following properties.
 \begin{enumerate}[(i)]
 \item $\mc{H}^\pi_0$ is $\sigma$-$n$-linked.
 \item $\ms{C}\cap [\omega_1]^{<n+1}\subseteq \mc{H}^\pi_0\subseteq \ms{C}$.
  \end{enumerate}
\end{prop}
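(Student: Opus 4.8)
The plan is to build $\pi: [\omega_1]^{<n+2} \ra 2$ by fixing a partition $\ms{C} = \bigcup_{m<\omega} C_m$ witnessing that $\ms{C}$ is $\sigma$-$n$-linked (so $\bigcup F \in \ms{C}$ whenever $F \in [C_m]^n$), and then forcing, using {\rm MA$_{\omega_1}$($\sigma$-centered)}, a suitable ``coloring'' object. The key idea is that an $(n+1)$-element set $a$ should be declared $\pi$-$0$ exactly when it can be ``explained'' as lying inside a single $\ms{C}$-set together with a uniform labelling of its $n$-element subsets by the index set $\omega$ of the partition, coherent with the function $e$ we have fixed (via Lemma~\ref{lem coh}). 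Concretely, I would first run a finite-support $\sigma$-centered (indeed $\sigma$-centered because the approximations are finite and the amalgamation is by the partition) forcing $\mathbb{Q}$ whose conditions are finite approximations to a function $\ell$ assigning to certain finite subsets of $\omega_1$ a ``color index'' $m$, with the promise that the corresponding set lies in $C_m$; the generic $\ell$ then exists in the ground model by {\rm MA$_{\omega_1}$($\sigma$-centered)} since only $\omega_1$ dense sets are needed. Then set $\pi(a)=0$ iff $a \in \ms{C}$ and, for $|a| = n+1$, the $n$-subsets of $a$ receive indices via $\ell$ in a way that is isomorphic-invariant with respect to $e$ on the smaller coordinates; for $|a| \le n$ just set $\pi(a) = 0$ iff $a \in \ms{C}$.

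The verification then splits into (i) and (ii). For (ii), $\ms{C} \cap [\omega_1]^{<n+1} \subseteq \mc{H}^\pi_0$ is immediate from the definition of $\pi$ on small sets together with downward closure of $\ms{C}$; and $\mc{H}^\pi_0 \subseteq \ms{C}$ holds because an $i$-homogeneous set for $\pi$ is, by our definition, one all of whose $(n+1)$-subsets witness membership in $\ms{C}$, and $\ms{C}$ being downward closed and $n$-linked-closed (via the partition) propagates this upward: here is where I expect to need a Lemma~\ref{lem initial2}-style argument to see that the $e$-isomorphism type controls which partition class each $n$-subset of a large homogeneous set lands in, so that all of them land in a common $C_m$ and hence their union is in $\ms{C}$. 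For (i), I would partition $\mc{H}^\pi_0$ according to (a) the index $m$ of the relevant partition class and (b) the finite $e$-isomorphism type of a condition together with its behavior below a bounded level $N$; the $\sigma$-$n$-linkedness then reduces to showing that $n$ conditions in the same piece have their union back in $\mc{H}^\pi_0$, which is exactly where the $n$-linkedness of $C_m$ (i.e.\ $[C_m]^n$ amalgamating into $\ms{C}$) gets used, in combination with the two-variable coherence inequalities (coh1)--(coh2) to control $e$-values across the conditions.

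The main obstacle, I expect, is getting the definition of $\pi$ on $(n+1)$-element sets exactly right so that \emph{both} (i) and (ii) hold: making $\pi^{-1}\{0\}$ large enough that $\ms{C} \cap [\omega_1]^{<n+1}$ sits inside it and that $\mc{H}^\pi_0$ is still $\sigma$-$n$-linked pulls in one direction, while making it small enough that $\mc{H}^\pi_0 \subseteq \ms{C}$ (so that uncountable $0$-homogeneous subsets of $\pi$ genuinely give back $\ms{C}$-structure, hence — in the intended application — $n$-linked subsets of the original poset) pulls in the other. The role of {\rm MA$_{\omega_1}$($\sigma$-centered)} is precisely to thread this needle: it supplies a global coherent choice of partition-indices $\ell$ that is simultaneously consistent on all the finite pieces, which is what lets the isomorphism-type bookkeeping in the $\sigma$-$n$-linkedness proof close. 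A secondary technical point will be handling $|a| \le n$ versus $|a| = n+1$ uniformly; I would use the reduction noted after the definition of $\mc{H}^\pi_i$ (replacing an $n$-ary coloring by an equivalent downward-closed coloring on $[\omega_1]^{<n+1}$) to avoid case-splitting in the final bookkeeping. I would then remark that the downward-closed coloring $\pi$ produced this way is exactly the object needed for step $(\bigstar 2)$ of Theorem~\ref{thm K3toMA}, taking $\ms{C}$ to be the collection of finite $n$-linked subsets of a given $\sigma$-$n$-linked poset.
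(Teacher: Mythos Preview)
Your high-level picture is right --- force a $\sigma$-centered approximation, use $e$-isomorphism types to witness centeredness, and let the partition $\ms{C}=\bigcup_m C_m$ do the heavy lifting --- but the proposal has a real gap at the step $\mc{H}^\pi_0\subseteq\ms{C}$, and a structural difference from the paper that matters for closing it.

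First, the paper does \emph{not} force an auxiliary labelling $\ell$ and then define $\pi$ from it; it forces the pair $(\pi,\sigma)$ simultaneously, where $\sigma$ is the intended $\sigma$-$n$-linked partition of $\mc{H}^\pi_0$. A condition is $p=(\pi_p,\sigma_p)$ with $\pi_p:[D_p]^{<n+2}\to 2$ and $\sigma_p:\mc{H}^{\pi_p}_0\to\omega$, subject to (a) $\ms{C}\cap[D_p]^{\le n}\subseteq\mc{H}^{\pi_p}_0\subseteq\ms{C}$, (b) each $\sigma_p^{-1}\{m\}$ is $n$-linked, and crucially (c) each $\sigma_p^{-1}\{m\}$ is contained in a single $\tau^{-1}\{k\}$. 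Centeredness is shown by amalgamating $p_0,\dots,p_{k-1}$ with isomorphic $(D_{p_i},e,\pi_{p_i},\sigma_{p_i},\tau)$; the key move is the definition of $\pi_q$ on $(n+1)$-sets: $\pi_q(a)=0$ exactly when $a$ is covered by at most $n$ sets $a_j\in\mc{H}^{p_{s(j)}}_0$ with a common $\sigma$-label. This ``Case~2'' clause is what your vague ``isomorphic-invariant with respect to $e$'' is groping toward, but it is a specific combinatorial condition, not an invariance principle.

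Second --- and this is the substantive gap --- your argument for $\mc{H}^\pi_0\subseteq\ms{C}$ does not work as stated. You write that the $n$-subsets of a large $0$-homogeneous $b$ ``all land in a common $C_m$ and hence their union is in $\ms{C}$.'' They do not, and that is not how the reduction goes. What the paper proves is that any $b\in\mc{H}^q_0$ is covered by at most $n$ of the domains $D_{p_i}$ (this is Lemma~\ref{lem initial3}, and it genuinely uses the Case~2 definition: if $b$ needed $n{+}1$ of them, one could extract an $(n{+}1)$-subset of $b$ with $\pi_q$-value $1$). Then one shows the ``index image'' $D_{p_0}[I]$ of $b$ lies in $\mc{H}^{p_0}_0\subseteq\ms{C}$, transfers this by isomorphism to each $D_{p_i}[I]$, and finally uses that these $\le n$ copies share a $\tau$-label (via condition (c)) so their union, which contains $b$, is in $\ms{C}$. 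Nothing here says the $n$-subsets of $b$ share a partition class; rather, $b$ itself is covered by $\le n$ isomorphic $\ms{C}$-sets with a common $\tau$-label. Your sketch does not provide the covering-by-$\le n$-pieces step, and without the Case~2 definition of $\pi$ on $(n{+}1)$-sets there is no mechanism forcing it.

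Finally, your one-line justification that the forcing is $\sigma$-centered (``the approximations are finite and the amalgamation is by the partition'') underestimates the work: in the paper this is Lemmas~\ref{lem 4.1.1}--\ref{lem lower bound}, and the content is precisely the Case~2 amalgamation plus the verification that the resulting $q$ still satisfies (a)--(c), which in turn feeds back into $\mc{H}^\pi_0\subseteq\ms{C}$ as above.
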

Recall   $\mc{H}^\pi_0=\{F\in [\omega_1]^{<\omega}: [F]^{<n+2}\subseteq \pi^{-1}\{0\}\}$. Note that conclusion (ii) of above proposition guarantees that $\pi^{-1}\{0\}$ is downward closed.

Fix a partition witnessing that $\ms{C}$ is $\sigma$-$n$-linked.
\begin{itemize}
\item $\tau: \ms{C}\ra \omega $ is a partition such that  every $\tau^{-1}\{m\}$ is $n$-linked. 
\end{itemize}
We then have the following property.
\begin{enumerate}[(I)]
\item $\ms{C}\subseteq [\omega_1]^{<\omega}$ is downward closed and for $F_0,...,F_{n-1}$ in $\ms{C}$, if $\tau(F_0)=\cdots=\tau(F_{n-1})$, then $\bigcup_{i<n} F_i\in \ms{C}$.
\end{enumerate}
 
We will use the following natural forcing to force the desired coloring $\pi$ and its $\sigma$-$n$-linked partition.

$\mc{P}$ is the poset consisting of $p=(\pi_p, \sigma_p)$ where $\pi_p: [D_p]^{<n+2}\ra 2$ for some $D_p\in [\omega_1]^{<\omega}$, $\sigma_p: \mc{H}^{\pi_p}_0\ra \omega$ such that 
\begin{enumerate}[(a)]
\item $\ms{C}\cap [D_p]^{\leq n}\subseteq \mc{H}^{\pi_p}_0\subseteq \ms{C}$;
\item for every $m<\omega$, $\sigma_p^{-1}\{m\}$ is an $n$-linked subset of $\mc{H}^{\pi_p}_0$;
\item for every $m<\omega$,  $\sigma^{-1}_p\{m\}\subseteq \tau^{-1}\{k\}$ for some $k$.
\end{enumerate}
For $p, q\in \mc{P}$, $p\leq q$ iff $\pi_p\supseteq \pi_q$ and $\sigma_p\supseteq \sigma_q$.\medskip

To simplify our notation, we denote, for $p\in \mc{P}$,
\begin{enumerate}[(a)]\setcounter{enumi}{3}
\item $\mc{H}_0^p=\mc{H}^{\pi_p}_0$.
\end{enumerate}

To apply MA$_{\omega_1}$($\sigma$-centered), we  will show that $\mc{P}$ is $\sigma$-centered.
\begin{lem}\label{lem 4.1}
$\mc{P}$ is $\sigma$-centered.
\end{lem}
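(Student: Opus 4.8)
The plan is to show that $\mc{P}$ is a countable union of centered pieces by specifying, for each condition $p=(\pi_p,\sigma_p)$, a finite amount of ``combinatorial type'' information that determines $p$ up to a harmless isomorphism, and then arguing that two conditions of the same type have a common lower bound. The natural type of $p$ should record: the isomorphism type of $(D_p, e)$ (in the sense of Definition~\ref{defn iso}), so in particular $|D_p|$ and the restriction of $e$ to $[D_p]^2$; the value of $\pi_p$ on each subset $D_p[I]$, $I\in[|D_p|]^{<n+2}$; and the value of $\sigma_p$ on each $0$-homogeneous $D_p[I]$, together with the common $\tau$-value $k$ attached to each colour class $\sigma_p^{-1}\{m\}$ via clause (c). Since $D_p$ is finite, there are only countably many such types, so it suffices to fix one type $t$ and show that the set $Q_t$ of conditions realizing $t$ is centered.

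First I would take finitely many conditions $p_0,\dots,p_{r-1}\in Q_t$ and form the amalgamated domain $D=\bigcup_{i<r} D_{p_i}$. By Lemma~\ref{lem initial} (applied to the common isomorphism type), any two of the sets $D_{p_i}$ agree on an initial segment, and more generally the $D_{p_i}$ sit inside $D$ in a controlled ``$\Delta$-system-like'' fashion governed by $e$; Lemma~\ref{lem initial2} will let me compute intersections $D_{p_i}[I]\cap D_{p_j}[J]$. The candidate common lower bound is $q$ with domain $D$, where I define $\pi_q$ on $[D]^{<n+2}$ by declaring $\pi_q(a)=0$ iff $a$ lies in some single $D_{p_i}$ (as a set of the appropriate isomorphism type) with $\pi_{p_i}(a)=0$; equivalently, $\mc{H}_0^q\cap[D]^{<n+1}$ is forced to be $\ms{C}\cap[D]^{\leq n}$ together with whatever the $p_i$'s already put there, and I set $\pi_q=1$ elsewhere. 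For $\sigma_q$ I glue the $\sigma_{p_i}$'s: two $0$-homogeneous subsets of $D$ get the same $\sigma_q$-colour exactly when they were assigned the same colour by the type $t$, and I must check this is well-defined and that the resulting colour classes still satisfy (b) and (c).

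The main obstacle is verifying clauses (a) and (b) for $q$. For (a) I need $\ms{C}\cap[D]^{\leq n}\subseteq\mc{H}_0^q$: given $a\in\ms{C}$ with $|a|\leq n$, I must show $[a]^{<n+2}\subseteq\pi_q^{-1}\{0\}$, i.e.\ every subset of $a$ was already declared $0$ by some single $p_i$ — this uses downward closure of $\ms{C}$ together with the fact that, because all the $p_i$ realize the same type and $\ms{C}$ is definable from parameters in the ground model, $a$'s membership in $\ms{C}$ is ``visible'' inside one of the $p_i$ via the isomorphism; the reverse inclusion $\mc{H}_0^q\subseteq\ms{C}$ follows because a $0$-homogeneous set is a union of $\leq n$ pieces each living in some $p_i\subseteq\ms{C}$, hence lies in $\ms{C}$ by property (I) — this is exactly where the hypothesis ``$\tau$-equal implies union in $\ms{C}$'' is used, and it is why clause (c) forces colour classes into single $\tau$-fibres. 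For (b), the delicate point is that a colour class $\sigma_q^{-1}\{m\}$ is now a union over $i<r$ of the classes $\sigma_{p_i}^{-1}\{m_i\}$ (all of the same type), and I must see that any $n$ elements of this union still have a common lower bound in $\mc{H}_0^q$; this again reduces, via Lemma~\ref{lem initial2} and property (I) applied to the common $\tau$-value $k$ guaranteed by clause (c), to checking that the union of $n$ sets of $\tau$-value $k$ lies in $\ms{C}$ and is therefore $0$-homogeneous for $\pi_q$. Once these are in place, $q\leq p_i$ for all $i$, so $Q_t$ is centered and $\mc{P}=\bigcup_t Q_t$ is $\sigma$-centered.
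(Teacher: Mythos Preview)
Your overall strategy---partitioning $\mc{P}$ by the isomorphism type of $(D_p,e,\pi_p,\sigma_p,\tau)$ and amalgamating finitely many conditions of the same type over $D=\bigcup_i D_{p_i}$---is exactly what the paper does. The genuine gap is in your definition of $\pi_q$ on $(n+1)$-element sets. You declare $\pi_q(a)=0$ only when $a$ lies in a single $D_{p_i}$ with $\pi_{p_i}(a)=0$, and set $\pi_q=1$ elsewhere. This is too restrictive for clause~(b). Take $F_0,\ldots,F_{n-1}$ in $\sigma_q^{-1}\{m\}$ with each $F_j\in\sigma_{p_{s(j)}}^{-1}\{m\}$ for possibly distinct indices $s(j)$; an $(n+1)$-element subset $a\subseteq\bigcup_j F_j$ need not be contained in any single $D_{p_i}$ (for instance when two of the $D_{p_i}$ are disjoint and $a$ meets both), so your $\pi_q$ gives $a$ value $1$ and hence $\bigcup_j F_j\notin\mc{H}_0^q$. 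Your appeal to property~(I) only shows $\bigcup_j F_j\in\ms{C}$; it does \emph{not} show $\bigcup_j F_j\in\mc{H}_0^q$, because on sets of size $n+1$ your $\pi_q$ is blind to~$\ms{C}$.

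The paper repairs this by enlarging the $0$-set of $\pi_q$: for $|a|=n+1$, set $\pi_q(a)=0$ whenever $a\subseteq\bigcup_{i<l}a_i$ for some $l\leq n$ and $a_i\in\mc{H}_0^{p_{s(i)}}$ sharing a common $\sigma$-value. This is the minimal enlargement making (b) automatic, but it shifts the real work to the other half of~(a), namely $\mc{H}_0^q\subseteq\ms{C}$. That requires first showing (Lemma~\ref{lem initial3}) that any $b\in\mc{H}_0^q$ is covered by at most $n$ of the $D_{p_i}$'s, and then (the Claim inside Lemma~\ref{lem a of P}) transferring $\pi_q$-homogeneity of $b$ to $\pi_{p_0}$-homogeneity of $D_{p_0}[I]$ for a \emph{single} index set $I=\bigcup_i I_i$; only then does the common isomorphism type force all the $D_{p_i}[I]$ to have the same $\tau$-value so that~(I) applies. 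Your one-line ``a $0$-homogeneous set is a union of $\leq n$ pieces each in some $p_i$, hence lies in $\ms{C}$ by~(I)'' skips precisely this: the raw pieces $b\cap D_{p_i}=D_{p_i}[I_i]$ in general have different index sets $I_i$ and hence different $\tau$-values, and producing the common $I$ while verifying $D_{p_0}[I]\in\mc{H}_0^{p_0}$ is the substance of the argument.
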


Before proving above lemma, we first show that Proposition \ref{prop code fin by 4} follows from above lemma.

\begin{proof}[Proof of Proposition \ref{prop code fin by 4} from Lemma \ref{lem 4.1}.]
Let
$\mc{D}_\alpha=\{p\in \mc{P}: \alpha\in D_p\}$ for $\alpha<\omega_1$. We claim that each $\mc{D}_\alpha$ is dense. To see this, fix $p\in \mc{P}$ and $\alpha<\omega_1$. 
Say $\alpha\notin D_p$. 

Define $D_q=D_p\cup\{\alpha\}$. Then define $\pi_q: [D_q]^{<n+2}\ra 2$ such that
\[\pi_q^{-1}\{0\}=\pi_p^{-1}\{0\}\cup (\ms{C}\cap [D_q]^{\leq n}).\]
Note $\mc{H}_0^{q}=\mc{H}_0^{p}\cup (\ms{C}\cap [D_q]^{\leq n}\setminus [D_p]^{\leq n})$.
Then define $\sigma_q: \mc{H}_0^q\ra \omega$ such that
\begin{itemize}
\item $\sigma_q\up{\mc{H}_0^p}=\sigma_p$;
\item $\sigma_q\up (\ms{C}\cap [D_q]^{\leq n}\setminus [D_p]^{\leq n})$ is one-to-one  and has range disjoint from $rang(\sigma_p)$.
\end{itemize}
It is straightforward to check that $q\in \mc{P}$ and $q\leq p$. So $q\in \mc{D}_\alpha$ and  $\mc{D}_\alpha$ is dense.

By Lemma \ref{lem 4.1} and MA$_{\omega_1}$($\sigma$-centered), there is a filter $G$ meeting all $\mc{D}_\alpha$'s.
Now let
\[\pi=\bigcup \{\pi_p: p\in G\}\text{ and }\sigma=\bigcup \{\sigma_p: p\in G\}.\]
Then $\pi: [\omega_1]^{<n+2}\ra 2$ is a total function and by (a)-(b) in definition of $\mc{P}$, 
\begin{enumerate}
\item $\ms{C}\cap [\omega_1]^{\leq n}\subseteq \mc{H}^\pi_0\subseteq \ms{C}$;
\item $\sigma: \mc{H}^\pi_0\ra \omega$ is a partition of $\mc{H}^\pi_0$ into countably many $n$-linked subsets.
\end{enumerate}
So $\pi$ is as desired.
\end{proof}

To prove Lemma \ref{lem 4.1}, we fix a partition $\mc{P}=\bigcup_{m<\omega} Q_m$ such that for every $m<\omega$,
\begin{enumerate}[(I)]\setcounter{enumi}{1}
\item for $p, q$ in $Q_m$, $(D_p, e, \pi_p, \sigma_p, \tau)$ is isomorphic to $(D_q, e, \pi_q, \sigma_q, \tau)$.\footnote{Recall the definition of isomorphism  in Definition \ref{defn iso}.}
\end{enumerate}
The mapping $e$ is used, by Lemma \ref{lem initial}, to guarantee that $D_p\cap D_q$ is an initial segment of both $D_p$ and $D_q$.

It suffices to prove that each $Q_m$ is centered. Fix $m<\omega$ and $p_0, ..., p_{k-1}$ in $Q_m$. We will define a condition $q\in \mc{P}$ that is a lower bound of all $p_i$'s.

First let 
\[D_q=\bigcup_{i<k} D_{p_i}.\]
Then define $\pi_q$ by cases. Fix  $a\in [D_q]^{<n+2}$.
\medskip

\textbf{Case 1.} $|a|<n+1$. Then define $\pi_q(a)=0$ iff $a\in \ms{C}$.\medskip

\textbf{Case 2.} For some $s\in k^{\leq n}$ and $\langle a_i\in \mc{H}_0^{p_{s(i)}}: i< |s|\rangle$,  $|a|=n+1$,
\[a\subseteq \bigcup_{i<|s|} a_i\text{ and $\sigma_{p_{s(i)}}(a_i)=\sigma_{p_{s(j)}}(a_{j})$ for }i, j<|s|.\]
 Then define $\pi_q(a)=0$. \medskip

\textbf{Case 3.} $|a|=n+1$ and Case 2 does not occur. Then define $\pi_q(a)=1$.\medskip

Note that Case 2 is necessary for $q$ to satisfy (b). And our $\pi_q$ is defined to minimize $\mc{H}_0^{q}$.

Finally define $\sigma_q: \mc{H}_0^{q}\ra \omega$ to satisfy the following requirements.
\begin{enumerate}[(I)]\setcounter{enumi}{2}
\item If for some $i<k$, $a\in \mc{H}_0^{q}\cap \mc{H}_0^{p_i}$, then $\sigma_q(a)=\sigma_{p_i}(a)$.
\item $\sigma_q\up (\mc{H}_0^{q}\setminus \bigcup_{i<k} \mc{H}_0^{p_i})$ is one-to-one    and has range disjoint from $rang(\sigma_{p_0})$.
\end{enumerate}

We will show that $q=(\pi_q, \sigma_q)$ is in $\mc{P}$ and   a lower bound of all $p_i$'s. First, we check that $q$ is well-defined. $\pi_q$ is clearly well-defined and $\sigma_q$ is well-defined by the following lemma.

\begin{lem}\label{lem 4.1.1}
For $i<j<k$ and $a\in \mc{H}_0^{p_i}\cap \mc{H}_0^{p_j}$, $\sigma_{p_i}(a)=\sigma_{p_j}(a)$.
\end{lem}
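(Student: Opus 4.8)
\textbf{Proof proposal for Lemma \ref{lem 4.1.1}.}

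The plan is to exploit the isomorphism granted by (II) together with the structural fact, recorded right after (II), that $D_{p_i}\cap D_{p_j}$ is an initial segment of both $D_{p_i}$ and $D_{p_j}$ (this is where the auxiliary coordinate $e$ in the isomorphism type is used, via Lemma \ref{lem initial}). Fix $i<j<k$ and $a\in \mc{H}_0^{p_i}\cap \mc{H}_0^{p_j}$. Since $a\in \mc{H}_0^{p_i}$ we have $a\subseteq D_{p_i}$, and likewise $a\subseteq D_{p_j}$, so $a\subseteq D_{p_i}\cap D_{p_j}$. Let $\ell=|D_{p_i}\cap D_{p_j}|$; by the initial-segment fact, $D_{p_i}\cap D_{p_j}=D_{p_i}[\ell]=D_{p_j}[\ell]$, so $a=D_{p_i}[I]=D_{p_j}[I]$ for the same index set $I\subseteq \ell$.

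Now apply the isomorphism $(D_{p_i}, e, \pi_{p_i}, \sigma_{p_i}, \tau)\cong (D_{p_j}, e, \pi_{p_j}, \sigma_{p_j}, \tau)$ from (II) to the index set $I$: by Definition \ref{defn iso}, $\sigma_{p_i}(D_{p_i}[I])$ and $\sigma_{p_j}(D_{p_j}[I])$ are either both undefined or both defined and equal. Here $a=D_{p_i}[I]\in \mc{H}_0^{p_i}=\mathrm{dom}(\sigma_{p_i})$, so $\sigma_{p_i}(a)$ is defined; hence $\sigma_{p_j}(D_{p_j}[I])=\sigma_{p_j}(a)$ is also defined and equals $\sigma_{p_i}(a)$. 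This is exactly the claim. The same argument applied to the $\pi$-coordinate of the isomorphism incidentally shows $\mc{H}_0^{p_i}$ and $\mc{H}_0^{p_j}$ agree on subsets of $D_{p_i}\cap D_{p_j}$, so there is no hidden domain mismatch.

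I do not expect a real obstacle here; the only point that needs care is making sure $a$, as a subset of the common initial segment, is named by the \emph{same} index set relative to both increasing enumerations — that is precisely what Lemma \ref{lem initial} (through the presence of $e$ in the isomorphism type) delivers, and without it the indices could a priori differ. Everything else is an unwinding of Definition \ref{defn iso}.
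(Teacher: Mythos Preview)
Your proof is correct and follows essentially the same approach as the paper: use Lemma \ref{lem initial} (via the $e$-coordinate in (II)) to see that $D_{p_i}\cap D_{p_j}$ is a common initial segment, so $a=D_{p_i}[I]=D_{p_j}[I]$ for a single index set $I$, and then read off $\sigma_{p_i}(a)=\sigma_{p_j}(a)$ from the $\sigma$-coordinate of the isomorphism in (II). Your extra remark about definedness and domain agreement is a harmless elaboration of the same argument.
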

\begin{proof}
Recall by (II) and Lemma \ref{lem initial}, $D_{p_i}\cap D_{p_j}$ is an initial segment of both $D_{p_i}$ and $D_{p_j}$.
Together with the fact $a\subseteq D_{p_i}\cap D_{p_j}$, $a=D_{p_i}[I]=D_{p_j}[I]$ for some $I\subseteq |D_{p_i}\cap D_{p_j}|$. Then by (II), 
$\sigma_{p_i}(a)=\sigma_{p_i}(D_{p_i}[I])=\sigma_{p_j}(D_{p_j}[I])=\sigma_{p_j}(a)$.
\end{proof}

Now we check that $q\in \mc{P}$. We will use the following fact to prove  (a) in the definition of $\mc{P}$.    
\begin{lem}\label{lem initial3}
For every $b\in \mc{H}_0^{q}$, there exists   $F\in [k]^{\leq n}$ such that
$b\subseteq \bigcup_{i\in F} D_{p_i}$.
\end{lem}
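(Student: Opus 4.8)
The plan is to prove Lemma~\ref{lem initial3} by analyzing what it means for a finite set $b$ to lie in $\mc{H}_0^q$, i.e.\ that $[b]^{<n+2}\subseteq \pi_q^{-1}\{0\}$. The only part of this that carries nontrivial information is the requirement on the $(n+1)$-element subsets of $b$, since the subsets of size $<n+1$ only encode membership in $\ms{C}$. So the first step is: for each $a\in [b]^{n+1}$, invoke Case~2 in the definition of $\pi_q$ to obtain $s_a\in k^{\le n}$ and a sequence $\langle a^a_i\in \mc{H}_0^{p_{s_a(i)}}: i<|s_a|\rangle$ with $a\subseteq \bigcup_{i<|s_a|} a^a_i$ and all the $\sigma$-values $\sigma_{p_{s_a(i)}}(a^a_i)$ equal. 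In particular $a\subseteq \bigcup_{i<|s_a|} D_{p_{s_a(i)}}$, so each $(n+1)$-set is covered by at most $n$ of the $D_{p_i}$'s. The goal is to upgrade this ``local'' covering (one witness per $(n+1)$-subset, each using $\le n$ indices) to a single global set $F\in[k]^{\le n}$ with $b\subseteq\bigcup_{i\in F} D_{p_i}$.

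The key tool for the upgrade is Lemma~\ref{lem initial} together with Lemma~\ref{lem initial2}, using the isomorphism (II) among the $p_i$'s. Concretely, fix $i_0<k$ and work inside $D_{p_{i_0}}$: every element of $b$ that lies in some $D_{p_i}$ lies, by Lemma~\ref{lem initial}, at a position in $D_{p_i}$ that it occupies already in $D_{p_{i_0}\cap p_i}$, hence the ``index set'' $I\subseteq |D_{p_{i_0}}|$ of positions of $b\cap D_{p_{i_0}}$ is well-behaved. The plan is: first argue that $b\subseteq \bigcup_{i<k} D_{p_i}$ at all (this is immediate since $b\subseteq D_q=\bigcup_{i<k} D_{p_i}$); then pull everything back to index sets in a fixed $D_{p_{i_0}}$ via Lemma~\ref{lem initial2}(2), which says precisely that if a block $a_0[I]$ is covered by a union of $a_j[J_j]$ then $I\subseteq\bigcup J_j$. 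Applying this with the local witnesses, one gets for each $(n+1)$-subset of $b$ a set of $\le n$ indices whose corresponding $D$-pieces cover it; the remaining task is a combinatorial ``compactness'' argument showing these local index-sets can be amalgamated into one set $F$ of size $\le n$.

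The main obstacle — and the step I would spend the most care on — is exactly this amalgamation: a priori, different $(n+1)$-subsets of $b$ could be covered by different families of $\le n$ indices, and naively taking the union of all of them could exceed $n$. The resolution should come from the fact that $b$ itself, viewed through the isomorphism type, behaves like a subset of a single $D_{p_{i_0}}$: if $b$ needed $n+1$ distinct indices, there would be $n+1$ elements of $b$, no $n$ of which are jointly covered, and one can locate an $(n+1)$-element subset of $b$ witnessing this — contradicting the Case~2 covering of that particular $(n+1)$-subset. Making this precise requires a careful bookkeeping argument: for each element $x\in b$ let $S_x=\{i<k: x\in D_{p_i}\}$ (nonempty), and show $\bigcap$ over any $n+1$ elements... more accurately, one shows the family $\{S_x : x\in b\}$ has the property that every $n+1$ of the $S_x$'s have... rather, that one can choose $i_x\in S_x$ with $|\{i_x:x\in b\}|\le n$, by a Helly/finite-character argument driven by applying Case~2 to each $(n+1)$-subset. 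I would isolate this as the crux and prove it by induction on $|b|$, using the $(n+1)$-subset witnesses as the inductive engine.

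\begin{proof}[Proof of Lemma~\ref{lem initial3}.]
Since $b\in\mc{H}_0^q\subseteq [D_q]^{<\omega}$ and $D_q=\bigcup_{i<k}D_{p_i}$, for each $x\in b$ the set $S_x=\{i<k: x\in D_{p_i}\}$ is nonempty. If $|b|\le n$ there is nothing to prove (pick one index from each $S_x$, obtaining at most $n$ indices), so assume $|b|\ge n+1$. We claim one can choose, for each $x\in b$, an index $i_x\in S_x$ so that $F=\{i_x:x\in b\}$ has $|F|\le n$; this $F$ clearly works.

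Suppose not. Then for every choice of representatives $i_x\in S_x$ the set $\{i_x:x\in b\}$ has size $\ge n+1$; equivalently, there is no transversal of $\{S_x:x\in b\}$ of size $\le n$. A standard finite argument then yields $n+1$ elements $x_0,\dots,x_n\in b$ such that $\{S_{x_0},\dots,S_{x_n}\}$ admits no transversal of size $\le n$, i.e.\ no $\le n$ of the indices cover $a:=\{x_0,\dots,x_n\}$: for every $F'\in[k]^{\le n}$ there is $j\le n$ with $x_j\notin\bigcup_{i\in F'}D_{p_i}$. But $a\in[b]^{n+1}\subseteq[\mc{H}_0^q]^{n+1}$, so $[a]^{n+1}=\{a\}\subseteq\pi_q^{-1}\{0\}$, and by Cases 1--3 the only way $\pi_q(a)=0$ with $|a|=n+1$ is via Case 2: there are $s\in k^{\le n}$ and $\langle a_i\in\mc{H}_0^{p_{s(i)}}:i<|s|\rangle$ with $a\subseteq\bigcup_{i<|s|}a_i\subseteq\bigcup_{i<|s|}D_{p_{s(i)}}$. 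Taking $F'=\{s(i):i<|s|\}\in[k]^{\le n}$ contradicts the choice of $a$. Hence a transversal of size $\le n$ exists, proving the claim and the lemma.
\end{proof}
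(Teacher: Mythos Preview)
There is a genuine gap at the ``standard finite argument'' step. You assert that if the family $\{S_x : x\in b\}$ has no hitting set of size $\le n$, then already some $n+1$ of the $S_x$'s have no hitting set of size $\le n$. This Helly-type finite-character statement is \emph{false} for arbitrary set systems. For a concrete counterexample with $n=2$: let $k=4$ and take the six $S_x$'s to be the six edges of the complete graph $K_4$ on vertex set $\{0,1,2,3\}$. No two vertices hit all six edges (any pair misses its complementary edge), yet every three edges \emph{can} be hit by two vertices (three edges among four vertices force some vertex of degree $\ge 2$, which covers two of them; one more vertex covers the third). So the reduction from $b$ to an $(n+1)$-element subset does not follow ``by a standard finite argument''.

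What makes the lemma true is precisely the structure you mentioned in your plan but did not use in the proof: by (II) and Lemma~\ref{lem initial}, each $D_{p_i}\cap D_{p_j}$ is an initial segment of both, which forces the sets $S_x$ to form a \emph{laminar} family (any two are nested or disjoint --- in fact they are balls for the ultrametric $d(i,j)=|D_{p_0}|-|D_{p_i}\cap D_{p_j}|$). For a laminar family the minimum hitting set has size equal to the number of $\subseteq$-minimal members, and those minimal members are pairwise disjoint; so if more than $n$ are needed, one can pick $n+1$ pairwise disjoint $S_x$'s, and \emph{those} give the witness. The paper carries this out concretely via a greedy construction: take $\beta_0=\max b$, $\beta_{j+1}=\max(b\setminus\bigcup_{i\le j}D_{p_{s(i)}})$, and use the initial-segment property to show that each $D_{p_i}$ contains at most one $\beta_j$; hence if the greedy process runs for $l>n$ steps, the set $\{\beta_0,\dots,\beta_n\}$ is an $(n+1)$-subset of $b$ not covered by any $n$ of the $D_{p_i}$'s, contradicting Case~2. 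Your argument can be repaired along the laminar route, but as written the crucial use of Lemma~\ref{lem initial} is missing.
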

\begin{proof}
Inductively choose  $\langle \beta_j, s(j): j<l\rangle$  for some $l\leq k$  such that
\begin{enumerate}
\item $\beta_0>\cdot\cdot\cdot>\beta_{l-1}$ are in $b$,  $s(0),...,s(l-1)< k$ are  pairwise distinct and  
\begin{itemize}
\item   for $j<l$, $\beta_{j}=\max (b\setminus \bigcup_{i< j} D_{p_{s(i)}})$ and $\beta_{j}\in D_{p_{s(j)}}$, 
\item $b\subseteq\bigcup_{i<l} D_{p_{s(i)}}$.
\end{itemize}
\end{enumerate}
We will show $l\leq n$. First note that 
\begin{enumerate}\setcounter{enumi}{1}
\item  $\beta_{j'}\notin D_{p_{s(j)}}$ for $j'<j<l$.
\end{enumerate}
To see this,  recall that $D_{p_{s(j')}}\cap D_{p_{s(j)}}$ is an initial segment of $D_{p_{s(j')}}$ and $D_{p_{s(j)}}$.  If $\beta_{j'}\in D_{p_{s(j)}}$, then $\beta_{j'}\in D_{p_{s(j)}}\cap D_{p_{s(j')}}$ and hence
\[\beta_j\in D_{p_{s(j)}}\cap \beta_{j'}\subseteq D_{p_{s(j)}}\cap D_{p_{s(j')}}\subseteq  D_{p_{s(j')}}.\]
But this contradicts the definition of $\beta_j$ in (1). This contradiction shows that (2) holds.

By (1),   $\beta_{j'}\notin D_{p_{s(j)}}$ for $j'>j$. We conclude that
\begin{enumerate}\setcounter{enumi}{2}
\item $D_{p_{s(j)}}\cap \{\beta_i: i<l\}=\{\beta_j\}$.
\end{enumerate}
Recall  that $D_{p_i}\cap D_{p_{s(j)}}$ is an initial segment of $D_{p_i}$ and $D_{p_{s(j)}}$. Consequently, no $D_{p_i}$ can contain both $\beta_{j}$ and $\beta_{j'}$ for $j<j'<l$, since otherwise $D_{p_{s(j)}}$ contains both $\beta_j$ and $\beta_{j'}$   too.
\begin{enumerate}\setcounter{enumi}{3}
\item For $i<k$, $|D_{p_{i}}\cap \{\beta_j: j<l\}|\leq 1$.
\end{enumerate}

We claim that $l\leq n$. Suppose otherwise, $l>n$. Then by (4), Case 2 in the definition of $\pi_q$ cannot occur for $\{\beta_j: j\leq n\}$ and hence $\pi_q(\{\beta_j: j\leq n\})=1$. This contradicts the fact $b\in \mc{H}_0^{q}$. 

Now by (1), $F=\{s(i): i<l\}$ is as desired.
\end{proof}

We now show that $q$ satisfies (a) in definition of $\mc{P}$.
\begin{lem}\label{lem a of P}
$\ms{C}\cap [D_q]^{\leq n}\subseteq \mc{H}_0^q\subseteq \ms{C}$.
\end{lem}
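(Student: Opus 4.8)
The plan is to prove the two inclusions $\ms{C}\cap [D_q]^{\leq n}\subseteq \mc{H}_0^q$ and $\mc{H}_0^q\subseteq \ms{C}$ separately, working directly from the case definition of $\pi_q$ and using Lemma \ref{lem initial3} to reduce the second inclusion to properties of the individual conditions $p_i$.

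For the first inclusion, suppose $a\in \ms{C}\cap [D_q]^{\leq n}$ and I must show $[a]^{<n+2}\subseteq \pi_q^{-1}\{0\}$, i.e. every $b\subseteq a$ with $|b|\leq n+1$ gets color $0$. Since $|a|\leq n$, in fact $|b|\leq n<n+1$, so we are always in Case 1 of the definition of $\pi_q$: $\pi_q(b)=0$ iff $b\in \ms{C}$. But $b\subseteq a\in\ms{C}$ and $\ms{C}$ is downward closed by (I), so $b\in\ms{C}$ and $\pi_q(b)=0$. Hence $a\in\mc{H}_0^q$.

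For the second inclusion, fix $b\in\mc{H}_0^q$; I want $b\in\ms{C}$. By Lemma \ref{lem initial3} there is $F\in[k]^{\leq n}$ with $b\subseteq\bigcup_{i\in F} D_{p_i}$; write $F=\{s(0),\dots,s(l-1)\}$ with $l\leq n$. For each $i\in F$ set $b_i=b\cap D_{p_i}$. Since $b\in\mc{H}_0^q$, every $\leq n$-element subset of $b$ has color $0$, so in particular every subset of $b_i$ of size $\leq n$ lies in $\ms{C}$ by Case 1; because $|b_i|\leq|b|$ could exceed $n$, I instead argue $b_i\in\mc{H}_0^{p_i}$ directly: every subset $c\subseteq b_i$ with $|c|\leq n+1$ is a subset of $b$, hence $\pi_q(c)=0$; if $|c|\leq n$ this forces $c\in\ms{C}$, and one checks this is exactly what $c\in\mc{H}_0^{p_i}$ requires together with $c\subseteq D_{p_i}$ — here I use condition (a) for $p_i$, namely $\ms{C}\cap[D_{p_i}]^{\leq n}\subseteq\mc{H}_0^{p_i}$ and $\mc{H}_0^{p_i}\subseteq\ms{C}$, to go back and forth. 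So $b_i\in\mc{H}_0^{p_i}\subseteq\ms{C}$ for each $i\in F$. Now I want to combine the $b_i$ via property (I): if all the $\tau(b_i)$ were equal, then $\bigcup_{i\in F} b_i=b\in\ms{C}$ since $l\leq n$. They need not be equal, but each $b_i\in\mc{H}_0^{p_i}$, so $\sigma_{p_i}(b_i)$ is defined, and by (c) each $\sigma_{p_i}^{-1}$-class sits inside a single $\tau$-class. The cleanest route is: I do not actually need the $b_i$ to be on the nose — instead I observe that $b$ itself already lies in $\ms{C}$ whenever $b$ has size $\leq n$ (Case 1), and when $|b|\geq n+1$ I derive a contradiction or reduce; more precisely, pick any $c\in[b]^{\leq n}$; by Case 1, $c\in\ms{C}$, but this only gives subsets. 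The honest combinatorial point is: since $l\leq n$ and each $b_i\in\ms{C}$, and $\ms{C}$ need not be closed under unions of $\leq n$ arbitrary members — only $\tau$-equal ones — I must use that $b\in\mc{H}_0^q$ more strongly.

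So the real argument for $\mc{H}_0^q\subseteq\ms{C}$ runs: assuming $|b|\geq n+1$ (else done by Case 1), take any $a\in[b]^{n+1}$; since $b\in\mc{H}_0^q$, $\pi_q(a)=0$, so Case 2 applies to $a$, giving $s\in k^{\leq n}$ and $a_i\in\mc{H}_0^{p_{s(i)}}$ with $a\subseteq\bigcup_{i<|s|}a_i$ and all $\sigma_{p_{s(i)}}(a_i)$ equal; by (c) all $\tau(a_i)$ are equal, so by (I) $\bigcup_{i<|s|}a_i\in\ms{C}$ — note $|s|\leq n$ — and hence $a\subseteq\bigcup a_i$ with $\ms{C}$ downward closed gives $a\in\ms{C}$. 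Thus every $(n+1)$-element subset of $b$ is in $\ms{C}$; combined with every $\leq n$-element subset being in $\ms{C}$ (Case 1 plus $b\in\mc{H}_0^q$), I would like to conclude $b\in\ms{C}$. This last step — lifting from "all subsets of size $\leq n+1$ are in $\ms{C}$" to "$b\in\ms{C}$" — is the main obstacle, and I expect it is handled by applying Lemma \ref{lem initial3}'s decomposition one more time together with property (I): write $b\subseteq\bigcup_{i\in F}D_{p_i}$ with $|F|\leq n$, get $b_i=b\cap D_{p_i}\in\mc{H}_0^{p_i}\subseteq\ms{C}$ as above, and now produce from the Case-2 witnesses for a maximal $a\subseteq b$ a single $\tau$-value making $\bigcup b_i=b$ land in $\ms{C}$ via (I). I would write this final reduction carefully, as it is where the coherence of the construction (conditions (b), (c) and property (I)) is really consumed.
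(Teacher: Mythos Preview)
Your first inclusion is fine. The second inclusion has a genuine gap, and you have correctly located it: the step from ``every $(\leq n+1)$-subset of $b$ lies in $\ms{C}$'' to ``$b\in\ms{C}$'' simply does not follow from the hypotheses on $\ms{C}$, and the patch you sketch does not go through. There are two places where the argument breaks. First, your claim that $b_i=b\cap D_{p_i}\in\mc{H}_0^{p_i}$ is not justified for $(n+1)$-element subsets $c\subseteq b_i$: knowing $\pi_q(c)=0$ via Case~2 only gives witnesses $a_j\in\mc{H}_0^{p_{s(j)}}$ scattered among \emph{different} $p_{s(j)}$'s, and there is no direct reason this forces $\pi_{p_i}(c)=0$. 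Second, even granting $b_i\in\ms{C}$, you have no control over the values $\tau(b_i)$, so property~(I) cannot be invoked.

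The missing ingredient throughout is property~(II), the isomorphism $(D_{p_i},e,\pi_{p_i},\sigma_{p_i},\tau)\cong(D_{p_j},e,\pi_{p_j},\sigma_{p_j},\tau)$, which you never use. The paper's argument exploits it as follows: instead of the $b_i$, one works with the single set $D_{p_0}[I]$ where $I=\bigcup_{i\in F}I_i$ and $b\cap D_{p_i}=D_{p_i}[I_i]$. For any $a\in[D_{p_0}[I]]^{<n+2}$ one transfers $a$ to a subset $a'\subseteq b$ of the same size (via Lemma~\ref{lem initial2}), extends to $a''\in[b]^{n+1}$, applies Case~2 to get witnesses $a_j=D_{p_{s(j)}}[J_j]$ with constant $\sigma$-value, and then uses (II) to pull everything back to $p_0$: the sets $D_{p_0}[J_j]$ have the same $\sigma_{p_0}$-value, so by~(b) their union is in $\mc{H}_0^{p_0}$, and $a\subseteq D_{p_0}[\bigcup J_j]$. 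This gives $D_{p_0}[I]\in\mc{H}_0^{p_0}\subseteq\ms{C}$; then (II) forces $\tau(D_{p_i}[I])$ to be constant over $i\in F$, and now (I) applies since $|F|\leq n$, yielding $b\subseteq\bigcup_{i\in F}D_{p_i}[I]\in\ms{C}$.
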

\begin{proof}
$\ms{C}\cap [D_q]^{\leq n}\subseteq \mc{H}_0^q$ follows from our definition of $\pi_q$. Now we show $\mc{H}_0^q\subseteq \ms{C}$. Fix $b\in \mc{H}_0^q$. By definition of $\pi_q$, assume $|b|>n$. By Lemma \ref{lem initial3},   assume that
\begin{enumerate}
\item $b\subseteq \bigcup_{i\in F} D_{p_i}$ for some $F\in [k]^{\leq n}$.
\end{enumerate}
Let $\langle I_i\subseteq |D_{p_i}|: i\in F\rangle$ and $I$ be such that
\[b\cap D_{p_i}=D_{p_i}[I_i] \text{ and } I=\bigcup_{i\in F} I_i.\]

\textbf{Claim.} $D_{p_0}[I]\in \mc{H}_0^{p_0}$.\medskip

\begin{proof}[Proof of Claim.]
 Arbitrarily choose $a\in [D_{p_0}[I]]^{<n+2}$. For $i\in F$, let $I'_i\subseteq I_i$ be such that
\[a\cap D_{p_0}[I_i]=D_{p_0}[I'_i].\]
 And for $i\in F$, let 
\[I''_i=I'_i\setminus \bigcup_{j\in F\cap i} I'_j.\]
 Then $I''_i$'s are pairwise disjoint and $a=\bigcup_{i\in F} D_{p_0}[I''_i]$. Let
\begin{enumerate}\setcounter{enumi}{1}
\item $a'=\bigcup_{i\in F} D_{p_i}[I''_i]$.
\end{enumerate}
By Lemma \ref{lem initial2},   $D_{p_i}[I''_i]$'s are pairwise disjoint. So $|a'|=|\bigcup_{i\in F} I''_i|=|a|<n+2$.

Find $a''\in [b]^{n+1}$ such that
\begin{enumerate}\setcounter{enumi}{2}
\item $a'\subseteq a''$.
\end{enumerate}
Note  $\pi_q(a'')=0$.  By  definition of $\pi_q$, choose $s\in k^{\leq n}$ and $\langle a_j\in \mc{H}_0^{p_{s(j)}}: j<|s|\rangle$ such that 
\begin{enumerate}\setcounter{enumi}{3}
\item $a''\subseteq \bigcup_{j<|s|} a_j\text{ and $\sigma_{p_{s(j)}}(a_j)=\sigma_{p_{s(l)}}(a_{l})$ for }j, l<|s|$.
\end{enumerate}
For $j<|s|$, let 
\begin{enumerate}\setcounter{enumi}{4}
\item $J_j\subseteq |D_{p_{s(j)}}|\text{ be such that } a_j=D_{p_{s(j)}}[J_j]$.
\end{enumerate}
Together with (II),
\[\sigma_{p_{0}}(D_{p_0}[J_j])=\sigma_{p_{0}}(D_{p_0}[J_l])\text{ for }j, l<|s|.\]
By (b) and the fact $|s|\leq n$, 
\begin{enumerate}\setcounter{enumi}{5}
\item $D_{p_0}[\bigcup_{j<|s|} J_j]\in \mc{H}_0^{p_0}$.
\end{enumerate}
Recall by (2)-(5),
\[\bigcup_{i\in F} D_{p_i}[I''_i]=a'\subseteq a''\subseteq \bigcup_{j<|s|} a_j= \bigcup_{j<|s|} D_{p_{s(j)}}[J_j]\] 
 Then by Lemma \ref{lem initial2}, $I''_i\subseteq \bigcup_{j<|s|} J_j$ and hence  
 \[\bigcup_{i\in F} I''_i\subseteq \bigcup_{j<|s|} J_j.\]
Together with (6) and $a=\bigcup_{i\in F} D_{p_0}[I''_i]$, we conclude that $\pi_{p_0}(a)=0$.

Since $a$ is arbitrary in $[D_{p_0}[I]]^{<n+2}$, $D_{p_0}[I]\in \mc{H}_0^{p_0}$. 
\end{proof}

Now by (a) for $p_0$ and (II),   $D_{p_i}[I]\in \ms{C}=dom(\tau)$ for $i\in F$ and 
\[\tau(D_{p_i}[I])=\tau(D_{p_{j}}[I]) \text{ for $i, j$ in } F.\] 
By (I) and the fact $|F|\leq n$,  $\bigcup_{i\in F} D_{p_i}[I]\in \ms{C}$.
So $b\subseteq \bigcup_{i\in F} D_{p_i}[I]$ is in $\ms{C}$.
\end{proof}

Then we check (b)-(c) for $q$.
\begin{lem}\label{lem q in P}
$q\in \mc{P}$.
\end{lem}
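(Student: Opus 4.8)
The plan is to verify that the condition $q=(\pi_q,\sigma_q)$ constructed above satisfies clauses (b) and (c) in the definition of $\mc P$; clause (a) is already Lemma~\ref{lem a of P}, and the fact that $q\le p_i$ for every $i<k$ follows once we know $q\in\mc P$, since by construction $\pi_q\supseteq\pi_{p_i}$ (for $|a|<n+1$ this is clause (a) for $p_i$; for $|a|=n+1$, if $\pi_{p_i}(a)=0$ then $a\in\mc H_0^{p_i}$, so taking $s$ the constant sequence $i$ of length $1$ and $a_0=a$ puts us in Case~2) and $\sigma_q\supseteq\sigma_{p_i}$ by (III).

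For clause (c), fix $m<\omega$ and I would show $\sigma_q^{-1}\{m\}\subseteq\tau^{-1}\{k\}$ for a single $k$. If $\sigma_q^{-1}\{m\}$ meets $\mc H_0^{q}\setminus\bigcup_{i<k}\mc H_0^{p_i}$ then by (IV) it is a singleton (and its unique element lies in $\mc H_0^q\subseteq\ms C=\mathrm{dom}(\tau)$ by Lemma~\ref{lem a of P}), so there is nothing to prove; otherwise $\sigma_q^{-1}\{m\}\subseteq\bigcup_{i<k}\mc H_0^{p_i}$, and for each $b$ in this set with $b\in\mc H_0^{p_i}$ we have $\sigma_{p_i}(b)=\sigma_q(b)=m$ by (III), so $b\in\sigma_{p_i}^{-1}\{m\}\subseteq\tau^{-1}\{k_i\}$ by (c) for $p_i$. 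The point is that the $k_i$ agree across those $i$ that actually occur: if $b\in\mc H_0^{p_i}\cap\mc H_0^{p_j}$ appears, then $\sigma_{p_i}(b)=\sigma_{p_j}(b)$ by Lemma~\ref{lem 4.1.1}, so $\tau(b)$ is well-defined and equals both $k_i$ and $k_j$; if no single $b$ is shared, one still needs a common value, which is guaranteed because any two conditions $p_i,p_j\in Q_m$ are isomorphic via (II), hence $\tau[\mathrm{rang}\,\sigma_{p_i}^{-1}\{m\}]$ matches across $i$ — concretely, $\sigma_{p_i}^{-1}\{m\}$ and $\sigma_{p_j}^{-1}\{m\}$ are sent to each other by the isomorphism, which also preserves $\tau$. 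So all occurring $b$'s land in a single $\tau$-class.

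For clause (b), fix $m<\omega$ and $F_0,\dots,F_{n-1}\in\sigma_q^{-1}\{m\}$; I must show $\bigcup_{j<n}F_j\in\mc H_0^{q}$, i.e.\ every $a\in[\bigcup_j F_j]^{<n+2}$ has $\pi_q(a)=0$. Again split on whether $m$ is a ``new'' label: if $\sigma_q^{-1}\{m\}$ meets $\mc H_0^q\setminus\bigcup_i\mc H_0^{p_i}$ it is a singleton by (IV), so all $F_j$ coincide and $\bigcup_j F_j=F_0\in\mc H_0^q$ trivially. Otherwise each $F_j\in\mc H_0^{p_{i_j}}$ for some $i_j<k$ with $\sigma_{p_{i_j}}(F_j)=m$. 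For $|a|<n+1$: since $a\subseteq\bigcup_j F_j\subseteq\ms C$ (each $F_j\in\mc H_0^{p_{i_j}}\subseteq\ms C$ and we use downward closure of $\ms C$ from (I)), so $\pi_q(a)=0$ by Case~1. For $|a|=n+1$: I want to land in Case~2. Set $s\in k^{n}$ by $s(j)=i_j$ and $a_j=F_j\in\mc H_0^{p_{s(j)}}$; then $a\subseteq\bigcup_{j<n}a_j$, and $\sigma_{p_{s(j)}}(a_j)=m$ for all $j$, so indeed $\sigma_{p_{s(j)}}(a_j)=\sigma_{p_{s(l)}}(a_l)$, which is exactly the hypothesis of Case~2 (with $|s|=n\le n$); hence $\pi_q(a)=0$.

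The main obstacle I anticipate is the ``common $\tau$-class'' bookkeeping in clause (c) when the elements of $\sigma_q^{-1}\{m\}$ are spread across several $\mc H_0^{p_i}$ without a shared element — there one genuinely needs to exploit that all $p_i$ lie in the same piece $Q_m$ of the partition (II), so that the isomorphisms carry $\sigma$ and $\tau$ simultaneously and force $\tau$ to be constant on $\sigma_q^{-1}\{m\}$. Everything else is routine unwinding of the definitions of $\pi_q$ and $\sigma_q$ together with the already-established Lemmas~\ref{lem initial}, \ref{lem initial2}, \ref{lem 4.1.1}, \ref{lem initial3}, and \ref{lem a of P}.
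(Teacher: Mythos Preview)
Your approach is essentially the paper's: verify (c) first (splitting on whether the label is ``new'' or comes from $\operatorname{rang}(\sigma_{p_0})$, and in the latter case using the isomorphism (II) to force $\tau$ to be constant across the $\sigma_{p_i}^{-1}\{m\}$), then verify (b) by exhibiting the Case~2 witnesses $s$ and $\langle a_j\rangle$ for any $a$ of size $n+1$.

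There is one genuine slip. In your clause~(b) argument for $|a|<n+1$ you write ``$a\subseteq\bigcup_j F_j\subseteq\ms C$ (each $F_j\in\mc H_0^{p_{i_j}}\subseteq\ms C$ and we use downward closure of $\ms C$ from (I))''. Downward closure alone does not give $\bigcup_j F_j\in\ms C$ from the individual $F_j\in\ms C$; you need the \emph{other} clause of (I), namely that $\tau(F_0)=\cdots=\tau(F_{n-1})$ implies $\bigcup_j F_j\in\ms C$. The equality of $\tau$-values is exactly what you established in clause~(c), so the fix is to invoke (c) (just proved) and then (I), as the paper does. Without this step your argument for $|a|<n+1$ has a gap.

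A side remark: your opening assertion that $\pi_q\supseteq\pi_{p_i}$ only checks one direction (if $\pi_{p_i}(a)=0$ then $\pi_q(a)=0$). The reverse implication, that $\pi_q(a)=0$ forces $\pi_{p_i}(a)=0$ for $a\in[D_{p_i}]^{n+1}$, is the nontrivial content of the separate Lemma~\ref{lem lower bound} and requires the Claim inside Lemma~\ref{lem a of P}; it is not part of the present lemma, so this does not affect your proof of $q\in\mc P$, but the remark as written is incomplete.
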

\begin{proof}
We first check (c) for $q$. Fix $l\in rang(\sigma_q)$. 

If $l\notin rang(\sigma_{p_0})$, then by (IV), $\sigma^{-1}_q\{l\}$ is a singleton and hence (c) is satisfied.

Assume $l\in rang(\sigma_{p_0})$. By (III) and Lemma \ref{lem 4.1.1}, $\sigma^{-1}_q\{l\}=\bigcup_{i<k} \sigma^{-1}_{p_i}\{l\}$. By (II) and (c) for $p_i$, $\tau$ is constant on $\bigcup_{i<k} \sigma^{-1}_{p_i}\{l\}$. So (c) is satisfied for $q$.

We then check (b) for $q$. Fix $l\in rang(\sigma_q)$. 
If $l\notin rang(\sigma_{p_0})$, then by (IV), $\sigma_q^{-1}\{l\}$ is a singleton and hence $n$-linked.

Now assume $l\in rang(\sigma_{p_0})$. Choose $\langle a_i\in \sigma_q^{-1}\{l\}: i<n\rangle$. By (III) and (IV), for each $i<n$, there exists $s(i)<k$ such that $a_i\in \mc{H}_0^{p_{s(i)}}$ and $\sigma_{p_{s(i)}}(a_i)=\sigma_q(a_i)=l$.

Note by (c) for $q$, $\tau(a_0)=\cdots=\tau(a_{n-1})$. Then by (I), 
\begin{enumerate}
\item $\bigcup_{i<n} a_i\in \ms{C}$.
\end{enumerate}
Fix  $a\in [\bigcup_{i<n} a_i]^{<n+2}$. If $|a|<n+1$, then $\pi(a)=0$ by definition of $\pi_q$ and (1).
Now assume $|a|=n+1$. Then $\pi_q(a)=0$ witnessed by $s$ and $\langle a_i\in \mc{H}_0^{p_{s(i)}}: i<n\rangle$. So $\bigcup_{i<n} a_i\in \mc{H}_0^{q}$ and hence $\sigma_q^{-1}\{l\}$ is $n$-linked.\medskip

Now (b)-(c) are all satisfied for $q$. Together with Lemma \ref{lem a of P}, $q\in \mc{P}$.
\end{proof}

Then we check that  $q$ is a common lower bound of $p_0,...,p_{k-1}$. 

\begin{lem}\label{lem lower bound}
For every $i<k$, $q\leq p_i$.
\end{lem}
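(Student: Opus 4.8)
The plan is to verify the definition of $p\le q$ in $\mc{P}$, namely that $\pi_q\supseteq\pi_{p_i}$ and $\sigma_q\supseteq\sigma_{p_i}$ for each fixed $i<k$. Since $\sigma_q\up\mc{H}_0^{p_i}=\sigma_{p_i}$ is built into the construction via (III) (using Lemma \ref{lem 4.1.1} to see this is consistent), the containment $\sigma_q\supseteq\sigma_{p_i}$ will follow once we know $\mc{H}_0^{p_i}\subseteq\mc{H}_0^q$, i.e. once we verify $\pi_q\up[D_{p_i}]^{<n+2}=\pi_{p_i}$. So the whole lemma reduces to checking, for an arbitrary $a\in[D_{p_i}]^{<n+2}$, that $\pi_q(a)=\pi_{p_i}(a)$.

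I would split into the two size regimes matching the case definition of $\pi_q$. For $|a|<n+1$: by Case 1, $\pi_q(a)=0$ iff $a\in\ms{C}$; and by (a) for $p_i$ together with downward closure of $\ms{C}$ and of $\mc{H}_0^{p_i}$, $\pi_{p_i}(a)=0$ iff $a\in\ms{C}\cap[D_{p_i}]^{\le n}$ iff $a\in\mc{H}_0^{p_i}$. Wait — more carefully, $\pi_{p_i}(a)=0$ means $a\in\mc{H}_0^{p_i}$, which by (a) implies $a\in\ms{C}$; conversely $a\in\ms{C}\cap[D_{p_i}]^{\le n}\subseteq\mc{H}_0^{p_i}$. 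So $\pi_{p_i}(a)=0\iff a\in\ms{C}\iff\pi_q(a)=0$, as desired. For $|a|=n+1$: if $\pi_{p_i}(a)=0$, then $a\in\mc{H}_0^{p_i}$; taking $s=\langle i\rangle$ and $a_0=a$ witnesses Case 2 (trivially, the compatibility condition on $\sigma$-values being vacuous for a length-one sequence), so $\pi_q(a)=0$. Conversely, if $\pi_q(a)=0$ via Case 2, witnessed by $s\in k^{\le n}$ and $\langle a_j\in\mc{H}_0^{p_{s(j)}}:j<|s|\rangle$ with $a\subseteq\bigcup_{j<|s|}a_j$ and all the $\sigma$-values equal — I want to conclude $a\in\mc{H}_0^{p_i}$.

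The last implication is the only real content, and it is the step I expect to be the main obstacle. Here is how I would handle it. Since $a\subseteq D_{p_i}$, for each $j<|s|$ we have $a\cap a_j\subseteq D_{p_i}\cap D_{p_{s(j)}}$, which by (II) and Lemma \ref{lem initial} is an initial segment of both domains; hence $a\cap a_j=D_{p_{s(j)}}[J_j']$ for some $J_j'$, and by the isomorphism (II) the corresponding set $D_{p_i}[J_j']=a\cap a_j$ satisfies $D_{p_i}[J_j']\in\mc{H}_0^{p_i}$ — because $a_j\in\mc{H}_0^{p_{s(j)}}$, its restriction to the initial-segment indices lies in $\mc{H}_0^{p_{s(j)}}$ (downward closure), and (II) transports this membership. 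Moreover, by (II) the $\sigma_{p_i}$-value of each $D_{p_i}[J_j']$ equals $\sigma_{p_{s(j)}}$ of $a_j$ restricted appropriately — but I actually want the values on the full $a_j$'s, so I should instead argue: set $J_j\subseteq|D_{p_{s(j)}}|$ with $a_j=D_{p_{s(j)}}[J_j]$; by (II), $D_{p_i}[J_j]\in\mc{H}_0^{p_i}$ and $\sigma_{p_i}(D_{p_i}[J_j])=\sigma_{p_{s(j)}}(a_j)$, so all these values coincide; by (b) for $p_i$ and $|s|\le n$, $D_{p_i}[\bigcup_{j<|s|}J_j]\in\mc{H}_0^{p_i}$. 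Finally, by Lemma \ref{lem initial2}(2), $a\subseteq\bigcup_{j<|s|}a_j=\bigcup_{j<|s|}D_{p_{s(j)}}[J_j]$ forces the index sets to satisfy the containment that yields $a\subseteq D_{p_i}[\bigcup_{j<|s|}J_j]$ — concretely, writing $a=D_{p_i}[K]$, each element of $K$ lands in some $J_j$ via the identification of initial segments. Hence $a\in\mc{H}_0^{p_i}$ by downward closure, completing the proof. I would write this out compactly, leaning on Lemmas \ref{lem initial}, \ref{lem initial2} and property (II) to suppress the routine index bookkeeping.
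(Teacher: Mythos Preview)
Your proof is correct and follows essentially the same route as the paper's. The only difference is packaging: for the hard direction ($|a|=n+1$, $\pi_q(a)=0$), the paper simply invokes the Claim already established inside Lemma~\ref{lem a of P} (applied with $b=a\subseteq D_{p_i}$, so $F=\{i\}$ and the index set is exactly your $K$), whereas you re-derive that Claim's argument inline --- your use of (II), (b), and Lemma~\ref{lem initial2}(2) is precisely the content of that Claim specialized to this simpler situation.
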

\begin{proof}
Fix $i<k$. First note by  (III) and Lemma \ref{lem 4.1.1}, $\sigma_q\supseteq \sigma_{p_i}$. To show $\pi_q\supseteq \pi_{p_i}$, note by (a),
   it suffices to show $\pi_q\up{[D_{p_i}]^{n+1}}=\pi_{p_i}$.

 Fix $a\in [D_{p_i}]^{n+1}$. Assume $a=D_{p_i}[I]$ for some $I$. It suffices to prove that $\pi_{p_i}(a)=0$ iff $\pi_{q}(a)=0$.

 If $\pi_{p_i}(a)=0$, then by definition of $\pi_q$ Case 2, $\pi_q(a)=0$.
 
Now suppose $\pi_q(a)=0$. Then $a=D_{p_i}[I]\in \mc{H}^q_0$.
By the Claim of Lemma \ref{lem a of P}, $D_{p_0}[I]\in \mc{H}^{p_0}_0$ and hence $D_{p_i}[I]\in \mc{H}^{p_i}_0$. So $\pi_{p_i}(a)=0$.
\end{proof}

\begin{proof}[Proof of Lemma \ref{lem 4.1}.]
For the partition $\mc{P}=\bigcup_{m<\omega} Q_m$ satisfying (II), every $Q_m$ is centered by Lemma \ref{lem q in P} and Lemma \ref{lem lower bound}. So $\mc{P}$ is $\sigma$-centered.
\end{proof}

We will need the following property which follows from Proposition \ref{prop code fin by 4}.
\begin{thm}\label{thm sK3toK4 sK3top}
Assume {\rm MA$_{\omega_1}$($\sigma$-centered)}. For $n\geq 2$, if every $\sigma$-$n$-linked poset has property {\rm K}$_{n+1}$, then every $\sigma$-$n$-linked poset has precaliber $\omega_1$. 
\end{thm}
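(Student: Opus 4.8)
The plan is to read this theorem off Proposition \ref{prop code fin by 4} by feeding it the right coding set $\ms{C}$, namely the finite‑compatibility pattern of an uncountable subset of the given $\sigma$-$n$-linked poset. So fix a $\sigma$-$n$-linked poset $\mc{P}$, a partition $\mc{P}=\bigcup_{m<\omega}P_m$ into $n$-linked pieces, and an uncountable $X\subseteq\mc{P}$; shrinking $X$, I may list it injectively as $\{p_\alpha:\alpha<\omega_1\}$. Put
\[\ms{C}=\{F\in[\omega_1]^{<\omega}: \{p_\alpha:\alpha\in F\}\text{ has a common lower bound in }\mc{P}\}.\]
Since a \emph{finite} subset of $\mc{P}$ is centered exactly when it has a common lower bound, $\ms{C}$ records which finite subsets of $X$ are centered; it is plainly downward closed. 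I would then verify $\ms{C}$ is $\sigma$-$n$-linked in the sense of the definition preceding Proposition \ref{prop code fin by 4}: choose once and for all a common lower bound $r_F$ for each nonempty $F\in\ms{C}$, and set $C_m=\{F\in\ms{C}:r_F\in P_m\}$ (with $\emptyset$ thrown into $C_0$). If $F_1,\dots,F_n\in C_m$, then $r_{F_1},\dots,r_{F_n}\in P_m$ are $n$-linked, so they have a common lower bound $s$, and $s$ is a common lower bound of $\{p_\alpha:\alpha\in F_1\cup\dots\cup F_n\}$; hence $F_1\cup\dots\cup F_n\in\ms{C}$, as required.

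Next I would invoke Proposition \ref{prop code fin by 4} (this is where MA$_{\omega_1}(\sigma$-centered) is used) to get $\pi:[\omega_1]^{<n+2}\ra 2$ with $\mc{H}^\pi_0$ $\sigma$-$n$-linked and $\ms{C}\cap[\omega_1]^{<n+1}\subseteq\mc{H}^\pi_0\subseteq\ms{C}$. By the hypothesis of the theorem, the $\sigma$-$n$-linked poset $\mc{H}^\pi_0$ has property K$_{n+1}$. Each singleton $\{\alpha\}$ lies in $\ms{C}\cap[\omega_1]^{<n+1}$ (witnessed by $r=p_\alpha$) and hence in $\mc{H}^\pi_0$, so $\{\{\alpha\}:\alpha<\omega_1\}$ is an uncountable subset of $\mc{H}^\pi_0$, and K$_{n+1}$ produces an uncountable $\Gamma\subseteq\omega_1$ such that $\{\{\alpha\}:\alpha\in\Gamma\}$ is $(n+1)$-linked in $\mc{H}^\pi_0$. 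Unwinding the order (reverse inclusion) and the downward closure of $\mc{H}^\pi_0$, an $(n+1)$-element subfamily $\{\{\alpha_1\},\dots,\{\alpha_{n+1}\}\}$ has a common lower bound $G\in\mc{H}^\pi_0$ containing $\{\alpha_1,\dots,\alpha_{n+1}\}$, so $[\Gamma]^{n+1}\subseteq\mc{H}^\pi_0$. Since $|\Gamma|>n+1$, for any finite $F\subseteq\Gamma$ and any $a\in[F]^{<n+2}$ I can pad $a$ up to an $(n+1)$-element subset of $\Gamma$, which lies in $\mc{H}^\pi_0$, forcing $\pi(a)=0$; thus every finite $F\subseteq\Gamma$ is $0$-homogeneous, i.e.\ $F\in\mc{H}^\pi_0\subseteq\ms{C}$. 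Consequently every finite subset of $\{p_\alpha:\alpha\in\Gamma\}$ has a common lower bound in $\mc{P}$, so this uncountable set is centered, and $\mc{P}$ has precaliber $\omega_1$.

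The genuinely hard content has been front‑loaded into Proposition \ref{prop code fin by 4}; the rest is bookkeeping in translating between $\mc{P}$, the coding set $\ms{C}$, and the derived coloring $\pi$. The two points requiring mild care — and the closest thing to an obstacle — are making the partition of $\ms{C}$ well defined by fixing the lower bounds $r_F$ in advance, and enumerating $X$ injectively so that the final centered family $\{p_\alpha:\alpha\in\Gamma\}$ is honestly uncountable; the combinatorial step deducing "all finite subsets of $\Gamma$ are $0$-homogeneous'' from "$(n+1)$-linkedness on singletons'' is then the padding argument just described.
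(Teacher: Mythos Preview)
Your proof is correct and follows essentially the same route as the paper's: feed the finite-centered pattern of $X$ into Proposition \ref{prop code fin by 4}, apply K$_{n+1}$ to the singletons $\{\{\alpha\}:\alpha<\omega_1\}$ inside $\mc{H}^\pi_0$, and read off an uncountable centered subfamily from $\mc{H}^\pi_0\subseteq\ms{C}$. The only cosmetic differences are that the paper identifies $\mathbb{P}$ with $\omega_1$ (restricting to $|\mathbb{P}|=\omega_1$) and asserts ``$Y$ is $0$-homogeneous'' directly, whereas you code via an enumeration of $X$ and spell out the padding argument; your version is marginally cleaner in handling posets of arbitrary size.
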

\begin{proof}
Fix a $\sigma$-$n$-linked poset $\mathbb{P}$ of size $\omega_1$ and an uncountable subset $X\subseteq \mathbb{P}$. Denote $\ms{C}=\{F\in [\mathbb{P}]^{<\omega}: F$ is centered$\}$. It is clear that $\ms{C}$ is downward closed and $\sigma$-$n$-linked.

Let $\pi: [\mathbb{P}]^{<n+2}\ra 2$ be a coloring guaranteed by Proposition \ref{prop code fin by 4}. Then $\mc{H}^\pi_0$ has property K$_{n+1}$.

Note that $[X]^1=\{\{x\}: x\in X\}\subseteq \mc{H}^\pi_0$. Find $Y\in [X]^{\omega_1}$ such that $[Y]^1$ is $(n+1)$-linked in $\mc{H}^\pi_0$. 
Then $Y$ is 0-homogeneous over $\pi$. By Proposition \ref{prop code fin by 4} (ii), 
\[[Y]^{<\omega}\subseteq \mc{H}^\pi_0\subseteq \ms{C}.\]
Then $Y$ is an uncountable  centered subset of $\mathbb{P}$. So $\mathbb{P}$ has precaliber $\omega_1$. 
 \end{proof}

We have the following different forms of consequences of Proposition \ref{prop code fin by 4}.
 \begin{cor}
 Assume {\rm MA$_{\omega_1}$($\sigma$-centered)} and $2\leq n<\omega$. If $\mathbb{P}$ is a $\sigma$-$n$-linked poset of size $\omega_1$, then there is a coloring $\pi: [\mathbb{P}]^{<n+2}\ra 2$ with the following properties.
 \begin{enumerate}[(i)]
 \item $\mc{H}^\pi_0$ is $\sigma$-$n$-linked.
 \item $\{F\in [\mathbb{P}]^{<n+1}: F$ is centered$\}\subseteq \mc{H}^\pi_0\subseteq \{F\in [\mathbb{P}]^{<\omega}: F$ is centered$\}$.
  \end{enumerate}
  \end{cor}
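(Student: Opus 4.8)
The plan is to obtain this as an immediate consequence of Proposition~\ref{prop code fin by 4}, applied to the family of finite centered subsets. First I would fix a bijection identifying the underlying set of $\mathbb{P}$ with $\omega_1$ (possible since $|\mathbb{P}|=\omega_1$) and set $\ms{C}=\{F\in[\mathbb{P}]^{<\omega}:F\text{ is centered}\}$, regarded as a subfamily of $[\omega_1]^{<\omega}$. This family is visibly downward closed, since every subset of a centered set is centered. Conclusion (ii) is exactly the sandwich ``$\ms{C}\cap[\mathbb{P}]^{<n+1}\subseteq\mc{H}^\pi_0\subseteq\ms{C}$'' and (i) is exactly ``$\mc{H}^\pi_0$ is $\sigma$-$n$-linked'', so once $\ms{C}$ is shown to be $\sigma$-$n$-linked, the coloring $\pi$ delivered by Proposition~\ref{prop code fin by 4} (transported back along the bijection) is the desired one, with no further work.

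So the only step carrying content is verifying that $\ms{C}$ is $\sigma$-$n$-linked in the sense of the definition preceding Proposition~\ref{prop code fin by 4}, i.e.\ that $\ms{C}$ splits into countably many pieces so that the union of any $n$ members of a single piece is again centered. Here I would exploit the fact that a \emph{finite} centered set $F$ already has a single common lower bound in $\mathbb{P}$ (apply centeredness of $F$ to the finite subset $F$ itself). Concretely, fix a partition $\mathbb{P}=\bigcup_{k<\omega}L_k$ with each $L_k$ $n$-linked; for each $F\in\ms{C}$ choose a common lower bound $p_F\in\mathbb{P}$ of $F$, let $k(F)$ be least with $p_F\in L_{k(F)}$, and set $C_m=\{F\in\ms{C}:k(F)=m\}$. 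If $F_0,\dots,F_{n-1}\in C_m$, then $p_{F_0},\dots,p_{F_{n-1}}$ lie in the $n$-linked set $L_m$, so they have a common lower bound $q$; since $q\leq p_{F_i}$ and $p_{F_i}$ is below every element of $F_i$, the element $q$ is a common lower bound of $\bigcup_{i<n}F_i$, whence $\bigcup_{i<n}F_i\in\ms{C}$. Thus $\langle C_m:m<\omega\rangle$ witnesses that $\ms{C}$ is $\sigma$-$n$-linked.

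I do not expect a genuine obstacle: the substance sits entirely in Proposition~\ref{prop code fin by 4}. The one place to stay alert is the bookkeeping around the definition of ``$\sigma$-$n$-linked'' for a family of finite sets — it demands that the \emph{union} of $n$ members of a piece be centered, not merely that those members be pairwise compatible — and collapsing each finite centered set to a common lower bound is precisely what makes this stronger requirement hold.
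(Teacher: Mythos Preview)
Your proposal is correct and matches the paper's approach exactly: the corollary is stated without proof, as an immediate application of Proposition~\ref{prop code fin by 4} to $\ms{C}=\{F\in[\mathbb{P}]^{<\omega}:F\text{ is centered}\}$, and this very family (with the same verification that it is downward closed and $\sigma$-$n$-linked via a common lower bound map) already appears in the proof of Theorem~\ref{thm sK3toK4 sK3top} just above.
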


  \begin{cor}\label{cor coloring fin by 4}
 Assume {\rm MA$_{\omega_1}$($\sigma$-centered)} and $2\leq n<\omega$. If $f: [\omega_1]^{<\omega}\ra 2$ is a downward closed coloring with $\mc{H}^f_0$ $\sigma$-$n$-linked, then there is a coloring $\pi: [\omega_1]^{<n+2}\ra 2$ with the following properties.
 \begin{enumerate}[(i)]
 \item $\mc{H}^\pi_0$ is $\sigma$-$n$-linked.
 \item $\mc{H}^f_0\cap [\omega_1]^{<n+1}\subseteq \mc{H}^\pi_0\subseteq \mc{H}^f_0$.
  \end{enumerate}
  \end{cor}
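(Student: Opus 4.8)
The final statement to prove is Corollary~\ref{cor coloring fin by 4}, and the plan is to deduce it directly from Proposition~\ref{prop code fin by 4} by taking $\ms{C}=\mc{H}^f_0$. First I would observe that the hypothesis provides exactly what Proposition~\ref{prop code fin by 4} requires of the set $\ms{C}$: since $f:[\omega_1]^{<\omega}\ra 2$ is downward closed, its collection of finite $0$-homogeneous sets $\mc{H}^f_0=f^{-1}\{0\}$ is a downward closed subfamily of $[\omega_1]^{<\omega}$; and since $\mc{H}^f_0$ is assumed $\sigma$-$n$-linked as a poset (ordered by reverse inclusion), there is a partition $\mc{H}^f_0=\bigcup_{m<\omega}C_m$ into $n$-linked pieces, which in the notation of the proposition means $\bigcup F\in\mc{H}^f_0$ whenever $m<\omega$ and $F\in[C_m]^n$. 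Hence $\ms{C}:=\mc{H}^f_0$ is a downward closed, $\sigma$-$n$-linked subset of $[\omega_1]^{<\omega}$ in the precise sense demanded.

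Next I would simply invoke Proposition~\ref{prop code fin by 4} with this $\ms{C}$, under the standing assumption {\rm MA}$_{\omega_1}(\sigma$-centered$)$ and $2\leq n<\omega$, to obtain a coloring $\pi:[\omega_1]^{<n+2}\ra 2$ satisfying (i) $\mc{H}^\pi_0$ is $\sigma$-$n$-linked, and (ii) $\ms{C}\cap[\omega_1]^{<n+1}\subseteq\mc{H}^\pi_0\subseteq\ms{C}$. Rewriting $\ms{C}$ as $\mc{H}^f_0$ in (ii) gives precisely $\mc{H}^f_0\cap[\omega_1]^{<n+1}\subseteq\mc{H}^\pi_0\subseteq\mc{H}^f_0$, which is conclusion (ii) of the corollary, while (i) is conclusion (i) verbatim. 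So the corollary follows with no further work.

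There is essentially no obstacle here: the corollary is a pure notational repackaging of the proposition, the only (entirely routine) point being to check that "$\mc{H}^f_0$ is $\sigma$-$n$-linked as a poset" coincides with "$\mc{H}^f_0$ is $\sigma$-$n$-linked" in the sense defined just before the proposition, i.e.\ that an $n$-linked family of finite sets closed enough to have unions of $n$-tuples inside $\mc{H}^f_0$ is the same thing as an $n$-linked subset of the reverse-inclusion poset $\mc{H}^f_0$ — which is immediate since a common lower bound of $F_0,\dots,F_{n-1}$ in that poset is any element of $\mc{H}^f_0$ containing $\bigcup_{i<n}F_i$, and $\bigcup_{i<n}F_i$ itself works. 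Thus the proof is a two-line reduction.

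\begin{proof}[Proof of Corollary~\ref{cor coloring fin by 4}.]
Since $f$ is downward closed, $\ms{C}:=\mc{H}^f_0=f^{-1}\{0\}$ is a downward closed subset of $[\omega_1]^{<\omega}$. Since $\mc{H}^f_0$ is $\sigma$-$n$-linked as a poset under reverse inclusion, fix a partition $\mc{H}^f_0=\bigcup_{m<\omega}C_m$ into $n$-linked subsets; for every $m$ and every $F\in[C_m]^n$, a common lower bound of $F$ in $\mc{H}^f_0$ exists, and as $\bigcup F\supseteq G$ for each $G\in F$ we get $\bigcup F\in\mc{H}^f_0=\ms{C}$. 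Thus $\ms{C}$ is downward closed and $\sigma$-$n$-linked in the sense required by Proposition~\ref{prop code fin by 4}. Applying that proposition yields a coloring $\pi:[\omega_1]^{<n+2}\ra 2$ with $\mc{H}^\pi_0$ $\sigma$-$n$-linked and $\ms{C}\cap[\omega_1]^{<n+1}\subseteq\mc{H}^\pi_0\subseteq\ms{C}$. Substituting $\ms{C}=\mc{H}^f_0$ gives exactly (i) and (ii).
\end{proof}
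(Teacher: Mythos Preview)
Your proof is correct and matches the paper's approach exactly: the paper states this corollary without proof, listing it among ``different forms of consequences of Proposition~\ref{prop code fin by 4}'', so the intended argument is precisely the substitution $\ms{C}=\mc{H}^f_0$ that you carry out. Your verification that ``$\mc{H}^f_0$ is $\sigma$-$n$-linked as a poset'' coincides with the proposition's notion of $\sigma$-$n$-linked (via downward closedness) fills in the only detail left implicit.
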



The statement that $\ms{K}_3$ implies MA$_{\omega_1}$ is clearly a corollary of Theorem \ref{thm K3toK4}. But at this point, we already have a proof for this statement.

\begin{proof}[Proof of Theorem \ref{thm K3toMA}.]
Firstly by Theorem \ref{thm K3 t} and Theorem \ref{thm BRMS}, MA$_{\omega_1}$($\sigma$-centered) holds.

Secondly by Theorem \ref{thm sK3toK4 sK3top}, every $\sigma$-linked poset has precaliber $\omega_1$.

Then by \cite[Corollary 2.6]{TV}, every ccc poset of size $\omega_1$ is $\sigma$-linked. So every ccc poset has precaliber $\omega_1$. 

Finally by \cite[Theorem 3.4]{TV}, {\rm MA}$_{\omega_1}$ holds.
\end{proof}

\section{Coding a poset by an uncountable homogeneous set}
In this section, we generalize the coding in \cite{TV} that codes a ccc poset by 0-homogeneous subset of a coloring.  New structures will be used to get stronger properties on posets.

Throughout this section, we fix $n\geq 2$ and  a sequence of distinct reals $\{r_\alpha\in 2^\omega: \alpha<\omega_1\}$. For $x\neq y$ in $2^\omega$,
\[\Delta(x,y)=\min\{m: x(m)\neq y(m)\}.\]
The following coding $c$   defined in \cite{Todorcevic87} and \cite{TV}  will be used.
\begin{defn}
For $\alpha<\beta<\omega_1$, $\Delta(e_\alpha, e_\beta)=\min(\{\xi<\alpha: e(\xi, \alpha)\neq e(\xi, \beta)\}\cup \{\alpha\})$.
\end{defn}
Define $c': [\omega_1]^2\ra \omega_1$ by
\[c'(\alpha, \beta)=\min(\{\xi\in (\alpha, \beta]: e(\xi, \beta)\leq e(\Delta(e_\alpha, e_\beta), \beta)\}).\]
We add a boundary value $e(\beta, \beta)=0$ so that $c'(\alpha, \beta)$ is always defined. Let
\[f: \omega_1\ra \omega\]
be a partition of $\omega_1$ into countably many stationary sets. Then define $c: [\omega_1]^2\ra \omega_1$ by 
\begin{equation*}
c(\alpha, \beta)=
\begin{cases}
e_\beta^{-1}(f(c'(\alpha, \beta))) & \text{ if } f(c'(\alpha, \beta))\in rang(e_\beta)\\
\text{undefined} & \text{otherwise.}
\end{cases}
\end{equation*}
The following property of $c$ is needed (see \cite[Lemma 2.3]{TV}).
\begin{lem}[\cite{Todorcevic87},\cite{TV}]\label{lem TV}
For every $X\in [\omega_1]^{\omega_1}$, there exists $\delta<\omega_1$ such that for any $\xi<\omega_1$, there are $\alpha\in X\cap \delta$ and $\beta\in X\setminus \delta$ with $c(\alpha, \beta)=\xi$.
\end{lem}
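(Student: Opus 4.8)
The plan is to unwind the definitions of $c$, $c'$, and $\Delta(e_\alpha,e_\beta)$ and to exploit the interplay between the coherent function $e$ (properties (coh1)--(coh2)) and the partition $f$ into stationary sets. Fix $X\in[\omega_1]^{\omega_1}$. First I would locate a ``good'' $\delta<\omega_1$ by an elementary-submodel or closure argument: take a countable elementary submodel $\mathcal{M}\prec H(\omega_2)$ containing $X$, $e$, $c$, $c'$, $f$ and all relevant parameters, and set $\delta=\mathcal{M}\cap\omega_1$. The point of this choice is that for a tail of $\beta\in X\setminus\delta$ the values $e_\beta\restriction\delta$ agree with a fixed function up to a finite set (coherence), and that $X\cap\delta$ is cofinal in $\delta$ and ``sees'' everything happening below $\delta$ inside $\mathcal{M}$.

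Next, fix an arbitrary target $\xi<\omega_1$. The strategy is to first arrange that $c'(\alpha,\beta)$ hits a prescribed ordinal $\eta$ in the stationary set $f^{-1}\{f(\eta)\}$ that will make $c(\alpha,\beta)=\xi$; concretely I want to choose $\eta$ so that $f(\eta)=e_\beta(\xi)$ for the $\beta$ we will pick, which is why the stationarity of each $f^{-1}\{k\}$ is needed --- it guarantees enough $\eta$'s with the right $f$-value lying below $\delta$. Then I would pick $\beta\in X\setminus\delta$ large enough that (i) $\xi<\beta$ and $\xi$ is in the range of $e_\beta$, (ii) the finite ``coherence error'' set $D$ between $e_\beta\restriction\delta$ and the fixed reference function is small relative to the ordinals we care about, and (iii) $e(\xi,\beta)$ is small enough that $\xi$ is a legitimate candidate in the $\min$ defining $c'$. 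Having fixed $\beta$, I would go back inside $\mathcal{M}$ (using elementarity and that $X\cap\delta$ is unbounded in $\delta$) to find $\alpha\in X\cap\delta$ with $\Delta(e_\alpha,e_\beta)$ equal to a prescribed value --- this is the step that controls which $\eta=c'(\alpha,\beta)$ we land on, since $c'(\alpha,\beta)$ is defined as the least $\zeta\in(\alpha,\beta]$ with $e(\zeta,\beta)\le e(\Delta(e_\alpha,e_\beta),\beta)$. By choosing $\alpha$ so that $\Delta(e_\alpha,e_\beta)$ is the ordinal whose $e_\beta$-value controls exactly the threshold we need, we force $c'(\alpha,\beta)$ to be the desired $\eta$, and hence $c(\alpha,\beta)=e_\beta^{-1}(f(\eta))=e_\beta^{-1}(e_\beta(\xi))=\xi$.

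The main obstacle I expect is the simultaneous control of two competing quantities: we need $\Delta(e_\alpha,e_\beta)$ (an ordinal below $\alpha<\delta$) to be chosen so that $e(\Delta(e_\alpha,e_\beta),\beta)$ sits at precisely the right level to make the $\min$ in the definition of $c'$ equal $\eta$, while at the same time $\alpha$ must come from $X\cap\delta$ and $\eta$ must lie in the correct stationary piece $f^{-1}\{f(\eta)\}$. Threading these together requires using (coh2) carefully to see how $e(\cdot,\beta)$ decomposes the interval $(\alpha,\beta]$ into finitely many ``levels'', and using the stationarity of the $f^{-1}\{k\}$'s together with elementarity of $\mathcal{M}$ to ensure an appropriate $\eta<\delta$ with the prescribed $f$-value and the prescribed $e_\beta$-level is actually available; the reflection argument (that whatever configuration we can achieve with $\beta\in X\setminus\delta$ can be matched by $\alpha\in X\cap\delta$) is where $\delta=\mathcal{M}\cap\omega_1$ does the real work. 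Since the statement says such $\delta$ exists and works for all $\xi$ simultaneously, the $\delta$ must be chosen before $\xi$, so all of this must be uniform in $\xi$ --- but that is automatic once $\delta$ is an elementary submodel's ordinal, because the map $\xi\mapsto$ (the required configuration) is definable from the fixed parameters.
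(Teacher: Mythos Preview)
The paper does not prove this lemma; it simply cites it as \cite[Lemma 2.3]{TV}. So there is no argument in the paper to compare your plan against, and I can only assess whether your outline would lead to a proof.

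Your plan has the right list of ingredients, but the central mechanism is missing. The heart of the lemma is that the values $c'(\alpha,\beta)$, as $(\alpha,\beta)$ ranges over $(X\cap\delta)\times(X\setminus\delta)$, hit every stationary set --- in particular each $f^{-1}\{k\}$. You write that stationarity of $f^{-1}\{k\}$ ``guarantees enough $\eta$'s with the right $f$-value lying below $\delta$'', but having many candidate $\eta$'s is irrelevant unless you can show that $c'(\alpha,\beta)$ actually \emph{equals} one of them for some admissible $\alpha,\beta$; and $c'(\alpha,\beta)$ is not a free parameter, it is determined by the walk structure of $e$. The Todorcevic--Veli\v{c}kovi\'{c} argument proves exactly this reachability by analysing how, for $\beta>\delta$ fixed and $\alpha\nearrow\delta$ in $X$, the ordinal $\Delta(e_\alpha,e_\beta)$ climbs toward $\delta$ and $c'(\alpha,\beta)$ traces the upper part of the walk from $\beta$ toward $\delta$; one then varies $\beta$ so that this trace meets the prescribed stationary set. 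Your plan never engages with this, and without it the argument has no content.

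There are also two local problems. First, your item (iii) --- ``$e(\xi,\beta)$ is small enough that $\xi$ is a legitimate candidate in the $\min$ defining $c'$'' --- is confused: $\xi$ does not appear in $c'(\alpha,\beta)=\min\{\zeta\in(\alpha,\beta]:e(\zeta,\beta)\le e(\Delta(e_\alpha,e_\beta),\beta)\}$ at all; what you actually need is $c'(\alpha,\beta)\in f^{-1}\{e(\xi,\beta)\}$ so that $c(\alpha,\beta)=e_\beta^{-1}(e(\xi,\beta))=\xi$. Second, your step ``go back inside $\mathcal{M}$ \ldots\ to find $\alpha\in X\cap\delta$ with $\Delta(e_\alpha,e_\beta)$ equal to a prescribed value'' cannot be carried out by elementarity as stated, since $\beta\notin\mathcal{M}$ is not available as a parameter; you would have to first replace $e_\beta\!\upharpoonright\!\delta$ by data coded in $\mathcal{M}$ via coherence, and that reduction is precisely where the walk analysis mentioned above enters.
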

The following slight generalization of above lemma will be used in     Subsection 5.2.

\begin{lem}[\cite{Todorcevic87},\cite{TV}]\label{lem 52}
For every $X\in [\omega_1]^{\omega_1}$, there exists $\delta<\omega_1$ such that for any $\xi<\omega_1$ and $m<\omega$, there are $\alpha_0<\alpha_1<\cdot\cdot\cdot<\alpha_m$ in $X\cap \delta$ and $\alpha_{m+1}\in X\setminus \delta$ with 
\[e(\alpha_0, \alpha_1)>e(\alpha_1, \alpha_2)>\cdot\cdot\cdot>e(\alpha_{m}, \alpha_{m+1})\text{ and }c(\alpha_m, \alpha_{m+1})=\xi.\]
\end{lem}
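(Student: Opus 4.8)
\textbf{Proof proposal for Lemma \ref{lem 52}.}

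The plan is to reduce the multidimensional statement to iterated applications of Lemma \ref{lem TV}, exploiting the fact that the walk-theoretic function $c$ already controls the value $c(\alpha_m,\alpha_{m+1})$ while the coherent function $e$ is forced to take a strictly decreasing chain of values along a carefully chosen sequence. First I would fix $X\in[\omega_1]^{\omega_1}$ and, before doing anything with $m$, apply Lemma \ref{lem TV} to obtain $\delta<\omega_1$ such that for every $\xi<\omega_1$ there are $\alpha\in X\cap\delta$ and $\beta\in X\setminus\delta$ with $c(\alpha,\beta)=\xi$. The claim is that this same $\delta$ works for all $m$. Given $\xi$ and $m$, I would first produce the top pair $\alpha_m\in X\cap\delta$, $\alpha_{m+1}\in X\setminus\delta$ with $c(\alpha_m,\alpha_{m+1})=\xi$ directly from Lemma \ref{lem TV}; the remaining task is to find $\alpha_0<\cdots<\alpha_{m-1}$ in $X\cap\delta$, all below $\alpha_m$, producing the strictly decreasing sequence $e(\alpha_0,\alpha_1)>\cdots>e(\alpha_{m-1},\alpha_m)$ that also exceeds $e(\alpha_m,\alpha_{m+1})$.

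For the lower part, the key tool is property (coh1): since $e_{\alpha_m}$ is one-to-one, the preimage $e_{\alpha_m}^{-1}[N+1]$ is finite for every $N$, so outside a finite set every element $\xi<\alpha_m$ has $e(\xi,\alpha_m)$ large. More precisely, I would choose the $\alpha_i$ recursively from the top down: having chosen $\alpha_{i+1},\dots,\alpha_m$, set $N_i=e(\alpha_{i+1},\alpha_{i+2})$ (or $e(\alpha_m,\alpha_{m+1})$ at the first step) and pick $\alpha_i\in X\cap\delta$ with $\alpha_i<\alpha_{i+1}$ and $e(\alpha_i,\alpha_{i+1})>N_i$; this is possible because $X\cap\delta$ is infinite (indeed $\delta$ must be a limit of points of $X$, since Lemma \ref{lem TV} forces $X\cap\delta$ to be unbounded in $\delta$) while $e_{\alpha_{i+1}}^{-1}[N_i+1]$ is finite, so all but finitely many members of $X\cap\alpha_{i+1}$ are admissible. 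Running this for $i=m-1,m-2,\dots,0$ yields $e(\alpha_0,\alpha_1)>e(\alpha_1,\alpha_2)>\cdots>e(\alpha_{m-1},\alpha_m)>e(\alpha_m,\alpha_{m+1})$, which is exactly the required decreasing chain, and $c(\alpha_m,\alpha_{m+1})=\xi$ was arranged at the outset. Since none of the lower-part choices touches $\alpha_m$ or $\alpha_{m+1}$, the value $c(\alpha_m,\alpha_{m+1})$ is unaffected.

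The main obstacle I anticipate is purely bookkeeping: one must verify that the single $\delta$ extracted from Lemma \ref{lem TV} genuinely serves all $m$ simultaneously, i.e.\ that the lower vertices can always be found strictly below $\alpha_m$ \emph{inside} $X\cap\delta$ and strictly below $\delta$; this hinges on the observation that Lemma \ref{lem TV} already guarantees $\sup(X\cap\delta)=\delta$, so $X\cap\delta$ is infinite and even unbounded in $\delta$, hence has members below any prescribed $\alpha_m\in X\cap\delta$ avoiding any prescribed finite set. A secondary point to check is that the recursion is well-founded — it runs for exactly $m$ steps and at each step the forbidden set $e_{\alpha_{i+1}}^{-1}[N_i+1]$ is finite — which is immediate. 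No genuinely new idea beyond Lemma \ref{lem TV} and (coh1) is needed; the statement is a routine strengthening obtained by prepending a decreasing $e$-chain to the pair supplied by the original lemma.
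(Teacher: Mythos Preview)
Your overall strategy is right, but there is a genuine gap in the ``lower part'' recursion. The problem is that Lemma \ref{lem TV}, as stated, hands you \emph{some} $\alpha_m\in X\cap\delta$ with the desired $c$-value, but gives you no control over where $\alpha_m$ sits inside $X\cap\delta$; in particular there is nothing preventing $\alpha_m$ from being $\min(X)$ (or having only finitely many elements of $X$ below it), in which case the reverse recursion cannot even start. Your proposed remedy, that ``Lemma \ref{lem TV} already guarantees $\sup(X\cap\delta)=\delta$'', is both unjustified (the lemma does not assert this, and it is not obvious from the definition of $c$ that a single $\alpha$ cannot code all $\xi$'s via varying $\beta$) and, more importantly, insufficient: even granting $\sup(X\cap\delta)=\delta$, you still need $X\cap\alpha_m$ to be infinite, and indeed to have enough room that at each step $X\cap\alpha_{i+1}$ remains infinite after you pick $\alpha_i$. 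Unboundedness of $X\cap\delta$ in $\delta$ says nothing about what lies below a particular $\alpha_m$.

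The paper's proof repairs exactly this point with a small adjustment: first choose $\eta<\omega_1$ so that $X\cap\eta$ already has order type $\geq\omega^2$, and then apply Lemma \ref{lem TV} to $X\setminus\eta$ rather than to $X$. This forces $\alpha_m\geq\eta$, so below $\alpha_m$ there is a fixed reservoir $X\cap\eta$ of order type $\geq\omega^2$. For any given $m$ one then splits an initial part of $X\cap\eta$ into $m$ consecutive infinite blocks and runs your reverse induction picking $\alpha_j$ from the $j$th block, using (coh1) exactly as you describe. The only missing ingredient in your proposal is this reservation of an initial segment of $X$ before invoking Lemma \ref{lem TV}.
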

\begin{proof}
Choose $\eta<\omega_1$ such that $X\cap \eta$ has order type $\geq \omega^2$. Then let $\delta<\omega_1$ satisfy the conclusion of Lemma \ref{lem TV} for $X\setminus \eta$. To show that this $\delta$ works, fix $\xi<\omega_1$ and $m<\omega$.

By Lemma \ref{lem TV}, find $\alpha_m\in (X\setminus \eta)\cap \delta$ and $\alpha_{m+1}\in X\setminus \delta$ such that $c(\alpha_m, \alpha_{m+1})=\xi$. Then find $\eta_0<\eta_1<\cdot\cdot\cdot<\eta_{m}<\eta$ such that for every $j<m$,  $X\cap [\eta_j, \eta_{j+1})$ is infinite. Then by (coh1), find the desired $\alpha_j\in X\cap [\eta_j, \eta_{j+1})$ by reverse induction on $j$.
\end{proof}

\subsection{{\rm P($\sigma$-$n$-linked$\ra $precaliber $\omega_1$)} implies {\rm P$_{\omega_1}$($\sigma$-$n$-linked$\ra \sigma$-centered)}}
Throughout this subsection, we assume {\rm P($\sigma$-$n$-linked$\ra $precaliber $\omega_1$)}. And we will show that {\rm P$_{\omega_1}$($\sigma$-$n$-linked$\ra \sigma$-centered)}. Then  {\rm MA$_{\omega_1}(\sigma$-$n$-linked)} holds by Theorem \ref{thm sK3toK4 sK3top} and Proposition \ref{prop sK3toK4 t}.

Fix a $\sigma$-$n$-linked poset of size $\omega_1$
\[\mathbb{P}=\{p_\alpha: \alpha<\omega_1\} \text{ and  a partition }\mathbb{P}=\bigcup_{m<\omega} P_m\]
 of $\mathbb{P}$ into countably many $n$-linked subsets.  Assume in addition that 
 \[ P_m\cap P_k=\emptyset \text{ for } m\neq k.\]
 Let
\[\ms{C}=\{F\in [\mathbb{P}]^{<\omega}: F\text{ is centered$\}$ and } \tau: \ms{C}\ra \omega\]
be a partition such that every $F\in \ms{C}$ has a common lower bound in $P_{\tau(F)}$.

We then have the following property.
\begin{enumerate}[(I)]
\item $\ms{C}$ is downward closed and for $F_0,...,F_{n-1}$ in $\ms{C}$, if $\tau(F_0)=\cdots=\tau(F_{n-1})$, then $\bigcup_{i<n} F_i\in \ms{C}$.
\end{enumerate}

A strong property, $\sigma$-$n$-linked, will be attached to the coloring we shall define. So we need an extra structure of $e$.

For $F\in [\omega_1]^{<\omega}$  and $m<\omega$, say $F$ is \emph{$m$-closed} if for every $\beta\in F$, $e_\beta^{-1}[m+1]\subseteq F$. 
 Let
 $cl(F, m)$ be the smallest  $m$-closed set containing $F$.
  
  Note for $F\in [\omega_1]^{<\omega}$  and $m<\omega$, $cl(F, m)$ is finite. To see this, let $T$ be the collection of $s\in \omega_1^{<\omega}$ such that $s(0)\in F$ and $s(i+1)\in e_{s(i)}^{-1}[m+1]$ for $i<|s|-1$.  Then $T$ is a finite branching tree without infinite branches since every branch is a decreasing sequence of ordinals. So $T$ is finite. Then $cl(F, m)=\{s(i): s\in T, i<|s|\}$ is finite.

For $F\in [\omega_1]^{<\omega}$, $s\in 2^{<\omega}$ and $m<\omega$, let
\[P(F, s, m)=\{p_\gamma: \exists \alpha, \beta \in F ~ (c(\alpha,  \beta)=\gamma\wedge r_\beta\up{e(\alpha, \beta)}=s \wedge p_\gamma\in P_m)\}.\]
Define a coloring $\pi: [\omega_1]^{<\omega}\ra 2$ by $\pi(F)=0$ iff
\[\forall s\in 2^{<\omega} \forall m<\omega ~ P(F, s, m)\in \ms{C}.\]

\begin{lem}
$\mc{H}^\pi_0$ is $\sigma$-$n$-linked.
\end{lem}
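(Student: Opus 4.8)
The plan is to partition $\mc{H}^\pi_0$ into countably many $n$-linked subsets using the same isomorphism-type idea that appeared in the proof of Lemma \ref{lem 4.1}, but now applied directly to finite $0$-homogeneous subsets of $\pi$ rather than to forcing conditions. First I would observe that for $F\in \mc{H}^\pi_0$, the coloring $\pi$ only "sees" $F$ through the finitely many sets $P(F,s,m)$ for $s\in 2^{<\omega}$, $m<\omega$; moreover, by (coh1)–(coh2), for a fixed $F$ only finitely many pairs $(s,m)$ give a nonempty $P(F,s,m)$, and the relevant values $c(\alpha,\beta)$ and $r_\beta\up e(\alpha,\beta)$ depend only on the restriction of $e$, $c$, $\langle r_\gamma\rangle$ and $\tau$ to a finite closure of $F$. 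So I would fix, for each $F\in\mc{H}^\pi_0$, the finite set $\widehat F = cl(F\cup\{\gamma : \exists\alpha,\beta\in F\ c(\alpha,\beta)=\gamma\}, N_F)$ for a suitable bound $N_F$, and then partition $\mc{H}^\pi_0$ into countably many pieces $\mc{Q}_m$ so that within each piece all the structures $(\widehat F, e, c, \langle r_\gamma\rangle_{\gamma\in\widehat F}, \tau, \text{the pattern of which } p_\gamma\in P_k)$ are isomorphic in the sense of Definition \ref{defn iso}, and in addition $|F|$ is constant and the placement of $F$ inside $\widehat F$ is the same; finally I would also throw into the partition data the value $\tau(P(F,s,m))$ for each of the finitely many relevant $(s,m)$, so that this value is constant across each $\mc{Q}_m$.

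Next I would show each $\mc{Q}_m$ is $n$-linked. Fix $F_0,\dots,F_{n-1}\in \mc{Q}_m$ and let $F=\bigcup_{i<n}F_i$; I must check $\pi(F)=0$, i.e. $P(F,s,k)\in\ms{C}$ for all $s,k$. The key point, exactly as in Lemma \ref{lem initial2} and the closure argument of Lemma \ref{lem 4.1}, is that because the $\widehat{F_i}$ all have isomorphic structure over $e$, their pairwise intersections are initial segments, so a witness $c(\alpha,\beta)=\gamma$ with $\alpha,\beta\in F$ and $r_\beta\up e(\alpha,\beta)=s$ either lies entirely inside some single $F_i$ — contributing $p_\gamma$ to $P(F_i,s,k)$ — or else $\alpha$ and $\beta$ lie in different $F_i,F_j$; in that mixed case, using isomorphism one finds a corresponding pair inside some single $F_i$ producing a point $p_{\gamma'}$ of the same $P_k$ with $\tau$-value equal to the common value on the piece. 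Hence $P(F,s,k)\subseteq \bigcup_{i<n}P(F_i,s,k)$ modulo this correspondence, and each $P(F_i,s,k)\in\ms{C}$ with a common $\tau$-value (the constant on $\mc{Q}_m$); by (I), the union of $\le n$ such sets is again in $\ms{C}$. Therefore $P(F,s,k)\in\ms{C}$, so $F\in\mc{H}^\pi_0$ and $\mc{Q}_m$ is $n$-linked.

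The main obstacle, and the place requiring care, is the mixed case: when $c(\alpha,\beta)=\gamma$ with $\alpha\in F_i\setminus F_j$ and $\beta\in F_j\setminus F_i$, the ordinal $\gamma=c(\alpha,\beta)$ need not lie in $\widehat{F_i}$ nor $\widehat{F_j}$, and worse, $p_\gamma$ is a genuinely new condition not controlled by either piece. This is precisely the reason the $m$-closure $cl(\cdot,m)$ and the auxiliary value $\Delta(e_\alpha,e_\beta)$ were built into the definition of $c$: I would use Lemma \ref{lem initial} and (coh1)–(coh3) to argue that for $\alpha,\beta$ in different pieces with the $e$-pattern fixed, either $e(\alpha,\beta)$ is "large" (bigger than all the closure bounds) forcing $c(\alpha,\beta)$ and $\Delta(e_\alpha,e_\beta)$ into a controlled position, or $e(\alpha,\beta)$ is "small" in which case $\alpha$ already lies in the common root of the $\widehat{F_i}$, reducing to the single-piece case. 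Making this dichotomy precise — ensuring the partition records enough of the $c$-values and $r$-restrictions to pin down $\tau(P(F,s,k))$ — is the technical heart; once it is set up, the verification that $P(F,s,k)\in\ms C$ is a bookkeeping argument parallel to Lemma \ref{lem a of P} and Lemma \ref{lem q in P}. I would close by noting that since every $F\in\mc{H}^\pi_0$ lies in some $\mc{Q}_m$, the countably many $\mc{Q}_m$ witness that $\mc{H}^\pi_0$ is $\sigma$-$n$-linked.
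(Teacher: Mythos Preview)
Your overall shape is right and matches the paper: partition $\mc{H}^\pi_0$ by an isomorphism type that records the $e$-pattern, the $r_\alpha$-restrictions up to a bound $M$, the closure $cl(F,M)$, and the tuple $\langle\tau(P(F,s,m))\rangle_{s,m}$, then split the verification according to whether $|s|\le M$ or $|s|>M$. Your treatment of the small case is correct and is exactly the paper's Case~1: if $\alpha\in F_i$, $\beta\in F_j$ and $e(\alpha,\beta)\le M$, then $\alpha\in cl(F_j,M)\cap cl(F_i,M)$, this intersection is an initial segment by Lemma~\ref{lem initial}, and the recorded characteristic function forces $\alpha\in F_j$; hence $P(F,s,m)=\bigcup_{i<n}P(F_i,s,m)$ and (I) applies.

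The gap is in the large case. Your plan there is still a correspondence argument --- ``using isomorphism one finds a corresponding pair inside some single $F_i$'' --- aimed at showing $P(F,s,m)\subseteq\bigcup_{i<n}P(F_i,s,m)$. This fails: when $|s|>M$ every pair inside a single $F_i$ has $e(\alpha',\beta')\le M<|s|$, so no such pair exists, and the condition $p_\gamma$ with $\gamma=c(\alpha,\beta)$ for cross-piece $\alpha,\beta$ is genuinely new and not in any $P(F_i,s,m)$. The paper's argument here is not a correspondence but a \emph{counting} argument: since $r_\beta\upharpoonright|s|=s$ and $|s|>M$ bounds the pairwise $\Delta$-values of the reals $r_\alpha$ within each piece, at most one $\beta$ per $F_j$ can satisfy $r_\beta\supseteq s$; and since $e_\beta$ is one-to-one, $\alpha$ is determined by $\beta$ and $|s|$. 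Hence $|P(F,s,m)|\le n$, and as $P(F,s,m)\subseteq P_m$ with $P_m$ $n$-linked, $P(F,s,m)\in\ms C$ directly. Your phrase ``forcing $c(\alpha,\beta)$ into a controlled position'' gestures toward something, but the actual mechanism has nothing to do with $c$ or $\Delta(e_\alpha,e_\beta)$ --- it is the injectivity of $e_\beta$ together with the $r$-separation. Relatedly, including $c$-values and the $c$-images $\gamma$ in your closure $\widehat F$ is unnecessary; the paper's partition uses only $e$, the $r$-restrictions, $cl(F,M)$, and the $\tau$-pattern.
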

\begin{proof}
Fix a partition $\mc{H}^\pi_0=\bigcup_{N<\omega} H_N$ such that the following properties hold: For every $N$, there exists $M<\omega$ such that for $E, F\in H_N$,
\begin{enumerate}
\item $\max(e[[E]^2]\cup \Delta[[\{r_\alpha: \alpha\in E\}]^2])=M$;
\item for $i<|E|$, $r_{E(i)}\up M= r_{F(i)}\up M$;
\item $\langle \tau(P(E, s, m)): s\in 2^{<\omega}, m<\omega\rangle
=\langle \tau(P(F, s, m)): s\in 2^{<\omega}, m<\omega\rangle$;
\item $(cl(E, M), \chi_E, e)$ is isomorphic to $(cl(F, M), \chi_F, e)$.
\end{enumerate}
In (4), $\chi_E$ is the characteristic function of $E$ and is used to guarantee that the positions of $E$ in $cl(E, M)$ are the same as positions of $F$ in $cl(F, M)$. 

Note also that for all but finitely many $s$ and $m$, $P(E, s, m)=\emptyset$. So the collection of sequences in (3), $\{\langle \tau(P(E, s, m)): s\in 2^{<\omega}, m<\omega\rangle: E\in \mc{H}^\pi_0\}$, is countable.

Now it suffices to show that each $H_N$ is $n$-linked. Fix $N<\omega$ and $F_0,...,F_{n-1}$ in $H_N$. Let \[M=\max(e[[F_0]^2]\cup \Delta[[\{r_\alpha: \alpha\in F_0\}]^2])\text{ and } F=\bigcup_{i<n} F_i.\]
 Fix $s\in 2^{<\omega}$ and $m<\omega$.\medskip

\textbf{Case 1.}  $|s|\leq M$.\medskip

 We will show that $P(F, s, m)=\bigcup_{i<n} P(F_i, s, m)$. Clearly, $P(F, s, m)\supseteq \bigcup_{i<n} P(F_i, s, m)$. To show $P(F, s, m)\subseteq \bigcup_{i<n} P(F_i, s, m)$, fix $p_\gamma\in P(F,s,m)$ witnessed by $\alpha<\beta$ in $F$.

 Fix $i, j<n$ such that $\alpha\in F_i$ and $\beta\in F_j$. Since $e(\alpha, \beta)=|s|\leq M$, $\alpha\in cl(F_j, M)$. In particular, 
 \[\alpha\in cl(F_i, M)\cap cl(F_j, M).\]
  By (4) and Lemma \ref{lem initial}, $cl(F_i, M)\cap cl(F_j, M)$ is an initial segment of $cl(F_i, M)$ and $cl(F_j, M)$. Together with (4) and the fact $\alpha\in F_i$, we conclude that $\alpha\in F_j$.
 
 So $p_\gamma\in P(F_j, s, m)$. This shows that $P(F, s, m)=\bigcup_{i<n} P(F_i, s, m)$. Together with (3) and (I), $P(F, s, m)\in \ms{C}$.\medskip

\textbf{Case 2.}  $|s|>M$.\medskip

 We will show that $|P(F, s, m)|\leq n$.  Suppose otherwise, $p_{\gamma_0},...,p_{\gamma_n}$ are in $P(F, s, m)$ witnessed by $(\alpha_0, \beta_0),...,(\alpha_n, \beta_n)$ respectively.

Recall that $r_{\beta_0}\up |s|=\cdot\cdot\cdot=r_{\beta_n}\up |s|=s$. By definition of $M$ and the fact $|s|>M$, $|\{\beta_i: i\leq n\}\cap F_j|\leq 1$ for every $j<n$. By the Pigeonhole Priciple,  $\beta_i=\beta_{i'}$ for some $i<i'$. Then $e(\alpha_i, \beta_i)=|s|=e(\alpha_{i'}, \beta_{i'})=e(\alpha_{i'}, \beta_i)$. This contradicts the fact that $e_{\beta_i}$ is one-to-one.

This contradiction shows that $|P(F, s, m)|\leq n$. By definition of $P(F, s, m)$, $P(F, s, m)\subseteq P_m$. So $P(F, s, m)\in \ms{C}$.\medskip

In any case, $P(F, s, m)\in \ms{C}$. So $F\in \mc{H}^\pi_0$. This shows that $H_N$ is $n$-linked and $\mc{H}^\pi_0$ is $\sigma$-$n$-linked.
\end{proof}

Now by {\rm P($\sigma$-$n$-linked$\ra $precaliber $\omega_1$)}, $\pi$ has an uncountable 0-homogeneous subset $X$. By Lemma \ref{lem TV},  $c[[X]^2]=\omega_1$. For  $s\in 2^{<\omega}$ and $m<\omega$, let
\[\ms{C}_{ s, m}=\bigcup\{P(F, s, m): F\in [X]^{<\omega}\}.\]
Then $\mathbb{P}=\bigcup \{\ms{C}_{s, m}:  s\in 2^{<\omega}, m<\omega\}$ and $[\ms{C}_{s, m}]^{<\omega}\subseteq \ms{C}$ for all $s$ and $m$. So $\mathbb{P}$ is $\sigma$-centered.

Above argument actually show the following property.
\begin{lem}\label{lem sK3top sK3tos}
For every $\sigma$-$n$-linked poset $\mathbb{P}$   of size $\omega_1$,   there is a downward closed coloring $\pi: [\omega_1]^{<\omega}\ra 2$ with the following properties.
\begin{enumerate}[(i)]
\item $[\omega_1]^2\subseteq \mc{H}^\pi_0$ and $\mc{H}^\pi_0$ is $\sigma$-$n$-linked;
\item If $\pi$ has an uncountable 0-homogeneous subset, then $\mathbb{P}$ is $\sigma$-centered.
\end{enumerate}
\end{lem}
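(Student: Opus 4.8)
The plan is to take $\pi$ to be exactly the coloring $\pi:[\omega_1]^{<\omega}\ra 2$ already constructed in this subsection for an arbitrary $\sigma$-$n$-linked $\mathbb{P}=\{p_\alpha:\alpha<\omega_1\}$ of size $\omega_1$ (with its fixed partition $\mathbb{P}=\bigcup_{m<\omega}P_m$ into pairwise disjoint $n$-linked pieces, the family $\ms{C}$ of finite centered subsets, the partition $\tau$, and the auxiliary sets $P(F,s,m)$), and then to observe that the verification of the two bullets uses nothing beyond what has already been established, once the hypothesis {\rm P($\sigma$-$n$-linked$\ra $precaliber $\omega_1$)} is deleted. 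So the argument is essentially a repackaging of the preceding discussion.

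For bullet (i): that $\mc{H}^\pi_0$ is $\sigma$-$n$-linked is precisely the content of the Lemma proved just above, whose proof never invokes {\rm P}. That $\pi$ is downward closed is immediate from $F'\subseteq F\Rightarrow P(F',s,m)\subseteq P(F,s,m)$ together with the downward closure of $\ms{C}$ (property (I)). That $[\omega_1]^2\subseteq\mc{H}^\pi_0$ I would get from the remark that a set $F$ with $|F|\le 2$ contains at most one increasing pair, so each $P(F,s,m)$ is empty or a singleton, hence lies in $\ms{C}$; thus $\pi(F)=0$.

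For bullet (ii): assume $X\in[\omega_1]^{\omega_1}$ is $0$-homogeneous for $\pi$, and run verbatim the paragraph following the Lemma. By Lemma \ref{lem TV}, $c[[X]^2]=\omega_1$, so putting $\ms{C}_{s,m}=\bigcup\{P(F,s,m):F\in[X]^{<\omega}\}$ we get $\mathbb{P}=\bigcup\{\ms{C}_{s,m}:s\in 2^{<\omega},m<\omega\}$: given $\gamma<\omega_1$, choose $\alpha<\beta$ in $X$ with $c(\alpha,\beta)=\gamma$, and let $s=r_\beta\up e(\alpha,\beta)$ and $m$ the block index with $p_\gamma\in P_m$; then $p_\gamma\in P(\{\alpha,\beta\},s,m)$. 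Moreover $[\ms{C}_{s,m}]^{<\omega}\subseteq\ms{C}$: finitely many elements of $\ms{C}_{s,m}$ are witnessed by pairs all lying inside a single finite $F\subseteq X$, and $\pi(F)=0$ forces $P(F,s,m)\in\ms{C}$, which contains those elements; since $\ms{C}$ is downward closed, the finite set is centered. Since there are only countably many pairs $(s,m)$, $\mathbb{P}$ is a countable union of centered sets, i.e.\ $\sigma$-centered.

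I do not expect a genuine obstacle here, since the computational work is already done; the only pieces not explicitly isolated before are the downward closedness of $\pi$ and the inclusion $[\omega_1]^2\subseteq\mc{H}^\pi_0$, and both follow at once from the fact that a pair $\{\alpha,\beta\}$ determines at most one value $c(\alpha,\beta)$ together with the downward closure of $\ms{C}$.
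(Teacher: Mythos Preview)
Your proposal is correct and follows exactly the paper's approach: the paper's proof is literally the sentence ``Above argument actually show the following property,'' pointing back to the coloring $\pi$ and the surrounding verification in Subsection~5.1. Your added justifications for downward closedness and for $[\omega_1]^2\subseteq\mc{H}^\pi_0$ are straightforward and correct, filling in what the paper leaves implicit.
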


Now we are ready to prove Theorem \ref{thm sK3toK4}.
\begin{proof}[Proof of Theorem \ref{thm sK3toK4}.]
By Theorem \ref{thm sK3toK4 sK3top}  and Lemma \ref{lem sK3top sK3tos},   P$_{\omega_1}(\sigma$-$n$-linked$\ra\sigma$-centered) holds. Together with Proposition \ref{prop sK3toK4 t}, MA$_{\omega_1}(\sigma$-$n$-linked) holds.
\end{proof}

Combine Corollary \ref{cor coloring fin by 4} with above argument to reduce arity of colorings.
\begin{cor}
 Assume {\rm MA$_{\omega_1}$($\sigma$-centered)} and $2\leq n<\omega$. For every $\sigma$-$n$-linked poset $\mathbb{P}$   of size $\omega_1$,   there is a coloring $\pi: [\omega_1]^{<n+2}\ra 2$ with the following properties.
 \begin{enumerate}[(i)]
 \item $\mc{H}^\pi_0$ is $\sigma$-$n$-linked and $ [\omega_1]^{2}\subseteq \mc{H}^\pi_0$.
 \item If $\pi$ has an uncountable 0-homogeneous subset, then $\mathbb{P}$ is $\sigma$-centered.
  \end{enumerate}
\end{cor}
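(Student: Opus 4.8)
The plan is to produce $\pi$ by chaining the two preceding results: first replace the poset $\mathbb{P}$ by a downward closed coloring on $[\omega_1]^{<\omega}$ via Lemma~\ref{lem sK3top sK3tos}, and then lower the arity to $n+1$ via Corollary~\ref{cor coloring fin by 4}, which is exactly where the hypothesis {\rm MA}$_{\omega_1}(\sigma$-centered) is spent.

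First I would apply Lemma~\ref{lem sK3top sK3tos} to the given $\sigma$-$n$-linked poset $\mathbb{P}$ of size $\omega_1$, obtaining a downward closed coloring $f\colon [\omega_1]^{<\omega}\ra 2$ such that $[\omega_1]^2\subseteq\mc{H}^f_0$, the poset $\mc{H}^f_0$ is $\sigma$-$n$-linked, and every uncountable $0$-homogeneous subset of $f$ witnesses that $\mathbb{P}$ is $\sigma$-centered. (The proof of that lemma is carried out in {\rm ZFC}, so the standing assumption of its subsection is not needed here.) Since $f$ is downward closed with $\mc{H}^f_0$ $\sigma$-$n$-linked, Corollary~\ref{cor coloring fin by 4} then supplies a coloring $\pi\colon [\omega_1]^{<n+2}\ra 2$ with $\mc{H}^\pi_0$ $\sigma$-$n$-linked and with $\mc{H}^f_0\cap[\omega_1]^{<n+1}\subseteq\mc{H}^\pi_0\subseteq\mc{H}^f_0$.

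It then remains to read off the two clauses for this $\pi$. For clause (i): $\mc{H}^\pi_0$ is $\sigma$-$n$-linked by the choice of $\pi$, and since $n\geq 2$ we have $2<n+1$, so $[\omega_1]^2\subseteq[\omega_1]^{<n+1}$; combined with $[\omega_1]^2\subseteq\mc{H}^f_0$ this gives $[\omega_1]^2\subseteq\mc{H}^f_0\cap[\omega_1]^{<n+1}\subseteq\mc{H}^\pi_0$. For clause (ii): if $X\in[\omega_1]^{\omega_1}$ is $0$-homogeneous for $\pi$, then $[X]^{<\omega}\subseteq\mc{H}^\pi_0\subseteq\mc{H}^f_0$, so $X$ is an uncountable $0$-homogeneous subset of $f$, and by the second clause of Lemma~\ref{lem sK3top sK3tos} the poset $\mathbb{P}$ is $\sigma$-centered.

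I expect essentially no genuine obstacle here beyond tracking the inclusions in the right direction: transferring $0$-homogeneity back from $\pi$ to $f$ needs $\mc{H}^\pi_0\subseteq\mc{H}^f_0$, while keeping the pairs needs $\mc{H}^f_0\cap[\omega_1]^{<n+1}\subseteq\mc{H}^\pi_0$ together with $n\geq 2$, and both are precisely what Corollary~\ref{cor coloring fin by 4} delivers. The $\sigma$-$n$-linkedness of $\mc{H}^\pi_0$ is inherited verbatim from that corollary, so no new combinatorial argument is required.
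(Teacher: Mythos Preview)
Your proposal is correct and follows essentially the same route as the paper: apply Lemma~\ref{lem sK3top sK3tos} to get the finitary coloring (your $f$, the paper's $\pi'$), then invoke Corollary~\ref{cor coloring fin by 4} to reduce the arity, and read off (i)--(ii) from the inclusions $\mc{H}^f_0\cap[\omega_1]^{<n+1}\subseteq\mc{H}^\pi_0\subseteq\mc{H}^f_0$ together with $[\omega_1]^2\subseteq\mc{H}^f_0$. Your observation that Lemma~\ref{lem sK3top sK3tos} itself needs no hypothesis beyond {\rm ZFC} is also correct and worth noting.
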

\begin{proof}
First fix a coloring $\pi': [\omega_1]^{<\omega}\ra 2$ witnessing Lemma \ref{lem sK3top sK3tos}. So
\begin{enumerate}
\item $ [\omega_1]^{2}\subseteq \mc{H}^{\pi'}_0$  and $\mc{H}^{\pi'}_0$ is $\sigma$-$n$-linked;
\item if $\pi'$ has an uncountable 0-homogeneous subset, then $\mathbb{P}$ is $\sigma$-centered.
\end{enumerate}
Then   find $\pi: [\omega_1]^{<n+2}\ra 2$ witnessing Corollary \ref{cor coloring fin by 4} for $\pi'$.

Then (i) holds. Just note that $[\omega_1]^2\subseteq \mc{H}^{\pi'}_0$. And (ii) follows from Corollary \ref{cor coloring fin by 4} (ii) and (2).
\end{proof}

\subsection{{\rm P(K$_n\ra$K$_{n+1}$)} implies {\rm MA$_{\omega_1}$(K$_n$)}}

Throughout this subsection, we  assume {\rm P(K$_n\ra$K$_{n+1}$)}. We will show P$_{\omega_1}$(K$_n\ra\sigma$-$n$-linked). This, together with Theorem \ref{thm sK3toK4}, shows MA$_{\omega_1}$(K$_n$).


We view every poset as 1-linked and hence $\sigma$-1-linked. We then prove for $1\leq i<n$,
\begin{itemize}
\item {\rm P(K$_n\ra$K$_{n+1}$)} implies {\rm P$_{\omega_1}$((K$_n\wedge \sigma$-$i$-linked)$\ra\sigma$-$(i+1)$-linked)}.
\end{itemize}

For the rest of this subsection, we fix $1\leq i^*<n$ and a  $\sigma$-$i^*$-linked  poset 
\[\mathbb{P}=\{p_\alpha: \alpha<\omega_1\}\]
 of size $\omega_1$ that has property K$_n$. We fix the following notations.

\begin{enumerate}[(a)] 
\item $\mathbb{P}=\bigcup_{m<\omega} P_m$
is a partition of $\mathbb{P}$ into countably many $i^*$-linked subsets such that
\[P_m\cap P_k=\emptyset \text{ for } m\neq k.\]
\item $\ms{C}=\{F\in [\mathbb{P}]^{<\omega}: F$ is centered$\}$.
\item $\tau: \ms{C}\ra \mathbb{P}$ 
is a map sending $F\in \ms{C}$ to a common lower bound in $\mathbb{P}$. 
\end{enumerate}

Before defining the coloring that induces a $\sigma$-$(i^*+1)$-linked partition, we first prove the following fact about $e$.
\begin{lem}\label{lem 51}
For $m<\omega$ and $\alpha_0<\cdot\cdot\cdot<\alpha_m<\omega_1$, if $e(\alpha_0, \alpha_1)>e(\alpha_1, \alpha_2)>\cdot\cdot\cdot>e(\alpha_{m-1}, \alpha_m)$, then $e(\alpha_0, \alpha_1)=e(\alpha_0, \alpha_m)$. Consequently, for $i< m$, $e(\alpha_i, \alpha_{i+1})=e(\alpha_i, \alpha_{i+2})=\cdots=e(\alpha_i,\alpha_m)$.
\end{lem}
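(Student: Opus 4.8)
The plan is to prove the statement by induction on $m$, using property (coh2) of the fixed coherent function $e$ as the sole combinatorial input. The base case $m=1$ is trivial since then the chain of strict inequalities is empty and the conclusion $e(\alpha_0,\alpha_1)=e(\alpha_0,\alpha_1)$ is a tautology; the case $m=2$ is the heart of the matter and should be isolated first.

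For $m=2$: we have $\alpha_0<\alpha_1<\alpha_2$ with $e(\alpha_0,\alpha_1)>e(\alpha_1,\alpha_2)$, and we want $e(\alpha_0,\alpha_1)=e(\alpha_0,\alpha_2)$. By (coh2) applied to the triple $\alpha_0<\alpha_1<\alpha_2$ in the form $e(\alpha_1,\alpha_2)\geq\max\{e(\alpha_0,\alpha_1),e(\alpha_0,\alpha_2)\}$ \emph{whenever} $e(\alpha_0,\alpha_1)\neq e(\alpha_0,\alpha_2)$: if the two values $e(\alpha_0,\alpha_1)$ and $e(\alpha_0,\alpha_2)$ were different, this inequality would give $e(\alpha_1,\alpha_2)\geq e(\alpha_0,\alpha_1)$, contradicting the hypothesis $e(\alpha_0,\alpha_1)>e(\alpha_1,\alpha_2)$. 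Hence $e(\alpha_0,\alpha_1)=e(\alpha_0,\alpha_2)$.

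For the inductive step, assume the statement for $m-1$ (with $m\geq 3$) and take $\alpha_0<\cdots<\alpha_m$ with $e(\alpha_0,\alpha_1)>\cdots>e(\alpha_{m-1},\alpha_m)$. Applying the induction hypothesis to $\alpha_0<\cdots<\alpha_{m-1}$ gives $e(\alpha_0,\alpha_1)=e(\alpha_0,\alpha_{m-1})$, and applying it to $\alpha_1<\cdots<\alpha_m$ gives $e(\alpha_1,\alpha_2)=e(\alpha_1,\alpha_m)$; in particular $e(\alpha_0,\alpha_{m-1})=e(\alpha_0,\alpha_1)>e(\alpha_1,\alpha_2)\geq$ (by (coh2) on $\alpha_1<\alpha_{m-1}<\alpha_m$, or more directly) we want to reduce to the $m=2$ case. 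The cleanest route: apply the $m=2$ case to the triple $\alpha_0<\alpha_{m-1}<\alpha_m$, for which we need $e(\alpha_0,\alpha_{m-1})>e(\alpha_{m-1},\alpha_m)$; but $e(\alpha_0,\alpha_{m-1})=e(\alpha_0,\alpha_1)>e(\alpha_1,\alpha_2)>\cdots>e(\alpha_{m-1},\alpha_m)$ gives exactly this, so $e(\alpha_0,\alpha_{m-1})=e(\alpha_0,\alpha_m)$, i.e. $e(\alpha_0,\alpha_1)=e(\alpha_0,\alpha_m)$.

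Finally, the "consequently" clause: fix $i<m$. The subchain $\alpha_i<\alpha_{i+1}<\cdots<\alpha_m$ still satisfies the strictly decreasing hypothesis, so the main statement applied to it yields $e(\alpha_i,\alpha_{i+1})=e(\alpha_i,\alpha_m)$; and applied to $\alpha_i<\cdots<\alpha_j$ for any $i<j\leq m$ it yields $e(\alpha_i,\alpha_{i+1})=e(\alpha_i,\alpha_j)$, giving the full chain of equalities $e(\alpha_i,\alpha_{i+1})=e(\alpha_i,\alpha_{i+2})=\cdots=e(\alpha_i,\alpha_m)$. I do not anticipate a genuine obstacle here — the only point requiring care is bookkeeping in the induction, namely making sure the reduction triple $\alpha_0<\alpha_{m-1}<\alpha_m$ inherits the strict inequality $e(\alpha_0,\alpha_{m-1})>e(\alpha_{m-1},\alpha_m)$, which follows by transitivity from the two induction-hypothesis equalities plus the given decreasing chain.
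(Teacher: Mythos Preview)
Your proof is correct and follows essentially the same route as the paper: induction on $m$, with the $m=2$ case handled directly by (coh2), and the inductive step obtained by applying the induction hypothesis to $\alpha_0<\cdots<\alpha_{m-1}$ to get $e(\alpha_0,\alpha_{m-1})=e(\alpha_0,\alpha_1)>e(\alpha_{m-1},\alpha_m)$, then invoking (coh2) (equivalently, the $m=2$ case) on the triple $\alpha_0<\alpha_{m-1}<\alpha_m$. The detour through $\alpha_1<\cdots<\alpha_m$ in your inductive step is not needed and can be dropped.
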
 
\begin{proof}
We prove by induction on $m$.
The case $m=2$ follows from (coh2).

Suppose the lemma holds for $m=k$ and we prove for $m=k+1$. By induction hypothesis, $e(\alpha_0, \alpha_k)=e(\alpha_0, \alpha_1)>e(\alpha_k, \alpha_{k+1})$. By (coh2), $e(\alpha_0, \alpha_k)=e(\alpha_0, \alpha_{k+1})$. Hence, $e(\alpha_0, \alpha_1)=e(\alpha_0, \alpha_{k+1})$.
\end{proof}

For $F\in [\omega_1]^{<\omega}$, $s\in 2^{<\omega}$, $a\in \omega^{n-i^*}$ and $m<\omega$, 
\begin{enumerate}[(a)]\setcounter{enumi}{3}
\item $P(F, s, a, m)$ is the collection of $p_\beta$ such that for some $\alpha_0<\cdots<\alpha_{n-i^*}$ in $F$,
\begin{enumerate}[(d1)]
\item $\beta=c(\alpha_{n-i^*-1}, \alpha_{n-i^*})$;
\item $e(\alpha_0, \alpha_1)>e(\alpha_1, \alpha_2)>\cdots>e(\alpha_{n-i^*-1}, \alpha_{n-i^*})$;
\item $s=r_{\alpha_{n-i^*}}\up e(\alpha_0, \alpha_1)$;
\item for $j<n-i^*$, $a(j)=e(\alpha_j, \alpha_{j+1})$;
\item $p_\beta\in P_m$.
\end{enumerate}
\end{enumerate}
Note that for $F\subseteq E$, $P(F,s,a,m)\subseteq P(E,s,a,m)$.
The following coloring will be used to induce a $\sigma$-$(i^*+1)$-linked partition of $\mathbb{P}$.
\begin{enumerate}[(a)]\setcounter{enumi}{4}
\item Define $\pi: [\omega_1]^{<\omega}\ra 2$ by $\pi(F)=0$ iff
\[\forall s\in 2^{<\omega} \forall a\in \omega^{n-i^*} \forall m<\omega ~ P(F,s,a,m)\in \ms{C}.\]
\end{enumerate}
Note that  $\pi^{-1}\{0\}$ is downward closed.

\begin{lem}
$\mc{H}^\pi_0$ has property {\rm K}$_n$.
\end{lem}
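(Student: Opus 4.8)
The plan is to show that $\mc{H}^\pi_0$ has property K$_n$ by taking an arbitrary uncountable $\mc{A}\subseteq\mc{H}^\pi_0$ and extracting an uncountable $n$-linked subcollection. First I would pass to an uncountable $\mc{A}'\subseteq\mc{A}$ that is "uniform": a $\Delta$-system $\{p_\alpha:\alpha\in\Sigma\}$ with root $\overline{p}$, with all $p_\alpha$ of the same size, with $(\mathrm{cl}(p_\alpha,M),\chi_{p_\alpha},e)$ isomorphic across $\alpha$ for an appropriate uniform bound $M$ (so that Lemma \ref{lem initial} gives that $\mathrm{cl}(p_\alpha,M)\cap\mathrm{cl}(p_\beta,M)$ is a common initial segment and the positions of $p_\alpha$ inside its closure are fixed), with the reals $r_\xi$ for $\xi\in p_\alpha$ agreeing up to $M$ across $\alpha$, and with the finite sequence $\langle\tau(P(p_\alpha,s,a,m)):s,a,m\rangle$ (of which only finitely many entries are defined) constant across $\alpha$. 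This is exactly the bookkeeping done in the previous lemma, and it is available because each such sequence has finite support and hence the collection of possible sequences is countable. I would also arrange that for $\alpha<\beta$ in $\Sigma$ the blocks $p_\alpha\setminus\overline{p}$ and $p_\beta\setminus\overline{p}$ are "far" in the sense that $e(\xi,\eta)\ge M$ whenever $\xi\in p_\alpha\setminus\overline{p}$, $\eta\in p_\beta\setminus\overline{p}$, and $\alpha<\beta$.

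Having done this, I would feed the uncountable uniform family into the hypothesis P(K$_n\to$K$_{n+1}$) applied to $\mathbb{P}$ itself: since $\mathbb{P}$ has property K$_n$, it has property K$_{n+1}$, so the uncountable set $\{p_\alpha:\alpha\in\Sigma\}$ — wait, more carefully, since the conditions $p_\alpha\in\mc{H}^\pi_0$ are finite subsets, I would instead want to show directly that the uniform family is $n$-linked, mimicking Case 1/Case 2 from the preceding lemma but now with the extra twist that $\mathbb{P}$ is only K$_n$, not $\sigma$-$i^*$-linked in a way that closes under $n$-fold unions. So the real structure is: fix $F_0,\dots,F_{n-1}$ from the uniform family, set $F=\bigcup_{j<n}F_j$, and verify $\pi(F)=0$, i.e. $P(F,s,a,m)\in\ms{C}$ for every $s,a,m$. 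As in the previous lemma one splits on $|s|$ versus $M$. When $|s|\le M$, the closure-initial-segment argument forces any witnessing tuple $\alpha_0<\dots<\alpha_{n-i^*}$ for an element of $P(F,s,a,m)$ to lie inside a single $F_j$ (here one uses Lemma \ref{lem 51}: the strictly decreasing $e$-values force $e(\alpha_0,\alpha_{n-i^*})=e(\alpha_0,\alpha_1)=|s|\le M$, so $\alpha_0,\dots,\alpha_{n-i^*-1}$ all lie in $\mathrm{cl}(F_j,M)$ where $\alpha_{n-i^*}\in F_j$, and then uniformity pins them into $F_j$), whence $P(F,s,a,m)=\bigcup_{j<n}P(F_j,s,a,m)$; then the uniformity of the $\tau$-sequences plus property (I) — which gives that an $n$-fold union of members of $\ms{C}$ with equal $\tau$-value is again in $\ms{C}$ — yields $P(F,s,a,m)\in\ms{C}$. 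When $|s|>M$, the reals $r_{\alpha_{n-i^*}}$ agree with $s$ up to length $>M$, and distinctness of the $r$'s together with the $\Delta$-bound $M$ forces $|\{\text{top ordinals }\alpha_{n-i^*}\}\cap F_j|\le 1$ for each $j$; but if two witnessing tuples shared the same top ordinal $\beta$, the conditions $e(\alpha_0,\alpha_1)=|s|$ and (coh1) (via $e_\beta$ injective) would be violated, so $|P(F,s,a,m)|\le$ (something like $n$ or $n-i^*$), and since $P(F,s,a,m)\subseteq P_m$ which is $i^*$-linked, for $|P(F,s,a,m)|\le i^*$ we are done directly — here I need to double check the Pigeonhole count gives $\le i^*$ rather than merely $\le n$, which is where the choice of arity $n-i^*$ in the tuples and the role of the distinct reals combine.

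The main obstacle I anticipate is precisely that last counting step in the $|s|>M$ case: I must verify that the number of distinct top ordinals across the $n$ sets $F_0,\dots,F_{n-1}$ that can witness membership in $P(F,s,a,m)$, after the $\Delta$-pruning and after eliminating repeated tops, is bounded by $i^*$ and not just by $n$ — otherwise $P(F,s,a,m)$ need not be centered, since $P_m$ is only $i^*$-linked. Getting the bookkeeping right so that the "wasted" inputs $\alpha_0,\dots,\alpha_{n-i^*-1}$ (the ones used only to produce the descending $e$-chain and the interval data $s,a$) are exactly $n-i^*$ of the $n$ available indices, leaving at most $i^*$ genuinely independent top ordinals, is the delicate point and is the reason the coloring was designed with tuples of length $n-i^*+1$. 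Once K$_n$ of $\mc{H}^\pi_0$ is established this way, one applies P(K$_n\to$K$_{n+1}$) to $\mc{H}^\pi_0$ to get an uncountable $(n+1)$-linked $X\subseteq$ the diagonal of $\mc{H}^\pi_0$, invokes Lemma \ref{lem TV} to get $c[[X]^2]=\omega_1$, and reads off a $\sigma$-$(i^*+1)$-linked decomposition of $\mathbb{P}$ from the sets $\bigcup\{P(F,s,a,m):F\in[X]^{<\omega}\}$, exactly as in the previous subsection; iterating the implication P$_{\omega_1}$((K$_n\wedge\sigma$-$i$-linked)$\to\sigma$-$(i+1)$-linked) from $i=1$ up to $i=n-1$ gives P$_{\omega_1}$(K$_n\to\sigma$-$n$-linked), and Theorem \ref{thm sK3toK4} then delivers MA$_{\omega_1}$(K$_n$).
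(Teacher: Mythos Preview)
There is a genuine gap: your argument never uses property K$_n$ of $\mathbb{P}$, and without it the proof cannot go through. You try to handle your ``Case~1'' ($|s|\le M$) by invoking ``property (I)'' from Subsection~5.1, but in Subsection~5.2 the map $\tau$ goes from $\ms{C}$ into $\mathbb{P}$ (a set of size $\omega_1$), not into $\omega$; so you cannot uniformize the sequence $\langle\tau(P(F_\alpha,s,a,m))\rangle_{(s,a,m)}$ across uncountably many $\alpha$, and there is no analogue of (I) available. The paper's fix is to use K$_n$ of $\mathbb{P}$: after the $\Delta$-system thinning, one further refines (a finite induction over $(s,a,m)\in\mc{A}$) so that each set $Y_{s,a,m}=\{\tau(P(F_\alpha,s,a,m)):\alpha\in\Gamma''\}\subseteq\mathbb{P}$ is $n$-linked. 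Then for $n$ blocks $F_\xi$ the lower bounds $\tau(P(F_\xi,s,a,m))$ have a common extension, so $\bigcup_\xi P(F_\xi,s,a,m)\in\ms{C}$.

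You are also missing an entire case. Your dichotomy on $|s|$ versus $M$ corresponds to $a(0)<N$ versus $a(0)\ge N$, but within $a(0)\ge N$ one must separate $a(n-i^*-1)\ge N$ from $a(n-i^*-1)<N$. Your worry that the count might be $\le n$ rather than $\le i^*$ is exactly right: when $a(n-i^*-1)<N\le a(0)$, the last two ordinals $\alpha_{n-i^*-1},\alpha_{n-i^*}$ must lie in a single $F_\delta\setminus\overline{F}$ (since $e(\alpha_{n-i^*-1},\alpha_{n-i^*})<N$), and uniformity pins them to fixed positions $j^*,k^*$, giving $\beta=c(F_\delta(j^*),F_\delta(k^*))$ with $n$ choices of $\delta$. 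This is handled by a second application of K$_n$ of $\mathbb{P}$: refine so that each $X_{j,k}=\{p_\beta:\exists\alpha\ c(F_\alpha(j),F_\alpha(k))=\beta\}$ is $n$-linked. The bound $|P(F,s,a,m)|\le i^*$ that you want holds only in the remaining case $a(n-i^*-1)\ge N$: then every consecutive $e$-value is $\ge N$, so the $\alpha_j$ lie in pairwise distinct $F_\xi\setminus\overline{F}$, forcing $\alpha_{n-i^*}\in F_{x(l)}$ for some $l\ge n-i^*$, i.e.\ at most $i^*$ choices of top ordinal.
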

\begin{proof}
Fix $\{F_\alpha\in \mc{H}^\pi_0: \alpha<\omega_1\}$. First find $\Gamma\in [\omega_1]^{\omega_1}$, $M, N<\omega$, $\overline{F}\in [\omega_1]^{<\omega}$ and $\mc{A}\in [2^{<\omega}\times \omega^{n-i^*}\times \omega]^{<\omega}$ such that
\begin{enumerate}
\item $\{F_\alpha: \alpha\in \Gamma\}$ forms a $\Delta$-system with root $\overline{F}$ and each $F_\alpha$ has size $M$;
\item for $\alpha\in \Gamma$, $\mc{A}=\{(s,a, m)\in 2^{<\omega}\times \omega^{n-i^*}\times \omega: P(F_\alpha, s, a, m)\neq\emptyset\}$;
\item for $\alpha\in \Gamma$ and $\xi<\eta$ in $F_\alpha$, $e(\xi, \eta)<N$ and $\Delta(r_\xi, r_\eta)<N$;
\end{enumerate}

Recall that by (a), $P_m$'s are pairwise disjoint. So for every $\alpha$, $\{(s,a, m)\in 2^{<\omega}\times \omega^{n-i^*}\times \omega: P(F_\alpha, s, a, m)\neq\emptyset\}$ is finite. Hence there is a finite $\mc{A}$ satisfying (2).
Going to an uncountable subset of $\Gamma$ if necessary, we may assume that
\begin{enumerate}\setcounter{enumi}{3}
\item for $\alpha<\beta$ in $\Gamma$, $\max (\overline{F})< \min (F_\alpha\setminus \overline{F})$ and $\max (F_\alpha)<\min (F_\beta\setminus \overline{F})$;
\item for $\alpha<\beta$ in $\Gamma$,
\begin{itemize}
\item $e(\xi, \eta)>N$ for $\xi\in F_\alpha\setminus \overline{F}$ and $\eta\in F_\beta\setminus \overline{F}$;
\item $(F_\alpha, e)$ is isomorphic to $(F_\beta, e)$;
\item  $r_{F_\alpha(j)}\up N=r_{F_\beta(j)}\up N$ for $j<M$.
\end{itemize}
\end{enumerate}

By the fact that $\mathbb{P}$ has property K$_n$, a $\frac{M(M-1)}{2}$ step induction induces $\Gamma'\in [\Gamma]^{\omega_1}$ such that
\begin{enumerate}\setcounter{enumi}{5}
\item for $j<k<M$, $X_{j,k}=\{p_\beta: \exists \alpha\in \Gamma' ~ c(F_\alpha(j), F_\alpha(k))=\beta\}$ is $n$-linked.
\end{enumerate}
Then a $|\mc{A}|$ step induction induces $\Gamma''\in [\Gamma']^{\omega_1}$ such that
\begin{enumerate}\setcounter{enumi}{6}
\item for $(s,a,m)\in \mc{A}$, $Y_{s, a,m}=\{\tau(P(F_\alpha, s,a,m)): \alpha\in \Gamma''\}$ is $n$-linked.
\end{enumerate}
It suffices to prove that $\{F_\alpha: \alpha\in \Gamma''\}$ is $n$-linked. For this, fix $x\in [\Gamma'']^n$ and let
\[F=\bigcup_{\xi\in x} F_\xi.\]
We will show $\pi(F)=0$. For this, 
fix $(s,a, m)\in 2^{<\omega}\times \omega^{n-i^*}\times \omega$ such that $P(F, s, a, m)\neq\emptyset$.\medskip

\textbf{Case 1.} $a(0)<N$.

Choose $p_\beta\in P(F, s,a, m)$ witnessed by $\alpha_0<\cdot\cdot\cdot<\alpha_{n-i^*}$ in $F$.  By (d2), for all $j<n-i^*$, $e(\alpha_j, \alpha_{j+1})\leq e(\alpha_0, \alpha_1)=a(0)<N$. Then by (4) and (5), for some $\xi\in x$, $\{\alpha_j: j\leq n-i^*\}\subseteq F_\xi$. So $p_\beta\in P(F_\xi, s, a, m)$.

Above argument shows that 
\[P(F, s, a, m)=\bigcup_{\xi\in x} P(F_\xi, s, a, m).\]
 By (2) and the fact $P(F, s, a, m)\neq\emptyset$, $P(F_\xi, s, a, m)\neq\emptyset$ for all $\xi\in F$. So  $(s,a, m)\in \mc{A}$ by (2).  Then by (7) and the fact $|x|=n$, $\bigcup_{\xi\in x} P(F_\xi, s, a, m)$  is centered.  Hence $P(F, s, a, m)\in \ms{C}$.\medskip

\textbf{Case 2.} $a(n-i^*-1)<N\leq a(0)$.

First note by (d3) and (d4), $|s|=a(0)\geq N$. Let $k^*$ be the unique $k<m$, if exists, such that
\[r_{F_{x(0)}(k)}\up N=s\up N.\]
By (5), if $k^*$ exists, then
\begin{enumerate}\setcounter{enumi}{7}
\item for every $\alpha\in F$, if $r_\alpha\up a(0)=s$,  then $\alpha\in \{F_\xi(k^*): \xi\in x\}$.
\end{enumerate}

Let $j^*$ be the unique $j<k^*$, if exists, such that 
\[e(F_{x(0)}(j), F_{x(0)}(k^*))=a(n-i^*-1).\]
Note   by (coh1), there is at most one such $j$.  And by (5), 
\begin{enumerate}\setcounter{enumi}{8}
\item $e(F_\xi(j^*), F_\xi(k^*))=a(n-i^*-1)$ for all $\xi\in x$.
\end{enumerate}

Now arbitrarily choose $p_\beta\in P(F, s,a, m)$ witnessed by $\alpha_0<\cdot\cdot\cdot<\alpha_{n-i^*}$ in $F$.  By (d4), 
\[e(\alpha_{n-i^*-1}, \alpha_{n-i^*})=a(n-i^*-1)<N\leq a(0)=e(\alpha_0, \alpha_1).\]
Then 
By (3), for some $\xi<\eta$ in $x$, $\alpha_0\in F_\xi\setminus \overline{F}$ and $\alpha_1\in F_\eta\setminus \overline{F}$. And by (5), for some $\delta\geq \eta$ in $x$, $\{\alpha_{n-i^*-1}, \alpha_{n-i^*}\}\subseteq F_\delta$.  Since $\alpha_{n-i^*-1}\geq \alpha_0>\max(\overline{F})$, $\{\alpha_{n-i^*-1}, \alpha_{n-i^*}\}\subseteq F_\delta\setminus \overline{F}$.

By (d3) and (d4), $s=r_{\alpha_{n-i^*}}\up a(0)$.
So by (5), $k^*$ exists and by (8), 
\[\alpha_{n-i^*}=F_\delta(k^*).\]
 Then   $j^*$  exists and 
 \[\alpha_{n-i^*-1}=F_\delta(j^*).\]
  Together with (d1), 
  \[\beta=c(F_\delta(j^*), F_\delta(k^*)). \]

This shows 
\[P(F, s, a, m)\subseteq \{p_\beta: \exists \xi \in x ~ \beta=c(F_\xi(j^*), F_\xi(k^*))\}\subseteq X_{j^*, k^*}.\]
By (6) and the fact $|x|=n$, $\{p_\beta: \exists \xi \in x ~ \beta=c(F_\xi(j^*), F_\xi(k^*))\}$ is centered. Hence, $P(F, s, a, m)\in \ms{C}$.\medskip

\textbf{Case 3.} $a(n-i^*-1)\geq N$.

Note $|s|=a(0)\geq N$. Let $k^*$ be the unique $k<m$, if exists, such that 
\[r_{F_{x(0)}(k)}\up N=s\up N.\]

Choose $p_\beta\in P(F, s,a,m)$ witnessed by $\alpha_0<\cdot\cdot\cdot<\alpha_{n-i^*}$ in $F$. 

By (d2), $e(\alpha_j, \alpha_{j+1})\geq e(\alpha_{n-i^*-1}, \alpha_{n-i^*})\geq N$ for all $j<n-i^*$. So by (3), different $\alpha_j$'s belong to different $F_\xi\setminus \overline{F}$'s. In particular, 
\[\alpha_{n-i^*}\in F_{x(l)}\setminus \overline{F}\text{ for some }n-i^*\leq l<n.\]

By (d3), $r_{\alpha_{n-i^*}}\supseteq s$. Then by (5), $k^*$ exists  and
$\alpha_{n-i^*}=F_{x(l)}(k^*)$.

In summary,
\begin{enumerate}\setcounter{enumi}{9}
\item $\alpha_{n-i^*}\in \{F_{x(l)}(k^*): n-i^*\leq l<n\}$.
\end{enumerate}

By (d2) and Lemma \ref{lem 51},   
$e(\alpha_j, \alpha_{n-i^*})=a(j)\text{ for }j<n-i^*$.
 So by the fact that $e_{\alpha_{n-i^*}}$ is one-to-one, 
\begin{enumerate}\setcounter{enumi}{10}
\item  $\alpha_j=e_{\alpha_{n-i^*}}^{-1}(a(j))\text{ for }j<n-i^*$.
\end{enumerate} 

(10) shows  that $\alpha_{n-i^*}$ has at most $i^*$ choices and (11) shows that $\alpha_0,...,\alpha_{n-i^*-1}$ are determined by $\alpha_{n-i^*}$ and $a$. So $|P(F, s, a, m)|\leq i^*$. Together with the fact that $P(F, s,a,m)\subseteq P_m$ and $P_m$ is $i^*$-linked, $P(F, s, a, m)\in \ms{C}$.\medskip

So in any case, $P(F, s, a, m)\in \ms{C}$. This shows that $F\in \mc{H}^\pi_0$ and hence is a lower bound of $\{F_\xi: \xi\in x\}$. Then $\{F_\alpha: \alpha\in \Gamma''\}$ is $n$-linked and $\mc{H}^\pi_0$ has property K$_n$.
\end{proof}

Applying {\rm P(K$_n\ra$K$_{n+1}$)} to $\mc{H}^\pi_0$, we find $X\in [\omega_1]^{\omega_1}$ such that $[X]^{n+1}\subseteq \mc{H}^\pi_0$.  Let $\delta<\omega_1$ satisfy the conclusion of Lemma \ref{lem 52}. For $x\in [X\cap \delta]^{n-i^*}$,  $s\in 2^{<\omega}$, $a\in \omega^{n-i^*}$ and $m<\omega$, let
\[\ms{C}_{x, s, a, m}=\bigcup \{P(x\cup \{\alpha\}, s, a,m): \alpha\in X\setminus \delta\}.\] 
Then by (d) and Lemma \ref{lem 52}, 
\[\mathbb{P}=\bigcup \{\ms{C}_{x, s, a, m}: x\in [X\cap \delta]^{n-i^*}, s\in 2^{<\omega}, a\in \omega^{n-i^*}, m<\omega\}.\]
By   $[X]^{n+1}\subseteq \mc{H}^\pi_0$, each $\ms{C}_{x, s, a, m}$ is $(i^*+1)$-linked. To see this, choose $p_{\beta_0},...,p_{\beta_{i^*}}$ in $\ms{C}_{x, s, a, m}$ witnessed by $\alpha_0,...,\alpha_{i^*}$ in $X\setminus \delta$ respectively.  Let $F=x\cup \{\alpha_j: j\leq i^*\}$. Then $F\in [X]^{n+1}$ and hence $\pi(F)=0$. So $\{p_{\beta_j}: j\leq i^*\}\subseteq P(F,s,a,m)$ is centered.

This shows that $\mathbb{P}$ is $\sigma$-$(i^*+1)$-linked.

\begin{lem}\label{lem K3toK4}
Suppose $n\geq 2$ and {\rm P(K$_n\ra$K$_{n+1}$)}. Then the following statements hold.
\begin{enumerate}
\item {\rm P$_{\omega_1}$(K$_n\ra\sigma$-linked)}.
\item For $2\leq i<n$,  {\rm P$_{\omega_1}$((K$_n\wedge \sigma$-$i$-linked)$\ra\sigma$-$(i+1)$-linked)}.
\end{enumerate}
\end{lem}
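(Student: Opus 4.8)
The plan is to read off Lemma~\ref{lem K3toK4} as a direct corollary of the construction carried out earlier in this subsection. Recall that throughout the subsection we fixed an arbitrary natural number $i^*$ with $1\leq i^*<n$ and an arbitrary $\sigma$-$i^*$-linked poset $\mathbb{P}$ of size $\omega_1$ with property {\rm K}$_n$, built the coloring $\pi$ of item (e), showed $\mc{H}^\pi_0$ has property {\rm K}$_n$, applied {\rm P(K$_n\ra$K$_{n+1}$)} to obtain an uncountable $X$ with $[X]^{n+1}\subseteq\mc{H}^\pi_0$, and then used Lemma~\ref{lem 52} to exhibit $\mathbb{P}$ as a countable union of $(i^*+1)$-linked sets. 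Hence the whole subsection is exactly a proof of the implication: assuming {\rm P(K$_n\ra$K$_{n+1}$)}, every $\sigma$-$i^*$-linked poset of size $\omega_1$ with property {\rm K}$_n$ is $\sigma$-$(i^*+1)$-linked. Since $i^*$ was arbitrary in $[1,n)$, this is precisely the schema
\[\text{{\rm P(K$_n\ra$K$_{n+1}$)}}\ \Longrightarrow\ \text{{\rm P$_{\omega_1}$((K$_n\wedge\sigma$-$i$-linked)$\ra\sigma$-$(i+1)$-linked)}}\quad\text{for }1\leq i<n.\]

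For part (2) of the lemma there is therefore nothing left to do: it is the instance $2\leq i<n$ of this schema, and I would simply note that it has been established by the construction above (the coloring $\pi$, the {\rm K}$_n$-ness of $\mc{H}^\pi_0$, and the final paragraph exhibiting the $\sigma$-$(i^*+1)$-linked partition). For part (1), I would observe that a poset with property {\rm K}$_n$ is in particular $\sigma$-$1$-linked, since \emph{every} poset is trivially $1$-linked (as flagged just before the statement, ``We view every poset as 1-linked and hence $\sigma$-1-linked''). So the instance $i^*=1$ of the schema applies and gives: {\rm P(K$_n\ra$K$_{n+1}$)} implies that every poset of size $\omega_1$ with property {\rm K}$_n$ is $\sigma$-$2$-linked, i.e.\ $\sigma$-linked. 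That is exactly {\rm P$_{\omega_1}$(K$_n\ra\sigma$-linked)}.

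There is no real obstacle here; the only thing to be careful about is bookkeeping, namely to make explicit that the argument was run with $i^*$ ranging over all of $[1,n)$ and that the $i^*=1$ case legitimately covers the hypothesis ``$\mathbb{P}$ has property {\rm K}$_n$'' alone (with no extra linkedness assumption) because $\sigma$-$1$-linkedness is vacuous. Concretely:

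\begin{proof}[Proof of Lemma~\ref{lem K3toK4}.]
Assume {\rm P(K$_n\ra$K$_{n+1}$)}. Fix $i^*$ with $1\leq i^*<n$ and let $\mathbb{P}$ be any $\sigma$-$i^*$-linked poset of size $\omega_1$ with property {\rm K}$_n$, with notation as in (a)--(e) above. By the preceding lemmas, $\mc{H}^\pi_0$ has property {\rm K}$_n$, so by {\rm P(K$_n\ra$K$_{n+1}$)} there is $X\in[\omega_1]^{\omega_1}$ with $[X]^{n+1}\subseteq\mc{H}^\pi_0$; and then, as shown above using Lemma~\ref{lem 52}, $\mathbb{P}$ is $\sigma$-$(i^*+1)$-linked. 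Since $\mathbb{P}$ and $i^*$ were arbitrary, this proves {\rm P$_{\omega_1}$((K$_n\wedge\sigma$-$i^*$-linked)$\ra\sigma$-$(i^*+1)$-linked)} for every $1\leq i^*<n$; in particular (2) holds.

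For (1), let $\mathbb{P}$ be any poset of size $\omega_1$ with property {\rm K}$_n$. Then $\mathbb{P}$ is $\sigma$-$1$-linked, since every poset is $1$-linked. Applying the case $i^*=1$ just established, $\mathbb{P}$ is $\sigma$-$2$-linked, i.e.\ $\sigma$-linked. This proves {\rm P$_{\omega_1}$(K$_n\ra\sigma$-linked)}.
\end{proof}
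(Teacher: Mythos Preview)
Your proposal is correct and takes essentially the same approach as the paper: the paper's proof is the single line ``In above argument, take $i^*=1$ for (1) and take $i^*=i$ for (2),'' which is exactly what you spell out in more detail.
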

\begin{proof}
 In above argument,  take $i^*=1$ for (1) and  take $i^*=i$ for (2).
\end{proof}

The following corollary follows from a finite induction.
\begin{cor}\label{cor K3toK4 K3tosK3}
{\rm P(K$_n\ra$K$_{n+1}$)} implies {\rm P$_{\omega_1}$(K$_n\ra\sigma$-$n$-linked)}.
\end{cor}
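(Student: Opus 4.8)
The plan is to iterate Lemma \ref{lem K3toK4}. Fix an arbitrary poset $\mathbb{P}$ of size $\omega_1$ with property {\rm K}$_n$; the goal is to show $\mathbb{P}$ is $\sigma$-$n$-linked, which then gives {\rm P$_{\omega_1}$(K$_n\ra\sigma$-$n$-linked)}. If $n=2$, this is exactly Lemma \ref{lem K3toK4}(1) (recall that $\sigma$-linked means $\sigma$-$2$-linked), so I would assume $n\geq 3$ from now on.

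First I would invoke Lemma \ref{lem K3toK4}(1) to conclude that $\mathbb{P}$ is $\sigma$-linked, i.e., $\sigma$-$2$-linked. Then I would run a finite induction on $i$, for $2\leq i\leq n$, with inductive hypothesis ``$\mathbb{P}$ is $\sigma$-$i$-linked''. The base case $i=2$ is what was just established. For the inductive step, assume $2\leq i<n$ and that $\mathbb{P}$ is $\sigma$-$i$-linked; since $\mathbb{P}$ also has property {\rm K}$_n$ (a property of $\mathbb{P}$ that is never weakened along the induction), Lemma \ref{lem K3toK4}(2) applies to $\mathbb{P}$ and yields that $\mathbb{P}$ is $\sigma$-$(i+1)$-linked. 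After the $n-2$ steps $i=2,3,\dots,n-1$ we arrive at $i=n$, so $\mathbb{P}$ is $\sigma$-$n$-linked, as desired.

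There is no genuine obstacle here; the construction is a routine bookkeeping argument, and the assumption {\rm P(K$_n\ra$K$_{n+1}$)} is used precisely through Lemma \ref{lem K3toK4}. The only two points worth keeping in mind are that each application of Lemma \ref{lem K3toK4}(2) simultaneously requires $\mathbb{P}$ to still have property {\rm K}$_n$ (automatic, since we carry $\mathbb{P}$ unchanged and only strengthen the degree of $\sigma$-linkedness) and requires the index $i$ to remain in the range $[2,n)$, which is exactly why the induction terminates after reaching degree $n$.
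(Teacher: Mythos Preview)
Your proposal is correct and matches the paper's approach exactly: the paper simply states that the corollary ``follows from a finite induction,'' and your argument spells out precisely that induction, using Lemma~\ref{lem K3toK4}(1) for the base case and Lemma~\ref{lem K3toK4}(2) for the successor steps.
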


\begin{proof}[Proof of Theorem \ref{thm K3toK4}.]
Assume {\rm P(K$_n\ra$K$_{n+1}$)}. Then clearly {\rm P($\sigma$-$n$-linked$\ra$K$_{n+1}$)} holds. By Theorem \ref{thm sK3toK4}, MA$_{\omega_1}(\sigma$-$n$-linked) holds.  Together with Corollary \ref{cor K3toK4 K3tosK3}, MA$_{\omega_1}$(K$_n$) holds.
\end{proof}

\section{Distinguishing {\rm P$_{\omega_1}$(K$_n\ra \sigma$-$n$-linked)} and {\rm MA$_{\omega_1}$(K$_n$)}}

In Section 3, it is proved that {\rm P$_{\omega_1}$(K$_n\ra \sigma$-$n$-linked)} (or {\rm P$_{\omega_1}$(precaliber $\omega_1\ra \sigma$-$n$-linked)}) implies combinatorial properties on cardinal invariants.
However, in this section, we will prove that strengthening K$_n$ to $\sigma$-$n$-linked is not as strong as enlarging the coding arity. More precisely, {\rm P$_{\omega_1}$(K$_n\ra \sigma$-$n$-linked)} does not imply P(K$_n\ra$K$_{n+1}$).

We first make the following observation.
\begin{fact}\label{fact p K3}
Suppose $n\geq 2$ and $\pi: [\omega_1]^{n}\ra 2$ has no uncountable 0-homogeneous subset. If $\mc{P}$ has property {\rm K$_{n}$}, then 
\[\Vdash_\mc{P} \pi\text{ has no uncountable 0-homogeneous subset}.\]
 In particular,  forcing with posets having precaliber $\omega_1$ does not add uncountable 0-homogeneous subset of $\pi$.
\end{fact}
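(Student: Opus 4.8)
The plan is the standard ``reflect a name down to the ground model via the linkedness property'' argument. Suppose, towards a contradiction, that some $p\in\mathcal{P}$ forces that $\dot A\subseteq\omega_1$ is uncountable with $[\dot A]^n\subseteq\pi^{-1}\{0\}$. First I would choose, by recursion on $\alpha<\omega_1$, conditions $p_\alpha\leq p$ and ordinals $\xi_\alpha<\omega_1$ such that $p_\alpha\Vdash\check\xi_\alpha\in\dot A$ and $\xi_\alpha\notin\{\xi_\beta:\beta<\alpha\}$. This is possible because at each stage the set $\{\xi_\beta:\beta<\alpha\}$ is a countable set lying in $V$, and $p$ forces $\dot A$ to be uncountable, hence not contained in it; so some extension of $p$ decides a new element of $\dot A$. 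In particular the $\xi_\alpha$, $\alpha<\omega_1$, are pairwise distinct.

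Next I would extract an uncountable $\Gamma\subseteq\omega_1$ with the property that every subset of $\{p_\alpha:\alpha\in\Gamma\}$ of size at most $n$ has a common lower bound in $\mathcal{P}$. If $\{p_\alpha:\alpha<\omega_1\}$ is uncountable as a subset of $\mathcal{P}$, property K$_n$ gives an uncountable $n$-linked subset of it, and I take $\Gamma$ to index such a subset. If instead $\{p_\alpha:\alpha<\omega_1\}$ is countable, then by the pigeonhole principle a single condition $q$ equals $p_\alpha$ for all $\alpha$ in some uncountable set, which I take as $\Gamma$; then $q$ itself is a common lower bound of every finite subfamily. In either case put $E=\{\xi_\alpha:\alpha\in\Gamma\}$; since the $\xi_\alpha$ are distinct, $E$ is an uncountable subset of $\omega_1$.

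Because $\pi$ has no uncountable $0$-homogeneous subset, $E$ is not $0$-homogeneous, so there is $a\in[E]^n$ with $\pi(a)=1$; write $a=\{\xi_{\alpha_1},\dots,\xi_{\alpha_n}\}$ with $\alpha_1,\dots,\alpha_n\in\Gamma$ distinct, and let $r$ be a common lower bound of $p_{\alpha_1},\dots,p_{\alpha_n}$. Then $r\leq p_{\alpha_i}$ forces $\check\xi_{\alpha_i}\in\dot A$ for each $i$, so $r\Vdash\check a\subseteq\dot A$; since also $r\leq p$ forces $[\dot A]^n\subseteq\pi^{-1}\{0\}$ and $|a|=n$, we get $r\Vdash\check a\in\pi^{-1}\{0\}$, i.e.\ $r$ forces $\pi(a)=0$. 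But $\pi$ and $a$ both lie in $V$, so $\pi(a)=1$ is absolute between $V$ and any generic extension, a contradiction. This proves $\Vdash_{\mathcal{P}}\pi$ has no uncountable $0$-homogeneous subset, and the final clause follows since precaliber $\omega_1$ implies property K$_n$ for every $n\geq 2$ (a centered family is $n$-linked).

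I do not expect a genuine obstacle. The points needing care are merely the bookkeeping that keeps the $\xi_\alpha$ pairwise distinct (so that $E$ is genuinely uncountable and therefore caught by the hypothesis on $\pi$), the case division according to whether $\alpha\mapsto p_\alpha$ has countable or uncountable range (so that property K$_n$ is invoked only where it applies), and the absoluteness remark that a ground-model coloring cannot reassign the colour of a fixed finite set in a forcing extension.
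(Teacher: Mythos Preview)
Your argument is correct and is exactly the standard reflection argument one expects here. The paper itself does not spell out a proof of this fact (it is introduced merely as ``the following observation''), but a variant of the same idea appears later in the proof of Theorem~\ref{thm K3sK3}, where the paper sketches why forcing with a K$_n$ poset cannot create an uncountable $n$-linked subset of a ground-model family that had none; your write-up is simply the explicit version of that sketch specialized to the coloring $\pi$.
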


The following coloring and posets will be needed in adding a $\sigma$-$n$-linked partition to a poset with property K$_n$.
\begin{defn}\label{defn hpn}
For a poset $\mathbb{P}$ and $n\geq 2$, $\pi_{\mathbb{P}}^n: [\mathbb{P}]^n\ra 2$ is defined by for $F\in [\mathbb{P}]^n$,
\[\pi^n_\mathbb{P}(F)=0 \text{ iff } F \text{ is centered}.\]
Use $\mc{H}_{\mathbb{P}, n}$ to denote $\mc{H}_0^{\pi^n_\mathbb{P}}$. $(\mc{H}_{\mathbb{P},n})^\omega$ is the countable product of $\mc{H}_{\mathbb{P}, n}$ with finite support.
\end{defn}
In other words, $(\pi^n_\mathbb{P})^{-1}\{0\}$ is the collection of centered (or $n$-linked) $n$-element subsets. A simple density argument shows that for a ccc poset $\mathbb{P}$, $(\mc{H}_{\mathbb{P},n})^\omega$ (not necessarily ccc) adds a $\sigma$-$n$-linked partition to $\mathbb{P}$.
\begin{lem}\label{lem 6.1}
Suppose $n\geq 2$ and $\mathbb{P}$ is a ccc poset. Then $\Vdash_{(\mc{H}_{\mathbb{P},n})^\omega} \mathbb{P}$ is $\sigma$-$n$-linked.
\end{lem}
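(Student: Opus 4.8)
The plan is to show that in the extension by $(\mc{H}_{\mathbb{P},n})^\omega$, the generic object naturally decomposes $\mathbb{P}$ into countably many pieces, each of which is forced to be $n$-linked. Recall a condition in $(\mc{H}_{\mathbb{P},n})^\omega$ is a finite partial function $p$ with $\mathrm{dom}(p)\subseteq\omega$ and each $p(k)\in\mc{H}_{\mathbb{P},n}$, i.e.\ each $p(k)$ is a finite $n$-linked subset of $\mathbb{P}$; the order is coordinatewise reverse inclusion. Let $G$ be generic. For each $k<\omega$ set
\[
\dot{P}_k=\bigcup\{\,p(k): p\in G,\ k\in\mathrm{dom}(p)\,\}\subseteq\mathbb{P}.
\]
The first step is to check that $\bigcup_{k<\omega}\dot{P}_k=\mathbb{P}$. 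This is the density argument: for each fixed $x\in\mathbb{P}$ and each $k<\omega$, the set of conditions $p$ with $k\in\mathrm{dom}(p)$ and $x\in p(k)$ is dense below any condition that is compatible with putting $x$ into coordinate $k$ — and in fact we only need density of $D_x=\{p: \exists k\ (k\in\mathrm{dom}(p)\wedge x\in p(k))\}$, which holds because given any $p$, picking a fresh $k^*\notin\mathrm{dom}(p)$ and extending $p$ by setting $p'(k^*)=\{x\}$ (a singleton, hence trivially $n$-linked, hence in $\mc{H}_{\mathbb{P},n}$) yields $p'\le p$ in $D_x$. Since $\mathbb{P}\in V$ and all $D_x$ are dense, genericity gives $\mathbb{P}=\bigcup_k\dot{P}_k$ in $V[G]$.

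The second step is to verify that each $\dot{P}_k$ is forced to be $n$-linked. Fix $k$ and suppose $p\Vdash \{\dot{x}_0,\dots,\dot{x}_{n-1}\}\subseteq\dot{P}_k$ with the $\dot{x}_i$ names for elements of $\mathbb{P}$. By extending $p$ and using the forcing theorem (deciding the values $x_i\in\mathbb{P}$, which lie in $V$), we may assume $p$ already decides $\dot{x}_i=x_i$ and that $k\in\mathrm{dom}(p)$ with $\{x_0,\dots,x_{n-1}\}\subseteq p(k)$. But $p(k)\in\mc{H}_{\mathbb{P},n}$ means $p(k)$ is an $n$-linked subset of $\mathbb{P}$ \emph{in $V$}, so $\{x_0,\dots,x_{n-1}\}$ has a common lower bound in $\mathbb{P}$; and being a common lower bound is absolute. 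Hence $p$ forces $\{x_0,\dots,x_{n-1}\}$ to have a common lower bound, which is what we needed. Therefore $\Vdash_{(\mc{H}_{\mathbb{P},n})^\omega}\mathbb{P}=\bigcup_{k<\omega}\dot{P}_k$ with each $\dot{P}_k$ $n$-linked, i.e.\ $\mathbb{P}$ is $\sigma$-$n$-linked.

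The only subtlety — and it is minor — is that we never claimed $(\mc{H}_{\mathbb{P},n})^\omega$ is ccc or that it preserves much of anything; the statement is purely about what the generic extension satisfies, and absoluteness of "being a common lower bound" (a first-order property of $(\mathbb{P},\leq)\in V$) does all the work. I do not expect a genuine obstacle here: the ccc hypothesis on $\mathbb{P}$ is not even used in the argument above and is presumably included only because $\mathbb{P}$ will be ccc in the intended application. One should just take care, when extending $p$ to put $x$ into a coordinate, to use a \emph{fresh} coordinate so that the resulting $p(k^*)=\{x\}$ is automatically a legal condition in $\mc{H}_{\mathbb{P},n}$, avoiding any need to check $n$-linkedness of a larger set at that stage.
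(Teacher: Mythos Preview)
Your density argument is exactly what the paper has in mind (the paper only says ``a simple density argument shows\ldots'' and gives no details), and the two-step structure---cover $\mathbb{P}$ by the coordinates, then check each coordinate is $n$-linked---is correct.

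There is one small slip, and it is precisely where the ccc hypothesis enters. You write that the singleton $\{x\}$ is ``trivially $n$-linked, hence in $\mc{H}_{\mathbb{P},n}$.'' Under the paper's convention (see the remark after the definition of $\mc{H}^\pi_i$), a set of size $<n$ is $0$-homogeneous only if it is contained in some $b\in(\pi^n_{\mathbb P})^{-1}\{0\}$, i.e.\ only if $x$ lies in some $n$-element centered subset of $\mathbb{P}$. So $\{x\}\in\mc{H}_{\mathbb{P},n}$ is not automatic. The fix is exactly what the paper uses in the proof of Lemma~\ref{lem 6.3}: for ccc $\mathbb{P}$ the set $\mathbb{P}\setminus\bigcup\mc{H}_{\mathbb{P},n}$ is at most countable, so those exceptional points can be thrown into countably many extra singleton pieces (which are vacuously $n$-linked in the ordinary sense). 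Thus the ccc hypothesis is not idle---it is needed to handle this residue---and your remark that it ``is not even used'' should be dropped.
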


Although forcing with $\mathbb{P}^\omega$ also adds a $\sigma$-$n$-linked partition of $\mathbb{P}$, we cannot use this type of posets to separate {\rm P$_{\omega_1}$(K$_n\ra \sigma$-$n$-linked)} and {\rm MA$_{\omega_1}$(K$_n$)}. In fact, this strategy would induce  {\rm MA$_{\omega_1}$(K$_n$)}. On the other hand, the following lemma indicates the different forcing properties  that $\mathbb{P}^\omega$ and $(\mc{H}_{\mathbb{P},n})^\omega$ have.

\begin{lem}\label{lem 6.2}
Suppose $n\geq 2$ and $\mathbb{P}$ is a poset with property {\rm K$_n$}. Then $(\mc{H}_{\mathbb{P},n})^\omega$ has precaliber $\omega_1$.
\end{lem}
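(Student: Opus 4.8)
The plan is to reduce to a single factor. Since $(\mc{H}_{\mathbb{P},n})^\omega$ is a finite‑support product of copies of $\mc{H}_{\mathbb{P},n}$, and precaliber $\omega_1$ is preserved under arbitrary finite‑support products, it suffices to show that $\mc{H}_{\mathbb{P},n}$ itself has precaliber $\omega_1$. So I would start with an uncountable $\{a_\xi:\xi<\omega_1\}\subseteq\mc{H}_{\mathbb{P},n}$ and, thinning out by the $\Delta$‑system lemma, arrange that the $a_\xi$ are pairwise distinct and form a $\Delta$‑system with root $R$, with $|a_\xi|=N$ constant; writing $a_\xi=R\cup B_\xi$ with the $B_\xi$ pairwise disjoint of common size $d:=N-|R|$ (necessarily $d\ge 1$, as the family is uncountable) and fixing enumerations $B_\xi=\{b^1_\xi,\dots,b^d_\xi\}$. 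Two elementary remarks fix the target: every member of $\mc{H}_{\mathbb{P},n}$ of size $\le n$ is centered (it is $n$-homogeneous, hence either a subset of a centered $n$-set, or of size exactly $n$ and then its own unique $n$-subset, which is centered), and $\mc{H}_{\mathbb{P},n}$ is closed under subsets. Consequently $\{a_\xi:\xi\in\Gamma\}$ is centered in $\mc{H}_{\mathbb{P},n}$ whenever $U_\Gamma:=\bigcup_{\xi\in\Gamma}a_\xi$ is $n$-linked in $\mathbb{P}$: a common lower bound in $\mc{H}_{\mathbb{P},n}$ of finitely many $a_\xi$'s must contain their union, and any $\le n$ points of the infinite set $U_\Gamma$ lie inside a centered $n$-subset of $U_\Gamma$. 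So the whole problem becomes: produce an uncountable $\Gamma$ with $U_\Gamma$ $n$-linked.

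This is where property {\rm K}$_n$ enters, applied a bounded number of times. Let $\mathcal{S}$ be the finite set of pairs $(R',D)$ with $R'\subseteq R$, $D\subseteq\{1,\dots,d\}$ and $|R'|+|D|\le n$, and enumerate it as $(R'_m,D_m)_{m<M}$. I would build a decreasing chain $\omega_1=\Gamma_0\supseteq\Gamma_1\supseteq\cdots\supseteq\Gamma_M$ of uncountable sets: given $\Gamma_m$, for each $\xi\in\Gamma_m$ the set $R'_m\cup\{b^i_\xi:i\in D_m\}$ is a $\le n$-element subset of $a_\xi\in\mc{H}_{\mathbb{P},n}$, hence centered, so it has a common lower bound $e^m_\xi\in\mathbb{P}$; now apply {\rm K}$_n$ to the uncountable set $\{e^m_\xi:\xi\in\Gamma_m\}$ and pull back to an uncountable $\Gamma_{m+1}\subseteq\Gamma_m$ with $\{e^m_\xi:\xi\in\Gamma_{m+1}\}$ $n$-linked (if that set of values is already countable, instead pass to an uncountable $\Gamma_{m+1}\subseteq\Gamma_m$ on which $\xi\mapsto e^m_\xi$ is constant, so its value set is a singleton, trivially $n$-linked). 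Put $\Gamma=\Gamma_M$.

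The remaining point — verifying that $U_\Gamma$ is $n$-linked — is the step I expect to be the crux, because the naive attempt of running {\rm K}$_n$ separately on the ``columns'' $\langle b^i_\xi:\xi\rangle$ fails to make points from \emph{different} columns of \emph{different} blocks compatible. The resolution rests on the observation that an $n$-element subset $T\subseteq U_\Gamma$ can meet at most $n$ of the blocks and contains at most $n$ points altogether: writing $R'=T\cap R$ and $T_j=T\cap B_{\xi_j}$ for the distinct blocks $\xi_1,\dots,\xi_l\in\Gamma$ it meets (so $l\le n$ and $|R'|+\sum_j|T_j|=n$), and setting $D^*=\bigcup_{j\le l}\{i:b^i_{\xi_j}\in T_j\}$, one has $|R'|+|D^*|\le|R'|+\sum_j|T_j|=n$, so $(R',D^*)\in\mathcal{S}$, say $(R',D^*)=(R'_m,D_m)$. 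Then for every $j\le l$ the element $e^m_{\xi_j}$ is a common lower bound of $R'\cup\{b^i_{\xi_j}:i\in D^*\}\supseteq R'\cup T_j$, and all of $e^m_{\xi_1},\dots,e^m_{\xi_l}$ lie in the \emph{single} $n$-linked set $\{e^m_\xi:\xi\in\Gamma_{m+1}\}$; being $\le n$ of its elements, they have a common lower bound $v$, and $v\le e^m_{\xi_j}$ places $v$ below every point of $R'\cup T_j$ for each $j$, hence below every point of $T=R'\cup T_1\cup\cdots\cup T_l$. Thus $T$ is centered, so $U_\Gamma$ is $n$-linked, $\{a_\xi:\xi\in\Gamma\}$ is centered, and $\mc{H}_{\mathbb{P},n}$ — and therefore $(\mc{H}_{\mathbb{P},n})^\omega$ — has precaliber $\omega_1$.
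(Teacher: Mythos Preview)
Your proof is correct and follows essentially the same strategy as the paper: reduce to a single factor, then apply property K$_n$ finitely many times to lower bounds of the possible ``patterns'' so that any $n$ points of the union can be routed through $\le n$ elements of a single $n$-linked set. The only difference is that you pass through a $\Delta$-system and index patterns by pairs $(R',D)$, whereas the paper skips the $\Delta$-system entirely: it simply enumerates each $F_\alpha=\{p_{\alpha,i}:i<M\}$, fixes a lower bound $p_{\alpha,I}$ for every $I\in[M]^n$, thins $\binom{M}{n}$ times via K$_n$, and then covers any $n$-tuple $\{p_{\alpha_k,i_k}:k<n\}$ by choosing one $I\supseteq\{i_k:k<n\}$ --- so your $\Delta$-system step is an unnecessary (though harmless) detour.
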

\begin{proof}
We will show that $\mc{H}_{\mathbb{P},n}$ has precaliber $\omega_1$. Then the lemma follows from the well-known fact that precaliber $\omega_1$ is closed under product with finite support (see \cite{Barnett}).

Fix $\{F_\alpha\in \mc{H}_{\mathbb{P},n}: \alpha<\omega_1\}$. Find $\Gamma\in [\omega_1]^{\omega_1}$ and $M<\omega$ such that
\[|F_\alpha|=M\text{ for every  }\alpha\in \Gamma.\]
We may assume that $M\geq n$. Enumerate each $F_\alpha$ as $\{p_{\alpha, i}: i<M\}$ and for each $I\in [M]^n$, choose a common lower bound $p_{\alpha, I}$ of $\{p_{\alpha, i}: i\in I\}$. Then a ${M \choose n}$ step induction induces $\Gamma'\in [\Gamma]^{\omega_1}$ such that
\[\text{for each $I\in [M]^n$,  $\{p_{\alpha, I}: \alpha\in \Gamma'\}$ is $n$-linked.}\]
Then $\bigcup_{\alpha\in \Gamma'} F_\alpha$ is an $n$-linked subset of $\mathbb{P}$. Hence $\{F_\alpha: \alpha\in \Gamma'\}$ is a centered subset of $\mc{H}_{\mathbb{P},n}$.
\end{proof}

Now we are ready to distinguish {\rm P$_{\omega_1}$(K$_n\ra \sigma$-$n$-linked)} and {\rm MA$_{\omega_1}$(K$_n$)}.
\begin{thm}\label{thm K3sK3}
It is relatively consistent with {\rm ZFC} that
\begin{enumerate}
\item for $n\geq 2$, {\rm P$_{\omega_1}$(K$_n\ra \sigma$-$n$-linked)} holds and
\item for  $n\geq 2$, {\rm MA$_{\omega_1}$(K$_n$)} fails.
\end{enumerate}
\end{thm}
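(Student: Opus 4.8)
The plan is to force, over a model $V$ of $\mathrm{GCH}$, by a finite support iteration of length $\omega_2$ that catches and repairs every would‑be counterexample to $\mathrm{P}_{\omega_1}(\mathrm{K}_n\to\sigma\text{-}n\text{-linked})$ while keeping $\mathfrak{p}=\omega_1$. Conclusion (2) then comes for free: if $\mathfrak{p}=\omega_1$ in the final model, then $\mathrm{MA}_{\omega_1}(\sigma\text{-centered})$ fails by Bell's Theorem (Theorem~\ref{thm BRMS}), and since every $\sigma$-centered poset has property $\mathrm{K}_n$ for all $n$, $\mathrm{MA}_{\omega_1}(\mathrm{K}_n)$ fails for every $n\ge 2$. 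The repairing will be carried out with the posets $(\mc{H}_{\mathbb{P},n})^\omega$ of Definition~\ref{defn hpn}: forcing with $(\mc{H}_{\mathbb{P},n})^\omega$ makes $\mathbb{P}$ become $\sigma$-$n$-linked (Lemma~\ref{lem 6.1}), and it has precaliber $\omega_1$ precisely because one only uses it when $\mathbb{P}$ has property $\mathrm{K}_n$ (Lemma~\ref{lem 6.2}); keeping $\mathfrak{p}=\omega_1$ will rest on Fact~\ref{fact p K3} together with the damage control structure of Section~7.

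I would first fix, in $V$, an unfilled $\omega_1$-tower $T=\{t_\alpha:\alpha<\omega_1\}$ (it exists since $\mathrm{CH}$ gives $\mathfrak{t}=\omega_1$) and, using the general damage control structure of Section~7, attach to $T$ a ``strong'' coloring $\pi_0\colon[\omega_1]^k\to 2$ (some $k\ge 3$) that has no uncountable $0$-homogeneous subset in $V$ and is arranged so that in any outer model in which $T$ is filled, $\pi_0$ has an uncountable $0$-homogeneous subset. Then I would build $\langle\mathbb{P}_\alpha,\dot{\mathbb{Q}}_\alpha:\alpha<\omega_2\rangle$ as follows: at stage $\alpha$, a bookkeeping function (available since $2^{\omega_1}=\omega_2$ and the iteration will turn out to be ccc) provides a $\mathbb{P}_\alpha$-name $\dot{\mathbb{P}}$ for a poset on $\omega_1$ together with an integer $n\ge 2$; set $\dot{\mathbb{Q}}_\alpha=(\mc{H}_{\dot{\mathbb{P}},n})^\omega$ if $\Vdash_{\mathbb{P}_\alpha}$ ``$\dot{\mathbb{P}}$ has property $\mathrm{K}_n$ and is not $\sigma$-$n$-linked'', and let $\dot{\mathbb{Q}}_\alpha$ be trivial otherwise; the bookkeeping is arranged so that every pair $(\mathbb{P},n)$ occurring along the iteration is treated at cofinally many stages.

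Next I would verify ccc‑ness and conclusion (1). Each nontrivial $\dot{\mathbb{Q}}_\alpha$ has precaliber $\omega_1$ by Lemma~\ref{lem 6.2}, hence is ccc, so $\mathbb{P}_{\omega_2}$ is ccc and preserves cardinals. In $W=V^{\mathbb{P}_{\omega_2}}$, any poset $\mathbb{P}$ on $\omega_1$ with property $\mathrm{K}_n$ already lies in some $V^{\mathbb{P}_\alpha}$, $\alpha<\omega_2$, by ccc‑ness and $\mathrm{cf}(\omega_2)>\omega_1$; a standard reflection argument --- passing to an elementary submodel of size $\omega_1$, which is needed because property $\mathrm{K}_n$ is not absolute --- shows that at one of the cofinally many stages $\beta\ge\alpha$ treating $(\mathbb{P},n)$, $\mathbb{P}$ still has property $\mathrm{K}_n$ and is not yet $\sigma$-$n$-linked; then $\dot{\mathbb{Q}}_\beta=(\mc{H}_{\mathbb{P},n})^\omega$ and, by Lemma~\ref{lem 6.1}, $\mathbb{P}$ is $\sigma$-$n$-linked in $V^{\mathbb{P}_{\beta+1}}$ and remains so thereafter. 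Hence $W\models\mathrm{P}_{\omega_1}(\mathrm{K}_n\to\sigma\text{-}n\text{-linked})$ for every $n\ge 2$.

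The remaining, and hardest, step is to show that $T$ is still unfilled in $W$, i.e.\ $W\models\mathfrak{p}=\omega_1$. The plan is an induction along the iteration showing that $\pi_0$ acquires no uncountable $0$-homogeneous subset in any $V^{\mathbb{P}_\alpha}$: at a successor stage, $\dot{\mathbb{Q}}_\alpha=(\mc{H}_{\dot{\mathbb{P}},n})^\omega$ has property $\mathrm{K}_n$ (Lemma~\ref{lem 6.2}), so by Fact~\ref{fact p K3} it adds no uncountable $0$-homogeneous subset of $\pi_0$, and since the damage control structure was set up so that filling $T$ forces the existence of such a subset, $T$ stays unfilled. This is the main obstacle precisely because it cannot be a black‑box preservation argument: precaliber $\omega_1$ forcing is \emph{not} tower‑preserving in general --- e.g.\ the tower forcing $\mathbb{Q}_T$ is $\sigma$-centered and fills $T$ --- so the Section~7 structure has to be robust enough that (i) the particular posets $(\mc{H}_{\dot{\mathbb{P}},n})^\omega$, with $\dot{\mathbb{P}}$ of property $\mathrm{K}_n$, inflict only minimal, recoverable damage on it, and (ii) the coding of ``$T$ is filled'' by a $0$-homogeneous subset of $\pi_0$ survives limit stages of cofinality $\omega$, where Cohen reals are added. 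Granting this, $W$ witnesses both (1) and (2); Theorem~\ref{thm K3tosK3} is then immediate.
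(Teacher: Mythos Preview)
Your strategy for (2) cannot work. You want the final model to satisfy $\mathfrak{p}=\omega_1$, but conclusion (1) already forces $\mathfrak{t}>\omega_1$: by Proposition~\ref{prop K3sK3}, ${\rm P}_{\omega_1}({\rm K}_3\to\sigma\text{-}3\text{-linked})$ is equivalent to ${\rm P}_{\omega_1}(\text{precaliber }\omega_1\to\sigma\text{-}3\text{-linked})$, and the latter implies $\mathfrak{t}>\omega_1$ by Theorem~\ref{thm ptosK3 t}. So any model of (1) has $\mathfrak{p}>\omega_1$, and your plan to witness (2) by keeping the tower $T$ unfilled is impossible in principle, not just technically hard. (The invocation of the damage control structure of Section~7 is also misplaced: that structure is used to preserve strong colorings through iterations of specific posets, not to encode towers into colorings; nothing in Section~7 produces the $\pi_0$ you describe.)

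The paper's route is direct and avoids towers entirely. In the GCH ground model one has $\mathfrak{b}=\omega_1$, so (by the contrapositive of Proposition~\ref{prop sK3toK4 b}) for each $n\ge 2$ there is a coloring $\pi_n\colon[\omega_1]^{n+1}\to 2$ with $\mc{H}^{\pi_n}_0$ $\sigma$-$n$-linked and no uncountable $0$-homogeneous subset. Since every nontrivial iterand $(\mc{H}_{\mathbb{P},n})^\omega$ has precaliber $\omega_1$ (Lemma~\ref{lem 6.2}), Fact~\ref{fact p K3} applied to each $\pi_n$ preserves ``no uncountable $0$-homogeneous subset'' through the whole iteration. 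In the final model $\mc{H}^{\pi_n}_0$ is then a ${\rm K}_n$ poset that is not ${\rm K}_{n+1}$, so ${\rm MA}_{\omega_1}({\rm K}_n)$ fails (as it would imply ${\rm P}({\rm K}_n\to{\rm K}_{n+1})$). The paper also iterates to length $\omega_3$, not $\omega_2$, with bookkeeping via stationary subsets of $\{\alpha<\omega_3:\mathrm{cf}(\alpha)=\omega_2\}$; this is what guarantees that a poset with ${\rm K}_n$ in the final model has ${\rm K}_n$ at a stage where it is actually treated, a point your $\omega_2$-length sketch handles only informally.
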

\begin{proof}
Start from a model of {\rm GCH}. By Proposition \ref{prop sK3toK4 b}, for each $n\geq 2$, fix a coloring
\begin{enumerate}
\item $\pi_n: [\omega_1]^{n+1}\ra 2$ such that $\mc{H}^{\pi_n}_0$ is $\sigma$-$n$-linked and $\pi$ has no uncountable 0-homogeneous (or $(n+1)$-linked) subset.
\end{enumerate}
Fix a partition 
\[\{\alpha<\omega_3: cf(\alpha)=\omega_2\}=\bigcup \{ S_{\alpha,\beta,n}: \alpha<\omega_3, \beta<\omega_2, 2\leq n<\omega\}\]
such that each $S_{\alpha, \beta, n}$ is stationary.
Then iteratively force with finite support $\langle \mc{P}_\alpha, \dot{\mc{Q}}_\beta: \beta<\omega_3, \alpha\leq \omega_3\rangle$ such that at step $\gamma$,
\begin{itemize}
\item  if $\gamma\in S_{\alpha, \beta,n}$, $\dot{\mathbb{P}}$ be the $\beta$th ccc poset   in $(H(\omega_2))^{V^{\mc{P}_\alpha}}$ and $\dot{\mathbb{P}}$ has property K$_n$ in $V^{\mc{P}\gamma}$,  $\dot{\mc{Q}}_\gamma$ is $(\mc{H}_{\dot{\mathbb{P}}, n})^\omega$,
\item otherwise, $\dot{\mc{Q}}_\gamma$ is the trivial forcing.
\end{itemize}

By Lemma \ref{lem 6.2}, $\mc{P}_{\omega_3}$ has precaliber $\omega_1$. So by Fact \ref{fact p K3}, in $V^{\mc{P}_{\omega_3}}$, no $\pi_n$ has an uncountable 0-homogeneous subset. In particular, {\rm MA$_{\omega_1}$(K$_n$)} fails for $n>2$.

On the other hand, every poset $\mathbb{P}$ of size $\omega_1$ that has property K$_n$ in $V^{\mc{P}_{\omega_3}}$, has  property K$_n$ in $V^{\mc{P}_\gamma}$ for club relative to $\{\alpha<\omega_3: cf(\alpha)=\omega_2\}$ many $\gamma$.  So by Lemma \ref{lem 6.1}, such $\mathbb{P}$ is $\sigma$-$n$-linked. So {\rm P$_{\omega_1}$(K$_n\ra \sigma$-$n$-linked)} holds for $n\geq 2$.
\end{proof}

The use of $\mc{H}_{\mathbb{P},n}$ also indicates that strengthening K$_n$ in {\rm P$_{\omega_1}$(K$_n\ra \sigma$-$n$-linked)} does not change the property. This is justified by the following lemma.
\begin{lem}\label{lem 6.3}
Suppose $n\geq 2$ and $\mathbb{P}$ is a ccc poset. Then $\mathbb{P}$ is $\sigma$-$n$-linked iff $\mc{H}_{\mathbb{P},n}$ is $\sigma$-$n$-linked.
\end{lem}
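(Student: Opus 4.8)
The plan is to prove both directions of the equivalence, the easier being that $\sigma$-$n$-linkedness of $\mathbb{P}$ implies the same for $\mc{H}_{\mathbb{P},n}$, and the harder being the converse. For the forward direction, fix a partition $\mathbb{P}=\bigcup_{m<\omega}P_m$ into $n$-linked pieces; I would then partition $\mc{H}_{\mathbb{P},n}$ according to (i) the cardinality $M$ of a condition $F$, (ii) for each pair of coordinates $i<j<M$ the index $m$ with the chosen common lower bound of the pair lying in $P_m$, and more precisely a partition fine enough that any $n$ conditions $F_0,\dots,F_{n-1}$ in the same piece, when unioned, still form a centered (in fact $n$-linked) subset of $\mathbb{P}$: whenever we pick $n$ elements from $\bigcup_{l<n}F_l$, by the coordinate-type homogeneity they correspond to $n$ elements whose pairwise common lower bounds all lie in a single $P_m$, hence have a common lower bound. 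This mimics the counting argument in the proof of Lemma \ref{lem 6.2}, but now using the $n$-linkedness of the $P_m$'s directly rather than property K$_n$.

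For the reverse direction, suppose $\mc{H}_{\mathbb{P},n}=\bigcup_{m<\omega}H_m$ with each $H_m$ being $n$-linked in $\mc{H}_{\mathbb{P},n}$. The key observation is that $[\mathbb{P}]^1=\{\{p\}:p\in\mathbb{P}\}\subseteq\mc{H}_{\mathbb{P},n}$, so $\mathbb{P}$ embeds into $\mc{H}_{\mathbb{P},n}$ via $p\mapsto\{p\}$, and this embedding is a complete sub-order in the sense that $\{p\}$ and $\{q\}$ have the common lower bound $\{p,q\}$ in $\mc{H}_{\mathbb{P},n}$ precisely when $\{p,q\}$ is centered, i.e.\ when $p,q$ are compatible in $\mathbb{P}$. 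Then I would set $Q_m=\{p\in\mathbb{P}:\{p\}\in H_m\}$, so that $\mathbb{P}=\bigcup_{m<\omega}Q_m$. To see each $Q_m$ is $n$-linked, take $p_0,\dots,p_{n-1}\in Q_m$; then $\{p_0\},\dots,\{p_{n-1}\}$ are $n$ conditions in $H_m$, which is $n$-linked, so they have a common lower bound $F\in\mc{H}_{\mathbb{P},n}$, i.e.\ $F$ is a centered subset of $\mathbb{P}$ containing $\{p_0,\dots,p_{n-1}\}$; hence $\{p_0,\dots,p_{n-1}\}$ is centered and in particular has a common lower bound in $\mathbb{P}$. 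This shows $\mathbb{P}$ is $\sigma$-$n$-linked.

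The main subtlety — really the only place one must be careful — is in the forward direction: one must verify that the partition of $\mc{H}_{\mathbb{P},n}$ can be taken countable while simultaneously being fine enough to guarantee the $n$-linkedness after taking unions. The standard device is to fix, for each $F\in\mc{H}_{\mathbb{P},n}$ with $|F|=M$, an enumeration $F=\{p_{F,0},\dots,p_{F,M-1}\}$ and for each $I\in[M]^n$ a common lower bound $p_{F,I}\in P_{\varphi_F(I)}$; then put two conditions $F,F'$ in the same piece iff $|F|=|F'|=M$ and $\varphi_F=\varphi_{F'}$ as functions $[M]^n\to\omega$. Since $M$ ranges over $\omega$ and each $[M]^n\to\omega$ function is determined by finitely many values but there are countably many such functions only if we also bound the range — here one simply notes that for fixed $M$ there are countably many functions $[M]^n\to\omega$, so altogether countably many pieces. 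Then for $F_0,\dots,F_{n-1}$ in one piece with common type $\varphi$, and any $n$-element subset $a$ of $\bigcup_{l<n}F_l$, the ccc-irrelevant but crucial point is that $a$ lies inside $F_l$ for possibly several $l$, but in any case writing $a$ via the coordinate structure one finds an $I\in[M]^n$ with the elements of $a$ among the $I$-indexed coordinates of a single $F_l$, so $p_{F_l,I}$ witnesses compatibility. Once this bookkeeping is set up the verification is routine, and I would present it at that level of detail, referring back to the analogous argument in Lemma \ref{lem 6.2}.
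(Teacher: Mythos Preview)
Your overall setup in both directions matches the paper's, but each direction has a genuine gap.

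\textbf{Forward direction.} Partitioning $\mc{H}_{\mathbb{P},n}$ by $(M,\varphi)$ with $\varphi:[M]^n\to\omega$ is exactly what the paper does (for $M>n$; small $F$'s are handled by a single lower bound). The error is in your verification that each piece is $n$-linked. You assert that for $F_0,\dots,F_{n-1}$ in one piece and an $n$-element $a\subseteq\bigcup_{l<n}F_l$, one finds $I\in[M]^n$ and a \emph{single} $l$ with $a\subseteq F_l[I]$, so that $p_{F_l,I}$ is a lower bound. This is false: if the $F_l$'s are pairwise disjoint and $a=\{F_0(0),F_1(1),\dots,F_{n-1}(n-1)\}$, then $a$ meets every $F_l$ in exactly one point. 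The correct argument needs a second use of $n$-linkedness: write each $q_i\in a$ as $F_{l(i)}(j(i))$, pick $I\in[M]^n$ with $\{j(i):i<n\}\subseteq I$, and observe that the $n$ witnesses $p_{F_0,I},\dots,p_{F_{n-1},I}$ all lie in the $n$-linked set $P_{\varphi(I)}$, hence have a common lower bound $r$; then $r\le p_{F_{l(i)},I}\le q_i$ for every $i$. Your earlier sketch, which speaks of ``pairwise common lower bounds'' and indexes $\varphi$ on $[M]^2$, is also off for $n>2$.

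\textbf{Reverse direction.} You claim $[\mathbb{P}]^1\subseteq\mc{H}_{\mathbb{P},n}$, but under the paper's convention a singleton $\{p\}$ is $0$-homogeneous only when $p$ lies in some centered $n$-element set; this can fail. You never invoke ccc, yet ccc is exactly what closes this gap: if $p$ is not in any centered $n$-element set then every $r\le p$ has $|\{s:s\ge r\}|<n$, so the set of such $p$ is downward closed with all chains of length $<n$, and its minimal elements form an antichain; by ccc this set is countable. The paper phrases the reverse direction slightly differently (setting $P_m=\bigcup Q_m$ rather than $\{p:\{p\}\in Q_m\}$), but the substantive point you are missing is the same: one must use ccc to absorb the countable set $\mathbb{P}\setminus\bigcup\mc{H}_{\mathbb{P},n}$.
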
 
\begin{proof}
First assume that $\mathbb{P}$ is $\sigma$-$n$-linked witnessed by 
\[\mathbb{P}=\bigcup_{m<\omega} P_m.\]
Without loss of generality, assume $\mathbb{P}=(\kappa, <_\mathbb{P})$ for some ordinal $\kappa$.

For each $m<\omega$, let $H_m$ be the collcetion of all $F\in \mc{H}_{\mathbb{P},n}\cap [\mathbb{P}]^{\leq n}$ that   has a common lower bound in $P_m$. Then
\[\mc{H}_{\mathbb{P},n}\cap [\mathbb{P}]^{\leq n}=\bigcup_{m<\omega} H_m\]
and each $H_m$ is $n$-linked.

For $M>n$ and $a: [M]^n\ra \omega$, let $H_{M, a}$ be the collection of all $F\in \mc{H}_{\mathbb{P},n}\cap [\mathbb{P}]^{M}$ such that $F[I]$ has a common lower bound in $P_{a(I)}$ for all $I\in [M]^n$. Then
\[\mc{H}_{\mathbb{P},n}\cap [\mathbb{P}]^{> n}=\bigcup\{ H_{M, a}: M>n, a: [M]^n\ra \omega\}\]
and each $H_{M, a}$ is $n$-linked.

So $\mc{H}_{\mathbb{P},n}$ is $\sigma$-$n$-linked.

Now assume that $\mc{H}_{\mathbb{P},n}$ is  $\sigma$-$n$-linked witnessed by 
\[\mc{H}_{\mathbb{P},n}=\bigcup_{m<\omega} Q_m.\]
For each $m<\omega$, let $P_m=\bigcup Q_m$. Then each $P_m$ is $n$-linked and
\[\bigcup  \mc{H}_{\mathbb{P},n}=\bigcup_{m<\omega} P_m.\]
Since $\mathbb{P}$ is ccc, $\mathbb{P}\setminus \bigcup  \mc{H}_{\mathbb{P},n}$ is at most countable. So $\mathbb{P}$ is $\sigma$-$n$-linked.
\end{proof}

Now the following conclusion follows from Lemma \ref{lem 6.2} and Lemma \ref{lem 6.3}.
\begin{prop}\label{prop K3sK3}
For $n\geq 2$, {\rm P$_{\omega_1}$(K$_n\ra \sigma$-$n$-linked)} is equivalent to {\rm P$_{\omega_1}$(precaliber $\omega_1\ra \sigma$-$n$-linked)}. In particular, {\rm P$_{\omega_1}$(K$_{n+1}\ra \sigma$-$(n+1)$-linked)} implies {\rm P$_{\omega_1}$(K$_n\ra \sigma$-$n$-linked)}.
\end{prop}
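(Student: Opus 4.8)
The plan is to route the argument through the auxiliary poset $\mc{H}_{\mathbb{P},n}$ of Definition \ref{defn hpn}, using Lemma \ref{lem 6.2} and Lemma \ref{lem 6.3} as the two halves of a ``round trip''. One direction of the equivalence is free: precaliber $\omega_1$ implies property K$_n$ for every $n$, so the posets of size $\omega_1$ with precaliber $\omega_1$ form a subclass of those with property K$_n$, and hence P$_{\omega_1}$(K$_n\ra\sigma$-$n$-linked) immediately implies P$_{\omega_1}$(precaliber $\omega_1\ra\sigma$-$n$-linked).

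For the converse, assume P$_{\omega_1}$(precaliber $\omega_1\ra\sigma$-$n$-linked) and let $\mathbb{P}$ be a poset of size $\omega_1$ with property K$_n$; I may assume $|\mathbb{P}|=\omega_1$, since a countable poset is trivially $\sigma$-$n$-linked. Property K$_n$ implies property K, so $\mathbb{P}$ is ccc and Lemma \ref{lem 6.3} applies to it. Moreover $\mc{H}_{\mathbb{P},n}\subseteq[\mathbb{P}]^{<\omega}$ contains all singletons, so $|\mc{H}_{\mathbb{P},n}|=\omega_1$; and by (the proof of) Lemma \ref{lem 6.2} it has precaliber $\omega_1$. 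Applying the hypothesis to $\mc{H}_{\mathbb{P},n}$ gives that $\mc{H}_{\mathbb{P},n}$ is $\sigma$-$n$-linked, and then the ``only if'' half of Lemma \ref{lem 6.3} yields that $\mathbb{P}$ is $\sigma$-$n$-linked. This establishes the equivalence.

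The ``in particular'' clause follows in one more step: given $\mathbb{P}$ of size $\omega_1$ with property K$_n$, the poset $\mc{H}_{\mathbb{P},n}$ has precaliber $\omega_1$, hence property K$_{n+1}$; then P$_{\omega_1}$(K$_{n+1}\ra\sigma$-$(n+1)$-linked) makes $\mc{H}_{\mathbb{P},n}$ $\sigma$-$(n+1)$-linked, hence $\sigma$-$n$-linked, and Lemma \ref{lem 6.3} transfers this back to $\mathbb{P}$. (Equivalently, invoke the equivalence just proved at levels $n$ and $n+1$ together with the trivial implication that $\sigma$-$(n+1)$-linked implies $\sigma$-$n$-linked.) Since Lemmas \ref{lem 6.2} and \ref{lem 6.3} carry all the real content, there is no genuine obstacle here; the only points that need attention are the bookkeeping ones — that $|\mc{H}_{\mathbb{P},n}|=\omega_1$, that K$_n$ already secures the ccc hypothesis of Lemma \ref{lem 6.3}, and the trivial countable case.
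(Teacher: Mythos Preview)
Your argument is correct and is exactly the route the paper intends: the proposition is stated as an immediate consequence of Lemma~\ref{lem 6.2} and Lemma~\ref{lem 6.3}, and you have spelled out precisely how those two lemmas combine. One cosmetic slip: you invoke the ``only if'' half of Lemma~\ref{lem 6.3} to pass from $\mc{H}_{\mathbb{P},n}$ being $\sigma$-$n$-linked back to $\mathbb{P}$, but as the lemma is phrased (``$\mathbb{P}$ is $\sigma$-$n$-linked iff $\mc{H}_{\mathbb{P},n}$ is $\sigma$-$n$-linked'') that is the ``if'' direction.
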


A straightforward corollary of above proposition is that
\begin{itemize}
\item MA$_{\omega_1}$(precaliber $\omega_1$) implies {\rm P$_{\omega_1}$(K$_n\ra \sigma$-$n$-linked)} for all $n\geq 2$.
\end{itemize}

Now Theorem \ref{thm K3sK3} is a corollary of above proposition and the well-known fact that MA$_{\omega_1}$(precaliber $\omega_1$) does not imply any MA$_{\omega_1}$(K$_n$).

A natural question is whether the implications in Proposition \ref{prop K3sK3} are strict. In other words,  does {\rm P$_{\omega_1}$(K$_n\ra \sigma$-$n$-linked)} imply {\rm P$_{\omega_1}$(K$_{n+1}\ra \sigma$-$(n+1)$-linked)}? We will show that the implication is strict.

\begin{thm}\label{thm K3tosK3 n+1}
Suppose $n\geq 2$. It is consistent that {\rm P$_{\omega_1}$(K$_n\ra \sigma$-$n$-linked)} holds and {\rm P$_{\omega_1}$(K$_{n+1}\ra \sigma$-$(n+1)$-linked)} fails.
\end{thm}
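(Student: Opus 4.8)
The plan is to imitate the iteration in the proof of Theorem~\ref{thm K3sK3}, but to close off only under the arity-$n$ linking forcings, while carrying along a robust witness to the failure of {\rm P$_{\omega_1}$(K$_{n+1}\ra \sigma$-$(n+1)$-linked)} supplied by the general damage control structure of Section~7. By Proposition~\ref{prop K3sK3} it suffices to obtain a model of {\rm P$_{\omega_1}$(precaliber $\omega_1\ra \sigma$-$n$-linked)} containing a poset $\mathbb{Q}$ of size $\omega_1$ that has property {\rm K}$_{n+1}$ but is not $\sigma$-$(n+1)$-linked. Starting from a model of {\rm GCH}, I would use the damage control structure of Section~7 to fix a poset $\mathbb{Q}$ on $\omega_1$, equivalently the coloring $\pi^{n+1}_{\mathbb{Q}}:[\mathbb{Q}]^{n+1}\ra 2$ of Definition~\ref{defn hpn}, such that $\mathbb{Q}$ has property {\rm K}$_{n+1}$ and $\omega_1$ is not a countable union of $\pi^{n+1}_{\mathbb{Q}}$-$0$-homogeneous sets. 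The second property is exactly ``$\mathbb{Q}$ is not $\sigma$-$(n+1)$-linked'': a decomposition $\mathbb{Q}=\bigcup_k P_k$ into $(n+1)$-linked pieces is the same thing as a cover of $\omega_1$ by $\pi^{n+1}_{\mathbb{Q}}$-$0$-homogeneous sets, since for $F\in[\mathbb{Q}]^{n+1}$ we have $\pi^{n+1}_{\mathbb{Q}}(F)=0$ iff $F$ is centered. Then, exactly as in Theorem~\ref{thm K3sK3}, I run a finite support iteration $\langle \mc{P}_\alpha, \dot{\mc{Q}}_\beta: \beta<\omega_3, \alpha\leq \omega_3\rangle$ which, guided by a partition of $\{\gamma<\omega_3: cf(\gamma)=\omega_2\}$ into stationary sets, forces $(\mc{H}_{\mathbb{P},n})^\omega$ whenever it has caught a size-$\omega_1$ poset $\mathbb{P}$ with property {\rm K}$_n$ in an intermediate $H(\omega_2)$, and is trivial otherwise. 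By Lemma~\ref{lem 6.2} and the closure of precaliber $\omega_1$ under finite support iterations and products, $\mc{P}_{\omega_3}$ has precaliber $\omega_1$.

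In $V^{\mc{P}_{\omega_3}}$ the statement {\rm P$_{\omega_1}$(K$_n\ra \sigma$-$n$-linked)} holds, exactly as in Theorem~\ref{thm K3sK3}: a size-$\omega_1$ poset $\mathbb{P}$ with property {\rm K}$_n$ in the extension already has property {\rm K}$_n$ in $V^{\mc{P}_\gamma}$ for club-many $\gamma<\omega_3$ of cofinality $\omega_2$, so it is caught and forced $\sigma$-$n$-linked by Lemma~\ref{lem 6.1}, and being $\sigma$-$n$-linked is preserved by further forcing (a witnessing partition stays a witnessing partition). Equivalently, {\rm P$_{\omega_1}$(precaliber $\omega_1\ra \sigma$-$n$-linked)} holds, by Proposition~\ref{prop K3sK3}.

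It then remains to check that $\mathbb{Q}$ still has property {\rm K}$_{n+1}$ and is still not $\sigma$-$(n+1)$-linked in $V^{\mc{P}_{\omega_3}}$; by the reduction in the first paragraph the latter amounts to showing that $\omega_1$ is not a countable union of $\pi^{n+1}_{\mathbb{Q}}$-$0$-homogeneous sets there. This is precisely where precaliber $\omega_1$ of $\mc{P}_{\omega_3}$ is not enough: ``$\omega_1$ is covered by countably many $0$-homogeneous sets'' is not of the form ``$\pi^{n+1}_{\mathbb{Q}}$ has an uncountable $0$-homogeneous subset'', so Fact~\ref{fact p K3} and the argument of Theorem~\ref{thm K3sK3} do not apply directly. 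Instead one invokes the key preservation feature of the damage control structure, to be established in Section~7: a forcing of the relevant kind — in particular each $(\mc{H}_{\mathbb{P},n})^\omega$, and then, by an iteration preservation argument for finite support iterations, the whole iteration $\mc{P}_{\omega_3}$ — inflicts only ``controlled'' damage on $\mathbb{Q}$ and never adds a countable $0$-homogeneous cover of $\omega_1$. The same structure is arranged so as to simultaneously preserve the Ramsey-type block property of $\pi^{n+1}_{\mathbb{Q}}$ that yields property {\rm K}$_{n+1}$ of $\mathbb{Q}$ (roughly: every uncountable family of pairwise isomorphic finite $\pi^{n+1}_{\mathbb{Q}}$-$0$-homogeneous sets contains an uncountable subfamily, any $n+1$ members of which have $0$-homogeneous union).

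\textbf{The main obstacle} is this simultaneous preservation. Since $\mathbb{Q}$ has property {\rm K}$_{n+1}$, hence {\rm K}$_n$, forcing {\rm P$_{\omega_1}$(K$_n\ra \sigma$-$n$-linked)} forces $\mathbb{Q}$ itself to become $\sigma$-$n$-linked, and this unavoidable ``damage'' must be shown never to accumulate into either (a) a $\sigma$-$(n+1)$-linked decomposition of $\mathbb{Q}$, equivalently a countable $0$-homogeneous cover of $\omega_1$ for $\pi^{n+1}_{\mathbb{Q}}$, or (b) a failure of property {\rm K}$_{n+1}$ of $\mathbb{Q}$. Designing the general damage control structure of Section~7 so that (a) and (b) are both kept in check across the whole $\omega_3$-length iteration — this is where the substantive difference from the binary construction of \cite{Peng25} enters — and checking in Section~8 that the linking forcings $(\mc{H}_{\mathbb{P},n})^\omega$ respect that structure, is the heart of the argument.
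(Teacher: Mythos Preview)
There is a genuine gap: you cannot fix the witness poset $\mathbb{Q}$ with property {\rm K}$_{n+1}$ in the ground model and then run a pure precaliber-$\omega_1$ iteration on top. In the paper's set-up the witness is $\mc{H}^\pi_0$ for the generic coloring $\pi$ of Fact~\ref{fact 8.1}, and in the ground model $\mc{H}^\pi_0$ does \emph{not} have {\rm K}$_{n+1}$: taking $\msa=[X]^1$ and $h\equiv 1$ in Fact~\ref{fact 8.1} shows $\pi$ has no uncountable $0$-homogeneous subset, so the uncountable set $\{\{\alpha\}:\alpha<\omega_1\}\subseteq\mc{H}^\pi_0$ has no uncountable $(n+1)$-linked subset. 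Property {\rm K}$_{n+1}$ of $\mc{H}^\pi_0$ is produced \emph{during} the iteration by the forcings $\mc{P}_{a,\msa,a',I}$ of Definition~\ref{defn PaAaI}, which add uncountable $0$-homogeneous subsets; bookkeeping item (5) in the proof arranges one such forcing for every $\Delta$-system in $\mc{H}^\pi_0$. These forcings are only $\sigma$-$n$-linked (hence {\rm K}$_n$), not precaliber~$\omega_1$, so the full iteration is {\rm K}$_n$ rather than precaliber~$\omega_1$, and the preservation of non-$\sigma$-$(n+1)$-linkedness cannot be read off from Fact~\ref{fact p K3} or Lemma~\ref{lem 8.3} alone.

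The substantive work you are deferring to Section~7 is therefore not quite what you describe. The damage control structure $(\mcc^\alpha,\mct^\alpha,\bfe^\alpha,\mcr^\alpha)$ is not a static object set up once; it grows along the iteration, with each $\mc{P}_{a,\msa,a',I}$ adding a new element to $\mcc$. The key preservation lemma is Lemma~\ref{lem 8.4}, which shows that these specific {\rm K}$_n$ forcings preserve $\psi_1$ (with the extended $\mcc'$), and this---not merely precaliber-$\omega_1$ preservation---is the heart of the argument. What is actually preserved is the property $\psi_1$, whose content (Res) is a Ramsey-type reservation of patterns; from it Lemma~\ref{lem 8.2} deduces that $\pi$ has no \emph{stationary} $0$-homogeneous subset, which immediately blocks any $\sigma$-$(n+1)$-linked decomposition. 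So the two tasks---forcing {\rm K}$_{n+1}$ for $\mc{H}^\pi_0$ and forcing {\rm P}$_{\omega_1}$({\rm K}$_n\ra\sigma$-$n$-linked)---are interleaved in a single $\omega_2$-length {\rm K}$_n$ iteration, not separated into two phases as your outline suggests.
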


Theorem \ref{thm K3tosK3 n+1} is proved via an iteration with controlled damage generalizing the form in  \cite{Peng25}.
So before proving Theorem \ref{thm K3tosK3 n+1}, we first introduce the general damage control structure in the next section.

\section{Damage control structure}

In \cite{Peng25}, we introduced an iterated forcing with  minimal damage to a strong coloring.  In this section, we  will express this idea of controlling  damage to strong colorings in an explicit way. 

We  will take out the structure that controls damage from the construction in \cite{Peng25} and introduce its general form.  More precisely, the structure that minimizes damage is the collection $(\mc{C}, \mc{T}, \mathbf{E})$ together with the properties they satisfy. The $\pi, \mc{R}$ in $\varphi_0(\pi, \mc{C}, \mc{R}, \mc{T}, \mathbf{E})$ of \cite{Peng25} are specific objects and properties to fulfill the corresponding purpose.  Moreover, although all the applications we use are associated with strong colorings, we  would like to point out that the associated objects may vary for different purposes.
 
  The general damage control structure is almost the same as the specific form  in \cite{Peng25} except for the following major difference.
\begin{itemize}
\item  In the general form, elements in the collection $\mcc$ have roots and are of form $(a, \msa)$.
\item In the specific form,  elements in the collection $\mcc$ have no root and are of form $\msa$.
\end{itemize}
To see the reason behind this difference, we first recall some notations from \cite{Peng25}.
    \begin{defn}
  \begin{enumerate}
    \item Suppose   $a$, $b$ are both finite sets of ordinals but neither is an ordinal. 
    Say $a<b$ if $\max a<\min b$.
    \item A set $\ms{A}\subset [Ord]^{<\omega}$ is \emph{non-overlapping} if for every $a\neq b$ in $\ms{A}$, either $a< b$ or $ b< a$.
    \item For an uncountable non-overlapping family $\msa\subset [\omega_1]^n$, use $N_\msa$ to denote this $n$, i.e., $\msa\subset [\omega_1]^{N_\msa}$.
    \item For an uncountable non-overlapping family $\msa\subset [\omega_1]^{N_\msa}$ and $i<N_\msa$, denote $\msa_i=\{a(i): a\in \msa\}$.
  \end{enumerate}
\end{defn}

The general structure  to be used to control damage is the collection $(\mc{C}, \mc{T}, \mathbf{E})$ together with the property $\varphi_1(\mc{C}, \mc{T}, \mathbf{E})$ in Definition \ref{cte} below.

 As mentioned above, elements of $\mc{C}$ in the general form will be different from that in the specific form.  This difference comes from the arity of the coloring: the colorings in the specific form are binary and the colorings in the general form are arbitrarily $n$-ary or finitary.  This phenomenon is common in analyzing corresponding colorings. For example, let $\pi: [\omega_1]^n\ra 2$ be a coloring for some $n\geq 2$. In the process of proving ccc of $\mc{H}^\pi_0$, we fix an uncountable family $\{p_\alpha\in \mc{H}^\pi_0: \alpha<\omega_1\}$. Then we find an uncountable $\Gamma\subseteq \omega_1$ such that $\{p_\alpha: \alpha\in \Gamma\}$ forms a $\Delta$-system with root $\overline{p}$. When $n=2$, $p_\alpha$ is compatible with $p_\beta$ iff $p_\alpha\setminus \overline{p}$ is compatible with $p_\beta\setminus \overline{p}$. So the root $\overline{p}$ is not important in the case $n=2$ and   is usually omitted. But for $n\geq 3$, this   is not true  and we cannot omit the root.

So we may view $(a, \msa)$ in the structure $\mc{C}$ as a $\Delta$-system $\{a\cup b: b\in \msa\}$ with 
 root $a$. 
To simplify our expression, we will require $\mcc$ to contain $(\emptyset, [\omega_1]^1)$ where $[\omega_1]^1=\{\{\alpha\}: \alpha<\omega_1\}$.

 \begin{defn}\label{c}
    Let $\varphi_1(\mc{C})$ be the assertion that the following statements hold.
      \begin{enumerate}[{\rm (C1)}]
  \item $(\emptyset, [\omega_1]^1)\in \mcc$ and every element in $\mc{C}$ has form  $(a, \msa)$ and satisfies the following properties.
  \begin{itemize}
  \item $a\in [\omega_1]^{<\omega}$;
  \item $\msa\subset [\omega_1]^{n}$ is uncountable non-overlapping for some $1\leq n<\omega$;
  \item $\max(a)<\min(\bigcup \msa)$ when $a\neq \emptyset$.
  \end{itemize}
  For above $\msa$, $N_\msa$ exists and equals $n$.
  \item For $(a,\msa), (b,\msb)$ in $\mc{C}$, if $\msb_i\subseteq \msa_j$ for some $i<N_\msb$ and $ j<N_\msa$, then
  \begin{itemize}
  \item $a\subseteq b$;
  \item  there are $k$ and $\{I_l\in [N_\msb]^{N_\msa}: l<k\}$ such that   
  $$[c]^{N_\msa}\cap \msa=\{c[I_l]: l<k\}\text{ and }c(j')\notin \bigcup \msa$$
   whenever  $c\in \msb$ and $j'\in N_\msb\setminus \bigcup_{l<k} I_l$.  
   \end{itemize}
  \end{enumerate}
        \end{defn}
        
        Note that $I_l$'s above are pairwise disjoint.
      Applying (C2) to $\msb_i\subseteq \msa_j$ and $\msa_j\subseteq \msb_i$ respectively,  we  observe the following property of $\mc{C}$.
                
                \begin{lem}\label{lem 7.a}
Assume $\varphi_1(\mc{C})$. For $(a, \msa), (b, \msb)$ in $\mc{C}$, if $\msb_i= \msa_j$ for some $i<N_\msb$ and $ j<N_\msa$, then $(a, \msa)=(b, \msb)$.
\end{lem}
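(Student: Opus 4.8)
The plan is to apply property (C2) in both directions and then check that the resulting inclusions collapse to equality. Concretely, suppose $(a,\msa),(b,\msb)\in\mc{C}$ and $\msb_i=\msa_j$ for some $i<N_\msb$, $j<N_\msa$. Since $\msb_i=\msa_j$ gives in particular $\msb_i\subseteq\msa_j$, applying (C2) with the roles of $(b,\msb)$ as the ``second'' coordinate and $(a,\msa)$ as the ``first'' coordinate (matching the statement of (C2) where $\msb_i\subseteq\msa_j$) yields $a\subseteq b$ together with a witnessing $k$ and sets $\{I_l\in[N_\msb]^{N_\msa}:l<k\}$. Symmetrically, $\msa_j\subseteq\msb_i$ lets us apply (C2) the other way to get $b\subseteq a$, hence $a=b$.

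Next I would extract a cardinality/arity obstruction from the two families of index sets. From the first application, for every $c\in\msb$ we have $[c]^{N_\msa}\cap\msa=\{c[I_l]:l<k\}$ and $c(j')\notin\bigcup\msa$ for $j'\in N_\msb\setminus\bigcup_{l<k}I_l$; from the second, for every $d\in\msa$ we have $[d]^{N_\msb}\cap\msb=\{d[J_{l'}]:l'<k'\}$ with $\{J_{l'}\in[N_\msa]^{N_\msb}:l'<k'\}$ and $d(j')\notin\bigcup\msb$ off $\bigcup_{l'<k'}J_{l'}$. Since the $I_l$ are pairwise disjoint subsets of $N_\msb$ each of size $N_\msa$, we get $k\cdot N_\msa\le N_\msb$; symmetrically $k'\cdot N_\msb\le N_\msa$. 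Combining, $k k' N_\msa\le N_\msa$, so $k=k'=1$ and $N_\msa=N_\msb$; moreover $\bigcup_{l<k}I_l$ has size $N_\msa=N_\msb$, so $I_0=N_\msb$ as a set, i.e.\ $I_0$ is the identity on indices (the unique increasing bijection $N_\msb\to N_\msb$), and likewise $J_0$ is the identity. Hence for every $c\in\msb$, $[c]^{N_\msa}\cap\msa=\{c\}$, which says $\msb\subseteq\msa$; symmetrically $\msa\subseteq\msb$, so $\msa=\msb$. Together with $a=b$ this gives $(a,\msa)=(b,\msb)$.

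The main obstacle, and the only place needing care, is making sure (C2) is being invoked with the hypotheses in exactly the form stated: (C2) is triggered by an inclusion $\msb_i\subseteq\msa_j$ for \emph{some} pair of indices, and the conclusion $a\subseteq b$ is asymmetric, so one must be careful to feed $\msb_i=\msa_j$ into the hypothesis once as $\msb_i\subseteq\msa_j$ (yielding $a\subseteq b$) and once as $\msa_j\subseteq\msb_i$ (yielding $b\subseteq a$), taking the respective roles of the two pairs correctly. One should also note that when $\msa_j\subseteq\msb_i$ we are applying (C2) to the pair $(b,\msb),(a,\msa)$ in the roles of its $(a,\msa),(b,\msb)$ respectively — purely a relabelling. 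After that, the argument is the elementary counting of disjoint index blocks described above, which forces all the multiplicities to be $1$ and the arities to agree; this is routine and I would present it in a couple of lines. I would also record, as in the remark following the lemma, that it suffices to know $\msb_i=\msa_j$ for a single pair $(i,j)$, since non-overlappingness of the families plus the derived $N_\msa=N_\msb$ and identity index maps propagate the equality to all coordinates automatically.
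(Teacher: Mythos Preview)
Your argument is correct and follows exactly the approach the paper indicates: apply (C2) to the inclusion $\msb_i\subseteq\msa_j$ and to $\msa_j\subseteq\msb_i$ respectively, then read off $a=b$ and $\msa=\msb$. The paper states only this one-line hint, while you have spelled out the counting step ($k\cdot N_\msa\le N_\msb$ and $k'\cdot N_\msb\le N_\msa$ forcing $k=k'=1$ and $N_\msa=N_\msb$), which is the intended unpacking; note that $k,k'\ge 1$ is guaranteed since $c(i)\in\msb_i=\msa_j\subseteq\bigcup\msa$ forces $i\in\bigcup_l I_l$, a point worth making explicit.
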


    $\mc{T}$ and $\mathbf{E}$ are induced from $\mc{C}$. 
           \begin{defn}\label{ct}
    Let $\varphi_1(\mc{C}, \mc{T})$ be the assertion that $\varphi_1(\mc{C})$ together with the following statements hold.
      \begin{enumerate}[{\rm (T1)}]
        \item $(\mc{T}, \supset)$ is a downward tree of height $\leq \omega$.
  \item $\mc{T}=\{X: X=\msa_i \text{ for some } (a, \msa)\in \mc{C} \text{ and } i<N_\msa\}$. Note that the 0th level is
  $\mc{T}_0=\{\omega_1\}$.
  \item For every incomparable $A, B$ in $\mc{T}$, $|A\cap B|\leq \omega$.
  \item For every $A\supset B$ in $\mc{T}$ and every $\alpha<\omega_1$, $A(\alpha)<B(\alpha)$.
    \end{enumerate}
        \end{defn}

 \begin{defn}\label{cte}
Let $\varphi_1(\mc{C}, \mc{T}, \mathbf{E})$ be the assertion that $\varphi_1(\mc{C}, \mc{T})$ together with the following statement hold.
  \begin{enumerate}[{\rm (E1)}]
  \item $\mathbf{E}$ is an equivalence relation on $[\omega_1]^{\omega_1}$ defined by $A\mathbf{E}B$  iff  for some   $(a, \msa)\in \mc{C}$, $i, j<N_\msa$ and $\Gamma\in [\omega_1]^{\omega_1}$, $A=\msa_i[\Gamma]$ and $B=\msa_j[\Gamma]$.
  \end{enumerate}
  \end{defn}
  
  Note that $A\mathbf{E} A$ is witnessed by $(\emptyset, [\omega_1]^1)$, 0, 0 and $A$.
  
\textbf{Remark.} The relation $\mathbf{E}$ defined above is an equivalence relation if $\varphi_1(\mc{C}, \mc{T})$ holds, by    Lemma \ref{lem 7.h} below (see also \cite[Lemma 5]{Peng25}).\medskip

We will use the following notation to simplify our expression.
\begin{defn}\label{defn agamma}
Suppose $\msa\subseteq [\omega_1]^{<\omega}$ is an uncountable non-overlapping family. For $\alpha<\omega_1$, $\msa(\alpha)$ is the $\alpha$th element, under $<$, of $\msa$. For $\Gamma\subseteq \omega_1$, $\msa[\Gamma]=\{\msa(\alpha): \alpha\in \Gamma\}$.
\end{defn}

In general, $\mc{C}\setminus \{(\emptyset, [\omega_1]^1)\}$ is used to collect generically added   families in $ [\omega_1]^{<\omega}\times ([\omega_1]^{<\omega})^{\omega_1}$.\medskip

In order to have a clearer description of the structure, we define a relation for pairs of elements with property (C2). We then use a function to record the $I_l$'s.
\begin{defn}\label{defn fab}
Suppose $\msa\subseteq [\omega_1]^{N_\msa}, \msb\subseteq [\omega_1]^{N_\msb}$ are uncountable non-overlapping families.
Say $\msa\preceq \msb$ if there are $k$ and $ \{I_l\in [N_\msb]^{N_\msa}: l<k\}$   such that  
\begin{align*}
\{c[I_l]: l<k\}\subseteq \msa\text{ and }  c(j)\notin \bigcup \msa 
 \text{ whenever $c\in \msb$ and }  j\in N_\msb\setminus \bigcup_{l<k} I_l.
\end{align*} 
In this case, denote $\mc{F}(\msa, \msb)=\{I_l: l<k\}$. 

For $a, b$   in $[\omega_1]^{<\omega}$,   say $(a, \msa)\preceq (b, \msb) $ if $a\subseteq b$ and $\msa\preceq \msb$.

Say $(a, \msa)\prec (b, \msb)$ if $(a, \msa)\preceq (b, \msb)$ and $(a, \msa)\neq (b, \msb)$.
\end{defn}
In our notation, different types of sets, $(a, \msa)$ and $\msb$, are not $\preceq$-comparable. Note $\preceq$ is not a partial order, but we shall see in Lemma \ref{lem 7.d} below that $(\mcc, \preceq)$ is a partial order if $\varphi_1(\mc{C}, \mc{T})$ holds.

(C2) induces another equivalent formulation of $\preceq$ on $\mcc$.

\begin{lem}\label{lem 7.b}
Assume $\varphi_1(\mc{C}, \mc{T})$. For $(a, \msa),  (b, \msb)$ in $\mc{C}$, 
\begin{enumerate}[(i)]
\item $(a, \msa)\preceq (b, \msb)\text{ iff $\msb_{i}\subseteq \msa_{j}$ for some $i<N_\msb$ and }j<N_\msa$;
\item $(a, \msa)\prec (b, \msb) \text{ iff } \msb_i\subset \msa_j \text{ for some $i<N_\msb$ and }j<N_\msa$.
\end{enumerate}
\end{lem}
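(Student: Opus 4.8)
The plan is to prove both equivalences by unwinding the definitions of $\preceq$, $\prec$ and the key structural property (C2), using the rigidity built into $\mathcal{C}$ via Lemma~\ref{lem 7.a}.

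First I would handle (i). The forward direction is essentially immediate: if $(a,\msa)\preceq(b,\msb)$, then by Definition~\ref{defn fab} there is $k$ and $\{I_l:l<k\}$ witnessing $\msa\preceq\msb$, and since $\mcc$ contains $(\emptyset,[\omega_1]^1)$ the families are nonempty and non-overlapping; picking any $l<k$ and any $j\in I_l$ with corresponding coordinate $i<N_\msa$ (the one such that the $i$-th entry of $c[I_l]$ is $c(j)$) shows that $\msb_j\subseteq\msa_i$, since every $c\in\msb$ contributes $c[I_l]\in\msa$ and hence $c(j)$ lies in the $i$-th column of $\msa$. (I should be careful here about the direction of the containment and the indexing convention; $c[I_l]$ is an $N_\msa$-element set and $I_l\in[N_\msb]^{N_\msa}$, so the map from coordinates of $\msb$ picked out by $I_l$ to coordinates of $\msa$ is order-preserving.) For the converse, suppose $\msb_i\subseteq\msa_j$ for some $i<N_\msb$, $j<N_\msa$. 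Then (C2) applies directly and yields $a\subseteq b$ together with $k$ and $\{I_l\in[N_\msb]^{N_\msa}:l<k\}$ satisfying exactly the displayed conditions in Definition~\ref{defn fab} (with the roles $c\in\msb$), which is precisely $\msa\preceq\msb$, hence $(a,\msa)\preceq(b,\msb)$.

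Next, for (ii): given (i), it suffices to show that when $\msb_i\subseteq\msa_j$, the containment is proper for \emph{some} witnessing pair of coordinates if and only if $(a,\msa)\neq(b,\msb)$. If $(a,\msa)=(b,\msb)$ then no coordinate of $\msb$ can be properly contained in a coordinate of $\msa$ and still have $\msb_i\subseteq\msa_j$ with proper inclusion --- more directly, $\msa\preceq\msa$ is witnessed by $k=1$, $I_0=\mathrm{id}$, giving equality of columns, so $(a,\msa)\prec(a,\msa)$ is impossible. Conversely, if $\msb_i=\msa_j$ for \emph{every} witnessing pair, then in particular some column of $\msb$ equals some column of $\msa$, and Lemma~\ref{lem 7.a} forces $(a,\msa)=(b,\msb)$, contradicting $(a,\msa)\prec(b,\msb)$. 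So if $(a,\msa)\prec(b,\msb)$ there must exist $i<N_\msb$, $j<N_\msa$ with $\msb_i\subsetneq\msa_j$; and conversely if $\msb_i\subsetneq\msa_j$ then by (i) $(a,\msa)\preceq(b,\msb)$, while equality would (again by Lemma~\ref{lem 7.a}, or by non-overlapping-ness and the fact that a column cannot be both equal to and properly contained in the same column) be absurd, so the pair is distinct.

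The main obstacle I anticipate is purely bookkeeping: making sure the coordinate-chasing between $\msa$ and $\msb$ is done consistently --- in particular that "$\msb_i\subseteq\msa_j$" extracted from a witness $I_l$ uses the right index $j$, and that Lemma~\ref{lem 7.a} can legitimately be invoked once we locate a pair of equal columns. There is also a subtlety in the $\prec$ direction: one must rule out the degenerate possibility that $(a,\msa)$ and $(b,\msb)$ differ only in their roots $a,b$ while all columns coincide; but (C2) gives $a\subseteq b$ from $\msb_i\subseteq\msa_j$ and, applying (C2) in the other direction when some $\msb_i=\msa_j$, also $b\subseteq a$, so Lemma~\ref{lem 7.a} (whose proof is exactly this symmetric application of (C2)) closes that gap. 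Everything else is a direct transcription of Definitions~\ref{defn fab} and~\ref{c}.
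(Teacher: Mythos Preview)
Your proposal is correct and matches the paper's approach: the paper states this lemma without proof, introducing it with the sentence ``(C2) induces another equivalent formulation of $\preceq$ on $\mcc$,'' so your unpacking---the backward direction of (i) is literally (C2), the forward direction reads off a column inclusion from any $I_l$ in the witness, and (ii) follows from (i) together with Lemma~\ref{lem 7.a}---is exactly what is intended. One small remark: your aside that ``$\mcc$ contains $(\emptyset,[\omega_1]^1)$'' is not what guarantees the families are uncountable and non-overlapping (that is (C1)); and you tacitly use $k\geq 1$ when you write ``picking any $l<k$,'' which is harmless here since the paper clearly intends the nondegenerate reading (cf.\ the trichotomy Remark after Definition~\ref{defn fab}), but is worth being aware of.
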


We use $\mc{T}$ to collect  $\msa_i$'s for $(a, \msa)\in\mc{C}$ and $i<N_\msa$. (C2) and (T3) add the following restrictions (a)-(c) on the collection $\mc{C}$.  (T1) and (T3) are used to guarantee that no alternative other than (a)-(c) below can occur.

\textbf{Remark.}
Assume $\varphi_1(\mc{C}, \mc{T})$. For   $(a, \msa)\neq  (b, \msb)$ in $\mc{C}$, by (C2) and (T3), exactly one of the following three alternatives occurs.
\begin{enumerate}[(a)]
\item $\msa$ and $\msb$ do not affect each other: $|(\bigcup \msa)\cap (\bigcup \msb)|\leq \omega$.
\item $(a, \msa)\preceq (b, \msb)$ and $(b, \msb)$ inherits structure from $(a, \msa)$: 
\begin{itemize}
\item $a\subseteq b$;
\item  for $I\in \mc{F}(\msa, \msb)$, $\{c[I]: c\in\msb\}\subseteq \msa$   is a part of $\msb$ inheriting structure from $\msa$;
\item $\{c[N_\msb\setminus \bigcup \mc{F}(\msa, \msb)]: c\in \msb\} $ is the part not relevant to $\msa$, moreover, 
\[(\bigcup \{c[N_\msb\setminus \bigcup \mc{F}(\msa, \msb)]: c\in \msb\})\cap(\bigcup \msa)=\emptyset.\]
\end{itemize}
\item $(b, \msb)\preceq (a, \msa)$ and $(a, \msa)$ inherits structure from $(b, \msb)$: 
\begin{itemize}
\item $b\subseteq a$;
\item  for $I\in \mc{F}(\msb, \msa)$, $\{c[I]: c\in\msa\}\subseteq \msb$   is a part of $\msa$ inheriting structure from $\msb$;
\item $\{c[N_\msa\setminus \bigcup \mc{F}(\msb, \msa)]: c\in \msa\} $ is the part not relevant to $\msb$, moreover, 
\[(\bigcup \{c[N_\msa\setminus \bigcup \mc{F}(\msb, \msa)]: c\in \msa\})\cap(\bigcup \msb)=\emptyset.\]
\end{itemize}
\end{enumerate}

In above condition (b), $\msb$ is locally big: every $b'\in \msb$ contains several copies, $\{b'[I]: I\in \mc{F}(\msa, \msb)\}$, of elements in $\msa$,  and is globally small: $\msb_{I(i)}\subset \msa_i$ whenever $i<N_\msa$ and $I\in \mc{F}(\msa, \msb)$.

We now turn to investigating properties of  $\preceq$.

\begin{lem}\label{lem 7.c}
Assume $\varphi_1(\mc{C}, \mc{T})$. For $(a, \msa)\preceq  (b, \msb)$ in $\mc{C}$,   the following statements hold.
\begin{enumerate}[(i)]
\item For $i<N_\msa$ and $I\in \mc{F}(\msa, \msb)$, $\msa_i\supseteq \msb_{I(i)}$. The equality holds only when    $(a, \msa)= (b, \msb)$.
\item $\msa_i\cap \msb_j=\emptyset$ for all $i<N_\msa$ and $j\in N_\msb\setminus \{I(i): I\in \mc{F}(\msa, \msb)\}$.
\item For $I\in \mc{F}(\msa, \msb)$, there exists $\Gamma\in [\omega_1]^{\omega_1}$ such that $\msa[\Gamma]=\{b[I]: b\in \msb\}$. In particular, 
 for every $i<N_\msa$, $\msa_i[\Gamma]=\msb_{I(i)}$.
  \item For $(a', \msa')\preceq (b, \msb)$ in $\mcc$, $I\in \mc{F}(\msa, \msb)$ and $I'\in \mc{F}(\msa', \msb)$, one of the following 3 alternatives occurs.
  \begin{itemize}
  \item $I\cap I'=\emptyset$.
  \item $(a, \msa)\preceq (a', \msa')$ and $I\subseteq I'$.
  \item $(a', \msa')\preceq (a, \msa)$ and $I'\subseteq I$.
  \end{itemize} 
\end{enumerate}
\end{lem}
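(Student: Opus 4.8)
The plan is to prove Lemma \ref{lem 7.c} by unwinding the definitions in terms of the tree $\mc{T}$, using (T3) and (T4) to eliminate impossible configurations. Throughout, $(a,\msa)\preceq(b,\msb)$ gives us $k$ and $\mc{F}(\msa,\msb)=\{I_l:l<k\}$ as in Definition \ref{defn fab}, and in particular each $I_l\in[N_\msb]^{N_\msa}$.

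For (i), fix $i<N_\msa$ and $I\in\mc{F}(\msa,\msb)$. By definition of $\mc{F}(\msa,\msb)$, $\{c[I]:c\in\msb\}\subseteq\msa$; projecting to the $i$th coordinate, $\{c(I(i)):c\in\msb\}=\msb_{I(i)}\subseteq\msa_i$ (the equality on the left holds because $\msa$ is non-overlapping, so for $c\in\msb$ the element $c[I]$ of $\msa$ is uniquely determined, and its $i$th coordinate is $c(I(i))$). Since $\msb_{I(i)}\subseteq\msa_i$ are both in $\mc{T}$ (by (T2)), comparability in the tree gives $\msb_{I(i)}\subseteq\msa_i$ as nodes; if moreover $\msa_i\subseteq\msb_{I(i)}$ then $\msa_i=\msb_{I(i)}$, and Lemma \ref{lem 7.a} forces $(a,\msa)=(b,\msb)$. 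For (ii), suppose $i<N_\msa$, $j<N_\msb$, $j\notin\{I(i):I\in\mc{F}(\msa,\msb)\}$, and $\msa_i\cap\msb_j\neq\emptyset$. The nodes $\msa_i,\msb_j\in\mc{T}$ have infinite intersection, so by (T3) they are comparable. If $\msb_j\subseteq\msa_i$, then by Lemma \ref{lem 7.b}(i) applied with the roles arranged so that $\msb_j\subseteq\msa_i$, we get $(a,\msa)\preceq(b,\msb)$ again; re-examining the witnessing family via (C2) one sees $j$ must lie in some $I(i)$'s coordinate set — more precisely, one argues that the $\mc{F}$-family is forced, so $j=I(i)$ for some $I$, contradicting the choice of $j$. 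If instead $\msa_i\subseteq\msb_j$ strictly, then by (T4) we would have $\msa_i(\alpha)>\msb_j(\alpha)$ for all $\alpha$, but $\msb_j(\alpha)=c(j)$ for $c=\msb(\alpha)$ and Definition \ref{defn fab} says $c(j)\notin\bigcup\msa\supseteq\msa_i$, so $\msa_i\cap\msb_j$ would have to be empty, a contradiction. (The symmetric case $\msa_i=\msb_j$ is handled by Lemma \ref{lem 7.a}.)

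For (iii), fix $I\in\mc{F}(\msa,\msb)$. The map $\msb\to\msa$ sending $c\mapsto c[I]$ is injective (again because $\msa$ is non-overlapping and (T4) forces the coordinate projections to be strictly monotone, so distinct $c$ give distinct $c[I]$); let $\Gamma=\{\alpha<\omega_1:\msa(\alpha)=c[I]\text{ for some }c\in\msb\}$ be the set of indices hit. Since $|\msb|=\omega_1$, $|\Gamma|=\omega_1$, and by construction $\msa[\Gamma]=\{b[I]:b\in\msb\}$. Projecting to coordinate $i<N_\msa$ gives $\msa_i[\Gamma]=\{b(I(i)):b\in\msb\}=\msb_{I(i)}$, using that $\msb$ is indexed compatibly with $\msa[\Gamma]$ via the same $\alpha$. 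The main technical point here is checking that the indexing of $\msb$ by $\omega_1$ (via Definition \ref{defn agamma}) matches up with the indexing of $\msa$ restricted to $\Gamma$ — this follows from the monotonicity in (T4), which guarantees that the $<$-order on $\msb$ corresponds exactly to the $<$-order on $\{c[I]:c\in\msb\}\subseteq\msa$.

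For (iv), suppose $(a,\msa)\preceq(b,\msb)$, $(a',\msa')\preceq(b,\msb)$, $I\in\mc{F}(\msa,\msb)$, $I'\in\mc{F}(\msa',\msb)$, and $I\cap I'\neq\emptyset$; pick $j\in I\cap I'$, say $j=I(i)=I'(i')$. Then by (i), $\msb_j\subseteq\msa_i$ and $\msb_j\subseteq\msa'_{i'}$, so $\msa_i\cap\msa'_{i'}\supseteq\msb_j$ is infinite (in fact uncountable, since $\msb_j$ is uncountable as $\msb$ is). By (T3), $\msa_i$ and $\msa'_{i'}$ are comparable; without loss of generality $\msa'_{i'}\subseteq\msa_i$, whence by Lemma \ref{lem 7.b}(i), $(a,\msa)\preceq(a',\msa')$. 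It remains to show $I\subseteq I'$. Take any $i_0<N_\msa$; I want $I(i_0)\in\operatorname{ran}(I')$. Since $(a,\msa)\preceq(a',\msa')$, pick $J\in\mc{F}(\msa,\msa')$ with, after reindexing so the chosen coordinate matches, $J(i)=i'$ (such a $J$ exists containing $i'$ by the argument just given applied at coordinate $i$; strictly, one uses (C2)/Lemma \ref{lem 7.b} to see the $\mc{F}$-family covers $i'$). Then composing the inclusions $\msb_{I'(J(i_0))}\subseteq\msa'_{J(i_0)}\subseteq\msa_{i_0}$ and $\msb_{I(i_0)}\subseteq\msa_{i_0}$, both $\msb_{I'(J(i_0))}$ and $\msb_{I(i_0)}$ are nodes of $\mc{T}$ contained in $\msa_{i_0}$ and meeting it in an uncountable set; since within a single $\msb\in\mc{C}$ the coordinate-nodes $\msb_0,\dots,\msb_{N_\msb-1}$ are pairwise incomparable (they are distinct nodes at incomparable positions — this needs a short argument from (C1) non-overlapping plus (T4), giving $\msb_s\cap\msb_t$ finite for $s\neq t$, hence by (T3) incomparable and then disjoint or equal, and equal is impossible by Lemma \ref{lem 7.a}), we must have $I'(J(i_0))=I(i_0)$, so $I(i_0)\in\operatorname{ran}(I')$. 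As $i_0$ was arbitrary, $I\subseteq I'$.

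The step I expect to be the main obstacle is (iv): disentangling the compositions $\mc{F}(\msa,\msa')$, $\mc{F}(\msa,\msb)$, $\mc{F}(\msa',\msb)$ and proving they cohere (a kind of cocycle condition $I = I' \circ J$), which is really where the tree axioms (T1)–(T4) do their work in ruling out "crossing" copies. A clean way to organize this is to first isolate as a preliminary claim that, for $(a,\msa)\in\mc{C}$, the nodes $\msa_0,\dots,\msa_{N_\msa-1}$ are pairwise incomparable and pairwise disjoint in $\mc{T}$ (from non-overlapping-ness of $\msa$ via (T4)), and that $\mc{F}(\msa,\msb)$ is \emph{uniquely determined} by $(a,\msa)$ and $(b,\msb)$ — both facts then make the bookkeeping in (iv) essentially forced. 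Parts (i)–(iii) are then routine consequences of the definitions once this preliminary claim is in hand.
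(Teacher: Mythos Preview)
Your overall strategy matches the paper's, and (i), (iii) are fine. Part (ii) is actually trivial: either $j\notin\bigcup\mc{F}(\msa,\msb)$, in which case $c(j)\notin\bigcup\msa$ for every $c\in\msb$ by definition, or $j=I_l(i')$ for some $l$ and some $i'\neq i$, in which case $\msb_j\subseteq\msa_{i'}$ and the coordinate projections $\msa_0,\dots,\msa_{N_\msa-1}$ are pairwise disjoint (immediate from $\msa$ non-overlapping). Your case analysis there invokes (T3), (T4) unnecessarily and the subcase ``$\msa_i\subset\msb_j$'' is argued incorrectly (the clause $c(j)\notin\bigcup\msa$ only applies when $j\notin\bigcup\mc{F}(\msa,\msb)$).

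The real gap is in (iv). From $\msb_{I(i_0)}\subseteq\msa_{i_0}$ and $\msb_{I'(J(i_0))}\subseteq\msa_{i_0}$ you conclude $I(i_0)=I'(J(i_0))$, but this is a non sequitur: two distinct coordinates of $\msb$ give \emph{disjoint} sets $\msb_s,\msb_t$, and nothing prevents both from sitting inside the same $\msa_{i_0}$ (indeed this happens whenever $|\mc{F}(\msa,\msb)|\ge 2$). Pairwise incomparability of the $\msb_s$ in $\mct$ does not help here. The paper closes this gap by working with a single witness: fix $c\in\msb$ and set $d=c[I']\in\msa'$; since $c(j)=d(i')\in\msa'_{i'}\subseteq\msa_i$, (C2) gives $J\in\mc{F}(\msa,\msa')$ with $i'\in J$ and $d[J]\in\msa$; then $c[I]$ and $d[J]=c[I'[J]]$ are two elements of $\msa$ sharing the point $c(j)$, so by non-overlapping of $\msa$ they are equal, whence $I=I'[J]\subseteq I'$. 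Equivalently, you can argue that $I'[J]\in\mc{F}(\msa,\msb)$ (since $c[I'[J]]\in\msa$ for all $c\in\msb$), and then $I'[J]\cap I\ni j$ forces $I'[J]=I$ because the members of $\mc{F}(\msa,\msb)$ are pairwise disjoint. Either way, the point is that you must exploit non-overlapping of $\msa$ (or the disjointness of the $I_l$'s) at a specific element, not just compare nodes of $\mct$.
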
 
\begin{proof}
(i) and (ii) are trivial. To see (iii), fix $I\in \mc{F}(\msa, \msb)$. Note that $\{b'[I]: b'\in \msb\}\subseteq \msa$ and are both well ordered by $<$. So $\Gamma$ is the collection of $\alpha<\omega_1$ such that the $\alpha$th element (under $<$) of $\msa$ is in $\{b'[I]: b'\in \msb\}$.

(iv). Suppose $I\cap I'\neq\emptyset$. Assume $j=I(i)=I'(i')$. Then $\msa_{i}\cap \msa'_{i'}\supseteq \msb_j$. By (T3), $\msa_{i}\subseteq \msa'_{i'}$ or $\msa'_{i'}\subseteq \msa_{i}$. 

By symmetry, assume $\msa_{i}\subseteq \msa'_{i'}$.
 If $\msa_{i}= \msa'_{i'}$, then by Lemma \ref{lem 7.a}, $(a, \msa)=(a', \msa')$. By (C2) and the fact $I\cap I'\neq\emptyset$, $I=I'$.

Now assume $\msa_{i}\subset \msa'_{i'}$. Then $(a', \msa')\prec (a, \msa)$. Since $j=I(i)=I'(i')$, $\msb_j\subseteq\msa_{i}\subseteq \msa'_{i'}$. Choose $c\in \msb$ and let $d=c[I]$. Then 
\[c[I]\in \msa, c[I']\in \msa'\text{ and  }c(j)=d(i)\in \msa_{i}\subseteq \msa'_{i'}.\]
 By (C2), 
\[\text{ for some $J\in \mc{F}(\msa', \msa)$, } d[J]\in \msa'\text{ and }i\in J.\]
  By $\{c[I'],d[J]\}\subseteq \msa'$ and the fact $c(j)=d(i)\in c[I']\cap d[J]$, $c[I']=d[J]$. 
  
  Recall that $d=c[I]$. So $I'=I[J]$ and hence $I'\subseteq I$.
\end{proof}

\begin{lem}\label{lem 7.d}
Assume $\varphi_1(\mc{C}, \mc{T})$.   $\preceq$ is a  partial order on $\mc{C}$.
\end{lem}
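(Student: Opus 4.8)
The plan is to verify the three defining properties of a partial order — reflexivity, antisymmetry, and transitivity — for $\preceq$ restricted to $\mc{C}$, using the structural facts already assembled. Reflexivity is immediate: for $(a,\msa)\in\mc{C}$ we have $\msa\preceq\msa$ witnessed by $k=1$ and $I_0=N_\msa$ (the identity), so $(a,\msa)\preceq(a,\msa)$. Antisymmetry follows quickly from Lemma \ref{lem 7.b}: if $(a,\msa)\preceq(b,\msb)$ and $(b,\msb)\preceq(a,\msa)$ with $(a,\msa)\neq(b,\msb)$, then part (ii) of that lemma gives both $\msb_i\subset\msa_j$ and $\msa_{j'}\subset\msb_{i'}$ for suitable indices; combining with (T3) (so that any two members of $\mc{T}$ meeting in an uncountable set are comparable) and tracing the strict inclusions one obtains $\msa_j\subset\msa_j$, a contradiction. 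Alternatively, one can invoke Lemma \ref{lem 7.a} directly once the non-strict inclusions are pinned down to equalities.

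The substantive step is transitivity: assume $(a,\msa)\preceq(b,\msb)$ and $(b,\msb)\preceq(c,\mathscr{C})$ in $\mc{C}$, and show $(a,\msa)\preceq(c,\mathscr{C})$. The root condition $a\subseteq b\subseteq c$ is trivial. For the non-overlapping families, I would argue via Lemma \ref{lem 7.b}(i): it suffices to produce $i<N_{\mathscr{C}}$ and $j<N_\msa$ with $\mathscr{C}_i\subseteq\msa_j$. From $(b,\msb)\preceq(c,\mathscr{C})$ pick $I'\in\mc{F}(\msb,\mathscr{C})$ and an index $i'\in N_{\mathscr{C}}$ of the form $i'=I'(\ell)$; then $\mathscr{C}_{i'}\subseteq\msb_\ell$ by Lemma \ref{lem 7.c}(i). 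From $(a,\msa)\preceq(b,\msb)$ pick $I\in\mc{F}(\msa,\msb)$, giving $\msb_{I(m)}\subseteq\msa_m$ for each $m<N_\msa$. If the index $\ell$ happens to lie in the range of some such $I$, say $\ell=I(m)$, we chain $\mathscr{C}_{i'}\subseteq\msb_\ell=\msb_{I(m)}\subseteq\msa_m$ and are done. The delicate point is that a priori $\ell$ need not be in $\bigcup\mc{F}(\msa,\msb)$; but then by Lemma \ref{lem 7.c}(ii) we would have $\msa_m\cap\msb_\ell=\emptyset$ for all $m<N_\msa$, while simultaneously some member $c\in\mathscr{C}$ has all its coordinates outside $\bigcup\mathscr{C}$ only off $\bigcup\mc{F}(\msb,\mathscr{C})$ — I expect that applying property (C2) to the inclusion $\mathscr{C}_{i'}\subseteq\msb_\ell$ together with $\msb_\ell$ not being reachable from $\msa$ forces $\mathscr{C}_{i'}$ to be disjoint from $\bigcup\msa$ too, which is consistent and simply means that the relevant copy of $\msa$-structure inside $\mathscr{C}$ comes through a \emph{different} index; one then chooses the index $\ell$ more carefully, using that $\msb\preceq\mathscr{C}$ transmits \emph{every} $\mc{F}(\msa,\msb)$-copy. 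Concretely, I would first use Lemma \ref{lem 7.c}(iii) to get $\Gamma\in[\omega_1]^{\omega_1}$ with $\msb[\Gamma]=\{c[I']:c\in\mathscr{C}\}$ and hence $\msb_\ell[\Gamma]=\mathscr{C}_{i'}$ for $i'=I'(\ell)$, and similarly $\Delta$ with $\msa[\Delta]=\{b[I]:b\in\msb\}$; composing these restrictions exhibits $\mathscr{C}_{i}\subseteq\msa_j$ for the right choice of $i,j$, and in fact exhibits the full witness $\mc{F}(\msa,\mathscr{C})$ as a suitable "composition" $I\circ I'$ of the index sets.

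The main obstacle is exactly this bookkeeping in the transitivity step: ensuring that the index sets $\mc{F}(\msa,\msb)$ and $\mc{F}(\msb,\mathscr{C})$ compose correctly to yield a valid $\mc{F}(\msa,\mathscr{C})$, i.e.\ that for every $c\in\mathscr{C}$ and every coordinate $j$ not in the composed index sets, $c(j)\notin\bigcup\msa$. This should follow by combining the "not relevant" clauses in alternatives (b)/(c) of the Remark after Lemma \ref{lem 7.b} with (T3) and Lemma \ref{lem 7.c}(ii)–(iv), but it requires care about which coordinates of $\mathscr{C}$ land in $\bigcup\msb$ versus $\bigcup\msa$. Once that is settled, $\preceq$ is reflexive, antisymmetric, and transitive on $\mc{C}$, completing the proof.
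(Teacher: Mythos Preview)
Your proposal is correct in outline but substantially overcomplicates transitivity. The paper's argument is two lines: from $(a,\msa)\preceq(b,\msb)$, Lemma~\ref{lem 7.c}(i) gives $\msa_0\supseteq\msb_i$ for some $i$ (take $i=I(0)$ for any $I\in\mc{F}(\msa,\msb)$); from $(b,\msb)\preceq(c,\ms{C})$, the same lemma gives $\msb_i\supseteq\ms{C}_j$ for some $j$; hence $\ms{C}_j\subseteq\msa_0$, and Lemma~\ref{lem 7.b}(i) finishes.

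The ``delicate point'' you raise --- whether your index $\ell$ lies in $\bigcup\mc{F}(\msa,\msb)$ --- is an artifact of picking the indices in the wrong order. If you first fix $I\in\mc{F}(\msa,\msb)$ and set $\ell=I(m)$, \emph{then} pick $I'\in\mc{F}(\msb,\ms{C})$, the chain $\ms{C}_{I'(\ell)}\subseteq\msb_\ell\subseteq\msa_m$ is immediate. More importantly, the ``main obstacle'' you identify --- checking that the composed index sets satisfy the full definition of $\mc{F}(\msa,\ms{C})$, including the $c(j)\notin\bigcup\msa$ clause --- is not an obstacle at all. You already observed that by Lemma~\ref{lem 7.b}(i) a \emph{single} inclusion $\ms{C}_j\subseteq\msa_0$ suffices; since $(a,\msa)$ and $(c,\ms{C})$ lie in $\mc{C}$, (C2) then produces $\mc{F}(\msa,\ms{C})$ automatically. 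You never need to construct it by hand, and the entire final paragraph of your proposal is unnecessary.

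For antisymmetry the paper is similarly brief: if $(a,\msa)\neq(b,\msb)$, the strict-inclusion clause of Lemma~\ref{lem 7.c}(i) applied twice yields $\msa_0\supsetneqq\msb_i\supsetneqq\msa_j$, which is impossible since distinct columns $\msa_0,\msa_j$ of a non-overlapping family are pairwise disjoint.
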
 
\begin{proof}
To see antisymmetry, fix $(a, \msa)\preceq (b, \msb)\preceq (a, \ms{A})$. Then $(a, \msa)= (b, \msb)$, since otherwise by Lemma \ref{lem 7.c} (i), $\msa_0\supsetneqq \msb_i\supsetneqq \msa_j$ for some $i<N_\msb, j<N_\msa$. 

To see transitivity, fix $(a, \msa)\preceq (b, \msb)\preceq (c, \ms{C})$. Then by Lemma \ref{lem 7.c} (i),  $\msa_0\supseteq \msb_i\supseteq \ms{C}_j$ for some $i<N_\msb$ and $j<N_\ms{C}$. Then by Lemma \ref{lem 7.b}, $(a, \msa)\preceq (c, \ms{C})$.
\end{proof}

We will   use the following sub-collection to analyze the structure $(\mcc, \prec)$.

\begin{defn}
For $X\in [\omega_1]^{\omega_1}$, $\mcc(X)=\{(a, \msa)\in \mcc: X\subseteq \msa_i\text{ for some } i<N_\msa\}$.
\end{defn}


We have the following properties on $\prec$.
\begin{lem}\label{lem 7.e}
Assume $\varphi_1(\mc{C}, \mc{T})$. 
For $X\in [\omega_1]^{\omega_1}$,  $(\mcc(X), \prec)$
 is a well-ordering. In particular,
$(\mc{C}, \prec)$ is well founded.
\end{lem}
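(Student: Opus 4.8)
The plan is to prove first that $(\mcc(X),\prec)$ is linearly ordered, then that it is well founded, and finally to deduce well foundedness of $(\mcc,\prec)$ from the linear case.

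\textbf{Linearity of $(\mcc(X),\prec)$.} Fix $X\in[\omega_1]^{\omega_1}$ and take $(a,\msa),(b,\msb)\in\mcc(X)$, say $X\subseteq\msa_j$ and $X\subseteq\msb_i$. Then $\msa_j\cap\msb_i\supseteq X$ is uncountable, so by (T3) the sets $\msa_j$ and $\msb_i$ are $\supset$-comparable in $\mc{T}$; without loss of generality $\msb_i\subseteq\msa_j$. By Lemma \ref{lem 7.b}(i) this gives $(a,\msa)\preceq(b,\msb)$. Hence any two elements of $\mcc(X)$ are $\preceq$-comparable, and since $\preceq$ is a partial order on $\mcc$ by Lemma \ref{lem 7.d}, $(\mcc(X),\prec)$ is a linear order.

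\textbf{Well-ordering of $(\mcc(X),\prec)$.} It remains to rule out an infinite strictly descending chain. Suppose $(a_0,\msa^0)\succ(a_1,\msa^1)\succ\cdots$ were such a chain in $\mcc(X)$. For each $m$ pick an index $i_m<N_{\msa^m}$ with $X\subseteq\msa^m_{i_m}$. From $(a_{m},\msa^m)\succ(a_{m+1},\msa^{m+1})$ and Lemma \ref{lem 7.b}(ii) together with Lemma \ref{lem 7.c}(i), one gets, after matching up the branch through which $X$ passes, $\msa^m_{i_m}\supsetneqq\msa^{m+1}_{i_{m+1}}$: indeed $X$ is contained in both $\msa^m_{i_m}$ and $\msa^{m+1}_{i_{m+1}}$, these are comparable in $\mc T$ by (T3), and the strict inclusion of the chain forces them to be distinct. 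Thus $\langle\msa^m_{i_m}:m<\omega\rangle$ is a strictly $\supsetneqq$-decreasing sequence of nodes of $\mc{T}$, i.e.\ an infinite descending branch, contradicting (T1) which says $(\mc{T},\supset)$ has height $\leq\omega$. The main obstacle here is exactly this bookkeeping: one must verify that the index witnessing membership in $\mcc(X)$ descends consistently along the chain, which is where (T3) (incomparable nodes meet in a countable set, hence two nodes both containing the uncountable $X$ are comparable) does the work. Therefore $(\mcc(X),\prec)$ is well-founded, and being also linear, it is a well-ordering.

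\textbf{Well foundedness of $(\mcc,\prec)$.} Finally suppose $(a_0,\msa^0)\succ(a_1,\msa^1)\succ\cdots$ is any infinite descending chain in $\mcc$. By Lemma \ref{lem 7.c}(i), for each $m$ there is $i<N_{\msa^m}$ and an $I\in\mc{F}(\msa^{m},\msa^{m+1})$ with $\msa^m_i\supsetneqq\msa^{m+1}_{I(i)}$; in particular $\msa^1_{i_1}\subseteq\msa^0_{i_0}$ for suitable indices, so picking any $X\in[\msa^{m}_{\ast}]^{\omega_1}$ that survives down the chain — concretely, use Lemma \ref{lem 7.c}(iii) to pull back a fixed uncountable $\Gamma$ so that the relevant $\msa^m_{i_m}$'s are nested — we obtain an uncountable $X$ lying in $\msa^m_{i_m}$ for every $m$, hence all $(a_m,\msa^m)\in\mcc(X)$. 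This contradicts the well-foundedness of $(\mcc(X),\prec)$ just proved. Hence no infinite descending chain exists in $(\mcc,\prec)$, so $(\mcc,\prec)$ is well founded, completing the proof.
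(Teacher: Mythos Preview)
Your linearity argument is correct and matches the paper's. The well-ordering step, however, has the inclusions reversed. From $(a_{m+1},\msa^{m+1})\prec(a_m,\msa^m)$ and Lemma~\ref{lem 7.b}(ii) one gets that some $\msa^m_i$ is \emph{strictly contained in} some $\msa^{m+1}_j$, and tracking the branch through $X$ yields $\msa^m_{i_m}\subsetneqq\msa^{m+1}_{i_{m+1}}$, not $\supsetneqq$. (Sanity check: $(\emptyset,[\omega_1]^1)$ is $\preceq$-least in $\mcc$, and its single component is $\omega_1$, the largest set in $\mct$.) With the direction you wrote, you would obtain an infinite path going \emph{away} from the root of $(\mct,\supset)$, and that is perfectly compatible with height $\le\omega$; so the contradiction you invoke does not fire. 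With the correct direction, $\msa^0_{i_0}$ acquires infinitely many $\supset$-ancestors in $\mct$, and \emph{that} is what contradicts (T1). The idea is right but the orientation and the stated contradiction are both off.

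Your ``in particular'' paragraph is not a proof as written. The arguments of $\mc{F}$ are swapped (it is $\mc{F}(\msa^{m+1},\msa^m)$ that is defined here), the inclusions are again reversed, and ``picking any $X$ that survives down the chain'' via Lemma~\ref{lem 7.c}(iii) is not justified: there is no a priori reason the indices $i_m$ can be threaded coherently through the whole chain. The paper's route is both simpler and avoids this issue: for any $(a,\msa)\in\mcc$ one has $\{(b,\msb):(b,\msb)\preceq(a,\msa)\}=\bigcup_{i<N_\msa}\mcc(\msa_i)$, a finite union of well-orders; an infinite $\prec$-descending chain below $(a_0,\msa^0)$ would, by pigeonhole, meet some $\mcc(\msa^0_i)$ infinitely often, contradicting the first part.
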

\begin{proof}
For every $(a,\msa)\in \mcc(X)$, fix $f(a, \msa)<N_\msa$ such that $X\subseteq \msa_{f(a, \msa)}$. Then $\mc{T}'=\{\msa_{f(a, \msa)}: (a, \msa)\in \mcc(X)\}$ is a subset of $\mct$. 

By (T3), elements in $\mc{T}'$ are pairwise $\supset$-comparable.
By (T1), $(\mc{T}', \supset)$ is a well-ordering. Then by Lemma \ref{lem 7.b}, $(\mcc(X), \prec)$ is a well-ordering.

For the in particular part, note that for $(a, \msa)\in \mcc$, 
\[\{(b, \msb)\in \mcc: (b, \msb)\preceq (a, \msa)\}=\bigcup_{i<N_\msa} \mcc(\msa_i)\]
 is a finite union of well orders and hence has no infinite decreasing sequence.
 \end{proof}

 $(\mc{C}, \prec)$ is in general not a tree order, since $(\{(b, \msb)\in \mcc: (b, \msb)\preceq (a, \msa)\}, \prec)$ is in general not a linear order. 

To have finite $|a|$ and $N_\msa$ for $(a, \msa)\in \mc{C}$, we want the $\preceq$-chain to have length $\leq \omega$. In fact, (T4) guarantees a stronger property (see also \cite[Lemma 7]{Peng25}).

\begin{lem}\label{lem 7.f}
Assume $\varphi_1(\mc{C}, \mc{T})$. For a $\prec$-chain $\{ (a_n, \msa^n)\in \mc{C}: n<\omega\}$ in $\mcc$, $\bigcap_{n<\omega} (\bigcup\msa^n)=\emptyset$.
\end{lem}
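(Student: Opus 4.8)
The plan is to argue by contradiction, turning an alleged point of $\bigcap_{n<\omega}(\bigcup\msa^n)$ into an infinite strictly decreasing sequence of ordinals by means of (T4). First I would reduce to the case where the chain is strictly $\prec$-increasing, i.e. $(a_n,\msa^n)\prec(a_{n+1},\msa^{n+1})$ for all $n$; this is harmless because $\prec$ is well founded by Lemma~\ref{lem 7.e}, so an infinite $\prec$-chain contains an increasing $\omega$-subsequence, and the intersection over a subsequence contains $\bigcap_{n<\omega}(\bigcup\msa^n)$, hence vanishing on the subsequence forces vanishing overall. Now suppose, toward a contradiction, that some $\gamma$ lies in $\bigcup\msa^n$ for every $n$. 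Since each $\msa^n$ is non-overlapping, its distinct elements are pairwise disjoint, so there are a unique $a^n\in\msa^n$ and a unique $i_n<N_{\msa^n}$ with $a^n(i_n)=\gamma$; in particular $\gamma\in\msa^n_{i_n}$, and $\msa^n_{i_n}\in\mc{T}$ by (T2).

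The core step is the claim that $\msa^{n}_{i_n}\supsetneqq\msa^{n+1}_{i_{n+1}}$ in $\mc{T}$ for every $n$. Since $(a_n,\msa^n)\preceq(a_{n+1},\msa^{n+1})$, the set $\mc{F}(\msa^n,\msa^{n+1})=\{I_l:l<k\}$ is defined as in Definition~\ref{defn fab}. From $\gamma=a^{n+1}(i_{n+1})\in\bigcup\msa^n$ and the defining clause of $\mc{F}$ — namely $c(j)\notin\bigcup\msa^n$ whenever $c\in\msa^{n+1}$ and $j\notin\bigcup_{l}I_l$ — I get $i_{n+1}\in I_l$ for some $l$. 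Then $a^{n+1}[I_l]\in\msa^n$ and it contains $\gamma$, so by the uniqueness above $a^{n+1}[I_l]=a^n$ and $I_l(i_n)=i_{n+1}$. Lemma~\ref{lem 7.c}(i) (with $i=i_n$ and $I=I_l$) then gives $\msa^n_{i_n}\supseteq\msa^{n+1}_{I_l(i_n)}=\msa^{n+1}_{i_{n+1}}$, with equality only when $(a_n,\msa^n)=(a_{n+1},\msa^{n+1})$; since the chain is strictly increasing the inclusion is proper, proving the claim.

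To finish, write $\gamma=\msa^n_{i_n}(\beta_n)$ for ordinals $\beta_n<\omega_1$. Applying (T4) to $\msa^n_{i_n}\supsetneqq\msa^{n+1}_{i_{n+1}}$ yields, for every $\alpha$, $\msa^n_{i_n}(\alpha)<\msa^{n+1}_{i_{n+1}}(\alpha)$; in particular $\msa^{n+1}_{i_{n+1}}(\beta_n)>\msa^n_{i_n}(\beta_n)=\gamma=\msa^{n+1}_{i_{n+1}}(\beta_{n+1})$, and since $\msa^{n+1}_{i_{n+1}}$ is enumerated increasingly this forces $\beta_{n+1}<\beta_n$. Thus $\beta_0>\beta_1>\cdots$ is an infinite strictly decreasing sequence of ordinals, a contradiction; hence no such $\gamma$ exists. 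I expect the only delicate point to be the bookkeeping in the middle paragraph — pinning down the unique $a^n$ through which $\gamma$ passes and checking that the block $I_l\in\mc{F}(\msa^n,\msa^{n+1})$ containing $i_{n+1}$ pulls $\gamma$ back exactly to position $i_n$ in $a^n$ — after which the conclusion is a one-line application of (T4). (An alternative, essentially equivalent, route would first use Lemma~\ref{lem 7.c}(iii) to reduce, along the chain, to the situation where $\mc{F}(\msa^n,\msa^{n+1})$ is the identity and $\msa^{n+1}\subseteq\msa^n$; but the argument above avoids splitting into cases according to whether the arities $N_{\msa^n}$ eventually stabilize.)
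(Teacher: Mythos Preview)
Your proof is correct and follows essentially the same route as the paper's own argument: reduce to a strictly $\prec$-increasing chain via Lemma~\ref{lem 7.e}, pick the unique coordinate $i_n$ with $\gamma\in\msa^n_{i_n}$, use Lemma~\ref{lem 7.c}(i) to obtain the strict inclusions $\msa^n_{i_n}\supsetneqq\msa^{n+1}_{i_{n+1}}$ in $\mc{T}$, and then invoke (T4) to produce the infinite descending sequence of positions. The only difference is expository --- you spell out the bookkeeping with $\mc{F}(\msa^n,\msa^{n+1})$ and the element $a^n$ explicitly, whereas the paper simply cites Lemma~\ref{lem 7.c}(i)--(ii) in one line.
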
 
\begin{proof}
Suppose otherwise, $\xi\in \bigcap_{n<\omega} (\bigcup\msa^n)$. Then for every $n<\omega$, there exists unique $i_n<N_{\msa^n}$ such that $\xi\in \msa^n_{i_n}$. 

By Lemma \ref{lem 7.e}, the chain $(\{(a_n, \msa^n): n<\omega\}, \prec)$ is well-founded and hence a well-ordering. Arguing with a subsequence, we may assume $(a_0, \msa^0)\prec (a_1, \msa^1)\prec\cdots$.
Then by Lemma \ref{lem 7.c} (i)-(ii), 
\[\msa^0_{i_0}\supset \msa^1_{i_1}\supset \cdots\supset \msa^n_{i_n}\supset\cdots.\] 
For $n<\omega$, let $\alpha_n$ be such that $\xi=\msa^n_{i_n}(\alpha_n)$. Then by (T4) and the fact that $\msa^n_{i_n}$'s are in $\mct$, 
\[\msa^{n+1}_{i_{n+1}}(\alpha_{n+1})=\xi=\msa^n_{i_n}(\alpha_n)<\msa^{n+1}_{i_{n+1}}(\alpha_n).\]
 So $\alpha_0>\alpha_1>\cdots$. A contradiction.
\end{proof}

The following property follows from Lemma \ref{lem 7.e}, Lemma \ref{lem 7.f} and the fact that $\{(b, \msb)\in \mcc: (b, \msb)\preceq (a, \msa)\}=\bigcup_{i<N_\msa} \mcc(\msa_i)$ for  $(a, \msa)\in \mcc$.

\begin{lem}\label{lem 7.g}
Assume $\varphi_1(\mc{C}, \mc{T})$.  For every $X\in [\omega_1]^{\omega_1}$, $\mcc(X)$ is finite. In particular, for every $(a,\msa)\in \mcc$, $\{(b, \msb)\in \mcc: (b, \msb)\preceq (a, \msa)\}$ is finite.
\end{lem}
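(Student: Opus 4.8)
The plan is to combine the two finiteness ingredients already proved: Lemma \ref{lem 7.e} tells us that for each $X\in[\omega_1]^{\omega_1}$ the set $(\mcc(X),\prec)$ is a well-ordering (in particular linearly ordered), and Lemma \ref{lem 7.f} tells us that along any $\prec$-chain the intersection of the sets $\bigcup\msa^n$ is empty. First I would observe that $\mcc(X)$ is itself a $\prec$-chain: if $(a,\msa),(b,\msb)\in\mcc(X)$ then by Lemma \ref{lem 7.e} they are $\prec$-comparable (or equal), so $\mcc(X)$ is linearly ordered by $\prec$, and well-ordered by the same lemma.

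Next, suppose toward a contradiction that $\mcc(X)$ is infinite. Since it is well-ordered of infinite size, it contains an increasing $\omega$-sequence $(a_0,\msa^0)\prec(a_1,\msa^1)\prec\cdots$. Fix any $x\in X$. By definition of $\mcc(X)$, for each $n$ there is $i_n<N_{\msa^n}$ with $X\subseteq\msa^n_{i_n}$, so in particular $x\in\msa^n_{i_n}\subseteq\bigcup\msa^n$. Hence $x\in\bigcap_{n<\omega}(\bigcup\msa^n)$, contradicting Lemma \ref{lem 7.f}. (Here one uses $X\neq\emptyset$, which holds since $X\in[\omega_1]^{\omega_1}$.) Therefore $\mcc(X)$ is finite.

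For the ``in particular'' clause, recall the identity already noted in the proof of Lemma \ref{lem 7.e}, namely that for $(a,\msa)\in\mcc$ one has $\{(b,\msb)\in\mcc:(b,\msb)\preceq(a,\msa)\}=\bigcup_{i<N_\msa}\mcc(\msa_i)$. By (C1), $N_\msa$ is a finite number, and each $\msa_i$ is uncountable (since $\msa$ is uncountable and non-overlapping, so $\msa_i$ has size $|\msa|=\omega_1$), hence $\msa_i\in[\omega_1]^{\omega_1}$ and $\mcc(\msa_i)$ is finite by the first part. A finite union of finite sets is finite, so $\{(b,\msb)\in\mcc:(b,\msb)\preceq(a,\msa)\}$ is finite.

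There is no real obstacle here — the proof is essentially an assembly of Lemmas \ref{lem 7.e} and \ref{lem 7.f}; the only point that needs a moment's care is ruling out the degenerate case $X=\emptyset$ (excluded because $X$ has cardinality $\omega_1$) and making sure each $\msa_i$ genuinely lies in $[\omega_1]^{\omega_1}$ so that the first part applies to it, which follows from $\msa$ being uncountable and non-overlapping.
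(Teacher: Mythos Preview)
Your proof is correct and follows essentially the same approach as the paper: the paper simply states that the lemma follows from Lemma~\ref{lem 7.e}, Lemma~\ref{lem 7.f}, and the identity $\{(b,\msb)\in\mcc:(b,\msb)\preceq(a,\msa)\}=\bigcup_{i<N_\msa}\mcc(\msa_i)$, and you have filled in exactly those details.
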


Now we see that $\mathbf{E}$ is an equivalence relation.

\begin{lem}\label{lem 7.h}
Assume $\varphi_1(\mc{C}, \mc{T})$. 
\begin{enumerate}[(i)]
\item The $\mathbf{E}$ defined by {\rm (E1)} is an equivalence relation. 
\item For $X\in [\omega_1]^{\omega_1}$, $[X]_\mathbf{E}=\{\msa_i[\Gamma]: i<N_\msa\}$ where for some $(a, \msa)\in \mcc$ and $j$, $(a, \msa)$ is the $\preceq$-maximal element of $\mcc(X)$ and $X=\msa_j[\Gamma]$. In particular, $[X]_\mathbf{E}$ is finite.
\item If $A\mathbf{E}B$, then ${\max}_\preceq(\mcc(A))={\max}_\preceq(\mcc(B))$ where ${\max}_\preceq(\mcc')$ is the collection of $\preceq$-maximal elements of $\mcc'$ for $\mcc'\subseteq \mcc$.
\end{enumerate}
\end{lem}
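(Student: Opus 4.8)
The plan is to prove the three parts essentially in sequence, with (ii) doing the real work and (i), (iii) following from it together with the structural lemmas already established (Lemmas \ref{lem 7.a}--\ref{lem 7.g}). First I would observe that $\mc{T}' = \{\msa_{f(a,\msa)} : (a,\msa)\in\mcc(X)\}$, for a choice function $f$ picking $i<N_\msa$ with $X\subseteq\msa_i$, is a finite $\supset$-chain inside $\mct$: by (T3) the elements are pairwise comparable, by Lemma \ref{lem 7.e} the order $(\mcc(X),\prec)$ is a well-ordering, and by Lemma \ref{lem 7.g} it is finite. Hence $\mcc(X)$ has a unique $\preceq$-maximal element $(a,\msa)$, and $X\subseteq\msa_j$ for a (unique, by Lemma \ref{lem 7.c}(i) forcing distinctness of the $\msa_i$ on any fixed $X$) index $j<N_\msa$. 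By (C1) $\msa$ is non-overlapping, so $X=\msa_j[\Gamma]$ for $\Gamma=\{\alpha<\omega_1: \msa(\alpha)\up\{j\}\in \ldots\}$; concretely $\Gamma$ is the set of $\alpha$ with $\msa_j(\alpha)\in X$.

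For (ii), one inclusion is immediate from (E1): each $\msa_i[\Gamma]$ is $\mathbf{E}$-equivalent to $X=\msa_j[\Gamma]$ via the witness $(a,\msa), j, i, \Gamma$. For the reverse inclusion, suppose $X\,\mathbf{E}\,Y$ witnessed by some $(b,\msb)\in\mcc$, indices $i',j'<N_\msb$, and $\Delta\in[\omega_1]^{\omega_1}$ with $X=\msb_{i'}[\Delta]$, $Y=\msb_{j'}[\Delta]$. Then $X\subseteq\msb_{i'}$, so $(b,\msb)\in\mcc(X)$ and therefore $(b,\msb)\preceq(a,\msa)$ by maximality. Now I would apply Lemma \ref{lem 7.c}(iii): for the $I\in\mc{F}(\msb,\msa)$ (wait — careful with the direction) — since $(b,\msb)\preceq(a,\msa)$ we have $\msa\preceq\msb$ in the family-ordering sense, so there is $I\in\mc{F}(\msa,\msb)$ with $\msa_i[\Gamma']=\msb_{I(i)}$ for all $i<N_\msa$ and a suitable $\Gamma'\in[\omega_1]^{\omega_1}$. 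In particular $\msb_{i'}=\msa_{k}[\Gamma']$ for the $k$ with $I(k)=i'$ (using that $\msb_{i'}\subseteq X\subseteq\msa_j$ forces $i'\in\mathrm{ran}(I)$ via Lemma \ref{lem 7.c}(ii)), and similarly $\msb_{j'}=\msa_{l}[\Gamma']$ for the $l$ with $I(l)=j'$. Chasing the two reindexings $\Gamma,\Gamma',\Delta$ through these identifications (all of $X,Y$ live inside the non-overlapping families, so the enumerations compose) yields $Y=\msa_{l}[\Gamma'']$ for some $\Gamma''\in[\omega_1]^{\omega_1}$, hence $Y\in\{\msa_i[\Gamma'']:i<N_\msa\}$; and by Lemma \ref{lem 7.c}(iii) applied with that same $\Gamma''$ one gets $\{\msa_i[\Gamma'']:i<N_\msa\}=\{\msa_i[\Gamma]:i<N_\msa\}$, completing the reverse inclusion. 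Finiteness of $[X]_\mathbf{E}$ is then immediate since $N_\msa<\omega$ by (C1).

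Given (ii), part (i) is routine: reflexivity is the remark after Definition \ref{cte} (witnessed by $(\emptyset,[\omega_1]^1)$), symmetry is built into (E1) by swapping $i$ and $j$, and transitivity follows because $[X]_\mathbf{E}$ as computed in (ii) depends only on the $\preceq$-maximal element of $\mcc(X)$ — so if $X\,\mathbf{E}\,Y$ and $Y\,\mathbf{E}\,Z$ then $X,Y,Z$ all share the same $\max_\preceq\mcc(\cdot)$ and hence $Z\in[X]_\mathbf{E}$. Part (iii) is then just the statement extracted in the course of proving (ii): if $A\,\mathbf{E}\,B$ then by (ii) both $\mcc(A)$ and $\mcc(B)$ have the same unique $\preceq$-maximal element $(a,\msa)$, so $\max_\preceq(\mcc(A))=\{(a,\msa)\}=\max_\preceq(\mcc(B))$.

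The main obstacle I expect is the bookkeeping in the reverse inclusion of (ii): one must verify that the three reindexing sets $\Gamma$, $\Gamma'$ (from $\mc{F}(\msa,\msb)$), and $\Delta$ can be composed to land on a \emph{single} $\Gamma''$ that simultaneously witnesses membership of $Y$ in $\{\msa_i[\cdot]:i<N_\msa\}$ \emph{and}, via Lemma \ref{lem 7.c}(iii), recovers the whole $\mathbf{E}$-class as $\{\msa_i[\Gamma'']:i<N_\msa\}$. This is where the non-overlapping hypothesis (C1) and the fact (Lemma \ref{lem 7.a}) that an index determines $(a,\msa)$ are used crucially to rule out ambiguity; everything else is an application of the already-proven lemmas. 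I would also double-check the degenerate case $(b,\msb)=(a,\msa)$ separately, where $I$ is the identity on a subset and the argument collapses to comparing $\Gamma$ and $\Delta$ directly.
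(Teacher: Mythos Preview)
Your overall strategy is sound and is essentially the same computation the paper performs, just organized differently: the paper proves transitivity (i) first by a direct argument and then remarks that (ii) and (iii) follow, whereas you aim for (ii) first and read off (i) and (iii). The core step in both is identical --- given two $\mcc$-witnesses, observe they lie in a common $\mcc(\cdot)$, hence are $\preceq$-comparable by Lemma~\ref{lem 7.e}, and then use Lemma~\ref{lem 7.c}(i)--(iii) to re-express one set in the coordinates of the larger witness with the \emph{same} index set.

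There is, however, a direction error in your key step that you should fix. From $(b,\msb)\preceq(a,\msa)$ one gets $\msb\preceq\msa$ (not $\msa\preceq\msb$), hence $I\in\mc{F}(\msb,\msa)$ with $I\in[N_\msa]^{N_\msb}$, and Lemma~\ref{lem 7.c}(iii) yields $\Gamma'$ with $\msb_i[\Gamma']=\msa_{I(i)}$ for all $i<N_\msb$ --- the reverse of what you wrote. With the correct direction, the reindexing you anticipate goes through cleanly: since $X=\msa_j[\Gamma]=\msb_{i'}[\Delta]$ and $\msa_j\cap\msb_{i'}$ is uncountable, Lemma~\ref{lem 7.c}(ii) gives $j=I(i')$ for some $I\in\mc{F}(\msb,\msa)$; then $\msb_{i'}[\Gamma']=\msa_j$, whence $\Delta$ is precisely the image of $\Gamma$ under the increasing enumeration of $\Gamma'$, and therefore $Y=\msb_{j'}[\Delta]=\msa_{I(j')}[\Gamma]$. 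This is exactly the paper's computation (they write $\msa[\Gamma]=\{b[I]:b\in\msb[\Sigma]\}$, hence $A=\msa_i[\Gamma]=\msb_{I(i)}[\Sigma]$). Also note your line ``$\msb_{i'}\subseteq X$'' should read $X\subseteq\msb_{i'}$. Once these directions are corrected, your ``bookkeeping obstacle'' dissolves: there is a \emph{single} $\Gamma$ throughout, no separate $\Gamma''$ is needed, and (iii) follows immediately because each of $A,B$ lies in a column of the other's maximal witness, forcing equality by antisymmetry (Lemma~\ref{lem 7.d}).
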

\begin{proof}
(i). Only transitivity needs a verification. Assume $A\mathbf{E}B\mathbf{E}C$. Choose $(a, \msa), (b, \msb)$ in $\mcc$ and $\Gamma, \Sigma$ such that $A=\msa_i[\Gamma]$, $B=\msa_j[\Gamma]=\msb_{i'}[\Sigma]$ and $C=\msb_{j'}[\Sigma]$. Then by Lemma \ref{lem 7.e}, $(a,\msa), (b, \msb)$ are in $\mcc(B)$ and hence is $\prec$-comparable. 

We will find $k$ such that either $A=\msb_k[\Sigma]$ if $(a, \msa)\preceq (b, \msb)$ or $C=\msa_k[\Gamma]$ if $(b, \msb)\preceq (a, \msa)$.
By symmetry, assume $(a, \msa)\preceq (b, \msb)$. Then by Lemma \ref{lem 7.c} (i)-(ii) and the fact $\msa_j[\Gamma]=\msb_{i'}[\Sigma]$,  $i'=I(j)$ for some $I\in \mc{F}(\msa, \msb)$. 

Now, $\msa_j[\Gamma]=\msb_{I(j)}[\Sigma]$ and hence $\msa[\Gamma]=\{b[I]: b\in \msb[\Sigma]\}$. Consequently, $A=\msa_i[\Gamma]=\msb_{I(i)}[\Sigma]$. Hence, $A\mathbf{E}C$.

(ii) and (iii) follow from above argument. For example, for (ii), if  $(b, \msb)$ is the $\preceq$-maximal element of $\mcc(B)$, then $[B]_\bfe=\{\msb_k[\Sigma]: k<N_\msb\}$.
\end{proof}

For Lemma \ref{lem 7.h} (iii), we would like to point out that   in general, $\mcc(A)\neq \mcc(B)$ for $A\mathbf{E}B$.

Now we turn to extending the structure $(\mcc, \mct, \mathbf{E})$. We will only add elements to $\mcc$ and then $\mct, \mathbf{E}$ are induced from $\mcc$ by (T2) and (E1).

\begin{defn}
Assume $\varphi_1(\mc{C}, \mc{T}, \bfe)$.  Say $(a, \msa)$ is a \emph{$\mcc$-candidate}  (or a \emph{candidate for $\mcc$}) if 
\begin{enumerate}
\item  $\varphi_1(\mcc\cup\{(a, \msa)\}, \mct', \bfe')$ holds where $\mct', \bfe'$ are induced from $\mcc\cup\{(a, \msa)\}$;
\item  $(a, \msa)\notin \mcc$ and is a $\preceq$-maximal element of $\mcc\cup\{(a, \msa)\}$.
\end{enumerate}
\end{defn}
\textbf{Remark.} It is straightforward to check that if $(a, \msa)$ is a $\mcc$-candidate, then $(a, \msa')$ is  a $\mcc$-candidate for all $\msa'\in [\msa]^{\omega_1}$.


\begin{lem}\label{lem 7.i}
Assume $\varphi_1(\mc{C}, \mc{T},\bfe)$. Suppose $a\in [\omega_1]^{<\omega}$, $\msa\subseteq [\omega_1]^{N_\msa}$ is uncountable non-overlapping and $\max(a)<\min(\bigcup \msa)$. Then $(a, \msa)$ is a $\mcc$-candidate iff the following statements hold.
\begin{enumerate}[(a)]
\item $\{\msa_i: i<N_\msa\}$ is $\bfe$-invariant.
\item For every $(b, \msb)\in \mcc$, $j<N_\msb$ and $i<N_\msa$, either ($b\subseteq a$ and $\msa_i\subset \msb_j$) or $|\msa_i\cap\msb_j|\leq \omega$.
\item For every $i<N_\msa$ and every $A\in \mct$ with $\msa_i\subseteq A$, $A(\alpha)<\msa_i(\alpha)$ for all $\alpha<\omega_1$.
\item For every $i, j<N_\msa$ and every $A\in \mct$ with $ \msa_j\subseteq A$, either $\msa_i\subseteq A$ or $\msa_i\cap A=\emptyset$.
\end{enumerate}
\end{lem}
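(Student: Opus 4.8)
The statement to prove is Lemma \ref{lem 7.i}: a characterization of when $(a,\msa)$ is a $\mcc$-candidate, i.e.\ when $\varphi_1(\mcc\cup\{(a,\msa)\},\mct',\bfe')$ holds and $(a,\msa)$ is a new $\preceq$-maximal element. The proof is essentially an unwinding of Definitions \ref{c}, \ref{ct}, \ref{cte}, checking each clause (C1)--(C2), (T1)--(T4), (E1) against conditions (a)--(d), so I would organize it as two implications, each broken into these clause-by-clause verifications.

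First I would prove the forward direction: assume $(a,\msa)$ is a $\mcc$-candidate and derive (a)--(d). Since $(a,\msa)$ is $\preceq$-maximal in $\mcc\cup\{(a,\msa)\}$, for any $(b,\msb)\in\mcc$ we cannot have $(a,\msa)\prec(b,\msb)$, so by Lemma \ref{lem 7.b} we never have $\msa_i\supsetneq\msb_j$ with $\msb_j\subseteq\msa_i$; combined with (T3) for the extended tree $\mct'$, this forces alternative (b) here (for each pair either $|\msa_i\cap\msb_j|\le\omega$, or $(b,\msb)\preceq(a,\msa)$, and in the latter case (C2) applied to $(b,\msb)\preceq(a,\msa)$ gives $b\subseteq a$ and $\msa_i\subset\msb_j$). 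Condition (c) is exactly (T4) restricted to pairs $A\supset\msa_i$ in $\mct'$; condition (d) is (T3) for $\mct'$ restricted to pairs $\msa_i,\msa_j,A$ (comparability/incomparability, noting incomparable elements meet in a countable set while $\msa_i$ is uncountable). Condition (a) — that $\{\msa_i:i<N_\msa\}$ is $\bfe$-invariant — I would extract from the requirement that $\bfe'$ be an equivalence relation together with the analysis in Lemma \ref{lem 7.h}: the $\bfe$-class of each $\msa_i[\Gamma]$ must already be $\{\msa_j[\Gamma]:j<N_\msa\}$ plus whatever old classes are glued in, and $\preceq$-maximality of $(a,\msa)$ prevents gluing to anything bigger, so the old $\bfe$-classes meeting $\bigcup\msa$ are already contained in $\{\msa_j:j<N_\msa\}$ up to restriction.

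Then I would prove the converse: assuming (a)--(d) and the standing hypotheses ($a\in[\omega_1]^{<\omega}$, $\msa\subseteq[\omega_1]^{N_\msa}$ uncountable non-overlapping, $\max(a)<\min(\bigcup\msa)$), verify $\varphi_1(\mcc\cup\{(a,\msa)\},\mct',\bfe')$ and $\preceq$-maximality. (C1) is immediate from the standing hypotheses and (C1) for $\mcc$. For (C2) one checks all new pairs: a pair $((a,\msa),(b,\msb))$ with $\msa_i\subseteq\msb_j$ is excluded by (b) (which forces $\msa_i\subset\msb_j$, i.e.\ the inclusion goes the other way, or a countable intersection), so the only new instances have $\msb_i\subseteq\msa_j$, and then (b) gives $b\subseteq a$ while the structure of $\mc{F}(\msb,\msa)$ — the required $I_l$'s — must be produced; here I would use (a) together with (C2) and Lemma \ref{lem 7.c} for the old structure to locate, inside each $c\in\msa$, exactly the copies of elements of $\msb$, and (d) to guarantee the leftover coordinates $c(j')$ avoid $\bigcup\msb$. (T1), (T2) are bookkeeping; (T3) follows from (d) for pairs involving a new $\msa_i$ and from (T3) on $\mcc$ otherwise, using (b) to handle a new $\msa_i$ against an old $\msb_j$; (T4) is (c) together with (T4) on $\mcc$. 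That $\bfe'$ is an equivalence relation follows from Lemma \ref{lem 7.h}(i) applied to the extended structure once (C2),(T3) are known; and $\preceq$-maximality of $(a,\msa)$ is exactly the content of (b) saying no old $(b,\msb)$ strictly extends it.

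\textbf{Main obstacle.} The delicate point is the converse direction's verification of (C2) for new pairs where $(b,\msb)\prec(a,\msa)$: I must manufacture the finite family $\mc{F}(\msb,\msa)=\{I_l\}$ of index sets witnessing that each element of $\msa$ contains precisely the copies of elements of $\msb$ sitting inside it, and that the remaining coordinates fall outside $\bigcup\msb$. This is where conditions (a) ($\bfe$-invariance, so the coordinates $\msa_i$ that meet a given $\msb_j$ are a union of copies patterned consistently across all of $\msa$) and (d) (so that for a fixed $\msb_k\subseteq A\in\mct$, the $\msa_i$'s split cleanly into those inside $A$ and those disjoint from $A$) do the real work, and I expect assembling them into the combinatorial statement of (C2)—while simultaneously checking transitivity of $\preceq$ on $\mcc\cup\{(a,\msa)\}$ doesn't create a hidden third element between $(b,\msb)$ and $(a,\msa)$—to be the part requiring genuine care rather than bookkeeping.
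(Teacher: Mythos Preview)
Your overall approach matches the paper's: both directions are handled by unwinding the clauses of $\varphi_1$ against conditions (a)--(d), and your identification of the ``main obstacle'' (manufacturing $\mc{F}(\msb,\msa)$ in the converse (C2) check via (a) and (d)) is exactly what the paper does.

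Two points need correction. First, your forward derivation of (d) via (T3) alone is insufficient: (T3) for $\mct'$ only gives $|\msa_i\cap A|\le\omega$ when $\msa_i,A$ are incomparable, but (d) demands $\msa_i\cap A=\emptyset$. The paper instead applies Lemma~\ref{lem 7.c}(i)--(ii) (which rests on (C2) for $\mcc\cup\{(a,\msa)\}$) to the pair $(b,\msb)\prec(a,\msa)$ where $A=\msb_{i'}$; this gives genuine disjointness, not merely countable intersection.

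Second, your middle paragraph on the converse (C2) check has the inclusion directions reversed. By condition (b), the case that \emph{cannot} occur is $\msb_j\subseteq\msa_i$ for old $(b,\msb)$ (this would force $|\msa_i\cap\msb_j|=\omega_1$ yet $\msa_i\not\subset\msb_j$). The case you \emph{must} handle is $\msa_i\subseteq\msb_j$, i.e.\ $(b,\msb)\preceq(a,\msa)$---which is what you correctly say in your ``main obstacle'' paragraph. The paper's explicit construction here is: set $J=\{l<N_\msa:\msa_l\subseteq\msb_j\}$; for each $l\in J$ let $\Gamma$ satisfy $\msa_l=\msb_j[\Gamma]$ and put $I_l=\{i'<N_\msa:\msa_{i'}=\msb_{j'}[\Gamma]\text{ for some }j'\}$. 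Condition (a) ensures $\{a'[I_l]:a'\in\msa\}\subseteq\msb$, and condition (d) ensures the leftover coordinates miss $\bigcup\msb$.
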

\begin{proof}
First assume $(a, \msa)$ is a $\mcc$-candidate. Then $\varphi_1(\mcc\cup\{(a, \msa)\}, \mct', \bfe')$ holds. To see (a), note that $\{\msa_i: i<N_\msa\}$ is $\bfe'$-invariant by Lemma \ref{lem 7.h} (ii). So (a) follows from the fact $\bfe\subseteq \bfe'$. To see (b), suppose $|\msa_i\cap \msb_j|=\omega_1$. By (T3), $\msa_i$ is comparable with $\msb_j$.  Since $(a, \msa)$ is a $\mcc$-candidate, $\msa_i\subset \msb_j$. Now $b\subseteq a$ follows from (C2).

(c) follows from (T4) of $\varphi_1(\mcc\cup\{(a, \msa)\}, \mct')$. To see (d), fix $(b, \msb)\in \mcc$ and $i'$ be such that $A=\msb_{i'}$. Then $(b, \msb)\prec (a, \msa)$ are in $\mcc\cup \{(a, \msa)\}$. Now (d) follows from Lemma \ref{lem 7.c} (i)-(ii) under the assumption $\varphi_1(\mcc\cup\{(a, \msa)\}, \mct')$.\medskip

Now assume (a)-(d) hold and we show $\varphi_1(\mcc\cup\{(a, \msa)\}, \mct', \bfe')$. (C1)   is trivial. To see (C2), fix $(b, \msb)\in\mcc$ with $\msa_i\subseteq \msb_j$. By (b), $b\subseteq a$.  Let $J=\{l<N_\msa: \msa_{l}\subseteq \msb_j\}$.  Fix $l\in J$ and let $\Gamma$ be such that $\msa_l=\msb_j[\Gamma]$. Let 
\[I_l=\{i'<N_\msa: \msa_{i'}=\msb_{j'}[\Gamma]\text{ for some }j'<N_\msb\}.\]
By (a),  $l\in I_l$ and $\{a'[I_l]: a'\in \msa\}\subseteq \msb$. 

Then by (d), for $j'\in N_\msa\setminus \bigcup_{l\in J} I_l$, $\msa_{j'}\cap (\bigcup \msb)=\emptyset$. This shows (C2).

 (T2) is trivial. (T1) and (T3)   follow from (b). (T4)  follows from (c).
 
 This shows that $\varphi_1(\mcc\cup\{(a, \msa)\}, \mct', \bfe')$ holds. By (b), $(a, \msa)\notin \mcc$ and is a  $\preceq$-maximal element in $\mcc\cup\{(a, \msa)\}$.
\end{proof}

 A new $\mcc$-candidate can be formed by combining finitely many $\mcc$-candidates. The following lemma provides a verification that the combination, omitting a countable part,  is indeed a $\mcc$-candidate.
\begin{lem}\label{lem 7.l}
Assume $\varphi_1(\mc{C}, \mc{T},\bfe)$, $n<\omega$ and $\langle (a_i, \msa^i): i<n\rangle$ is a sequence of $\mcc$-candidates. Suppose $b\in [\omega_1]^{<\omega}$, an uncountable non-overlapping $\msb\subseteq [\omega_1]^{N_\msb}$    and $\langle I_i\subseteq N_\msb: i<n\rangle$ satisfy the following conditions.
\begin{enumerate}[(a)]
\item $\bigcup_{i<n} a_i\subseteq b$ and $N_\msb=\bigcup_{i<n} I_i$.
\item For every $c\in \msb$ and $i<n$, $c[I_i]\in \msa^i$.
\end{enumerate}
Then for some $\alpha<\omega_1$, $(b, \msb\cap [\omega_1\setminus \alpha]^{<\omega})$ is a $\mcc$-candidate.
\end{lem}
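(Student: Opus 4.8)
The plan is to verify conditions (a)--(d) of Lemma \ref{lem 7.i} for the pair $(b, \msb')$ where $\msb' = \msb\cap[\omega_1\setminus\alpha]^{<\omega}$ for a suitable countable ordinal $\alpha$, using the hypotheses (a)--(b) of the present lemma together with the fact that each $(a_i, \msa^i)$ already satisfies Lemma \ref{lem 7.i}(a)--(d). First I would observe that $\max(b)<\min(\bigcup\msb')$ can be arranged by shrinking $\msb$, since $\msb$ is uncountable non-overlapping and $b$ is finite; this is part of what the passage to $\alpha$ buys us. The key point is that by hypothesis (b), for each $i<n$ the map $c\mapsto c[I_i]$ sends $\msb$ into $\msa^i$, and since $I_i$'s union is $N_\msb$, every coordinate $\msb_k$ of $\msb$ equals $\msa^i_{l}$ (read along $\msb[\Gamma]$ for the appropriate cofinal $\Gamma$) for some $i$ and some $l\in I_i$ with $I_i(l')=k$; more precisely $\msb_{I_i(l')}=\{c(I_i(l')): c\in\msb\}$ is, up to reindexing by a cofinal $\Gamma_i$, equal to $\msa^i_{l'}$. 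So the tree-nodes $\{\msb_k: k<N_\msb\}$ are, modulo a countable error, exactly collected from the nodes of the $\msa^i$'s.

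Next I would check (a), $\bfe$-invariance of $\{\msb'_k: k<N_\msb\}$: given $A\,\bfe\,\msb'_k$, we have $\msb'_k = (\msa^i_{l})[\Gamma]$ for suitable $i, l, \Gamma$ (after passing to $\msb'$), and $\bfe$-invariance of $\{\msa^i_{l'}: l'<N_{\msa^i}\}$ from Lemma \ref{lem 7.i}(a) for $(a_i,\msa^i)$ gives that $A$ is some $\msa^i_{l''}[\Gamma']$; but that in turn, via the copy relation $c[I_i]\in\msa^i$ and the cofinal $\Gamma$, is some $\msb_{I_i(l''')}[\Gamma'']$, hence some $\msb'_{k'}[\Gamma'']$ modulo countably many elements, which is what (a) demands. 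Here I should be careful that the appeal to Lemma \ref{lem 7.i}(a) is to the candidate hypothesis, and that the identification of $A$ with a node of $\msb'$ may introduce a countable discrepancy, absorbed into the choice of $\alpha$. For (b): if $|\msb'_k\cap\msc_j|=\omega_1$ for $(c,\msc)\in\mcc$, then picking $i$ with $k=I_i(l)$ we get $|\msa^i_l\cap\msc_j|=\omega_1$ (the two sets agree up to a countable set), so by Lemma \ref{lem 7.i}(b) for $(a_i,\msa^i)$ we obtain $c\subseteq a_i\subseteq b$ and $\msa^i_l\subset\msc_j$, whence $\msb'_k\subseteq\msa^i_l\subset\msc_j$ after shrinking; strictness $\msb'_k\subset\msc_j$ then follows because $\msb'_k$ is uncountable and contained in $\msc_j$ and equals $\msa^i_l$ up to countably many elements, so it cannot be cofinal if $\msa^i_l$ is not. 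Conditions (c) and (d) are similar: (c) follows from Lemma \ref{lem 7.i}(c) applied to the $(a_i,\msa^i)$ containing the relevant node, noting that $\msb'_k$ and $\msa^i_l$ eventually coincide so the order-comparison $A(\beta)<\msb'_k(\beta)$ can be read off from $A(\beta)<\msa^i_l(\beta)$ for $\beta$ large; and (d) follows similarly, with the case analysis of whether two coordinates $\msb'_k,\msb'_{k'}$ lie below the same $A\in\mct$ reduced — when $k\in I_i$ and $k'\in I_{i'}$ with $i=i'$ — to Lemma \ref{lem 7.i}(d) for $(a_i,\msa^i)$, and — when $i\neq i'$ — to the structure theory of $\mcc$ (the Remark after Lemma \ref{lem 7.c}) relating $\msa^i$ and $\msa^{i'}$.

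The main obstacle I anticipate is the bookkeeping of the countable discrepancies. Each identification ``$\msb_{I_i(l)}$ equals $\msa^i_l$'' only holds after reindexing by a cofinal $\Gamma_i\subseteq\omega_1$, and these $\Gamma_i$ need not be the same across different $i$; the point of passing from $\msb$ to $\msb\cap[\omega_1\setminus\alpha]^{<\omega}$ is precisely to throw away the initial segment where the various $\Gamma_i$ disagree and where nodes $\msb_k$ fail to match the corresponding $\msa^i$-nodes exactly. I would need to argue that finitely many cofinal subsets of $\omega_1$ have a common cofinal tail of the form $\omega_1\setminus\alpha$ after a uniform re-enumeration — which is false literally, but becomes workable because what we actually need is only that, above $\alpha$, the set-theoretic identities $\msb'_k=\msa^i_{l}$ (as \emph{sets}, not as indexed enumerations) and the order-isomorphism conditions hold; a careful choice of $\alpha$ as a closure point handles this. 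A secondary obstacle is verifying that $(b,\msb')\notin\mcc$ and is $\preceq$-maximal in $\mcc\cup\{(b,\msb')\}$ (the second clause of being a $\mcc$-candidate): $\preceq$-maximality is immediate from (b) of Lemma \ref{lem 7.i} as just verified, since no $(c,\msc)\in\mcc$ can have a coordinate strictly inside a coordinate of $\msb'$ without $c\subseteq b$, and the non-membership in $\mcc$ follows because if $(b,\msb')\in\mcc$ were comparable with some $(a_i,\msa^i)$ via Lemma \ref{lem 7.a} we would contradict that $(a_i,\msa^i)$ is itself $\preceq$-maximal unless $(b,\msb')=(a_i,\msa^i)$, which is excluded by (a) of the present lemma forcing $N_\msb$ to properly exceed each $N_{\msa^i}$ when $n\geq 2$ (and when $n=1$ the statement is essentially the Remark after the definition of $\mcc$-candidate). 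I would close by invoking Lemma \ref{lem 7.i} to conclude $(b,\msb')$ is a $\mcc$-candidate.
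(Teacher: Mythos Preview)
Your approach is the same as the paper's---verify conditions (a)--(d) of Lemma~\ref{lem 7.i}---but you are making it much harder than necessary by missing the key observation. Since $c[I_i]\in\msa^i$ for every $c\in\msb$, and since $c$ and $c[I_i]$ are both enumerated in increasing order, we have literally
\[
\msb_{I_i(j)}=\{(c[I_i])(j):c\in\msb\}\subseteq\{d(j):d\in\msa^i\}=\msa^i_j
\]
as a genuine set inclusion, not merely ``equal up to reindexing by a cofinal $\Gamma_i$''. Once you see this, conditions (a)--(c) of Lemma~\ref{lem 7.i} hold for $(b,\msb)$ itself with no shrinking at all: for (b), if $|\msb_k\cap\msc_{j'}|=\omega_1$ then $|\msa^i_j\cap\msc_{j'}|=\omega_1$, so Lemma~\ref{lem 7.i}(b) for $(a_i,\msa^i)$ gives $c\subseteq a_i\subseteq b$ and $\msa^i_j\subset\msc_{j'}$, whence $\msb_k\subset\msc_{j'}$; for (c), if $\msb_k\subseteq A\in\mct$ then $\msa^i_j\subseteq A$ (by (b) just verified), so $A(\alpha)<\msa^i_j(\alpha)\leq\msb_k(\alpha)$ since $\msb_k\subseteq\msa^i_j$; for (a), $\mcc(\msb_k)=\mcc(\msa^i_j)$ (same reasoning), and then Lemma~\ref{lem 7.h}(ii) together with $\bfe$-invariance of $\{\msa^i_l:l<N_{\msa^i}\}$ gives $[\msb_k]_\bfe\subseteq\{\msb_l:l\in I_i\}$ exactly. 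Your claim that ``$\msb'_k$ and $\msa^i_l$ eventually coincide'' is false in general---$\msb_k$ is an arbitrary uncountable subset of $\msa^i_j$, not a tail---but the correct inequality $\msa^i_j(\alpha)\leq\msb_k(\alpha)$ is what you actually need.

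The only place where shrinking to $\msb\cap[\omega_1\setminus\alpha]^{<\omega}$ is required is (d), and here the paper's argument is a single line: from (b), for each $k$ and each $A\in\mct$, either $\msb_k\subset A$ or $|\msb_k\cap A|\leq\omega$; the relevant $A$'s are those containing some $\msb_{k'}$, hence lie in $\bigcup_{k'}\mcc(\msb_{k'})$, which is finite by Lemma~\ref{lem 7.g}; so one $\alpha$ makes all the countable intersections empty. Your case split on $i=i'$ versus $i\neq i'$ is unnecessary, and your appeal in the $i\neq i'$ case to ``the Remark after Lemma~\ref{lem 7.c} relating $\msa^i$ and $\msa^{i'}$'' is a red herring: that Remark is about elements of $\mcc$, not about two $\mcc$-candidates, and you have no control over how $\msa^i$ and $\msa^{i'}$ interact. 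Finally, your worry about showing $(b,\msb')\notin\mcc$ and $\preceq$-maximality is already handled inside the proof of Lemma~\ref{lem 7.i} itself (it follows from condition (b) there), so you need not argue it separately.
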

\begin{proof}
We first show that conditions (a)-(c) in Lemma \ref{lem 7.i} hold for $(b, \msb)$. First note by (a) and (b), 
\begin{enumerate}
\item for every $k<N_\msb$, there are $i<n$ and $j<N_{\msa^i}$ such that $k\in I_i$ and $\msb_k\subseteq\msa^i_j$.
\end{enumerate}
In particular,  Lemma \ref{lem 7.i} (b)-(c) hold.

To see  Lemma \ref{lem 7.i} (a), fix $k<N_\msb$. Let $i<n$ and $ j<N_{\msa^i}$ witness (1). Since $(a_i, \msa^i)$ is a $\mcc$-candidate,  ${\max}_\preceq(\mcc(\msb_k))={\max}_\preceq(\mcc(\msa^i_j))$. Together with Lemma \ref{lem 7.h} (ii), $[\msb_k]_\bfe\subseteq \{\msb_l: l\in I_i\}$. So $\{\msb_k: k<N_\msb\}$ is $\bfe$-invariant.

Now  Lemma \ref{lem 7.i} (d) follows from Lemma \ref{lem 7.i} (b) by omitting a countable part of $\msb$. This finishes the proof of the lemma.
\end{proof}

We introduce the structure inside a $\mcc$-candidate or an element of $\mcc$ (see also \cite[Definition 14]{Peng25}). The structure will be used when we apply the damage control structure to a specific problem, e.g., a coloring with pre-described property.
 \begin{defn}
 Assume $\varphi_1(\mcc, \mct, \bfe)$ and $(a, \msa)$ is either a $\mcc$-candidate or an element of $\mcc$. 
 \begin{enumerate}[(i)]
 \item For $n<\omega$, $\mc{C}_n(a, \msa)$ is the collection of all $(b, \msb, \langle I_i: i<n\rangle)$ such that 
 \[(b, \msb)\in \mcc, ~ (b, \msb)\prec (a, \msa) \text{ and each } I_i\in \mc{F}(\msb, \msa).\]
$\mc{C}(a, \msa)=\bigcup_{n<\omega} \mc{C}_n(a, \msa)$. 
  
  \item Let $\preceq_{a, \msa}$ be the partial order defined on $\mc{C}(a, \msa)$ by 
  \begin{align*}
  (b, \msb, \langle I_i: i<m\rangle)\preceq_{a, \msa} (c, \ms{C}, \langle J_i: i<n\rangle) \text{ iff }\\
  m=n, ~ (b, \msb)\preceq (c, \ms{C}) \text{ and } \prod_{i<m} I_i\subseteq \prod_{i<n} J_i.
  \end{align*}
  \item $\prec_{a, \msa}$ is the strict part of $\preceq_{a, \msa}$, i.e., $x\prec_{a, \msa} y \text{ if } x\preceq_{a, \msa} y \wedge x\neq y$.
\item For $n<\omega$, $\mc{C}_{n, \max}(a, \msa)$ is the collection of $\preceq_{a, \msa}$-maximal elements of $\mc{C}_n(a, \msa)$ and $\mc{C}_{\max}(a, \msa)=\bigcup_{n<\omega} \mc{C}_{n, \max}(a, \msa)$.
  \end{enumerate}
 \end{defn}
 In particular, $\mc{C}_0(a, \msa)$ is the collection of $(b, \msb)\prec (a, \msa)$ in $\mcc$. Note by Lemma \ref{lem 7.g}, $\mc{C}_0(a, \msa)$ is finite. So $\mc{C}_n(a, \msa)$ is finite for every $n<\omega$.
 
 \begin{lem}\label{lem 7.po}
 Suppose $\varphi_1(\mcc, \mct, \bfe)$ and $(a, \msa)$ is either a $\mcc$-candidate or an element of $\mcc$.
Suppose $0<n<\omega$ and $  (b, \msb, \langle I_i: i<n\rangle)\neq (c, \ms{C}, \langle J_i: i<n\rangle)$ are in $\mc{C}_n(a, \msa)$. Then one of the following 3 alternatives occurs.
\begin{itemize}
\item $\prod_{i<n} I_i\cap \prod_{i<n} J_i=\emptyset$.
\item $(b, \msb)\prec (c, \ms{C})$ and $\prod_{i<n} I_i\subseteq \prod_{i<n} J_i$.
\item $(c, \ms{C})\prec (b, \msb)$ and $\prod_{i<n} J_i\subseteq \prod_{i<n} I_i$.
\end{itemize}
In particular, if $  (b, \msb, \langle I_i: i<n\rangle)$ and $(c, \ms{C}, \langle J_i: i<n\rangle)$ are both in $\mc{C}_{n, \max}(a, \msa)$, then $\prod_{i<n} I_i\cap \prod_{i<n} J_i=\emptyset$.
 \end{lem}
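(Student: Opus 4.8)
The plan is to deduce the trichotomy coordinate by coordinate from Lemma~\ref{lem 7.c}(iv) and then assemble the coordinates. First I would fix the ambient structure in which to run the argument: if $(a,\msa)\in\mcc$ we work inside $\mcc$, while if $(a,\msa)$ is only a $\mcc$-candidate we pass to $\mcc\cup\{(a,\msa)\}$, which satisfies $\varphi_1$ by the definition of a candidate. In either case $(b,\msb)$ and $(c,\ms{C})$ both lie strictly below $(a,\msa)$, so Lemma~\ref{lem 7.c}(iv) is applicable with $(a,\msa)$ in the role of the top element, $(b,\msb)$ and $(c,\ms{C})$ the two lower elements, and $I_i\in\mc{F}(\msb,\msa)$, $J_i\in\mc{F}(\ms{C},\msa)$ the two fibres over a fixed coordinate $i<n$. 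This gives, for each $i<n$, one of the three possibilities: (a) $I_i\cap J_i=\emptyset$; (b) $(b,\msb)\preceq(c,\ms{C})$ and $I_i\subseteq J_i$; (c) $(c,\ms{C})\preceq(b,\msb)$ and $J_i\subseteq I_i$.

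Next I would case on whether (a) ever occurs. If $I_i\cap J_i=\emptyset$ for some $i<n$, then $\prod_{i<n}I_i\cap\prod_{i<n}J_i=\emptyset$, which is the first alternative. Otherwise $I_i\cap J_i\neq\emptyset$ for every $i<n$, so for each $i$ one of (b), (c) holds; in particular $(b,\msb)$ and $(c,\ms{C})$ are $\preceq$-comparable. If $(b,\msb)=(c,\ms{C})$, then $I_i,J_i\in\mc{F}(\msb,\msa)$ have nonempty intersection for every $i$, and since the members of $\mc{F}(\msb,\msa)$ are pairwise disjoint (the $I_l$'s of property (C2)) this forces $I_i=J_i$ for all $i$, contradicting $(b,\msb,\langle I_i:i<n\rangle)\neq(c,\ms{C},\langle J_i:i<n\rangle)$; hence the comparability is strict. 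If $(b,\msb)\prec(c,\ms{C})$, then by antisymmetry (Lemma~\ref{lem 7.d}) alternative (c) is impossible for every $i$, so (b) holds for every $i$, whence $I_i\subseteq J_i$ for all $i$ and $\prod_{i<n}I_i\subseteq\prod_{i<n}J_i$; this is the second alternative, and the case $(c,\ms{C})\prec(b,\msb)$ is symmetric and gives the third.

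For the ``in particular'' clause I would argue by elimination: if $(b,\msb,\langle I_i:i<n\rangle)$ and $(c,\ms{C},\langle J_i:i<n\rangle)$ are distinct members of $\mc{C}_{n,\max}(a,\msa)$, the trichotomy just proved applies; the second alternative yields $(b,\msb,\langle I_i:i<n\rangle)\prec_{a,\msa}(c,\ms{C},\langle J_i:i<n\rangle)$, contradicting $\preceq_{a,\msa}$-maximality of the former, and the third alternative symmetrically contradicts maximality of the latter, so only the first alternative—disjointness of the two products—can hold. The only points needing care are invoking Lemma~\ref{lem 7.c}(iv) inside the enlarged structure $\mcc\cup\{(a,\msa)\}$ when $(a,\msa)$ is merely a candidate, and disposing of the equality case $(b,\msb)=(c,\ms{C})$ via pairwise disjointness of the fibres; the remainder is bookkeeping.
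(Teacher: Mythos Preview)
Your proof is correct and follows essentially the same route as the paper: work in $\mcc\cup\{(a,\msa)\}$, invoke Lemma~\ref{lem 7.c}(iv) coordinate by coordinate, dispose of the equality case $(b,\msb)=(c,\ms{C})$ via pairwise disjointness of the elements of $\mc{F}(\msb,\msa)$, and then use antisymmetry to force the inclusions to all point the same way. The paper merely presents these steps in a slightly different order (first assuming the products intersect, then deriving strict comparability, then reapplying Lemma~\ref{lem 7.c}(iv) at each coordinate), but the content is identical.
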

 \begin{proof}
 Assume $\prod_{i<n} I_i\cap \prod_{i<n} J_i\neq \emptyset$. First note that $(b, \msb)\neq (c, \ms{C})$. Otherwise, for each $i<n$, $I_i=J_i$ since $I_i\cap J_i\neq\emptyset$. This contradicts $  (b, \msb, \langle I_i: i<n\rangle)\neq (c, \ms{C}, \langle J_i: i<n\rangle)$.
 
Applying Lemma \ref{lem 7.c} (iv) under assumption $\varphi_1(\mcc\cup \{(a, \msa)\}, \mct')$, we conclude that either $(b, \msb)\prec (c, \ms{C})$ or $(c, \ms{C})\prec (b, \msb)$.
By symmetry, assume $(b, \msb)\prec (c, \ms{C})$. Now by Lemma \ref{lem 7.c} (iv) and the fact $I_i\cap J_i\neq\emptyset$, $I_i\subseteq J_i$ for every $i<n$.
 \end{proof}

In general, the collection $\mcc$ is extended in the iteration process. So the structure $(\mcc, \mct, \bfe)$ should be extendable. Moreover, we need for every $a\in [\omega_1]^{<\omega}$ and uncountable non-overlapping $\msa\subseteq [\omega_1]^{<\omega}$, a $\mcc$-candidate $(b, \msb)$ and $I\subseteq N_\msb$ with $a\subseteq b$ and  $\{c[I]: c\in \msb\}\subseteq \msa$. So in the process of iterated forcing, when we want to find an uncountable subset (e.g., 0-homogeneous over some coloring $\pi$) of $\{a\cup a': a'\in \msa\}$, instead of adding generic subsets of $\msa$, we add generic subsets of $\msb$ (with the requirement that finite unions of $a\cup c[I]$'s are 0-homogeneous).

  For this,  we will use the generalized almost disjointness number $\mathfrak{a}_{\omega_1}$ and the generalized bounding number $\mathfrak{b}_{\omega_1}$  on $\omega_1$ (modulo the co-bounded filter).  
  
  Say $\mc{A}\subseteq [\omega_1]^{\omega_1}$ is an almost disjoint family if $|X\cap Y|<\omega_1$  for all $X\neq Y$ in $\mc{A}$. Say an almost disjoint family $\mc{A}\subseteq [\omega_1]^{\omega_1}$ is maximal if for every $X\in [\omega_1]^{\omega_1}$, there exists $Y\in \mc{A}$ with $|X\cap Y|=\omega_1$.
  $\mathfrak{a}_{\omega_1}$ is the minimal size of an uncountable maximal almost disjoint family.
  
  For $f, g \in \omega_1^{\omega_1}$ functions  from  $\omega_1$ to $\omega_1$, say 
  $$f<_{[\omega_1]^{\leq \omega}} g\text{ if }|\{\alpha<\omega_1: f(\alpha)\geq g(\alpha)\}|\leq \omega.$$
   $\mathfrak{b}_{\omega_1}$ is the minimal size of an unbounded subfamily of $(\omega_1^{\omega_1}, <_{[\omega_1]^{\leq \omega}})$ (see \cite{CS}). We recall two facts that are straightforward to check.
   \begin{itemize}
   \item $\mathfrak{b}_{\omega_1}\geq \omega_2$.
   \item $\mathfrak{b}_{\omega_1}$ is not changed by ccc forcing.
   \end{itemize}
   
The following fact follows from the proof of $\mathfrak{b}\leq \mathfrak{a}$. 
\begin{fact}\label{fact b a}
$\mathfrak{b}_{\omega_1}\leq \mathfrak{a}_{\omega_1}$.
\end{fact}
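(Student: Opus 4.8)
The plan is to adapt the classical proof that $\mathfrak{b} \leq \mathfrak{a}$ to the uncountable setting, working modulo the co-bounded filter on $\omega_1$. First I would recall the structure of the standard argument: given a maximal almost disjoint family $\mc{A}$ on $\omega$ of size $<\mathfrak{b}$, one enumerates a countable subfamily $\{A_n : n<\omega\}$, forms the functions $f_{A}$ that "dominate the enumeration of $A$ past the $A_n$'s," bounds them by a single $g$, and uses $g$ together with the $A_n$'s to build a set almost disjoint from everything in $\mc{A}$, contradicting maximality. The uncountable analogue replaces "countable subfamily" by "subfamily of size $\omega_1$" and "finite" by "countable."

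The key steps, in order: (1) Suppose toward a contradiction that $\mc{A} \subseteq [\omega_1]^{\omega_1}$ is an uncountable maximal almost disjoint family with $|\mc{A}| = \mathfrak{a}_{\omega_1} < \mathfrak{b}_{\omega_1}$. Fix an enumeration $\{A_\xi : \xi < \omega_1\}$ of an $\omega_1$-sized subfamily $\mc{A}' \subseteq \mc{A}$ (possible since $\mc{A}$ is uncountable). (2) For each $B \in \mc{A} \setminus \mc{A}'$, the sets $B \cap A_\xi$ are countable for all $\xi$, so define $f_B : \omega_1 \to \omega_1$ by letting $f_B(\alpha)$ be an ordinal past $\sup(B \cap \bigcup_{\xi \leq \alpha} A_\xi \cap \alpha)$ — more precisely, choose $f_B(\alpha)$ so large that $B \cap \alpha$ and $\bigcup_{\xi<\alpha}(A_\xi \cap \alpha)$ are "separated" above $f_B(\alpha)$ in the appropriate bookkeeping sense. (3) Since $|\mc{A} \setminus \mc{A}'| \leq \mathfrak{a}_{\omega_1} < \mathfrak{b}_{\omega_1}$, the family $\{f_B : B \in \mc{A}\setminus\mc{A}'\}$ is bounded: there is $g \in \omega_1^{\omega_1}$ with $f_B <_{[\omega_1]^{\leq\omega}} g$ for all such $B$. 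We may assume $g$ is strictly increasing and continuous. (4) Using $g$ and a diagonalization over the $A_\xi$'s, construct $X \in [\omega_1]^{\omega_1}$ of the form $X = \{x_\alpha : \alpha<\omega_1\}$ where $x_\alpha$ is chosen in $A_\alpha$ but above $g(\beta)$ for suitable $\beta$, engineered so that $X \cap A_\xi \subseteq \{x_\eta : \eta \leq \xi\}$ is countable for every $\xi < \omega_1$, i.e. $X$ is almost disjoint from every member of $\mc{A}'$; and the bounding property of $g$ forces $X \cap B$ countable for every $B \in \mc{A}\setminus\mc{A}'$ as well. (5) Then $X$ witnesses the non-maximality of $\mc{A}$ (since $X$ is itself uncountable and almost disjoint from all of $\mc{A}$), a contradiction.

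I would package this as a short lemma-style argument, since $\mathfrak{b}_{\omega_1}$ and $\mathfrak{a}_{\omega_1}$ were just defined and the two cited facts ($\mathfrak{b}_{\omega_1} \geq \omega_2$, invariance under ccc forcing) are already on the table. The delicate point — and the main obstacle — is step (4): in the countable case one diagonalizes through $\omega$ and "finite initial segments" handle themselves, but over $\omega_1$ one must be careful that the set $X$ being built stays uncountable (cofinal) while simultaneously meeting each $A_\xi$ in only a countable set. This requires choosing the $x_\alpha$ along a club and using the continuity of $g$ together with the regularity of $\omega_1$ to ensure that at limit stages nothing has accumulated; the bookkeeping must guarantee that for a fixed $\xi$, once $\alpha$ is large enough (past $\xi$ and past a club point where $g$ "catches up"), $x_\alpha \notin A_\xi$. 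One also needs the mild point that $X$ being almost disjoint from $\mc{A}'$ and from $\mc{A}\setminus\mc{A}'$ combine to contradict maximality, which is immediate. A cleaner route, if the direct construction is awkward, is to observe that the proof is uniform enough to be phrased as: "the standard proof of $\mathfrak{b}\leq\mathfrak{a}$ relativizes verbatim, replacing $\omega$ by $\omega_1$, 'finite' by 'countable', and $<^*$ by $<_{[\omega_1]^{\leq\omega}}$, using regularity of $\omega_1$ where the original uses regularity of $\omega$" — and then the only real content to check is that $\omega_1$ is regular and that co-boundedly-often domination suffices for the separation argument, both of which are elementary.
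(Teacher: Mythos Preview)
Your approach is correct and matches the paper's. Your worry that step~(4) is delicate is unfounded, though: the paper simply sets $f_\xi(\alpha)=\sup(X_\xi\cap X_\alpha)$ for $\xi\in[\omega_1,\kappa)$, takes a bound $f$, and picks $x_\alpha\in X_\alpha$ with $x_\alpha>\max\bigl\{f(\alpha),\ \sup\bigcup_{\beta<\alpha}(X_\beta\cap X_\alpha)\bigr\}$; these two inequalities alone immediately yield $|X\cap X_\gamma|\le\omega$ for every $\gamma<\kappa$, with no clubs, continuity, or limit-stage bookkeeping required.
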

\begin{proof} Fix $\omega_1\leq\kappa<\mathfrak{b}_{\omega_1}$ and an almost disjoint family $\mc{A}=\{X_\alpha: \alpha<\kappa\}\subseteq [\omega_1]^{\omega_1}$.

For $\xi \in [\omega_1, \kappa)$, define $f_\xi: \omega_1\ra \omega_1$ by $f_\xi(\alpha)= \sup(X_\xi\cap X_\alpha)$ for $\alpha<\omega_1$.
 By  $\kappa<\mathfrak{b}_{\omega_1}$, let $f:\omega_1\ra \omega_1$ be a $<_{[\omega_1]^{\leq \omega}}$-upper bound of all $f_\xi$'s. Choose an increasing sequence $\langle x_ \alpha\in   X_ \alpha\setminus f(\alpha): \alpha <\omega_1\rangle$ such that $x_ \alpha> \sup (\bigcup_{\beta< \alpha} (X_\beta\cap X_ \alpha))$.
Then $X=\{x_\alpha: \alpha <\omega_1\}$ witnesses that $\mc{A}$ is not maximal.
\end{proof}

   For $A\in [\omega_1]^{\omega_1}$,  we will find a $\mcc$-candidate $(a, \msa)$ with $\msa_i\subseteq A$ for some $i$. For this, we will need \cite[Lemma 30]{Peng25}. We sketch a proof for completeness.
   
\begin{lem}[\cite{Peng25}]\label{lem Peng 30}
Assume $\varphi_1(\mc{C}, \mc{T})$ and $|\mcc|<\mathfrak{a}_{\omega_1}$. Then for every $A\in [\omega_1]^{\omega_1}$, there exists $B\in [A]^{\omega_1}$ such that for every $X\in \mct$, either $B\subset X$ or $|B\cap X|\leq \omega$.
\end{lem}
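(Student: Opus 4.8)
\textbf{Proof plan for Lemma \ref{lem Peng 30}.}
The plan is to build $B$ by a transfinite recursion of length $\omega_1$, at each stage discarding countably many ordinals from $A$ so as to ``resolve'' the membership relation of the current point with every node of $\mct$ that matters. The key observation enabling this is that although $\mct$ may be large, for a fixed $X\in[\omega_1]^{\omega_1}$ the relevant part of $\mct$ is controlled by the collection $\mcc(X)$, which is finite by Lemma \ref{lem 7.g}; and $\mct$ itself, being $\{\msa_i: (a,\msa)\in\mcc,\ i<N_\msa\}$ by (T2), has size at most $|\mcc|<\mathfrak{a}_{\omega_1}$. So there are $<\mathfrak{a}_{\omega_1}$ many nodes of $\mct$, and by (T3) any two incomparable ones meet in a countable set.

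First I would fix an enumeration issue: since $|\mct|<\mathfrak{a}_{\omega_1}$ and, by Fact \ref{fact b a}, $\mathfrak{b}_{\omega_1}\leq\mathfrak{a}_{\omega_1}$, the branches of $\mct$ through a generic point are controlled by a family of functions of size $<\mathfrak{b}_{\omega_1}$, hence bounded modulo countable sets. Concretely, for each $X\in\mct$ and each $\alpha<\omega_1$ with $A(\alpha)\in X$, let $g_X(\alpha)$ be the position of $A(\alpha)$ in the increasing enumeration of $X$; for $A(\alpha)\notin X$ set $g_X(\alpha)=0$. This gives $<\mathfrak{b}_{\omega_1}$ many functions $\omega_1\to\omega_1$, so there is a single $h\colon\omega_1\to\omega_1$ with $g_X<_{[\omega_1]^{\leq\omega}} h$ for all $X\in\mct$; discarding a countable subset of $A$, we may assume $g_X(\alpha)<h(\alpha)$ for all $X$ and all $\alpha$. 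Now recursively pick $B(\xi)\in A$: having chosen $\langle B(\eta):\eta<\xi\rangle$, the previously chosen points lie in only finitely many nodes ``above'' them in each $\mcc(B(\eta))$, and by (T4) moving up a node strictly decreases the enumeration index; combined with the bound $h$, only countably many nodes of $\mct$ can ever ``contain a tail of $B$ while also having met $\{B(\eta):\eta<\xi\}$ in an uncountable set.'' Choose $B(\xi)\in A$ above $\sup\{B(\eta):\eta<\xi\}$ avoiding the (countably many, by (T3) and the bound) bad intersection points accumulated so far.

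The verification is then that for $X\in\mct$, the set $\{\xi: B(\xi)\in X\}$ is either co-countable in $\omega_1$ (whence $B\subset^* X$, and after trimming a countable initial part $B\subset X$) or countable. If it were both uncountable and co-uncountable, pick $\xi$ large in the co-uncountable part and $\eta<\xi$ large in the uncountable part with $B(\eta)\in X$; then $X\in\mcc(\{B(\eta)\})$-relevant, and by (T3) any incomparable node meets $X$ countably, so the only way $B(\xi)\notin X$ is engineered by the recursion is if $B(\xi)$ was chosen to avoid $X$ — but we only avoid countably many points total associated to $X$, contradicting that $B(\xi)$ was forced out of $X$ uncountably often. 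I expect the main obstacle to be making precise the bookkeeping in the recursion: one must argue that the set of nodes of $\mct$ that are ``dangerous at stage $\xi$'' (those that have already accumulated an uncountable trace of $B$ but could still fail to swallow the tail) stays countable, which is exactly where (T4) (strict decrease of index when going up) together with the $\mathfrak{b}_{\omega_1}$-bound $h$ and the finiteness of each $\mcc(X)$ (Lemma \ref{lem 7.g}) must be combined. Once that is in place the pressing-down-style contradiction closes the argument.
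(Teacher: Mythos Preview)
Your recursion has a genuine gap: it never commits to a target node of $\mct$ that $B$ should live inside, and without that commitment the dichotomy cannot be forced. Consider already the simplest case $\mct=\{\omega_1,X,Y\}$ with $X,Y$ incomparable, $|X\cap Y|\le\omega$, and both $|A\cap X|,|A\cap Y|$ uncountable. Any valid $B$ must lie inside at most one of $X,Y$; a recursion that at each stage merely ``avoids countably many bad points'' has no mechanism for making this choice and can alternate between $X$ and $Y$, yielding $|B\cap X|=|B\cap Y|=\omega_1$ with $B$ contained in neither. Several auxiliary steps are also broken as written: the sentence ``discarding a countable subset of $A$, we may assume $g_X(\alpha)<h(\alpha)$ for all $X$ and all $\alpha$'' is false, since the exceptional set depends on $X$ and there can be uncountably many $X\in\mct$; the phrase ``nodes that have already accumulated an uncountable trace of $B$'' is vacuous at any countable stage $\xi$; and $\mcc(B(\eta))$ does not typecheck, as $\mcc(X)$ is defined only for uncountable $X$.

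The paper's argument avoids recursion entirely by \emph{first} locating the target node. Set $\mct'=\{C\in\mct:|A\cap C|=\omega_1\}$ and show there exist $C\in\mct'$ and $B\in[A\cap C]^{\omega_1}$ with $|B\cap C'|\le\omega$ for every immediate $\mct'$-successor $C'$ of $C$. If not, then for each $C\in\mct'$ the set $A\cap C$ is, up to a countable set, covered by $succ_{\mct'}(C)$; since these successors are pairwise almost disjoint and there are fewer than $\mathfrak{a}_{\omega_1}$ of them, $|succ_{\mct'}(C)|\le\omega$. Thus $\mct'$ is a countably branching tree of height $\le\omega$, hence countable, and picking $\alpha\in A$ outside the countable residue $\bigcup_{C\in\mct'}(A\cap C\setminus\bigcup succ_{\mct'}(C))$ produces an infinite $\supset$-chain in $\mct'$ through $\alpha$, contradicting Lemma~\ref{lem 7.f}. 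Once $C$ and $B$ are fixed, the dichotomy for an arbitrary $X\in\mct$ follows immediately by comparing $X$ with $C$ via (T3). The invariant $\mathfrak{b}_{\omega_1}$ plays no role; only $\mathfrak{a}_{\omega_1}$ is used, and only at the single step bounding $|succ_{\mct'}(C)|$.
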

\begin{proof}
Consider the subtree $\mct'=\{C\in \mct: |A\cap C|=\omega_1\}$. For $C\in \mct'$, let $succ_{\mct'}(C)$ be the collection of immediate successors of $C$  in $\mct'$. We claim that 
\begin{enumerate}
\item there are $C\in \mct'$ and $B\in [A\cap C]^{\omega_1}$ such that $|B\cap C'|\leq \omega$ for all $C'\in succ_{\mct'}(C)$.
\end{enumerate}
Suppose otherwise. Then for every $C\in \mct'$, $|A\cap C\setminus \bigcup succ_{\mct'}(C)|\leq \omega$ and, by $|succ_{\mct'}(C)|\leq |\mct|<\mathfrak{a}_{\omega_1}$,  $|succ_{\mct'}(C)|\leq \omega$.  So $\mct'$ is a countably branching tree of height $\leq \omega$. Then $\mct'$, and hence $\bigcup \{A\cap C\setminus \bigcup succ_{\mct'}(C): C\in \mct'\}$, is countable. Fix $\alpha\in A\setminus \bigcup \{A\cap C\setminus \bigcup succ_{\mct'}(C): C\in \mct'\}$. Then for $C\in \mct'$, $\alpha\in C$ iff $\alpha\in \bigcup succ_{\mct'}(C)$. Inductively find a chain $C_0\supset C_1\supset\cdots$ in $\mct'$ such that $\alpha\in  C_n$ for all $n$. This contradicts Lemma \ref{lem 7.f}.

Let $C, B$ witness (1).
We check that $B$ is as desired. Fix $X\in \mct$. 

If $X$ is $\supset$-incomparable with $C$, then by (T3), $|B\cap X|\leq |C\cap X|\leq \omega$.

If $C\subseteq X$, then $B\subset X$.

If $X\subset C$, then by (1) and the definition of $\mct'$, $|B\cap X|\leq \omega$.
\end{proof}
   
   \begin{lem}\label{lem 7.j}
  Assume $\varphi_1(\mc{C}, \mc{T},\bfe)$ and $|\mcc|<\mathfrak{a}_{\omega_1}$.  For every $A\in [\omega_1]^{\omega_1}$, there are  $(a, \msa)\in \mcc$ and $\Gamma\in [\omega_1]^{\omega_1}$ such that $\msa_i[\Gamma]\subseteq A$ for some $i<N_\msa$ and $(a, \msa[\Gamma])$ is a $\mcc$-candidate.
\end{lem}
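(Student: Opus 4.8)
\textbf{Proof plan for Lemma \ref{lem 7.j}.}

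The plan is to start from $A\in[\omega_1]^{\omega_1}$, first thin it out using Lemma \ref{lem Peng 30} so that it is ``tree-compatible'', then build the candidate by closing off under the already-existing structure of $\mcc$. More precisely, apply Lemma \ref{lem Peng 30} to get $B\in[A]^{\omega_1}$ such that for every $X\in\mct$ either $B\subseteq X$ or $|B\cap X|\le\omega$. Let $(b_0,\msb^0),\dots,(b_{k-1},\msb^{k-1})$ enumerate $\mcc(B)$ (which is finite by Lemma \ref{lem 7.g}), let $(a,\msa):=\max_\preceq(\mcc(B))$ be the (unique, by Lemma \ref{lem 7.e}) $\preceq$-maximal element, and fix $i<N_\msa$ with $B\subseteq\msa_i$. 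Since $B\subseteq\msa_i$, by Lemma \ref{lem 7.c}(iii) applied along $\mcc(B)$ we may pick $\Gamma\in[\omega_1]^{\omega_1}$ so that $\msa[\Gamma]=\{$the elements of $\msa$ indexed by positions whose $i$-th coordinate lies in $B\}$; concretely, $\Gamma=\{\gamma<\omega_1:\msa_i(\gamma)\in B\}$. The candidate we propose is $(a,\msa[\Gamma])$ (after possibly deleting a countable initial part, as in Lemma \ref{lem 7.l}).

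The key steps, in order: (1) thin $A$ to $B$ via Lemma \ref{lem Peng 30}; (2) identify $(a,\msa)=\max_\preceq\mcc(B)$ and set $\Gamma=\{\gamma:\msa_i(\gamma)\in B\}$ so $\msa_i[\Gamma]=B\subseteq A$; (3) verify that $(a,\msa[\Gamma])$ satisfies conditions (a)--(d) of Lemma \ref{lem 7.i}, hence is a $\mcc$-candidate. For (a), $\bfe$-invariance of $\{\msa_j[\Gamma]:j<N_\msa\}$ follows because $\{\msa_j:j<N_\msa\}$ is $\bfe$-invariant (as $(a,\msa)\in\mcc$, Lemma \ref{lem 7.h}(ii)) and restricting all coordinates by the same $\Gamma$ preserves the witnessing relation $\msa_j[\Gamma]=\msa_{j'}[\Gamma]$. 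For (b): given $(c,\ms{C})\in\mcc$, $j<N_\ms{C}$, $l<N_\msa$ with $|\msa_l[\Gamma]\cap\ms{C}_j|=\omega_1$; by (T3) $\msa_l[\Gamma]$ is $\subseteq$-comparable with $\ms{C}_j$, and since $\msa_l[\Gamma]\subseteq\msa_l$ which is already in $\mct$, one teases out from $\varphi_1(\mcc,\mct)$ that the only possibility consistent with maximality is $\msa_l[\Gamma]\subset\ms{C}_j$, whence $\ms{C}_j\supsetneq\msa_l[\Gamma]\supseteq B$ forces $(c,\ms{C})\in\mcc(B)$, so $(c,\ms{C})\preceq(a,\msa)$ and then $c\subseteq a$. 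Conditions (c) and (d) follow from (T4) and (T3)/(d) of $\varphi_1$ restricted to the $\msa_j$'s, exactly as in the proof of Lemma \ref{lem 7.i}, since $\msa_j[\Gamma]\subseteq\msa_j\in\mct$; if (d) fails on a set of size $\le\omega$ we delete that part, which is where the ``$\msa[\Gamma]$ up to a countable error'' comes in.

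The main obstacle I expect is step (3)(b): ruling out the case where $\msa_l[\Gamma]$ sits \emph{below} some new tree node $\ms{C}_j$ that is \emph{incomparable} with the old node $\msa_l$, or equivalently making sure that shrinking $\msa$ to $\msa[\Gamma]$ does not create a genuinely new place in the tree requiring $a$ to grow. The resolution should be that $\msa_l[\Gamma]\subseteq\msa_l$ and Lemma \ref{lem Peng 30} was applied precisely so that $B$ (hence any $\msa_l[\Gamma]\supseteq B$, after the same thinning propagated through $\Gamma$) is tree-compatible: every $X\in\mct$ with $|X\cap\msa_l[\Gamma]|=\omega_1$ has $X\supseteq\msa_l[\Gamma]$, so there is no incomparable node, and the comparison with $\ms{C}_j$ is forced to be $\msa_l[\Gamma]\subseteq\ms{C}_j$ or $\ms{C}_j\subseteq\msa_l[\Gamma]$; the latter with $|{\cdot}|=\omega_1$ plus $\ms{C}_j\in\mct$ would put $\msa_l$ below $\ms{C}_j$ contradicting $\preceq$-maximality of $(a,\msa)$ over $\mcc(B)$ unless $(c,\ms{C})$ is already $\preceq(a,\msa)$, which is exactly what we want. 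Once this comparison bookkeeping is pinned down, the rest is the routine checklist of Lemma \ref{lem 7.i}, and the cardinal hypothesis $|\mcc|<\mathfrak{a}_{\omega_1}$ is used only to license Lemma \ref{lem Peng 30}.
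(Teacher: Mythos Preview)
Your overall approach matches the paper's: apply Lemma \ref{lem Peng 30} to obtain $B$, take $(a,\msa)=\max_\preceq\mcc(B)$, set $\Gamma$ so that $\msa_i[\Gamma]=B$, and verify (a)--(d) of Lemma \ref{lem 7.i}, shrinking $\Gamma$ at the end for (c)--(d). However, your treatment of condition (b) contains a genuine error that is exactly the ``main obstacle'' you flagged but did not actually resolve.

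You write ``$\ms{C}_j\supsetneq\msa_l[\Gamma]\supseteq B$'', but this is only true for $l=i$; for $l\neq i$ the set $\msa_l[\Gamma]$ need not contain $B$ at all, so your conclusion $(c,\ms{C})\in\mcc(B)$ does not follow. The same slip recurs in your obstacle paragraph: ``$B$ (hence any $\msa_l[\Gamma]\supseteq B$\ldots)''. You also invoke (T3) to compare $\msa_l[\Gamma]$ with $\ms{C}_j$, but $\msa_l[\Gamma]$ is not in $\mct$, so (T3) does not apply to it directly; you must compare $\msa_l$ with $\ms{C}_j$ instead.

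The missing ingredient is the \emph{uniform propagation of the restriction index across coordinates}. Here is what the paper does. Suppose $|\ms{C}_j\cap\msa_l[\Gamma]|=\omega_1$. Since $\msa_l,\ms{C}_j\in\mct$, (T3) makes them comparable. If $\ms{C}_j\subsetneq\msa_l$, then by (C2)/Lemma \ref{lem 7.c}(iii) there is a single $\Gamma'\subsetneq\omega_1$ with $\ms{C}_j=\msa_l[\Gamma']$ \emph{and simultaneously} $\msa_i[\Gamma']\in\mct$ (the same $\Gamma'$ works for every coordinate). Now Lemma \ref{lem Peng 30} applied to $X=\msa_i[\Gamma']$ together with maximality of $(a,\msa)$ in $\mcc(B)$ gives $|B\cap\msa_i[\Gamma']|\le\omega$, i.e.\ $|\Gamma\cap\Gamma'|\le\omega$, hence $|\msa_l[\Gamma]\cap\ms{C}_j|=|\msa_l[\Gamma]\cap\msa_l[\Gamma']|\le\omega$, a contradiction. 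So $\msa_l\subseteq\ms{C}_j$, whence $(c,\ms{C})\preceq(a,\msa)$ and $c\subseteq a$. Your phrase ``the same thinning propagated through $\Gamma$'' gestures at this, but the actual mechanism --- that a tree node below any $\msa_l$ arises from a restriction index $\Gamma'$ shared by all coordinates --- is the content of Lemma \ref{lem 7.c}(iii), and without it the argument for coordinates $l\neq i$ does not close.
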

\begin{proof}
By Lemma \ref{lem Peng 30}, find $B\in [A]^{\omega_1}$ such that 
\begin{enumerate}
\item for every $X\in \mct$, either $B\subset X$ or $|B\cap X|\leq \omega$. 
\end{enumerate}
Let $(a, \msa)$ be the $\preceq$-maximal element of $\mcc(B)$.
Fix $i<N_\msb$ and $\Sigma\in [\omega_1]^{\omega_1}$ such that 
\[B= \msa_i[\Sigma].\]
 By (1) and maximality of $(a, \msa)$,
 \begin{enumerate}\setcounter{enumi}{1}
\item for every $\Gamma\in [\omega_1]^{\omega_1}\setminus\{\omega_1\}$, if $\msa_i[\Gamma]\in \mct$, then $|\Sigma\cap \Gamma|=|\msa_i[\Sigma]\cap \msa_i[\Gamma]|\leq \omega$.
\end{enumerate}
Recall by (C2) or Lemma \ref{lem  7.c} (iii), $\msa_i[\Gamma]\in \mct$ iff $\msa_j[\Gamma]\in \mct$. So by (2),
 \begin{enumerate}\setcounter{enumi}{2}
\item for every $X\in \mct$ and $j<N_\msa$, if $X\subset \msa_j$, then $|X\cap \msa_j[\Sigma]|\leq \omega$.
\end{enumerate}

We show that $(a, \msa[\Sigma'])$ satisfies Lemma \ref{lem 7.i} (b) for all $\Sigma'\in [\Sigma]^{\omega_1}$.  Fix $|\msb_k\cap \msa_{j}[\Sigma']|=\omega_1$ for some $(b, \msb)\in \mcc$ and $k,j$. By (T2), $\msa_j$ is $\subseteq$-comparable with $\msb_k$.
Then by (3), $\msa_j\subseteq \msb_k$.  So $\msa_j[\Sigma']\subset \msb_k$ and by (C2), $b\subseteq a$.

Now find $\Gamma\in [\Sigma]^{\omega_1}$ such that $(a, \msa[\Gamma])$ is a $\mcc$-candidate. To see the existence of such $\Gamma $, note that Lemma \ref{lem 7.i} (a) is automatically satisfied.
Then note by (3),
\[{\max}_\preceq(\mcc(\msa_j[\Sigma']))=\{(a, \msa)\}\text{ for all } \Sigma'\in [\Sigma]^{\omega_1}.\]
So Lemma \ref{lem 7.i} (c)-(d) are satisfied by choosing an appropriate subset $\Gamma$. 

Recall that $(a, \msa)\in \mcc$.
Now $(a, \msa)$ and $\Gamma $ are as desired.
\end{proof}

   Now we deal with finite tuples.
   
     \begin{lem}\label{lem 7.k}
  Assume $\varphi_1(\mc{C}, \mc{T},\bfe)$ and $|\mcc|<\mathfrak{a}_{\omega_1}$. For every $a\in [\omega_1]^{<\omega}$ and uncountable non-overlapping $\msa\subseteq [\omega_1]^{N_\msa}$, there are $\mcc$-candidate $(b, \msb)$ and $I\subseteq N_\msb$ such that $a\subseteq b$ and $\{b[I]: b\in \msb\}\subseteq \msa$.
\end{lem}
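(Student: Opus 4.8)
The plan is to realize $\msa$ inside a single $\mcc$-candidate obtained by gluing together, via Lemma \ref{lem 7.l}, one $\mcc$-candidate for each of the $N_\msa$ coordinates of $\msa$, the coordinate-candidates being produced by successive applications of Lemma \ref{lem 7.j}. The one genuine difficulty is that Lemma \ref{lem 7.j} gives no control over \emph{which} uncountable subfamily realizes a given coordinate, so the coordinates cannot be treated in parallel (a common refinement of $N_\msa$ arbitrarily chosen uncountable subfamilies of $\msa$ can be countable); instead they must be handled one at a time, each time passing to the subfamily of $\msa$ compatible with the candidate just found. Apart from this sequential bookkeeping, everything reduces to standard $\Delta$-system–style thinning and to checking the hypotheses of Lemma \ref{lem 7.l}.

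For the construction, set $S^{(-1)}=\msa$. For $i=0,1,\dots,N_\msa-1$ in turn, the $i$-th projection $\{c(i):c\in S^{(i-1)}\}$ is an uncountable subset of $\omega_1$ (on a non-overlapping family the map $c\mapsto c(i)$ is injective), so by Lemma \ref{lem 7.j} (using $|\mcc|<\mathfrak{a}_{\omega_1}$) there are a $\mcc$-candidate $(b_i,\msb^i)$ and $k_i<N_{\msb^i}$ with $\msb^i_{k_i}\subseteq\{c(i):c\in S^{(i-1)}\}$. Put $S^{(i)}=\{c\in S^{(i-1)}:c(i)\in\msb^i_{k_i}\}$, again uncountable, and let $S=S^{(N_\msa-1)}$. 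Then for every $c\in S$ and every $i<N_\msa$ there is a unique $d^i_c\in\msb^i$ with $d^i_c(k_i)=c(i)$, and each $c\mapsto d^i_c$ is an order-preserving injection of $S$ into the non-overlapping family $\msb^i$.

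Now glue: for $c\in S$ set $g_c=\bigcup_{i<N_\msa}d^i_c$. By a $\Delta$-system–style thinning pass to an uncountable $S'\subseteq S$ so that $\{g_c:c\in S'\}$ is non-overlapping (possible since each $c\mapsto d^i_c$ is order-preserving into a non-overlapping family and $N_\msa$ is finite, so $\min g_c\to\omega_1$), so that $|g_c|$ has a fixed value $N_\msb$ and the pattern recording, for each position of the increasing enumeration of $g_c$, which of the $d^i_c$ it lies in is the same for all $c\in S'$ — this yields fixed index sets $I^*_i\subseteq N_\msb$ with $g_c[I^*_i]=d^i_c$ and $\bigcup_{i<N_\msa}I^*_i=N_\msb$ — and so that $\min g_c>\max(a\cup\bigcup_{i<N_\msa}b_i)$. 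Let $\msb=\{g_c:c\in S'\}$ and $b=a\cup\bigcup_{i<N_\msa}b_i$. Restricting each coordinate-candidate to $(b_i,\{d^i_c:c\in S'\})$, which is still a $\mcc$-candidate by the Remark after the definition of $\mcc$-candidate, Lemma \ref{lem 7.l} applies to $\langle(b_i,\{d^i_c:c\in S'\}):i<N_\msa\rangle$ with the data $b$, $\msb$, $\langle I^*_i:i<N_\msa\rangle$: its hypothesis (a) holds since $\bigcup_i b_i\subseteq b$ and $N_\msb=\bigcup_i I^*_i$, and (b) holds since $g_c[I^*_i]=d^i_c$. Hence $(b,\msb^*)$ is a $\mcc$-candidate, where $\msb^*=\msb\cap[\omega_1\setminus\alpha]^{<\omega}$ for some $\alpha<\omega_1$. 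Finally put $I=\{I^*_i(k_i):i<N_\msa\}$; these are $N_\msa$ distinct positions of $g_c$ (the position of $c(i)=d^i_c(k_i)$), and since $c(0)<\cdots<c(N_\msa-1)$ they are increasing in $i$, so $g_c[I]=c$ for all $c\in S'$. Thus $\{g[I]:g\in\msb^*\}\subseteq\msa$ and $a\subseteq b$, which is what was required. The main obstacle is exactly the sequential restriction forced by the non-effectivity of Lemma \ref{lem 7.j}; the thinnings and the verification of the hypotheses of Lemma \ref{lem 7.l} are then routine.
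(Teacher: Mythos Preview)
Your proof is correct and follows essentially the same approach as the paper: sequentially apply Lemma~\ref{lem 7.j} to each coordinate of $\msa$ (shrinking the surviving subfamily at each step), glue the resulting coordinate-candidates together, thin to stabilize the overlap pattern, and invoke Lemma~\ref{lem 7.l}. The only differences are cosmetic---you index by elements $c\in\msa$ while the paper indexes by ordinals $\alpha$ with $\msa(\alpha)$ the relevant element---and your explicit remark about why the sequential (rather than parallel) treatment of coordinates is forced is a helpful clarification that the paper leaves implicit.
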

\begin{proof}
Inductively applying Lemma \ref{lem 7.j} to find  $\langle (a_i, \msa^i)\in\mcc: i<N_\msa\rangle$ and $\Gamma_0\supseteq \cdots\supseteq \Gamma_{N_\msa-1}$ such that
for each $i<N_\msa$, 
\begin{enumerate}
\item for some $j<N_{\msa^i}$ and $\Sigma_i\in [\omega_1]^{\omega_1}$, $\msa^i_j[\Sigma_i]=\msa_i[\Gamma_i]$ and $(a_i, \msa^i[\Sigma_i])$ is a $\mcc$-candidate.
\end{enumerate}
In order to have $\Gamma_i\supseteq \Gamma_{i+1}$, at step $i+1$, we apply Lemma \ref{lem 7.j} to $\msa_{i+1}[\Gamma_i]$.


Let $b=a\cup \bigcup_{i<N_\msa} a_i$ and for $\alpha\in \Gamma_{N_\msa-1}$,  
\[b_\alpha=\bigcup\{a'\in \msa^i: i<N_\msa, \msa_i(\alpha)\in a'\}.\]
 Let $I_\alpha\subseteq |b_\alpha|$ and $\langle J_{\alpha, i}: i<N_\msa\rangle$ be such that
\begin{enumerate}\setcounter{enumi}{1}
\item $b_\alpha[I_\alpha]\in \msa$, $b_\alpha[J_{\alpha, i}]\in \msa^i$ and $\msa_i(\alpha)\in b_\alpha[J_{\alpha, i}]$ for $i<N_\msa$.
\end{enumerate}
By (1) and the fact $\Gamma\subseteq \Gamma_{N_\msa-1}$, 
\begin{enumerate}\setcounter{enumi}{2}
\item $b_\alpha[J_{\alpha, i}]\in \msa^i[\Sigma_i]$.
\end{enumerate}

Now find $\Gamma\in [\Gamma_{N_\msa-1}]^{\omega_1}$, $I$ and $\langle J_i: i<N_\msa\rangle$ such that 
\[\msb=\{b_\alpha: \alpha\in\Gamma\}\text{ is non-overlapping   and $I=I_\alpha$, }J_i=J_{\alpha, i}\]
 whenever $\alpha\in \Gamma, i<N_\msa$. Now by definition of $b_\alpha$, 
\begin{enumerate}\setcounter{enumi}{3}
\item for every $\alpha\in \Gamma$, $b_\alpha=\bigcup_{i<N_\msa} b_\alpha[J_i]$. Hence, $N_\msb=\bigcup_{i<N_\msa} J_i$.
\end{enumerate}
 By (2)-(3),  for every $\alpha\in \Gamma$ and $i<N_\msa$,
\begin{enumerate}\setcounter{enumi}{4}
\item $b_\alpha[I]\in \msa$, $b_\alpha[J_{i}]\in \msa^i[\Sigma_i]$ and $\msa_i(\alpha)\in b_\alpha[J_{i}]$.
\end{enumerate}

Now by (1), (4)-(5) and Lemma \ref{lem 7.l},  for some $\alpha<\omega_1$, $(b, \msb\cap [\omega_1\setminus \alpha]^{<\omega})$ and $I$ are as desired.
\end{proof}

By extending the collection, $\varphi_1$ is preserved while adding a $\mcc$-candidate   to $\mcc$.   It turns out that continuously extending $\mcc$'s   at limit stages  preserves $\varphi_1$ as well (see also the proof of \cite[Lemma 16]{Peng25}).

  \begin{lem}\label{lem 7.m}
Suppose $\nu$ is a limit ordinal, $\langle \mcc^\alpha: \alpha\leq \nu\rangle$ is a sequence of collections such that $\mcc^\alpha\subseteq \mcc^\beta$ for $\alpha<\beta<\nu$. Suppose  $\mcc^\nu=\bigcup_{\alpha<\nu} \mcc^\alpha$ and $\mct^ \nu, \bfe^ \nu $ are induced from $\mcc^ \nu $. If $\varphi_1(\mcc^\alpha, \mct^\alpha, \bfe^\alpha)$ holds for every $\alpha<\nu$, then $\varphi_1(\mcc^\nu, \mct^\nu,\bfe^\nu)$ holds.
\end{lem}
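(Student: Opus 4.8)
The plan is to check each of the clauses (C1)--(C2), (T1)--(T4) and (E1) for the limit collection $\mcc^\nu$ and its induced $\mct^\nu,\bfe^\nu$, using the fact that every element of $\mcc^\nu$, indeed every \emph{finite} subset of $\mcc^\nu$, already lies in some $\mcc^\alpha$ with $\alpha<\nu$. The key observation is that all the axioms except possibly (T1) are \emph{local}: they quantify over one or two elements of $\mcc$ at a time, or over one or two members of $\mct$, each of which is $\msa_i$ for some single $(a,\msa)\in\mcc$. So for any instance of (C1), (C2), (T2), (T3), (T4) or (E1) in the limit, pick the finitely many elements of $\mcc^\nu$ involved, find a single $\alpha<\nu$ with all of them in $\mcc^\alpha$ (using that the chain is increasing and $\nu$ is a limit), and quote the corresponding clause of $\varphi_1(\mcc^\alpha,\mct^\alpha,\bfe^\alpha)$. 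One must check that the induced objects agree: since $\mct^\alpha=\{\msa_i:(a,\msa)\in\mcc^\alpha,\ i<N_\msa\}$ and likewise for $\mct^\nu$, we have $\mct^\nu=\bigcup_{\alpha<\nu}\mct^\alpha$, and similarly $\bfe^\nu=\bigcup_{\alpha<\nu}\bfe^\alpha$ by (E1); and the relations $\preceq$, $\mc{F}(\cdot,\cdot)$ computed inside $\mcc^\alpha$ restrict correctly because they too are determined by the pair of elements alone.

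Concretely I would proceed as follows. First, (C1): each $(a,\msa)\in\mcc^\nu$ lies in some $\mcc^\alpha$, so it has the required form; and $(\emptyset,[\omega_1]^1)\in\mcc^0\subseteq\mcc^\nu$. Second, (C2): given $(a,\msa),(b,\msb)\in\mcc^\nu$ with $\msb_i\subseteq\msa_j$, choose $\alpha$ with both in $\mcc^\alpha$ and apply (C2) there; the witnesses $k$ and $\{I_l\}$ obtained are absolute. Third, (T2): $\mct^\nu=\bigcup_\alpha\mct^\alpha$ by definition, and $\mct^\nu_0=\{\omega_1\}$ since each $\mct^\alpha_0=\{\omega_1\}$. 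Fourth, (T3) and (T4): these compare two members $A=\msa_i$, $B=\msb_k$ of $\mct^\nu$, so again drop to a common $\mcc^\alpha$ containing $(a,\msa),(b,\msb)$ and quote the clause there. Fifth, (E1): the definition of $\bfe^\nu$ is just the union, and the fact that it is an equivalence relation follows either directly from Lemma~\ref{lem 7.h}(i) applied once $\varphi_1(\mcc^\nu,\mct^\nu)$ is established, or by the same localization argument on transitivity (any instance $A\bfe^\nu B\bfe^\nu C$ uses two elements of $\mcc^\nu$, hence lives in some $\mcc^\alpha$).

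The one clause needing care is (T1), that $(\mct^\nu,\supset)$ is a downward tree of height $\leq\omega$. Being a downward tree means: for each $X\in\mct^\nu$, the set of $\supset$-predecessors of $X$ is well-ordered by $\supset$; this is local in $X$ since every predecessor $Y\supset X$ is $\supseteq$-comparable with $X$ by (T3) and both $X,Y$ come from a single pair of elements once we fix the one $(a,\msa)$ with $X=\msa_i$ --- more precisely, I would use Lemma~\ref{lem 7.g}: for $X\in\mct^\nu$ pick $(a,\msa)\in\mcc^\alpha$ and $i$ with $X=\msa_i$, then every $Y\supset X$ in $\mct^\nu$ is $\msb_k$ with $X\subseteq\msb_k$, so $(b,\msb)\in\mcc^\nu(X)$, and $\mcc^\nu(X)$ is finite (apply Lemma~\ref{lem 7.g} inside any $\mcc^\beta$ that contains all of $\mcc^\nu(X)$ --- here one needs $\mcc^\nu(X)$ to be finite a priori, which follows because the chain of predecessors is well-founded by Lemma~\ref{lem 7.f}, applied in a suitable $\mcc^\beta$). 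Thus the predecessors of $X$ form a finite $\supset$-chain, giving both well-ordering and finite (hence $<\omega$) height below each node. This bootstrapping --- establishing finiteness of $\mcc^\nu(X)$ before we formally know $\varphi_1(\mcc^\nu,\mct^\nu)$ --- is the main subtlety; it is handled exactly as in the proof of \cite[Lemma~16]{Peng25}, using that $\varphi_1$ holds at every $\alpha<\nu$ and that any purported infinite descending chain or infinite antichain below $X$ would already be a counterexample inside some $\mcc^\beta$.

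Having verified (C1)--(C2), (T1)--(T4) and (E1) for $(\mcc^\nu,\mct^\nu,\bfe^\nu)$, we conclude $\varphi_1(\mcc^\nu,\mct^\nu,\bfe^\nu)$, completing the proof. The whole argument is the routine ``everything is determined by finitely many members, and finitely many members already appear at some stage below $\nu$'' limit lemma, with the only real work being the finiteness bootstrap for (T1).
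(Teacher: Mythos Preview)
Your overall strategy matches the paper's: all clauses except (T1) are local in one or two elements of $\mcc^\nu$ and so reduce to some $\mcc^\alpha$; (T1) is the only clause needing an actual argument. Your treatment of (T1), however, has a gap. You argue that an infinite chain of predecessors of $X$ in $\mct^\nu$ ``would already be a counterexample inside some $\mcc^\beta$'', but when $\mathrm{cf}(\nu)=\omega$ an infinite chain need not lie in any single $\mcc^\beta$, so you cannot simply invoke Lemma~\ref{lem 7.f} or Lemma~\ref{lem 7.g} there. The appeal to Lemma~\ref{lem 7.g} is circular for the same reason: finding a $\mcc^\beta\supseteq\mcc^\nu(X)$ presupposes the finiteness you are trying to prove.

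The fix, which is what the paper does, is to work \emph{pairwise}. Predecessors of $A$ in $\mct^\nu$ are pairwise $\supset$-comparable (by local (T3)), so if there are infinitely many, they contain either an infinite $\subset$-increasing sequence $A_0\subset A_1\subset\cdots$ or an infinite $\supset$-decreasing sequence $A_0\supset A_1\supset\cdots$ with $A\subseteq\bigcap_n A_n$. In the first case, each pair $A_n\subset A_{n+1}$ lies in some $\mct^{\alpha_n}$, where (T4) gives $A_{n+1}(0)<A_n(0)$, producing an infinite decreasing sequence of ordinals. In the second case, fix $\xi\in A$ and let $\beta_n$ satisfy $A_n(\beta_n)=\xi$; pairwise (T4) gives $A_n(\beta_{n+1})<A_{n+1}(\beta_{n+1})=\xi=A_n(\beta_n)$, hence $\beta_{n+1}<\beta_n$, again a contradiction (this is exactly the proof of Lemma~\ref{lem 7.f}, run pairwise). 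The point is that the contradiction comes from countably many \emph{pairwise} applications of (T4), each inside its own $\mcc^{\alpha_n}$, not from the whole chain living in a single stage.
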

\begin{proof}
For (T1), it suffices to prove that every $A\in \mct^\nu$ has finitely many predecessors.  But if $A$ has infinitely many predecessors, then 
\begin{itemize}
\item either there is a $\subset$-chain $A_0\subset A_1\subset\cdot\cdot\cdot$ in $\mct^\nu$ which induces $A_0(0)>A_1(0)>\cdot\cdot\cdot$ by (T4) for $\mct^\alpha$'s;
\item or there is a $\supset$-chain $A_0\supset A_1\supset\cdot\cdot\cdot$ in $\mct^\nu$ with non-empty intersection which yields a contradiction by proof of Lemma \ref{lem 7.f}.
\end{itemize}
Since neither of above cases can occur, (T1) holds. The rest properties only refer to  finitely many elements and follow from $\varphi_1(\mcc^\alpha, \mct^\alpha, \bfe^\alpha)$ for some $\alpha<\nu$.
\end{proof}

Note that $\varphi_1(\mcc, \mct, \bfe)$ is absolute between models with the same $\omega_1$. So in above lemma, although we describe the condition in one model, the conclusion holds if $\mcc^\alpha$'s are obtained in the iterated forcing. More precisely, the conclusion holds if for some iterated forcing $\langle \mc{P}_\alpha, \dot{\mc{Q}}_\beta: \alpha\leq \nu, \beta<\nu\rangle$,
\begin{itemize}
\item  $\varphi_1(\mcc^\alpha, \mct^\alpha, \bfe^\alpha)$ holds in $V^{\mc{P}_\alpha}$ and $\mc{P}_\nu$ preserves $\omega_1$.
\end{itemize}

When using the damage control structure $(\mcc, \mct, \bfe)$ in a specific problem, we need to verify several properties on $\mcc$-candidates in a forcing extension. In what follows, we present several cases of pulling a $\mcc$-candidate (or a $\mcc'$-candidate for some extension $\mcc'$ of $\mcc$) in a forcing extension back to another $\mcc$-candidate in the ground model.

 \begin{lem}\label{lem 7.n}
  Assume $\varphi_1(\mc{C}, \mc{T},\bfe)$. Suppose 
\begin{enumerate}[(i)]
\item $\mc{P}$ is a ccc poset;
\item for some $p\in \mc{P}$ and $(a, \dot{\msa})$, $p\Vdash (a, \dot{\msa})$ is a $\mcc$-candidate;
\item for every $\alpha<\omega_1$, there are $p_\alpha\leq p$ and $a_\alpha\in[\omega_1\setminus \alpha]^{<\omega}$ such that 
\[p_\alpha\Vdash a_\alpha\in \dot{\msa}.\]
\end{enumerate}
Then for every $\Gamma\in [\omega_1]^{\omega_1}$, there exists $\Sigma\in [\Gamma]^{\omega_1}$ such that $(a, \{a_\alpha: \alpha\in \Sigma\})$ is a $\mcc$-candidate.
\end{lem}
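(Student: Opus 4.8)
## Proof plan for Lemma \ref{lem 7.n}

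The plan is to use a standard reflection/counting argument to extract an uncountable subfamily of the $a_\alpha$'s that genuinely forms a $\mcc$-candidate in the ground model, verifying the criteria of Lemma \ref{lem 7.i}. First I would set up the combinatorial skeleton: by (iii) we have conditions $p_\alpha\le p$ and finite sets $a_\alpha\in[\omega_1\setminus\alpha]^{<\omega}$ with $p_\alpha\Vdash a_\alpha\in\dot\msa$. Fix an arbitrary $\Gamma\in[\omega_1]^{\omega_1}$. Since each $a_\alpha$ is finite and $\min(a_\alpha)\ge\alpha$, a $\Delta$-system argument together with the fact that $\{N_{\dot\msa}$ is forced to be some fixed $n\}$ can be decided on an uncountable set lets me thin $\Gamma$ to an uncountable $\Gamma'$ so that: all $a_\alpha$ ($\alpha\in\Gamma'$) have the same size $n$, form a $\Delta$-system with some root $r$, are non-overlapping above the root, and $p_\alpha$ forces $N_{\dot\msa}=n$. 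Actually the cleanest route is to note that $p\Vdash\dot\msa$ is uncountable non-overlapping with $\max(a)<\min(\bigcup\dot\msa)$, so on a further uncountable subset I may assume $r=\emptyset$ and $\max(a)<\min(a_\alpha)$ for all $\alpha\in\Gamma'$, and that $\langle a_\alpha:\alpha\in\Gamma'\rangle$ is non-overlapping. Set $\msa^*=\{a_\alpha:\alpha\in\Gamma'\}$; I will produce the desired $\Sigma$ as a further uncountable refinement of $\Gamma'$.

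Next I would verify the four conditions (a)--(d) of Lemma \ref{lem 7.i} for $(a,\{a_\alpha:\alpha\in\Sigma\})$ after suitable thinning. The key observation driving all of them is absoluteness: $\varphi_1(\mcc,\mct,\bfe)$ and hence the property ``$(b,\msc)$ is a $\mcc$-candidate'' is absolute between $V$ and $V^{\mc P}$ (it only quantifies over elements of the fixed ground-model collection $\mcc$, over subsets of $\omega_1$, and over $\mct,\bfe$, all of which are unchanged since $\mc P$ is ccc and preserves $\omega_1$ — compare the remark after Lemma \ref{lem 7.m}). For condition (b) of Lemma \ref{lem 7.i}: given $(b,\msb)\in\mcc$, $j<N_\msb$, $i<n$, I must show that after thinning, either ($b\subseteq a$ and $(\msa^*)_i\subset\msb_j$) or $|(\msa^*)_i\cap\msb_j|\le\omega$. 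For this, work in $V^{\mc P}$ below $p$: there $(a,\dot\msa)$ is a $\mcc$-candidate, so by Lemma \ref{lem 7.i}(b) applied in $V^{\mc P}$, either ($b\subseteq a$ and $\dot\msa_i\subset\msb_j$) or $|\dot\msa_i\cap\msb_j|\le\omega$. The point is that $(\msa^*)_i=\{a_\alpha(i):\alpha\in\Gamma'\}$ and each $a_\alpha$ is forced (by $p_\alpha$) to lie in $\dot\msa$, so $a_\alpha(i)\in\dot\msa_i$ whenever $p_\alpha$ is in the generic filter; by a density/genericity argument one finds a single condition $q\le p$ deciding which of the two alternatives of (b) holds, and then the uncountable set of $\alpha$ with $p_\alpha\le q$ (or compatible with $q$, refining appropriately) is the thinning that works. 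A parallel treatment handles (c) (using Lemma \ref{lem 7.i}(c) in $V^{\mc P}$ plus (T4), which is a ground-model fact about $\mct$) and (d). Condition (a), $\bfe$-invariance of $\{(\msa^*)_i:i<n\}$, I would get from Lemma \ref{lem 7.h}(ii): ${\max}_\preceq(\mcc(\dot\msa_i))$ is computed the same way in $V^{\mc P}$ and in $V$, and the $\bfe$-class of $(\msa^*)_i$ is read off the $\preceq$-maximal element of $\mcc((\msa^*)_i)$, which lies in $\mcc$; a finitely-many-cases thinning (since $\mcc((\msa^*)_i)$ is finite by Lemma \ref{lem 7.g}) makes this uniform across all $\alpha\in\Sigma$.

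The implementation detail I would be most careful about is the interplay between ``$p_\alpha$ forces $a_\alpha\in\dot\msa$'' for different $\alpha$ and a single decision about the structure: the $p_\alpha$'s need not be pairwise compatible, so I cannot simply read off $\dot\msa_i\cap\msb_j$ from the generic. The fix is to argue property by property: for each of the finitely many ``configuration questions'' (which alternative of (b)/(c)/(d) holds for each of the finitely many relevant $(b,\msb)\in\mcc$ and indices), the answer is decided by some $q_\alpha\le p_\alpha$; since there are only countably many possible answer-vectors and only countably many $q$'s up to the relevant decisions (or: by passing to an uncountable set on which $q_\alpha$ forces the same answer-vector), I get an uncountable $\Sigma\subseteq\Gamma'$ on which all these answers are constant, and moreover on which the ground-model non-overlapping and root conditions hold. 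Then $(a,\{a_\alpha:\alpha\in\Sigma\})$ satisfies all of Lemma \ref{lem 7.i}(a)--(d) in $V$, hence is a $\mcc$-candidate, finishing the proof.

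The main obstacle, as indicated, is precisely this descent from a generic statement about $\dot\msa$ to a ground-model statement about a concrete uncountable family $\{a_\alpha:\alpha\in\Sigma\}$: one must be disciplined about the fact that membership $a_\alpha\in\dot\msa$ is only forced by $p_\alpha$, not by $p$, so the ``correct'' structural data has to be harvested one condition at a time and then homogenized on an uncountable set. Once that bookkeeping is set up, each of (a)--(d) follows mechanically from the corresponding clause of Lemma \ref{lem 7.i} applied inside $V^{\mc P}$ together with ccc-absoluteness of $\mcc$, $\mct$, $\bfe$ and the finiteness statements of Lemmas \ref{lem 7.g} and \ref{lem 7.h}.
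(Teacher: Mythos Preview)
Your overall strategy---reflect the generic $\mcc$-candidate property of $(a,\dot\msa)$ down to a ground-model family via the criteria of Lemma~\ref{lem 7.i}---is correct and is essentially what the paper does. But there is one genuine gap: your treatment of condition~(c) of Lemma~\ref{lem 7.i}. You write that ``a parallel treatment handles~(c)'', but~(c) is not a dichotomy like~(b) or~(d); it is the growth requirement $A(\alpha)<\msa^*_i(\alpha)$ for \emph{every} $\alpha<\omega_1$. Knowing that $A(\alpha)<\dot\msa_i(\alpha)$ holds generically tells you nothing about the ground-model enumeration of $\msa^*_i=\{a_\gamma(i):\gamma\in\Sigma\}$, because the position of $a_\gamma$ inside $\dot\msa$ is not controlled by anything you have arranged. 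Homogenizing finitely many ``answer vectors'' cannot produce a growth condition valid at every $\alpha$.

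The paper's fix is a specific thinning step you are missing. After finding (via ccc) a single $q\le p$ that decides $N_{\dot\msa}=N$ and the finite set $\mcc_0(a,\dot\msa)=\mcx$ (and $\mcc_1(a,\dot\msa)$), and after thinning $\Gamma$ to $\Sigma'$ on which each $p_\alpha$ is compatible with $q$ and the $a_\alpha$'s are non-overlapping, one uses ccc once more: for each $\alpha$ there is $\beta_\alpha$ with $q\Vdash\dot\msa(\alpha)\subseteq\beta_\alpha$, so one can thin $\Sigma'$ to $\Sigma$ satisfying $q\Vdash a_{\Sigma(\alpha)}>\dot\msa(\alpha)$ for every $\alpha$. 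Then for $\msb_j\supset\msa^*_i$ with $(b,\msb)\in\mcx$, one gets $q\Vdash a_{\Sigma(\alpha)}(i)>\dot\msa_i(\alpha)>\msb_j(\alpha)$, whence $\msa^*_i(\alpha)=a_{\Sigma(\alpha)}(i)>\msb_j(\alpha)$ in $V$, which is exactly~(c). Note also that the paper's approach of committing to \emph{one} $q$ that decides the finite structural data $\mcx$ up front is cleaner than your piecemeal ``$q_\alpha\le p_\alpha$ decides an answer-vector'' scheme: it immediately identifies the finitely many $(b,\msb)$ against which~(b),~(c),~(d) need to be checked, and makes the growth argument for~(c) possible.
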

\begin{proof}
First by ccc of $\mc{P}$, find $q\leq p$ such that $q\Vdash |\{\alpha\in \Gamma: p_\alpha\in \dot{G}\}|=\omega_1$ where $\dot{G}$ is the canonical name of the generic filter. Extending $q$ if necessary, we may assume that 
  \begin{enumerate}
  \item for some $\mc{X}\in [\mcc]^{<\omega}$ and $N<\omega$, 
  \[q\Vdash N_{\dot{\msa}}=N\text{ and }\mc{C}_0(a, \dot{\msa})=\mcx,\]  
  and $q$ determines $\mc{C}_1(a, \dot{\msa})$.
  \end{enumerate}
  Note the existence of a finite $\mc{X}$ is guaranteed   by   Lemma \ref{lem 7.g}.

Now find $\Sigma'\in [\Gamma]^{\omega_1}$ such that
\begin{enumerate}\setcounter{enumi}{1}
\item for every $\alpha\in \Sigma'$, $q$ is compatible with $p_\alpha$;
\item $\msa'=\{a_\alpha: \alpha\in \Sigma'\}$ is non-overlapping.
\end{enumerate}
Recall that $\mc{P}$ is ccc. So for every $\alpha<\omega_1$, there exists $\beta<\omega_1$ such that $q\Vdash \dot{\msa}(\alpha)\subseteq \beta$.\footnote{Recall that $\msa(\alpha)$ is the $\alpha$th element of $\msa$.}  Find $\Sigma\in [\Sigma']^{\omega_1}$ such that
\begin{enumerate}\setcounter{enumi}{3}
\item  for every $\alpha<\omega_1$,   $q\Vdash a_{\Sigma(\alpha)}>\dot{\msa}(\alpha)$.
\end{enumerate}

By (1) and (2), 
\begin{enumerate}\setcounter{enumi}{4}
\item for every $(b, \msb)\in \mc{X}$,  $\msa'_i\subseteq \msb_j\text{ for some } i<N, j<N_\msb$.
\end{enumerate}
Fix $A\in \mct\setminus\{\msb_j: (b, \msb)\in \mc{X}, j<N_\msb\}$. Then 
\[q\Vdash |A\cap\dot{\msa}_i|\leq \omega\text{ for every }i<N.\]
Since $\mc{P}$ is ccc, for some $\alpha<\omega_1$, $q\Vdash A\cap\dot{\msa}_i\subseteq \alpha\text{ for every }i<N$.
Together with (2),
\begin{enumerate}\setcounter{enumi}{5}
\item $\msa'_i\cap A\subseteq \alpha$ for every $i<N$.
\end{enumerate}

Now it suffices to show that $\varphi_1(\mcc', \mct', \bfe')$ holds for $\mcc'=\mcc\cup \{(a, \msa')\}$ and $\mct', \bfe'$ induced from $\mcc'$. (C1) is trivial. The non-trivial case for (C2) is   $(b, \msb)\in \mcc$ and $\msa'_i\subseteq \msb_j$ for some $i<N, j<N_\msb$. By (6), $(b, \msb)\in \mc{X}$. Now, (C2) follows from (1) and (2).

By (5) and (6), $\msa'_i$'s are end nodes of $\mct'$.
Together with  the facts that    $(\mct, \supset)$ is a tree, (T1) holds. (T2) is trivial. (T3) follows from (C2) and (6). 

 The non-trivial case of (T4) is $\msb_j\supset \msa'_i$ for some $(b,\msb)\in \mc{X}$ and $i<N, j<N_\msb$. Fix $\alpha<\omega_1$.  Then $\msa'_i(\alpha)=a_{\Sigma(\alpha)}(i)$. By (4),   
 \[q\Vdash a_{\Sigma(\alpha)}(i)>\dot{\msa}_i(\alpha)>\msb_j(\alpha).\]
  So $a_{\Sigma(\alpha)}(i)>\msb_j(\alpha)$ and hence (T4) holds.

Then $\Sigma$ is as desired.
\end{proof}

 \begin{lem}\label{lem 7.o}
  Assume $\varphi_1(\mc{C}, \mc{T},\bfe)$. Suppose 
\begin{enumerate}[(i)]
\item $\mc{P}$ is a ccc poset and $(a,\msa)$ is a $\mcc$-candidate;
\item $\dot{\msa}^*$ is a $\mc{P}$-name of an uncountable subset of $\msa$;
\item for some $p\in \mc{P}$ and $(b, \dot{\msb})$, $p\Vdash (b, \dot{\msb})$ is a $\dot{\mcc}'$-candidate where $\dot{\mcc}'=\mcc\cup \{(a, \dot{\msa}^*)\}$;
\item for every $\alpha<\omega_1$, there are $p_\alpha\leq p$ and $b_\alpha\in[\omega_1\setminus \alpha]^{<\omega}$ such that 
\[p_\alpha\Vdash b_\alpha\in\dot{\msb}.\]
\end{enumerate}
Then for every $\Gamma\in [\omega_1]^{\omega_1}$, there are $q\leq p$ and $\Sigma\in [\Gamma]^{\omega_1}$ such that 
\begin{enumerate}[(i)]\setcounter{enumi}{4}
\item $(b, \{b_\alpha: \alpha\in \Sigma\})$ is a $\mcc$-candidate;
\item $q\Vdash (b, \{b_\alpha: \alpha\in \Sigma, p_\alpha\in \dot{G}\})$ is a $\dot{\mc{C}}'$-candidate where $\dot{G}$ is the canonical name of the generic filter.
\end{enumerate}
\end{lem}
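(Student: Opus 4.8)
The plan is to run the argument of Lemma~\ref{lem 7.n} to get (v), keeping track of the extra feature that the ambient structure here is $\dot{\mcc}'=\mcc\cup\{(a,\dot\msa^*)\}$ rather than $\mcc$, and then to derive (vi) almost for free from the Remark preceding Lemma~\ref{lem 7.i} (that restricting the second coordinate of a candidate to an uncountable subfamily again yields a candidate). Note first that $(a,\dot\msa^*)$ is forced to be a $\mcc$-candidate: since $(a,\msa)$ is a $\mcc$-candidate and $\dot\msa^*$ is forced to be an uncountable subfamily of $\msa$, the Remark applies; hence $p\Vdash\varphi_1(\dot\mcc',\dot\mct',\dot\bfe')$, and Lemma~\ref{lem 7.g} is available in the extension. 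The proof will split on whether $(a,\dot\msa^*)\in\mc{C}_0(b,\dot\msb)$; in the negative case $(a,\dot\msa^*)$ will be forced to be $\preceq$-incomparable with $(b,\dot\msb)$ and the proof of Lemma~\ref{lem 7.n} applies essentially verbatim over $\mc{C}_0(b,\dot\msb)\subseteq\mcc$, while in the positive case the columns of $\dot\msb$ lying in the blocks of $\mc{F}(\dot\msa^*,\dot\msb)$ have to be pushed into $\msa$.

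First I would, using ccc of $\mc{P}$, choose $q_0\le p$ deciding the finite data attached to $(b,\dot\msb)$: the value $N_{\dot\msb}=N$, the finite set $\mc{C}_0(b,\dot\msb)=\mcx$ (finite by Lemma~\ref{lem 7.g}), the set $\mc{C}_1(b,\dot\msb)$, and, for each $(c,\ms{D})\in\mcx$, the finite family $\mc{F}(\ms{D},\dot\msb)$ of coordinate blocks of $N$. If $(a,\dot\msa^*)\in\mcx$, say with $\mc{F}(\dot\msa^*,\dot\msb)=\{I^{(1)},\dots,I^{(t)}\}$, then for every $\alpha<\omega_1$ and every $s\le t$ the condition $q_0\wedge p_\alpha$ is nonempty (once we restrict to compatible $\alpha$, see below) and forces $b_\alpha[I^{(s)}]\in\dot\msa^*\subseteq\msa$; since $b_\alpha[I^{(s)}]$ and $\msa$ lie in $V$, this gives $b_\alpha[I^{(s)}]\in\msa$ already in $V$, so the column $\{b_\alpha(i):\alpha\}$ for $i\in\bigcup_s I^{(s)}$ is a subset of a column of the ground model candidate $(a,\msa)$. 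This is the only place where the passage from $\mcc$ to $\dot\mcc'$ is felt.

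Next I would perform the thinnings of the proof of Lemma~\ref{lem 7.n}: pass to $\Sigma'\in[\Gamma]^{\omega_1}$ with each $p_\alpha$ ($\alpha\in\Sigma'$) compatible with $q_0$ and $\msb'=\{b_\alpha:\alpha\in\Sigma'\}$ non-overlapping (reindexed $<$-increasing, so the $b_\alpha$ are distinct), and then, using ccc to bound $\dot\msb(\gamma)$ and each $A\cap\dot\msb_i$ (for $A\in\mct$ not a $\mct$-ancestor of a column of $\dot\msb$) below a ground model ordinal, pass to $\Sigma\in[\Sigma']^{\omega_1}$ with $q_0\Vdash b_{\Sigma(\gamma)}>\dot\msb(\gamma)$ for all $\gamma$. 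With these choices one checks, as in the last paragraph of the proof of Lemma~\ref{lem 7.n} (i.e.\ verifying (C1),(C2),(T1)--(T4), equivalently the clauses of Lemma~\ref{lem 7.i}), that $(b,\{b_\alpha:\alpha\in\Sigma\})$ is a $\mcc$-candidate: the columns coming from blocks in $\mc{F}(\ms{D},\dot\msb)$ with $(c,\ms{D})\in\mcx\cap\mcc$ are handled exactly as there, while the columns coming from the blocks $I^{(s)}$ are subsets of columns of $(a,\msa)$ and hence inherit the required tree position ((T4), via Lemma~\ref{lem 7.i}(c) for $(a,\msa)$), incomparability with $\mct$-nodes and $\preceq$-maximality ((T1)--(T3), via Lemma~\ref{lem 7.i}(b) for $(a,\msa)$), and $\bfe$-invariance (Lemma~\ref{lem 7.i}(a) for $(a,\msa)$). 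This establishes (v). For (vi), one more application of ccc gives $q\le q_0$ with $q\Vdash|\{\alpha\in\Sigma:p_\alpha\in\dot G\}|=\omega_1$ (otherwise $q_0$ would force that set into some $\xi^*<\omega_1$, contradicting compatibility of $q_0$ with $p_\alpha$ for $\alpha\in\Sigma\setminus\xi^*$); then $q\le p$, and in any $V[G]$ with $q\in G$ the family $\{b_\alpha:\alpha\in\Sigma,\ p_\alpha\in G\}$ is an uncountable subfamily of $\dot\msb^G$, so since $q\le p$ forces $(b,\dot\msb)$ to be a $\dot\mcc'$-candidate, the Remark preceding Lemma~\ref{lem 7.i} yields that $(b,\{b_\alpha:\alpha\in\Sigma,\ p_\alpha\in\dot G\})$ is a $\dot\mcc'$-candidate.

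I expect the main obstacle to be the $\bfe$-invariance clause (Lemma~\ref{lem 7.i}(a)) for the pulled-back family in the case $(a,\dot\msa^*)\in\mcx$: one must be sure that all $\bfe$-relatives of a column $\{b_\alpha(i):\alpha\in\Sigma\}$ with $i\in\bigcup_s I^{(s)}$ again occur among the columns of $(b,\{b_\alpha:\alpha\in\Sigma\})$, and that the $\bfe$-class does not shrink or shift between the extension and $V$. This is forced by having $q_0$ decide $\mcx$ together with $\mc{C}_1(b,\dot\msb)$, which pins down the $\bfe$-class of every column of $\dot\msb$ relative to $\mcc$ (by Lemma~\ref{lem 7.h}(ii)), combined with the fact that $(a,\msa)$, being a $\mcc$-candidate, already has $\bfe$-invariant columns; the tree properties (T3),(T4) and Lemma~\ref{lem 7.a} are then used to rule out spurious coincidences of columns created by the thinning to $\Sigma$.
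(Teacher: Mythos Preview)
Your approach would likely work with careful verification, but you are making the argument far harder than it needs to be, and the paper's proof reveals why.

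The key observation you miss is that $p\Vdash(b,\dot\msb)$ is a $\mcc$-candidate, not merely a $\dot\mcc'$-candidate. This is immediate: since $\mcc\subseteq\dot\mcc'$, the conditions (a)--(d) of Lemma~\ref{lem 7.i} for $(b,\dot\msb)$ relative to $\dot\mcc'$ imply the same conditions relative to $\mcc$ (condition (b) quantifies over fewer elements, conditions (c),(d) over fewer $A\in\mct\subseteq\dot\mct'$, and $\dot\bfe'$-invariance implies $\bfe$-invariance since $\bfe\subseteq\dot\bfe'$). Once you have this, Lemma~\ref{lem 7.n} applies \emph{as a black box} to produce $\Sigma$ satisfying (v), with no case split, no separate handling of columns in the blocks of $\mc{F}(\dot\msa^*,\dot\msb)$, and no worry about how $\bfe$-classes behave across the passage from $\dot\mcc'$ to $\mcc$. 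The paper's proof is then literally three lines: observe $(b,\dot\msb)$ is a $\mcc$-candidate, invoke Lemma~\ref{lem 7.n} for (v), pick $q\leq p$ forcing $\{\alpha\in\Sigma:p_\alpha\in\dot G\}$ uncountable, and conclude (vi) from $\{b_\alpha:\alpha\in\Sigma,\,p_\alpha\in\dot G\}\subseteq\dot\msb$ together with the Remark, exactly as you do.

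Your route reopens the proof of Lemma~\ref{lem 7.n} and tries to accommodate the extra element $(a,\dot\msa^*)$ by hand. The case analysis and the appeal to Lemma~\ref{lem 7.i}(a)--(c) for $(a,\msa)$ are in the right spirit, but the ``main obstacle'' you identify (the $\bfe$-invariance of the pulled-back columns) is precisely the step you leave only sketched, and it is also precisely the step that evaporates once you notice $(b,\dot\msb)$ is already a $\mcc$-candidate. In short: your argument for (vi) matches the paper; your argument for (v) is an unnecessary unpacking of Lemma~\ref{lem 7.n} that the single monotonicity observation above replaces entirely.
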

\begin{proof}
First note that $p\Vdash (b, \dot{\msb})$ is a $\mcc$-candidate. Then by Lemma \ref{lem 7.n}, find $\Sigma\in [\Gamma]^{\omega_1}$ such that (v) holds. Then find $q\leq p$ such that 
\[q\Vdash \{\alpha\in \Sigma: p_\alpha\in \dot{G}\}\text{ is uncountable}.\]

Now (vi) follows from (iii) and the fact that
$q\Vdash \{b_\alpha: \alpha\in \Sigma, p_\alpha\in \dot{G}\}\subseteq \dot{\msb}$.
\end{proof}

\medskip

 \begin{lem}\label{lem 7.p}
  Assume $\nu$ is an infinite limit ordinal and $\langle \mc{P}_\alpha, \dot{\mc{Q}}_\beta: \alpha\leq \nu, \beta<\nu\rangle$ is a finite support iteration of ccc posets.
 Suppose 
\begin{enumerate}[(i)]
\item For $\alpha<\nu$, $\Vdash_{\mc{P}_\alpha} \varphi_1(\dot{\mc{C}}^\alpha, \dot{\mc{T}}^\alpha,\dot{\bfe}^\alpha)$ holds;
\item For $\alpha<\beta<\nu$, $\Vdash \dot{\mc{C}}^\alpha\subseteq \dot{\mc{C}}^\beta$ and $\dot{\mc{C}}^\nu=\bigcup_{\xi<\nu} \dot{\mc{C}}^\xi$;
\item for some $p\in \mc{P}_\nu$ and $(a, \dot{\msa})$, $p\Vdash (a, \dot{\msa})$ is a $\dot{\mcc}^\nu$-candidate;
\item for every $\alpha<\omega_1$, there are $p_\alpha\leq p$ and $a_\alpha\in [\omega_1\setminus \alpha]^{<\omega}$ such that 
\[p_\alpha\Vdash a_\alpha\in\dot{\msa}.\]
\end{enumerate}
Then for every $\Gamma\in [\omega_1]^{\omega_1}$, there are $\xi<\nu$, $q\leq p$ and  $\Sigma\in [\Gamma]^{\omega_1}$ such that for the canonical name  of the $\mc{P}_\nu$-generic filter $\dot{G}$,
\begin{enumerate}[(i)]\setcounter{enumi}{4}
\item $q\up\xi\Vdash_{\mc{P}_\xi} (a, \{a_\alpha: \alpha\in \Sigma, p_\alpha\up \xi \in \dot{G}\up\xi\})$ is a $\dot{\mcc}^\xi$-candidate;
\item $q\Vdash (a, \{a_\alpha: \alpha\in \Sigma, p_\alpha\in \dot{G}\})$ is a $\dot{\mc{C}}^\nu$-candidate.
\end{enumerate}
\end{lem}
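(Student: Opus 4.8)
The plan is to follow the template set by Lemmas~\ref{lem 7.n} and~\ref{lem 7.o}, the genuinely new ingredient being the reflection from the limit stage $\nu$ down to a fixed bounded stage $\xi<\nu$. First I would isolate $\xi$. Since $\mc{P}_\nu$ is a finite support iteration of ccc posets at a limit stage, it is ccc; so I can strengthen $p$ to some $q_0\le p$ which decides $N_{\dot\msa}$, decides the finite sets $\mc{C}_0(a,\dot\msa)$ and $\mc{C}_1(a,\dot\msa)$ (finite by Lemma~\ref{lem 7.g}), and forces $|\{\alpha:p_\alpha\in\dot G\}|=\omega_1$. Say $q_0\Vdash\mc{C}_0(a,\dot\msa)=\mc{X}$ for a fixed finite set $\mc{X}$ of pairs from $\dot{\mc{C}}^\nu$. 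By (ii) the sequence $\langle\dot{\mc{C}}^\alpha:\alpha<\nu\rangle$ is $\subseteq$-increasing with union $\dot{\mc{C}}^\nu$; as $\mc{X}$ is finite and each of its members is forced into some $\dot{\mc{C}}^\alpha$ with $\alpha<\nu$, after strengthening $q_0$ again I may fix $\xi<\nu$ with $q_0\Vdash\mc{X}\subseteq\dot{\mc{C}}^\xi$. Since $\dot{\mc{C}}^\xi\subseteq\dot{\mc{C}}^\nu$, the set $\mc{C}_0(a,\dot\msa)$ computed inside $\dot{\mc{C}}^\xi$ is still $\mc{X}$, and the clauses (a)--(d) of Lemma~\ref{lem 7.i} that witness $(a,\dot\msa)$ being a $\dot{\mc{C}}^\nu$-candidate only become weaker under shrinking $\dot{\mc{C}}^\nu,\dot{\mc{T}}^\nu,\dot{\bfe}^\nu$ to $\dot{\mc{C}}^\xi,\dot{\mc{T}}^\xi,\dot{\bfe}^\xi$ (here one uses $\dot{\bfe}^\xi\subseteq\dot{\bfe}^\nu$); hence $q_0$ already forces $(a,\dot\msa)$ to be a $\dot{\mc{C}}^\xi$-candidate, and $q_0\up\xi$ decides the finitely many nodes of $\dot{\mc{T}}^\xi$ lying above the components of $(a,\dot\msa)$.

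Next I would descend to $\mc{P}_\xi$. Factor $\mc{P}_\nu\cong\mc{P}_\xi\ast\dot{\mc{P}}_{[\xi,\nu)}$, the tail being ccc over $V^{\mc{P}_\xi}$. Working below $q_0\up\xi$ in $V^{\mc{P}_\xi}$ we have $\varphi_1(\dot{\mc{C}}^\xi,\dot{\mc{T}}^\xi,\dot{\bfe}^\xi)$, we have $q_0/\dot G\up\xi$ forcing over $\dot{\mc{P}}_{[\xi,\nu)}$ that $(a,\dot\msa)$ is a $\dot{\mc{C}}^\xi$-candidate, and --- using (iv) together with the fact that $q_0$ forces $\{\alpha:p_\alpha\in\dot G\}$ to be uncountable, hence unbounded --- for cofinally many $\alpha<\omega_1$ there is a condition of $\dot{\mc{P}}_{[\xi,\nu)}$ below $q_0/\dot G\up\xi$ forcing $a_\alpha\in\dot\msa$ (after the harmless step, valid on an uncountable set of $\alpha$, of replacing $p_\alpha$ by a common extension with $q_0$). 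This is exactly what is needed to run the argument of Lemma~\ref{lem 7.n} over $\dot{\mc{P}}_{[\xi,\nu)}$ inside $V^{\mc{P}_\xi}$. In $V$ I would thin $\Gamma$ to $\Sigma$ so that $\{a_\alpha:\alpha\in\Sigma\}$ is non-overlapping and, using ccc of $\mc{P}_\nu$ to bound each $\dot\msa(\alpha)$, the growth condition $a_\alpha>\dot\msa$ on the matched enumeration is forced; then I would verify clauses (a)--(d) of Lemma~\ref{lem 7.i} for $(a,\{a_\alpha:\alpha\in\Sigma,\ p_\alpha\up\xi\in\dot G\up\xi\})$ relative to $\dot{\mc{C}}^\xi$, folding the required working condition into $q\le q_0$. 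This gives clause (v); I would also take $q$ small enough to force both $\{\alpha\in\Sigma:p_\alpha\up\xi\in\dot G\up\xi\}$ and $\{\alpha\in\Sigma:p_\alpha\in\dot G\}$ uncountable.

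Clause (vi) is then automatic in the style of Lemma~\ref{lem 7.o}: $q$ forces $\{a_\alpha:\alpha\in\Sigma,\ p_\alpha\in\dot G\}$ to be an uncountable subfamily of $\dot\msa$, and a subfamily of a $\dot{\mc{C}}^\nu$-candidate is a $\dot{\mc{C}}^\nu$-candidate (the Remark following the definition of $\mc{C}$-candidate), so $(a,\{a_\alpha:\alpha\in\Sigma,\ p_\alpha\in\dot G\})$ is a $\dot{\mc{C}}^\nu$-candidate. Note that the same shortcut is \emph{not} available for clause (v).

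I expect the main obstacle to be precisely clause (v): showing that the \emph{generic} family $\{a_\alpha:\alpha\in\Sigma,\ p_\alpha\up\xi\in\dot G\up\xi\}$ is forced to be a $\dot{\mc{C}}^\xi$-candidate. Because membership in $\dot G\up\xi$ is strictly weaker than membership in $\dot G$, this family need not be contained in $\dot\msa$, so one cannot quote the subfamily principle and must re-prove the relevant instances of Lemma~\ref{lem 7.i}; in particular, for a node $A$ of $\dot{\mc{T}}^\xi$ outside the finitely many $\mc{X}$-nodes one must still force $|\msa^\xi_i\cap A|\le\omega$ for the components $\msa^\xi_i$ of the shadow. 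The value of first moving to $V^{\mc{P}_\xi}$ is exactly that there $\dot{\mc{T}}^\xi$ is a genuine ground-model tree, so the ``fix $A$, bound $\dot\msa_i\cap A$ by ccc, and push the growth condition through'' step of Lemma~\ref{lem 7.n} can be carried out node by node as before; the care needed --- and this is the delicate point of the whole lemma --- is in choosing the $p_\alpha$ (and hence which facts about the position of $a_\alpha$ in $\dot{\mc{T}}^\xi$ are already settled by $p_\alpha\up\xi$) so that this bounding is insensitive to whether $p_\alpha^{\mathrm{tail}}$ lands in the tail generic, which is where the growth condition $a_\alpha>\dot\msa$, compatibility of the $p_\alpha$ with the working condition, and the finiteness of $\mc{C}_0(a,\dot\msa)$ are all used together.
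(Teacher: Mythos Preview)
Your proposal is correct and follows essentially the same approach as the paper's proof: determine the finite data $\mc{C}_0(a,\dot\msa)$ and $\mc{C}_1(a,\dot\msa)$, locate a stage $\xi<\nu$ catching these, thin $\Gamma$ to $\Sigma$ in $V$ to get the non-overlapping and growth conditions, and then run the Lemma~\ref{lem 7.n} verification in $V[G\up\xi]$ using the tail forcing; your identification of clause~(v) as the genuine content and clause~(vi) as immediate via the subfamily remark is exactly right. The paper adds two small bookkeeping steps you omit: it first passes to a $\Delta$-system of the supports $\{supp(p_\alpha):\alpha\in\Gamma\}$ with root below some $\xi'<\nu$, and it explicitly enlarges $\xi$ so that $supp(q)\subseteq\xi$; neither is a new idea, but the second is what makes ``$q\up\xi$ decides'' well-posed (your phrase ``$q_0\up\xi$ decides the finitely many nodes'' is slightly off, since it is $q_0$ as a $\mc{P}_\nu$-condition that decides $\mc{X}$, and one then arranges $supp(q_0)\subseteq\xi$ so that $q_0=q_0\up\xi$).
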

\begin{proof}
We may assume that $\{supp(p_\alpha): \alpha\in \Gamma\}$ forms a $\Delta$-system with root contained in some $\xi'<\nu$ where $supp(p_\alpha)$ is the support of $p_\alpha$.

By ccc of $\mc{P}_\nu$, find $q\leq p$ such that $q\Vdash |\{\alpha\in \Gamma: p_\alpha\in \dot{G}\}|=\omega_1$. Extending $q$ if necessary, we may assume that 
  \begin{enumerate}
  \item for some $n<\omega$, a $\mc{P}_\nu$-name $\dot{f}$ of an injection from $n$ to  $\dot{\mcc}^\nu$ and $N<\omega$,  
  \[q\Vdash N_{\dot{\msa}}=N\text{ and }\dot{\mc{C}}^\nu_0(a, \dot{\msa})=rang(\dot{f}),\]
 and for $i<n$,  $q$ determines $ \mc{F}(\dot{\ms{B}}, \dot{\msa})$ where $\dot{f}(i)= (b, \dot{\ms{B}})$ for some $b$.
  \end{enumerate}
  
Moreover, assume   for some $\xi'\leq \xi<\nu$, $q\Vdash rang(\dot{f})\subseteq\dot{\mc{C}}^\xi$. So we also view $\dot{f}$ as a $\mc{P}_\xi$-name.

Now find $\Sigma\in [\Gamma]^{\omega_1}$ such that
\begin{enumerate}\setcounter{enumi}{1}
\item for every $\alpha\in \Sigma$, $q$ is compatible with $p_\alpha$;
\item $\{a_\alpha: \alpha\in \Sigma\}$ is non-overlapping;
\item  for every $\alpha<\omega_1$,   $q\Vdash a_{\Sigma(\alpha)}>\dot{\msa}(\alpha)$.
\end{enumerate}
By (2), extending $q$ and enlarging $\xi$, we may assume that 
\[supp(q)\subseteq \xi\text{ and }q\Vdash |\{\alpha\in \Sigma: p_\alpha\in \dot{G}\}|=\omega_1.\]

Let $G$ be a generic filter over $\mc{P}_\nu$ containing $q$. In $V[G\up\xi]$, let $f=(\dot{f})^{G\up_\xi}$ and $\msa'=\{a_\alpha: \alpha\in \Sigma, p_\alpha\up\xi\in G\up\xi\}$.
Note that $p\Vdash (a, \dot{\msa})$ is a $\dot{\mcc}^\xi$-candidate since  $\dot{\mcc}^\xi\subseteq \dot{\mcc}^\nu$. Then the argument of Lemma \ref{lem 7.n} shows that  in $V[G\up\xi]$, $(a, \msa')$ is a $\mcc^\xi$-candidate.

A density argument shows (v).

Finally, (vi) follows from (iii) and the fact that $q\Vdash  \{a_\alpha: \alpha\in \Sigma, p_\alpha\in \dot{G}\}\subseteq\dot{\msa}$. 
\end{proof}

\section{Distinguishing  {\rm P$_{\omega_1}$(K$_n\ra\sigma$-$n$-linked)} and {\rm P$_{\omega_1}$(K$_{n+1}\ra\sigma$-$(n+1)$-linked)}}

Throughout this section, we fix $n\geq 2$. Then we use the damage control structure to construct a model of {\rm P$_{\omega_1}$(K$_n\ra\sigma$-$n$-linked)} in which {\rm P$_{\omega_1}$(K$_{n+1}\ra\sigma$-$(n+1)$-linked)} fails.

We will need a coloring $\pi: [\omega_1]^{n+1}\ra 2 $ with additional properties. For this, we add $\pi$ generically. Let $\mc{Q}$ be the poset consisting of $p=(\pi_p, \sigma_p)$ such that
\begin{itemize}
\item $\pi_p: [D_p]^{n+1}\ra 2$ is a finite map for some $D_p\in [\omega_1]^{<\omega}$;
\item $\sigma_p: [D_p]^{\leq n}\cup \mc{H}^{\pi_p}_0\ra \omega$ with every $\sigma_p^{-1}\{m\}$ $n$-linked. 
\end{itemize}
The order is coordinatewise reverse extension. Suppose  $G$ is generic over $\mc{Q}$ and
\[\pi=\bigcup \{\pi_p: p\in G\},~ \sigma=\bigcup\{\sigma_p: p\in G\}.\]

The following fact is straightforward to verify.
\begin{fact}\label{fact 8.1}
Suppose $\pi, \sigma$ are as above. Then $[\omega_1]^{\leq n}\subseteq \mc{H}^\pi_0$ and  $\mc{H}^\pi_0$ is  $\sigma$-$n$-linked. Moreover,
for every uncountable non-overlapping family $\msa\subseteq [\omega_1]^{N_\msa}$  and every function $h:  N_\msa^{n+1}\ra 2$, there are $a_0<\cdots<a_{n}$ in $\msa$ such that for every $s\in N_\msa^{n+1}$, $\pi(\{a_{i}(s(i)): i\leq n\})=h(s)$.
\end{fact}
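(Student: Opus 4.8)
## Proof Proposal for Fact \ref{fact 8.1}

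The plan is to verify the three assertions by straightforward density arguments inside the forcing $\mc{Q}$. For the first claim that $[\omega_1]^{\leq n}\subseteq \mc{H}^\pi_0$: by definition $\mc{H}^\pi_0$ consists of finite sets $a$ with $[a]^{n+1}\subseteq \pi^{-1}\{0\}$ or $a\subseteq b$ for some $b\in\pi^{-1}\{0\}$; since any $a$ with $|a|\leq n$ has $[a]^{n+1}=\emptyset$, such $a$ is vacuously $0$-homogeneous, so this holds without any genericity. For the $\sigma$-$n$-linkedness of $\mc{H}^\pi_0$: the generic object $\sigma:[\omega_1]^{\leq n}\cup\mc{H}^\pi_0\ra\omega$ is a total function once $G$ meets the dense sets $\{p: \alpha\in D_p\}$ and $\{p: a\in\mathrm{dom}(\sigma_p)\}$ for each finite $a$ (which are dense by a routine extension argument: given $p$ and a finite set $a$ of ordinals, enlarge $D_p$ to contain $a$, extend $\pi_p$ arbitrarily on the new $(n+1)$-subsets — say assigning value $1$ to all new ones not forced to be $0$ — and extend $\sigma_p$ on the new domain elements using a fresh color for each, keeping each color class a singleton hence $n$-linked). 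Then $\mc{H}^\pi_0=\bigcup_{m<\omega}\sigma^{-1}\{m\}$ and each $\sigma^{-1}\{m\}=\bigcup_{p\in G}\sigma_p^{-1}\{m\}$ is $n$-linked because $n$-linkedness of a set is witnessed by finitely many elements, and any $n$ elements of $\sigma^{-1}\{m\}$ already lie in $\sigma_p^{-1}\{m\}$ for a single $p\in G$.

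The substantive part is the "moreover" clause. Fix an uncountable non-overlapping family $\msa\subseteq[\omega_1]^{N_\msa}$ and $h:N_\msa^{n+1}\ra 2$. I would show that for each $p\in\mc{Q}$ the set of $q\leq p$ that decide, for some $a_0<\cdots<a_n$ in $\msa$, the values $\pi(\{a_i(s(i)):i\leq n\})=h(s)$ for all $s\in N_\msa^{n+1}$, is dense. Given $p$ with domain $D_p$: since $\msa$ is uncountable and non-overlapping while $D_p$ is finite, pick $a_0<\cdots<a_n$ in $\msa$ all lying above $\max(D_p)$ and mutually $<$-separated (possible because only countably many elements of $\msa$ can meet any fixed ordinal, and the elements of $\msa$ are pairwise comparable under $<$). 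Let $D_q=D_p\cup\bigcup_{i\leq n}a_i$. The $(n+1)$-element subsets of $D_q$ not already in $D_p$ split into those of the form $\{a_i(s(i)):i\leq n\}$ for $s\in N_\msa^{n+1}$ — on these set $\pi_q$ equal to $h(s)$ — and all other new $(n+1)$-subsets, on which we set $\pi_q$ to be, say, $1$. One must check this choice is consistent, i.e., that a new $(n+1)$-subset cannot be represented in two conflicting ways as $\{a_i(s(i)):i\leq n\}$; but since $a_0<\cdots<a_n$ are $<$-separated, the only way an $(n+1)$-set hits all of $a_0,\dots,a_n$ with exactly one point each, in that order, is via a unique $s$, so no conflict arises (and $(n+1)$-sets hitting fewer than all $a_i$'s, or two points of some $a_i$, get the default value $1$ and are disjoint from the "coded" family). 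Then extend $\sigma_q$ from $\sigma_p$ by assigning fresh distinct colors to every element of $(\mathrm{dom}(\sigma_q)\setminus\mathrm{dom}(\sigma_p))$, keeping each new color class a singleton; this preserves $n$-linkedness of every color class trivially, and $q=(\pi_q,\sigma_q)\in\mc{Q}$ with $q\leq p$.

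The main obstacle — really the only place care is needed — is the consistency check in the previous paragraph: ensuring that when we declare $\pi_q(\{a_i(s(i)):i\leq n\})=h(s)$ for all $s$, no $(n+1)$-element set of ordinals receives two incompatible assignments, and that the default-$1$ assignment on the remaining new $(n+1)$-subsets does not clash either. This is exactly where the non-overlapping hypothesis on $\msa$ is used: $<$-separation of $a_0,\dots,a_n$ guarantees that the map $s\mapsto\{a_i(s(i)):i\leq n\}$ is injective and that its image is disjoint from every $(n+1)$-subset of $D_q$ that is not of this "transversal" shape. Once this is in hand, a generic $G$ meeting all these dense sets (one for each pair $(p,\msa,h)$ in the ground model, but really we only need it for our fixed $\msa,h$, relative to every condition, giving density) yields the required $a_0<\cdots<a_n\in\msa$ with $\pi(\{a_i(s(i)):i\leq n\})=h(s)$ for all $s\in N_\msa^{n+1}$. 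Since $\msa$ and $h$ were arbitrary, this completes the verification.
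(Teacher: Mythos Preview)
The paper does not supply a proof (it labels the fact ``straightforward to verify''), so I assess your argument directly. Two points need attention.

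The minor one: your claim that any $a$ with $|a|\leq n$ is \emph{vacuously} $0$-homogeneous misreads the paper's convention. For a coloring on $(n+1)$-tuples, the first disjunct in the definition of $i$-homogeneous carries the hypothesis $|a|\geq n+1$; for smaller $a$ the paper explicitly requires $a\subseteq b$ for some $b\in\pi^{-1}\{0\}$ (see the Remark following the definition of $\mc{H}^\pi_i$). This is still easy---extend $D_p$ by fresh ordinals to produce such a $b$ and set $\pi_q(b)=0$---but it is not vacuous.

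The substantive gap is in the ``moreover'' clause. The statement is to hold in $V[G]$, so $\msa$ ranges over uncountable non-overlapping families \emph{in the extension}; you treat $\msa$ as a fixed ground-model object (``pick $a_0<\cdots<a_n$ in $\msa$ above $\max D_p$'') and your closing remark even flags ``$(p,\msa,h)$ in the ground model''. For $\msa\in V[G]\setminus V$ one must work with a name: below any $p$ forcing $\dot\msa$ uncountable, choose $p_{\alpha_0},\dots,p_{\alpha_n}\leq p$ and $a_{\alpha_0}<\cdots<a_{\alpha_n}$ with $p_{\alpha_i}\Vdash a_{\alpha_i}\in\dot\msa$, refine by $\Delta$-system and isomorphism so the $D_{p_{\alpha_i}}$ are disjoint outside a root $R$ with each $a_{\alpha_i}\subseteq D_{p_{\alpha_i}}\setminus R$, and then amalgamate all $n+1$ conditions into one $q$ while prescribing $\pi_q=h(s)$ on the transversals $\{a_{\alpha_i}(s(i)):i\leq n\}$. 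The point you are missing is why this amalgamation yields a condition: one must check each merged colour class $\bigcup_i\sigma_{p_{\alpha_i}}^{-1}\{m\}$ is still $n$-linked in $\mc{H}^{\pi_q}_0$. The key is pigeonhole---any $n$ sets $b_1,\dots,b_n$ from the merged class lie in at most $n$ of the $n+1$ blocks $D_{p_{\alpha_i}}\setminus R$, so no transversal (which meets all $n+1$ blocks) can sit inside $\bigcup_j b_j$; hence setting $\pi_q=h$ on transversals and $\pi_q=0$ on all other mixed $(n+1)$-sets, together with the isomorphism argument for $(n+1)$-sets contained in a single $D_{p_{\alpha_i}}$, preserves $n$-linkedness. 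This interaction between the arity $n+1$ and the $n$-linkedness constraint is the crux of the verification and is absent from your sketch.
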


We will combine the following additional property of the coloring $\pi$ with the damage control structure       in the iteration.

\begin{defn}
Assume $\varphi_1(\mcc, \mct, \bfe)$. $\psi_1^-(\mcc, \mct, \bfe, \pi, \mcr)$ is the assertion that $\pi: [\omega_1]^{n+1}\ra 2$ is a coloring  and the following statements hold.
\begin{enumerate}[{\rm (R1)}]
\item $\mc{R}$ is a map with domain $\mc{C}\setminus\{(\emptyset, [\omega_1]^1)\}$ such that  for every $(a, \msa)\in dom(\mcr)$, $\mc{R}(a,\msa)=(a', I)$ for some $a'\subseteq a$ and $I\subseteq N_\msa$.
\item For  $\mcr(a, \msa)=(a', I)$, $\bigcup\{a'\cup b[I]:b\in \msa\}$ is 0-homogeneous.
     \end{enumerate}
\end{defn}
By (R2), if $\mcr(a, \msa)=(a', I)$, then $a'\cup b[I]\in \mc{H}^\pi_0$ for every $b\in \msa$.

In the iteration process, we will keep adding 0-homogeneous ($(n+1)$-linked) subsets of $\pi$. More specifically, when we deal with a $\Delta$-system of 0-homogeneous sets $\{p_\alpha: \alpha<\omega_1\}$ with root $\overline{p}$, we first use   Lemma \ref{lem 7.k} to find a $\mcc$-candidate $(a, \msb)$ and $I$ such that $\overline{p}\subseteq a$ and $\{b[I]: b\in \msb\}\subseteq \{p_\alpha\setminus \overline{p}: \alpha<\omega_1\}$. Then force an uncountable $\msa\in [\msb]^{\omega_1}$ such that $\bigcup \{\overline{p}\cup b[I]: b\in \msa\}$ is 0-homogeneous. In this case, $\mcr(a, \msa)=(\overline{p}, I)$
records the positions of the generically added 0-homogeneous subset.

We also need to preserve the property that $\mc{H}^\pi_0$ is not $\sigma$-$(n+1)$-linked. For this, we will need to reserve color 1 for appropriate candidates.  In fact, we will reserve the following stronger property.

  \begin{defn}
 Assume $\varphi_1(\mcc, \mct, \bfe)$. $\psi_1(\mcc, \mct, \bfe, \pi, \mcr)$ is the assertion that  $\psi_1^-(\mcc, \mct, \bfe, \pi, \mcr)$ together with the following statement hold.
  \begin{enumerate}[{\rm (Res)}]
  \item Suppose $(a,\msa)$ is a $\mcc$-candidate  and  $f:  N_\msa^{n+1}\ra 2 $ is a function with the following property.
  \begin{enumerate}[{\rm (Res.1)}]
  \item For every $s\in   N_\msa^{n+1}$, if for some $(b,\msb)\in \mc{C}_0(a, \msa)\setminus\{(\emptyset, [\omega_1]^1)\}$, $\mcr(b, \msb)=(b', I)$ and $rang(s)\subseteq \bigcup\{J[I]: J\in \mc{F}(\msb, \msa)\}$, then $f(s)=0$.
  \end{enumerate}
  \end{enumerate}
 Then there are $a_0<\cdots<a_{n}$ in $\msa$ such that for every $s\in   N_\msa^{n+1}$, $\pi(\{a_{i}(s(i)): i\leq n\})=f(s)$.
 
  Say $f$ is \emph{$\mc{R}$-satisfiable for $(a, \msa)$} if above condition {\rm (Res.1)} is satisfied.  
  \end{defn}
  
 In above definition, note that by (R2), $b'\cup \bigcup \{c[I]: c\in \msb\}$ and hence $\bigcup\{a'[J[I]]: a'\in \msa, J\in \mc{F}(\msb, \msa)\}$ is 0-homogeneous. So condition (Res.1) is necessary and simply states that $f$ takes value 0 at positions that are 0-homogeneous for $\pi$.

We will preserve $\psi_1(\mcc^\alpha, \mct^\alpha, \bfe^\alpha, \pi, \mcr^\alpha)$ in the iteration process. But first, we show that under appropriate assumptions, $\mc{H}^\pi_0$ is not $\sigma$-$(n+1)$-linked.
\begin{lem}\label{lem 8 C-candidate}
Assume $\varphi_1(\mcc, \mct, \bfe)$  and $|\mcc|<\mathfrak{a}_{\omega_1}$. For every stationary set $S\subseteq \omega_1$, there exists $X\in [S]^{\omega_1}$ such that $(\emptyset, [X]^1)$ is a $\mcc$-candidate. Moreover, $\mcc_0(\emptyset, [X]^1)=\{(\emptyset, [\omega_1]^1)\}$.
\end{lem}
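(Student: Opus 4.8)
The plan is to verify that $(\emptyset,[X]^1)$ satisfies the criterion of Lemma~\ref{lem 7.i} for a suitable $X\in[S]^{\omega_1}$, using Lemma~\ref{lem Peng 30} to obtain the needed tree-compatibility, exactly as in the proof of Lemma~\ref{lem 7.j} but starting from a stationary set rather than an arbitrary uncountable set. First I would apply Lemma~\ref{lem Peng 30} (whose hypothesis $|\mcc|<\mathfrak a_{\omega_1}$ is assumed) to the set $S$ to get $X_0\in[S]^{\omega_1}$ such that for every $Y\in\mct$, either $X_0\subset Y$ or $|X_0\cap Y|\leq\omega$. Since $[X_0]^1$ consists of singletons, the hypotheses of Lemma~\ref{lem 7.i} on the pair $(\emptyset,[X_0]^1)$ are trivially satisfied ($a=\emptyset$, $N_{[X_0]^1}=1$, non-overlapping).

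Next I would check conditions (a)--(d) of Lemma~\ref{lem 7.i} for $(\emptyset,[X_0]^{1})$, possibly after passing to a further uncountable subset $X\in[X_0]^{\omega_1}$. Condition (a), $\bfe$-invariance of $\{[X_0]^1{}_0\}=\{X_0\}$, is automatic since there is only a single node $X_0$ and (by Lemma~\ref{lem 7.h}(ii)) we only need $[X_0]_\bfe$ to be the right finite set; in fact by the maximality/tree argument (exactly as in Lemma~\ref{lem 7.j}) once $X_0$ is ``deep enough'' in $\mct$ the class $[X_0]_\bfe$ will be a singleton after thinning. For condition (b): suppose $|X_0\cap\msb_j|=\omega_1$ for some $(b,\msb)\in\mcc$; by (T2)--(T3) $X_0$ is $\subseteq$-comparable with $\msb_j$, and by the defining property of $X_0$ from Lemma~\ref{lem Peng 30} we must have $X_0\subset\msb_j$; since here $a=\emptyset\subseteq b$ automatically, (b) holds. (Here I should also record, as in Lemma~\ref{lem 7.j}, that after passing to $X$ we may assume $\mcc(X)=\{(\emptyset,[\omega_1]^1)\}$, i.e.\ no nontrivial $(b,\msb)$ has $X\subseteq\msb_j$ — this is the content of the ``moreover'' clause.) Condition (c), $A(\alpha)<[X]^1{}_0(\alpha)=X(\alpha)$ for all $A\in\mct$ with $X\subseteq A$: since after thinning $X\subseteq A$ forces $A=\omega_1$ (by the previous point no proper node of $\mct$ contains $X$), and $\omega_1(\alpha)=\alpha\leq X(\alpha)$, we need the strict inequality $\alpha<X(\alpha)$, which can be arranged by thinning $X$ so that $X(\alpha)>\alpha$ for all $\alpha$ — a standard diagonalization using $|X_0|=\omega_1$. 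Condition (d) is vacuous or follows the same way since there is only one index $i=j=0$.

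Finally, for the ``moreover'' clause: once $X$ has been thinned so that no $(b,\msb)\in\mcc\setminus\{(\emptyset,[\omega_1]^1)\}$ has $X\subseteq\msb_j$ for any $j$, we get $\mcc(X)=\{(\emptyset,[\omega_1]^1)\}$ directly from the definition of $\mcc(X)$, and hence $\mcc_0(\emptyset,[X]^1)=\{(b,\msb)\in\mcc:(b,\msb)\prec(\emptyset,[X]^1)\}$; by Lemma~\ref{lem 7.b}, $(b,\msb)\prec(\emptyset,[X]^1)$ means $[X]^1{}_0=X\subset\msb_j$ for some $j$, i.e.\ $X\subseteq\msb_j$, forcing $(b,\msb)=(\emptyset,[\omega_1]^1)$; so $\mcc_0(\emptyset,[X]^1)=\{(\emptyset,[\omega_1]^1)\}$ as claimed. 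The main obstacle — though it is really bookkeeping rather than a genuine difficulty — is making sure the several successive thinnings of $X$ (to kill $\bfe$-classes, to kill containments in nontrivial nodes, and to get $X(\alpha)>\alpha$) can all be carried out simultaneously while keeping $X\subseteq S$ uncountable; since $\mct$ is a tree of height $\leq\omega$ and $\mcc$ has no infinite $\prec$-descending chains (Lemma~\ref{lem 7.f}, Lemma~\ref{lem 7.g}), each thinning removes at most a non-stationary (indeed bounded or countable) part, so an uncountable — even stationary — subset survives, and $S$ being stationary is preserved.
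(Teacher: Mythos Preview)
Your verification of condition (b) of Lemma~\ref{lem 7.i} has the containment reversed: the requirement is $b\subseteq a$, not $a\subseteq b$. Since here $a=\emptyset$, you would need $b=\emptyset$ for every $(b,\msb)\in\mcc$ with $X_0\subset\msb_j$, and this is \emph{not} automatic. This is a genuine obstruction, not a typo. Lemma~\ref{lem Peng 30} may well return $X_0$ contained in some proper node $Y=\msb_j\in\mct_1$ (its proof chooses $B\subseteq C$ for some $C\in\mct'$, which need not be the root $\omega_1$), and the corresponding $(b,\msb)\in\mcc$ may have $b\neq\emptyset$. In that case condition (b) fails, and the ``moreover'' clause fails too since then $(b,\msb)\in\mcc_0(\emptyset,[X_0]^1)$. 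No further thinning helps: any uncountable $X\subseteq X_0$ is still contained in $Y$. Your closing claim that ``each thinning removes at most a non-stationary \dots\ part'' is therefore false for precisely the thinning you need most.

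The paper's proof does not invoke Lemma~\ref{lem Peng 30} at all; it uses the stationarity of $S$ directly against the first level $\mct_1$ of the tree. The key observation (which you never make) is that by (T4) every $Y\in\mct_1$ satisfies $Y(\alpha)>\omega_1(\alpha)=\alpha$ for all $\alpha$, hence is non-stationary. One then finds $X'\in[S]^{\omega_1}$ with $|X'\cap Y|\leq\omega$ for all $Y\in\mct_1$: if only countably many $Y\in\mct_1$ meet $S$ uncountably, subtract their (non-stationary) union from $S$; otherwise the almost-disjoint family $\{Y\in\mct_1:|S\cap Y|=\omega_1\}$ has size below $\mathfrak a_{\omega_1}$, so is not maximal inside $S$. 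Since every proper node of $\mct$ sits below some $Y\in\mct_1$, this $X'$ is almost disjoint from \emph{every} proper node, forcing $\mcc(X')=\{(\emptyset,[\omega_1]^1)\}$; a final thinning to get $X(\alpha)>\alpha$ then makes $(\emptyset,[X]^1)$ a $\mcc$-candidate via Lemma~\ref{lem 7.i}, with the ``moreover'' clause built in.
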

\begin{proof}
First find $X'\in [S]^{\omega_1}$ such that $|X'\cap Y|\leq \omega$ for all $Y\in \mct_1$  where $\mct_1$ is the  level 1 of $\mct$. 
By (T3), elements in $\mct_1$ are pairwise almost disjoint (modulo $[\omega_1]^{\leq \omega}$). By (T4), elements in $\mct_1$ are non-stationary.
Let
\[\mct'=\{Y\in \mct_1: |S\cap Y|=\omega_1\}.\]

If $|\mct'|< \omega_1$, then $\bigcup \mct'$ is non-stationary and let $X'=S\setminus \bigcup\mct'$.

If $|\mct'|\geq \omega_1$, then the existence of $X'$ follows from $|\mct'|\leq |\mcc|+\omega<\mathfrak{a}_{\omega_1}$.

Find $X\in [X']^{\omega_1}$ such that $X(\alpha)>\alpha$ for all $\alpha<\omega_1$. Then by Lemma \ref{lem 7.i}, $(\emptyset, [X]^1)$ is a $\mcc$-candidate. 
\end{proof}
\begin{lem}\label{lem 8.2}
Assume $\varphi_1(\mcc, \mct, \bfe)$, $\psi_1(\mcc, \mct, \bfe, \pi, \mcr)$ and $|\mcc|<\mathfrak{a}_{\omega_1}$. Then $\pi$ has no stationary 0-homogeneous subset. In particular,
$\mc{H}^\pi_0$ is not $\sigma$-$(n+1)$-linked.
\end{lem}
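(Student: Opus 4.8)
The plan is to prove the first assertion of the lemma --- that $\pi$ has no stationary $0$-homogeneous subset --- by contradiction, and then to deduce the ``in particular'' clause from it by a pigeonhole argument. So assume $S\subseteq\omega_1$ is stationary with $[S]^{n+1}\subseteq\pi^{-1}\{0\}$. The guiding idea is that the only thing that can prevent the reflection property (Res) from realizing a prescribed pattern $f$ over a $\mcc$-candidate $(a,\msa)$ is the constraint (Res.1), and (Res.1) is dictated entirely by $\mc{C}_0(a,\msa)$; hence if one can build a candidate \emph{over} $S$ whose $\mc{C}_0$ is trivial, then (Res) will force some $(n+1)$-tuple inside $S$ to receive colour $1$, contradicting $0$-homogeneity of $S$.

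Concretely, I would first apply Lemma \ref{lem 8 C-candidate} (which uses exactly the hypotheses $\varphi_1(\mcc,\mct,\bfe)$ and $|\mcc|<\mathfrak{a}_{\omega_1}$) to get $X\in[S]^{\omega_1}$ with $(\emptyset,[X]^1)$ a $\mcc$-candidate and $\mc{C}_0(\emptyset,[X]^1)=\{(\emptyset,[\omega_1]^1)\}$. Put $(a,\msa)=(\emptyset,[X]^1)$; then $N_\msa=1$, so $N_\msa^{n+1}$ is the single constant sequence $\mathbf{0}$. Define $f: N_\msa^{n+1}\ra 2$ by $f(\mathbf{0})=1$. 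Since $\mc{C}_0(a,\msa)\setminus\{(\emptyset,[\omega_1]^1)\}=\emptyset$, the antecedent of (Res.1) is never satisfied, so $f$ is vacuously $\mcr$-satisfiable for $(a,\msa)$. By (Res), from $\psi_1(\mcc,\mct,\bfe,\pi,\mcr)$, there are $a_0<\cdots<a_n$ in $\msa=[X]^1$, that is, singletons $\{x_0\},\dots,\{x_n\}$ with $x_0<\cdots<x_n$ all in $X\subseteq S$, such that $\pi(\{x_0,\dots,x_n\})=f(\mathbf{0})=1$. But $\{x_0,\dots,x_n\}\in[S]^{n+1}$, so $\pi(\{x_0,\dots,x_n\})=0$; this contradiction shows $\pi$ has no stationary $0$-homogeneous subset.

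For the last sentence, suppose $\mc{H}^\pi_0=\bigcup_{m<\omega}H_m$ with each $H_m$ being $(n+1)$-linked. By Fact \ref{fact 8.1} we have $[\omega_1]^{\le n}\subseteq\mc{H}^\pi_0$, so every singleton $\{\alpha\}$ lies in some $H_{m(\alpha)}$; since the nonstationary ideal on $\omega_1$ is countably complete, there is a fixed $m^{*}$ for which $S=\{\alpha<\omega_1:m(\alpha)=m^{*}\}$ is stationary. Given $\alpha_0<\cdots<\alpha_n$ in $S$, the $n+1$ conditions $\{\alpha_0\},\dots,\{\alpha_n\}\in H_{m^{*}}$ have a common lower bound in $\mc{H}^\pi_0$, which contains $\{\alpha_0,\dots,\alpha_n\}$; as $\mc{H}^\pi_0$ is closed under subsets, $\{\alpha_0,\dots,\alpha_n\}\in\mc{H}^\pi_0$, i.e. $\pi(\{\alpha_0,\dots,\alpha_n\})=0$. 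Thus $S$ would be a stationary $0$-homogeneous subset of $\pi$, contradicting the first part.

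I expect the argument to be essentially routine. The one point deserving real care is the verification that the pattern $f\equiv 1$ is $\mcr$-satisfiable, which rests squarely on the ``moreover'' clause of Lemma \ref{lem 8 C-candidate} guaranteeing that the candidate $(\emptyset,[X]^1)$ has trivial $\mc{C}_0$ --- without it, (Res.1) might force $f$ to take value $0$ everywhere and the contradiction would evaporate. The only other delicate bookkeeping is handling the degenerate shape $N_\msa=1$ of (Res), so that $N_\msa^{n+1}$ is genuinely a one-point set and the elements $a_i\in\msa$ produced by (Res) are exactly the singletons whose members form the offending $(n+1)$-tuple inside $S$.
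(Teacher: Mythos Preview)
Your argument is correct and mirrors the paper's proof exactly: apply Lemma~\ref{lem 8 C-candidate} to obtain a $\mcc$-candidate $(\emptyset,[X]^1)$ inside $S$ with trivial $\mc{C}_0$, observe that the constant-$1$ function is vacuously $\mcr$-satisfiable, and invoke (Res) to produce a $1$-colored $(n+1)$-tuple in $X\subseteq S$. The paper omits any argument for the ``in particular'' clause; your pigeonhole via singletons is the intended deduction (Fact~\ref{fact 8.1} being a standing property of the fixed $\pi$ throughout Section~8).
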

\begin{proof}
Fix a stationary set $S\subseteq\omega_1$.
By Lemma \ref{lem 8 C-candidate}, find $X\in [S]^{\omega_1}$ such that $(\emptyset, [X]^1)$ is a $\mcc$-candidate and $\mcc_0(\emptyset, [X]^1)=\{(\emptyset, [\omega_1]^1)\}$.
Then the constant 1 function $f: \{0\}^{n+1}\ra \{1\}$ is $\mcr$-satisfiable for $(\emptyset, [X]^1)$.

Now applying $\psi_1(\mcc, \mct, \bfe, \pi, \mcr)$ to $(\emptyset, [X]^1)$ and $f$,  we get $b\in [X]^{n+1}$ such that $\pi(b)=1$.
This shows that $X$, and hence $S$, is not 0-homogeneous.
\end{proof}

Then we show that $\psi_1(\mcc, \mct, \bfe, \pi, \mcr)$ is preserved in the iteration process (with  a possible extension of $\mcc, \mct, \bfe, \mcr$).
\begin{lem}\label{lem 8.3}
Assume $\varphi_1(\mcc, \mct, \bfe)$ and $\psi_1(\mcc, \mct, \bfe, \pi, \mcr)$. If $\mc{P}$ is a poset with precaliber $\omega_1$, then $\Vdash \psi_1(\mcc, \mct, \bfe, \pi, \mcr)$.
\end{lem}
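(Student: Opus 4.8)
The statement to prove is that if $\varphi_1(\mcc,\mct,\bfe)$ and $\psi_1(\mcc,\mct,\bfe,\pi,\mcr)$ hold and $\mc{P}$ has precaliber $\omega_1$, then $\Vdash_{\mc{P}} \psi_1(\mcc,\mct,\bfe,\pi,\mcr)$. The structure $\psi_1$ is the conjunction of $\psi_1^-$ (properties (R1), (R2) of $\mcr$) and (Res). The first two are properties of the ground-model objects $\mcc, \mct, \bfe, \pi, \mcr$ that only refer to finitely many elements at a time, so they are absolute between $V$ and $V^{\mc{P}}$ (the sets $\mcc,\mct,\bfe,\mcr$ are not enlarged here — the lemma asserts preservation of the \emph{same} structure — and a 0-homogeneous set of $\pi$ stays 0-homogeneous since $\pi$ is unchanged). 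Hence the only nontrivial content is preserving (Res): whenever $(a,\msa)$ is a $\mcc$-candidate in $V^{\mc{P}}$ and $f:N_\msa^{n+1}\ra 2$ is $\mcr$-satisfiable for $(a,\msa)$, there must be $a_0<\cdots<a_n$ in $\msa$ with $\pi(\{a_i(s(i)):i\le n\})=f(s)$ for all $s$.

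First I would fix, in $V$, a name $(a,\dot\msa)$ and a condition $p$ forcing ``$(a,\dot\msa)$ is a $\mcc$-candidate and $\dot f$ is $\mcr$-satisfiable for $(a,\dot\msa)$,'' where $\dot f$ is also a name; by strengthening $p$ I can assume $p$ decides $N_{\dot\msa}=N$, decides $\dot f = f$, and decides $\mc{C}_0(a,\dot\msa)$ (this finite set exists by Lemma \ref{lem 7.g}) together with $\mc{F}(\dot\ms{B},\dot\msa)$ for each $(b,\dot\ms{B})$ in it. Then for each $\alpha<\omega_1$ pick $p_\alpha\le p$ and $a_\alpha\in[\omega_1\setminus\alpha]^{<\omega}$ with $p_\alpha\Vdash a_\alpha\in\dot\msa$. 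Apply Lemma \ref{lem 7.n} to a suitable $\Gamma$: this yields $\Sigma\in[\Gamma]^{\omega_1}$ such that $(a,\{a_\alpha:\alpha\in\Sigma\})$ is a genuine $\mcc$-candidate \emph{in $V$}, and moreover the proof of Lemma \ref{lem 7.n} arranges that $\mc{C}_0(a,\{a_\alpha:\alpha\in\Sigma\})$ and the relevant $\mc{F}$'s coincide with the forced values for $\dot\msa$. The key point to check is that $f$ (the same function) is $\mcr$-satisfiable for this ground-model candidate $(a,\msa^*):=(a,\{a_\alpha:\alpha\in\Sigma\})$: condition (Res.1) refers only to $\mc{C}_0(a,\msa^*)\setminus\{(\emptyset,[\omega_1]^1)\}$, the values $\mcr(b,\msb)$ for those (unchanged since $\mcr$ is the ground-model map), and the combinatorial sets $\mc{F}(\msb,\msa^*)$, all of which match the forced data — so (Res.1) transfers verbatim.

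Now I apply $\psi_1(\mcc,\mct,\bfe,\pi,\mcr)$ \emph{in $V$} to $(a,\msa^*)$ and $f$: there exist indices $\alpha_0<\cdots<\alpha_n$ in $\Sigma$ with $\pi(\{a_{\alpha_i}(s(i)):i\le n\})=f(s)$ for all $s\in N^{n+1}$. Finally I must pull this back down below $p$ in $\mc{P}$: since $\mc{P}$ has precaliber $\omega_1$, the uncountable set $\{p_\alpha:\alpha\in\Sigma\}$ has an uncountable centered subset, so in particular I can find a single condition $q\le p$ compatible with $p_{\alpha_0},\dots,p_{\alpha_n}$ simultaneously — indeed I may pass to a centered subfamily before running the application of $\psi_1$ in $V$, so that the chosen $\alpha_0,\dots,\alpha_n$ automatically have $p_{\alpha_0},\dots,p_{\alpha_n}$ pairwise compatible with a common lower bound $q\le p$. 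Then $q$ forces $a_{\alpha_0},\dots,a_{\alpha_n}\in\dot\msa$, they are increasing and non-overlapping (arranged when choosing $\Sigma$), and $\pi(\{a_{\alpha_i}(s(i)):i\le n\})=f(s)$ holds absolutely since $\pi$ is fixed. Hence $q$ forces the conclusion of (Res) for $(a,\dot\msa),\dot f$. As $p$ and the names were arbitrary, $\Vdash_{\mc{P}}\psi_1(\mcc,\mct,\bfe,\pi,\mcr)$.

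\textbf{Main obstacle.} The delicate step is the verification that $f$ remains $\mcr$-satisfiable for the ground-model candidate $(a,\msa^*)$ extracted by Lemma \ref{lem 7.n}; this requires that the decided (forced) values of $\mc{C}_0(a,\dot\msa)$ and of the maps $\mc{F}(\cdot,\dot\msa)$ genuinely agree with the corresponding objects computed for $(a,\msa^*)$ in $V$. This is exactly what the proof of Lemma \ref{lem 7.n} guarantees — it builds $\msa^*$ precisely so that its predecessors in $\mcc$ and their $\mc{F}$-data are the ones $p$ forces — but one must quote and use that internal feature, not merely the statement of Lemma \ref{lem 7.n}. The use of precaliber $\omega_1$ (rather than mere ccc) is essential only at the very end, to secure a single condition $q$ below $p$ lying below all $n+1$ chosen $p_{\alpha_i}$; with ccc alone one could not in general find such a common lower bound for a prescribed finite tuple, and this is precisely why Fact \ref{fact p K3} (and the whole strategy of the paper) needs precaliber $\omega_1$ here.
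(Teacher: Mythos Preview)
Your proof is correct and follows essentially the same approach as the paper: pull the forced $\mcc$-candidate back to a ground-model one via Lemma \ref{lem 7.n}, check that $f$ stays $\mcr$-satisfiable, apply (Res) in $V$, and use precaliber $\omega_1$ to obtain a common lower bound of the finitely many $p_{\alpha_i}$'s. The only noteworthy difference is in the step you flagged as the main obstacle: instead of opening up the proof of Lemma \ref{lem 7.n} to match $\mc{C}_0$ and the $\mc{F}$-data, the paper observes that some $q$ forces an uncountable $\msa''\subseteq\msa^*\cap\dot\msa$, and since $\mc{C}_0(a,\cdot)$ and $\mc{F}(\msb,\cdot)$ are invariant under passing between a $\mcc$-candidate and any uncountable sub-candidate, $\mcr$-satisfiability transfers from $(a,\dot\msa)$ through $(a,\msa'')$ to $(a,\msa^*)$ by absoluteness --- a slightly cleaner route that avoids quoting internals of Lemma \ref{lem 7.n}.
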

\begin{proof}
Only (Res) needs a verification. Fix $p\in \mc{P}$, a $\mc{P}$-name  $(a, \dot{\msa})$  and $f:   N_{\dot{\msa}}^{n+1}\ra 2$ such that
\[p\Vdash (a, \dot{\msa}) \text{ is a $\mcc$-candidate and $f$ is $\mcr$-satisfiable for } (a, \dot{\msa}).\]

For every $\alpha<\omega_1$, find $p_\alpha\leq p$ and $a_\alpha\in [\omega_1\setminus\alpha]^{N_{\dot{\msa}}}$ such that
\[p_\alpha\Vdash a_\alpha \text{ is the $\alpha$th element of } \dot{\msa}.\]
First find $\Gamma\in [\omega_1]^{\omega_1}$ such that 
\begin{enumerate}\setcounter{enumi}{1}
\item $\{p_\alpha: \alpha\in \Gamma\}$ is centered.
\end{enumerate}
Then by  Lemma \ref{lem 7.n}, find $\Sigma\in [\Gamma]^{\omega_1}$ such that
\begin{enumerate}\setcounter{enumi}{2}
\item $(a, \msa')$ is a $\mcc$-candidate where $\msa'=\{a_\alpha: \alpha\in \Sigma\}$.
\end{enumerate}

We claim that $f$ is $\mcr$-satisfiable for $(a, \msa')$. To see this, fix $q\leq p$ such that $q\Vdash \{\alpha\in \Sigma: p_\alpha\in \dot{G}\}$ is uncountable where $\dot{G}$ is the canonical name of the generic filter. Then 
\[q\Vdash \{a_\alpha: \alpha\in \Sigma\wedge p_\alpha\in \dot{G}\}\subseteq \dot{\msa}.\]
Consequently, $q\Vdash f \text{ is $\mcr$-satisfiable for } (a, \{a_\alpha: \alpha\in \Sigma\wedge p_\alpha\in \dot{G}\})$. Since $q\Vdash \{a_\alpha: \alpha\in \Sigma\wedge p_\alpha\in \dot{G}\}\subseteq \msa'$,  $f$ is $\mcr$-satisfiable for $(a, \msa')$.

By $\psi_1(\mcc, \mct, \bfe, \pi, \mcr)$, find $a_{\alpha_0}<\cdots<a_{\alpha_{n}}$ in $\msa'$ witnessing (Res) for $(a, \msa')$ and $f$. By (2), fix a common lower bound $r$ of $\{p_{\alpha_i}: i\leq n\}$. Then 
\[r\Vdash a_{\alpha_0},...,a_{\alpha_{n}} \text{ witness (Res) for } (a, \dot{\msa}) \text{ and } f.\]

Now a standard density argument shows $\Vdash \psi_1(\mcc, \mct, \bfe, \pi, \mcr)$.
\end{proof}

Now we introduce the second kind of forcing we shall use. 
\begin{defn}\label{defn PaAaI}
Assume $\varphi_1(\mcc, \mct, \bfe)$ and $(a, \msa)$ is a $\mcc$-candidate. Suppose $a'\subseteq a$ and $I\subseteq N_\msa$ satisfy $a'\cup b[I]\in \mc{H}^\pi_0$ for all $b\in \msa$. Then 
\[\mc{P}_{a, \msa, a', I}\text{ is the poset consisting of $F\in [\msa]^{<\omega}$ such that }a'\cup \bigcup_{b\in F} b[I]\in \mc{H}^\pi_0.\] 
The order is reverse inclusion.
\end{defn}

For the coloring $\pi$ we shall use, $\mc{H}^\pi_0$ is $\sigma$-$n$-linked. Then $\mc{P}_{a, \msa, a', I}$ is $\sigma$-$n$-linked and hence ccc.

\begin{lem}\label{lem 8.4}
Assume $\varphi_1(\mcc, \mct, \bfe)$, $\psi_1(\mcc, \mct, \bfe, \pi, \mcr)$ and $\mc{H}^\pi_0$ is $\sigma$-$n$-linked. Suppose $(a, \msa), a', I$ and $ \mc{P}_{a, \msa, a', I}$ are as above. If $G$ is an uncountable generic filter,  then in $V[G]$, $\varphi_1(\mcc', \mct', \bfe')$ and $\psi_1(\mcc', \mct', \bfe', \pi, \mcr')$ hold where $\mcc'=\mcc\cup \{(a, \bigcup G)\}$, $\mct', \bfe'$ are induced from $\mcc'$  and $\mcr'=\mcr\cup \{((a, \bigcup G), (a', I))\}$.
\end{lem}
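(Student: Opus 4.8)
The plan is to verify, in the generic extension $V[G]$, each clause of $\varphi_1$ and $\psi_1$ for the enlarged structure. First I would note that $G$ adds a single new element to the collection, namely $(a, \bigcup G)$ where $\bigcup G = \{b \in \msa : \exists F \in G,\ b \in F\}$ is an uncountable subfamily of $\msa$. Since $(a,\msa)$ is a $\mcc$-candidate and $\bigcup G \in [\msa]^{\omega_1}$, the Remark after the definition of $\mcc$-candidate gives that $(a, \bigcup G)$ is again a $\mcc$-candidate \emph{with respect to $\mcc$}. From this, $\varphi_1(\mcc', \mct', \bfe')$ is immediate: adding a single $\mcc$-candidate preserves $\varphi_1$ by the very definition of $\mcc$-candidate (clause (1) of that definition). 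So the only genuine work is $\psi_1(\mcc', \mct', \bfe', \pi, \mcr')$.

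For $\psi_1$, clauses (R1) and (R2) for the new value $\mcr'(a, \bigcup G) = (a', I)$ are handled by the density of the conditions $F$ with $a' \cup \bigcup_{b\in F} b[I] \in \mc{H}^\pi_0$: the generic object $\bigcup G$ satisfies $a' \cup \bigcup\{b[I] : b \in \bigcup G\} \in \mc{H}^\pi_0$ (using that $\mc{H}^\pi_0$ is closed under unions of finite $0$-homogeneous pieces, which is exactly the ccc-type property of $\mc{H}^\pi_0$, or more directly that $0$-homogeneity is a property checkable on finite subsets). The old part of $\mcr'$ is unchanged, and since $\mcc$-candidates that were already in $\mcc$ stay in $\mcc'$ with the same structure, (R1)--(R2) carry over for them. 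The main point, and the main obstacle, is (Res): given a $\mcc'$-candidate $(b, \dot{\msb})$ and an $\mcr'$-satisfiable $f : N_{\dot\msb}^{n+1} \ra 2$, produce $b_0 < \cdots < b_n$ in $\dot\msb$ realizing $f$ via $\pi$. The difficulty is that $(b,\dot\msb)$ may be a name depending on $G$, and that $\mc{C}_0(b,\dot\msb)$ in $V[G]$ may now contain the new element $(a, \bigcup G)$, so $\mcr'$-satisfiability of $f$ differs from $\mcr$-satisfiability.

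Here is how I would overcome it. Work below a condition $F_0 \in G$ deciding $N_{\dot\msb} = N$ and deciding $\mc{C}_0(b, \dot\msb) \cap \mcc$ (finite, by Lemma~\ref{lem 7.g}); further decide whether $(a,\bigcup G) \in \mc{C}_0(b,\dot\msb)$, i.e.\ whether some $\dot\msb_j \subset (\bigcup G)_i$. If $(a,\bigcup G)$ is \emph{not} below, then a $\mc{P}_{a,\msa,a',I}$-name for $(b,\dot\msb)$ pulls back, via Lemma~\ref{lem 7.n} applied with $\mc{P} = \mc{P}_{a,\msa,a',I}$ (which is ccc, being $\sigma$-$n$-linked), to a genuine $\mcc$-candidate $(b, \msb')$ in $V$ with $\msb' = \{b_\alpha : \alpha \in \Sigma\}$; and $f$ is then $\mcr$-satisfiable for $(b,\msb')$ since $\mc{C}_0(b, \msb') \subseteq \mcc$ and $\mcr'\up\mcc = \mcr$, so (Res) in $V$ supplies $b_{\alpha_0} < \cdots < b_{\alpha_n}$ realizing $f$; a common lower bound in $\mc{P}_{a,\msa,a',I}$ of the corresponding $p_{\alpha_i}$ forces the realization up to $V[G]$, and a density argument finishes. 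If $(a, \bigcup G)$ \emph{is} in $\mc{C}_0(b,\dot\msb)$, then $\mc{R}'$-satisfiability of $f$ forces $f(s) = 0$ for all $s$ with $\mathrm{rang}(s) \subseteq \bigcup\{K[I'] : K \in \mc{F}(\bigcup G, \dot\msb)\}$ where $(a',I') := \mcr'(a, \bigcup G) = (a',I)$ — and by (R2) for $(a, \bigcup G)$, those positions are already $0$-homogeneous for $\pi$ in $V[G]$; so realizing $f$ on the complementary positions is exactly what $\mcr$-satisfiability (with respect to the ground-model structure, after pulling back $(b,\dot\msb)$ to a $\mcc$-candidate using that $\bigcup G \in [\msa]^{\omega_1}$ and $(a,\msa)$ is itself a $\mcc$-candidate) allows. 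In both cases the pullback is via Lemma~\ref{lem 7.n} (or Lemma~\ref{lem 7.o}, which is tailored to exactly this situation of a $\dot\mcc'$-candidate over $\mcc \cup \{(a,\dot\msa^*)\}$), and the realization is transported back by taking common lower bounds and running a density argument. The subtle bookkeeping — matching $\mc{F}(\bigcup G, \dot\msb)$ positions against the $0$-homogeneous block guaranteed by $(R2)$ — is where care is needed, but it is forced by the definitions once the case split is made.
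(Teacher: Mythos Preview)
Your outline correctly identifies $\varphi_1$, (R1), and (R2) as immediate, and correctly locates the real work in (Res). But the core step of your (Res) argument has a genuine gap.

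In your ``$(a,\bigcup G)\notin \mc{C}_0(b,\dot\msb)$'' case you pull back to a ground-model $\mcc$-candidate $(b,\msb')$, apply (Res) there to obtain $b_{\alpha_0}<\cdots<b_{\alpha_n}$, and then say ``a common lower bound in $\mc{P}_{a,\msa,a',I}$ of the corresponding $p_{\alpha_i}$ forces the realization up.'' But there are $n+1$ conditions $p_{\alpha_0},\ldots,p_{\alpha_n}$, and $\mc{P}_{a,\msa,a',I}$ is only $\sigma$-$n$-linked; there is no reason these $n+1$ conditions have a common lower bound. Refining first to an $n$-linked family does not help: $n$-linked gives compatibility of any $n$, not $n+1$. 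The same problem infects your second case, where the compatibility question is never addressed at all.

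The paper's proof overcomes exactly this obstacle by a construction you do not have. Instead of applying (Res) to $(b,\msb')$, it enlarges each $b_\alpha$ to $c_\alpha = b_\alpha\cup\bigcup(F_\alpha\setminus\overline{F})$, absorbing the (non-root part of the) forcing condition into the element, and forms a $\mcc$-candidate $(a\cup b,\ms{C})$ with $\ms{C}=\{c_\alpha\}$. It then defines a function $h$ on $N_{\ms{C}}^{n+1}$ that copies $f$ on the coordinates coming from the $b_\alpha$-part and is identically $0$ on every tuple touching the condition-part. One must check $h$ is $\mcr$-satisfiable for $(a\cup b,\ms{C})$; this uses the assumed $\mcr'$-satisfiability of $f$ for the new element $(a,\bigcup G)$. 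Applying (Res) to $(a\cup b,\ms{C})$ and $h$ yields $c_{\alpha_0}<\cdots<c_{\alpha_n}$, and now compatibility of $F_{\alpha_0},\ldots,F_{\alpha_n}$ is \emph{forced by the construction}: any $(n+1)$-tuple $d\subseteq a'\cup\bigcup_{i}\bigcup_{c\in F_{\alpha_i}}c[I]$ is either covered by at most $n$ of the $F_{\alpha_i}$ (handled by $\sigma$-$n$-linkedness, arranged in advance), or picks one element from each $c_{\alpha_i}$, in which case $\pi(d)=h(t)$ is $0$ by design (with a separate check when $t$ lands in the $b_\alpha$-part, using (Res.1) for the new element). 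This absorption-and-padding-by-zero is the missing idea in your proposal.
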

\begin{proof}
First note that $(a,\msa)$ is a $\mcc$-candidate and $\bigcup G\subseteq \msa$. Then $(a, \bigcup G)$ is also a $\mcc$-candidate. So $\varphi_1(\mcc', \mct', \bfe')$ holds in $V[G]$.

To see $\psi_1(\mcc', \mct', \bfe', \pi, \mcr')$, first note that (R1) and (R2) follow from the definition of $\mcr'$ and our forcing condition.

Now we check (Res). Fix a $\mcc'$-candidate $(b, \msb)$ and $f:   N_\msb^{n+1}\ra 2$ that is $\mcr'$-satisfiable for $(b, \msb)$. Let 
\[N=N_\msb.\]


Let $\dot{\msb}$ be a $\mc{P}_{a, \msa, a', I}$-name of $\msb$ and $F\in \mc{P}_{a, \msa, a', I}$ such that
\begin{enumerate}
  \item $F\Vdash f$ is $\dot{\mcr}'$-satisfiable for $(b, \dot{\msb})$. Moreover, $F$ determines $\dot{\mc{C}}'_1(b, \dot{\msb})$.
  \end{enumerate}

For every $\alpha<\omega_1$, choose $F_\alpha\leq F$ and $b_\alpha\in [\omega_1]^N$ such that
\begin{enumerate}\setcounter{enumi}{1}
\item $F_\alpha\Vdash b_\alpha \text{ is the $\alpha$th element of } \dot{\msb}$.
\end{enumerate}
Extending $F_\alpha$ if necessary, we may assume that
\begin{enumerate}\setcounter{enumi}{2}
\item if $F\Vdash (a, \bigcup \dot{G})\prec (b, \dot{\msb})$, then $b_\alpha[J]\in F_\alpha$ for every $J\in \mc{F}(\bigcup \dot{G}, \dot{\msb})$.
\end{enumerate} 

Now choose $\Gamma\in [\omega_1]^{\omega_1}$ and $I^*$ such that,
\begin{enumerate}\setcounter{enumi}{3}
\item if $F\Vdash (a, \bigcup \dot{G})\not\prec (b, \dot{\msb})$, then for every $\alpha\in \Gamma$, $b_\alpha\cap (\bigcup F_\alpha)=\emptyset$;
\item $\{a'\cup \bigcup_{c\in F_\alpha}c[I]: \alpha\in \Gamma\}$ is $n$-linked in $\mc{H}^\pi_0$;
\item $|F_\alpha|$ is constant for $\alpha\in \Gamma$ and $\{F_\alpha: \alpha\in \Gamma\}$ is a $\Delta$-system with root $\overline{F}$;
\item  for every $\alpha\in \Gamma$, $c_\alpha[I^*]=b_\alpha$ where $c_\alpha=b_\alpha\cup \bigcup (F_\alpha\setminus \overline{F})$ and $\{c_\alpha: \alpha\in \Gamma\}$ is non-overlapping.
\end{enumerate}
By Lemma \ref{lem 7.l},  find $\Gamma'\in [\Gamma]^{\omega_1}$ such that
\begin{enumerate}\setcounter{enumi}{7}
\item     $(a, \msa')$ is a $\mcc$-candidate where $\msa'=\{\bigcup (F_\alpha\setminus \overline{F}): \alpha\in \Gamma'\}$.
\end{enumerate}
 
 Apply Lemma \ref{lem 7.o} to find  $\Sigma\in [\Gamma']^{\omega_1}$ such that
 \begin{enumerate}\setcounter{enumi}{8}
\item $(b, \msb')$ is a $\mcc$-candidate where $\msb'=\{b_\alpha: \alpha\in \Sigma\}$;
\end{enumerate}

By (3)-(4), (6)-(9) and Lemma \ref{lem 7.l}, omitting a countable subset of $\Sigma$, we may assume that 
 \[(a\cup b, \ms{C})\text{ is a $\mcc$-candidate where }\ms{C}=\{c_\alpha: \alpha\in \Sigma\}.\]

Now define $h:  N_\ms{C}^{n+1}\ra 2$ by for every  $t\in N_\ms{C}^{n+1}$,
\begin{enumerate}\setcounter{enumi}{9}
\item if $t\in (I^*)^{n+1}$, $h(t)=f(s)$ where $t(i)=I^*(s(i))$ for all $i\leq n$;
\item otherwise, $h(t)=0$.
\end{enumerate}
So $h$ copies $f$ on $(I^*)^{n+1}$ and equals 0 on $N_\ms{C}^{n+1}\setminus (I^*)^{n+1}$.\medskip

\textbf{Claim.} $h$ is $\mcr$-satisfiable for $(a\cup b, \ms{C})$. 
\begin{proof}[Proof of Claim.]
Fix $t\in N_\ms{C}^{n+1}$ and $(c, \ms{C}')\prec (a\cup b, \ms{C})$ in $\mcc\setminus\{(\emptyset, [\omega_1]^1)\}$. Assume 
\[\mcr(c, \ms{C}')=(c', J)\text{ and } rang(t)\subseteq \bigcup\{L[J]: L\in \mc{F}(\ms{C}', \ms{C})\}.\]

It suffices to show $h(t)=0$. The non-trivial case is $t\in (I^*)^{n+1}$. Let $s\in N^{n+1}$ be such that $t(i)=I^*(s(i))$ for all $i\leq n$. Note $(\bigcup \ms{C}')\cap (\bigcup \msb')\supseteq \ms{C}_i$ is uncountable. By (9), $(c, \ms{C}')\prec (b, \msb')$. 
We will check that
\[rang(s)\subseteq  \bigcup\{L'[J]: L'\in \mc{F}(\ms{C}', \msb')\}.\]
Then $f(s)=0$ by (1) and hence $h(t)=0$ by (10). To verify above inclusion, fix $i<n+1$.

Let $L\in \mc{F}(\ms{C}', \ms{C})$ satisfy $t(i)\in L[J]$. Then $t(i)\in L\cap I^*$. By (C2) of $\varphi_1(\mcc\cup\{(b, \msb'\})$ (or Lemma \ref{lem 7.i} (a)), $L\subseteq I^*$. Consequently, 
\[L=I^*[L'] \text{ for some } L'\in \mc{F}(\ms{C}', \msb').\]

By definition of $s$, $s(i)\in L'[J]$.
\end{proof}

Applying (Res) to $(a\cup b, \ms{C})$ and $h$, we find witnesses $c_{\alpha_0}<\cdots<c_{\alpha_{n}}$ in $\ms{C}$. 
We check that $F^*=\bigcup_{i< n+1} F_{\alpha_i}\in \mc{P}_{a, \msa, a', I}$. Arbitrarily choose $d\in [a'\cup \bigcup_{c\in F^*} c[I]]^{n+1}$. It suffices to prove that $\pi(d)=0$.

If   for some $x\in [\{\alpha_i: i< n+1\}]^{\leq n}$, $d\subseteq a'\cup \bigcup_{\beta\in x}\bigcup_{c\in  F_\beta} c[I]$, then by (5), $\pi(d)=0$.

Now suppose for some  $t\in N_\ms{C}^{n+1}$, $d(i)= c_{\alpha_{i}}(t(i))$. Then $\pi(d)=h(t)$. We may assume that $h(t)$ is defined according to (10). Let $s$ be such that $t(i)=I^*(s(i))$.
By (3)-(4), $F\Vdash (a, \bigcup \dot{G})\prec (b, \dot{\msb})$ and for every $i$,
\[d(i)\in c[I] \text{ for some $c\in F_{\alpha_i}$ with } c\subseteq b_{\alpha_i}.\]
Note that above $c$ equals $b_{\alpha_i}[J]$ for some $J$ in $\mc{F}(\bigcup \dot{G}, \dot{\msb})$. Consequently, $s(i)\in J[I]$.
Together with (1), $f(s)=0$ and hence $\pi(d)=h(t)=0$. This finishes the verification that
\[F^*=\bigcup_{i\leq n} F_{\alpha_i}\in \mc{P}_{a, \msa, a', I}.\]
Then by (2) and (10), $F^*\Vdash b_{\alpha_0},...,b_{\alpha_{n}}\text{ are in } \dot{\msb}\text{ and witness (Res) for $(b, \dot{\msb})$ and } f$.

Now a density argument shows that (Res) holds. Hence, $\psi_1(\mcc', \mct', \bfe', \pi, \mcr')$ holds in $V[G]$.
\end{proof}

We now check the preservation of $\psi_1(\mcc, \mct, \bfe, \pi, \mcr)$ at limit stages.
  \begin{lem}\label{lem 8.5}
Suppose $\nu$ is a limit ordinal and $\langle \mc{P}_\alpha, \dot{\mc{Q}}_\beta: \alpha\leq \nu, \beta<\nu\rangle$ is a finite support iteration of ccc posets. Moreover, for $\alpha\leq \nu$,
\begin{itemize}
\item $(\mcc^\alpha, \mct^\alpha, \bfe^\alpha, \pi, \mcr^\alpha)\in V^{\mc{P}_\alpha}$;
\item  $\mcc^\xi\subseteq \mcc^\alpha$ for $\xi<\alpha$, $\mcc^\nu=\bigcup_{\xi<\nu} \mcc^\xi$ and $\mcr^\nu=\bigcup_{\xi<\nu} \mcr^\xi$;
\item  $\mct^\alpha, \bfe^\alpha$ are induced from $\mcc^\alpha$ and $\mcr^\alpha$ is a map defined on $\mcc^\alpha\setminus \{(\emptyset, [\omega_1]^1)\}$. 
\end{itemize}
If $\varphi_1(\mcc^\alpha, \mct^\alpha, \bfe^\alpha)$ and $\psi_1(\mcc^\alpha, \mct^\alpha, \bfe^\alpha, \pi, \mcr^\alpha)$ hold in $V^{\mc{P}_\alpha}$ for all $\alpha<\nu$, then $\psi_1(\mcc^\nu, \mct^\nu,\bfe^\nu, \pi, \mcr^\nu)$ holds in $V^{\mc{P}_\nu}$.
\end{lem}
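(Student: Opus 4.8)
The plan is to verify the three clauses (R1), (R2) and (Res) of $\psi_1(\mcc^\nu,\mct^\nu,\bfe^\nu,\pi,\mcr^\nu)$ in $V^{\mc{P}_\nu}$. First, $\varphi_1(\mcc^\nu,\mct^\nu,\bfe^\nu)$ holds by Lemma~\ref{lem 7.m}, applied — as noted there — along the iteration $\langle\mc{P}_\alpha,\dot{\mc{Q}}_\beta\rangle$, using that $\mc{P}_\nu$ is ccc and hence preserves $\omega_1$. For (R1): $\mcr^\nu=\bigcup_{\xi<\nu}\mcr^\xi$ is a well-defined function on $\mcc^\nu\setminus\{(\emptyset,[\omega_1]^1)\}$ since the $\mcr^\xi$ are increasing, each has domain $\mcc^\xi\setminus\{(\emptyset,[\omega_1]^1)\}$, and each satisfies (R1); and for $(a,\msa)\in\mcc^\nu$ we have $(a,\msa)\in\mcc^\xi$ for some $\xi<\nu$, so $\mcr^\nu(a,\msa)=\mcr^\xi(a,\msa)=(a',I)$ has the required form. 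For (R2): if $\mcr^\nu(a,\msa)=(a',I)$ then $\mcr^\xi(a,\msa)=(a',I)$ for some $\xi<\nu$, and $\bigcup\{a'\cup b[I]:b\in\msa\}$ is $0$-homogeneous over $\pi$; since $\pi$ is fixed throughout the iteration, $0$-homogeneity over $\pi$ is absolute, and (R2) holds at stage $\nu$.

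It remains to establish (Res). Fix $p\in\mc{P}_\nu$, a $\mc{P}_\nu$-name $(a,\dot{\msa})$ and a function $f\colon N_{\dot{\msa}}^{n+1}\to 2$ with $p\Vdash$ ``$(a,\dot{\msa})$ is a $\dot{\mcc}^\nu$-candidate and $f$ is $\dot{\mcr}^\nu$-satisfiable for $(a,\dot{\msa})$''; by a standard density argument (as in the proof of Lemma~\ref{lem 8.3}) it suffices to find, below an arbitrary such $p$, a condition forcing the conclusion of (Res). For each $\alpha<\omega_1$ pick $p_\alpha\le p$ and $a_\alpha\in[\omega_1\setminus\alpha]^{<\omega}$ with $p_\alpha\Vdash a_\alpha$ is the $\alpha$-th element of $\dot{\msa}$. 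Apply Lemma~\ref{lem 7.p} with $\Gamma=\omega_1$ to obtain $\xi<\nu$, $q\le p$ and $\Sigma\in[\omega_1]^{\omega_1}$ as in its clauses (v)--(vi); we may in addition assume $supp(q)\subseteq\xi$ and that $\{supp(p_\alpha):\alpha\in\Sigma\}$ is a $\Delta$-system with root contained in $\xi$. Working in $V^{\mc{P}_\xi}$ below $q\up\xi$, the family $\msa'=\{a_\alpha:\alpha\in\Sigma,\ p_\alpha\up\xi\in\dot{G}\up\xi\}$ makes $(a,\msa')$ a $\mcc^\xi$-candidate. Moreover — and this is the point one extracts from the proof of Lemma~\ref{lem 7.p}, where $q$ is arranged to decide $N_{\dot{\msa}}$, the finite set $\dot{\mc{C}}^\nu_0(a,\dot{\msa})$, and the index-sets $\mc{F}(\dot{\msb},\dot{\msa})$ for $\dot{\msb}$ ranging over it — the pull-back preserves this data: $q\up\xi$ forces $\mc{C}_0^\xi(a,\msa')=\dot{\mc{C}}_0^\nu(a,\dot{\msa})$ with the same families $\mc{F}(\msb,\msa')=\mc{F}(\msb,\dot{\msa})$. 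Since $\mcr^\xi\subseteq\mcr^\nu$ agrees on this finite set, the condition (Res.1) for $(a,\msa')$ and $f$ is (as forced by $q\up\xi$) literally the same as (Res.1) for $(a,\dot{\msa})$ and $f$; hence $q\up\xi$ forces that $f$ is $\mcr^\xi$-satisfiable for $(a,\msa')$.

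Now apply $\psi_1(\mcc^\xi,\mct^\xi,\bfe^\xi,\pi,\mcr^\xi)$, which holds in $V^{\mc{P}_\xi}$: below $q\up\xi$ one finds $\alpha_0<\cdots<\alpha_n$ in $\Sigma$ with $p_{\alpha_i}\up\xi$ in the generic for each $i$ and $\pi(\{a_{\alpha_i}(s(i)):i\le n\})=f(s)$ for every $s\in N_{\dot{\msa}}^{n+1}$; the last equality is a statement about the fixed ground-model $\pi$ and the ground-model sets $a_{\alpha_i}$, hence just true. Fix $q'\le q\up\xi$ in $\mc{P}_\xi$ deciding such $\alpha_0,\dots,\alpha_n$; then $q'\le p_{\alpha_i}\up\xi$ for each $i$. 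Because $supp(p_{\alpha_i})\cap supp(p_{\alpha_j})\subseteq\xi$ for $i\ne j$, the tails $p_{\alpha_0}\up[\xi,\nu),\dots,p_{\alpha_n}\up[\xi,\nu)$ have pairwise disjoint supports and therefore a common lower bound $r^*$. Let $r\in\mc{P}_\nu$ be the condition with $r\up\xi=q'$ and $r\up[\xi,\nu)=r^*$. Then $r\le q\le p$ and $r\le p_{\alpha_i}$ for each $i$, so $r\Vdash$ ``$a_{\alpha_0},\dots,a_{\alpha_n}$ lie in $\dot{\msa}$ and satisfy $\pi(\{a_{\alpha_i}(s(i)):i\le n\})=f(s)$ for all $s$'', i.e. $r$ witnesses (Res) for $(a,\dot{\msa})$ and $f$. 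As $p$ was arbitrary, $\Vdash_{\mc{P}_\nu}$ (Res), which completes the verification of $\psi_1(\mcc^\nu,\mct^\nu,\bfe^\nu,\pi,\mcr^\nu)$.

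The main obstacle is the preservation step highlighted above: one must unwind the proof of Lemma~\ref{lem 7.p} to see that pulling the $\dot{\mcc}^\nu$-candidate $(a,\dot{\msa})$ back to a $\mcc^\xi$-candidate $(a,\msa')$ leaves unchanged the finite collection $\mc{C}_0$ of candidates strictly below it and the associated index-sets $\mc{F}$, so that $\mcr$-satisfiability of $f$ — which by definition refers precisely to this data together with the restriction of $\mcr$ to it — transfers verbatim between stages $\xi$ and $\nu$. Everything else ($\varphi_1$ at the limit via Lemma~\ref{lem 7.m}, absoluteness of $\pi$-homogeneity since $\pi$ is fixed, the $\Delta$-system amalgamation of the tails, and the density arguments) is routine and parallels the proof of Lemma~\ref{lem 8.3}.
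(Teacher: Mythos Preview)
Your proof is correct and follows essentially the same strategy as the paper: reduce to (Res), pull the $\mcc^\nu$-candidate back to a $\mcc^\xi$-candidate via Lemma~\ref{lem 7.p}, observe that the $\mcr$-satisfiability data transfers, apply $\psi_1$ at stage $\xi$, and amalgamate using the $\Delta$-system on supports. The one presentational difference is that the paper first re-bases, assuming without loss of generality that all $(b,\msb)\prec(a,\msa)$ in $\mcc^\nu$ already lie in $\mcc^0$ (possible since $\mc{C}_0^\nu(a,\msa)$ is finite), which lets it cite the preservation of $\mcr$-satisfiability in one line rather than unwinding the proof of Lemma~\ref{lem 7.p} as you do; your more explicit verification of (R1)--(R2) and of how the $\mc{C}_0$ and $\mc{F}$ data survive the pull-back is fine and arguably clearer.
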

\begin{proof}
First recall by Lemma \ref{lem 7.m}, $\varphi_1(\mcr^\nu, \mct^\nu, \bfe^\nu)$ holds. It suffices to prove that condition (Res) holds in  $V^{\mc{P}_\nu}$.

Fix  in  $V^{\mc{P}_\nu}$, a $\mcc^\nu$-candidate $(a, \msa)$ and an $\mcr^\nu$-satisfiable for $(a, \msa)$ function $f$. Viewing some $V^{\mc{P}_\alpha}$ as the ground model if necessary, we may assume that 
\begin{enumerate}
\item all $(b, \msb)\prec (a, \msa)$ in $\mcc^\nu$ are in $\mcc^0$.
\end{enumerate}

Fix $p\in \mc{P}_\nu$, $N<\omega$ and a $\mc{P}_\nu$-name $\dot{\msa}$ of $\msa$ such that 
\begin{enumerate}\setcounter{enumi}{1}
\item $p$ forces (1) and determines all $(b, \msb)\prec (a, \dot{\msa})$ in $\mcc^0$;
\item $p\Vdash N=N_{\dot{\msa}}\text{ and } f \text{ is } \dot{\mcr}^\nu \text{ satisfiable for } (a, \dot{\msa})$.
\end{enumerate}
For every $\alpha<\omega_1$, find $p_\alpha\leq p$ and $a_\alpha\in [\omega_1]^N$ such that
\[p_\alpha\Vdash a_\alpha\text{ is the $\alpha$th element of } \dot{\msa}.\]
Find $\Gamma\in [\omega_1]^{\omega_1}$ such that 
\begin{enumerate}\setcounter{enumi}{3}
\item $\{supp(p_\alpha): \alpha\in \Gamma\}$ forms a $\Delta$-system with root contained in $\xi'<\nu$.
\end{enumerate}

By Lemma \ref{lem 7.p}, find $\xi\in [\xi', \nu)$, $q\leq p$ and  $\Sigma\in [\Gamma]^{\omega_1}$ such that for the canonical name  of the $\mc{P}_\nu$-generic filter $\dot{G}$,
\begin{enumerate}\setcounter{enumi}{4}
\item $q\up\xi\Vdash_{\mc{P}_\xi} (a, \{a_\alpha: \alpha\in \Sigma, p_\alpha\up \xi \in \dot{G}\up\xi\})$ is a $\dot{\mcc}^\xi$-candidate.
\end{enumerate}

Let $G$ be a $\mc{P}_\nu$-generic filter containing $q$, 
\[\Sigma'=\{\alpha\in \Sigma: p_\alpha\up\xi\in G\up\xi\}\text{ and } \msa'=\{a_\alpha: \alpha\in \Sigma'\}.\]

Work in $V[G\up\xi]$.  By (5),  $(a, \msa')$ is a $\mcc^\xi$-candidate. By (2) and (3), $f$ is $\mcr^\xi$-satisfiable for $(a, \msa')$. Applying $\psi_1(\mcc^\xi, \mct^ \xi, \bfe^ \xi, \pi, \mcr^ \xi)$, we get $a_{\alpha_0}<\cdots<a_{\alpha_{n}}$ in $\msa'$ witnessing (Res) for $(a, \msa')$ and $f$.

By (4) and the fact $\Sigma'\subseteq \Gamma$, $p_{\alpha_0},...,p_{\alpha_{n}}$ have a common lower bound, say $p'$. Now 
\[p'\Vdash a_{\alpha_0}<\cdots<a_{\alpha_{n}} \text{   witness (Res) for $(a, \dot{\msa})$ and } f.\]

Now a density argument shows that (Res) and hence $\psi_1(\mcc^\nu, \mct^\nu,\bfe^\nu, \pi, \mcr^\nu)$ holds in $V^{\mc{P}_\nu}$.
\end{proof}

\begin{proof}[Proof of Theorem \ref{thm K3tosK3 n+1}.]
Start from a model of GCH in which there is a coloring $\pi: [\omega_1]^{n+1}\ra 2$ satisfying the conclusion of Fact \ref{fact 8.1}. Then iteratively force ccc posets with finite support $\langle \mc{P}_\alpha, \dot{\mc{Q}}_\beta: \alpha\leq \omega_2, \beta<\omega_2\rangle$ such that 
\begin{enumerate}
\item for $\beta<\omega_2$, $\Vdash_{\mc{P}_\beta} \dot{\mc{Q}}_\beta$ either has precaliber $\omega_1$ and has size $\leq \omega_1$ or is of form $\mc{P}_{a, \dot{\msa}, a', I}$ (see Definition \ref{defn PaAaI}).
\end{enumerate}

Define $\langle (\dot{\mcc}^\alpha, \dot{\mct}^\alpha, \dot{\bfe}^\alpha, \dot{\mcr}^\alpha): \alpha\leq \omega_2\rangle$ in the following standard way.
 \begin{itemize}
 \item $\dot{\mct}^\alpha$ and $\dot{\bfe}^\alpha$ are induced from $\dot{\mcc}^\alpha$.
\item $\mcc^0=\{(\emptyset, [\omega_1]^1)\}$, $\mcr^0$ is the null function.
\item For limit $\alpha$, $\Vdash_{\mc{P}_\alpha} \dot{\mcc}^\alpha=\bigcup_{\xi<\alpha} \dot{\mcc}^\xi$, $\dot{\mcr}^\alpha=\bigcup_{\xi<\alpha} \dot{\mcr}^\xi$. 
\item Suppose $\alpha=\beta+1$.  $\Vdash_{\mc{P}_\alpha} $ if $\dot{\mc{Q}}_\beta$ has precaliber $\omega_1$, then $\dot{\mcc}^\alpha=\dot{\mcc}^\beta$,  $\dot{\mcr}^\alpha=\dot{\mcr}^\beta$ and if $ \dot{\mc{Q}}_\beta$ has form $\mc{P}_{a, \dot{\msa}, a', I}$, then $\dot{\mcc}^\alpha=\dot{\mcc}^\beta\cup \{(a, \bigcup \dot{G}(\beta))\}$, $\dot{\mcr}^\alpha=\dot{\mcr}^\beta\cup \{((a, \bigcup \dot{G}(\beta)), (a', I)\}$ where $\dot{G}(\beta)$ is the canonical name of the $\dot{\mc{Q}}_\beta$-generic filter over $V^{\mc{P}_\beta}$.
\end{itemize}

Recall  by Fact \ref{fact 8.1},   $\mc{H}^\pi_0$ is $\sigma$-$n$-linked and hence 
\begin{enumerate}\setcounter{enumi}{1}
\item all $\mc{P}_{a, \msa, a', I}$'s  have property K$_n$ and $\mc{P}_{\omega_2}$ is a finite support iteration of posets with property K$_n$. In particular, $\mc{P}_{\omega_2}$ has property K$_n$.
\end{enumerate}

Recall the fact: If $\mc{P}$ has property K$_n$ and $\mc{Q}$ does not have property K$_n$, then $\Vdash_\mc{P} \mc{Q}$ does not have property K$_n$. To see this, suppose $\mc{X}\in [\mc{Q}]^{\omega_1}$ has no uncountable $n$-linked subset. Then $\mc{X}$ has no uncountable $n$-linked subset in $V^\mc{P}$ since otherwise some uncountable $n$-linked subset of $\mc{P}$ would induce an uncountable $n$-linked subset of $\mc{X}$ in $V$.

Now by above fact and (2),
\begin{enumerate}\setcounter{enumi}{2}
\item if $\mc{Q}$ has property K$_n$ in $V^{\mc{P}_{\omega_2}}$ and $\mc{Q}\in V^{\mc{P}_\alpha}$ for some $\alpha<\omega_2$, then $\mc{Q}$ has property K$_n$ in $V^{\mc{P}_\alpha}$.
\end{enumerate}

Arrange the iteration $\langle \mc{P}_\alpha, \dot{\mc{Q}}_\beta: \alpha\leq \omega_2, \beta<\omega_2\rangle$ in a standard bookkeeping way such that
\begin{enumerate}\setcounter{enumi}{3}
\item if $\mc{Q}\in H(\omega_2)^{V^{\mc{P}_{\omega_2}}}$ has precaliber $\omega_1$ in $V^{\mc{P}_\alpha}$ whenever  $\alpha<\omega_2$ is sufficiently large, then $\mc{Q}$ is forced  at some stage $\beta$ (as $\dot{\mc{Q}}_\beta$);
\item if $\{a_\xi\in \mc{H}^\pi_0: \xi<\omega_1\}\in V^{\mc{P}_{\omega_2}}$ is a $\Delta$-system with root $a'$, then there are $\alpha<\omega_2$, a $\mcc^\alpha$-candidate $(a, \msa)\in V^{\mc{P}_\alpha}$ and $I\subseteq N_\msa$ such that $a'\subseteq a$, $\{b[I]: b\in \msa\}\subseteq \{a_\xi\setminus a': \xi<\omega_1\}$ and   $\mc{P}_{a, \msa, a', I}$ is forced at stage $\alpha$.
\end{enumerate}
Note that (5) is guaranteed by Lemma \ref{lem 7.k}.

By (1), our definitions of $\mcc^\alpha$'s and Lemma \ref{lem 7.m}, 
\begin{enumerate}\setcounter{enumi}{5}
\item $\varphi_1(\mcc^\alpha, \mct^\alpha, \bfe^\alpha)$ holds for every $\alpha\leq \omega_2$.
\end{enumerate}
 Then by Lemma \ref{lem 8.3}, Lemma \ref{lem 8.4} and Lemma \ref{lem 8.5}, 
 \begin{enumerate}\setcounter{enumi}{6}
\item $\psi_1(\mcc^\alpha, \mct^\alpha, \bfe^\alpha, \pi, \mcr^\alpha)$ holds in $V^{\mc{P}_\alpha}$ for every $\alpha\leq \omega_2$.
\end{enumerate}

We first show that P$_{\omega_1}($K$_n\ra \sigma$-$n$-linked) holds in $V^{\mc{P}_{\omega_2}}$. For this, fix a poset $\mc{Q}$ having property K$_n$ and size $\omega_1$ in $V^{\mc{P}_{\omega_2}}$. We may assume that $\mc{Q}\in H(\omega_2)$. 

By (3), $\mc{Q}$ has property K$_n$ for all $\alpha<\omega_2$ such that $\mc{Q}\in V^{\mc{P}_\alpha}$. 

By Lemma \ref{lem 6.2}, $(\mc{H}_{\mc{Q}, n})^\omega$ has precaliber $\omega_1$ in $V^{\mc{P}_\alpha}$ for sufficiently large $\alpha<\omega_2$. Together with (4) and Lemma \ref{lem 6.1}, $\mc{Q}$ is $\sigma$-$n$-linked in $V^{\mc{P}_{\omega_2}}$.

We then show that $\mc{H}^\pi_0$ witnesses the failure of P$_{\omega_1}($K$_{n+1}\ra \sigma$-$(n+1)$-linked) in $V^{\mc{P}_{\omega_2}}$.

To see that $\mc{H}^\pi_0$ has property K$_{n+1}$, fix a $\Delta$-system $\{a_\xi\in \mc{H}^\pi_0: \xi<\omega_1\}\in V^{\mc{P}_{\omega_2}}$   with root $a'$. Then by (5), some uncountable $(n+1)$-linked subset of $\{a_\xi\in \mc{H}^\pi_0: \xi<\omega_1\}$ will be added at some stage $\alpha<\omega_2$. Hence, $\mc{H}^\pi_0$ has property K$_{n+1}$.

To see that $\mc{H}^\pi_0$ is not $\sigma$-$(n+1)$-linked, note that $\omega_2=\mathfrak{b}_{\omega_1}\leq \mathfrak{a}_{\omega_1}$ in $V^{\mc{P}_{\alpha}}$ for all $\alpha\leq \omega_2$.
Then by Lemma \ref{lem 8.2}, (6) and (7), $\mc{H}^\pi_0$ is not $\sigma$-$(n+1)$-linked in $V^{\mc{P}_\alpha}$ whenever $\alpha<\omega_2$.
So  $\mc{H}^\pi_0$ is not  $\sigma$-$(n+1)$-linked in $V^{\mc{P}_{\omega_2}}$.

Now $V^{\mc{P}_{\omega_2}}$ is a model of P$_{\omega_1}($K$_n\ra \sigma$-$n$-linked) in which P$_{\omega_1}($K$_{n+1}\ra \sigma$-$(n+1)$-linked)  fails.
\end{proof}

\section{Weakening the weaker property}

In this section, we will use the damage control structure in Section 7 to show Theorem \ref{thm k2 not k3}. Then the following corollary follows from the fact that MA$_{\omega_1}$ is equivalent to $\ms{K}_2$+MA$_{\omega_1}$(K).
\begin{cor}
\begin{enumerate}
\item $\mc{K}_2$ does not imply $\ms{K}_2$.
\item $\mc{K}_2$ does not imply $\mc{K}_3$.
\end{enumerate}
\end{cor}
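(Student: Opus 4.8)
The plan is to read off both non-implications from Theorem \ref{thm k2 not k3} together with the standard equivalence that ${\rm MA}_{\omega_1}$ is equivalent to $\ms{K}_2+{\rm MA}_{\omega_1}({\rm K})$. By Theorem \ref{thm k2 not k3}, fix a model $M$ of {\rm ZFC} in which $\mc{K}_2$ and ${\rm MA}_{\omega_1}({\rm K})$ hold while $\mc{K}_3$ fails.

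For part (2) there is nothing further to do: $M$ itself satisfies $\mc{K}_2\wedge\neg\mc{K}_3$, so $\mc{K}_2$ does not imply $\mc{K}_3$.

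For part (1), I would show that $\ms{K}_2$ fails in $M$; since $\mc{K}_2$ holds in $M$, this gives that $\mc{K}_2$ does not imply $\ms{K}_2$. Suppose toward a contradiction that $\ms{K}_2$ holds in $M$. First I would record the equivalence ${\rm MA}_{\omega_1}\Leftrightarrow\ms{K}_2+{\rm MA}_{\omega_1}({\rm K})$: the left-to-right direction is immediate, since ${\rm MA}_{\omega_1}$ makes every ccc poset have property {\rm K} and a fortiori yields ${\rm MA}_{\omega_1}({\rm K})$; conversely, under $\ms{K}_2$ every ccc poset has property {\rm K}, so ${\rm MA}_{\omega_1}({\rm K})$ applies to it and produces the required filter, which is exactly ${\rm MA}_{\omega_1}$. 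Hence ${\rm MA}_{\omega_1}$ holds in $M$. But ${\rm MA}_{\omega_1}$ implies $\ms{K}_3$, and $\ms{K}_3$ implies $\mc{K}_3$ because $\ms{K}_n$ is equivalent to $\mc{K}_n+C(\omega_1)$; this contradicts $M\models\neg\mc{K}_3$. Therefore $\ms{K}_2$ fails in $M$, completing (1).

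All the mathematical substance sits in Theorem \ref{thm k2 not k3}, which is established in Section 9 via the damage control structure of Section 7; the Corollary is a purely formal deduction. The one step that warrants a sentence of care is the equivalence ${\rm MA}_{\omega_1}\Leftrightarrow\ms{K}_2+{\rm MA}_{\omega_1}({\rm K})$ invoked above, and I expect no obstacle there.
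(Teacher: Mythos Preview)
Your argument is correct and matches the paper's own: the corollary is stated immediately after Theorem~\ref{thm k2 not k3} with the remark that it follows from the equivalence ${\rm MA}_{\omega_1}\Leftrightarrow\ms{K}_2+{\rm MA}_{\omega_1}({\rm K})$, exactly as you deduce.
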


Recall for $n\geq 2$, $\mc{K}_n$ is the assertion that every ccc coloring $c: [\omega_1]^n\ra 2$ has an uncountable 0-homogeneous subset.

\begin{defn}
For $n\geq 2$, a ccc poset $\mathbb{P}$ has $n$-ccc coloring if $\mc{H}_{\mathbb{P}, n}$ is ccc.\footnote{See Definition \ref{defn hpn}.}
\end{defn}
So $\mc{K}_n$ is equivalent to the property P($n$-ccc coloring$\ra$K$_n$) and MA$_{\omega_1}$($n$-ccc coloring) is equivalent to $\mc{K}_n$+MA$_{\omega_1}$(K$_n$).

The argument in \cite[pp 837]{Todorcevic91} (or the proof of Lemma \ref{lem 6.2}) shows that every powerfully ccc poset has $n$-ccc coloring for $n\geq 2$. So the following proposition follows from Theorem \ref{thm K3toK4}.

\begin{prop}\label{prop K3MAp}
$\mc{K}_3$ implies {\rm MA}$_{\omega_1}$(powerfully ccc). In particular, $\mc{K}_n$ is equivalent to  {\rm MA}$_{\omega_1}$($n$-ccc coloring) for $n\geq 3$.
\end{prop}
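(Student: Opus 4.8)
The plan is to deduce Proposition \ref{prop K3MAp} from Theorem \ref{thm K3toK4} together with the folklore computation that powerfully ccc posets always carry $n$-ccc colorings. First I would recall the precise statement to be proved: $\mc{K}_3$ implies {\rm MA}$_{\omega_1}$(powerfully ccc), and for $n\geq 3$ the Ramsey-type fragment $\mc{K}_n$ is equivalent to {\rm MA}$_{\omega_1}$($n$-ccc coloring). The key observation is that $\mc{K}_3$ is, by definition, the assertion that every ccc coloring $c\colon[\omega_1]^3\ra 2$ has an uncountable $0$-homogeneous subset, which in the poset language just says that every poset $\mathbb{P}$ with property "$3$-ccc coloring" (i.e.\ with $\mc{H}_{\mathbb{P},3}$ ccc) satisfies $\mc{H}_{\mathbb{P},3}$ has property {\rm K}$_3$; this in turn is exactly P($3$-ccc coloring$\ra$K$_3$). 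The first step, then, is to use the computation from \cite[pp 837]{Todorcevic91} (equivalently the proof technique of Lemma \ref{lem 6.2}) showing that if $\mathbb{P}$ is powerfully ccc then $\mc{H}_{\mathbb{P},n}$ is ccc for every $n\geq 2$, i.e.\ powerfully ccc posets have $n$-ccc coloring. Concretely: given an uncountable $\{F_\alpha\in\mc{H}_{\mathbb{P},n}:\alpha<\omega_1\}$, after a $\Delta$-system refinement and fixing the size $M$, one enumerates $F_\alpha=\{p_{\alpha,i}:i<M\}$ and picks lower bounds $p_{\alpha,I}$ for $I\in[M]^n$; then powerful ccc of $\mathbb{P}$ (applied $\binom{M}{n}$ times, or once to the finite power $\mathbb{P}^{\binom{M}{n}}$) yields two indices $\alpha\neq\beta$ with all the chosen lower bounds pairwise compatible, so $F_\alpha\cup F_\beta\in\mc{H}_{\mathbb{P},n}$.

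The second step is to observe that $\mc{K}_3 = $ P($3$-ccc coloring$\ra$K$_3$) is a fragment of the forcing axiom covered by Theorem \ref{thm K3toK4}. However Theorem \ref{thm K3toK4} is stated for P(K$_n\ra$K$_{n+1}$), so I need a bridge: I would argue that $\mc{K}_3$ implies P(K$_3\ra$K$_4$). This is because every poset with property {\rm K}$_3$ has, in particular, $3$-ccc coloring on its canonical coloring $\pi^3_{\mathbb{P}}$ — indeed the argument of Lemma \ref{lem 6.2} shows $\mc{H}_{\mathbb{P},3}$ has precaliber $\omega_1$, hence is ccc, whenever $\mathbb{P}$ has property {\rm K}$_3$ — and $\mc{K}_3$ then gives that $\mc{H}_{\mathbb{P},3}=\mc{H}_0^{\pi^3_\mathbb{P}}$ has property {\rm K}$_3$, whence by Lemma \ref{lem 6.3} (or directly) $\mathbb{P}$ is $\sigma$-$3$-linked on a co-countable part, so in particular $\mathbb{P}$ has property {\rm K}$_4$. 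Thus $\mc{K}_3\Rightarrow$ P(K$_3\ra$K$_4$), and Theorem \ref{thm K3toK4} (with $n=3$) gives {\rm MA}$_{\omega_1}$(K$_3$). Actually, since every powerfully ccc poset has property {\rm K}$_n$ for all $n$ (being even $\sigma$-$n$-linked-like after refinement, or at least: powerfully ccc $\Rightarrow$ K$_n$ for all $n$, a standard fact), {\rm MA}$_{\omega_1}$(K$_3$) already implies {\rm MA}$_{\omega_1}$(powerfully ccc) once we know powerfully ccc $\Rightarrow$ K$_3$; but more cleanly, I would invoke that $\mc{K}_3$ implies {\rm MA}$_{\omega_1}$, hence {\rm MA}$_{\omega_1}$(powerfully ccc) trivially.

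Let me restructure to avoid circularity, since Theorem \ref{thm k2 not k3} (whose corollary we are in) wants $\mc{K}_2\not\Rightarrow\mc{K}_3$, so we must not secretly use that. The cleanest route: Step (1) powerfully ccc $\Rightarrow$ $n$-ccc coloring for all $n\geq 2$; hence $\mc{K}_n\Rightarrow$ P(powerfully ccc $\ra$ K$_n$), which since powerfully ccc is productive gives {\rm MA}$_{\omega_1}$(powerfully ccc) provided we know that for a powerfully ccc $\mathbb{P}$ of size $\omega_1$, having property {\rm K}$_n$ for the relevant witnessing posets suffices — here one uses the standard equivalence (Todorcevic--Velickovic style) that {\rm MA}$_{\omega_1}(\Phi)$ follows once every $\Phi$-poset of size $\omega_1$ is sufficiently decomposable. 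Step (2): specialize to $n=3$: $\mc{K}_3\Rightarrow$ P($3$-ccc coloring$\ra$K$_3$)$\Rightarrow$ (via Corollary \ref{cor K3toK4 K3tosK3} and the chain $\sigma$-$3$-linked $\ra$ ... ) P$_{\omega_1}$($3$-ccc coloring$\ra\sigma$-$3$-linked), and then the machinery of Section 5--6 upgrades this to {\rm MA}$_{\omega_1}$(powerfully ccc), giving the first sentence. Step (3): the "in particular" is then immediate — {\rm MA}$_{\omega_1}$($n$-ccc coloring) is by definition $\mc{K}_n+${\rm MA}$_{\omega_1}$(K$_n$), so $\mc{K}_n\Rightarrow$ {\rm MA}$_{\omega_1}$($n$-ccc coloring) needs only $\mc{K}_n\Rightarrow$ {\rm MA}$_{\omega_1}$(K$_n$) for $n\geq 3$, which follows from Theorem \ref{thm K3toK4} once we check $\mc{K}_n\Rightarrow$ P(K$_n\ra$K$_{n+1}$) exactly as in Step (2) but at level $n$; the reverse implication is trivial since {\rm MA}$_{\omega_1}$($n$-ccc coloring) contains $\mc{K}_n$ as a conjunct.

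The main obstacle I anticipate is making the transition from "$\mc{K}_n$, a Ramsey-type fragment about colorings on $[\omega_1]^n$" to "P(K$_n\ra$K$_{n+1}$), a poset fragment", cleanly and without hidden circularity. The subtlety is that a poset $\mathbb{P}$ with property {\rm K}$_n$ need not have $\mc{H}_{\mathbb{P},n}$ ccc a priori — but Lemma \ref{lem 6.2} (whose proof we may cite) shows it in fact has precaliber $\omega_1$, hence is ccc, so the canonical $n$-ary coloring $\pi^n_\mathbb{P}$ is a ccc coloring and $\mc{K}_n$ applies to it, yielding an uncountable $n$-linked (= $0$-homogeneous) subset of $\mathbb{P}$ inside any prescribed uncountable set; that is precisely property {\rm K}$_n$ being inherited — but we want {\rm K}$_{n+1}$. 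For that, I would run the argument of Lemma \ref{lem 6.3}: $\mc{H}_{\mathbb{P},n}$ being {\rm K}$_n$ (hence, by $\mc{K}_n$ applied repeatedly through the bookkeeping, $\sigma$-$n$-linked modulo countable) forces $\mathbb{P}$ itself to be $\sigma$-$n$-linked on a co-countable part, and a $\sigma$-$n$-linked poset has property {\rm K}$_{n+1}$ automatically. Thus $\mc{K}_n\Rightarrow$ P($n$-ccc coloring$\ra\sigma$-$n$-linked on size $\omega_1$)$\Rightarrow$ P(K$_n\ra$K$_{n+1}$), and Theorem \ref{thm K3toK4} closes the loop. I would present this as the one genuine lemma of the proof and keep the rest as citations to Lemmas \ref{lem 6.1}, \ref{lem 6.2}, \ref{lem 6.3}, Corollary \ref{cor K3toK4 K3tosK3}, and Theorem \ref{thm K3toK4}.
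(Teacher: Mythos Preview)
Your proposal contains a genuine gap: the repeated claim (explicit in the last paragraph, implicit earlier) that ``a $\sigma$-$n$-linked poset has property {\rm K}$_{n+1}$ automatically'' is false. A $\sigma$-$n$-linked decomposition only yields uncountable $n$-linked subsets, not $(n+1)$-linked ones; indeed, the assertion P($\sigma$-$n$-linked$\ra$K$_{n+1}$) is exactly Theorem~\ref{thm sK3toK4}'s hypothesis and is equivalent to {\rm MA}$_{\omega_1}(\sigma$-$n$-linked), not a triviality. Consequently your bridge ``$\mc{K}_n\Rightarrow$ P(K$_n\ra$K$_{n+1}$)'' does not go through: from $\mc{K}_n$ you only get that every $n$-ccc-coloring poset has K$_n$, and since K$_n$ already implies $n$-ccc coloring (via Lemma~\ref{lem 6.2}), this is tautological at level $n$. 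Likewise, ``$\mc{K}_n$ applied repeatedly through the bookkeeping'' does not produce $\sigma$-$n$-linked partitions; $\mc{K}_n$ delivers K$_n$, nothing more.

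The paper's route avoids this by shifting \emph{down} one level. Since K$_{n-1}\Rightarrow$ powerfully ccc $\Rightarrow$ $n$-ccc coloring (the cited Todorcevic computation), $\mc{K}_n$ gives P(K$_{n-1}\ra$K$_n$) directly, and Theorem~\ref{thm K3toK4} with parameter $n-1$ then yields {\rm MA}$_{\omega_1}$(K$_{n-1}$). Under $\mc{K}_n$ the classes powerfully ccc, $n$-ccc coloring, K$_{n-1}$ and K$_n$ all coincide, so {\rm MA}$_{\omega_1}$(K$_{n-1}$) is the same as {\rm MA}$_{\omega_1}$(powerfully ccc) and as {\rm MA}$_{\omega_1}$($n$-ccc coloring). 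For the headline statement one takes $n=3$: $\mc{K}_3\Rightarrow$ P(K$_2\ra$K$_3)\Rightarrow$ {\rm MA}$_{\omega_1}$(K), which under $\mc{K}_3$ equals {\rm MA}$_{\omega_1}$(powerfully ccc). Your Step~(1) and the ``in particular'' deduction are fine once this correct bridge is in place.
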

By \cite{Peng25}, {\rm MA$_{\omega_1}$(powerfully ccc)} does not imply $\mc{K}_2$. So $\mc{K}_n$ for $n\geq 3$ or P(2-ccc coloring$\ra$K$_3$) is strictly stronger than {\rm MA$_{\omega_1}$(powerfully ccc)}.


For the rest of the section, we prove Theorem \ref{thm k2 not k3}. We will construct a coloring $\pi: [\omega_1]^3\ra 2$ to witness the failure of $\mc{K}_3$ and iteratively force {\rm MA$_{\omega_1}$(2-ccc coloring)}. First, we introduce additional properties of $\pi$ to be preserved in the iteration process.

\subsection{Associating a coloring to the damage control structure}
\begin{defn}
Assume $\varphi_1(\mcc, \mct, \bfe)$. $\psi_2^-(\mcc, \mct, \bfe, \pi, \mcr)$ is the assertion that $\pi: [\omega_1]^{3}\ra 2$ is a coloring and the following statements hold.
\begin{enumerate}[{\rm (R1)}]
\item For every $(a, \msa)\in \mcc$ and $a', a''$ in $\msa$, $(a\cup a', \pi)$ is isomorphic to $(a \cup a'', \pi)$.
\item $\mc{R}$ is a map on $\mc{C}$ such that  for every $(a, \msa)\in\mcc$, $\mc{R}(a,\msa)$ is the collection of functions $f: [|a|+2N_\msa]^3\ra 2$ such that $(|a|+2N_\msa, f)$ is isomorphic to $(a\cup a'\cup a'', \pi)$ for some $a'<a''$ in $\msa$.
\item For every $(a, \msa)\in \mcc$, for every $a^*\subseteq a$ and $I, J\subseteq N_\msa$ with 
\[\forall a'\in \msa ~ a^*\cup a'[I]\in \mc{H}^\pi_0 \wedge a^*\cup a'[J]\in \mc{H}^\pi_0,\]
 there are $a'<a''$ in $\msa$ such that $a^*\cup a'[I]\cup a'[J]\in \mc{H}^\pi_0$.
     \end{enumerate}
\end{defn}
Recall   the definition of isomorphism between functions   in Definition \ref{defn iso}. In (R2), if $|a|+2N_\msa=2$, then $\mcr(a, \msa)$ consists of the null function.

 In above definition, we add one more requirement, (R1), to the collection $\mcc$. Since    (R1) depends on $\pi$, we put this property in $\psi_2^-(\mcc, \mct, \bfe, \pi, \mcr)$ instead of $\varphi_1(\mcc, \mct, \bfe)$.

 The following notation will be frequently used.
 
   \textbf{Notation.} For $a\in [\omega_1]^{<\omega}$, $b\subseteq a$ and $x<\omega$,
  \[a^{-1}[b]=\{i<|a|: a(i)\in b\} \text{ and } x\widetilde{+}I=\{x+k: k\in I\}.\]
  
    (R3) is used in the preservation of ccc for $\mc{H}^\pi_0$. And under the assumption (R2), (R3) can be reformulated as follows.
  \begin{enumerate}[{\rm (R3)$^*$}]
  \item For every $(a, \msa)\in \mcc$, for every $a^*\subseteq a$ and $I, J\subseteq N_\msa$ with 
\[\forall a'\in \msa ~ a^*\cup a'[I]\in \mc{H}^\pi_0 \wedge a^*\cup a'[J]\in \mc{H}^\pi_0,\]
 there exists $f\in \mcr(a, \msa)$ that is constant 0 on $[a^{-1}[a^*]\cup (|a|\widetilde{+} I)\cup (|a|+N_\msa\widetilde{+} J)]^3$.
  \end{enumerate}

We use the following property to reserve all patterns not conflicting requirements recorded by $\mcr$.
  \begin{defn}
 Assume $\varphi_1(\mcc, \mct, \bfe)$. $\psi_2(\mcc, \mct, \bfe, \pi, \mcr)$ is the assertion that  $\psi_2^-(\mcc, \mct, \bfe, \pi, \mcr)$ together with the following statement hold.
  \begin{enumerate}[{\rm (Res)}]
  \item Suppose $(a,\msa)$ is a $\mcc$-candidate with the following property:
  \begin{enumerate}[{\rm (R1)$^*$}]
  \item for every $a', a''$ in $\msa$, $(a\cup a',\pi)$ is isomorphic to $(a\cup a'', \pi)$.
  \end{enumerate}  
  Suppose  $f: [|a|+2N_\msa]^3\ra 2 $ is a function with the following properties.
  \begin{enumerate}[{\rm (Res.1)}]
  \item For $i<2$, $(|a|\cup [|a|+iN_\msa, |a|+(i+1)N_\msa), f)$ is isomorphic to $(a\cup a', \pi)$ for some $a'\in \msa$.
  \item For every  $(b, \msb, I, J)\in \mc{C}_2(a, \msa)$,   there exists $g\in \mcr(b, \msb)$ such that $(a^{-1}[b]\cup (|a|\widetilde{+} I)\cup (|a|+N_\msa\widetilde{+}J), f)$ is isomorphic to $(|b|+2N_\msb, g)$.
  \end{enumerate}
   Then there are $a_0<a_{1}$ in $\msa$ such that $(a\cup a_0\cup a_1, \pi)$ is isomorphic to $(|a|+2N_\msa, f)$.
  \end{enumerate}

  Say $f$ is \emph{$\mc{R}$-satisfiable for $(a, \msa)$} if above conditions {\rm (Res.1)-(Res.2)} are satisfied.  
  \end{defn}

  \textbf{Remark.} Suppose $(a\cup a_0\cup a_1, \pi)$ is isomorphic to $(|a|+2N_\msa, f)$ for some $f$ and $a_0<a_{1}$ in $\msa$. Then on one hand, for $i<2$, 
  \[(|a|\cup [|a|+iN_\msa, |a|+(i+1)N_\msa), f)\text{ is isomorphic to }(a\cup a_i, \pi).\]
   On the other hand, for  $(b, \msb, I, J)\in\mc{C}_2(a, \msa)$, 
  \begin{align*}
   &(a^{-1}[b]\cup (|a|\widetilde{+}I)\cup (|a|+N_\msa\widetilde{+}J), f)   \text{ is isomorphic to } (b\cup a_0[I]\cup a_1[J], \pi)\\
  &   \text{ and hence isomorphic to }  (|b|+2N_\msb, g) \text{ for some } g\in \mcr(b, \msb).
  \end{align*}
  So (Res.1)-(Res.2) are necessary conditions which simply require that $f$ does not conflict the requirements recorded by $\mcr$.

  In what follows, once we define $\pi: [\omega_1]^3\ra 2$, then $\mct, \bfe, \mcr$ are induced from $\mcc$ by (T2), (E1) and (R2) respectively. So we will only describe the procedure of finding $\mcc$.

 We now investigate the preservation of $\psi_2(\mcc, \mct, \bfe, \pi, \mcr)$. First, we apply Lemma \ref{lem 7.po} to find a simple way of forming $\mcr$-satisfiable functions (see also Lemma \cite[Lemma 22]{Peng25}).
  \begin{lem}\label{lem 9.0}
 Assume $\varphi_1(\mcc, \mct, \bfe)$ and $\psi_2^-(\mcc, \mct, \bfe, \pi, \mcr)$. Suppose $(a,\msa)$ is a $\mcc$-candidate with  property {\rm (R1)$^*$} and $f: [|a|+2N_\msa]^3\ra 2$ satisfies {\rm (Res.1)}.
 The following statements are equivalent.
 \begin{enumerate}[(i)]
 \item $f$ is $\mcr$-satisfiable for $(a, \msa)$.
 \item  for every $(b, \msb, I, I')\in \mc{C}_{2, \max}(a, \msa)$, $(a^{-1}[b]\cup (|a|\widetilde{+} I)\cup (|a|+N_\msa\widetilde{+} I', f)$ is isomorphic to $(|b|+2N_\msb, g)$ for some $g\in \mcr(b,\msb)$.
 \end{enumerate}
 \end{lem}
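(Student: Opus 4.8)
\textbf{Proof plan for Lemma \ref{lem 9.0}.}

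The implication (i)$\Rightarrow$(ii) is immediate: if $f$ is $\mcr$-satisfiable, then condition (Res.2) is satisfied for \emph{every} $(b, \msb, I, J)\in \mc{C}_2(a, \msa)$, in particular for every $\preceq_{a,\msa}$-maximal one. So the work is all in (ii)$\Rightarrow$(i). The plan is to fix an arbitrary $(b, \msb, I, J)\in \mc{C}_2(a, \msa)$ and produce the required $g\in \mcr(b, \msb)$, using the hypothesis only for the $\preceq_{a,\msa}$-maximal element above it. By definition of $\mc{C}_2(a,\msa)$ and finiteness of $\mc{C}_2(a,\msa)$ (which follows from Lemma \ref{lem 7.g}, as noted after Lemma \ref{lem 7.po}), we may choose $(c, \ms{C}, \langle I^*, J^*\rangle)\in \mc{C}_{2,\max}(a, \msa)$ with $(b, \msb, \langle I, J\rangle)\preceq_{a,\msa} (c, \ms{C}, \langle I^*, J^*\rangle)$; so $(b, \msb)\preceq (c, \ms{C})$, $I\subseteq I^*$ and $J\subseteq J^*$. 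By hypothesis (ii), there is $g^*\in \mcr(c, \ms{C})$ with $(a^{-1}[c]\cup(|a|\widetilde{+}I^*)\cup(|a|+N_\msa\widetilde{+}J^*), f)$ isomorphic to $(|c|+2N_{\ms{C}}, g^*)$. By definition of $\mcr$ (R2), $(|c|+2N_{\ms{C}}, g^*)$ is isomorphic to $(c\cup c'\cup c'', \pi)$ for some $c'<c''$ in $\ms{C}$.

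The key step is then to transfer this down from $(c, \ms{C})$ to $(b, \msb)$ along the $\preceq$-relation. Since $(b, \msb)\preceq (c, \ms{C})$, by Lemma \ref{lem 7.c}(iii) applied to a suitable $L\in \mc{F}(\msb, \ms{C})$ there is $\Gamma\in[\omega_1]^{\omega_1}$ with $\msb[\Gamma]=\{d[L]: d\in \ms{C}\}$; in particular $b\subseteq c$ and, choosing $c', c''$ in the range corresponding to $\Gamma$, the elements $c'[L], c''[L]$ lie in $\msb$. By (R1) for $(b, \msb)$, $(b\cup c'[L]\cup c''[L], \pi)$ is isomorphic to $(b\cup b'\cup b'', \pi)$ for any $b'<b''$ in $\msb$, so it determines a well-defined element $g\in \mcr(b, \msb)$. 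It remains to check that $(a^{-1}[b]\cup(|a|\widetilde{+}I)\cup(|a|+N_\msa\widetilde{+}J), f)$ is isomorphic to $(|b|+2N_\msb, g)$. This is a bookkeeping computation: the index sets $I\subseteq I^*$, $J\subseteq J^*$ and $a^{-1}[b]\subseteq a^{-1}[c]$ select exactly the sub-block of the larger isomorphism corresponding to the copies of $\msb$-elements inside $\ms{C}$-elements; one uses that $\mc{F}(\msb, \ms{C})$ encodes which coordinates of $\ms{C}$-elements come from $\msb$-elements (via (C2) and Lemma \ref{lem 7.c}(iv)) to match indices correctly, together with the fact that $f$ restricted to this sub-block is, via the $(c, \ms{C})$-isomorphism, exactly the coloring pattern on $b\cup c'[L]\cup c''[L]$.

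I expect the main obstacle to be precisely this last index-matching verification: one must carefully track how the relabelings $a^{-1}[\cdot]$, $|a|\widetilde{+}\cdot$, $|a|+N_\msa\widetilde{+}\cdot$ on the $f$-side interact with the coordinate selections $I$, $I^*$, $L$ on the poset side, and confirm that restricting the $g^*$-isomorphism to the appropriate coordinates yields precisely $g$. The isomorphism bookkeeping lemmas (Lemma \ref{lem initial}, Lemma \ref{lem initial2}) and the structural description of $\preceq$ (Lemma \ref{lem 7.c}) provide the tools, but assembling them into a clean argument requires care; this is the kind of step where the specific form in \cite[Lemma 22]{Peng25} can be followed as a template, with the root $a$ (and the indices $a^{-1}[b]$) being the new ingredient that must be carried through.
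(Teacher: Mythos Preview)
Your approach is essentially the paper's, but two steps in the transfer from $(c,\ms{C})$ down to $(b,\msb)$ are off.

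First, a single $L$ is not enough. Since $I\subseteq I^*$ and $J\subseteq J^*$, write $I=I^*[L]$ and $J=J^*[L']$; one checks (via (C2), as the paper does) that $L,L'\in\mc{F}(\msb,\ms{C})$, but there is no reason for $L=L'$. Restricting the $(c,\ms{C})$-level isomorphism to the coordinates $a^{-1}[b]\cup(|a|\widetilde{+}I)\cup(|a|+N_\msa\widetilde{+}J)$ yields an isomorphism with $(b\cup c'[L]\cup c''[L'],\pi)$, not with $(b\cup c'[L]\cup c''[L],\pi)$. (Also, no ``choice'' of $c',c''$ is needed: for \emph{any} $d\in\ms{C}$ and any $K\in\mc{F}(\msb,\ms{C})$ one has $d[K]\in\msb$ by the definition of $\mc{F}$, so the particular $c',c''$ given by (R2) already work.)

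Second, your appeal to (R1) is incorrect: (R1) only says that $(b\cup b',\pi)$ is constant over \emph{single} $b'\in\msb$; it says nothing about pairs, and indeed $\mcr(b,\msb)$ typically contains many functions. No invariance is needed here. Since $c'[L],c''[L']\in\msb$ with $c'[L]<c''[L']$, the pattern on $b\cup c'[L]\cup c''[L']$ is, by the very definition (R2) of $\mcr$, an element of $\mcr(b,\msb)$; that is the $g$ you want. With these two corrections the bookkeeping you defer to the end becomes straightforward.
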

 \begin{proof}
 Only (ii)$\ra$(i) needs a proof. Suppose (ii) holds and we check (Res.2) for $f$.
 
 Fix $(b, \msb, I, I')\in \mc{C}(a, \msa)$ and we check (Res.2) for $(b, \msb)$ and $I, I'$. The case $(b, \msb, I, I')\in \mc{C}_{\max}(a, \msa)$ follows from (ii). So assume $(b, \msb, I, I')\notin \mc{C}_{\max}(a, \msa)$.
 
 By Lemma \ref{lem 7.po} and the fact that $\mc{C}_2(a, \msa)$ is finite, there exists $(b', \msb', J, J') \in \mc{C}_{\max}(a, \msa)$ such that $(b, \msb)\prec (b', \msb')$  and $I\times I'\subseteq J\times J'$. 
 
 By (ii) and (R2),   for some $b_0<b_1$ in $\msb'$,
 \begin{enumerate}
 \item $(a^{-1}[b']\cup (|a|\widetilde{+} J)\cup (|a|+N_\msa\widetilde{+} J', f)$ is isomorphic to $(b'\cup b_0\cup b_1, \pi)$.
 \end{enumerate}
 
 Let $L, L'$ be such that 
 \begin{enumerate}\setcounter{enumi}{1}
 \item $I=J[L]$ and $I'=J'[L']$. 
 \end{enumerate}
 Then 
  \begin{enumerate}\setcounter{enumi}{2}
 \item  $L, L'$ are in $\mc{F}(\msb, \msb')$.
  \end{enumerate}
   To see this, fix $a'\in \msa$. Then $a'[I]\in \msb$ and $a'[J]\in \msb'$. Note $a'[I]=(a'[J])[L]$. By (C2), $c[L]\in \msb$ for some and hence  all $c\in \msb'$. So $L\in \mc{F}(\msb, \msb')$.
 
 By (1) and (2), $(a^{-1}[b]\cup (|a|\widetilde{+} I)\cup (|a|+N_\msa\widetilde{+} I', f)$ is isomorphic to $(b\cup b_0[L]\cup b_1[L'], \pi)$. Then by (3), $b_0[L], b_1[L']$ are in $\msb$.  Together with (R2), (Res.2) holds for $(b, \msb, I, I')$.
 \end{proof}
 
 \textbf{Remark.} By Lemma \ref{lem 7.po}, for $(b, \msb, I, I')\neq (c, \ms{C}, J, J')$ in $\mc{C}_{\max}(\msa)$, $(I\times I')\cap (J\times J')=\emptyset$. So the intersection of their induced domains is
 \begin{align*}
 [a^{-1}[b]\cup (|a|\widetilde{+} I)\cup (|a|+N_\msa\widetilde{+} I')]^3\cap [a^{-1}[c]\cup (|a|\widetilde{+} J)\cup (|a|+N_\msa\widetilde{+} J')]^3\\
 \subseteq \bigcup_{i<2} [|a|\cup [|a|+iN_\msa, |a|+(i+1)N_\msa)]^3.
 \end{align*}
 By (R1)$^*$ and (Res.1), the definition of $f$ on this intersection   is pre-fixed and hence will not conflict each other.

 Then, we introduce an equivalent formulation of (Res) (see also \cite[Lemma 26]{Peng25}).
 
 \begin{lem}\label{lem 9.1}
 Assume $\varphi_1(\mcc, \mct, \bfe)$ and $\psi_2^-(\mcc, \mct, \bfe, \pi, \mcr)$. Then the following statements are equivalent.
 \begin{enumerate}[(i)]
 \item {\rm (Res)}, or equivalently, $\psi_2(\mcc, \mct, \bfe, \pi, \mcr)$.
 \item Suppose $(a,\msa)$ is a $\mcc$-candidate with  property {\rm (R1)$^*$}.
  Suppose $2\leq n<\omega$ and  $f: [|a|+nN_\msa]^3\ra 2 $ is $\mcr$-satisfiable, i.e., $(|a|\cup I\cup J, f)$ is $\mcr$-satisfiable for all $I\neq J$ in $\{[|a|+iN_\msa, |a|+(i+1)N_\msa): i<n\}$.
   Then there are $a_0<\cdots<a_{n-1}$ in $\msa$ such that $(a\cup \bigcup_{i<n} a_i, \pi)$ is isomorphic to $(|a|+nN_\msa, f)$. 
 \end{enumerate}
 \end{lem}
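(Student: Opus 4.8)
The plan is to prove the equivalence of (i) and (ii) in Lemma \ref{lem 9.1}. The direction (ii)$\ra$(i) is immediate: specializing $n=2$ in (ii) gives exactly (Res), since for $n=2$ the only pair $I\neq J$ among the two blocks $[|a|+iN_\msa,|a|+(i+1)N_\msa)$ ($i<2$) is the pair of blocks itself, so ``$f$ is $\mcr$-satisfiable'' in the sense of (ii) coincides with ``$f$ is $\mcr$-satisfiable for $(a,\msa)$'' in the sense of (Res). So the work is all in (i)$\ra$(ii), and I would do it by induction on $n\geq 2$, with the base case $n=2$ being precisely (Res).

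For the induction step, assume (ii) holds for $n$ and let $(a,\msa)$ be a $\mcc$-candidate with (R1)$^*$ and $f\colon[|a|+(n+1)N_\msa]^3\ra 2$ be $\mcr$-satisfiable. Write the blocks as $B_i=[|a|+iN_\msa,|a|+(i+1)N_\msa)$ for $i\leq n$. First I would apply the induction hypothesis to the restriction $f\up[|a|+nN_\msa]^3$ (viewing the first $n$ blocks), which is $\mcr$-satisfiable by hypothesis, to obtain $a_0<\cdots<a_{n-1}$ in $\msa$ with $(a\cup\bigcup_{i<n}a_i,\pi)\cong(|a|+nN_\msa,f)$. Now I want to adjoin one more element $a_n>a_{n-1}$ of $\msa$ realizing the pattern dictated by $f$ on the block $B_n$ together with each earlier block. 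The natural move is to pass to the ``tail'' candidate $(a\cup\bigcup_{i<n}a_i,\, \msa\cap[\,\omega_1\setminus(\max(\bigcup_{i<n}a_i)+1)\,]^{<\omega})$, call its root $a^\sharp$ and its family $\msa^\sharp$; I must check this is still a $\mcc$-candidate (using the Remark after the definition of $\mcc$-candidate, that shrinking $\msa$ to any uncountable subfamily preserves being a candidate, together with the fact that enlarging the root to $a^\sharp$ is harmless because $\bigcup_{i<n}a_i$ lies below $\bigcup\msa^\sharp$ and, by (R1)$^*$ and the isomorphism just obtained, satisfies the structural requirements of Lemma \ref{lem 7.i}). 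Then I would define $g\colon[|a^\sharp|+2N_{\msa^\sharp}]^3\ra 2$ so that its two blocks encode, respectively, the already-chosen configuration $a\cup\bigcup_{i<n}a_i$ (via the isomorphism type $f\up[|a|+nN_\msa]^3$) and the desired new configuration on $B_n$; apply (Res) (the case $n=2$, or equivalently Lemma \ref{lem 9.1}(i)) to $(a^\sharp,\msa^\sharp)$ and $g$ to extract $a_n$.

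The main obstacle — and the step deserving the most care — is verifying that $g$ is $\mcr$-satisfiable for $(a^\sharp,\msa^\sharp)$, i.e.\ checking (Res.1) and (Res.2). (Res.1) is routine: the two blocks of $g$ are prescribed to be isomorphic to $(a^\sharp\cup a',\pi)$ for $a'\in\msa^\sharp$, which holds by construction and (R1)$^*$. For (Res.2) I need: for every $(b,\msb,I,J)\in\mc{C}_2(a^\sharp,\msa^\sharp)$ the induced restriction of $g$ is isomorphic to some element of $\mcr(b,\msb)$. Here I would use Lemma \ref{lem 9.0} to reduce to $(b,\msb,I,J)\in\mc{C}_{2,\max}(a^\sharp,\msa^\sharp)$, and then the key point is that $\mc{C}(a^\sharp,\msa^\sharp)$ relates back to $\mc{C}(a,\msa)$: every $(b,\msb)\prec(a^\sharp,\msa^\sharp)$ with $\msb$ globally small inside $\msa^\sharp$ is an element of $\mc{C}(a,\msa)$ too (since $\msa^\sharp\subseteq\msa$), so the constraint recorded by $\mcr(b,\msb)$ on $g$'s cross-block patterns is exactly the constraint that the original $\mcr$-satisfiability of $f$ already guarantees on the corresponding pair of blocks of $f$ (one of which is among $B_0,\dots,B_{n-1}$, realized by the $a_i$'s, and the other is $B_n$). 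Making this bookkeeping precise — matching copies $\mc{F}(\msb,\msa^\sharp)$ against $\mc{F}(\msb,\msa)$ and tracking which block of $f$ each copy lands in — is where the argument is technical but not deep; it parallels \cite[Lemma 26]{Peng25}. Once $g$ is seen to be $\mcr$-satisfiable, (Res) yields $a_n$, and $a_0<\cdots<a_n$ in $\msa$ with $(a\cup\bigcup_{i\leq n}a_i,\pi)\cong(|a|+(n+1)N_\msa,f)$ completes the induction.
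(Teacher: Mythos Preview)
Your induction step has a real gap at the point you call ``routine.'' After fixing a single tuple $a_0<\cdots<a_{n-1}$ and passing to $(a^\sharp,\msa^\sharp)$ with $a^\sharp=a\cup\bigcup_{i<n}a_i$, the pair $(a^\sharp,\msa^\sharp)$ is indeed a $\mcc$-candidate, but it need not satisfy {\rm(R1)$^*$}: the isomorphism type of $(a^\sharp\cup c,\pi)$ for $c\in\msa^\sharp$ depends on $\pi$-values of triples such as $\{a_i(l),a_j(m),c(k)\}$ with $i<j<n$, and these are \emph{not} controlled by {\rm(R1)$^*$} for $(a,\msa)$, which only fixes the type of $a\cup c$ for a single $c\in\msa$. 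You want the first block of your $g$ to encode the $f$-pattern on $|a^\sharp|\cup B_n$ (so that the witness $c_0$ from (Res) gives $(a^\sharp\cup c_0,\pi)\cong(|a|+(n+1)N_\msa,f)$); but {\rm(Res.1)} then demands that this $f$-pattern be realized as $(a^\sharp\cup c,\pi)$ for some $c\in\msa^\sharp$ --- which is exactly the statement you are trying to prove for these particular $a_0,\ldots,a_{n-1}$. Shrinking $\msa^\sharp$ to stabilize the type does not help, since the stabilized type has no reason to coincide with the $f$-type.

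The paper's proof avoids this by never fixing the tuple. For every $\alpha<\omega_1$ it uses the induction hypothesis to produce a $k$-tuple $a_{\alpha,0}<\cdots<a_{\alpha,k-1}$ in $\msa$ above $\alpha$ realizing $f\!\upharpoonright\![|a|+kN_\msa]^3$, sets $a_\alpha=\bigcup_{i<k}a_{\alpha,i}$, and forms the family $\msb=\{a_\alpha:\alpha\in\Gamma\}$ with $N_\msb=kN_\msa$. By Lemma~\ref{lem 7.l}, $(a,\msb)$ is a $\mcc$-candidate, and it satisfies {\rm(R1)$^*$} \emph{automatically} because each $a_\alpha$ was built to realize the same pattern. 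One then extends $f$ to an $\mcr$-satisfiable $g:[|a|+2N_\msb]^3\to 2$ and applies (Res) once to $(a,\msb)$; the witnesses $a_\alpha<a_\beta$ yield the $(k{+}1)$-tuple $a_{\alpha,0}<\cdots<a_{\alpha,k-1}<a_{\beta,0}$ in $\msa$ realizing $f$. The moral: you need uncountably many realizations of the length-$k$ pattern, glued into a new family, so that {\rm(R1)$^*$} for that family comes for free.
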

 \begin{proof}
 Only (i)$\ra$(ii) needs a proof. We assume (i) and prove (ii) by induction on $n$. $n=2$ is exactly (Res).
 
 Now suppose (ii) holds for $n=k\geq 2$ and we prove for $n=k+1$.
 
 Fix a $\mcc$-candidate $(a, \msa)$ with property (R1)$^*$  and $\mcr$-satisfiable function $f: [|a|+nN_\msa]^3\ra 2$.
 
 By induction hypothesis for $n=k$, we find $a_{\alpha, 0}<a_{\alpha, 1}<\cdot\cdot\cdot<a_{\alpha, k-1}$ in $\msa\cap [\omega_1\setminus\alpha]^{<\omega}$ for each $\alpha<\omega_1$ such that
\begin{enumerate}
\item $(a\cup \bigcup_{i<k} a_{\alpha, i}, \pi)$ is isomorphic to $(|a|+kN_\msa, f\up [|a|+kN_\msa]^3)$.
\end{enumerate}
For each $\alpha$, let $a_\alpha=\bigcup_{i<k} a_{\alpha, i}$. Then find $\Gamma\in [\omega_1]^{\omega_1}$ such that
\begin{enumerate}\setcounter{enumi}{1}
\item $a_\alpha<a_\beta$ for $\alpha<\beta$ in $\Gamma$.
\end{enumerate}
Then by Lemma \ref{lem 7.l}, omitting a countable part if necessary,
$$(a, \msb)\text{ is a $\mcc$-candidate where }\msb=\{a_\alpha: \alpha\in \Gamma\}.$$
By (1), $(a, \msb)$ satisfies (R1)$^*$.
Then note $N_\msb=kN_\msa$ and
\[\text{for every $b\in \msb$ and $i<k$, }b[[iN_\msa, (i+1)N_\msa)]\in \msa.\]
Now extend $f$ to an $\mcr$-satisfiable for $(a, \msb)$ function $g: [|a|+2N_\msb]^3\ra 2$.
To see the existence of an $\mcr$-satisfiable $g$, choose $\alpha<\beta$ in $\Gamma$. Denote $b=a\cup a_\alpha\cup a_\beta$. Define 
\begin{displaymath}
g(x)=\left\{
\begin{array}{ll}
f(x)  & : ~ x\in [|a|+nN_\msa]^3 \\
\pi(b[x])  & :  ~ \text{otherwise}.
\end{array}
\right.
\end{displaymath}
Then (Res.1) follows from (1). To see(Res.2), fix $(c, \ms{C}, I, J)\in \mcc_2(a, \msb)$. Then for some $i<k$, $J\subseteq [iN_\msa, (i+1)N_\msa)$. So 
\begin{itemize}
\item either $i=0$ and $g$ equals $f$ on $[a^{-1}[c]\cup (|a|\widetilde{+} I)\cup (|a|+N_\msb\widetilde{+} J)]^3$;
\item or $i>0$ and 
$(a^{-1}[c]\cup (|a|\widetilde{+} I)\cup (|a|+N_\msb\widetilde{+} J), g)$ is   isomorphic to $(c\cup a_\alpha[I]\cup a_\beta[J], \pi)$. 
\end{itemize}
Now
(Res.2) follows from (R2) and the fact that $f$ is $\mcr$-satisfiable.


Applying (Res)   to $(a,\msb)$ and $g$, we get $\alpha<\beta$ in $\Gamma$ such that $(a\cup a_\alpha\cup a_\beta, \pi)$ is isomorphic to $(|a|+2N_\msb, g)$. 

By (2),  $a_{\alpha, 0}<\cdot\cdot\cdot<a_{\alpha, k-1}<a_{\beta, 0}$ are in $\msa$. Then by the fact $g$ extends $f$, 
\[(a\cup \bigcup_{i<k} a_{\alpha,i}\cup a_{\beta, 0}, \pi)\text{ is isomorphic to }(|a|+nN_\msa, f).\]
This finishes the induction and the proof of the lemma.
 \end{proof}
 We show that $\pi$ witnesses the failure of $\mc{K}_3$ under appropriate assumptions.
 \begin{lem}\label{lem 9.2}
 Suppose $\varphi_1(\mcc, \mct, \bfe)$, $\psi_2(\mcc, \mct, \bfe, \pi, \mcr)$ and $|\mcc|<\mathfrak{a}_{\omega_1}$. Then $\mc{H}^\pi_0$ is ccc and $\pi$ has no uncountable 0-homogeneous subset.
 \end{lem}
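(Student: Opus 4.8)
The plan is to prove the two assertions separately, both relying on the key Ramsey-type property (Res) (equivalently $\psi_2(\mcc,\mct,\bfe,\pi,\mcr)$) together with the combinatorial scarcity of $\mcc$ guaranteed by $|\mcc|<\mathfrak{a}_{\omega_1}$.

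\textbf{Non-existence of an uncountable 0-homogeneous subset.} Given any $X\in[\omega_1]^{\omega_1}$, I would first invoke Lemma \ref{lem 7.j} (applicable since $|\mcc|<\mathfrak{a}_{\omega_1}$) to produce $(a,\msa)\in\mcc$ and $\Gamma$ such that $\msa_i[\Gamma]\subseteq X$ for some $i<N_\msa$ and $(a,\msa[\Gamma])$ is a $\mcc$-candidate. Refining $\Gamma$ further, I can arrange property (R1)$^*$ for $(a,\msa[\Gamma])$ (this is automatic for a candidate after passing to an uncountable subset, by (R1) applied generically or simply by a $\Delta$-system/isomorphism refinement, since there are only countably many isomorphism types of $(a\cup a',\pi)$). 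Now I want to choose a target function $f\colon[|a|+2N_\msa]^3\to 2$ that is $\mcr$-satisfiable for $(a,\msa[\Gamma])$ but assigns color $1$ to some triple lying inside the block corresponding to the copy of $\msa_i$. Concretely, pick $f$ so that on $[|a|+iN_\msa,|a|+(i+1)N_\msa)$ within the first $N_\msa$-block it realizes a triple with value $1$; this is possible precisely because $\pi$ itself (by Fact analogous to Fact \ref{fact 8.1}, or rather by $\psi_2$ applied to the candidate $(\emptyset,[\omega_1]^1)$-type argument as in Lemma \ref{lem 8.2}) is not 0-homogeneous, so there is no obstruction from (Res.1). The delicate point is verifying (Res.2): I must check that this choice of $f$ does not conflict any requirement recorded by $\mcr(b,\msb)$ for $(b,\msb,I,J)\in\mc{C}_2(a,\msa[\Gamma])$; since $\mc{C}_0(a,\msa[\Gamma])$ is finite (Lemma \ref{lem 7.g}) and the induced domains for distinct maximal tuples are disjoint (Remark after Lemma \ref{lem 9.0}), I can place the ``bad'' triple outside all these induced domains as long as $N_\msa$ is large enough — and if it is not, I first replace $(a,\msa)$ by a candidate with larger $N_\msa$ using Lemma \ref{lem 7.k}. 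Then (Res) yields $a_0<a_1$ in $\msa[\Gamma]$ with $(a\cup a_0\cup a_1,\pi)$ isomorphic to $(|a|+2N_\msa,f)$, which exhibits a triple inside $X$ of color $1$. Hence $X$ is not 0-homogeneous.

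\textbf{ccc of $\mc{H}^\pi_0$.} Given an uncountable antichain-candidate $\{p_\xi:\xi<\omega_1\}\subseteq\mc{H}^\pi_0$, I first thin out to a $\Delta$-system with root $\overline{p}$, constant size, and so that $(\overline{p}\cup p_\xi,\pi)$ are mutually isomorphic and, by (coh)-type bookkeeping on $e$, the tails $\{p_\xi\setminus\overline{p}:\xi<\omega_1\}$ form an uncountable non-overlapping family $\msb$ with all $(\overline{p}\cup b,\pi)$, $b\in\msb$, isomorphic. By Lemma \ref{lem 7.k} (using $|\mcc|<\mathfrak{a}_{\omega_1}$), there are a $\mcc$-candidate $(a,\msa)$ and $I\subseteq N_\msa$ with $\overline{p}\subseteq a$ and $\{b[I]:b\in\msa\}\subseteq\{p_\xi\setminus\overline{p}:\xi<\omega_1\}$; refine so (R1)$^*$ holds. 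Set $a^*=$ the part of $a$ contained in $\overline{p}$ (so $a^{-1}[a^*]$ makes sense), and note $a^*\cup b[I]\in\mc{H}^\pi_0$ for all $b\in\msa$. I now want two elements $b',b''\in\msa$ with $a^*\cup b'[I]\cup b''[J]\in\mc{H}^\pi_0$ where $J=I$ — i.e. $\overline{p}\cup(p\text{-tail})\cup(p'\text{-tail})$ is 0-homogeneous, giving compatibility of the two conditions. Take $f\in\mcr(a,\msa)$ that is constant $0$ on $[a^{-1}[a^*]\cup(|a|\,\widetilde{+}\,I)\cup(|a|+N_\msa\,\widetilde{+}\,I)]^3$; such $f$ exists by (R3)$^*$ (the reformulation of (R3) under (R2)). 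I must check $f$ is $\mcr$-satisfiable for $(a,\msa)$: (Res.1) holds because $f\in\mcr(a,\msa)$, and (Res.2) must be verified against each $(b,\msb,I_0,J_0)\in\mc{C}_2(a,\msa)$ — here (R3) for the \emph{ground-model} element $(a,\msa)\in\mcc$ (or rather its predecessors) is exactly what guarantees the relevant induced sub-patterns are realized by some $g\in\mcr(b,\msb)$, so no conflict arises; this is the main obstacle and is handled by Lemma \ref{lem 9.0}'s criterion (ii), reducing the check to the finitely many maximal tuples in $\mc{C}_{2,\max}(a,\msa)$ whose induced domains are pairwise disjoint off the pre-fixed part. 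Applying (Res) gives $b'<b''$ in $\msa$ with $(a\cup b'\cup b'',\pi)$ isomorphic to $(|a|+2N_\msa,f)$; restricting the isomorphism to the relevant coordinates shows $a^*\cup b'[I]\cup b''[I]\in\mc{H}^\pi_0$, so the corresponding $p_\xi,p_{\xi'}$ are compatible. Therefore $\mc{H}^\pi_0$ is ccc.

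\textbf{Expected main obstacle.} The genuinely delicate step in both halves is the verification that the target function $f$ is $\mcr$-satisfiable, i.e. checking condition (Res.2) against all tuples in $\mc{C}_2(a,\msa)$. This is where the structural lemmas of Section 7 — finiteness of $\mc{C}_n(a,\msa)$ (Lemma \ref{lem 7.g}), the disjointness of induced domains for maximal tuples (Lemma \ref{lem 7.po} and the Remark after Lemma \ref{lem 9.0}), and the reduction in Lemma \ref{lem 9.0} — all have to be combined carefully; and in the ccc argument one additionally needs (R3) for the members of $\mcc$ themselves to ensure the ``diagonal'' pattern $a^*\cup b'[I]\cup b''[I]$ is not forbidden by any recorded requirement. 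I would present these verifications by first recording the relevant $f$, then quoting Lemma \ref{lem 9.0}(ii), then checking the finitely many maximal tuples one isomorphism type at a time.
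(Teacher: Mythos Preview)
There is a genuine gap in your non-homogeneity argument. With $f:[|a|+2N_\msa]^3\to 2$ and the two witnesses $a_0<a_1\in\msa[\Gamma]$ produced by (Res), the only points of $X$ you can reach are $a_0(i)$ and $a_1(i)$ --- just two points, never a triple entirely inside $X$. Any color-$1$ triple coming from your $f$ necessarily involves a coordinate from $a$ or a coordinate $j\neq i$ of some $a_k$, and nothing puts those in $X$. (Your suggestion to place the bad triple ``within the first $N_\msa$-block'' is self-defeating: triples inside a single block are governed by (Res.1), i.e.\ by the fixed isomorphism type $(a\cup a',\pi)$, and are not freely assignable; enlarging $N_\msa$ via Lemma~\ref{lem 7.k} does not help either, since the problem is the number of blocks, not their width.) The paper fixes this by invoking Lemma~\ref{lem 9.1} with $n=3$: one builds an $\mcr$-satisfiable $g:[|a|+3N_\msa]^3\to 2$ with $g(|a|+i,\,|a|+N_\msa+i,\,|a|+2N_\msa+i)=1$, obtains $a_0<a_1<a_2$ in $\msa[\Gamma]$, and then $\pi(a_0(i),a_1(i),a_2(i))=1$ is a genuine color-$1$ triple inside $X$.

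Your ccc argument has the right shape but a real error in the key step. You write ``take $f\in\mcr(a,\msa)$'' and invoke (R3)$^*$ for $(a,\msa)$; but $(a,\msa)$ is only a $\mcc$-candidate, not a member of $\mcc$, so $\mcr(a,\msa)$ is undefined and (R3) does not apply to it. The paper instead builds $f$ piecewise: impose (Res.1); then for each $(b,\msb,J,J')\in\mc{C}_{2,\max}(a,\msa)$ apply (R3) \emph{to $(b,\msb)\in\mcc$} with $a^*=\overline{p}\cap b$ and the index sets $I\cap J$, $I\cap J'$ (legitimate since $a^*\cup a'[I\cap J]\subseteq \overline{p}\cup a'[I]\in\mc{H}^\pi_0$) to obtain a $g\in\mcr(b,\msb)$ that is constant $0$ on the required sub-domain; set $f=0$ elsewhere. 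Lemma~\ref{lem 9.0} then certifies $\mcr$-satisfiability, and (Res) yields the compatible pair. Your parenthetical ``(or rather its predecessors)'' shows you sense this, but the argument as written does not carry it out.
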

 \begin{proof}
 We first show that $\mc{H}^\pi_0$ is ccc. Fix $\mcx=\{p_\alpha\in \mc{H}^\pi_0: \alpha<\omega_1\}$. We may assume that $\mcx$ forms a $\Delta$-system with root $\overline{p}$.
 
 By Lemma \ref{lem 7.k}, find a $\mcc$-candidate $(a, \msa)$ and $I\subseteq N_\msa$ such that
 \begin{enumerate}
 \item $\overline{p}\subseteq a \text{ and } \{a'[I]: a'\in \msa\}\subseteq \{p_\alpha\setminus \overline{p}: \alpha<\omega_1\}$.
 \end{enumerate}
Replacing $\msa$ by a subset, we may assume that $(a, \msa)$ has property (R1)$^*$.

Define a $\mcr$-satisfiable for $(a, \msa)$ function $f$,  according to Lemma \ref{lem 9.0},   such that

\begin{enumerate}\setcounter{enumi}{1}
\item $f$ satisfies (Res.1) for $(a, \msa)$;
\item for every $(b, \msb, J, J')\in \mc{C}_{2, \max}(a, \msa)$,  
\begin{enumerate}[(\theenumi.\arabic{enumii})]
\item $(a^{-1}[b]\cup (|a|\widetilde{+} J)\cup (|a|+N_\msa\widetilde{+} J', f)$ is isomorphic to $(|b|+2N_\msb, g)$ for some $g\in \mcr(b,\msb)$;
\item $f $ is constant 0 on $[a^{-1}[\overline{p}\cap b]\cup (|a|\widetilde{+} (I\cap J))\cup (|a|+N_\msa\widetilde{+} (I\cap J'))]^3$;
\end{enumerate}
\item $f(x)=0$ if it is not defined according to (2)-(3).
\end{enumerate}
Note that the satisfaction of (3.1)-(3.2)   follows from (R3), (1) and the fact $\mcx\subseteq \mc{H}^\pi_0$.

Then apply (Res) to find $a'<a''$ in $\msa$ such that $(a\cup a'\cup a'', \pi)$ is isomorphic to $(|a|+2N_\msa, f)$. Together with (2)-(4), $\overline{p}\cup a'[I]\cup a''[I]\in \mc{H}^\pi_0$. 

By (1), $\mcx$ contains 2 compatible elements. So $\mc{H}^\pi_0$ is ccc.\medskip

We then show that $\pi$ has no uncountable 0-homogeneous subset. Fix $X\in [\omega_1]^{\omega_1}$. 

By Lemma \ref{lem 7.j},  find $(a, \msa)\in \mcc$, $i<N_\msa$ and $\Gamma\in [\omega_1]^{\omega_1}$ such that $\msa_i[\Gamma]\subseteq X$ and $(a, \msa[\Gamma])$ is a $\mcc$-candidate.

Now it is easy to find an $\mcr$-satisfiable for $(a, \msa[\Gamma])$ function $g: [|a|+3N_\msa]^3\ra 2$, in the sence of Lemma \ref{lem 9.1} (ii), such that
\begin{enumerate}\setcounter{enumi}{4}
\item $g(|a|+i, |a|+N_\msa+i, |a|+2N_\msa+i)=1$.
\end{enumerate}
By Lemma \ref{lem 9.1}, find $a_0<a_1<a_2$ in $\msa[\Gamma]$ such that $(a\cup \bigcup_{j<3} a_j, \pi)$ is isomorphic to $(|a|+3N_\msa, g)$. Then by (5), $\pi(a_0(i), a_1(i), a_2(i))=1$. Since $\{a_0(i), a_1(i), a_2(i)\}\subseteq X$, $X$ is not 0-homogeneous.
 \end{proof}
 
 In the iterated forcing process, we will frequently extend the collection $\mcc$ to contain some $(a, \msa')$ where for some $\mcc$-candidate $(a, \msa)$, $\msa'\in [\msa]^{\omega_1}$ is added generically.  
The following poset will be  frequently used to add such $\msa'$.
 
\begin{defn}\label{defn Paah}
Assume $\varphi_1(\mcc, \mct, \bfe)$ and $\psi_2(\mcc, \mct, \bfe, \pi, \mcr)$. Suppose $(a, \msa)$ is a $\mcc$-candidate satisfying {\rm (R1)$^*$}. Suppose $H$ is a non-empty collection of $\mcr$-satisfiable for $(a, \msa)$ functions. 
Then $\mc{P}_{a, \msa, H}$ is the poset consisting of $p\in [\msa]^{<\omega}$ such that 
\begin{itemize}
\item for every $ a'<a'' $ in $p$, there exists $f\in H$ such that $(a\cup a'\cup a'', \pi)$ is isomorphic to $ (|a|+2N_\msa, f)$.
\end{itemize} 
The order is reverse inclusion.
\end{defn}
\begin{lem}\label{lem 9.3}
Assume $\varphi_1(\mcc, \mct, \bfe)$ and $\psi_2(\mcc, \mct, \bfe, \pi, \mcr)$. For $(a, \msa)$ and $H$ as above, $\mc{P}_{a, \msa, H}$ is  ccc.
\end{lem}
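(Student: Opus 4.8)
The goal is to show that $\mc{P}_{a, \msa, H}$ is ccc. The strategy mirrors the ccc verifications appearing throughout the paper (e.g. the proof that $\mc{H}^\pi_0$ is ccc in Lemma \ref{lem 9.2}, and the ccc argument of the analogous poset in \cite{Peng25}): take an uncountable antichain candidate, thin it to a $\Delta$-system with strong uniformity, pull out a fresh $\mcc$-candidate using Lemma \ref{lem 7.k}, build a single $\mcr$-satisfiable function encoding the desired amalgamation, and apply (Res) to find two compatible conditions.

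\textbf{Step 1: Set-up and thinning.} First I would fix an uncountable $\mcx = \{p_\alpha \in \mc{P}_{a, \msa, H} : \alpha < \omega_1\}$. Each $p_\alpha$ is a finite subset of $\msa$, so it has the form $\{a'_{\alpha, 0} < \cdots < a'_{\alpha, k_\alpha - 1}\}$ with all $a'_{\alpha, j} \in \msa$. Passing to an uncountable subset, I may assume: all $p_\alpha$ have the same size $k$; $\{p_\alpha : \alpha < \omega_1\}$ forms a $\Delta$-system, but here the natural "root" lives in $\msa$ — more precisely, after thinning I may assume that for some fixed $l \leq k$ the first $l$ elements $a'_{\alpha, 0}, \dots, a'_{\alpha, l-1}$ are constant (call this common part $\overline{p}$, a finite subset of $\msa$) and the remaining "tails" $\overline{p}^{\,c}_\alpha = \{a'_{\alpha, j} : l \leq j < k\}$ are pairwise non-overlapping and $<$-increasing in $\alpha$; for each $\alpha$ and each pair $j_1 < j_2$ in $k$ the witnessing function $f \in H$ giving the isomorphism type of $(a \cup a'_{\alpha, j_1} \cup a'_{\alpha, j_2}, \pi)$ may be taken to depend only on $(j_1, j_2)$ and not on $\alpha$ (since $H$ is fixed and there are only finitely many pairs and finitely many relevant isomorphism types once we also fix, via (R1)$^*$, the type of each $(a \cup a', \pi)$); and, by Lemma \ref{lem 7.l} applied to the $\mcc$-candidate $(a, \msa)$, omitting a countable part, $(\bigcup\{a \cup \overline{p}\}, \{\bigcup \overline{p}^{\,c}_\alpha : \alpha < \omega_1\})$ is itself a $\mcc$-candidate; thinning once more I may assume it has property (R1)$^*$. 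Set $\msb = \{c_\alpha : \alpha < \omega_1\}$ where $c_\alpha = \bigcup \overline{p}^{\,c}_\alpha$, so $(a \cup \bigcup\overline{p}, \msb)$ is a $\mcc$-candidate, and each $c_\alpha$ naturally decomposes as $c_\alpha[L_j] = a'_{\alpha, l+j}$ for pairwise disjoint $L_j \subseteq N_\msb$ with $\bigcup_j L_j = N_\msb$, each $L_j \in \mc{F}(\msa, \msb)$ by (C2).

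\textbf{Step 2: Constructing the amalgamating function.} Now I would build a single function $g : [|a \cup \bigcup\overline{p}| + 2N_\msb]^3 \to 2$ that is $\mcr$-satisfiable for $(a \cup \bigcup\overline{p}, \msb)$ and whose realization by two elements $c_\alpha < c_\beta$ of $\msb$ forces $p_\alpha \cup p_\beta$ to be a condition in $\mc{P}_{a, \msa, H}$. The realization $c_\alpha < c_\beta$ will give, for each pair of indices, a copy of $a \cup a'_{\alpha, j_1} \cup a'_{\beta, j_2}$ (or $a \cup a'_{\alpha, j_1} \cup a'_{\alpha, j_2}$ when both come from $c_\alpha$): I want each such copy to be isomorphic to $(a \cup a'\cup a'', \pi)$ for the appropriate member of $H$ — for the "within $c_\alpha$" pairs this is automatic because $p_\alpha$ is already a condition, and for the "across" pairs $a'_{\alpha, l+j_1}, a'_{\beta, l+j_2}$ I prescribe the relevant sub-block of $g$ to copy the $(j_1, j_2)$-type from Step 1, which indeed came from some $f \in H$, hence is $\mcr$-satisfiable for $(a, \msa)$, hence (by Lemma \ref{lem 9.0}, checking consistency on the $\mc{C}_{2,\max}$ blocks) compatible with the requirements of $\mcr$ on the smaller candidates. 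Using Lemma \ref{lem 9.0}/the Remark after it, the prescriptions on distinct $\mc{C}_{2,\max}(a\cup\bigcup\overline{p}, \msb)$ blocks have disjoint "off-diagonal" domains, so they do not conflict; on the fixed diagonal blocks everything is pre-determined by (R1)$^*$ and (Res.1) and is consistent by construction. Fill in any undefined triple with value $0$. The verification that the resulting $g$ satisfies (Res.1)–(Res.2) is exactly the bookkeeping done in Lemma \ref{lem 9.2}'s ccc argument and in \cite[Lemma 27 etc.]{Peng25}, using (R2) and (R3)$^*$ to see that the needed $g$-values on $\mcr$-recorded positions are indeed $0$ as required.

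\textbf{Step 3: Applying (Res) and conclusion.} Finally, apply (Res) (i.e. $\psi_2(\mcc, \mct, \bfe, \pi, \mcr)$) to the $\mcc$-candidate $(a \cup \bigcup\overline{p}, \msb)$ and the $\mcr$-satisfiable function $g$. This yields $c_\alpha < c_\beta$ in $\msb$ with $(a \cup \bigcup\overline{p} \cup c_\alpha \cup c_\beta, \pi)$ isomorphic to $(|a\cup\bigcup\overline{p}| + 2N_\msb, g)$. Unpacking the decomposition $c_\gamma[L_j] = a'_{\gamma, l+j}$, every pair $a' < a''$ drawn from $p_\alpha \cup p_\beta$ now sits inside this isomorphic copy in a position where $g$ was prescribed to match some $f \in H$; together with the fact that $p_\alpha$ and $p_\beta$ were already conditions, this shows $p_\alpha \cup p_\beta \in \mc{P}_{a,\msa,H}$, i.e. $p_\alpha$ and $p_\beta$ are compatible. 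Hence $\mcx$ is not an antichain and $\mc{P}_{a, \msa, H}$ is ccc.

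\textbf{Main obstacle.} The delicate point is Step 2: one must define $g$ consistently so that, simultaneously, (a) every across-pair block realizes a type from $H$, (b) the diagonal blocks are forced by (R1)$^*$/(Res.1) and agree with (a), and (c) the restrictions to all $\mc{C}_{2,\max}(a\cup\bigcup\overline{p}, \msb)$-induced domains are isomorphic to members of $\mcr(b,\msb)$ so that (Res.2) holds. The machinery of Lemma \ref{lem 7.po} and Lemma \ref{lem 9.0} (disjointness of the off-diagonal domains of distinct maximal tuples) is precisely what makes this consistent, and the fact that the across-pair types were themselves drawn from the $\mcr$-satisfiable family $H$ is what guarantees compatibility with $\mcr$. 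Making this bookkeeping airtight — in particular tracking how $\mc{F}(\msa, \msb)$ interacts with $\mc{F}$ of the smaller candidates under the $L_j$-decomposition — is the crux; everything else is the standard $\Delta$-system thinning.
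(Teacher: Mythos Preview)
Your approach is essentially the same as the paper's: thin to a $\Delta$-system, form a $\mcc$-candidate from the tails via Lemma~\ref{lem 7.l} (not Lemma~\ref{lem 7.k} as in your Plan---no fresh candidate needs to be extracted, since $(a,\msa)$ is already one), build an $\mcr$-satisfiable function prescribing $H$-types on each cross-block, and apply (Res).

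The one simplification you miss is that compatibility in $\mc{P}_{a,\msa,H}$ only asks whether $p_\alpha\cup p_\beta$ is a condition, so the $\Delta$-system root $\overline{p}$ contributes no new pairs to check and may be assumed empty. The paper does exactly this and works with the candidate $(a,\msb)$ rather than your $(a\cup\bigcup\overline{p},\msb)$. This removes all the root bookkeeping you describe in Steps~1--3; in particular the cases ``both in $\overline{p}$'' and ``one in $\overline{p}$, one in a tail'' simply disappear, and the function $f$ needs only (Res.1) together with the requirement that each block $(|a|\cup I^*_i\cup I^*_{n+j},f)$ realize some member of $H$. Since every element of $\mc{C}_2(a,\msb)$ has its index sets contained in single $L_j$-blocks and $H$ consists of $\mcr$-satisfiable functions for $(a,\msa)$, the $\mcr$-satisfiability of $f$ is then immediate---the paper dispatches it in one sentence rather than the extended discussion of your Step~2 and ``Main obstacle''.
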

\begin{proof}
Fix $\{p_\alpha\in \mc{P}_{a, \msa, H}: \alpha<\omega_1\}$. Find $\Gamma\in [\omega_1]^{\omega_1}$ such that $\{p_\alpha: \alpha\in \Gamma\}$ forms a $\Delta$-system with root $\overline{p}$. Since the root does not affect compatibility, we may assume that $\overline{p}=\emptyset$.  Find $\Gamma'\in [\Gamma]^{\omega_1}$  and  $n<\omega$ such that
\[|p_\alpha|=n\text{ whenever $\alpha\in \Gamma'$ and $\msa'=\{\bigcup p_\alpha: \alpha\in \Gamma'\}$ is non-overlapping}.\]
Now by Lemma \ref{lem 7.l}, find $\Sigma\in [\Gamma']^{\omega_1}$ such that
\[(a, \msb)\text{ is a $\mcc$-candidate satisfying (R1)$^*$ where }\msb=\{\bigcup p_\alpha: \alpha\in \Sigma\}.\]

Choose $f: [|a|+2nN_\msa]^3\ra 2$ such that
\begin{enumerate}
\item $f$ satisfies (Res.1) for $(a, \msb)$;
\item  for every $i, j<n$, $(|a|\cup I^*_i\cup I^*_{n+j}, f)$ is isomorphic to $(|a|+2N_\msa, g)$ for some $g\in H$ where $I^*_k=[|a|+kN_\msa, |a|+(k+1)N_\msa)$ for $k<2n$.
\end{enumerate}
It is straightforward to check that $f$ is $\mcr$-satisfiable for $(a, \msb)$.

Now applying (Res) to $(a, \msb)$ and $f$, we find $\alpha<\beta$ in $\Sigma$ such that $(a\cup a_\alpha\cup a_\beta, \pi)$ is isomorphic to $(|a|+2N_\msb, f)$. Then $p_\alpha$ is compatible with $p_\beta$. So $\mc{P}_{a, \msa, H}$ is  ccc.
\end{proof}

 We now investigate preservation of $\psi_2(\mcc, \mct, \bfe, \pi, \mcr)$ (or $\psi_2(\mcc', \mct', \bfe', \pi, \mcr')$ with extended $\mcc'$ and induced $\mct', \bfe', \mcr'$) in the iteration process.
 \begin{lem}\label{lem 9.4}
 Assume $\varphi_1(\mcc, \mct, \bfe)$ and $\psi_2(\mcc, \mct, \bfe, \pi, \mcr)$. Suppose $\mc{P}$ is productively ccc. Then $\Vdash_\mc{P} \psi_2(\mcc, \mct, \bfe, \pi, \mcr)$. 
 \end{lem}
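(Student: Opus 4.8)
\textbf{Proof plan for Lemma \ref{lem 9.4}.}

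The plan is to verify the only nontrivial clause, namely (Res), forced by $\mc{P}$, since (R1)--(R3) refer to finitely many elements of the \emph{ground model} collection $\mcc$ (which is unchanged) and the coloring $\pi$ (also unchanged), so they persist verbatim by absoluteness. So fix $p\in \mc{P}$, a $\mc{P}$-name $(a,\dot{\msa})$ for a $\mcc$-candidate with property (R1)$^*$, and a ground-model function $f:[|a|+2N_{\dot\msa}]^3\ra 2$ that $p$ forces to be $\mcr$-satisfiable for $(a,\dot{\msa})$. (Note $N_{\dot\msa}$ is decided by $p$ after a further extension; $f$ is a genuine ground-model object since its domain and range are countable.) As in the proof of Lemma \ref{lem 8.3}, for each $\alpha<\omega_1$ pick $p_\alpha\le p$ and $a_\alpha\in[\omega_1\setminus\alpha]^{N_{\dot\msa}}$ with $p_\alpha\Vdash a_\alpha$ is the $\alpha$th element of $\dot{\msa}$. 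Since $\mc{P}$ is ccc, thin out to $\Gamma\in[\omega_1]^{\omega_1}$ along which $\{a_\alpha:\alpha\in\Gamma\}$ is non-overlapping, and apply Lemma \ref{lem 7.n} to get $\Sigma\in[\Gamma]^{\omega_1}$ with $(a,\msa')$ a (ground-model) $\mcc$-candidate, where $\msa'=\{a_\alpha:\alpha\in\Sigma\}$. Thinning $\Sigma$ further (using (R1)$^*$, which $p$ forces, and the pigeonhole on the finitely many possible isomorphism types of $(a\cup a_\alpha,\pi)$), arrange that $(a,\msa')$ has property (R1)$^*$ in the ground model.

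The next step is to check that $f$ is $\mcr$-satisfiable for $(a,\msa')$ in the ground model. Condition (Res.1) is immediate from (R1)$^*$ for $(a,\msa')$ together with the fact that $p$ forces $f$ to satisfy (Res.1) for $(a,\dot\msa)$: the relevant isomorphism type $(|a|\cup[|a|+iN,|a|+(i+1)N),f)\cong(a\cup a',\pi)$ is a statement about ground-model objects $a_\alpha$. For (Res.2), fix $(b,\msb,I,J)\in\mc{C}_2(a,\msa')$; since all $(b,\msb)\prec(a,\msa')$ lie in $\mcc$ (ground model) and $\mc{C}_2(a,\msa')$ is finite by Lemma \ref{lem 7.g}, I can find $q\le p$ forcing $\{\alpha\in\Sigma:p_\alpha\in\dot G\}$ uncountable and forcing $(b,\msb,I,J)\in\mc{C}_2(a,\dot\msa)$ (using that a cofinal sub-family of $\dot\msa$ below $q$ consists of the $a_\alpha$'s, hence $\msb\preceq\dot\msa$ is witnessed with the same $\mc{F}$-data). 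Then $q$ forces (Res.2) for $(b,\msb,I,J)$ and $f$ relative to $(a,\dot\msa)$, and since that is a statement about $f$, $b$, $\msb$, $I$, $J$ and the single element $g\in\mcr(b,\msb)$ — all ground-model objects — it holds in the ground model. Hence $f$ is $\mcr$-satisfiable for $(a,\msa')$.

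Now apply $\psi_2(\mcc,\mct,\bfe,\pi,\mcr)$, i.e.\ (Res), in the ground model to $(a,\msa')$ and $f$: there are $a_{\alpha_0}<a_{\alpha_1}$ in $\msa'$ with $(a\cup a_{\alpha_0}\cup a_{\alpha_1},\pi)$ isomorphic to $(|a|+2N,f)$. Here is the single place where \emph{productive} ccc is needed (as opposed to mere ccc, which sufficed in Lemma \ref{lem 8.3} where only precaliber $\omega_1$ was used): unlike the precaliber case, here $\{p_\alpha:\alpha\in\Sigma\}$ need not be centered, so I cannot simply take a common lower bound of $p_{\alpha_0},p_{\alpha_1}$. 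Instead, by productive ccc, $\mc{P}\times\mc{P}$ is ccc, so $\{(p_\alpha,p_\alpha):\alpha\in\Sigma\}$ has two members with a common lower bound $(r_0,r_1)$ in $\mc{P}\times\mc{P}$; but I actually want $p_{\alpha_0},p_{\alpha_1}$ for the \emph{specific} pair produced by (Res). The clean way: observe (Res) produces not just one pair but, running the argument inside the forcing, the compatibility can be extracted via a genericity/product argument — more precisely, I will run the thinning and the application of (Res) for arbitrarily large index sets, so that for densely many $r\le p$ there is a pair $\alpha_0<\alpha_1$ with $p_{\alpha_0},p_{\alpha_1}$ both $\ge r$ (achievable since $\mc{P}$ is ccc so below any $r$ the set of $\alpha$ with $p_\alpha$ compatible with $r$ is uncountable, and then re-apply the ground-model (Res) to this sub-family, whose union is still a $\mcc$-candidate with $f$ still $\mcr$-satisfiable) and with $(a\cup a_{\alpha_0}\cup a_{\alpha_1},\pi)\cong(|a|+2N,f)$. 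This yields a dense set of conditions forcing the conclusion of (Res) for $(a,\dot\msa)$ and $f$, hence $\Vdash_\mc{P}$ (Res). A routine density argument then gives $\Vdash_\mc{P}\psi_2(\mcc,\mct,\bfe,\pi,\mcr)$.

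The main obstacle I expect is exactly this last point: transferring the ground-model solution $(a_{\alpha_0},a_{\alpha_1})$ of (Res) into a \emph{single} condition of $\mc{P}$ forcing that those two sets are both in $\dot\msa$. Productive ccc is the hypothesis that makes this work, and the cleanest implementation is the ``re-apply (Res) below each $r$'' trick just sketched, which avoids ever needing an actual common lower bound of $p_{\alpha_0}$ and $p_{\alpha_1}$ by instead ensuring the witnessing pair is chosen among conditions above a prescribed $r$; one should double-check that ccc (not even productivity) suffices for \emph{that} particular step, and that productivity is genuinely used only to keep iterated two-step products ccc elsewhere in the construction — but within this lemma, the statement ``$\Vdash_\mc{P}$'' only needs the re-application argument, so I will present it that way. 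I would also remark that, alternatively, one can phrase the whole thing as: $p$ forces (Res) because any counterexample below $p$ would, after passing to a $\mcx$-name, reflect to a ground-model counterexample for $(a,\msa')$ and $f$, contradicting $\psi_2$ in $V$; this reflection uses only ccc of $\mc{P}$ for the name-capturing, which is the form I would ultimately write up.
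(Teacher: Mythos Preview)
Your reduction to a ground-model $\mcc$-candidate $(a,\msa')$ and the verification that $f$ is $\mcr$-satisfiable for it are fine, but the final step---producing $\alpha_0<\alpha_1$ with both $(a\cup a_{\alpha_0}\cup a_{\alpha_1},\pi)\cong(|a|+2N,f)$ \emph{and} $p_{\alpha_0},p_{\alpha_1}$ compatible in $\mc{P}$---does not go through as written. Applying (Res) in $V$ gives you one specific pair with the right $\pi$-pattern, but nothing forces $p_{\alpha_0},p_{\alpha_1}$ to be compatible. Your ``re-apply below each $r$'' fix needs, for a dense set of $r\le p$, uncountably many $\alpha$ with $p_\alpha$ compatible with $r$; ccc alone does not give this (think of the $p_\alpha$'s forming an antichain), and even if it did, you would only get each $p_{\alpha_i}$ separately compatible with $r$, not with each other. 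Your closing reflection remark has the same defect: a forced counterexample does not reflect to a ground-model one, because the failure in $V^{\mc P}$ is not that no pair has the right $\pi$-pattern, but that no such pair lies in $\dot\msa$ below a single condition.

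The paper's proof uses productive ccc in a different and decisive way. By Lemma~\ref{lem 9.3} the auxiliary poset $\mc{P}_{a,\msa',\{f\}}$ (finite subsets $F$ of $\msa'$ such that every pair in $F$ realizes the pattern $f$) is ccc. Hence $\mc{P}\times\mc{P}_{a,\msa',\{f\}}$ is ccc, so the uncountable family $\{(p_\alpha,\{a_\alpha\}):\alpha\in\Gamma\}$ contains two compatible elements $(p_\alpha,\{a_\alpha\})$, $(p_\beta,\{a_\beta\})$. Compatibility in the second coordinate says exactly that $(a\cup a_\alpha\cup a_\beta,\pi)\cong(|a|+2N,f)$; compatibility in the first gives a $q\le p_\alpha,p_\beta$ forcing both $a_\alpha,a_\beta\in\dot\msa$. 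This is where productive ccc is genuinely needed and is the idea your argument is missing.
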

 \begin{proof}
 Since $\mc{P}$ preserves $\omega_1$,  $\mc{P}$ preserves $\varphi_1(\mcc, \mct, \bfe)$ and $\psi_2^-(\mcc, \mct, \bfe, \pi, \mcr)$. Now it suffices to check (Res). Fix a $\mc{P}$-name of a $\mcc$-candidate $(a, \dot{\msa})$ with property (R1)$^*$. Fix $p\in \mc{P}$, $N$  and $f: [|a|+2N]^3\ra 2$ satisfying the following properties.
 \begin{enumerate}
 \item $p\Vdash N_{\dot{\msa}}=N$ and $f$ is $\mcr$-satisfiable for $(a, \dot{\msa})$.
 \end{enumerate}
 
 For each $\alpha<\omega_1$, find $p_\alpha\leq p$ and $a_\alpha$ such that
 \[p_\alpha\Vdash a_\alpha \text{ is the $\alpha$th element of } \dot{\msa}.\]
 By Lemma \ref{lem 7.n}, find $\Gamma\in [\omega_1]^{\omega_1}$ such that 
 \[(a, \msa')\text{ is a $\mcc$-candidate satisfying (R1)$^*$ where }\msa'=\{a_\alpha: \alpha\in \Gamma\}.\]
 
 By Lemma \ref{lem 9.3}, $\mc{P}_{a, \msa', \{f\}}$ is ccc. Hence, $\mc{P}\times \mc{P}_{a, \msa', \{f\}}$ is ccc. Find $\alpha<\beta$ in $\Gamma$ such that $(p_\alpha, \{a_\alpha\})$ is compatible with $(p_\beta, \{a_\beta\})$. Let $q\leq p_\alpha, p_\beta$. Then
 \[q\Vdash a_\alpha< a_\beta \text{ are in }\dot{\msa} \text{ and } (a\cup a_\alpha\cup a_\beta, \pi)\text{ is isomorphic to } (|a|+2N, f).\]
 
 Now a density argument shows that $\mc{P}$ preserves $\psi_2(\mcc, \mct, \bfe, \pi, \mcr)$.
 \end{proof}

\begin{lem}\label{lem 9.5}
Assume $\varphi_1(\mcc, \mct, \bfe)$ and $\psi_2(\mcc, \mct, \bfe, \pi, \mcr)$. Suppose $(a, \msa), H$ and $ \mc{P}_{a, \msa, H}$ are as in Definition \ref{defn Paah}.  Suppose moreover that $H$ has the following property.
\begin{enumerate}[(i)]
\item For $a'\in \msa$, $a^*\subseteq a$ and $I, J\subseteq N_\msa$,
if $a^*\cup a'[I]\in \mc{H}^\pi_0$, $ a^*\cup a'[J]\in \mc{H}^\pi_0$,
 then there exists $f\in H$ that is constant 0 on   
$[a^{-1}[a^*]\cup (|a|\widetilde{+} I)\cup (|a|+N_\msa\widetilde{+} J)]^3$.
\end{enumerate}
 If $G$ is an uncountable generic filter,  then $\varphi_1(\mcc', \mct', \bfe')$ and $\psi_2(\mcc', \mct', \bfe', \pi, \mcr')$ hold  in $V[G]$ where $\mcc'=\mcc\cup \{(a, \bigcup G)\}$ and $\mct', \bfe', \mcr'$ are induced from $\mcc'$. Moreover, $\mcr'(a, \bigcup G)=H$.
\end{lem}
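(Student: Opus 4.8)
\textbf{Proof plan for Lemma \ref{lem 9.5}.}

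The plan is to verify each component of $\psi_2(\mcc', \mct', \bfe', \pi, \mcr')$ in turn, reducing everything back to properties that already hold for $(\mcc, \mct, \bfe, \pi, \mcr)$ together with the definition of the generic filter $G$. First I would observe that $(a, \msa)$ is a $\mcc$-candidate and $\bigcup G \subseteq \msa$ is uncountable (since $G$ is uncountable and a density argument forces $\bigcup G$ to be unbounded), so by the Remark after the definition of $\mcc$-candidate, $(a, \bigcup G)$ is again a $\mcc$-candidate. Hence $\varphi_1(\mcc', \mct', \bfe')$ holds in $V[G]$. For $\psi_2^-(\mcc', \mct', \bfe', \pi, \mcr')$: (R1) for the new element follows from the fact that $(a,\msa)$ satisfies (R1)$^*$ and $\bigcup G \subseteq \msa$; (R2) holds by definition since we \emph{set} $\mcr'(a, \bigcup G) = H$, provided we check that $H$ is exactly the collection of functions $f$ with $(|a|+2N_\msa, f)$ isomorphic to $(a \cup a' \cup a'', \pi)$ for some $a' < a''$ in $\bigcup G$ — the inclusion $\supseteq$ is immediate from the forcing condition in $\mc{P}_{a,\msa,H}$, and the inclusion $\subseteq$ follows from a density argument (every $f \in H$ is realized cofinally often by a pair from $\bigcup G$, using that $\mc{P}_{a,\msa,H}$ is ccc by Lemma \ref{lem 9.3} and the genericity of $G$). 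Then (R3), equivalently (R3)$^*$, for $(a, \bigcup G)$ is exactly the hypothesis (i) of the lemma: given $a^* \subseteq a$ and $I, J$ with $a^* \cup a'[I], a^* \cup a'[J] \in \mc{H}^\pi_0$ for all $a' \in \bigcup G$, the function $f \in H$ guaranteed by (i) witnesses (R3)$^*$.

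The main work is verifying (Res) in $V[G]$. Here I would follow the pattern of Lemma \ref{lem 8.4}: fix a $\mc{P}_{a,\msa,H}$-name $(b, \dot{\msb})$ of a $\mcc'$-candidate satisfying (R1)$^*$ and an $\mcr'$-satisfiable function $f$. Pick $p \in \mc{P}_{a,\msa,H}$ forcing these facts and determining $\dot{\mc{C}}'_1(b, \dot{\msb})$ (finite by Lemma \ref{lem 7.g}). For each $\alpha < \omega_1$ choose $F_\alpha \leq p$ and $b_\alpha$ with $F_\alpha \Vdash b_\alpha$ is the $\alpha$th element of $\dot{\msb}$, and (extending) arrange that if $F \Vdash (a, \bigcup \dot{G}) \prec (b, \dot{\msb})$ then $b_\alpha[J] \in F_\alpha$ for all $J \in \mc{F}(\bigcup \dot{G}, \dot{\msb})$. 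Thin out to $\Gamma \in [\omega_1]^{\omega_1}$ so that the $F_\alpha$ form a $\Delta$-system with root $\overline{F}$, the sets $c_\alpha := b_\alpha \cup \bigcup(F_\alpha \setminus \overline{F})$ are non-overlapping with $c_\alpha[I^*] = b_\alpha$ for a fixed $I^*$, and (using Lemma \ref{lem 7.l}, discarding a countable tail) both $(a, \{\bigcup(F_\alpha \setminus \overline{F}) : \alpha \in \Sigma\})$ and $(a \cup b, \ms{C})$ with $\ms{C} = \{c_\alpha : \alpha \in \Sigma\}$ are $\mcc$-candidates, the latter satisfying (R1)$^*$. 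Then define $h$ on $[|a \cup b| + 2N_\ms{C}]^3$ that copies $f$ on the coordinates coming from $I^*$ (via the pair structure) and is constant $0$ elsewhere, exactly as in Lemma \ref{lem 8.4}. The key claim is that $h$ is $\mcr$-satisfiable for $(a \cup b, \ms{C})$; here I would use Lemma \ref{lem 9.0} (reduce to $\mc{C}_{2,\max}$) and then for each $(c, \ms{C}', I, I') \in \mc{C}_{2,\max}(a \cup b, \ms{C})$ trace the $I^*$-coordinates back through $\mc{F}$-relations, invoking (C2) (Lemma \ref{lem 7.i}(a)) to show $L \subseteq I^*$ for the relevant $L \in \mc{F}(\ms{C}', \ms{C})$, so that the isomorphism requirement for $h$ reduces to the corresponding one for $f$, which holds by $\mcr'$-satisfiability of $f$ and (R2). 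Applying (Res) from $\psi_2(\mcc, \mct, \bfe, \pi, \mcr)$ to $(a \cup b, \ms{C})$ and $h$ yields witnesses $c_{\alpha_0} < c_{\alpha_1}$; I then check $F^* := F_{\alpha_0} \cup F_{\alpha_1} \in \mc{P}_{a,\msa,H}$ by verifying $(a \cup a'[I^*\text{-parts}])$-isomorphisms realize some $f \in H$ (using hypothesis (i) for the $0$-part of $h$ and the fact that $f$ was chosen from $H$ for the copied part), and conclude $F^* \Vdash b_{\alpha_0}, b_{\alpha_1}$ witness (Res) for $(b, \dot{\msb})$ and $f$. A density argument finishes it.

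I expect the main obstacle to be the verification that $F^* = F_{\alpha_0} \cup F_{\alpha_1}$ is a legitimate condition of $\mc{P}_{a, \msa, H}$ — that is, that every pair $a' < a''$ drawn from $F^*$ realizes some isomorphism type in $H$. This splits into the case where both $a', a''$ lie in a single $F_{\alpha_i}$ (already a condition), the case of an $I^*$-coordinate pair where we need the $h$-to-$f$ correspondence plus the fact that whenever $F \Vdash (a, \bigcup \dot{G}) \prec (b, \dot{\msb})$ the relevant coordinate factors through $\mc{F}(\bigcup \dot{G}, \dot{\msb})$ so that $f(s) = 0$ by $\mcr'$-satisfiability of $f$, and the cross-case which by construction of $h$ (constant $0$ off $I^*$-coordinates) must be shown to produce a $0$-homogeneous set, invoking hypothesis (i) of the lemma. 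Keeping careful track of which copies of $\msa$-elements sit inside the $c_\alpha$'s, and how $\mc{F}(\bigcup \dot{G}, \dot{\msb})$, $\mc{F}(\ms{C}', \ms{C})$ and $I^*$ compose, is the delicate bookkeeping — but it is entirely parallel to the argument of Lemma \ref{lem 8.4}, with the coloring being ternary rather than $(n+1)$-ary and the role of $\mc{H}^\pi_0$-membership replaced by the isomorphism-type condition of $\mc{P}_{a,\msa,H}$.
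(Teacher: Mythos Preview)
Your plan tracks the paper closely through the setup, the thinning, and the verification of $\varphi_1$, (R1), (R2), (R3), and $\mcr'(a,\bigcup G)=H$. The gap is in your definition of $h$ and the subsequent check that $F^*=F_{\alpha_0}\cup F_{\alpha_1}$ is a condition in $\mc{P}_{a,\msa,H}$.

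You propose to define $h$ ``exactly as in Lemma~\ref{lem 8.4}'': copy $f$ on the $I^*$-block and set $h\equiv 0$ elsewhere. That worked in Lemma~\ref{lem 8.4} because the forcing condition there was membership in $\mc{H}^\pi_0$, so $\pi=0$ on cross-triples sufficed. Here the forcing condition in $\mc{P}_{a,\msa,H}$ is that for every pair $a'<a''$ in the condition, the \emph{full} isomorphism type of $(a\cup a'\cup a'',\pi)$ lies in $H$. If $a'=c_{\alpha_0}[K]$ and $a''=c_{\alpha_1}[K']$ with $K\times K'$ disjoint from $I^*\times I^*$, your $h$ forces this type to be ``$(a\cup a',\pi)$ on each diagonal block, $0$ on all cross-triples''. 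Nothing guarantees that this particular function belongs to $H$. Hypothesis~(i) does not help: it only promises \emph{some} $f\in H$ that is $0$ on a prescribed subset of $[|a|+2N_\msa]^3$; it says nothing about whether the specific type your $h$ produces is realized in $H$.

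The paper handles this by building the requirement into $h$ from the start. Its condition~(13) demands that for every pair $(I,I')$ with $c_{\xi^*}[I],c_{\xi^*}[I']\in p_{\xi^*}$, the restriction of $h$ to $I_a\cup(|c|\widetilde{+}I)\cup(|c|+N_\ms{C}\widetilde{+}I')$ be isomorphic to some $g\in H$ chosen in advance (nonemptiness of $H$ is enough here). One then has to verify that (11)--(14) --- (Res.1), the $f$-copy on $I^*$, these $H$-patterns on the $p_{\xi^*}$-pairs, and (Res.2) for $\mc{C}_{2,\max}(c,\ms{C})$ --- are simultaneously satisfiable; the paper does this case analysis explicitly, using (7) to see that the relevant domains are disjoint off the diagonal, and using $\mcr'$-satisfiability of $f$ for the overlap with the $I^*$-block. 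After that, $F^*\in\mc{P}_{a,\msa,H}$ follows directly from~(13), not from hypothesis~(i). Hypothesis~(i) is used only for~(R3).
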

\begin{proof}
By $\bigcup G\subseteq \msa$, $\varphi_1(\mcc', \mct', \bfe')$ holds in $V[G]$.

To see $\psi_2(\mcc', \mct', \bfe', \pi, \mcr')$, first note that (R1) of $\mcc'$ follows from (R1) of $\mcc$ and (R1)$^*$ of $(a, \msa)$. Also, (R2) is automatically satisfied. 

Then we prove the moreover part. And
(R3) will follow from the additional property that $H$ satisfies.
Fix $p\in  G$ and $f\in H$.
Recall that $G$ is uncountable. 

Now work in $V$. Note that $\msa'=\{a'\in \msa: p\cup \{a'\}\in \mc{P}_{a, \msa, H}\}$ is uncountable.
 Applying (Res) to $(a, \msa')$ and $f$, we find $a'<a''$ in $\msa'$ such that \[(a\cup a'\cup a'', \pi)\text{ is isomorphic to }(|a|+2N_\msa, f).\]
  Then $p\cup \{a', a''\}$ is an extension of $p$ which forces $f\in \dot{\mcr}'(a, \bigcup \dot{G})$. 
 
 Now a density argument shows that $\mcr'(a, \bigcup G)=H$ in $V[G]$.

Finally we check (Res). 
Fix $p\in G$, $(b, \dot{\msb})$, $N$ and  $f: [|b|+2N]^3\ra 2$ such that
\[p\Vdash (b, \dot{\msb})\text{ is a $\dot{\mcc}'$-candidate, $N=N_{\dot{\msb}}$ and $f$ is $\dot{\mcr}'$-satisfiable for } (b, \dot{\msb}).\]
Extending $p$ if necessary, we may assume that 
\begin{enumerate}
\item $p$ determines $\dot{\mc{C}}'_1(b, \dot{\msb})$.
\end{enumerate}

For every $\alpha<\omega_1$, choose $p_\alpha\leq p$ and $b_\alpha\in [\omega_1]^N$ such that
\[p_\alpha\Vdash b_\alpha \text{ is the $\alpha$th element of } \dot{\msb}.\]
Extending $p_\alpha$ if necessary, we may assume that
\begin{enumerate}\setcounter{enumi}{1}
\item if $p\Vdash (a, \bigcup \dot{G})\prec (b, \dot{\msb})$, then $b_\alpha[J]\in p_\alpha$ for every $J\in \mc{F}(\bigcup \dot{G}, \dot{\msb})$.
\end{enumerate} 

Now choose $\Gamma\in [\omega_1]^{\omega_1}$ and $I^*$ such that,
\begin{enumerate}\setcounter{enumi}{2}
\item if $p\Vdash (a, \bigcup \dot{G})\not\prec (b, \dot{\msb})$, then for every $\alpha\in \Gamma$, $b_\alpha\cap (\bigcup p_\alpha)=\emptyset$;
\item $|p_\alpha|$ is constant for $\alpha\in \Gamma$ and $\{p_\alpha: \alpha\in \Gamma\}$ is a $\Delta$-system with root $\overline{p}$;
\item  for every $\alpha\in \Gamma$, $c_\alpha[I^*]=b_\alpha$ where $c_\alpha=b_\alpha\cup \bigcup (p_\alpha\setminus \overline{p})$;
\item $\{c_\alpha: \alpha\in \Gamma\}$ is non-overlapping.
\end{enumerate}
Note by (1)-(3), for $\alpha\in \Gamma$,
\begin{enumerate}\setcounter{enumi}{6}
\item    for $I\subseteq |c_\alpha|$ with $c_\alpha[I]\in p_\alpha$, either $I\subseteq I^*$, or $I\cap I^*=\emptyset$. Moreover, $\{I\subseteq |c_\alpha|: c_\alpha[I]\in p_\alpha\wedge I\nsubseteq I^*\}$ is a partition of $|c_\alpha|\setminus I^*$.
\end{enumerate}

By Lemma \ref{lem 7.l}, find $\Gamma'\in [\Gamma]^{\omega_1}$ that
\begin{enumerate}\setcounter{enumi}{7}
\item     $(a, \msa')$ is a $\mcc$-candidate where $\msa'=\{\bigcup p_\alpha\setminus \overline{p}: \alpha\in \Gamma'\}$.
\end{enumerate}
 
 Applying Lemma \ref{lem 7.o} to find  $\Gamma''\in [\Gamma']^{\omega_1}$ such that
 \begin{enumerate}\setcounter{enumi}{8}
\item $(b, \msb')$ is a $\mcc$-candidate where $\msb'=\{b_\alpha: \alpha\in \Gamma''\}$.
\end{enumerate}

By (7)-(9)  and Lemma \ref{lem 7.l}, find $\Sigma \in [\Gamma'']^{\omega_1}$ such  that 
\begin{enumerate}\setcounter{enumi}{9}
\item $(c, \ms{C})$ is a $\mcc$-candidate satisfying (R1)$^*$ where $c=a\cup b$ and $\ms{C}=\{c_\alpha: \alpha\in \Sigma\}$.
\end{enumerate}
 Fix ${\xi^*}\in \Sigma$. Let  $I_a, I_b\subseteq |c|$ satisfy 
\[a=c[I_a]\text{ and } b=c[I_b].\]
Then find $h: [|c|+2N_\ms{C}]^3\ra 2$ such that
\begin{enumerate}\setcounter{enumi}{10}
\item $h$ satisfy (Res.1) for $(c, \ms{C})$; 
\item $(I_b\cup (|c|\widetilde{+} I^*) \cup (|c|+N_\ms{C}\widetilde{+} I^*, h)$ is isomorphic to $(|b|+2N, f)$;
\item for $I, I'\subseteq N_\ms{C}$ with $\{c_{\xi^*}[I], c_{\xi^*}[I']\}\subseteq p_{\xi^*}$, $(I_a\cup (|c|\widetilde{+} I)\cup (|c|+N_\ms{C}\widetilde{+} I'), h)$ is isomorphic to $(|a|+2N_\msa, g)$ for some $g\in H$;
\item for $(d, \ms{D}, I, I')\in \mc{C}_{\max}(c, \ms{C})$, $(c^{-1}[d]\cup (|c|\widetilde{+} I)\cup (|c|+ N_\ms{C}\widetilde{+} I'), h)$ is isomorphic to $(|d|+2N_\ms{D}, g)$ for some $g\in \mcr(d, \ms{D})$.
\end{enumerate}
We check that (11)-(14) can be satisfied simultaneously.  

First by (10), define  $h$ partially on $D^*$ so that (11) is satisfied where
\[D^*=\bigcup_{i<2}[|a|\cup [|a|+iN_\ms{C}, |a|+(i+1)N_\ms{C})]^3.\]

Then define $h$ on 
\[[I_b\cup (|c|\widetilde{+} I^*) \cup (|c|+N_\ms{C}\widetilde{+} I^*)]^3\]
so that (12) is satisfied. 

For (13), note by (7), 
\begin{enumerate}\setcounter{enumi}{14}
\item elements in $\mcx=\{I\times I': c_{\xi^*}[I]\in p_{\xi^*}, c_{\xi^*}[I']\in p_{\xi^*}\}$
 are pairwise disjoint and each $I\times I'\in \mcx$ is either contained in or disjoint from $I^*\times I^*$.
 \end{enumerate}
  So define $h$ on
 \[\bigcup\{[I_a\cup (|c|\widetilde{+} I)\cup (|c|+N_\ms{C}\widetilde{+} I']^3: I\times I'\in \mcx, I\times I'\cap I^*\times I^*=\emptyset\}\]
to satisfy (13) for corresponding $I, I'$'s. 

We then check (13) for   $I\times I'\in \mcx$ such that $I\times I'\cap I^*\times I^*\neq \emptyset$. By (15), $I\times I'\subseteq I^*\times I^*$.

Then by (3), $p\Vdash (a, \bigcup \dot{G})\prec (b, \dot{\msb})$. Hence $a\subseteq b$.  Then by (12), $(I_a\cup (|c|\widetilde{+} I)\cup (|c|+N_\ms{C}\widetilde{+} I'), h)$ is isomorphic to $(b^{-1}[a]\cup (|b|\widetilde{+} J)\cup (|b|+N\widetilde{+} J'), f)$ where $J, J'\in \mc{F}(\bigcup \dot{G}, \dot{\msb})$ are such that $I=I^*[J], I'=I^*[J']$.

Now (13) for $I, I'$ follows from  the fact that $\mcr'(b, \bigcup G)=H$ and $f$ is  $\mcr'$-satisfiable for $(b, \dot{\msb}^G)$ in $V[G]$.

Finally we define $h$ to satisfy (14). First note by Lemma \ref{lem 7.po},  elements in
\[\{[c^{-1}[d]\cup (|c|\widetilde{+} I)\cup (|c|+ N_\ms{C}\widetilde{+} I')]^3\setminus D^*: (d, \ms{D}, I, I')\in\mc{C}_{\max}(c, \ms{C})\}\]
are pairwise disjoint.
  So it suffices to define $h$ on corresponding domain induced by each $(d, \ms{D}, I, I')\in\mc{C}_{\max}(c, \ms{C})$ to satisfy (14).

Fix $(d, \ms{D}, I, I')\in\mc{C}_{\max}(c, \ms{C})$. 

First consider the case $(I\times I')\cap (I^*\times I^*)\neq\emptyset$. Note $\ms{D}_i\cap \msb'_j\supseteq \ms{C}_k$ if $k=I(i)=I^*(j)\in I\cap I^*$.  Hence, $(d, \ms{D})\prec (b, \msb')$ and $I\times I'\subseteq I^*\times I^*$. Then (14) for $(d,\ms{D}, I, I')$ follows from (12).

The case $(I\times I')\cap (J\times J')\neq\emptyset$ for some $J\times J'\in \mcx$ follows from a similar argument. Now it is easy to define $h$ for the other cases so that (14) holds.

Now (11)-(14) are all satisfied and hence $h$ is $\mcr$-satisfiable for $(c, \ms{C})$. Apply (Res) to $(c, \ms{C})$ and $h$ to obtain $\alpha<\beta $ in $\Sigma$ such that $(c\cup c_\alpha\cup c_\beta, \pi)$ is isomorphic to $(|c|+2N_\ms{C}, h)$.

By (13), $p_\alpha$ is compatible with $p_\beta$. By (12),
\[p_\alpha\cup p_\beta\Vdash b_\alpha, b_\beta\in \dot{\msb}\text{ and witness (Res) for } (b, \dot{\msb}) \text{ and } f.\] 
Then (Res) follows from a density argument. 
\end{proof}

We show the preservation of $\psi_2(\mcc, \mct, \bfe, \pi, \mcr)$ at limit stages of finite support iteration of ccc posets.
  \begin{lem}\label{lem 9.6}
Suppose $\nu$ is a limit ordinal and $\langle \mc{P}_\alpha, \dot{\mc{Q}}_\beta: \alpha\leq \nu, \beta<\nu\rangle$ is a finite support iteration of ccc posets. Moreover, for $\alpha\leq \nu$,
\begin{itemize}
\item $(\mcc^\alpha, \mct^\alpha, \bfe^\alpha, \pi, \mcr^\alpha)\in V^{\mc{P}_\alpha}$;
\item  $\mcc^\xi\subseteq \mcc^\alpha$ for $\xi<\alpha$ and $\mcc^\nu=\bigcup_{\xi<\nu} \mcc^\xi$;
\item  $\mct^\alpha, \bfe^\alpha$ and $\mcr^\alpha$  are induced from $\mcc^\alpha$.
\end{itemize}
If $\varphi_1(\mcc^\alpha, \mct^\alpha, \bfe^\alpha)$ and $\psi_2(\mcc^\alpha, \mct^\alpha, \bfe^\alpha, \pi, \mcr^\alpha)$ hold in $V^{\mc{P}_\alpha}$ for all $\alpha<\nu$, then $\psi_2(\mcc^\nu, \mct^\nu,\bfe^\nu, \pi, \mcr^\nu)$ holds in $V^{\mc{P}_\nu}$.
\end{lem}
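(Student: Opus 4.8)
The plan is to follow the template already established in Lemma \ref{lem 7.m}, Lemma \ref{lem 8.5}, and especially Lemma \ref{lem 8.3}'s limit-stage companion, adapting the argument of Lemma \ref{lem 8.5} to the richer structure $\psi_2$. First I would note that $\varphi_1(\mcc^\nu, \mct^\nu, \bfe^\nu)$ holds by Lemma \ref{lem 7.m}, and that (R1), (R2), (R3) of $\psi_2^-(\mcc^\nu, \mct^\nu, \bfe^\nu, \pi, \mcr^\nu)$ each refer to only a single element $(a, \msa)\in \mcc^\nu$ (together with the finitely many elements of $\mc{C}_0(a, \msa)$, which by Lemma \ref{lem 7.g} is finite); since $\mcc^\nu=\bigcup_{\xi<\nu}\mcc^\xi$ and each such finite configuration already appears in some $\mcc^\xi$, properties (R1)--(R3) are inherited directly from $\psi_2^-(\mcc^\xi, \mct^\xi, \bfe^\xi, \pi, \mcr^\xi)$. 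So the entire content of the lemma reduces to verifying (Res) in $V^{\mc{P}_\nu}$.

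For (Res), I would fix in $V^{\mc{P}_\nu}$ a $\mcc^\nu$-candidate $(a, \msa)$ with property (R1)$^*$ and an $\mcr^\nu$-satisfiable function $f:[|a|+2N_\msa]^3\ra 2$. By Lemma \ref{lem 7.g}, $\mc{C}_2(a, \msa)$ is finite, so $f$ is determined by finitely many pieces of data; viewing some $V^{\mc{P}_\alpha}$ as the ground model, I may assume (exactly as in the proof of Lemma \ref{lem 8.5}, step (1)) that every $(b, \msb)\prec (a, \msa)$ in $\mcc^\nu$ already lies in $\mcc^0$. Then I fix a condition $p\in\mc{P}_\nu$ forcing all this, fix a $\mc{P}_\nu$-name $\dot{\msa}$ for $\msa$, and for each $\alpha<\omega_1$ pick $p_\alpha\leq p$ and $a_\alpha\in[\omega_1]^{N_\msa}$ with $p_\alpha\Vdash a_\alpha$ is the $\alpha$th element of $\dot{\msa}$. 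After thinning so that $\{\mathrm{supp}(p_\alpha):\alpha\in\Gamma\}$ forms a $\Delta$-system with root contained in some $\xi'<\nu$, I apply Lemma \ref{lem 7.p} to obtain $\xi\in[\xi',\nu)$, $q\leq p$ and $\Sigma\in[\Gamma]^{\omega_1}$ such that, writing $\Sigma'=\{\alpha\in\Sigma: p_\alpha\up\xi\in\dot{G}\up\xi\}$ and $\msa'=\{a_\alpha:\alpha\in\Sigma'\}$, the condition $q\up\xi$ forces $(a, \msa')$ to be a $\mcc^\xi$-candidate.

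Now I work in $V[G\up\xi]$ for a generic $G\ni q$: there $(a, \msa')$ is a $\mcc^\xi$-candidate, it still has property (R1)$^*$ (isomorphism type of $(a\cup a_\alpha, \pi)$ is absolute and was forced), and crucially $f$ is $\mcr^\xi$-satisfiable for $(a, \msa')$ — the conditions (Res.1)--(Res.2) only involve the finitely many elements of $\mc{C}_2(a, \msa')$, all of which, by the reduction above, are in $\mcc^0\subseteq\mcc^\xi$ with the same $\mcr$-values, so the witnesses $g\in\mcr^\xi(b,\msb)$ required by (Res.2) are exactly those required in $V^{\mc{P}_\nu}$. Applying $\psi_2(\mcc^\xi, \mct^\xi, \bfe^\xi, \pi, \mcr^\xi)$ (which holds in $V^{\mc{P}_\xi}$ by hypothesis) yields $a_{\alpha_0}<a_{\alpha_1}$ in $\msa'$ with $(a\cup a_{\alpha_0}\cup a_{\alpha_1}, \pi)$ isomorphic to $(|a|+2N_\msa, f)$. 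Since $\alpha_0,\alpha_1\in\Sigma'$, the conditions $p_{\alpha_0}, p_{\alpha_1}$ both lie in $G\up\xi$ and in particular have a common lower bound $p'$ in $\mc{P}_\nu$, and $p'\Vdash a_{\alpha_0}, a_{\alpha_1}$ witness (Res) for $(a, \dot{\msa})$ and $f$. A standard density argument then gives $\Vdash_{\mc{P}_\nu}$ (Res), completing the proof.

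The only point requiring care — the main (mild) obstacle — is the bookkeeping showing that $\mcr^\nu$-satisfiability of $f$ for $(a, \msa)$ is genuinely equivalent to $\mcr^\xi$-satisfiability of $f$ for $(a, \msa')$ once one has arranged that all $\prec$-predecessors of $(a, \msa)$ sit below stage $\xi$; this rests on Lemma \ref{lem 7.g} (finiteness of $\mc{C}_0$, hence of $\mc{C}_2$) and on the fact that $\mcr^\alpha$ is defined from $\mcc^\alpha$ uniformly by (R2), so $\mcr^\xi$ and $\mcr^\nu$ agree on $\mcc^\xi$. Everything else is the verbatim analogue of Lemma \ref{lem 8.5}, with $\psi_1$ replaced by $\psi_2$ and Lemma \ref{lem 7.n}/\ref{lem 7.p} used unchanged.
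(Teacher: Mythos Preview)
Your proposal is correct and follows essentially the same route as the paper's proof: reduce to (Res) via Lemma~\ref{lem 7.m}, push all $\prec$-predecessors of $(a,\msa)$ down to $\mcc^0$, pick $p_\alpha, a_\alpha$, thin to a $\Delta$-system of supports, invoke Lemma~\ref{lem 7.p} to obtain a $\mcc^\xi$-candidate $(a,\msa')$ in $V[G\up\xi]$, apply $\psi_2$ at stage $\xi$, and lift the witnesses back using the $\Delta$-system. The only slip is the phrase ``$p_{\alpha_0}, p_{\alpha_1}$ both lie in $G\up\xi$'' --- it is their restrictions $p_{\alpha_i}\up\xi$ that lie in $G\up\xi$, and the common lower bound in $\mc{P}_\nu$ then comes from combining these compatible restrictions with the disjointly-supported tails, exactly as the paper argues via its condition~(4).
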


\begin{proof}
By Lemma \ref{lem 7.m}, $\varphi_1(\mcr^\nu, \mct^\nu, \bfe^\nu)$ holds. It suffices to prove   (Res)   in  $V^{\mc{P}_\nu}$.

Fix  in  $V^{\mc{P}_\nu}$, a $\mcc^\nu$-candidate $(a, \msa)$ satisfying (R1)$^*$ and an $\mcr^\nu$-satisfiable for $(a, \msa)$ function $f$. Viewing some $V^{\mc{P}_\alpha}$ as the ground model if necessary, assume 
\begin{enumerate}
\item all $(b, \msb)\prec (a, \msa)$ in $\mcc^\nu$ are in $\mcc^0$.
\end{enumerate}

Fix $p\in \mc{P}_\nu$, $N<\omega$ and a $\mc{P}_\nu$-name $\dot{\msa}$ of $\msa$ such that 
\begin{enumerate}\setcounter{enumi}{1}
\item $p$ forces (1) and determines all $(b, \msb)\prec (a, \dot{\msa})$ in $\mcc^0$;
\item $p\Vdash N=N_{\dot{\msa}}\text{ and } f \text{ is } \dot{\mcr}^\nu \text{ satisfiable for } (a, \dot{\msa})$.
\end{enumerate}
For every $\alpha<\omega_1$, find $p_\alpha\leq p$ and $a_\alpha\in [\omega_1]^N$ such that
\[p_\alpha\Vdash a_\alpha\text{ is the $\alpha$th element of } \dot{\msa}.\]
Find $\Gamma\in [\omega_1]^{\omega_1}$ such that 
\begin{enumerate}\setcounter{enumi}{3}
\item $\{supp(p_\alpha): \alpha\in \Gamma\}$ forms a $\Delta$-system with root contained in $\xi'<\nu$.
\end{enumerate}

By Lemma \ref{lem 7.p}, find $\xi\in [\xi', \nu)$, $q\leq p$ and  $\Sigma\in [\Gamma]^{\omega_1}$ such that for the canonical name  of the $\mc{P}_\nu$-generic filter $\dot{G}$,
\begin{enumerate}\setcounter{enumi}{4}
\item $q\up\xi\Vdash_{\mc{P}_\xi} (a, \{a_\alpha: \alpha\in \Sigma, p_\alpha\up \xi \in \dot{G}\up\xi\})$ is a $\dot{\mcc}^\xi$-candidate.
\end{enumerate}

Let $G$ be a $\mc{P}_\nu$-generic filter containing $q$, 
\[\Sigma'=\{\alpha\in \Sigma: p_\alpha\up\xi\in G\up\xi\}\text{ and } \msa'=\{a_\alpha: \alpha\in \Sigma'\}.\]

Work in $V[G\up\xi]$.  By (5),  $(a, \msa')$ is a $\mcc^\xi$-candidate. By (2) and (3), $f$ is $\mcc^\xi$-satisfiable for $(a, \msa')$. Applying $\psi_2(\mcc^\xi, \mct^ \xi, \bfe^ \xi, \pi, \mcr^ \xi)$, we get $a_{\alpha}<a_{\beta}$ in $\msa'$ witnessing (Res) for $(a, \msa')$ and $f$.

By (4) and the fact $\Sigma'\subseteq \Gamma$, $p_{\alpha}, p_{\beta}$ have a common lower bound, say $p'$. Now 
\[p'\Vdash a_{\alpha}<a_{\beta} \text{   witness (Res) for $(a, \dot{\msa})$ and } f.\]

Now a density argument shows that (Res)  holds in $V^{\mc{P}_\nu}$.
\end{proof}

 \subsection{Forcing at an intermediate stage}
 Assume $\varphi_1(\mcc, \mct, \bfe)$, $\psi_2(\mcc, \mct,\bfe, \pi, \mcr)$ and $|\mcc|<\mathfrak{a}_{\omega_1}$.
We   describe the procedure of dealing with a ccc poset at some intermediate stage. Fix a ccc poset $\mathbb{P}$ of size $\leq \omega_1$. Note that $\mathbb{P}$ preserves $\omega_1$ and hence $\varphi_1(\mcc, \mct, \bfe)$. Our goal is that one of the following outcomes occurs.
\begin{itemize}
\item $\mathbb{P}$ preserves $\psi_2(\mcc, \mct, \bfe, \pi, \mcr)$ and we force with $\mathbb{P}$.
\item Destroy ccc of $\mathbb{P}$.
\item Destroy 2-ccc of $\mathbb{P}$, i.e., add an uncountable antichain to $\mc{H}_{\mathbb{P}, 2}$.
\end{itemize}

Note by Lemma \ref{lem 9.4}, the first outcome will occur if $|\mathbb{P}|=\omega$. So we assume that $\mathbb{P}$ has size $\omega_1$ in Alternatives 2-3 below. We describe our treatment by cases.\bigskip

\textbf{Alternative 1.} $\mathbb{P}$ preserves $\psi_2(\mcc, \mct, \bfe, \pi, \mcr)$.\medskip

We force with $\mathbb{P}$. So the first outcome occurs.\bigskip

\textbf{Alternative 2.} $\mathbb{P}$ destroys  $\psi_2(\mcc, \mct, \bfe, \pi, \mcr)$ and does not add an uncountable 0-homogeneous subset of $\pi$.\medskip

We will need \cite[Lemma 35]{Peng25}. For completeness, we sketch a proof.

  \begin{lem}[\cite{Peng25}]\label{lem P35}
  Suppose $\msa\subset [\omega_1]^{<\omega}$ is uncountable non-overlapping, $2\leq n<\omega$, $\tau: [\omega_1]^n\ra 2$ is a coloring. Then the following statements are equivalent.
  \begin{enumerate}[(i)]
  \item For some $\delta<\omega_1$, for every finite $\mcx\subset \msa\cap [\omega_1\setminus \delta]^{<\omega}$,  there exists  $b\in \mc{H}_0^ \tau $ such that $b\cap a\neq \emptyset$   whenever $a\in \mcx$.
  \item  There is a 0-homogeneous set meeting all but countably many elements of $\msa$.
  \end{enumerate}
  \end{lem}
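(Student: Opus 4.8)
\textbf{Proof proposal for Lemma \ref{lem P35}.}

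The plan is to prove the nontrivial direction (i)$\Rightarrow$(ii); the converse is immediate, since a $0$-homogeneous set $b$ meeting all but countably many elements of $\msa$ will, after removing the countably many exceptional elements, witness (i) for an appropriate $\delta$ (take $\delta$ large enough that all exceptional elements of $\msa$ are indexed below $\delta$, and note that for any finite $\mcx\subset\msa\cap[\omega_1\setminus\delta]^{<\omega}$, $b$ itself meets every $a\in\mcx$). So the work is entirely in (i)$\Rightarrow$(ii).

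First I would fix $\delta$ as in (i) and work with $\msa'=\msa\cap[\omega_1\setminus\delta]^{<\omega}$, which is still uncountable and non-overlapping, enumerated in increasing order (under $<$) as $\{\msa'(\alpha):\alpha<\omega_1\}$. The natural move is to introduce the poset $Q$ of finite subsets $\mcx\subset\msa'$ such that there exists $b\in\mc{H}_0^\tau$ with $b\cap a\neq\emptyset$ for all $a\in\mcx$ — equivalently, the finite subsets of $\msa'$ that are ``simultaneously pierceable'' by a finite $0$-homogeneous set — ordered by reverse inclusion. Hypothesis (i) says precisely that every finite subset of $\msa'$ lies in $Q$, so $Q$ is nonempty and in fact $[\msa']^{<\omega}=Q$. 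The key point to extract is that $Q$ is a $\sigma$-$n$-linked (hence ccc) poset, or more precisely that one can run a $\Delta$-system argument on the candidate piercing sets: given uncountably many conditions $\mcx_\alpha\in Q$, pick witnesses $b_\alpha\in\mc{H}_0^\tau$ for each, thin out so that the $b_\alpha$ form a $\Delta$-system with root $\bar b$, the $(b_\alpha\setminus\bar b)$ are non-overlapping and pairwise $\tau$-isomorphic over the relevant initial data, and the $\mcx_\alpha$ also form a $\Delta$-system; then because $\tau\restriction[\bar b\cup(b_\alpha\setminus\bar b)\cup(b_\beta\setminus\bar b)]$ is controlled, $b_\alpha\cup b_\beta\in\mc{H}_0^\tau$ for suitable $\alpha<\beta$, so $\mcx_\alpha\cup\mcx_\beta\in Q$. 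This is the routine part and I would only sketch it, citing the ccc-verification pattern used repeatedly earlier (e.g.\ in the proof of Theorem \ref{thm K3 t}).

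The heart of the argument is then a density/genericity step rather than a forcing one: I want to produce a single $0$-homogeneous set meeting all but countably many $a\in\msa'$, in the ground model, from the fact that every finite piece is pierceable. Here I would invoke a compactness-type or MA-free combinatorial extraction. The cleanest route: build, by recursion on $\alpha<\omega_1$, an increasing sequence of finite $0$-homogeneous sets $b_\alpha$ together with an increasing sequence of ordinals $\gamma_\alpha$ so that $b_\alpha$ meets $\msa'(\xi)$ for all $\xi<\gamma_\alpha$, using (i) at each step to extend $b_\alpha$ to pierce one more element of $\msa'$ while staying $0$-homogeneous — but the obstacle is that the union $\bigcup_\alpha b_\alpha$ need not be $0$-homogeneous (for a general $n$-ary $\tau$, $0$-homogeneity of an infinite set is not a finite condition unless $\tau$ is downward closed in the appropriate sense, which here it is not assumed to be). So instead I would exploit the non-overlapping structure: since $\msa'$ is non-overlapping, the elements $\msa'(\alpha)$ pairwise ``separate'' in $\omega_1$, and a $0$-homogeneous piercing set $b$ can be taken, by thinning, to interact with each $\msa'(\alpha)$ only through a bounded window. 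I would pass to an uncountable subfamily of $\msa'$ on which the ``local type'' of optimal piercing sets stabilizes (finitely many local patterns, by the same $\Delta$-system/isomorphism reduction), and then glue the local pieces: on such a stabilized subfamily, picking one window-vertex from each $\msa'(\alpha)$ according to the fixed pattern yields a set that is $0$-homogeneous because every $n$-element subset of it, being non-overlapping-localized, realizes one of the finitely many patterns all of which were verified $0$-homogeneous via (i). The resulting set meets cofinally (indeed all but countably) many elements of $\msa$.

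The main obstacle I anticipate is exactly this last gluing: ensuring that a set assembled from one vertex per $\msa'(\alpha)$ is genuinely $0$-homogeneous for $\tau$, not merely pairwise-pierceable. The resolution must use (i) with \emph{all} finite subfamilies, combined with an isomorphism-stabilization (Ramsey-style) reduction that forces the ``pattern'' of how a piercing set meets several $\msa'(\alpha)$'s simultaneously to depend only on the relative position data — the same philosophy as Lemma \ref{lem initial} and Lemma \ref{lem initial2} — so that verifying $0$-homogeneity of the glued set reduces to finitely many instances of (i). I expect the write-up to hinge on stating cleanly the right stabilization (a coloring of $[\omega_1]^{\leq n}$ recording the local type, colored with finitely many colors on a cofinal set) and then a short verification; everything else is bookkeeping with non-overlapping families.
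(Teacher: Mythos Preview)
Your proposal has a genuine gap, and it misses the paper's key idea entirely.

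The paper's proof is a short two-step ultrafilter argument with no poset, no $\Delta$-system, and no stabilization. Fix a non-principal ultrafilter $\mc{U}$ on $\omega$ and a uniform ultrafilter $\mc{V}$ on $\omega_1$. First, for any countable $\mcx=\{a_m:m<\omega\}\subseteq\msa'=\msa\cap[\omega_1\setminus\delta]^{<\omega}$, use (i) to get $b_i\in\mc{H}_0^\tau$ meeting $a_0,\ldots,a_{i-1}$; since each $a_m$ is finite, choose $x_m\in a_m$ with $\{i:x_m\in b_i\}\in\mc{U}$. Then $\{x_m:m<\omega\}$ is $0$-homogeneous, because any $n$ of the $x_m$'s lie in $\mc{U}$-many, hence in some single, $b_i$. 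Second, repeat at level $\omega_1$: for each $\alpha<\omega_1$ let $B_\alpha$ be a $0$-homogeneous set meeting every element of $\msa'\cap[\alpha]^{<\omega}$ (just produced); for each $a\in\msa'$ choose $y_a\in a$ with $\{\alpha:y_a\in B_\alpha\}\in\mc{V}$; then $\{y_a:a\in\msa'\}$ is $0$-homogeneous by the same argument and meets every $a\in\msa'$.

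Your route does not reach this. The poset $Q$ you introduce is, by (i), literally all of $[\msa']^{<\omega}$, so it is centered and the ccc verification you sketch is vacuous --- it contributes nothing toward (ii). The actual content of your plan is the final ``stabilize local types, then glue one vertex per $a$'' step, and this is where the gap lies. To make the glued set $\{x_a:a\in\msa'\}$ $0$-homogeneous you must ensure that any $n$ chosen points $x_{a_1},\ldots,x_{a_n}$ lie together in a single $b\in\mc{H}_0^\tau$. Hypothesis (i) only promises \emph{some} $b$ meeting each $a_i$, with no control over which point of $a_i$ it hits; a Ramsey-type stabilization of ``patterns'' on an uncountable subfamily does not by itself force the piercing point to be the one you pre-selected. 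You have identified the obstacle correctly but not supplied a mechanism to overcome it. The ultrafilter is precisely that mechanism: it selects $x_a\in a$ so that the sets of witnesses for any finitely many $a$'s have nonempty intersection.
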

  \begin{proof}
  Only ``(i)$\Rightarrow$(ii)'' needs a proof. Fix $\delta<\omega_1$ guaranteed by (i).
  
  Fix a non-principal ultrafilter $\mc{U}$ on $\omega$ and a uniform ultrafilter $\mc{V}$ on $\omega_1$.
  
  We first find, for every countable $\mcx\subseteq \msa\cap [\omega_1\setminus \delta]^{<\omega}$, a 0-homogeneous set meeting all elements of $\mcx$: Say $\mcx=\{a_m: m<\omega\}$. For every $i<\omega$, fix $b_i\in \mc{H}_0^ \tau $ meeting all $a_j$ for $j<i$.   
  For $m<\omega$, choose $x_m\in a_m$ such that $\{i: x_m\in b_i\}\in \mc{U}$.   Then $\{x_m: m<\omega\}$ is 0-homogeneous and meeting all elements of $\mcx$.
  
  Now for every $\alpha<\omega_1$, fix a 0-homogeneous set $B_\alpha$ meeting all elements of $\msa\cap [\alpha\setminus \delta]^{<\omega}$.  
    For $a\in \msa\cap [\omega_1\setminus \delta]^{<\omega}$, choose $y_a\in a$ such that $\{\alpha: y_a\in B_\alpha\}\in \mc{V}$.  Then $\{y_a: a\in \msa\cap [\omega_1\setminus \delta]^{<\omega}\}$ is a 0-homogeneous set meeting all elements of $\msa\cap [\omega_1\setminus \delta]^{<\omega}$.
  \end{proof}

  Note that $\varphi_1(\mcc, \mct, \bfe)$ and $\psi_2^-(\mcc, \mct, \bfe, \pi, \mcr)$ are preserved. Hence $\mathbb{P}$ forces the failure of (Res). Fix $p\in \mathbb{P}$, $a\in [\omega_1]^{<\omega}$, a $\mathbb{P}$-name $\dot{\msa}$, $N$ and $f$ such that
\begin{align*}
p\Vdash & (a, \dot{\msa})\text{ is a $\mcc$-candidate satisfying (R1)$^*$, $f$ is $\mcr$-satisfiable for }(a, \dot{\msa}),\\
& N_{\dot{\msa}}=N \text{ and (Res) fails for $(a, \dot{\msa})$ and } f.
\end{align*}
  
  Recall that $p\Vdash $ Lemma \ref{lem P35} (ii) fails for $\dot{\msa}$. 
So by Lemma \ref{lem P35}, for every $\alpha<\omega_1$, there are $p_\alpha\leq p$, $m_\alpha<\omega$ and $a_{\alpha,0}<\cdots<a_{\alpha, m_\alpha-1}$ in $[\omega_1\setminus \alpha]^{<\omega}$ such that $p_\alpha \Vdash \{a_{\alpha, i}: i<m_\alpha\}\subseteq \dot{\msa}$ and
  \begin{align*}
  \text{for every $b\in \mc{H}^\pi_0$, there exists $i<m_\alpha$ with } b\cap a_{\alpha, i}=\emptyset.
  \end{align*}

Find $\Gamma\in [\omega_1]^{\omega_1}$ and $m<\omega$ such that
\begin{enumerate}[({Alt2.}1)]
\item $m_\alpha=m$ for all $\alpha\in \Gamma$ and $(a, \msa')$ is a $\mcc$-candidate satisfying (R1)$^*$ where $\msa'=\{a_\alpha: \alpha\in \Gamma\}$ and $a_\alpha=\bigcup_{i<m} a_{\alpha, i}$.
\end{enumerate}
To get a $\mcc$-candidate $(a, \msa')$, first inductively apply Lemma \ref{lem 7.n} to get $\Gamma'$ such that $(a, \{a_{\alpha, i}: \alpha\in \Gamma'\})$ is a $\mcc$-candidate for every $i<m$. Then apply Lemma \ref{lem 7.l} to get the desired $\Gamma\in [\Gamma']^{\omega_1}$.

Let $H$ be the collection of all $g$ such that $g$ is $\mcr$-satisfiable for $(a, \msa')$ and
\begin{align*}
\text{for some }i<m,~ (|a|\cup (|a|\widetilde{+} I^*_i) \cup (|a|\widetilde{+} I^*_{m+i}), g)\text{ is isomorphic to }(|a|+2N, f)
\end{align*}
where $I^*_i=[iN, (i+1)N)$.

We show that $H$ satisfies the additional property posted in Lemma \ref{lem 9.5}.
\begin{lem}\label{lem 9.8}
For $a'\in \msa'$, $a^*\subseteq a$ and $I, J\subseteq mN$,
if $a^*\cup a'[I]\in \mc{H}^\pi_0$ and $ a^*\cup a'[J]\in \mc{H}^\pi_0$,
 then there exists $g\in H$ that is constant 0 on   
$[a^{-1}[a^*]\cup (|a|\widetilde{+} I)\cup (|a|+mN\widetilde{+} J)]^3$.
\end{lem}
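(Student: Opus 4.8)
\textbf{Proof plan for Lemma \ref{lem 9.8}.}
The plan is to construct the desired $g$ by piecing together its values on a few overlapping cubes and then appealing to Lemma \ref{lem 9.0} to conclude $\mcr$-satisfiability. Fix $a'\in\msa'$, $a^*\subseteq a$, and $I,J\subseteq mN$ with $a^*\cup a'[I]\in\mc{H}^\pi_0$ and $a^*\cup a'[J]\in\mc{H}^\pi_0$. Since $(a,\msa')$ satisfies (R1)$^*$, by (R3)$^*$ applied to $(a,\msa')$ (using the hypothesis on $a^*$, $I$, $J$) there is some $f_0\in\mcr(a,\msa')$ that is constant $0$ on $[a^{-1}[a^*]\cup(|a|\,\widetilde+\,I)\cup(|a|+N_{\msa'}\,\widetilde+\,J)]^3$. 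The first step is to set up the skeleton of $g\colon[|a|+2mN]^3\to 2$: on the $i$-th block copy of $(a\cup a_\alpha,\pi)$ (for $i<2m$) we are forced by (Res.1) to let $g$ restrict to the prescribed isomorphic image; the two "diagonal" blocks indexed $0$ and $m$ will carry the data of $f$ via the required isomorphism $(|a|\cup(|a|\,\widetilde+\,I^*_0)\cup(|a|\,\widetilde+\,I^*_m),g)\cong(|a|+2N,f)$. Because $a'[I]$ and $a'[J]$ sit inside a single $a_\alpha$, we must choose the block index $i<m$ so that the $I$-coordinates and $J$-coordinates land in that block; the definition of $H$ already allows exactly one such $i$, so picking $i=0$ is legitimate.

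Next I would fill in $g$ on the cube $[a^{-1}[a^*]\cup(|a|\,\widetilde+\,I)\cup(|a|+mN\,\widetilde+\,J)]^3$ by declaring it constant $0$ there; this is the content the lemma demands, and it is consistent with the forced values on the block copies precisely because $a^*\cup a'[I]$ and $a^*\cup a'[J]$ are $0$-homogeneous — so the intersection of this cube with the block cubes already carries value $0$ in the fixed isomorphic images. The main work is to check that these prescriptions do not collide, and that whatever remains can be set to $0$. For the non-collision: by the Remark after Lemma \ref{lem 9.0} the cubes attached to distinct maximal tuples in $\mc{C}_{2,\max}(a,\msa')$ meet only inside $\bigcup_{i<2}[|a|\cup[|a|+iN_{\msa'},|a|+(i+1)N_{\msa'})]^3$, where the values are pre-fixed by (R1)$^*$ and (Res.1); so I just need to verify that the $0$-cube I added, the $f$-copy on blocks $0$ and $m$, and the images required by each $(b,\msb,I',J')\in\mc{C}_{2,\max}(a,\msa')$ are mutually compatible. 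The image required by such a maximal tuple is, by (R3)$^*$ again (applied with the restriction of $a^*,I,J$ to the copies of $b,\msb$ sitting inside $a,\msa'$), realizable by a $g_{b,\msb}\in\mcr(b,\msb)$ that is itself constant $0$ on the relevant sub-cube — so everything that must be $0$ is $0$, and everything else is free. Finally, setting $g$ equal to $0$ wherever it is not yet defined and invoking Lemma \ref{lem 9.0} (criterion (ii), the maximal-tuple criterion) shows $g$ is $\mcr$-satisfiable for $(a,\msa')$, hence $g\in H$, and by construction $g$ is constant $0$ on $[a^{-1}[a^*]\cup(|a|\,\widetilde+\,I)\cup(|a|+mN\,\widetilde+\,J)]^3$.

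The step I expect to be the main obstacle is the bookkeeping of overlaps: one must be careful that the cube indexed by $I\times J$ interacts correctly both with the two diagonal $f$-blocks (indices $0$ and $m$) and with the sub-cubes forced by the finitely many maximal tuples of $\mc{C}_{2,\max}(a,\msa')$, and that the "constant $0$" requirement of this lemma is not in tension with any isomorphism type forced by $\mcr$. The resolution is exactly the observation that every isomorphism type that $\mcr$ forces into those sub-cubes is, by (R3)$^*$ and the $0$-homogeneity hypotheses on $a^*\cup a'[I]$ and $a^*\cup a'[J]$, itself a type that takes value $0$ at precisely the coordinates we are constraining — so a single coherent choice of $g$ works, and the verification reduces to the disjointness statement of Lemma \ref{lem 7.po}. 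I would also note that omitting a countable part of $\msa'$ (if needed to make (R1)$^*$ or the $\mc{C}_{2,\max}$ data stabilize) does not affect the conclusion, since $H$ and the $\mcc$-candidate property are preserved under passing to uncountable subsets.
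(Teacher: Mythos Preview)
Your proposal has a genuine gap at the most important step: the choice of the block index $i$ for the $f$-copy. You fix $i=0$ and write ``picking $i=0$ is legitimate,'' but this is exactly where the argument can fail. The set $I\subseteq mN$ may very well meet the block $I^*_0=[0,N)$, and in that case the cube $[|a|\cup(|a|\,\widetilde{+}\,I^*_0)\cup(|a|+mN\,\widetilde{+}\,I^*_0)]^3$ (on which $g$ must be the isomorphic image of $f$) overlaps with your constant-$0$ cube $[a^{-1}[a^*]\cup(|a|\,\widetilde{+}\,I)\cup(|a|+mN\,\widetilde{+}\,J)]^3$ outside the (Res.1) region; since $f$ is a fixed function that may take value $1$ at those positions, the two prescriptions are incompatible. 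Your sentence ``Because $a'[I]$ and $a'[J]$ sit inside a single $a_\alpha$, we must choose the block index $i<m$ so that the $I$-coordinates and $J$-coordinates land in that block'' is backwards: we need $i$ so that $I$ \emph{avoids} the block $I^*_i$, not lands in it.

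The missing ingredient is precisely (Alt2.1): the defining property of Alternative~2 says that no $b\in\mc{H}^\pi_0$ meets every $a_{\alpha,j}$, so applying this to $b=a^*\cup a'[I]$ yields an index $i<m$ with $I\cap I^*_i=\emptyset$. With this $i$, the $f$-copy sits on a block disjoint from $I$, and the overlap between the $f$-cube and the constant-$0$ cube is confined to the (Res.1) region where consistency follows from $a^*\cup a'[J]\in\mc{H}^\pi_0$. This is how the paper proceeds; once $i$ is chosen correctly, one also needs (and the paper records) that any $I'\in\mc{F}(\msb,\msa')$ meeting $I$ must be disjoint from $I^*_i$, so that the (R3)-requirements on the maximal tuples overlapping $I\times J$ do not conflict with the $f$-copy either.

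A smaller issue: you invoke ``$f_0\in\mcr(a,\msa')$,'' but $\mcr$ is only defined on $\mcc$ and $(a,\msa')$ is merely a $\mcc$-candidate, so (R3)/(R3)$^*$ do not apply to it directly. The paper applies (R3) to the elements $(b,\msb)\in\mcc_0(a,\msa')$ instead.
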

\begin{proof}
Denote $I^*_j=[jN, (j+1)N)$ for $j<2m$.
By (Alt2.1), fix $i<m$ with $I\cap I^*_i=\emptyset$.

First define a partial $g: [|a|+2mN]^3\ra 2$ such that 
\begin{enumerate}
\item (Res.1) holds for $(a, \msa')$ and $(|a|\cup (|a|\widetilde{+} I^*_i) \cup (|a|\widetilde{+} I^*_{m+i}), g)$ is isomorphic to $(|a|+2N, f)$.
\end{enumerate}

Then extend $g$ such that by (R3), for every $(b, \msb, I', J')\in \mc{C}_{\max}(a,\msa')$ with $(I'\times J')\cap (I\times J)\neq\emptyset$, 
\begin{enumerate}\setcounter{enumi}{1}
\item $(a^{-1}[b]\cup (|a|\widetilde{+} I')\cup (|a|+mN\widetilde{+} J'), g)$ is isomorphic to $(|b|+2N_\msb, h)$ for some $h\in \mcr(b, \msb)$;
\item $g$ is constant 0 on $[a^{-1}[a^*\cap b]\cup (|a|\widetilde{+} (I\cap I'))\cup (|a|+mN\widetilde{+} (J\cap J'))]^3$.
\end{enumerate}
Note that (1)-(3) can be satisfied simultaneously since if $I'\cap I\neq\emptyset$, then $I'\not\subseteq I^*_i$ and hence $I'\cap I^*_i=\emptyset$.

Extend $g$ such that (2) holds for all $(b, \msb, I', J')\in \mc{C}_{\max}(a,\msa')$.
Define $g$ to be 0 on the undefined domain. 

Then $g$ is $\mcr$-satisfiable for $(a, \msa')$ by Lemma \ref{lem 9.0}. $g\in H$ by (1). And $g$  is constant 0 on $[a^{-1}[a^*]\cup (|a|\widetilde{+} I)\cup (|a|+mN\widetilde{+} J)]^3$.
\end{proof}

Now we force with $\mc{P}_{a, \msa', H}$. Assume $G$ is an uncountable generic filter. Let $\mcc'=\mcc\cup\{(a, \bigcup G)\}$ and $\mct', \bfe', \mcr'$ are induced from $\mcc'$. 

Then $\varphi_1(\mcc', \mct', \bfe')$ holds since $(a, \msa')$  is a $\mcc$-candidate. $\psi_2(\mcc', \mct', \bfe', \pi, \mcr')$ holds by Lemma \ref{lem 9.5}. Also, by our choice of $H$, $\{p_\alpha: a_\alpha\in \bigcup G\}$ is an uncountable antichain of $\mathbb{P}$. So the second outcome occurs.

This finishes our treatment for Alternative 2.\bigskip

\textbf{Alternative 3.} $\Vdash_\mathbb{P} \pi$ has an uncountable 0-homogeneous subset.\medskip

By Lemma \ref{lem 7.j} and the fact that $|\mcc|<\mathfrak{a}_{\omega_1}$, find $p\in \mathbb{P}$, $(a, \msa)\in \mcc$, $l^*<N_\msa$ and a $\mathbb{P}$-name $\dot{\Gamma}$ in $[\omega_1]^{\omega_1}$ such that
\begin{enumerate}[({Alt3.}1)]
\item $p\Vdash (a, \msa[\dot{\Gamma}])$ is a $\mcc$-candidate and $\msa_{l^*}[\dot{\Gamma}]$ is 0-homogeneous.
\end{enumerate}
It is straightforward to check that 
\[p\Vdash \exists (a, \msa')\in \mcc ~\mc{C}_{0,\max}(a, \msa[\dot{\Gamma}])=\{(a, \msa')\}.\]
 Without loss of generality, we may assume that
\begin{enumerate}[({Alt3.}1)]\setcounter{enumi}{1}
\item $p\Vdash  \mc{C}_{0,\max}(a, \msa[\dot{\Gamma}])=\{(a, \msa)\}$.
\end{enumerate}

For each $\alpha<\omega_1$, find $p_\alpha\leq p$ and $a_\alpha$ such that 
\[p_\alpha\Vdash a_\alpha\in \msa[\dot{\Gamma}]\cap [\omega_1\setminus \alpha]^{N_\msa}.\]
By Lemma \ref{lem 7.n}, find $\Sigma\in [\omega_1]^{\omega_1}$ such that 
\begin{enumerate}[({Alt3.}1)]\setcounter{enumi}{2}
\item $a_\alpha<a_\beta$ for all $\alpha<\beta$ in $\Sigma$ and $(a, \msa')$ is a $\mcc$-candidate satisfying (R1)$^*$ where $\msa'=\{a_\alpha: \alpha\in \Sigma\}$.
\end{enumerate}
Note by (Alt3.2) and (Alt3.3), 
\[\mc{C}_{0,\max}(a, \msa')=\{(a, \msa)\}.\]
 To see this, suppose some $(a, \msa'')\succ (a, \msa)$ is in $\mcc $. Since $\mathbb{P}$ is ccc, for some $\alpha<\omega_1$, $p\Vdash \msa[\dot{\Gamma}]\cap \msa''\subseteq [\alpha]^{N_\msa}$. Then $\msa'\cap \msa''$ is countable. Together with the fact $\msa'\subseteq \msa$ and $\msa''\subseteq \msa$, $(\bigcup \msa')\cap (\bigcup \msa'')$ is countable and hence $(a, \msa'')\not\in \mc{C}_0(a, \msa')$.

\bigskip

\textbf{Subalternative 3.1.}  For every $\alpha<\beta$ in $\Sigma$, 
\begin{align*}
\{\gamma\in \Sigma\setminus (\beta+1): ~& \pi(a_\alpha(l^*), a_\beta(l^*), a_\gamma(l^*))=1 \text{ and}\\
& p_\gamma \text{ is compatible with both $p_\alpha$ and }p_\beta\}
\end{align*}
 is at most countable.\medskip
 
 Find $\Sigma'\in [\Sigma]^{\omega_1}$ such that 
 \begin{itemize}
 \item for every $\alpha<\beta<\gamma$ in $\Sigma'$, if $\pi(a_\alpha(l^*), a_\beta(l^*), a_\gamma(l^*))=1$, then either $p_\gamma$ is incompatible with $p_\alpha$ or $p_\gamma$ is incompatible with $p_\beta$.
 \end{itemize}
 
 Then we use a canonical poset to destroy ccc of $\mathbb{P}$ and show that the poset is ccc and preserves $\psi_2(\mcc, \mct, \bfe, \pi, \mcr)$.
 Let 
 \[\mc{Q}\text{ be the poset consisting of $q\in [\Sigma']^{<\omega}$ such that $\{p_\xi: \xi\in q\}$ is a $\mathbb{P}$-antichain}.\]
  The order is reverse inclusion.
 
 \begin{lem}\label{lem 9.9}
 $\mc{Q}$ is ccc and preserves $\psi_2(\mcc, \mct, \bfe, \pi, \mcr)$.
 \end{lem}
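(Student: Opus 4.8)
The plan is to prove the two assertions of Lemma~\ref{lem 9.9} separately, both by the same ``find an uncountable subfamily with good structure, then appeal to (Res)'' strategy that drives the rest of Section~9. For ccc of $\mc{Q}$, I would start with an arbitrary uncountable $\{q_\eta\in \mc{Q}: \eta<\omega_1\}$, pass to a $\Delta$-system with root $\overline{q}$ (which contributes nothing to compatibility, so we may assume $\overline{q}=\emptyset$), then thin out so that $|q_\eta|=m$ is constant and the tuples $\langle a_\xi: \xi\in q_\eta\rangle$ (taken in increasing order) are mutually ``far apart''. By applying Lemma~\ref{lem 7.n} iteratively and then Lemma~\ref{lem 7.l}, omitting a countable tail if necessary, I can arrange that $(a, \msb)$ is a $\mcc$-candidate with property (R1)$^*$, where $\msb=\{b_\eta:\eta<\omega_1\}$ with $b_\eta=\bigcup_{\xi\in q_\eta} a_\xi$. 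Then I would construct an $\mcr$-satisfiable for $(a,\msb)$ function $f: [|a|+2N_\msb]^3\ra 2$ using Lemma~\ref{lem 9.0}: $f$ satisfies (Res.1) for $(a,\msb)$, it is isomorphic to the appropriate $g\in\mcr(b,\msc)$ on every domain coming from $\mc{C}_{2,\max}(a,\msb)$, and crucially on each ``cross block'' $[|a|\cup (|a|\widetilde{+}I^*_i)\cup (|a|+N_\msb\widetilde{+}I^*_{m+j})]^3$ (where $I^*_k$ are the copies of the $l^*$-coordinate) $f$ takes value $0$ at the triple $(|a|+l^*_i, |a|+N_\msb+l^*_{m+j}, \dots)$ — wait, this needs care; more precisely $f$ should force that for the chosen $\alpha<\beta$ produced by (Res), $\pi(a_{\alpha,i}(l^*), a_{\beta,j}(l^*))$ on pairs is arranged so that $\{p_\xi:\xi\in q_\alpha\cup q_\beta\}$ remains an antichain, and $f$ is $0$ elsewhere. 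Since being an antichain only forbids \emph{compatibility}, and the definition of $\Sigma'$ guarantees that $\pi$-value $1$ on the relevant triples forces incompatibility, I set $f$ to take value $1$ on exactly those triples that encode the incompatibility we need, and $0$ everywhere else; then apply (Res) to obtain $\alpha<\beta$ in the index set with $(a\cup b_\alpha\cup b_\beta,\pi)$ isomorphic to $(|a|+2N_\msb, f)$, which makes $q_\alpha\cup q_\beta\in\mc{Q}$ a common lower bound of $q_\alpha,q_\beta$. Hence $\mc{Q}$ is ccc.

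For the preservation of $\psi_2(\mcc,\mct,\bfe,\pi,\mcr)$, since $\mc{Q}$ is ccc it preserves $\omega_1$, hence $\varphi_1(\mcc,\mct,\bfe)$ and $\psi_2^-(\mcc,\mct,\bfe,\pi,\mcr)$; only (Res) needs verification. I would follow the template of Lemma~\ref{lem 9.4} (which handled productively-ccc posets) but now I cannot invoke productive ccc — instead I would use the internal structure of $\mc{Q}$ exactly the way Lemma~\ref{lem 9.3} and Lemma~\ref{lem 9.5} handle posets of the form $\mc{P}_{a,\msa,H}$. Given a $\mc{Q}$-name of a $\mcc$-candidate $(a',\dot{\msa}')$ with (R1)$^*$, a condition $p\in\mc{Q}$, $N'$ and an $\mcr$-satisfiable $f'$, I pick $p_\alpha\leq p$ and $a'_\alpha$ deciding the $\alpha$th element of $\dot{\msa}'$; by Lemma~\ref{lem 7.n} I thin to get $(a',\msa'')$ a $\mcc$-candidate; then I need $\alpha<\beta$ with $p_\alpha\cup p_\beta\in\mc{Q}$ (i.e.\ $\{p_\xi:\xi\in p_\alpha\cup p_\beta\}$ an antichain) \emph{and} $(a'\cup a'_\alpha\cup a'_\beta,\pi)$ isomorphic to $(|a'|+2N', f')$. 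To get both simultaneously, I build a larger $\mcc$-candidate $(c,\msc)$ whose elements $c_\eta$ package $a'_\eta$ together with $\bigcup(p_\eta\setminus\overline{p})$ (as in the proof of Lemma~\ref{lem 9.5}), choose a suitable $\mcr$-satisfiable $h$ on $[|c|+2N_\msc]^3$ that on the ``$\dot{\msa}'$-part'' restricts to $f'$ and on the ``$\mathbb{P}$-part'' encodes the antichain requirement dictated by $\Sigma'$ (value $1$ on exactly the triples witnessing incompatibility), and apply (Res) to $(c,\msc)$ and $h$ to get $\alpha<\beta$ with $(c\cup c_\alpha\cup c_\beta,\pi)$ isomorphic to $(|c|+2N_\msc,h)$; a density argument then finishes (Res) for $\mc{Q}$.

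The main obstacle will be Subalternative~3.1's assumption interacting correctly with the coloring bookkeeping: I must check that the triples on which $h$ (resp.\ $f$) is assigned value $1$ to force $\mathbb{P}$-incompatibility do not collide with any domain pre-fixed by (Res.1) or by some $(b,\msb,I,J)\in\mc{C}_2(a,\msc)$ — i.e.\ that setting those particular $\pi$-values to $1$ does not violate $\mcr$-satisfiability. This is where property (R3)/(R3)$^*$ and the disjointness statement of Lemma~\ref{lem 7.po} ($\prod I_i\cap\prod J_i=\emptyset$ for distinct maximal tuples) must be combined: the incompatibility-encoding triples all lie inside the ``$l^*$-coordinate cross blocks'', which — because $\msa_{l^*}$ is the $l^*$-coordinate of $\msa$ and by (Alt3.2)-(Alt3.3) $\mc{C}_{0,\max}(a,\msa')=\{(a,\msa)\}$ — are disjoint from every domain forced $0$ by (Res.2), since any $(b,\msb,I,J)\in\mc{C}_2$ with $l^*\in I$ or $l^*\in J$ would have to come from a $\preceq$-predecessor that encodes a $0$-homogeneous constraint on $\msa_{l^*}$, contradicting that $\msa_{l^*}[\dot\Gamma]$ is the \emph{newly} forced $0$-homogeneous set with no predecessor recording it. I would isolate this as a short sublemma before running the two density arguments, mirroring the ``Claim'' inside the proof of Lemma~\ref{lem 9.5}. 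With that sublemma in hand the rest is routine $\Delta$-system and density bookkeeping of the kind already carried out several times in Sections 7--9.
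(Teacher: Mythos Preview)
Your proposal has a genuine gap: you misread what Subalternative~3.1 gives you. The property of $\Sigma'$ is that for $\alpha<\beta<\gamma$ in $\Sigma'$, if $\pi(a_\alpha(l^*),a_\beta(l^*),a_\gamma(l^*))=1$ then $p_\gamma$ is incompatible with $p_\alpha$ \emph{or} with $p_\beta$ --- not with both, and not with any one you get to choose in advance. This is a ternary constraint, not a binary one, so you cannot ``set $f$ to value $1$ on exactly those triples that encode the incompatibility we need'' and thereby force that $\{p_\xi:\xi\in q_\alpha\cup q_\beta\}$ is a $\mathbb{P}$-antichain for a single pair $\alpha<\beta$ produced by (Res). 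With only two blocks you simply do not have enough leverage.

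The paper's key idea, which your plan is missing, is to use $n+2$ blocks (where $n=|q_\alpha\setminus\overline{q}|$) and the multi-block version of (Res) from Lemma~\ref{lem 9.1}. One defines $g:[|c|+(n+2)N_{\ms{C}}]^3\to 2$ so that it is $\mcr$-satisfiable on every pair of blocks and takes value $1$ on every triple whose three coordinates lie in the $l^*$-positions of three \emph{distinct} blocks. Applying Lemma~\ref{lem 9.1} yields $c_{\alpha_0}<\cdots<c_{\alpha_{n+1}}$. Now the ternary constraint says: for each $\xi\in q_{\alpha_{n+1}}\setminus\overline{q}$ there is at most one $i<n+1$ such that $p_\xi$ is compatible with some $p_{\xi'}$, $\xi'\in q_{\alpha_i}\setminus\overline{q}$ (otherwise two such compatibilities together with the forced $\pi$-value $1$ contradict the $\Sigma'$-property). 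Since $|q_{\alpha_{n+1}}\setminus\overline{q}|=n$ and there are $n+1$ indices $i$, pigeonhole gives some $i<n+1$ for which $q_{\alpha_i}\cup q_{\alpha_{n+1}}\in\mc{Q}$. This single argument handles both ccc and preservation of (Res) simultaneously. Note also that your worry about the value-$1$ triples colliding with (Res.2) constraints disappears for free in this setup: those triples span three distinct blocks, while $\mcr$-satisfiability in the sense of Lemma~\ref{lem 9.1} only constrains $g$ on unions of two blocks with $|c|$; no sublemma is needed.
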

 \begin{proof}
 We will show that $\mc{Q}$ preserves $\psi_2(\mcc, \mct, \bfe, \pi, \mcr)$ and the argument shows ccc of $\mc{Q}$. Note only (Res) needs a verification.
 
 Fix $q\in \mc{Q}$, $b$, $N$, a $\mc{Q}$-name $\dot{\msb}$ and $f: [|b|+2N]^3\ra 2$ such that
\begin{align*}
 q\Vdash & (b, \dot{\msb})\text{ is a $\mcc$-candidate satisfying (R1)$^*$, $N=N_{\dot{\msb}}$ and}\\
 & f\text{ is $\mcr$-satisfiable for }(b, \dot{\msb}).
 \end{align*}
Moreover, assume $q$ determined $\mc{C}_{2, \max}(b, \dot{\msb})$.

For every $\alpha<\omega_1$, find $q_\alpha\leq q$ and $b_\alpha$ such that
\[q_\alpha\Vdash b_\alpha\in \dot{\msb}\cap [\omega_1\setminus \alpha]^N.\]

Find $X\in [\omega_1]^{\omega_1}$ and $n$ such that
\begin{enumerate}
\item $\{q_\alpha: \alpha\in X\}$ forms a $\Delta$-system with root $\overline{q}$ and $|q_\alpha\setminus \overline{q}|=n$  for $\alpha\in X$;
\item $(b, \msb')$ is a $\mcc$-candidate where $\msb'=\{b_\alpha: \alpha\in X\}$;
\item for $\alpha<\beta$ in $X$, $c'_\alpha<c'_\beta$ where $c'_\alpha=\bigcup \{a_\xi: \xi\in q_\alpha\setminus \overline{q}\}$.
\end{enumerate}
Note by (1), (3) and (Alt3.3), 
\begin{enumerate}\setcounter{enumi}{3}
\item $(a, \ms{C}')$ is a $\mcc$-candidate where $\ms{C}'=\{c'_\alpha: \alpha\in X\}$.
\end{enumerate}
By (Alt3.2) and (Alt3.3), $\mc{C}_{0,\max}(a, \ms{C}')=\{(a, \msa)\}$. For $\alpha\in X$, let $c_\alpha=b_\alpha\cup c'_\alpha$.
Find $Y'\in [X]^{\omega_1}$ and $I, J$ such that
\begin{enumerate}\setcounter{enumi}{4}
\item for $\alpha\in Y'$, $c_\alpha[I]=b_\alpha$ and $c_\alpha[J]=c'_\alpha$.
\end{enumerate}
Then by Lemma  \ref{lem 7.l}, find $Y\in [Y']^{\omega_1}$ such that
\begin{enumerate}\setcounter{enumi}{5}
\item $(c, \ms{C})$ is a $\mcc$-candidate satisfying (R1)$^*$ where $c=a\cup b$ and $\ms{C}=\{c_\alpha: \alpha\in Y\}$.
\end{enumerate}
By (1), (3) and (5), for $\alpha\in Y$, $c_\alpha[J]$ is a union of $n$ elements in $\msa'$. Fix a partition 
\[J=\bigcup_{i<n} J_i\]
 of $J$ into $n$ pieces such that each $c_\alpha[J_i]$ is in $\msa'$.  Denote
 \[J^*=\{J_i(l^*): i<n\}.\]
Then for $\alpha\in Y$,
 \[c_\alpha[J^*]=\{a_\xi(l^*): \xi\in q_\alpha\setminus \overline{q}\}.\]

We now define a  function $g: [|c|+(n+2)N_\ms{C}]^3\ra 2$ that is $\mcr$-satisfiable for $(c, \ms{C})$ in the sense of Lemma \ref{lem 9.1} (ii). Denote for $i<n+2$,
\[I_i^*=[|c|+iN_\ms{C}, |c|+(i+1)N_\ms{C}).\]

First define $g$ partially on $\bigcup_{i<j<n+2}[c^{-1}[b]\cup I^*_i[I]\cup I^*_j[I]]^3$ such that

\begin{enumerate}\setcounter{enumi}{6}
\item for every $i<j<n+2$, $(c^{-1}[b]\cup I^*_i[I]\cup I^*_j[I], g)$ is isomorphic to $(|b|+2N, f)$.
\end{enumerate}
Then extend the domain of $g$ to $\bigcup_{i<j<n+2} [|c|\cup I^*_i\cup I^*_j]^3$  such that
\begin{enumerate}\setcounter{enumi}{7}
\item for every $i<j<n+2$, $g\up   [|c|\cup I^*_i\cup I^*_j]^3$ is $\mcr$-satisfiable for $(c, \ms{C})$.
\end{enumerate}
By Lemma \ref{lem 9.0},  we only need that for every $(d, \ms{D}, I', J')\in \mcc_{2, \max}(c, \ms{C})$, $(c^{-1}[d]\cup I^*_i[I']\cup I^*_j[J'], g)$ is isomorphic to $(|d|+2N_\ms{D}, h)$ for some $h\in \mcr(d, \ms{D})$. By (2) and (5), for $(d, \ms{D}, I', J')\in \mcc_{2, \max}(c, \ms{C})$, either $I'\times J'\subseteq I\times I$ or $I'\times J'\cap I\times I=\emptyset$. Together with the fact that $f$ is $\mcr$-satisfiable for $(b, \msb')$, (7) and (8) can be satisfied simultaneously.

Now extend the domain of $g$ to $[|c|+(n+2)N_\ms{C}]^3$ such that
\begin{enumerate}\setcounter{enumi}{8}
\item  $g(x, y,z)=1$ if $(x, y, z)\in \bigcup_{i<j<k<n+2} I^*_i[J^*]\times I^*_j[J^*]\times I^*_k[J^*]$.
\end{enumerate}
Note that the requirement (9) will not conflict the requirements (7)-(8) since their satisfactions  refer to disjoint domains.

Now by (8) and Lemma \ref{lem 9.1}, there are $c_{\alpha_0}<\cdots<c_{\alpha_{n+1}}$ in $\ms{C}$ such that 
 \begin{enumerate}\setcounter{enumi}{9}
\item $(c\cup \bigcup_{i<n+2} c_{\alpha_i}, \pi)\text{ is isomorphic to }(|c|+(n+2)N_\ms{C}, g)$.
\end{enumerate}
We will then find $i<n+1$ such that $q_{\alpha_i}$ is compatible with $q_{\alpha_{n+1}}$. Equivalently, we will find $i<n+1$ such that $\{p_\xi: \xi\in q_{\alpha_i}\cup q_{\alpha_{n+1}}\setminus \overline{q}\}$ is an antichain of $\mathbb{P}$. For this, we will use the following property.
 \begin{enumerate}\setcounter{enumi}{10}
\item For every $\xi\in q_{\alpha_{n+1}}\setminus \overline{q}$, 
\[|\{i<n+1: p_\xi\text{ is compatible with $p_{\xi'}$ for some } {\xi'}\in q_{\alpha_i}\setminus \overline{q}\}|\leq 1.\]
\end{enumerate}
To see this, suppose otherwise. For $i<j<n+1$, ${\xi'}\in q_{\alpha_i}\setminus \overline{q}$ and  ${\xi''}\in q_{\alpha_j}\setminus \overline{q}$, $p_\xi$ is compatible with both $p_{\xi'}$ and $p_{\xi''}$. Then  by our choice of $\Sigma'$ and the fact $\{\xi', \xi'', \xi\}\subset \Sigma'$, 
\[\pi(a_{\xi'}(l^*), a_{\xi''}(l^*), a_\xi(l^*) )=0.\]
Note by our choice of $J^*$, $a_\xi(l^*)\in c_{\alpha_{n+1}}[J^*]$. Similarly, $a_{\xi'}(l^*)\in c_{\alpha_i}[J^*]$ and $a_{\xi''}(l^*)\in c_{\alpha_j}[J^*]$. This is a contradiction since by (9)-(10), 
\[\pi(a_{\xi'}(l^*), a_{\xi''}(l^*), a_\xi(l^*) )=1.\]
This contradiction shows that (11) holds.

By (11) and the fact $|p_{\alpha_{n+1}}\setminus \overline{q}|=n$, there exists $i<n+1$ such that for every $\xi\in q_{\alpha_{n+1}}\setminus \overline{q}$ and every $\xi'\in q_{\alpha_i}\setminus \overline{q}$, $p_\xi$ is incompatible with $p_{\xi'}$. Now $q_{\alpha_i}\cup q_{\alpha_{n+1}}\in \mc{Q}$.

By (5), (7) and (10), 
\[q_{\alpha_i}\cup q_{\alpha_{n+1}}\Vdash b_{\alpha_i}< b_{\alpha_{n+1}}\text{ are in $\dot{\msb}$ and witness (Res) for } (b, \dot{\msb}) \text{ and } f.\] 
This shows (Res) and finishes the proof of the lemma.
 \end{proof}
 
 Then for Subalternative 3.1, we force with $\mc{Q}$. By absoluteness of $\varphi_1(\mcc, \mct, \bfe)$ and above lemma, $\varphi_1(\mcc, \mct, \bfe)$ and $\psi_2(\mcc, \mct, \bfe, \pi, \mcr)$ holds in the forcing extension. On the other hand, $\mc{Q}$ adds an uncountable antichain of $\mathbb{P}$. So the second outcome occurs.\bigskip
 
 Before proceeding to the last case, we would like to point out that in all previous alternatives, 2-ccc coloring is not assumed and we either force with $\mathbb{P}$ or destroy ccc of $\mathbb{P}$. But this cannot succeed for all ccc posets. For example, $\mc{H}^\pi_0$ is ccc, destroys $\psi_2(\mcc, \mct, \bfe, \pi, \mcr)$, and we can neither force with $\mc{H}^\pi_0$ nor destroy ccc of $\mc{H}^\pi_0$. So our last case will destroy  2-ccc coloring only.\bigskip
 
 \textbf{Subalternative 3.2.} $\mathbb{P}$ has 2-ccc coloring and for some $\xi_0<\xi_1$ in $\Sigma$,
\begin{align*}
\Sigma''=\{\gamma\in \Sigma\setminus (\xi_1+1): ~& \pi(a_{\xi_0}(l^*), a_{\xi_1}(l^*), a_\gamma(l^*))=1 \text{ and}\\
& p_\gamma \text{ is compatible with both $p_{\xi_0}$ and }p_{\xi_1}\}
\end{align*}
is uncountable.\medskip

By (Alt3.3), $(b, \{a_\alpha: \alpha\in \Sigma''\})$ is a $\mcc$-candidate where $b=a\cup \{a_{\xi_0}(l^*), a_{\xi_1}(l^*)\}$. By (Alt3.2), $\mc{C}_{1,\max}(b, \{a_\alpha: \alpha\in \Sigma''\})=\{(a, \msa, N_\msa)\}$.

Find $\Sigma^*\in [\Sigma'']^{\omega_1}$ such that
\begin{enumerate}[({Alt3.}1)]\setcounter{enumi}{3}
\item $(b, \msb)$ is a $\mcc$-candidate satisfying (R1)$^*$ and $\mc{C}_{1,\max}(b, \msb)=\{(a, \msa, N_\msa)\}$  where $\msb=\{a_\alpha: \alpha\in \Sigma^*\}$.
\end{enumerate}

We first consider the case that $\mathbb{P}$ has  the following  additional property:  
\begin{itemize}
\item a finite subset of $\mathbb{P}$ is centered if it is pairwise compatible. 
\end{itemize}
Together with (Alt3.1),  the following property holds.
 \begin{enumerate}[({Alt3.}1)]\setcounter{enumi}{4}
 \item  For $\alpha<\beta<\gamma$ in $\Sigma$, if $\{p_\alpha, p_\beta, p_\gamma\}$ is pairwise compatible, then 
 \[\pi(a_\alpha(l^*), a_\beta(l^*), a_\gamma(l^*))=0.\]
 In particular, if $p_\alpha$ is compatible with $p_\beta$ for $\alpha<\beta$ in $\Sigma^*$, then
  \[\pi(a_{\xi_0}(l^*), a_\alpha(l^*), a_\beta(l^*))=0=\pi(a_{\xi_1}(l^*), a_\alpha(l^*), a_\beta(l^*)).\]
 \end{enumerate} 
 Let $H$ be the collection of all $\mcr$-satisfiable for $(b, \msb)$ function $f$ such that
 \[\text{for some $i<2$, }f(|a|+i, |b|+l^*, |b|+N_\msa+l^*)=1.\]
 Here is the role that $H$ will play. Suppose $\{a_\alpha, a_\beta\}\in \mc{P}_{b, \msb, H}$. Then by   definition of $H$ and $\mc{P}_{b, \msb, H}$, $\pi(a_{\xi_i}(l^*), a_\alpha(l^*), a_\beta(l^*))=1$ for some $i<2$. Together with (Alt3.5), $p_\alpha$ is incompatible with $p_\beta$. 
 
So force with $\mc{P}_{b, \msb, H}$ will add an uncountable antichain to $\mathbb{P}$.  We then show that $H$ satisfies the assumption of Lemma \ref{lem 9.5}. 
 
 \begin{lem}\label{lem 9.10}
For $a_\alpha\in \msb$, $b'\subseteq b$ and $I, J\subseteq N_\msa$,
if $b'\cup a_\alpha[I]\in \mc{H}^\pi_0$ and $ b'\cup a_\alpha[J]\in \mc{H}^\pi_0$,
 then there exists $f\in H$ that is constant 0 on   
$[{b}^{-1}[b']\cup (|b|\widetilde{+} I)\cup (|b|+N_\msa\widetilde{+} J)]^3$.
\end{lem}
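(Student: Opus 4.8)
\textbf{Proof plan for Lemma \ref{lem 9.10}.} The statement is a ``satisfiability witness'' lemma of exactly the same shape as Lemma \ref{lem 9.8}, so the plan is to mimic that argument, now with $(b,\msb)$ in place of $(a,\msa')$ and with the extra bookkeeping coming from the fact that $(a,\msa,N_\msa)$ is the unique element of $\mc{C}_{1,\max}(b,\msb)$ (by (Alt3.4)). First I would set $I^*_j=[|b|+jN_\msa, |b|+(j+1)N_\msa)$ for $j<2$ and observe that, since $|a|<|b|$ and $b$ is obtained from $a$ by adjoining the two ordinals $a_{\xi_0}(l^*)<a_{\xi_1}(l^*)$, there is an index $i<2$ with $a_{\xi_i}(l^*)$ available, i.e.\ such that putting the ``$1$'' at position $(|a|+i,\,|b|+l^*,\,|b|+N_\msa+l^*)$ does not collide with the constant-$0$ region dictated by $I$ and $J$. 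Concretely I would choose $i<2$ so that $l^*\notin I$ (if $l^*\in I$ for both witnesses, then a routine reshuffling using that $I$ is one of the legal index sets forces a choice; but in any case the single coordinate $l^*$ versus the pair of copies gives room, exactly as the ``$I\cap I^*_i=\emptyset$'' step in Lemma \ref{lem 9.8}).

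Next I would build the function $f\colon[|b|+2N_\msa]^3\to 2$ in stages, following Lemma \ref{lem 9.0}. Stage one: define $f$ partially on $[|b|\cup I^*_0\cup I^*_1]^3$ so that (Res.1) holds for $(b,\msb)$ and so that the designated triple gets value $1$ (this is where $f\in H$ comes from). Stage two: for each $(d,\ms D,I',J')\in\mc C_{\max}(b,\msb)$ with $(I'\times J')\cap(I\times J)\neq\emptyset$, use (R3) (equivalently (R3)$^*$) applied to the hypotheses $b'\cup a_\alpha[I]\in\mc H^\pi_0$ and $b'\cup a_\alpha[J]\in\mc H^\pi_0$ to extend $f$ so that $(b^{-1}[d]\cup(|b|\,\widetilde{+}\,I')\cup(|b|+N_\msa\,\widetilde{+}\,J'),f)$ is isomorphic to some $(|d|+2N_{\ms D},h)$ with $h\in\mcr(d,\ms D)$ and, simultaneously, $f$ is constant $0$ on $[b^{-1}[b'\cap d]\cup(|b|\,\widetilde{+}\,(I\cap I'))\cup(|b|+N_\msa\,\widetilde{+}\,(J\cap J'))]^3$; these two demands are compatible because if $I'\cap I\neq\emptyset$ then $I'\not\subseteq I^*_i$, and the reserved ``$1$''-position lies in $I^*_i$. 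Stage three: extend to all of $\mc C_{\max}(b,\msb)$ requiring only the isomorphism-to-$\mcr$ clause (Lemma \ref{lem 7.po} guarantees the induced domains are pairwise disjoint off the pre-fixed part $D^*=\bigcup_{j<2}[|b|\cup I^*_j]^3$, so no conflict). Stage four: set $f$ to $0$ on whatever remains undefined, in particular on all of $[b^{-1}[b']\cup(|b|\,\widetilde{+}\,I)\cup(|b|+N_\msa\,\widetilde{+}\,J)]^3$.

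Finally I would verify the three conclusions. That $f$ is $\mcr$-satisfiable for $(b,\msb)$ follows from Lemma \ref{lem 9.0}, since by construction each $(d,\ms D,I',J')\in\mc C_{2,\max}(b,\msb)$ has its induced restriction isomorphic to a member of $\mcr(d,\ms D)$, and (Res.1) holds by stage one. That $f\in H$ is immediate from the placement of the value $1$ in stage one. That $f$ is constant $0$ on $[b^{-1}[b']\cup(|b|\,\widetilde{+}\,I)\cup(|b|+N_\msa\,\widetilde{+}\,J)]^3$ is what stages two and four were arranged to guarantee: any triple from that domain either falls inside the region treated in stage two for some relevant $(d,\ms D,I',J')$ — where it was explicitly set to $0$ — or lies in the default region of stage four. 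The main obstacle, as in Lemma \ref{lem 9.8}, is checking that the simultaneous demands of stage two are consistent, i.e.\ that no position is required to be both $0$ and part of the $1$-carrying isomorphic copy; this is handled exactly by the index choice making the reserved position sit in $I^*_i$ with $I\cap I^*_i=\emptyset$, together with Lemma \ref{lem 7.po} disjointness for distinct maximal tuples and the compatibility of (R3) with (R1)$^*$/(Res.1) on the overlap $D^*$.
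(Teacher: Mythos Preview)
Your overall scheme (define $f$ in stages, satisfy (Res.1), use (R3) to get the $\mcr$-satisfiability clause while keeping the target region zero, then appeal to Lemma \ref{lem 9.0}) is right in spirit, and in fact it simplifies here: by (Alt3.4) the set $\mc{C}_{2,\max}(b,\msb)$ has a \emph{single} element $(a,\msa,N_\msa,N_\msa)$, so your Stage two/three collapse to one application of (R3) for $(a,\msa)$ with $I'=J'=N_\msa$, exactly as the paper's step (2)--(3).

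The genuine gap is in how you place the value $1$ at $(|a|+i,\,|b|+l^*,\,|b|+N_\msa+l^*)$ without violating the constant-$0$ demand. Your proposed mechanism---``choose $i<2$ so that $l^*\notin I$'' and the analogy with the $I\cap I^*_i=\emptyset$ step of Lemma \ref{lem 9.8}---does not work: whether $l^*\in I$ is independent of $i$, and your $I^*_j$'s are the two full copies of $N_\msa$, not sub-blocks among which one can be chosen disjoint from $I$. What actually decides whether the $1$-position lies in the forbidden region $[b^{-1}[b']\cup(|b|\,\widetilde{+}\,I)\cup(|b|+N_\msa\,\widetilde{+}\,J)]^3$ is the conjunction ``$a_{\xi_i}(l^*)\in b'$ and $l^*\in I$ and $l^*\in J$''. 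The paper resolves this by a case split you have not identified. If some $a_{\xi_i}(l^*)\notin b'$ then $|a|+i\notin b^{-1}[b']$ and the $1$ goes there (the other index gets $0$). If both $a_{\xi_i}(l^*)\in b'$, the key input you never invoke is the \emph{defining property of $\Sigma''$}: for $\alpha\in\Sigma^*\subseteq\Sigma''$ one has $\pi(a_{\xi_0}(l^*),a_{\xi_1}(l^*),a_\alpha(l^*))=1$, so $a_\alpha(l^*)$ cannot lie in the $0$-homogeneous set $b'\cup a_\alpha[I]$, forcing $l^*\notin I$, and likewise $l^*\notin J$. That is what creates the room for the $1$; the ``routine reshuffling'' you allude to is precisely this nontrivial point, and without it the argument does not close.
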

\begin{proof}
We define $f: [|b|+2N_\msa]^3\ra 2$ by steps.

First define a partial $f$ on $\bigcup_{i<2} [|b|\cup [|b|+iN_\msa, |b|+(i+1)N_\msa)]^3$ such that
 \begin{enumerate}
\item  (Res.1) holds for $(b, \msb)$.
\end{enumerate}

Then by (R3),   define $f$ on $[|a|\cup   [|b|, |b|+2N_\msa)]^3$ such that
 \begin{enumerate}\setcounter{enumi}{1}
\item $(|a|\cup [|b|,   |b|+2N_\msa), f)$ is isomorphic to $(|a|+2N_\msa, g)$ for some $g\in \mcr(a, \msa)$;
\item $f$ is constant 0 on $[b^{-1}[a\cap b']\cup (|b|\widetilde{+} I)\cup (|b|+N_\msa\widetilde{+} J)]^3$.
\end{enumerate}
Note by (Alt3.4),  $\mc{C}_{2,\max}(b, \msb)=\{(a, \msa, N_\msa, N_\msa)\}$. So by Lemma \ref{lem 9.0}, every extension of $f$ is $\mcr$-satisfiable for $(b, \msb)$.\medskip

\textbf{Case 1.} $ \{a_{\xi_0}(l^*), a_{\xi_1}(l^*)\}\not\subseteq b'$.\medskip

Say $a_{\xi_1}(l^*)\not\in b'$. Extend $f$ such that for $i<2$, $x\in [|b|, |b|+N_\msa)$ and $y\in [|b|+N_\msa, |b|+2N_\msa)$, $f(|a|+i, x, y)=i$.

It is straightforward to check that $f$ is as desired.\medskip

\textbf{Case 2.}  $ \{a_{\xi_0}(l^*), a_{\xi_1}(l^*)\}\subseteq b'$. \medskip

By definition of $\Sigma''$ and the fact $\Sigma^*\subseteq \Sigma''$, $l^*\notin I\cup J$. Extend $f$ such that for $i<2$, $x\in |b|\widetilde{+} I$ and $y\in |b|+N_\msa\widetilde{+} J$, $f(|a|+i, x, y)=0$ and $f(|a|, |b|+l^*, |b|+N_\msa+l^*)=1$.

By (1)-(3) and above definition, $f$ is as desired.
\end{proof}
 
 Now we force with $\mc{P}_{b, \msb, H}$. Assume $G$ is an uncountable generic filter.  Let $\mcc'=\mcc\cup\{(b, \bigcup G)\}$ and $\mct', \bfe', \mcr'$ are induced from $\mcc'$. 

Then by Lemma \ref{lem 9.5}, $\varphi_1(\mcc', \mct', \bfe')$   and   $\psi_2(\mcc', \mct', \bfe', \pi, \mcr')$ hold. Also, by our choice of $H$, ccc of $\mathbb{P}$ is destroyed.\medskip

We then consider the general case without assuming the additional property of $\mathbb{P}$.
 
Find, for every $\gamma\in \Sigma^*$, a common lower bound $p_\gamma'$ of $p_{\xi_0}$ and $p_\gamma$.
Define $\tau: [\Sigma^*]^2\ra 2$ by 
\[\tau(\alpha, \beta)=0\text{ iff $p'_\alpha$ is compatible with }p'_\beta.\]
Since $\mathbb{P}$ has 2-ccc coloring, $\mc{H}^\tau_0$ is ccc.

Now we consider the ccc poset $\mc{H}^\tau_0$. Note by (Alt3.4) and Lemma \ref{lem 9.0}, there is an $\mcr$-satisfiable for $(b, \msb)$ function $f: [|b|+2N_\msa]^3\ra 2$ such that
\[f(|a|, |b| +l^*, |b| +N_\msa+l^*)=1.\]

Then by (Alt3.1), 
\[\Vdash_{\mc{H}^\tau_0} \text{ (Res) fails witnessed by } (b, \{a_\alpha\in \msb: \{\alpha\}\in \dot{G}\}) \text{ and }f\]
where $\dot{G}$ is the canonical name of the $\mc{H}^\tau_0$-generic filter. To see this, note if $\tau(\alpha, \beta)=0$, then $\{p_{\xi_0}, p_\alpha, p_\beta\}$ has a common lower bound. Then by (Alt3.1), $\pi(a_{\xi_0}(l^*), a_{\alpha}(l^*), a_\beta(l^*))=0$.  Hence $(b\cup a_\alpha\cup a_\beta, \pi)$ is not isomorphic to $(|b|+2N_\msa, f)$.

So  Alternative 1 does not happen to $\mc{H}^\tau_0$. 

If Alternative 2 or Subalternative 3.1 happens to $\mc{H}^\tau_0$, then the corresponding procedure will destroy ccc of $\mc{H}^\tau_0$ while preserve $\psi_2(\mcc, \mct, \bfe, \pi, \mcr)$ (or $\psi_2(\mcc', \mct', \bfe', \pi, \mcr')$ for extended $\mcc'$ and induced $\mct', \bfe', \mcr'$). Then we would destroy 2-ccc coloring of $\mathbb{P}$.

So we assume that Subalternative 3.2 happens to $\mc{H}^\tau_0$. Note that since $\tau$ is a coloring on pairs and $\mc{H}^\tau_0$ is ccc,  $\mc{H}^\tau_0$ has 2-ccc coloring.

Now,  $\mc{H}^\tau_0$ satisfies the same assumption as $\mathbb{P}$, as well as the following  additional property:  a finite subset of $\mc{H}^\tau_0$ is centered if it is pairwise compatible.
 
 Then our treatment for the first case induces a method of destroying ccc of $\mc{H}^\tau_0$ while preserving $\psi_2(\mcc', \mct', \bfe', \pi, \mcr')$ for some extended $\mcc'$. This destroys 2-ccc coloring of $\mathbb{P}$.

This finishes our treatment for Subalternative 3.2 and all alternatives.\bigskip

 \subsection{Proof of  Theorem \ref{thm k2 not k3}}
Start from a model of GCH and let $\mcc^0=\{(\emptyset, [\omega_1]^1)\}$.
Construct    a coloring $\pi: [\omega_1]^3\ra 2$  by CH (or generically add $\pi$ by forcing) such that 
\[[\omega_1]^2\subseteq \mc{H}^\pi_0 \text{ and }\psi_2(\mcc^0, \mct^0, \bfe^0, \pi, \mcr^0) \text{ holds}.\]
 Note that $\varphi_1(\mcc^0, \mct^0, \bfe^0)$ and $\psi_2^-(\mcc^0, \mct^0, \bfe^0, \pi, \mcr^0)$ are automatically satisfied.

Then iteratively force ccc posets $\langle \mc{P}_\alpha, \dot{\mc{Q}}_\beta: \beta<\omega_2, \alpha\leq \omega_2\rangle$ with finite support, together with $\mc{P}_\alpha$-names $\dot{\mcc}^\alpha, \dot{\mct}^\alpha, \dot{\bfe}^\alpha, \dot{\mcr}^\alpha$ for $\alpha\leq \omega_2$ such that for $\beta<\omega_2$,
 \begin{itemize}
 \item   $\Vdash_{\mc{P}_\beta} \dot{\mc{Q}}_\beta$ has size $\leq \omega_1$ and $|\dot{\mcc}^\beta|<\omega_2$;
 \item  $\Vdash_{\mc{P}_\beta} \varphi_1(\dot{\mcc}^ \beta, \dot{\mct}^ \beta, \dot{\bfe}^ \beta)$ and $\psi_2(\dot{\mcc}^ \beta, \dot{\mct}^ \beta, \dot{\bfe}^ \beta, \pi, \dot{\mcr}^ \beta)$;
 \item if $\beta$ is a limit ordinal, then $\Vdash_{\mc{P}_\beta} \dot{\mcc}^\beta=\bigcup_{\alpha<\beta} \dot{\mcc}^\alpha$.
 \end{itemize}
 
 By Alternatives 1-3 discussed in previous subsection, we may assume that 
 \begin{itemize}
 \item for every ccc poset $\mathbb{P}\in V^{\mc{P}_\alpha}$ of size $\leq \omega_1$ where $\alpha<\omega_2$, if $\mathbb{P}$ has 2-ccc coloring in $V^{\mc{P}_{\omega_2}}$, then $\mathbb{P}$ is forced at some stage $\beta\geq\alpha$.
 \end{itemize}
 This shows that
 \begin{itemize}
 \item {\rm MA$_{\omega_1}$(2-ccc coloring)} holds  in $V^{\mc{P}_{\omega_2}}$.
  \end{itemize}
 Recall that ccc forcing does not change $\mathfrak{b}_{\omega_1}$ and hence $\omega_2=\mathfrak{b}_{\omega_1}\leq \mathfrak{a}_{\omega_1}$ in $V^{\mc{P}_\alpha}$ for all $\alpha\leq \omega_2$. 
 
 So by Lemma \ref{lem 9.2}, 
  \begin{itemize}
 \item $\mc{H}^\pi_0$ is ccc and $\pi$ has no uncountable 0-homogeneous subset in $V^{\mc{P}_\alpha}$ for all $\alpha<\omega_2$ and hence  in $V^{\mc{P}_{\omega_2}}$.
  \end{itemize}
 So in $V^{\mc{P}_{\omega_2}}$, $\mc{K}_3$ fails. This finishes the proof of Theorem \ref{thm k2 not k3}.

 \section{Closing remarks}
 
 For properties of form P($\Phi\ra\Psi$) (or P$_{\omega_1}(\Phi\ra\Psi$)), we concentrate on three types of questions.\medskip
 
 Type 1: How weak can $\Psi$ be in order for P(ccc$\ra\Psi$) (or P$_{\omega_1}$(ccc$\ra\Psi$)) to have the full strength of MA$_{\omega_1}$? By Theorem \ref{thm K3toK4},   $\Psi$ can be  K$_3$.

We list   well-known ccc properties   in the following figure which has the bottom properties in Figure \ref{figure1} as top properties.  The strengths of strengthening properties in Figure \ref{figure3} are unknown.

 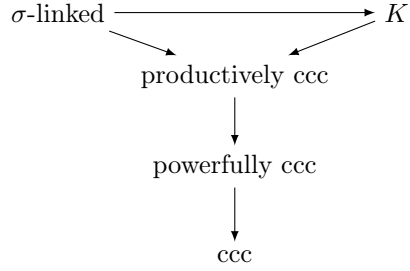
\begin{figure}[h]
\begin{tikzpicture}[auto, node distance=1.2 cm, >=latex]

\node (linked)  {$\sigma$-linked};
\node (K) [right of=linked, xshift=3.3cm] {$K$};
\node (productive) [below right of=linked, xshift=1.5cm] {productively ccc};
\node (powerful) [below of=productive] {powerfully ccc};
\node (ccc) [below of=powerful] {ccc};

 \draw [->] (linked) -- (K);
 \draw [->] (linked) -- (productive);
 \draw [->] (K) -- (productive);
 \draw [->] (productive) -- (powerful);
 \draw [->] (powerful) -- (ccc);

\end{tikzpicture}
\caption{Weak forcing properties}\label{figure3}
\end{figure}

 \begin{question}
 What of the following properties imply {\rm MA}$_{\omega_1}$?
 \begin{enumerate}[(a)]
 \item {\rm P(ccc$\ra$productively ccc)}, or $C(\omega_1)$ that ccc is productive.
 \item {\rm P(ccc$\ra$K)}, or $\ms{K}_2$.
 \item {\rm P$_{\omega_1}$(ccc$\ra\sigma$-linked)}.
 \end{enumerate}
 \end{question}
 These questions are natural and have been asked in the literature, e.g., \cite{TV}, \cite{LT2001}.  
 
 By \cite{Todorcevic86}, each of above 3 properties implies $\mathfrak{b}>\omega_1$. Very few other consequences   are known. See  \cite{Todorcevic91},  \cite{LT2001}, \cite{Yorioka19}, \cite{MY} for more partial results.\medskip
 
 Type 2: How strong can $\Phi$ be in order for P($\Phi\ra$K$_3$) to have the full strength of MA$_{\omega_1}$? By \cite{Peng25}, $\Phi$ must be strictly weaker than powerfully ccc. So we shall search for $\Phi$ not limited in Figure \ref{figure3}. Properties $\mc{K}_n$ (or P($n$-ccc coloring$\ra$K$_3$) in our notation), for $n\geq 2$, might be quite strong. So $n$-ccc coloring, for $n\geq 2$, are natural candidates of $\Phi$. 
  By Theorem \ref{thm k2 not k3}, $\Phi$ cannot be 2-ccc coloring.
 \begin{question}
 For $n\geq 3$, does $\mc{K}_n$ (or {\rm MA$_{\omega_1}$($n$-ccc coloring})) imply {\rm MA}$_{\omega_1}$?
 \end{question}

One might attempt to generalize the proof of Theorem \ref{thm k2 not k3} to distinguish, e.g., $\mc{K}_3$ and $\mc{K}_4$/MA$_{\omega_1}$. We would like to point out one major difficulty in this attempt. The proof of Theorem \ref{thm k2 not k3} makes essential use of the following difference between coloring on pairs and coloring on triples.
\begin{itemize}
\item The root of a $\Delta$-system is usually omitted in analyzing coloring on pairs and plays an important role in coloring on triples.
\end{itemize}
More precisely, in Subalternative 3.2 of Subsection 9.2,  when viewing $\mathbb{P}$ as form $\mc{H}^\tau_0$ for some $\tau: [\omega_1]^2\ra 2$, $\xi_i$'s are in the root and $\tau$-0-homogeneity of $p_\alpha\cup p_\beta$ determines  $\tau$-0-homogeneity of $p_{\xi_i}\cup p_\alpha\cup p_\beta$ which in turn determines $\pi$-0-homogeneity of $\{a_{\xi_i}(l^*), a_\alpha(l^*), a_\beta(l^*)\}$ for $i<2$ and $\alpha, \beta\in \Sigma^*$.

Since the root of a $\Delta$-system plays important roles in colorings on $n$-tuples for all $n\geq 3$, some new difference might be needed to distinguish, e.g., $\mc{K}_3$ and $\mc{K}_4$.

Coloring on $n$-tuples for $n\geq 4$ may encounter another technical difficulty, when the requirement $\mcr(a, \msa)$ collects patterns of $\pi$ on $a$ together with 3 (or more) other inputs from $\msa$. We describe a strengthening, $\varphi_2(\mcc, \mct, \bfe)$, of $\varphi_1(\mcc, \mct, \bfe)$ to overcome this difficulty. 

$\varphi_2(\mcc, \mct, \bfe)$ is the assertion that $\varphi_1(\mcc, \mct, \bfe)$ together with the following statements hold.
 \begin{enumerate}[{\rm (C1)}]\setcounter{enumi}{2}
\item If $(a, \msa)\prec (b, \msb)$ are in $\mcc$, then $a\cup\bigcup \{a'\in \msa: a'\cap b\neq \emptyset\}\subseteq b$.
\item For every $\mcc'\in [\mcc]^{<\omega}$ and every $a\in [\omega_1]^{<\omega}$, there exists a finite $a^*\supseteq a$ such that for every $(b, \msb)\in \mcc'$, $b\cup\bigcup\{b'\in \msb: b'\cap a^*\neq\emptyset\}\subseteq a^*$.
\end{enumerate}
Unlike $\varphi_1(\mcc, \mct, \bfe)$, a candidate satisfying (C4) may need  to be forced.\medskip

Type 3: How strong is P($\Phi\ra\Psi$) (or P$_{\omega_1}(\Phi\ra\Psi$)) for two closely related properties $\Phi$ and $\Psi$? On one hand, investigation of this type of questions clarifies the powerful part of a strong property. On the other hand, in some sence, the strength of P($\Phi\ra\Psi$) (or P$_{\omega_1}(\Phi\ra\Psi$)) measures the difference between $\Phi$ and $\Psi$.  For example,  the difference between K$_n$ and K$_{n+1}$ is considerably  larger than the difference between K$_n$ and $\sigma$-$n$-linked since P(K$_n\ra$K$_{n+1}$) is strictly stronger than P$_{\omega_1}$(K$_n\ra\sigma$-$n$-linked).

For properties above property K, the strengths of P(K$_n\ra$K$_{n+1}$) and P($\sigma$-$n$-linked$\ra$K$_{n+1}$) are clear by Theorem \ref{thm K3toK4} and Theorem \ref{thm sK3toK4}. It is natural to investigate the exact strength of P$_{\omega_1}$(K$_n\ra\sigma$-$n$-linked). 

For this type of questions, very few is known for properties below property K and is closely related to questions of type 1.\bigskip

Above 3 types of questions, as well as the investigation of the paper, concentrate on forcing axioms for $\omega_1$ dense sets. Discussion of  forcing axioms for more dense sets is out of the scope of the current paper. We will only point out that in analyzing  forcing axioms for $\kappa>\omega_1$ dense sets, some idea and technique here may be useful  while the generalization of some treatment may encounter extra difficulty.

The treatment for $\mathfrak{t}$ in Proposition \ref{prop sK3toK4 t} works for larger $\kappa$ as well  (see \cite{Todorcevic86} for the $\mathfrak{b}$ part).
\begin{cor}\label{cor general t}
 For $n\geq 2$, if every $\sigma$-$n$-linked poset $\mathbb{P}$ of size $\leq \kappa$ has an $(n+1)$-linked subset of size $|\mathbb{P}|$, then $\mathfrak{t}>\kappa$.
\end{cor}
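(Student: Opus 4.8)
\textbf{Proof proposal for Corollary \ref{cor general t}.}

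The plan is to mirror the proof of Proposition \ref{prop sK3toK4 t} line by line, replacing $\omega_1$ by $\kappa$ throughout, and simply feeding the new hypothesis in place of $\mathrm{P}(\sigma\text{-}n\text{-linked}\ra\mathrm{K}_{n+1})$. First I would fix a tower $T=\{t_\alpha:\alpha<\kappa\}$ and invoke the analogue of Proposition \ref{prop sK3toK4 b} for $\kappa$ — i.e., the statement that $\mathfrak b>\kappa$ follows from the hypothesis — which is exactly the content cited from \cite{Todorcevic86} for the $\mathfrak b$ part. With $\mathfrak b>\kappa$ in hand, I pass to a sub-tower and obtain $I\in[\omega]^\omega$ with $t_\alpha\cap I_m\neq\emptyset$ for all $\alpha<\kappa$ and all $m<\omega$, as in clause (1) of the original proof. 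Then I consider the poset $\mathbb P_{T,I}$ of Definition \ref{defn tower poset}, now built over $\kappa$ rather than $\omega_1$: finite $F\subseteq\kappa$ such that $(\{t_\alpha\cap I_m:\alpha\in F\},\subseteq)$ has width $\leq n$ for every $m$, ordered by reverse inclusion. The Claim in the proof of Proposition \ref{prop sK3toK4 t} — that $\mathbb P_{T,I}$ is $\sigma$-$n$-linked — uses only a $\Delta$-system argument on finite subsets and pointwise stabilization on the interval $I(M+1)$, none of which is sensitive to the cardinality of the index set, so it goes through verbatim. Note also $|\mathbb P_{T,I}|=\kappa$, so the hypothesis of the corollary applies.

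Next I would apply the hypothesis to $\mathbb P_{T,I}$: since it is $\sigma$-$n$-linked of size $\kappa$, it has an $(n+1)$-linked subset $X$ of size $\kappa$. As in the original argument, if $(\{t_\alpha\cap I_k:\alpha\in X\},\subseteq)$ had width $>n$ for some $k$, this would be witnessed by an $(n+1)$-element subset of $X$, contradicting $(n+1)$-linkedness; hence every $(\{t_\alpha\cap I_k:\alpha\in X\},\subseteq)$ has width $\leq n$, so has at most $n$ $\subseteq$-minimal elements. The remainder of the original proof — the choice of the least $m\leq n$ satisfying property (5), the extraction of $Y$, $A$ and $\langle\mc I'_k:k\in A\rangle$, and the minimality argument showing $\bigcup_{k\in A}\bigcup\mc I'_k\subseteq^* t_\alpha$ for all $\alpha\in Y$ — is purely combinatorial on finite sets indexed by the single ordinal $k$ ranging in $A\subseteq\omega$, and uses the relation $t_\beta\subseteq^* t_\alpha$ for $\beta$ above $\alpha$ in $Y$; this is available for $Y\subseteq\kappa$ just as for $Y\subseteq\omega_1$. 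Together with clause (1), $\bigcup_{k\in A}\bigcup\mc I'_k$ is infinite and fills $T$. Since $T$ was an arbitrary tower of size $\kappa$, $\mathfrak t>\kappa$.

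I do not expect a genuine obstacle here: every step of Proposition \ref{prop sK3toK4 t} is cardinality-agnostic once one has the $\mathfrak b>\kappa$ input, which is imported from \cite{Todorcevic86}. The only point requiring a word of care is the extraction of the stabilized sub-tower (clause (1)): one needs that a $<^*$-dominating family — equivalently $\mathfrak b>\kappa$ — suffices to thin a $\kappa$-tower so that every $t_\alpha$ meets every $I_m$; this is the standard observation that if $g$ dominates $\{$enumeration functions of $t_\alpha:\alpha<\kappa\}$ then $I=\{g^{(m)}(0):m<\omega\}$ works after replacing each $t_\alpha$ by a $\subseteq^*$-smaller set, and the closure of $\subseteq^*$ under the tower ordering lets us do this uniformly. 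With that in place the proof is a transcription, so it suffices to remark that ``the proof of Proposition \ref{prop sK3toK4 t} goes through with $\omega_1$ replaced by $\kappa$ and Proposition \ref{prop sK3toK4 b} replaced by its $\kappa$-analogue from \cite{Todorcevic86}.''
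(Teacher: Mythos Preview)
Your approach is exactly what the paper intends: it states the corollary with no proof beyond the remark that ``the treatment for $\mathfrak t$ in Proposition \ref{prop sK3toK4 t} works for larger $\kappa$ as well (see \cite{Todorcevic86} for the $\mathfrak b$ part),'' and you have correctly spelled out that transcription. One small slip to fix: the hypothesis hands you an $(n+1)$-linked $Y\subseteq\mathbb P_{T,I}$ (a family of finite subsets of $\kappa$), not directly a subset of $\kappa$ as in the original proof where $\mathrm{K}_{n+1}$ was applied to the singletons; set $X=\bigcup Y$, note $|X|=\kappa$, and observe that any $n+1$ ordinals in $X$ lie inside a common lower bound of $\leq n+1$ members of $Y$, hence inside a condition of $\mathbb P_{T,I}$, which is what the width argument needs.
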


Theorem \ref{thm sK3toK4 sK3top} might be generalized to larger $\kappa$ with extra effort.

The generalization of Lemma \ref{lem K3toK4} (or \cite[Corollary 2.7]{TV}) to larger $\kappa$ is likely to encounter some major difficulty. The coding on   $\kappa>\omega_1$ may need extra assumption (see Section 2-3 of \cite{Todorcevic91} for a discussion on $\kappa=\omega_2$).\medskip

The damage control structure can be generalized to larger $\kappa$ as well. But   generalizations may vary for different purposes.

For example, when analyzing a ccc coloring $\pi: [\kappa]^{<\omega}\ra 2$, an appropriate collection $\mc{C}$ may collect $(a, \msa)$ for $a\in [\kappa]^{<\omega}$ and $\msa \in [[\kappa]^{N_\msa}]^{\omega_1}$ with $N_\msa<\omega$. Also, non-overlapping requirement on $\msa$ should be modified accordingly since $\sup(\msa_i)<\sup(\msa_j)$ is possible.

Moreover, when analyzing $\kappa$-cc notions, $\mcc$ may collect $(a, \msa)$ for $a\in [\kappa]^{<\omega}$ and $\msa \in [[\kappa]^{N_\msa}]^{\kappa}$ with $N_\msa<\omega$. And when analyzing $\kappa$ above the continuum,  $\mcc$ may collect $(a, \msa)$ for an infinite $a$ and a family $\msa$ of infinite sets.

 \section*{Acknowledgement}
 I would like to thank Stevo Todorcevic and Justin Moore for their useful suggestions on earlier draft, which have enhanced the paper's presentation.

    \bibliographystyle{plain}

\end{document}